\documentclass[reqno,hypertex,dvipdfmx]{amsart}
\RequirePackage{amsthm,amssymb,bm}

\RequirePackage[mathscr]{eucal} 
    \let\go=\mathfrak
  \let\mc=\mathscr 
  \let\bb=\mathbb \let\mb=\mathbf

\synctex=1  

\usepackage{multicol}
 \usepackage[makeindex,nonewpage]{imakeidx}
 \makeindex[name=notation,title={Notation},program=makeindex,%
        options=-c,columns=4,columnsep=20pt]
 \def\indn#1{\index[notation]{#1}}
\makeindex[name=terminology,title={Terminology},columnsep=20pt,options=-c]
  \def\indt#1{\index[terminology]{#1}}

%
%
\makeatletter

\def\defcd#1{\expandafter\def\csname cd#1\endcsname}

 \def\activeat#1{\csname @#1\endcsname}
 \def\def@#1{\expandafter\def\csname @#1\endcsname}
 {\catcode`\@=\active \gdef@{\activeat}}

\let\ssize\scriptstyle
\newdimen\excd	\excd.2326ex

 \def\requalfill{\cleaders\hbox{$\mkern-2mu\mathord=\mkern-2mu$}\hfill
  \mkern-6mu\mathord=$}
 \def\eqfill{$\m@th\mathord=\mkern-6mu\requalfill}
 \def\deffill{\hbox{$:=$}$\m@th\mkern-6mu\requalfill}
 \def\fiberbox{\hbox{$\vcenter{\hrule\hbox{\vrule\kern1ex
     \vbox{\kern1.2ex}\vrule}\hrule}$}}

 \newdimen\arrwd 
  \newdimen\minCDarrwd \minCDarrwd=2.5pc
   \setbox\z@\hbox{$\rightarrow\,$} \minCDarrwd=\wd\z@
 	
 \def\findarrwd#1#2#3{\arrwd=#3%
  \setbox\z@\hbox{$\ssize\;{#1}\;\;$}%
  \setbox\@ne\hbox{$\ssize\;{#2}\;\;$}%
  \ifdim\wd\z@>\arrwd \arrwd=\wd\z@\fi
  \ifdim\wd\@ne>\arrwd \arrwd=\wd\@ne\fi}
 \newdimen\arrowsp\arrowsp=0.375em  	
 \def\findCDarrwd#1#2{\findarrwd{#1}{#2}{\minCDarrwd}
    \advance\arrwd by 2\arrowsp}
 \newdimen\minarrwd 
 \setbox\z@\hbox{$\longrightarrow$} \minarrwd=\wd\z@

 \def\harrow#1#2#3#4{{\minarrwd=#1\minarrwd
   \findarrwd{#2}{#3}{\minarrwd}\kern\arrowsp
    \mathrel{\mathop{\hbox to\arrwd{#4}}\limits^{#2}_{#3}}\kern\arrowsp}}

\def@]#1>#2>#3>{\harrow{#1}{#2}{#3}\rightarrowfill}
 \def@>#1>#2>{\harrow1{#1}{#2}\rightarrowfill}
 \def@<#1<#2<{\harrow1{#1}{#2}\leftarrowfill}
 \def@={\harrow1{}{}\eqfill}
 \def@:#1={\harrow1{}{}\deffill}
 \def@ N#1N#2N{\vCDarrow{#1}{#2}\UpDownarrow}
 \def\UpDownarrow{\uparrow\,\Big\downarrow}

\def\hookrightarrowfill{\hbox{$\lhook\joinrel$}\rightarrowfill}
\def@{hk>}#1>#2>{\harrow1{#1}{#2}\hookrightarrowfill}
\def@ c>#1>#2>{\harrow1{#1}{#2}\hookrightarrowfill}
\def\hookleftarrowfill{\leftarrowfill\hbox{$\joinrel\rhook$}}
\def@{hk<}#1<#2<{\harrow1{#1}{#2}\hookleftarrowfill}

 \def@.{\ifodd\row\relax\harrow1{}{}\hfill
   \else\vCDarrow{}{}.\fi}
 \def@|{\vCDarrow{}{}\Vert}
 \def@ V#1V#2V{\vCDarrow{#1}{#2}\downarrow}
 \def@ A#1A#2A{\vCDarrow{#1}{#2}\uparrow}
 \def@(#1){\arrwd=\csname col\the\col\endcsname\relax
   \hbox to 0pt{\hbox to \arrwd{\hss$\vcenter{\hbox{$#1$}}$\hss}\hss}}

 \defcd]#1>#2>#3>{\harrow{#1}{#2}{#3}\rightarrowfill}
 \defcd>#1>#2>{\harrow1{#1}{#2}\rightarrowfill}
 \defcd<#1<#2<{\harrow1{#1}{#2}\leftarrowfill}
 \defcd={\harrow1{}{}\eqfill}
 \defcd:#1={\harrow1{}{}\deffill}
 \defcd N#1N#2N{\vCDarrow{#1}{#2}\UpDownarrow}
 \def\UpDownarrow{\uparrow\,\Big\downarrow}

\def\hookrightarrowfill{\hbox{$\lhook\joinrel$}\rightarrowfill}
\defcd{hk>}#1>#2>{\harrow1{#1}{#2}\hookrightarrowfill}
\defcd c>#1>#2>{\harrow1{#1}{#2}\hookrightarrowfill}
\def\hookleftarrowfill{\leftarrowfill\hbox{$\joinrel\rhook$}}
\defcd{hk<}#1<#2<{\harrow1{#1}{#2}\hookleftarrowfill}

 \defcd.{\ifodd\row\relax\harrow1{}{}\hfill
   \else\vCDarrow{}{}.\fi}
 \defcd|{\vCDarrow{}{}\Vert}
 \defcd V#1V#2V{\vCDarrow{#1}{#2}\downarrow}
 \defcd A#1A#2A{\vCDarrow{#1}{#2}\uparrow}
 \defcd(#1){\arrwd=\csname col\the\col\endcsname\relax
   \hbox to 0pt{\hbox to \arrwd{\hss$\vcenter{\hbox{$#1$}}$\hss}\hss}}

 \def\squash#1{\setbox\z@=\hbox{$#1$}\finsm@@sh}
\def\finsm@@sh{\ifnum\row>1\ht\z@\z@\fi \dp\z@\z@ \box\z@}

 \newcount\row \newcount\col \newcount\numcol \newcount\arrspan
 \newdimen\vrtxhalfwd  \newbox\tempbox

 \def\innernewdimen{\alloc@1\dimen\dimendef\insc@unt}
 \def\measureinit{\col=1\vrtxhalfwd=0pt\arrspan=1\arrwd=0pt 
   \setbox\tempbox=\hbox\bgroup$}
 \def\setinit{\col=1\hbox\bgroup$\ifodd\row
   \kern\csname col1\endcsname
   \kern-\csname row\the\row col1\endcsname\fi}
 \def\findvrtxhalfsum{$\egroup
  \expandafter\innernewdimen\csname row\the\row col\the\col\endcsname
  \global\csname row\the\row col\the\col\endcsname=\vrtxhalfwd
  \vrtxhalfwd=0.5\wd\tempbox
  \global\advance\csname row\the\row col\the\col\endcsname by \vrtxhalfwd 
  \advance\arrwd by \csname row\the\row col\the\col\endcsname
  \divide\arrwd by \arrspan
  \loop\ifnum\col>\numcol \numcol=\col%
     \expandafter\innernewdimen \csname col\the\col\endcsname
     \global\csname col\the\col\endcsname=\arrwd
   \else \ifdim\arrwd >\csname col\the\col\endcsname
      \global\csname col\the\col\endcsname=\arrwd\fi\fi
   \advance\arrspan by -1 %
   \ifnum\arrspan>0 \repeat}
 \def\setCDarrow#1#2#3#4{\advance\col by 1 \arrspan=#1 
    \arrwd= -\csname row\the\row col\the\col\endcsname\relax
    \loop\advance\arrwd by \csname col\the\col\endcsname
     \ifnum\arrspan>1 \advance\col by 1 \advance\arrspan by -1%
     \repeat
    \squash{\mathop{
     \hbox to\arrwd{\kern\arrowsp#4\kern\arrowsp}}\limits^{#2}_{#3}}}
 \def\measureCDarrow#1#2#3#4{\findvrtxhalfsum\advance\col by 1%
   \arrspan=#1\findCDarrwd{#2}{#3}%
    \setbox\tempbox=\hbox\bgroup$}
 \def\vCDarrow#1#2#3{\kern\csname col\the\col\endcsname
    \hbox to 0pt{\hss$\vcenter{\llap{$\ssize#1$}}%
     \Big#3\vcenter{\rlap{$\ssize#2$}}$\hss}\advance\col by 1}

 \def\setCD{\makeatother %
  \def\harrow{\setCDarrow}%
  \def\\{$\egroup\advance\row by 1\setinit}
  \m@th\lineskip3\ex@\lineskiplimit3\excd \row=1\setinit}
 \def\endsetCD{$\egroup}
 \def\dr@p#1\\{\findvrtxhalfsum\advance\row by 2 \measureinit}
 \def\measure{\bgroup
  \def\harrow{\measureCDarrow}%
  \def\\##1{\ifx##1\endmeasure\endmeasure\else\expandafter\dr@p\fi}%
  \row=1\numcol=0\measureinit}
 \def\endmeasure{\findvrtxhalfsum\egroup}

\newskip\aboveCDskip \aboveCDskip=0pt
\newskip\belowCDskip \belowCDskip=0pt
 \newcount\savedcount	
 \def\LCD#1\end{\savedcount=\count11
   \measure#1\endmeasure
   \vcenter{\kern\aboveCDskip\setCD#1\endsetCD\kern\belowCDskip}%
   \global\count11=\savedcount\end}

 \newenvironment{CD}{\let\at=@\catcode`\@=\active\LCD}{\catcode`\@=12\relax}
 
\makeatother

\let\:=\colon
\let\?=\overline

\let\bu=\bullet 

\let\dg=\dagger
\let\di=\diamond
\let\ft=\flat
\let\into=\hookrightarrow
\let\la=\langle \let\ra=\rangle

\let\onto=\twoheadrightarrow
\let\ox=\otimes
\let\sh=\sharp
\let\sp=\spadesuit
\let\To=\longrightarrow
\let\ts=\textstyle
\let\ve=\varepsilon \let\vf=\varphi
\let\wh=\widehat
\let\wt=\widetilde 
\let\x=\times
\let\xto=\xrightarrow

\def\0{^{\,0}}
\def\varstrut{\vrule height10.5pt depth3.5pt width0pt}
\def\Z{\bb Z}
\def\tq#1{\text{\quad#1\quad}}  \def\and{\tq{and}}
\def\Der{\mathop{\rm Der}\nolimits}
\def\tqn#1{\text{\quad#1\enspace}}

\def\(#1){{\rm(#1)}}

\def\risom{\buildrel\sim\over{\smashedlongrightarrow}}
  \def\smashedlongrightarrow{\setbox0=\hbox{$\longrightarrow$}\ht0=1pt\box0}
 \def\emdash{\unskip\thinspace\penalty10000---\penalty-500\ignorespaces
  \thinspace}
 \def\car{\mathop{\rm char}}

\def\Grass{\mathop{\bb G\rm rass}\nolimits}
\def\Hilb{\mathop{\rm  Hilb}}
\def\Hom{\mathop{\rm Hom}\nolimits}
 
 \def\sHom{\mathop{\mc{H}\kern-1pt\textit{om\/}}}
\def\Im{\mathop{\rm Im}} \def\Coim{\mathop{\rm Coim}}
\def\im{\mathop{\rm im}} \def\coim{\mathop{\rm coim}}

 \def\implies{$\Rightarrow$}
\def\IP{\mathop{\bb P}}
\def\Jac{\mathop{\rm Jac}}
\def\Ker{\mathop{\rm Ker}} 
 \def\Cok{\mathop{\rm Cok}}  
\def\ker{\mathop{\rm ker}} \def\cok{\mathop{\rm cok}}
\def\length{\mathop{\rm length}}

\def\Proj{\mathop{\rm Proj}}
\def\Quot{\mathop{\rm {\bb Q}uot}\nolimits}

\def\rank{\mathop{\rm rank}}

\def\Rees{\mathop{\rm Rees}}
\def\sm{_{\rm sm}}
\def\Soc{\mathop{\rm Soc}\nolimits}
\def\Spec{\mathop{\rm Spec}}
\def\Supp{\mathop{\rm Supp}}

 \RequirePackage[hypertex,dvipdfmx]{hyperref} 
 \hypersetup{colorlinks=true, urlcolor=green, citecolor=red, linkcolor=blue}

 \usepackage[abbrev]{amsrefs}

\def\sbsno{(\arabic{section}.\arabic{subsection})\enspace}
\def\defspec#1{\def\headspec{#1}%
  \ifx\headspec\empty 
  \else{\unkern\enspace(#1)}\fi
}       

\newtheoremstyle{italics}
  {6pt}
  {6pt}
  {\itshape}
  {}
  {\bfseries}
  {.}
  {.5em}
  {\sbsno \thmname{#1}{\rm \defspec{#3}}}

\newtheoremstyle{roman}
  {6pt}
  {6pt}
  {\rmfamily}
  {}
  {\bfseries}
  {.}
  {.5em}
  {\sbsno \thmname{#1}\defspec{#3}}

\theoremstyle{italics}
 \newtheorem{lemma}[subsection]{Lemma}
 
 \newtheorem{corollary}[subsection]{Corollary}
 \newtheorem{proposition}[subsection]{Proposition}
 \newtheorem{theorem}[subsection]{Theorem}

\theoremstyle{roman}
 \newtheorem{definition}[subsection]{Definition}
 \newtheorem{example}[subsection]{Example}
 
 \newtheorem{remark}[subsection]{Remark}
 \newtheorem{setup}[subsection]{Setup}
 \newtheorem{sbs}[subsection]{} 

\numberwithin{equation}{subsection}

\widowpenalty=0 \clubpenalty=0 \displaywidowpenalty=0
\parskip = 0pt plus 2pt minus 0pt

    \title{Macaulay duality and its geometry}

\author[S. L. Kleiman]{Steven L. Kleiman}
 \address
 {Dept.\ of Math., 2-172 MIT\\
 77 Mass.\ Ave.\\
 Cambridge, MA 02139, USA}
 \email{Kleiman@math.MIT.edu}

\author[J. O. Kleppe]{Jan O. Kleppe}
 \address
 {Faculty of Technology, Art and Design\\
  Oslo Metropolitan University\\
  St. Olavs Plass\\
  NO-0130 Oslo, Norway}
 \email{JanOddvar.Kleppe@oslomet.no}

\date{\today}

\thanks{Both authors are deeply grateful to Tony Iarrobino for many
discussions in person and in email about the content of this article and
for providing many pertinent references, in particular about smoothable
quotients and elementary components, which are discussed in
\eqref{exElemCpts}.
  The second author also thanks
Arvid Siqveland for discussions about deformations of modules.
  Finally, both authors are very grateful for the referee's many
thoughtful suggestions for improving the paper.}

 \subjclass[2010]{13H10, 13C40, 14M05, 14C05}
 \keywords{Macaulay duality, level Artinian algebras, permissible socle
types, compressed Artinian algebras, punctual Hilbert schemes}

 \indexsetup{noclearpage,level=\section*,toclevel=section,firstpagestyle=headings}

\begin{document}
\begin{abstract}
 Macaulay Duality, between quotients of a polynomial ring over a field,
annihilated by powers of the variables, and finitely generated
submodules of the ring's graded dual, is generalized over any Noetherian
ring, and used to provide isomorphisms between the subschemes of the
Hilbert scheme parameterizing various sorts of these quotients, and the
corresponding subschemes of the Quot scheme of the dual.  Thus notably
the locus of recursively compressed algebras of permissible socle type
is proved to be covered by open subschemes, each one isomorphic to an
open subscheme of a certain affine space.  Moreover, the polynomial
variables are weighted, the polynomial ring is replaced by a graded
module, and attention is paid to induced filtrations and gradings.
Furthermore, a similar theory is developed for (relatively) maximal
quotients of a graded Gorenstein Artinian algebra.
 \end{abstract}

\maketitle

\vskip 0pt plus 0pt minus 3pt

 \tableofcontents

\section{Introduction}\label{intro}

This introduction surveys previous work by many, explains our approach,
and summarizes our advances.

 In 1972, Iarrobino \cite{IM15} initiated the study of reducible
punctual Hilbert schemes.  Since then, many researchers have carried on,
studying various loci of thick points, often with common support.  Thus
certain subtle geometric issues arose, for which the present article
aims to give the first proper treatment in natural generality.

A key tool has been Macaulay duality.  It provides a means to transform
loci in the local punctual Hilbert scheme into more tractable loci in a
related Quot scheme.  However, previous treatments of Macaulay duality
are inadequate to the job, since (with a few limited exceptions) they're
developed over a base field, and cannot properly handle flat families.
Thus in the present article, our first goal is to develop a suitable
version \eqref{thGMD3} of Macaulay duality over any Noetherian base
ring.

Our main goal stems from Iarrobino and Emsalem's 1978 landmark paper
\cite{IE78}.  They fixed a polynomial ring $P$ in $r$ variables over an
algebraically closed field $k$.  Notably, they considered local
Gorenstein quotients $C$ of $P$ with Hilbert vector $(1,r,r,1)$.  Using
Macaulay duality, they found that the homogeneous $C$ fill an open
subset of the projective space on the cubic forms, and that over it, the
adically filtered $C$ fill an affine-space bundle of fiber dimension
$\binom r2$, whose structure map takes $C$ to its associated graded
algebra.

Our main goal is to treat Iarrobino's remarkable 1984 generalization in
\cite{Iar84} of the preceding results.  The original $C$ are generalized
by the {\it compressed\/} $C$; these are the $C$ whose length is maximal
 \indt{compressed@compressed $C$}
 among quotients that are annihilated by a power of each variable and
are of fixed ``permissible'' socle type.  Iarrobino discovered and
Fr\"{o}berg and Laksov reproved, see \eqref{sbPerm} below, that,
strikingly, the length of a compressed $C$ can be computed a priori from
its socle type.  This basic result grew out of \cite{IE78}*{Thm.\,3.31,
p.\,173}; related work is discussed in the note at the bottom of p.\,350
in \cite{Iar84}.  This result is assumed in the present paper.  Rather,
we give the geometry a proper treatment.

We fix two flaws in \cite{Iar84}, beyond the lack of use of a suitable
Macaulay duality.  The first concerns the locus (in the Hilbert scheme)
of compressed homogeneous $C$.  It is not true, as asserted in
\cite{Iar84}*{Prop.\,3.6, p.\,356}, that every $C$ remains compressed
under recursion on the diameter of the socle type; see \eqref{reErr}.
So to reach our main goal, we work with {\it recursively compressed\/} 
 \indt{recursively compressed}
 $C$; these are the compressed $C$ that do remain compressed under
recursion.  It is their locus that is covered by open subschemes of
affine space; see \eqref{prSmOp}(2).  However, we show in
\eqref{regenstype} on the basis of the work of Iarrobino and of
Fr\"{o}berg and Laksov mentioned above that a general $C$ of socle type
bounded by $\mb t$ is, in fact, recursively compressed of socle type
$\mb t$.

The second flaw is the poor justification of the description in
\cite{Iar84}*{Prop.\,3.11, p.\,359} of the retraction from the locus of
filtered $C$ to that of homogeneous $C$.  That justification proceeds by
induction on the minimal number of generators of the dual $C^*$.
However, adding new generators, one after the other, leads not to a
single quotient $C$, but to a flag of quotients.  Therefore, we provide
a noninductive proof in \eqref{thFHdim}.

The theory of compressed Artinian local rings $C$ has been extended by
many authors in various directions.  In his 1985 thesis
\cite{MirThesis}, Miri studied such $C$ that are quotients of direct
sums of shifts $B := \bigoplus_q P(q)^{\oplus \mb t(q)}$.  However, in
his 1993 paper \cite{MiriCA21}, which is based on \cite{MirThesis}, he
took $B := P^{\oplus t}$ with $t\ge 1$.  He focused on the $C$ with
cyclic duals of maximal length of socle type $\mb t$.  A stronger
version of his existence theorem \cite{MiriCA21}*{Prp.\,2.8, p.\,2846}
with $B := \bigoplus_q P(q)^{\oplus \mb t(q)}$ is given in
\eqref{prMB}(1) below with a simpler proof.  He also described the
varieties of graded $C$ and of filtered $C$ in
\cite{MiriCA21}*{Thm.\,3.1, p.\,2853}; his descriptions are special
cases of \eqref{prSmOp}(2) and \eqref{coFdim}.

In 2000, Boij studied graded level quotients $C$ of $B := P(s)^{\oplus
t}$ for $s,\,t \ge 1$.  A version of of his existence theorem is given
for comparison with Miri's in \eqref{prMB}(2).

In 2001, Cho and Iarrobino \cite{C-IJA241} initiated the study of
(relatively) compressed quotients $C$ of a proper quotient $A$ of $P$;
these are the $C$ whose length is maximal among quotients of fixed
``permissible'' socle type.  They gave examples of $A$ where this
maximal length is not the a priori bound computed from the socle type and
the Hilbert function of $A$.  A few years later, they and others gave
additional examples.  There's more discussion of this matter near the
end of the introduction.

In 2014, Rossi and \c{S}ega \cite{RS-AM259} studied the Poincar\'e
series of finitely generated modules over compressed local Gorenstein
$C$; they treated mixed characteristic $C$, a notable advance.  In 2018,
Kustin, \c{S}ega, and Vraciu \cite{KSV-JA505} extended that work to
compressed local $C$ of arbitrary permissible socle type.  Both papers
recall that such $C$ exist over an infinite field, but otherwise, they
study such a $C$, assuming $C$ exists.  However, since a mixed
characteristic $C$ isn't flat over any base ring $k$, such a $C$ isn't
covered by our theory.

In our work, $P$ is replaced by a finitely generated, graded algebra $A
:= \bigoplus_{n\ge0}A_n$ over an arbitrary Noetherian base ring $k$;
from Section 4 on, $A_0 = k$.  Further, $C$ is now a quotient of a
finitely generated graded $A$-module $B = \bigoplus_{n\ge 0}B_n$ with
$B_n$ a locally free $k$-module of finite rank $\mb b(n)$.  For example,
$k$ could be a field, $A$ a {\it weighted\/} polynomial ring, and $B =
A$; see \eqref{exSymNonArt} and \eqref{exArt}, which illustrate an
advantage gained from a nonstandard weighting.  However, unless
explicitly stated otherwise, a polynomi ring has the standard grading,
in which all the variables have weight 1.  Furthermore, it may well be
useful to generalize our work to the case where $B$ is filtered,
especially in the linkage results \eqref{coLink} and \eqref{prHflg} when
$C$ is filtered, but not graded.

In general, the $C$ of interest are locally free $k$-modules of finite
rank.  Below, such a $C$ is termed $k$-{\it Artinian}.  Of course, if
 \indt{kArt@$k$-Artinian}
$k$ isn't Artinian, then neither is $C$.  However, $C$ can be viewed as
providing a flat family of Artinian quotients.

To treat the various loci $L$ of Artinian quotients and the maps between
them, we use the rudiments of Grothendieck's theory of representable
functors.  The first job then is to take a $k$-scheme $T$ of finite type
and to identify the $T$-points of $L$, namely, the $k$-maps $\tau\:T\to
L$.  Below, each $L$ of interest turns out to parameterize a universal
family $\mc L$ of quotients.  Then $\tau^*\mc L$ is a family
parameterized by $T$; conversely, every family parameterized by $T$ is
obtained via pullback along a unique $\tau$.  Thus the $T$-points $\tau$
are given by certain quotients $\mc C$ on $T$. The first job is to
identify, a priori, these $\mc C$.  Doing so means generalizing the
definition of the locus $L$ in terms of $T$ and $\mc C$.  Here
$T$-flatness plays a central role.  Further, it suffices to use affine
$T$.  The second job is to give $L$ a scheme structure and to construct
the universal family $\mc L$; in other words, the job is to represent
the functor of these $\mc C$.

To do the second job, we use Grothendieck's theory of {\it flattening
stratification\/}
 \indt{flattening stratification}
 (as it's named in \cite{CAS}*{Lect.\,8}).  The key
tool is Grothendieck's Lem.\,3.6 on p.\,221-15 in \cite{SB221}: given a
coherent sheaf $\mc F$ on a locally Noetherian scheme $Q$ and an integer
$r$, there's a (unique) subscheme $Z$ of $Q$ such that a map $\tau\:
T\to Q$ factors through $Z$ iff $\tau^*\mc F$ is locally free of rank
$r$.  (To prove it, let $F_r$ be the closed subscheme of $Q$ defined by
the $(r-1)$st Fitting Ideal of $\mc F$, and set $Z = F_r - F_{r-1}$.)
Historically speaking, flattening stratification was introduced into the
study of loci of Artinian algebras by the second author in \cite{PGor}
and \cite{MaxGor}.

For example, filter $B$ by $F^nB := \bigoplus_{p\ge n}B_p$, and consider
the locus of filtered, $k$-Artinian quotients of $B$ with given Hilbert
function $\mb h$.  What's the value of the associated functor at $T :=
\Spec(K)$ with $K$ a $k$-algebra?  It's the set of $A\ox_kK$-quotients
$C$ of $B\ox_kK$ with the induced filtration such that $F^nC/F^{n+1}C$
is locally free over $K$ of rank $\mb h(n)$ for all $n$.  In
\eqref{thRep}, we prove these sets form a functor.

Let's represent this functor.  Set $\ell := \sum_p\mb h(p)$ and $X :=
\Spec(A)$.  Denote the $\mc O_X$-module associated to $B$ by $\wt{B}$.
Set $\smash{Q := \Quot_{\wt{B}/X/k}^\ell}$, and denote by $\mc Q$ the
quotient of $B_Q$ whose associated $\mc O_{X_Q}$-module is the universal
quotient of $\wt B_Q$.  Given $q$, Grothendieck's lemma yields a
subscheme $Q_q$ of $Q$ such that a map $\tau\:T\to Q$ factors through
$Q_q$ iff $\tau^*(\mc Q/F^{q}\mc Q) := (\mc Q/F^{q}\mc Q)\ox K$ is
locally free of rank $\sum_{p < q}\mb h(p)$.  Then the functor is
represented by $\bb F\Psi_B^{\mb h} := \bigcap_{q} Q_q$ and the pullback
 \indn{FbbPsi@$\bb F\Psi_B^{\mb h}$}
 of $\mc Q$; see \eqref{thRep}.  Note it's not enough to render all
$\tau^*(F^q\mc Q/F^{q+1}\mc Q)$ flat, as possibly $\tau^*F^{q}\mc Q \neq
F^{q}\tau^*\mc Q$, whereas $\tau^*(\mc Q/F^{q}\mc Q) = \tau^*\mc
Q\big/F^{q}\tau^*\mc Q$; a simple example was given by Granger
\cite{Granger}*{p.\,3, top}.

Next, consider the sublocus of homogeneous quotients.  Its subfunctor
has, as value at $T$, the subset of $C = \bigoplus_p C_p$ with $C_p$
locally free of rank $\mb h(p)$ for all $p$.  To represent it, set $Z :=
\bb F\Psi_{B}^{\mb h}$.  Define $\mc S$ by $\mc Q =: \wt B_Q/\mc S$.
Consider the pullback $\mc Q_Z$ of $\mc Q$ and its associated graded
quotient $G_\bu (\mc Q_Z)$.  Then $\mc Q_Z = \wt B_Z/\mc S_Z$, and
$G_\bu (\mc Q_Z) = \wt B_Z/G_\bu(\mc S_Z)$.  Set $\mc R := \wt B_Z/(\mc
S_Z + G_\bu(\mc S_Z))$.  Grothendieck's lemma yields a subscheme $\bb
H\Psi_{ B}^{\mb h}$ of $Z$ such that a map $T\to Z$ factors through $\bb
H\Psi_{ B}^{\mb h}$ iff $\mc R_T$ is locally free of rank $\ell$; so iff
both $\mc Q_T \onto \mc R_T$ and $G_\bu(\mc Q_T) \onto \mc R_T$ are
isomorphisms, see \eqref{thRep}.  Thus the subfunctor is represented by
$\bb H\Psi_{ B}^{\mb h}$ and the pullback of $\mc Q$.
 \indn{HbbPsi@$\bb H\Psi_B^{\mb h}$}
 
Alternatively, form the maps $\beta_n\: (\wt B_Q)_n \to \mc Q$.
Grothendieck's lemma yields a subscheme $Y_n$ of $Q$ such that a map
$\tau\:T\to Q$ factors through $Y_n$ iff $\tau^*\Cok(\beta_n)$ is
locally free of rank $\ell - \mb h(n)$.  Then the subfunctor is
represented by $\bb H\Psi_{ B}^{\mb h} := \bigcap_{n} Y_n$ and the
pullback of $\mc Q$; see \eqref{thRep} again.  Note it's not enough to
render $\Ker(\beta_n)$ locally free, as forming a kernel does not always
commute with pullback, whereas forming a cokernel does.

The previous four paragraphs fix and generalize the second author's 1998
proof of \cite{PGor}*{Thm.\,1.1, p.\,609} and his 2006 proof of
\cite{MaxGor}*{Prp.\,9, p.\,3145}.  (Note that $\bb H\Psi_B^h$ is
denoted by ${\rm GradAlg}(h)$ in \cite{PGor} and \cite{MaxGor}.)  The
particular case of homogeneous quotients also results from the 2004
general theory of Haiman and Sturmfels, see \cite{HS04}*{6.2, p.\,766},
but both of our proofs above are simpler and more direct.

There are two natural maps: a closed embedding $\bb H\Psi_{ B}^{\mb h}
\into \bb F\Psi_{ B}^{\mb h}$ and a retraction $\bb F\Psi_{ B}^{\mb
h}\onto \bb H\Psi_{ B}^{\mb h}$.  They represent maps of functors: the
inclusion of the homogeneous $C$ into the filtered $C$, and the
retraction taking a filtered $C$ to its associated graded quotient
$G_\bu C$.  A priori, $\bb H\Psi_{ B}^{\mb h} \into \bb F\Psi_{ B}^{\mb
h}$ is a monomorphism, but a posteriori, it's a closed embedding, as the
schemes are separated and a retraction exists.  See \eqref{coIncRetr}.

Jelisiejew kindly pointed out in email to the authors the following
interesting way to construct $\bb F\Psi_{ B}^{\mb h}$ and $\bb H\Psi_{
B}^{\mb h}$ and to study the retraction map $\bb F\Psi_{ B}^{\mb h}\onto
\bb H\Psi_{ B}^{\mb h}$.  This way involves the Bia{\l}ynicki-Birula
decomposition of the above Quot scheme $Q$ under the natural action of
the multiplicative group.  Namely, $\bb F\Psi_{ B}^{\mb h}$ appears as a
disjoint union of connected components of the decomposition, and $\bb
H\Psi_{ B}^{\mb h}$ is its subscheme of fixed points.  Consequently, the
retraction map is affine; in fact, near a point of $\bb H\Psi_{ B}^{\mb
h}$ where the map is smooth, it's an affine-space fibration.
 Please see Prp.\,3.1 and Rmk.\,3.2 on p.\,255 of \cite{JJ-JLMS2019},
and Thm.\,1.1 and Prp.\,5.3(3) and Sec.\,7 of \cite{JJ-LS}, and Sec.\,5
of \cite{JJ-KS}; note that in these papers, $\bb F\Psi_{ B}^{\mb h}$
would be denoted by $\bigl(\bb H\Psi_{ B}^{\mb h}\bigr)^+$.  These
papers place various restrictive hypotheses on $A$, $B$, and $S$;
however, Jelisiejew said he believes that, without change, the proofs
cover our setup.  We do not pursue these ideas here, except indirectly
in \eqref{exElemCpts} through references to \cite{JJ-JLMS2019}.

To analyze the structure of $\bb H\Psi_{ B}^{\mb h}$ and of the
retraction $\bb F\Psi_{ B}^{\mb h}\onto \bb H\Psi_{ B}^{\mb h}$,
following \cite{IE78} and \cite{Iar84}, we generalize Macaulay duality.
Macaulay introduced it in 1916 in Chap.\,IV of \cite{MacF}.  In essence
(cf.\ Robert's introduction in \cite{MacF}), Macaulay fixed a power
series ring $\wh P$ over a field $k$, and formed the span, $P^\dg$ say,
of the Laurent monomials $1/M$ for all monomials $M \in \wh P$.  He
viewed $P^\dg$ as an $\wh P$-module: the product of a monomial $L\in \wh
P$ with $1/M$ is $L/M$ if $L$ divides $M$ and is 0 if not.  He called a
$\psi\in \wh P$ and an $f\in P^\dg$ ``inverse to one another'' if
$\psi\cdot f = 0$, and he \cite{MacF}*{\ p.\,68} defined the ``inverse
system'' of an ideal $I$ containing powers of the variables to be the
set of all $f\in P^\dg$ inverse to all $\psi \in I$.  Thus $P^\dg$
is the graded dual of the polynomial ring $P$, and the inverse system is
 \indt{graded dual} \indt{inverse system}
just $\Hom_k(P/(I\cap P),\, k)$.  For instance, take $I := (X_1X_2,\,
X_1^2-X_2^3)$ in $\wh P := k[[X_1,X_2]]$; its inverse system in $P^\dg :=
k[X_1^{-1},X_2^{-1}]$ is generated by a single element $f :=
X_1^{-2}+X_2^{-3}$; see \eqref{exArt}.  The bijection between $I$, or
equivalently $\wh P/I$, and its inverse system is commonly known as
``Macaulay Duality'' (or ``apolarity''). Our generalization is
\eqref{thGMD3}.

Macaulay Duality is a special case of Matlis Duality, as introduced in
1958 in \cite{MatDu}*{Sec.\,4, pp.\,525--28} for an arbitrary complete
Noetherian local ring.  Indeed, $\Hom_k(\wh P/I,\, k) = \Hom_{\wh P}(\wh
P/I,\, P^\dg)$, see the end of \eqref{exPolyRg}; hence, $P^\dg$ is the
injective hull of $k$ by the uniqueness of dualizing modules.  In 2018,
in \cite{KES}*{Sec\,3, pp.\,15--18; Sec.\,5, pp.\,20--22}, Smith
developed a relative version of Matlis duality, for an arbitrary
Noetherian domain $k$ and a Noetherian $k$-algebra $P$ equipped with an
ideal $J$ such that $P/J$ is a finitely generated $k$-module.  For any
$P$-module $M$, set $M^\vee := \varinjlim \Hom_k(M/J^nM,\ k)$.  Smith
proved $M^\vee = \Hom_P(M,\,P^\vee)$, and when $P = k[[X_1,\dotsc,X_n]]$
and $J = \la X_1,\dotsc,X_n \ra$, then $P^\vee = H_J^n(P)$.  Notice
$P^\vee = P^\dg$.

Another version of relative Matlis Duality is given in \eqref{prRMD}.
In it, $P$ is replaced by a $k$-Artinian $k$-algebra $A$ where $k$ is a
Noetherian ring.  Set $A^* := \Hom_k(A,k)$.  This version asserts that
$A^*$ is a dualizing module for the $k$-Artinian $A$-modules.

In 1978, the first relative version of Macaulay Duality was stated,
without proof, by Emsalem \cite{PEpais}*{Prp.\,18, p.\,415} for a power
series ring over a local algebra of finite dimension over an
algebraically closed field $K$ of characteristic 0 in order to study
deformations of local Artinian $K$-algebras.

In 1998, in Rmk.\,1.9 on p.\,613 of \cite{PGor}, the second author
conjectured there should be a relative Macaulay duality yielding an
isomorphism of schemes $L\risom \Gamma$ where $\Gamma$ is the open
subscheme of $\bb H\Psi_P^{\mb h}$ of Gorenstein quotients of a
polynomial ring $P$ over an algebraically closed field $k$ of
characteristic not $2$ and where $L$ is the appropriate catalecticant
determinantal locus of $s$-forms, with $s := \sup\{\,n\mid\mb
h(n)\neq0\,\}$; see \eqref{sbLev} and \eqref{rmLev1}.  As evidence, he
constructed a canonical map of reductions $L_{\rm red}\to \Gamma_{\rm
red}$; it's bijective on $k$-points by Macaulay duality, and
corresponding $k$-points of $L$ and $\Gamma$ have the same tangent
space.  In 2006, he elaborated on this construction in the proof of
Thm.\,11 on p.\,3146 in \cite{MaxGor}, and in the following subsection,
Prb.\,12, he noted that the arguments there prove that the map is a
topological isomorphism and so the dimensions at corresponding points
are equal.  In 2007, on p.\,694 in \cite{LevAlg}, he extended this proof
to the case of level quotients.  In particular, the equality of the
tangent spaces and of the dimensions yield a smoothness criterion for
$L$.

In 2018 in \cite{JJ-VSP}*{Subsec.\,2.2, pp.\,271--78}, Jelisiejew
developed a relative Macaulay Duality for a power series ring with
coefficients in a Noetherian algebra over an algebraically closed field
$K$ in order to study loci of Gorenstein Artinian $K$-algebras.  The
Gorenstein hypothesis is weakened a bit in Sec.\,4.4 of his 2017 thesis
\cite{JJthesis}.  In the recent paper \cite{JJ-KS}*{Sec.\,4.1}, he and
\v{S}ivic developed a Macaulay Duality for a free module over a
polynomial ring with coefficients in an algebraically closed field.

Macaulay Duality was also generalized to higher-dimensional Gorenstein
quotients of the polynomial ring and the power series ring over a field
in 2017 by Elias and Rossi \cite{ER-AM}.  However, here we do not pursue
that direction.

Generalizing Macaulay Duality is our first goal.  Our treatment is
independent of earlier work, and goes further in the way explained next;
yet it's rather simple.  For it, recall we fix a Noetherian ring $k$, a
finitely generated graded $k$-algebra $A := \bigoplus_{n\ge0}A_n$, and a
$k$-flat finitely generated graded $A$-module $B = \bigoplus_{n\ge
0}B_n$.

Let $C$ be a $k$-module.  Set $C^* := \Hom_k(C,k)$.  If $C$ is an
 \indn{Cstar@$C^*$}
$A$-module, then so is $C^*$ via the adjoint action: $(af)\psi :=
f(a\psi)$ for all $a\in A$ and $f\in C^*$ and $\psi\in C$.

Assume $C$ is locally free of finite rank.  Then $C^*$ is too, and the
natural map $C\to (C^*)^*$ is an isomorphism.  In other words, $C\mapsto
C^*$ is a dualizing functor on these $C$.  Further, see
\eqref{*duality}, it induces a dualizing functor on the filtered $C$,
which restricts to a dualizing functor on the graded $C$, and which
respects the retraction that takes a filtered $C$ to its associated
graded module.

Moreover, see \eqref{sbApl} and \eqref{thGMD3}, form the graded dual
$B^\dg := \bigoplus_{n\ge0} (B_n)^*$.  Then the functor $C\mapsto C^*$
yields a bijection from the $k$-Artinian quotients $C$ of $B$ onto the
$k$-Artinian submodules $D$ of $B^\dg$ with $B^\dg/D$ flat.  Further,
this bijection respects the induced filtrations on $C$ and $D$, and it
respects the retractions taking $C$ and $D$ to their associated graded
modules.  To recast this bijection into the more traditional form of
apolarity, note that, if $C = B/I$, then $C^* = (0:_{B^\dg}I)$, where
$(0:_{B^\dg}I)$ is the apolar annihilator, which consists of all $f\in
B^\dg$ with $\psi\cdot f = 0$ for all $\psi \in I$.  Note $B^{\dg\dg} =
B$; so $D^* = B/(0:_BD)$.  Thus the bijection becomes $I\mapsto
(0:_{B^\dg}I)$ with inverse $D\mapsto (0:_BD)$.  That then is our
generalization of Macaulay Duality.

Our Macaulay Duality facilitates the study of the retraction $\bb
F\Psi_{ B}^{\mb h}\onto \bb H\Psi_{ B}^{\mb h}$ as follows.
Recall that, if $T := \Spec(K)$ with $K$ a $k$-algebra, then a $T$-point
of $\bb F\Psi_{ B}^{\mb h}$ is given by a filtered $A\ox_kK$-quotient
$C$ of $B\ox_kK$ with $F^nC/F^{n+1}C$ locally free over $K$ of rank $\mb
h(n)$.  Now, every map $K\to L$ yields a natural isomorphism
$\Hom_K(C,K)\ox_KL \risom \Hom_L(C\ox_KL,L)$.  Thus Macaulay Duality
provides an isomorphism from the functor of points of $\bb F\Psi_{
B}^{\mb h}$ onto the functor whose value at $T$ is the set of filtered
submodules $D$ of $B\ox_kK$ with $F^nD/F^{n+1}D$ locally free over $K$
of rank $\mb h^*(n)$ where $\mb h^*(n) := \mb h(-n)$ and with
$(B^\dg\ox_kK)/D$ flat over $K$.
 \indn{hmbzst@$\mb h^*(n)$}

The latter functor is automatically representable by a copy of $\bb
F\Psi_{ B}^{\mb h}$, say $\bb F\Delta_{ B^\dg}^{\mb h^*}$, and by
a subsheaf of the pullback of $ B^\dg$ to $\bb F\Delta_{
B^\dg}^{\mb h^*}$; this subsheaf is obtained from the universal quotient
on $\bb F\Psi_{ B}^{\mb h}$ by ``Hom'ing" it into the structure
sheaf.  Similarly, the subfunctor of homogeneous $D$ is representable by
a copy of $\bb H\Psi_{ B}^{\mb h}$, say $\bb H\Delta_{ B^\dg}^{\mb
h^*}$.  Also, $\bb F\Psi_{ B}^{\mb h}\onto \bb H\Psi_{ B}^{\mb h}$
becomes the retraction $\bb F\Delta_{ B^\dg}^{\mb h^*} \onto \bb
H\Delta_{ B^\dg}^{\mb h^*}$ given by $D \mapsto G_\bu D$.

Thus a locus of quotients with given Hilbert function $\mb h$ can be
transformed into an isomorphic dual locus of submodules.  Now, our main
goal is to treat, via this duality, the geometry of recursively
compressed quotients.  Since they're defined in terms of the socle type
$\mb t$, the next step is to relativize and dualize this notion.

Assume $A/F^1A =k$.  Fix a $k$-Artinian $A$-module $C$, set $\Soc_k(C)
:= \Hom_A(k,\,C)$, and call this $A$-module the {\it $k$-socle\/} of
 \indt{kso@$k$-socle}
$C$.  If $k$ is a field, we recover the traditional notion.  In general,
$\Soc_k(C) = (C^*\ox_A k)^*$.  A form of this equation was proved by
Iarrobino \cite{Iar84}*{Lem.\,2.1, p.\,346}. It's important, because
$C^*\ox_A k$ is more useful than $\Soc_k(C)$, as by Nakayama's Lemma,
$C^*\ox_A k$ locally governs the generators of $C^*$.
  The equation holds, as both sides are equal to $\Hom_A(C^*,\,k^*)$;
the left, as $C\mapsto C^*$ is a dualizing functor, and the right, by
Adjoint Associativity; see \eqref{leSoc}.

For instance, take $P := k[X_1,X_2]$ and $C := P/I$ with $I :=
(X_1X_2,\, X_1^2- X_2^3)$.  Then, as is easy to see, $C^*$ is generated
by $X_2^{-3}+ X_1^{-2}$, and so $X_2^{-3}$ and $X_1^{-1}$ form a minimal
generating set of $G_\bu(C^*)$.  Thus we obtain $\Soc(G_\bu(C)) =
(X_1,\,X_2^3)$ without even finding $G_\bu(C)$; cf.\ \eqref{exArt}.

Assume $C^*\ox_A k$ is $k$-Artinian.  Then so is $\Soc_k(C)$, and their
Hilbert functions, say $\mb t^*$ and $\mb t$, satisfy $\mb t^*(p) = \mb
t(-p)$, as $\Soc_k(C) = (C^*\ox_A k)^*$.  Extending tradition, we call
$\mb t$ the {\it $k$-socle type\/} of $C$, and $\mb t^*$ the {\it local
 \indn{tmb@$\mb t$} \indn{tmbst@$\mb t^*$} 
 \indt{ksot@$k$-socle type}   \indt{local generator type}
generator type\/} of $C^*$.  Notice that, given any maximal ideal of
$k$, its complement contains an element $f$ such that the localization
$C^*_f$ is generated by a set with $\mb t^*(p)$ elements of degree $p$
for all $p$.

For every $k$-algebra $K$, the $A\ox_kK$-module $(C\ox_kK)^*$ is also of
local generator type $\mb t^*$ as $(C\ox_kK)^* = C^*\ox_kK$; so
$C\ox_kK$ is of $K$-socle type $\mb t$.  Set $T := \Spec(K)$.  It
follows that the subsets of the $T$-points of $\bb F\Psi_{ B}^{\mb h}$
and $\bb H\Psi_{ B}^{\mb h}$ given by those quotients of socle type $\mb
t$ form functors in $T$, which are compatible with retraction.
Similarly, the subsets of the $T$-points of $\bb F\Delta_{ B^\dg}^{\mb
h^*}$ and $\bb H\Delta_{ B^\dg}^{\mb h^*}$ given by those submodules of
local generator type $\mb t^*$ form functors in $T$, which are
compatible with retraction.

The former functors are represented by subschemes of $\bb F\Psi_{
B}^{\mb h}$ and $\bb H\Psi_{ B}^{\mb h}$, say $\bb F\Psi_{ B}^{\mb h,
\mb t}$ and $\bb H\Psi_{ B}^{\mb h, \mb t}$.  Indeed, flattening
 \indn{FbbPsit@$\bb F\Psi_{ B}^{\mb h,\mb t}$}
 \indn{HbbPsit@$\bb H\Psi_{ B}^{\mb h,\mb t}$}
 stratification yields $\bb F\Psi_{ B}^{\mb h, \mb t}$ much as above for
$\bb F\Psi_{ B}^{\mb h}$, and we can take $\bb H\Psi_{ B}^{\mb h, \mb t}
:= \bb F\Psi_{ B}^{\mb h, \mb t} \cap \bb H\Psi_{ B}^{\mb h}$; see
\eqref{prLevS}.  Now, Macaulay Duality gives isomorphisms from the
former functors to the latter.  So they're represented by copies of $\bb
F\Psi_{ B}^{\mb h, \mb t}$ and $\bb H\Psi_{ B}^{\mb h, \mb t}$, say $\bb
F\Delta_{ B^\dg}^{\mb h^*, \mb t}$ and $\bb H\Delta_{ B^\dg}^{\mb h^*,
\mb t}$,
 \indn{FbbDeltst@$\bb F\Delta_{ B^\dg}^{\mb h^*, \mb t}$}
 \indn{HbbDeltst@$\bb H\Delta_{ B^\dg}^{\mb h^*, \mb t}$}
 which are subschemes of $\bb F\Delta_{ B^\dg}^{\mb h^*}$ and
$\bb H\Delta_{ B^\dg}^{\mb h^*}$.  Also, $\bb H\Delta_{ B^\dg}^{\mb h^*,
\mb t} = \bb H\Delta_{ B^\dg}^{\mb h^*} \cap \bb F\Delta_{ B^\dg}^{\mb
h^*, \mb t}$.  Now, for $\mb h$ fixed, as $\mb t$ varies, the $\bb
F\Psi_{ B}^{\mb h, \mb t}$ and $\bb H\Psi_{ B}^{\mb h, \mb t}$ {\it
stratify\/} (that is, disjointly cover) $\bb F\Psi_{ B}^{\mb h}$ and
$\bb H\Psi_{ B}^{\mb h}$; see \eqref{prLevS}. So by Macaulay Duality,
the $\bb F\Delta_{ B^\dg}^{\mb h^*, \mb t}$ and $\bb H\Delta_{
B^\dg}^{\mb h^*, \mb t}$ stratify $\bb F\Delta_{ B^\dg}^{\mb h^*}$ and
$\bb H\Delta_{ B^\dg}^{\mb h^*}$.

From Section 7 on, many arguments proceed via recursion on the {\it
diameter\/} $(s-\bar s)$ of $\mb t$, where $s := \sup\{\,p\mid \mb t(p)
> 0\,\}$ and $\bar s := \inf\,\{\, p \mid \mb t(p) \neq 0\,\}$.  Note
that, if $C$ has Hilbert function $\mb h$, plainly $s := \sup\{\,p\mid
\mb h(p) > 0\,\}$; also, $\mb t(s) := \mb h(s)$ by \eqref{leSocLev}(2).
 \indt{diameter}  \indn{s@$s$}  \indn{sbar@$\bar s$} 

The recursion exits when $\bar s = s$, or equivalently, $\mb t(p) := 0$
for $p \neq s$.  Then we call $\mb t$ and $C$ {\it level}, and set 
 \indt{level}
 $\bb F\Lambda_{ B}^{\mb h} := \bb F\Psi_{ B}^{\mb h, \mb t}$ and
 $\bb H\Lambda_{ B}^{\mb h} := \bb H\Psi_{ B}^{\mb h, \mb t}$.
 \indn{FbbLam@$\bb F\Lambda_{ B}^{\mb h}$}
 \indn{HbbLam@$\bb H\Lambda_{ B}^{\mb h}$}
  In \eqref{prLevS}, there's an alternative construction of $\bb
F\Lambda_{ B}^{\mb h}$; it shows $\bb F\Lambda_{ B}^{\mb h}$ is an open
subscheme of $\bb F\Psi_{ B}^{\mb h}$.  So $\bb H\Lambda_{ B}^{\mb h}$
is an open subscheme of $\bb H\Psi_{ B}^{\mb h}$.  Sometimes, $ \bb
H\Psi_{ B}^{\mb h} - \bb H\Lambda_{ B}^{\mb h}$ contains $k$-points; an
example is the quotient $D^*$ in \eqref{exNotAll2}.  Also, the
retraction $\bb F\Psi_{ B}^{\mb h} \to \bb H\Psi_{ B}^{\mb h}$ needn't
carry $\bb F\Lambda_{ B}^{\mb h}$ into $\bb H\Lambda_{ B}^{\mb h}$;
indeed, \eqref{exArt} discusses a (standard) example of a
($k$-Gorenstein) level filtered $C$ with $G_\bu C$ not level.  However,
by \eqref{prGAA}, if $G_\bu C$ is level, then so is $C$; thus the
preimage of $\bb H\Lambda_{ B}^{\mb h}$ lies in $\bb F\Lambda_{ B}^{\mb
h}$.  Jelisiejew develops some of the theory of $k$-Gorenstein modules
for arbitrary $k$ in \cite{JJ-VSP}, especially in Subsection 2.2.

There's a third construction of $\bb H\Lambda_{ B}^{\mb h}$ studied in
\eqref{sbLev}--\eqref{coLevS}.  This time, $\bb H\Lambda_{ B}^{\mb h}$
arises, via flattening stratification, as a subscheme of the
Grassmann\-ian $\bb G$ of rank-$\mb t(s)$ locally free quotients of $
B_s$. Moreover, for $\mb t$ fixed, as $\mb h$ varies, the $\bb
H\Lambda_{ B}^{\mb h}$ stratify $\bb G$.  Hence, if $k$ is a domain,
then $\bb G$ is reduced and irreducible, and its generic point lies in
some $\bb H\Lambda_{ B}^{\mb h}$.  This $\bb H\Lambda_{ B}^{\mb h}$ is
reduced and irreducible, and it's covered by open subschemes, each one
isomorphic to an open subscheme of the affine space over $k$ of fiber
dimension $t(\mb b(s) - t)$ where $t := \mb h(s) = \mb t(s)$ and $\mb b$
is the Hilbert function of
 \indn{bbmb@$\mb b$} \indn{tmi@$t$}
$B$.  Also, by lower semicontinuity of rank, if $\bb H\Lambda_{ B}^{\mb
h^\di} \neq \emptyset$, then $\mb h^\di(p) \le \mb h(p)$ for all $p$; so
$\sum_p\mb h^\di(p) \le \sum_p\mb h(p)$, with equality iff $\mb h^\di =
\mb h$.  This third construction yields the isomorphism $L\risom \Gamma$
conjectured in Rmk.\,1.9 on p.\,613 of \cite{PGor} and discussed above.

For a moment, suppose $B = A$.  Then $C$ is a $k$-algebra, and $C^*$ is,
by \eqref{prRMD}, its dualizing module.  Moreover, $C^*$ is invertible
iff $C$ is level and $\mb t(s) = 1$.  If so, we say $C$ is {\it
$k$-Gorenstein}.  (Likely, Grothendieck coined the term ``Gorenstein'',
 \indt{kGor@$k$-Gorenstein}
and Serre in 1960 was the first to publish it, but historically it's
more correct to honor any one of several others, starting with Macaulay;
see \cite{Hun}*{pp.\,56, 61--62} and \cite{Eis}*{Sec.\,21.2,
pp.\,529--30}.)  The condition for $C$ to be $k$-Gorenstein doesn't
depend on its filtration, but that for $G_\bu C$ does; see \eqref{exArt}
for an example.  If $B$ is $k$-Artinian and $k$-Gorenstein, but $C$ just
$k$-Artinian, then the linked ideal $\Hom_A (C, A) \subset A$ and the
Macaulay dual $C^*$ of $C$ coincide up to tensor product with a certain
invertible $k$-module; see \eqref{coLink}.

In general, when $\bar s < s$, the recursion proceeds by replacing a
given homogeneous $k$-Artinian $C$ by the unique graded quotient $C'$
with {\it socle diameter\/} $s - \bar s -1$, provided $C'$ is $k$-flat,
and
 \indt{diameter}
when $\bar s +1 < s$, similarly replaceable.  Thus we are led to form
the sequence of quotients $\Lambda^mC$ of $C$ where $(\Lambda^mC)^*$ is
the submodule of $C^*$ generated by all the homogeneous elements of
degree at most $-m$.  So $C' = \Lambda^{\bar s+1}C$.  Furthermore, every
$\Lambda^mC$ must be $k$-flat; when so, we call $C$ \textit{multilevel},
 \indn{LamC@$\Lambda^mC$} \indt{multilevel}
and denote the Hilbert function of $\Lambda^mC$ by $\mb h_m$.  Note that
$\mb h_{\bar s}$ is the Hilbert function of $C$. 
 \indn{hmbzm@$\mb h_m$}
 Of course, when $k$ is a field, flatness is automatic, and so every $C$ is
multilevel.

Then all the $(\Lambda^mC)^*$ are flat too by \eqref{prMLev}(3).  Note
$\Lambda^m\Lambda^nC = \Lambda^nC$ for $m \le n$ and
$\Lambda^m\Lambda^nC = \Lambda^mC$ for $m \ge n$ for any $n$.  Take $n
:= \bar s+1$.  Thus $C'$ is multilevel of socle type $\mb t'$.  Also,
set $\mb h'_m := \mb h_{\bar s+1}$ for $m\le \bar s+1$ and $\mb h'_m :=
\mb h_m$ for $m\ge \bar s+1$.  Then $\mb h'_m$ is the Hilbert function
of $\Lambda^m C'$. We call $\{\mb h'_m\}$ the {\it attendant\/} to
 \indt{attendant}
$\{\mb h_m\}$.  As to a filtered module $C$, we call $C$
\textit{multilevel} if $G_\bu C$ is multilevel.

The algebraic theory of multilevel modules is treated in
\eqref{sbMLev}--\eqref{prMLevCon}, and the geometric theory, in
Secs.\,7--10.  Most of the effort concerns graded modules.  For example,
in \eqref{prMLevS}, we prove that, as $\{\mb h_m\}$ varies with $\mb
h_{0}$ fixed, the locus of all graded and all filtered multilevel
quotients of $B$ are parameterized by (possibly empty) subschemes $\bb
H\Lambda_{B}^{\{\mb h_m\}}$ and $\bb F\Lambda_{B}^{\{\mb h_m\}}$ of\/
$\bb H\Psi_{B}^{\mb h_0}$ and $\bb F\Psi_{B}^{\mb h_0}$, stratifying
 \indn{HbbLamhm@$\bb H\Lambda_{B}^{\{\mb h_m\}}$}
 \indn{FbbLamhm@$\bb F\Lambda_{B}^{\{\mb h_m\}}$}
 them.  We use flattening stratification to construct $\bb
H\Lambda_{B}^{\{\mb h_m\}}$, and we take $\bb F\Lambda_{B}^{\{\mb
h_m\}}$ to be the preimage of $\bb H\Lambda_{B}^{\{\mb h_m\}}$ under the
retraction $\bb F\Psi_{\mc B}^{\mb h_0} \onto \bb H\Psi_{B}^{\mb h_0}$.

A recursive construction of the $\bb H\Lambda_{B}^{\{\mb h_m\}}$ is
given in \eqref{sbMLSch}--\eqref{coMLevS}.  It's harder, but leads to
three new notions, treated in \eqref{sbRecMax}--\eqref{thRC}.  First, we
call $\{\mb h_m\}$ {\it recursively maximal\/} for $\mb t$ and $T/k$ if
 \indt{recursively maximal}
(1) when $\bar s < s$, the attendant $\{\mb h'_m\}$ is recursively
maximal for the attendant $\mb t'$; and (2) the $\mb h_m$ termwise bound
the Hilbert functions of any multilevel quotient $C^\di$ on any
$T^\di/T$ of $T^\di$-socle type $\mb t$, provided $\Lambda^{\bar s+1}\mc
C^\di$ has the $\mb h'_m$ as its Hilbert functions when $\bar s < s$;
and (3) there's some multilevel quotient $C^\sh$ on some $T^\sh/T$ with
socle type $\mb t$ and Hilbert functions the $\mb h_m$.

Second, we call $\mb t$ {\it quasi-permissible\/} if $\bar s = s$ and
 \indt{quasi-permissible}
$\mb b(s) \ge \mb t(s)$, or if $\bar s < s$ and (1) the attendant $\mb
t'$ to $\mb t$ is quasi-permissible, and (2) given any $T^\ft/S$ and
$\{\mb h^\ft_m\}$ recursively maximal for $\mb t'$ and $T^\ft$,
necessarily $\mb b(\bar s) - \mb h^\ft_{\bar s+1}(\bar s) \ge \mb t(\bar
s)$.  Third, given a multilevel quotient $C$ on some $T/k$, we call $C$
{\it recursively compressed\/}  for $\mb t$ if $C$ is of $T$-socle type
 \indt{recursively compressed}
$\mb t$; if $\Lambda^{\bar s+1}C$ is recursively compressed for $\mb t'$
when $\bar s < s$; and if, for any multilevel quotient $C^\di$ on any
$T^\di/S$ of $T^\di$-socle type $\mb t$, with $\Lambda^{\bar s+1} C^\di$
recursively compressed for $\mb t'$ if $\bar s < s$, necessarily $\rank
(C^\di) \le \rank (C)$.

Those three notions have these properties.  If a recursively maximal
sequence for $\mb t$ and $T/k$ exists, then it's unique and $\mb t$ is
quasi-permissible.  If $\mb t$ is quasi-permissible and $k$ is a domain,
then a recursively maximal sequence $\{\mb h_m\}$ for $\mb t$ and $T/k$
exists; moreover, $\bb H\Lambda_{B}^{\{\mb h_m\}}$ is nonempty, reduced,
irreducible, and covered by open subschemes, with each one isomorphic to
an open subscheme of the affine space over $k$ of fiber dimension $N$
where $\mb N :=\sum_p\mb t(p)\bigl(\mb b(p) - \mb h_{\bar s}(p)\bigr)$.
Every recursively compressed $C$ for $\mb t$ is of the same rank.  If
$\mb t$ is quasi-permissible, then (1) there's a recursively compressed
$C$ for $\mb t$ on some $T/k$, and (2) given a recursively maximal
sequence $\{\mb h_m\}$ for $\mb t$ and some $T/k$, and given on some
$T^\sh$ a $C^\sh$ of socle type $\mb t$, each $\mb h_m$ is the Hilbert
function of $\Lambda^mC^\sh$ iff $\mc C^\sh$ is recursively compressed
for $\mb t$.

Those three notions are particularly important when $k$ is an infinite
field, $A$ is a polynomial ring in two or more variables, and $B = A$.
This fact was discovered by Iarrobino, and treated in \cite{Iar84}.  We
abstract, clarify and extend his work from \eqref{se8} on.  Our main
results\emdash \eqref{coImax}, \eqref{prSmOp}, \eqref{thFHdim},
\eqref{coFdim}\emdash extend his main results \emdash Thms.\ IIA, IIB,
IIC on p.\,351 in \cite{Iar84}.  His results describe the geometry of
$\bb H\Lambda_A^{\{\mb h_m\}}$, of $\bb F\Lambda_A^{\{\mb h_m\}}$, and
of the retraction $\bb F\Lambda_A^{\{\mb h_m\}}\onto \bb H\Lambda_A^{\{\mb
h_m\}}$ when $\{\mb h_m\}$ is recursively maximal for $\mb t$ and $\mb
t$ is permissible; these issues are discussed next.  In particular, in
our extensions, $k$ can be any Noetherian base ring, and $A =
\bigoplus_p A_p$, any finitely generated, graded $k$-algebra with $A_p$
a locally free $k$-module of rank, say, $\mb a(p)$.

In general, set $g_m(p) := \ts \sum_{q = m}^s\mb t(q)\,\mb a(q-p)$ and
$\mb h^{\rm I}_m(p) := \min\{\,\mb g_m(p),\, \mb b(p)\,\}$ and $\beta^{\rm I}_m := \ts \sum_p \mb h^{\rm I}_m(p)$ for all $m$, $p$;
 \indn{hmbzIm@$\mb h^{\rm I}_m(p)$}  \indn{gmp@$g_m(p)$}
  \indn{babetaIm@$\beta^{\rm I}_m$}
 see \eqref{se8}; the I's honor Iarrobino's work.  Then given any $m$
and any homogeneous multilevel quotient $C$ of $B$ of socle type $\mb
t$, the Hilbert function $\mb h_m$ of $\Lambda^mC$ satisfies $\mb h_m(p)
\le \mb h^{\rm I}_m(p)$ for all $p$, and also $\rank(\Lambda^mC) \le
\beta^{\rm I}_m$, with equality iff $\mb h_m = \mb h^{\rm I}_m$, as
$C^*$ is of local generator type $\mb t^*$; see \eqref{prhImax}.

We call the $C$ above I-{\it compressed\/} if $\mb h_m = \mb h^{\rm
I}_m$ for all $m$, that is, if $C$ is represented by a point of $\bb
H\Lambda_B^{\{\mb h^{\rm I}_m\}}$; in addition, $\mb t$ must be
permissible, where {\it permissible\/} means that $\mb t(p) = 0$ if $\mb
b(p) < \mb g_{\bar s}(p)$ and if \eqref{sbPerm}(d) holds; the latter
holds, for instance, when $\mb a(p)$ and $\mb b(p)$ are nondecreasing in
$p$.  In \eqref{sbPerm}, the meaning of ``permissible''
  \indt{permissible}
 is extended to rather arbitrary $A$ and $B$. 

For instance, let $k$ be a field, and $B := A := k[X_1,X_2,X_3]$ the
polynomial ring (with the standard grading).  Let the socle vector be
$(0,0,0,1,2)$; that is, the nonzero values of $\mb t$ are $\mb t(q) = 1$
if $q = 3$ and $\mb t(q) = 2$ if $q = 4$.  Then $s = 4$ and ${\bar s} =
3$; further, for $m \ge 3$ the nonzero $g_m$ are $g_4(p)=2\mb a(4-p)$
and $g_3(p)=g_4(p)+\mb a(3-p)$.  So $\mb h^{\rm I}_4(p) = g_4(p)$ and
$\mb h^{\rm I}_3(p) = g_3(p)$ for $p \ge 3$; further, the coordinates of
the vectors $(1,3,6,6,2)$ and $(1,3,6,7,2)$ given by $\mb h^{\rm I}_4$
and $\mb h^{\rm I}_3$ are at least those of the Hilbert vectors of
$\Delta^4 C$ and $C$.  The latter are of dimensions at most 18 and 19.
Thus any $C$ of socle type $\mb t$ and dimension 19 has Hilbert vector
$(1,3,6,7,2)$, and if $\Delta^4 C$ is 18-dimensional too, then $C$ is
I-compressed and recursively compressed.

Iarrobino proved (and Fr\"{o}berg and Laksov reproved) in essence that,
remarkably, there exists an I-compressed $C$, or a $k$-point of $\bb
H\Lambda_A^{\{\mb h^{\rm I}_m\}}$, if $\mb t$ is permissible, $k$ is an
infinite field, $A$ is a polynomial ring, and $B = A$ (Iarrobino also
assumed $\mb h^{\rm I}_{\bar s}(1) = \mb a(1)$, but this assumption is
easily removed); see \eqref{sbPerm}.  Consequently, there's such a $C$
if $k$ is replaced by any (Noetherian) local ring with an infinite
residue field; see \eqref{prlift}.  Furthermore, see \eqref{coImax},
then $\{\mb h^{\rm I}_m\}$ is recursively maximal for $\mb t$.
Moreover, a permissible $\mb t$ is, by \eqref{leperm}(4),
quasi-permissible.

 Preserve Iarrobino's setup.  So his existence theorem holds.  It
follows, see \eqref{coImax}, that a homogeneous quotient $C$ of $A$ of
socle type $\mb t$ is I-compressed iff it's recursively compressed;.
Also, $\bb H\Psi_{A}^{\mb h^{\rm I}_{\bar s}, \mb t}$ is
irreducible, and $\bb H\Lambda_A^{\{\mb h^{\rm I}_m\}}$ is open in it;
see \eqref{coIrrmax}.

 Further, in essence, Iarrobino, just after Prop.\,3.4 of
\cite{Iar84}*{p.\,339}, constructed an irreducible scheme $P$ whose
$k$-points represent, with repetition, all the homogeneous $C$ of socle
type bounded (termwise) by $\mb t$, and he asserted $P$ has a nonempty
open subset whose $k$-points represent most compressed $C$ of socle type
$\mb t$.  There's a more refined assertion in \eqref{regenstype}: all
the compressed $C$ of socle type $\mb t$ correspond to an open subset of
$P$, and those recursively compressed, to a nonempty, smaller open
subset.  To put it more informally, a general $C$ of socle type $\mb t$
is recusively compressed.


Conversely, when $k$ is any Noetherian ring, $A$ is a polynomial ring,
and $B = A$, if there exists a $C$ with $\rank(\Lambda^mC) = \beta^{\rm
I}_m$ for all $m$, then $\mb t$ is permissible by \eqref{prthenperm}.
On the other hand, the literature contains many interesting examples
with $k$ a field, $A$ a proper quotient of a polynomial ring, $B = A$,
and $\mb t$ permissible; sometimes there are no $C$ with
$\rank(\Lambda^mC) = \beta^{\rm I}_m$ for all $m$, and sometimes, there
are some.

Published examples of the latter case are reviewed in \eqref{exCT}.
There all the $C$ are Gorenstein; so the rank condition reduces to $\dim
C = \beta^{\rm I}_s$.  Moreover, suppose $A$ too is Artinian and
Gorenstein.  Then $\dim C = \beta^{\rm I}_s$ when both $A$ and $C$ are
suitably general for their socle degrees; see \eqref{cogenAG}.  On the
other hand, \eqref{coPowSum} constructs examples where the Hilbert
function $\mb h$ of $C$ has a nontrivial ``plateau"; the construction is
largely distilled and suitably adapted from \cite{LNM1721}*{pp.\,9--14}.

When, as above, $\{\mb h^{\rm I}_m\}$ is recursively maximal for a
permissible $\mb t$, more can be proved via careful work with the spaces
of local generators of $C^*$.  First, by \eqref{prSmOp}, whether or not
$k$ is a domain, $\bb H\Lambda_{B}^{\{\mb h^{\rm I}_m\}}$ is an open
subscheme of\/ $\bb H\Psi_{B}^{\mb h^{\rm I}_{\bar s}}$, and it's
covered by open subschemes, with each one isomorphic to an open
subscheme of the affine space over $k$ of fiber dimension $\mb H
:=\sum_p\mb t(p)\bigl(\mb b(p) - \mb h^{\rm I}_{\bar s}(p)\bigr)$.
Second, if $A$ and $B$ satisfy certain mild conditions, which hold when
$A$ is a polynomial ring and $B = A$, then by \eqref{thFHdim} the
retraction map makes $\bb F\Lambda_{\mc B}^{\mb h^{\rm I}_{\bar s}, \mb t}$
an affine-space bundle over $\bb H\Lambda_{\mc B}^{\mb h^{\rm I}_{\bar s},
\mb t}$ of fiber dimension $\mb R := \sum_p \mb t(p)\big(\sum_{q<p}(\mb
b(q) - \mb h^{\rm I}_{\bar s}(q))\bigr)$.  Finally, by \eqref{coFdim},
which is a corollary of \eqref{prSmOp} and \eqref{thFHdim}, then $\bb
F\Lambda_{\mc B}^{\{\mb h^{\rm I}_m\}}$ is covered by open subschemes,
each one isomorphic to an open subscheme of the affine space over $k$ of
fiber dimension $\mb F := \mb H + \mb R = \sum_p \mb t(p)\big(\sum_{q\le
p}(\mb b(q) - \mb h^{\rm I}_{\bar s}(q))\bigr)$; moreover, if $S$ is
irreducible, then so are $\bb H\Lambda_{\mc B}^{\{\mb h^{\rm I}_m\}}$
and $\bb F\Lambda_{\mc B}^{\{\mb h^{\rm I}_m\}}$.

An interesting issue, raised by the referee, is to determine when the
retraction map is an algebraic vector bundle.  The referee noted that
Iarrobino \cite{IaTop}*{Thm.\,1, p.\,229; Thm.\,2, p.\,232} proved in
two variables that it's not for the (nonmaximal) Hilbert vector
$(1,1,1,\dotsc,1)$ of length $n\ge 4$ and that his result was
generalized by Haboush and Hyeon \cite{HaHy}*{Thm.\,9.2, p.\,4309} to
$r$ variables.  On the other hand, Jelisiejew proved in
\cite{JJ-VSP}*{Claim 2, p.\,280} that it is a vector bundle in six
variables for the Hilbert vector $(1,6,6,1)$.  

The formula for $\mb F$ and the smoothness and irreducibility of
$\smash{\bb F\Lambda_{\mc B}^{\{\mb h^{\rm I}_m\}}}$ are useful in the
study of $d$-dimensional quotients $C$ of the polynomial ring $A$ in $r$
variables over a field $k$, particularly in showing that certain $C$
aren't smoothable within $\Hilb^d_{\bb A^r/k}$ and in finding {\it
elementary components} of $\Hilb^d_{\bb A_k^r/k}$ (ones parameterizing
 \indt{elementary component} $C$ with absolutely irreducible support\/).
Examples are discussed in \eqref{exElemCpts}.  Notably, there's a new
example of a {\it small elementary component\/} (one of dimension less
than $dr$) with $k =\bb Q$ and with Hilbert vector
 \indt{small elementary component}
$(1,5,6,1)$.  It's produced via a novel method of reduction to the case
$k = \bb Z/p$ for just one $ p > 0$, see \eqref{prLift}, accompanied by
computations over $\bb Z/p$ with Macaulay 2.

The study of elementary components is generally based on the method of
small tangent spaces or trivial negative tangents, introduced in 1978 by
Iarrobino and Emsalem \cite{IE78} and put in definitive form in 2019 by
Jelisiejew \cite{JJ-JLMS2019}.  Versions of Jelisiejew's results,
adequate for the examples in \eqref{exElemCpts}, are, in \eqref{prLift},
given self-contained alternative proofs, largely inspired by
Jelisiejew's.

Examples abound of proper quotients $A$ of a polynomial ring
over a field $k$ and of permissible-like $\mb t$ such that every
homogeneous quotient $C$ of $A$ of $k$-socle type $\mb t$ has $\dim_kC
<\beta^{\rm I}_{\bar s}$; in particular, no $C$ has $\rank(\Lambda^mC) =
\beta^{\rm I}_m$ for all $m$, and $\bb H\Psi_A^{\mb h^{\rm I}_{\bar
s},\mb t} = \emptyset$.  Some examples were given by Cho and Iarrobino
in \cite{C-IJA241}*{Prp.\,2.7, p.\,754}, and were analyzed by Geramita
et al.\ in \cite{GerMAM186}*{Ex.\,7.3, pp.\,70--72}.  The present second
author made an infinitesimal analysis of many classes of examples of
this sort; it suggests the geometry of $\bb H\Psi_{A}^{\mb h, \mb t}$
and $\bb F\Psi_{A}^{\mb h, \mb t}$ too is essentially as above.

  More examples were given by Iarrobino \cite{Pen2005}*{Ex.\,2.9,
p.\,283}, by Zanello \cite{ZanCA35}*{Rmk.\,6, p.\,1090}, and by
Migliore, Mir\'{o}-Roig, and Nagel in \cite{MMN}*{Exs.\,2.14--2.16,
pp.\,343--344}.  The last ones are discussed in some detail and refined
in \eqref{exMetal}, as they nicely illustrate \eqref{thAffBdl}, which is
the main result of Section~\ref{GCQ}.

Section 11 develops a theory like that in Sections 9 and 10, but now for
a $k$-Gorenstein, $k$-Artinian $A$ over any Noetherian base $k$.  The
maximal Hilbert function of a quotient $C$ is often not $\mb h^{\rm
I}_{\bar s}$, but is still a predictable function $\mb h$.  The main
results \eqref{thAffBdl}(1a)--(1c) are geometric: if $\mb t$ vanishes
when it should and if $\bb H\Psi_{A}^{\mb h, \mb t} \neq \emptyset$,
then $\bb H\Psi_{A}^{\mb h, \mb t}$ is covered by nonempty open
subschemes, each one isomorphic to an open subscheme of the affine space
over $S$ of fiber dimension $\mb H:= \sum_p \mb t(p)\bigl(\mb a(p) - \mb
h(p)\bigr)$; further, $\bb F\Psi_{A}^{\mb h, \mb t}$ is an
affine-space bundle over\/ $\bb H\Psi_{A}^{\mb h, \mb t}$ of fiber
dimension $\mb R := \sum_p \mb t(p)\bigl(\sum_{p'<p} (\mb a(p') - \mb
h(p')) \bigr)$; finally, $\bb F\Psi_{A}^{\mb h, \mb t}$ is covered by
nonempty open subschemes, each one isomorphic to an open subscheme of
the affine space over $S$ of fiber dimension $\mb F := \mb H + \mb R =
\sum_p \mb t(p)\big(\sum_{p'\le p}(\mb a(p') - \mb h(p')) \bigr)$.

The key to Section 11 is further work with local generators of $C^*$ for
a given $C \in \bb H\Psi_{A}^{\mb h^\di, \mb t}$ for some $\mb
h^\di$.  Here's the idea.  Set $E := \bigoplus_{q\in\Z} A(q)^{\oplus \mb
t(q)}$.  Now, $C^*$ is of local generator type $\mb t^*$.  So given any
maximal ideal $\go m$ of k, we may replace $k$ by the localization $k_h$
for an $h \notin \go m$ so that $C^*$ has a minimal set of homogeneous
generators $f_1,\dotsc,f_m$, which define a surjection $E \onto C^*$ of
degree $0$.  Then $\mb h^\di = \mb h^{\rm I}_{\bar s}$ iff there's $n <
0$ with $E_p \risom (C^*)_p$ for $p < n$ and $(C^*)_p = A^\dg_p$ for
$p\ge n$.  To proceed, for $p < n$, we let the kernel of $E_p \onto
(C^*)_p$ be nonzero in a controlled, but general, fashion.

Say $A^\dg = Af$ with $f\in A^\dg_{-a}$.  So $f_i = g_if$ for some
$g_i\in A_{d_i}$ with $d_i>0$.  Then $\mb h^\di = \mb h$ iff the Koszul
homology group $H_1(g_1,\dotsc,g_m;A^\dg)$ is $0$ in degrees below $n$;
see \eqref{leInj}.  If so and if $k_{red}$ is a domain, then
$g_1,\dotsc,g_m$ are general; see \eqref{prOpens}.  When $k$ is an
infinite field and $A$ is a general complete intersection, Fr\"oberg's
conjecture asserts that, if $g_1,\dotsc,g_m$ are general, then indeed
$\mb h^\di = \mb h$; see \eqref{reFrbg}.
 \indt{Fr\"oberg's conjecture}

In brief, Section 2 treats preliminaries about the category $\bb F_A$ of
 \indn{FbbA@$\bb F_A$}
filtered modules over a filtered ring $A$.  However, \eqref{sbRm} makes
a novel observation: the Rees functor embeds $\bb F_A$ into the Abelian
category of graded modules over the Rees algebra of $A$ as an exact
subcategory in Quillen's sense \cite{Q}*{p.\,99}.  Section~3 develops
Macaulay Duality over any Noetherian base ring $k$, providing the tool
used to transform loci of quotients into loci of submodules.  Section 4
discusses a key invariant, the $k$-socle type $\mb t$ of an $A$-module
$C$.  Its importance stems from \eqref{leSoc}: if $C$ is {\it
$k$-Artinian}, that is, locally free of finite rank over $k$, then the
``reverse'' $\mb t^*$ of $\mb t$ is the local generator type of the dual
$C^*$.  Section 5 discusses {\it $k$-Gorenstein} $k$-Artinian $A$; those
 \indt{kGor@$k$-Gorenstein}
are the $A$ with $A^*$ an invertible $A$-module.  Most of the results in
Section 5 are familiar when $k$ is a field, but these $A$ are used in
some interesting new examples.

From Section 6 on, the focus is geometry. Section 6 constructs various
schemes of quotients of a fixed graded $A$-module $B$ when $A$ is a
graded $k$-algebra with $A_0 = k$. Notably, there are several different
constructions of the schemes $\bb F\Lambda_B^{\mb h}$ and $\bb
H\Lambda_B^{\mb h}$ of filtered and of homogeneous, level quotients with
Hilbert function $\mb h$.  Section~7 constructs the schemes $\bb
F\Lambda_{B}^{\{\mb h_m\}}$ and $\bb H\Lambda_{B}^{\{\mb h_m\}}$ of
multilevel quotients, and treats three new notions: recursively maximal
$\{\mb h_m\}$, quasi-permissible $\mb t$, and recursively compressed
multilevel $C$.  Section 8 treats, in greater generality, Iarrobino's
notions of a permissible $\mb t$ and of the I-set (our term) $\{\mb
h^{\rm I}_m\}$ of any $\mb t$.  Sections 9 and 10 extend Iarrobino's
main results in \cite{Iar84}, which describe the geometry of $\bb
H\Lambda_A^{\{\mb h^{\rm I}_m\}}$, of $\bb F\Lambda_A^{\{\mb h^{\rm
I}_m\}}$, and of the retraction $\bb F\Lambda_A^{\{\mb h^{\rm
I}_m\}}\onto \bb H\Lambda_A^{\{\mb h^{\rm I}_m\}}$ when $\mb t$ is
permissible and $A$ is a polynomial ring, thus achieving our main goal.
Finally, Section 11 develops a similar theory for maximal quotients of a
graded Gorenstein Artinian algebra.

\section{Preliminaries}\label{sePr}

\begin{setup}\label{se2}
 Fix a Noetherian base ring $k$ for use throughout this paper.  (In this
paper, all rings are commutative with 1.)

Recall for frequent, but implicit, use that (even when $k$ is not
Noetherian) the following three conditions on a $k$-module $C$ are
equivalent:
 \begin{enumerate}
 \item $C$ is locally free of finite (but not necessarily constant) rank
over $k$;
 \item $C$ is projective and finitely generated over $k$;
 \item $C$ is flat and finitely presented over $k$.
 \end{enumerate}

Given a $k$-algebra $A$, denote the (Abelian) category of $A$-modules
by $\bb M_A$.
 \indn{MA@$\bb M_A$}

 Given any $k$-module $C$, define its {\it $k$-dual}  $C^*$ by
 \indt{kdual@$k$-dual}
 \begin{equation*}\label{eq1.1}
 C^* : = \Hom_k(C,\,k).
 \end{equation*}
 \indn{Cstar@$C^*$}

If $C$ is a module over a given $k$-algebra $A$, then $C^*$ is
canonically an $A$-module too, via the {\it adjoint action\/}: $(af)\psi
:= f(a\psi)$ for all $a\in A$ and $f\in C^*$ and $\psi\in C$.
 \indt{adjoint action}

 A map of $A$-modules $\gamma\: C\to D$ induces its {\it dual map\/} of
 \indt{dual map}
$A$-modules $\gamma^*\: D^*\to C^*$ by $\gamma^*g := g\circ\gamma$.  And
$C\mapsto C^*$ defines a left exact $A$-linear functor $*\:\bb M_A\to
\bb M_A$.
 \indt{dual map}
 \end{setup}

\begin{sbs}[Filtered modules]\label{sbFilt}
 Fix a $k$-algebra $A$ with a {\it (descending) filtration} $F^\bu A$;
 \indt{filtration}
namely, $F^\bu A$ is a descending chain of $A$-submodules $F^m A$ for
$m\in \Z$ with $F^m A = A$ for $m\le 0$ and with $F^m A\cdot F^nA
\subset F^{m+n}A$ for all $m$ and $n$.

Given an $A$-module $C$, by a {\it filtration} $F^\bu C$, let us mean a
descending chain of $A$-submodules $F^n C$ for $n\in \Z$ with $F^m
A\cdot F^nC \subset F^{m+n}C$ for all $m$ and $n$.  Set 
\begin{equation*}\label{eqFm1}\ts
 G_nC := F^nC/F^{n+1}C\enspace \text{for all }n \and
         G_\bu C := \bigoplus_{n\in\Z} G_nC.
 \end{equation*}
 Then $G_m A\cdot G_nA \subset G_{m+n}A$ and $G_m A\cdot G_nC \subset
G_{m+n}C$ for all $m$ and $n$.  Also, $G_nA = 0$ for $n<0$.  Thus $G_\bu
A$ is a graded $k$-algebra, $G_\bu C$ a graded $G_\bu A$-module.  As
usual, call them the {\it associated graded\/} algebra and module.
 \indn{Gbu@$G_\bu C$}
 \indt{associated graded}

Notice that $A$ always has a {\it trivial filtration}, where $F^m A := A$
for $m\le 0$ and $F^m A := 0$ for $m > 0$.  Then $G_0A = A$ and $G_nA =
0$ for $n\neq 0$.  Similarly, $C$ has a trivial filtration.  Thus it's
really no restriction to assume $A$ and $C$ are filtered.

Given $p\in \Z$, let $C[p]$ denote $C$ with its {\it shifted
filtration}, $F^n(C[p]) := F^{n+p}C$.
 \indt{shifted filtration} \indn{FnCp@$F^n(C[p])$}

Call $F^\bu C$ {\it exhaustive\/} if $\,\bigcup_n F^nC =C$, {\it
separated\/} if $\,\bigcap_n F^nC =0$, {\it discrete\/} if $F^nC =0$ for
$n\gg0$, and {\it finite\/} if $F^nC =C$ for $n\ll0$ and $F^nC =0$ for
$n\gg0$.

The filtered $A$-modules form a category $\bb F_A$.  Its maps $\beta\:
 \indn{FbbA@$\bb F_A$}
(B,\,F^\bu B)\to (C,\,F^\bu C)$ are the $A$-maps $\beta\: B\to C$ with
$\beta(F^nB) \subset F^nC$ for all $n$.

Denote by $\bb{EF}_A$ the full subcategory of $\bb F_A$ of all
$(B,\,F^\bu B)$ with $F^\bu B$ exhaustive, and by $\bb{DEF}_A$ its full
subcategory of all $(B,\,F^\bu B)$ with $F^\bu B$ discrete too. 

Given $(B,\,F^\bu B)$ and $(C,\,F^\bu C)$ in $\bb F_A$, filter the
$A$-module $\Hom_A(B,\,C)$ by
 \begin{align*}
 F^p\Hom_A(B,\,C) &:= \Hom_A\bigl(B,\,C[p]\,\bigr)\\
                  &:= \{\, \beta\: B\to C \mid\beta(F^nB)
                         \subset F^{n+p}C \enspace\text{for all }n\,\}.
 \end{align*}

Similarly, give the $k$-module $k$ the trivial filtration, and filter
$B^*$ by
 \begin{equation*}  \label{eqFm3}
 F^p(B^*) := F^p\Hom_k(B,\,k) := \{\, \beta\: B\to k \mid\beta(F^nB)
                 \subset F^{n+p}k\enspace\text{for all }n\,\}.\\
 \end{equation*}
 Then it's easy to see that
 \begin{equation} \label{eqFm4}
  F^p(B^*) = \{\, \beta\: B\to k \mid\beta(F^nB) = 0
                 \enspace\text{for }n + p =1\,\} = (B/F^{1-p}B)^*.
 \end{equation} Notice, however, that even if $F^\bu B$ is exhaustive,
$F^\bu (B^*)$ need not be exhaustive.
 
Given a map of filtered modules $\beta\: B\to C$, its dual $\beta^*\:
C^* \to B^*$ is, plainly, a map of filtered modules.  Thus  $C \mapsto
C^*$ is a contravariant, additive functor from $\bb F_A$ to itself.

Filter the tensor product $B\ox_AC$ in $\bb M_A$ by setting
\begin{equation*}\label{eqFm5}\ts
 F^p(B\ox_AC) := \Im(\tau) \text{\enspace where }
        \tau\: \bigoplus_{m+n=p}F^m(B)\ox_AF^n(C) \to B\ox_AC.
 \end{equation*}
 Then it's easy to see that this {\it adjoint associativity formula\/}
 \indt{adjoint associativity}
preserves filtrations: $\Hom_k(B\ox_AC,\,k) = \Hom_A\bigl(B,\,
\Hom_k(C,k)\bigr)$.  In other words, \begin{equation}\label{eqeqFm6}
  (B\ox_AC)^* = \Hom_A(B,\,C^*)\text{\quad in } \bb F_A.
 \end{equation}
 In particular, taking $C:=A$ yields the following (duality) formula:
 \begin{equation}\label{eqeqFm7}
 B^* = \Hom_A(B,\,A^*)\text{\quad in } \bb F_A.
 \end{equation}

 Let now $A$ be a graded $k$-algebra, and $C$ a graded $A$-module.  Say
$A = \bigoplus_{p\in\Z} A_p$ with $A_p = 0$ for $p<0$, and $C =
\bigoplus_{p\in\Z} C_p$.  For all $n$, set $F^nA := \bigoplus_{p\ge
n}A_p$ and $F^nC := \bigoplus_{p\ge n}C_p$.  Plainly, the $F^nA$ turn
 \indn{FnC@$F^nC$}
$A$ into a filtered $k$-algebra, and the $F^nC$ turn $C$ into a filtered
$A$-module; moreover, $G_pA = A_p$ and $G_pC = C_p$ for all $p$.

Also, an $A$-module map $\gamma\: C\to D$ is {\it homogeneous\/} (that
 \indt{homogeneous}
is, $D$ is graded, say $D = \bigoplus D_p$, and $\gamma C_p\subset D_p$
for all $p$) iff $\gamma\in \Hom_{\,\bb F_A}(B,\,C)$.  Thus the category
$\bb G_A$ of graded modules and homogeneous maps is a full subcategory
 \indn{GbbA@$\bb G_A$}
of $\bb F_A$.  And the functor $G_\bu\: \bb F_A\to \bb G_A$, given by
$C\mapsto G_\bu C$, is a {\it retraction\/}; that is, $G_\bu\big| \bb
G_A=1$.
 \indt{retraction}

Let $B$ be the kernel of $\gamma$ in $\bb M_A$.  Then $B$ is naturally
graded: $B = \bigoplus_{p\in\Z} B_p$ where $B_P := B\cap C_p$.  So $B$
is the kernel of $\gamma$ in $\bb G_A$.  Moreover, $F^nB = B\cap F^n C$
for all $n$; so $B$ is the kernel of $\gamma$ in $\bb F_A$ by
\eqref{sbSm}.  Similarly, the cokernel of $\gamma$ in $\bb M_A$ is
also its cokernel in $\bb G_A$ and in $\bb F_A$.  In particular, $\bb
G_A$ is, therefore, Abelian.
 \end{sbs}

\begin{sbs}[Strict maps]\label{sbSm}
 Fix a filtered $k$-algebra $(A,\,F^\bu A)$.  Following
\cite{SerreAlLoc}*{p.\,II-2} (and \cite{D71}*{(1.1.5), p.\,7}, call a map
$\gamma\: (C,\,F^\bu C)\to (D,\,F^\bu D)$ in $\bb F_A$ {\it strict\/}
(or
 \indt{strict}
{\it strictly compatible with the filtrations\/}) if $\gamma(F^nC) =
\gamma(C)\cap F^nD$ for all $n$.

For example, if $\gamma\: C\to D$ is injective, then $\gamma$ is strict
iff $F^\bu C$ is {\it induced\/} by $F^\bu D$ in the sense that $F^nC =
\gamma^{-1}(F^nD)$.  Instead, if $\gamma\: C\to D$ is surjective, then
$\gamma$ is strict iff $F^\bu D$ is {\it induced\/} by $F^\bu C$ in the
sense that $F^nD = \gamma(F^nC)$.  In general, $\gamma$ is strict iff both
$F^\bu C$ and $F^\bu D$ induce the same filtration on $\gamma(C)$.

Let's see $\gamma\: (C,\,F^\bu C)\to (D,\,F^\bu D)$ {\it has a kernel
$\Ker(\gamma)$ and a cokernel\/} $\Cok(\gamma)$ in $\bb F_A$, whose
underlying $A$-modules are the usual $\Ker(\gamma)$ and $\Cok(\gamma)$
in $\bb M_A$, and whose filtrations are induced by $F^\bu C$ and $F^\bu
D$.
  Given $\beta\: (B,\, F^\bu B) \to (C,\,F^\bu C)$ with $\gamma\beta =
0$, there's a unique $\alpha\: B\to \Ker(\gamma)$ in $\bb M_A$ with
$\ker(\gamma)\alpha = \beta$ where $\ker(\gamma)\: \Ker(\gamma)\into C$.
But $\ker(\gamma)(\alpha(F^nB)) = \beta(F^nB) \subset F^nC$.  Thus, as
desired, $\alpha(F^nB)\subset \ker(\gamma)^{-1}(F^nC)$.  The analysis of
$\Cok(\gamma)$ is dual.

Consider the following sequence in $\bb F_A$ associated to $\gamma\:
(C,\,F^\bu C)\to (D,\,F^\bu D)$\/:
 \begin{equation*}\label{eqSm1}
 \Ker(\gamma)\xto{\ker(\gamma)} C\xto{\coim(\gamma)} \Coim(\gamma)
  \xto{\theta(\gamma)} \Im(\gamma) \xto{\im(\gamma)} D
    \xto{\cok(\gamma)} \Cok(\gamma)
 \end{equation*}
 where $\coim(\gamma) := \cok(\ker(\gamma))$ and $\im(\gamma) :=
\ker(\cok(\gamma))$.  Note $\theta(\gamma)$ is bijective in $\bb M_A$,
and it's an isomorphism in $\bb F_A$ iff $F^\bu C$ and $F^\bu D$ induce
the same filtration on $\Im(\gamma)$, so iff $\gamma$ is strict.  Thus,
since not every $\gamma$ is strict, $\bb F_A$ fails to be Abelian.

Given $(E,\, F^nE)$ in $\bb F_A$, let's see $\Hom_A(\bu,\,E)$ is left
exact in the sense that
 \begin{equation}\label{eqSm2}
 \Hom_A(\,\Cok(\gamma),\,E\,) = \Ker(\,\Hom_A(\gamma,\,E)\,)
 \tqn{in} \bb F_A.
 \end{equation}
 First, \eqref{eqSm2} holds in $\bb M_A$, as each side can be
canonically identified with the set of $A$-maps $\delta\: D\to E$ with
$\delta\gamma = 0$.  Second, \eqref{eqSm2} holds in $\bb F_A$, as the
$p$th term of the filtration of each side is identified with the subset
of all $\delta\: D\to E[p]$ in $\bb F_A$.

Finally, it is clear from the construction of $\Cok(\gamma)$ that it's
not only the cokernel in $\bb F_A$, but also in $\bb F_k$.  Hence
$\Cok(\gamma)^* = \Ker(\gamma^*)$ in $\bb F_k$ by \eqref{eqSm2} with $A
:= k$ and $E := k$.  In particular, given $n$, then $F^n(\Cok(\gamma)^*)
= F^n(\Ker(\gamma^*))$ in $\bb M_k$; so, this equation holds in $\bb
M_A$ too, as it plainly respects multiplication by each $a\in A$.  Thus
 \begin{equation}\label{eqSm3}
 \Cok(\gamma)^* = \Ker(\gamma^*)  \tqn{in} \bb F_A.
 \end{equation}
 \end{sbs}

\begin{sbs}[Rees modules]\label{sbRm}
 Fix a filtered $k$-algebra $(A,\,F^\bu A)$ and a variable $t$.  Form
the {\it (extended) Rees algebra} $\Rees(A) := \bigoplus_{n\in
 \indt{Rees algebra}
\Z}F^nA\cdot t^{-n}$ and the {\it Rees functor\/}
 \indt{Rees functor}
\begin{equation*}\label{eqRf}\ts
   \Rees\: \bb F_A \to \bb G_{\Rees(A)} \tq{by} (B,\,F^\bu B) \mapsto
        \Rees(B) := \bigoplus_{n\in \Z}F^n B\cdot t^{-n}.
 \end{equation*}
 Note that $\bb F_A$ and $\bb{EF}_A$ have the same image $\bb R_A$,
which (see \cite{LO96}*{p.\,vi}) is the full subcategory of graded
$\Rees(A)$-modules on which $t$ is {\it regular\/} (that is,
multiplication by $t$ is injective); an $M := \bigoplus M_n$ in $\bb
R_A$ is equal to the image of $ (B,\,F^\bu B)$ where $B := M/(1-t)M$ and
$F^nB := (M_n+(1-t)M )/ (1-t)M$ is the image of $M_n$ in $B$.  Moreover,
the Rees functor embeds $\bb{EF}$ as $\bb R_A$ in the Abelian category
$\bb G_{\Rees(A)}$.

Notice that $\bb R_A$ is closed under extensions; that is, given a short
exact sequence $0\to M\to M'\to M''\to 0$ in $\bb G_{\Rees(A)}$, if
$M,\ M'' \in \bb R_A$, then $M'\in \bb R_A$; indeed, apply the Snake
Lemma to this diagram, where the $\tau$'s denote multiplication by $t$,
 \begin{equation}\label{eqRm1}
 \begin{CD}
        0@>>> M  @>>>  M'  @>>>   M''  @>>> 0\\
        @. @V\tau VV @V\tau' VV @V\tau''VV\\
        0@>>> M  @>>>  M'  @>>>   M''  @>>> 0
 \end{CD}
 \end{equation}
 in order to see that $t$ is regular on $M'$ because it is on $M$ and
$M''$.

Thus $\bb R_A$ is the seminal example of an {\it exact category\/} in
 \indt{exact category}
\cite{Q}*{p.\,99}.  Accordingly, call a sequence $0\to (B,\,F^\bu B)
\xto\beta (C,\,F^\bu C) \xto\gamma (D,\,F^\bu D)\to 0$ in $\bb {EF}_A$
{\it exact\/} if its image in $\bb G_{\Rees(A)}$ is exact.  Plainly, the
latter condition holds iff each sequence $0\to F^nB\to F^nC\to F^nD\to
0$ is exact in $\bb M_A$; so iff $F^\bu B$ and $F^\bu D$ are induced by
$F^\bu C$; so iff $\beta = \ker(\gamma)$ and $\gamma = \cok(\beta)$ in
$\bb {EF}_A$, or equivalently in $\bb F_A$.

Given  $(B,\,F^\bu B)$ in $\bb F_A$, notice that
 \begin{equation}\label{eqRm2}
  G_\bu B = \Rees(B)\bigm/ t\cdot \Rees(B)
         = \Rees(B)\ox_{\Rees(A)}G_\bu A.
  \end{equation}
 Thus the functor $G_\bu\: \bb F_A \to \bb G_{G_\bu A}$ factors through
$\bu\ox G_\bu A\: \mathbb{G}_{\Rees(A)} \to \bb G_{G_\bu A}$.

The latter functor is, of course, right exact.  Moreover, its
restriction to $\bb R_A$ is exact owing to the 9-Lemma applied to
\eqref{eqRm1}.  Thus $G_\bu$ is exact on $\bb {EF}_A$ in the sense that
it preserves short exact sequences (compare with \cite{Bour}*{Prp.\,2,
p.\,25}).

Let's say that the restriction of $G_\bu$ to a subcategory $\bb S$ of
$\bb{EF}_A$ is {\it biexact\/} if a sequence $0\to (B,\,F^\bu B)
 \indt{biexact}
\xto\beta (C,\,F^\bu C) \xto\gamma (D,\,F^\bu D)\to 0$ in $\bb S$ with
$\gamma\beta = 0$ is exact in $\bb {EF}_A$ when its image under $G_\bu$
is exact in $\bb G_{G_\bu A}$.

By \eqref{prBiex} below, the restriction of $G_\bu$ to $\bb S$ is
biexact if $\bb S$ is {\it fully separated\/} in the sense that $\bb S$
 \indt{fully separated}
is closed under taking kernels and cokernels, and if $(B,\,F^\bu B)\in
\bb S$, then $F^\bu B$ is separated.  For example, $\bb {DEF}_A$ is,
plainly, fully separated.

For another example, take $A$ to be a {\it Zariski ring\/} (see
\cite{ZSvII}*{\S\,4, p.\,261} and \cite{LO96}*{p.\,83}); namely,
$\Rees(A)$ is a Noetherian ring, and $F^1A$ lies in the Jacobson radical
$\Jac(A$).  For instance, $A$ could be a weighted power series ring over
$k$.  Let $\bb{ZF}_A$ be the full subcategory of $\bb{EF}_A$ of
$(B,\,F^\bu B)$ such that $\Rees(B)$ is a finitely generated
$\Rees(A)$-module.
 Plainly, $\bb{ZF}_A$ is closed under taking kernels and cokernels.

 Given $(B,\,F^\bu B)$ in $\bb ZF_A$ and $b\in \bigcap_nF^nB$, let's see
$b=0$.  Inspired by Rees (see \cite{Sharp}*{p.\,563}), set $M^{(n)} :=
\Rees(A)\cdot bt^{-n} \subset \Rees(B)$.  The $M^{(n)}$ form an
increasing chain.  As $\Rees(B)$ is Noetherian, $M^{(p)}=M^{(p+1)}$ for
some $p$.  So $ubt^{-p} = bt^{-(p+1)}$ for some $u\in \Rees(A)$.  Say $u
= \sum u_it^{-i}$ with $u_i\in F^iA$.  Then $u_1b = b$.  So $(1-u_1)b =
0$.  But $F^1A\subset \Jac(A)$.  So $b=0$.  Thus $\bb{ZF}_A$ is fully
separated.
 \end{sbs}

\begin{proposition}\label{prBiex}
 Let $A$ be a filtered $k$-algebra, and $\bb S\subset \bb{EF}_A$ a fully
separated subcategory.  Then $G_\bu\: \bb{EF}_A \to \bb G_{G_\bu A}$ is
exact, and its restriction to $\bb S$ is biexact.
 \end{proposition}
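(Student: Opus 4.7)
The first assertion, exactness of $G_\bu\colon \bb{EF}_A\to \bb G_{G_\bu A}$, has already been carried out in the paragraph preceding the proposition via the $9$-Lemma applied to the diagram \eqref{eqRm1}, so this part of the statement merely records that discussion. The main content is biexactness of the restriction to $\bb S$: given a complex $0\to B\xto\beta C\xto\gamma D\to 0$ in $\bb S$ with $\gamma\beta=0$ whose image under $G_\bu$ is a short exact sequence in $\bb G_{G_\bu A}$, the task is to show that $0\to F^nB\to F^nC\to F^nD\to 0$ is exact in $\bb M_A$ for every $n$. The plan is to split this into four claims: (a) $\beta$ is strictly injective; (b) $\gamma$ is surjective as an $A$-module map; (c) $\beta(B)=\Ker(\gamma)$; (d) $\gamma(F^nC)=F^nD$ for every $n$.

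For (a), take a nonzero $b\in B$ and use separation of $F^\bu B$ to pick the maximal $n_0$ with $b\in F^{n_0}B$; the nonzero class of $b$ in $G_{n_0}B$ maps via $G_{n_0}\beta$ to a nonzero class of $\beta(b)$ in $G_{n_0}C$, so $\beta(b)\in F^{n_0}C\setminus F^{n_0+1}C$. This gives both $\beta(b)\neq 0$ and the implication $\beta(b)\in F^nC\Rightarrow n_0\ge n\Rightarrow b\in F^nB$, which is strictness. For (b), form $E_1:=\Cok_{\bb{EF}_A}(\gamma)\in\bb S$; right-exactness of $G_\bu$ on $\bb{EF}_A$ together with surjectivity of $G_\bu\gamma$ yields $G_\bu E_1=0$, so all $F^nE_1$ coincide, and then separation together with exhaustivity of $E_1$ collapses $E_1$ to zero.

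For (c), let $K:=\Ker_{\bb{EF}_A}(\gamma)\in\bb S$ and $E_2:=\Cok_{\bb{EF}_A}(\beta_K)\in\bb S$, where $\beta_K\colon B\to K$ is the factorization of $\beta$ through $K$ furnished by $\gamma\beta=0$. The middle-exactness hypothesis reads $\gamma^{-1}(F^{n+1}D)\cap F^nC=\beta(F^nB)+F^{n+1}C$; applied to $k\in F^nK=K\cap F^nC$, it produces $k=\beta(b)+c_1$ with $b\in F^nB$ and $c_1\in F^{n+1}C$, and $c_1=k-\beta(b)$ lies in $K\cap F^{n+1}C=F^{n+1}K$. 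This shows $F^nK\subset\beta(B)+F^{n+1}K$, hence $G_nE_2=0$ for every $n$, and a second separation-plus-exhaustivity argument gives $E_2=0$, i.e., $\beta(B)=K$. For (d), given $d\in F^nD$, step (b) supplies some $c\in C$ with $\gamma(c)=d$, and exhaustivity puts $c\in F^mC$ for some $m$; if $m<n$, the same identity rewrites $c=\beta(b)+c_1$ with $c_1\in F^{m+1}C$ and $\gamma(c_1)=d$, and iterating $n-m$ times places $d$ in $\gamma(F^nC)$.

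The main obstacle is that objects of $\bb S$ need not be discrete or complete, so a naive iterative attack on (c) that successively peels off $\beta(b_m),\beta(b_{m+1}),\dotsc$ from an element of $\Ker(\gamma)$ would yield an infinite series with no reason to converge. The device of converting everything into the single vanishing statement $E_2=0$ is precisely what avoids this issue, and it uses both hypotheses packaged into ``fully separated'': closure of $\bb S$ under kernels and cokernels (so that $K$, $E_1$, $E_2$ lie in $\bb S$) and separation of every object of $\bb S$ (so that $G_\bu E_1=0$ and $G_\bu E_2=0$ genuinely collapse $E_1$ and $E_2$). The finite gap $n-m$ in (d) keeps the direct iteration there under control.
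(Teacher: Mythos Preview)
Your proof is correct. The paper takes a different route: it passes through the Rees functor, working in $\bb G_{\Rees(A)}$ with $M=\Rees(B)$, $M'=\Rees(C)$, $M''=\Rees(D)$, and uses the Snake Lemma twice on multiplication-by-$t$ diagrams. First it shows $L:=\Ker(\Rees\beta)$ has $t$ acting bijectively, hence corresponds to an object of $\bb S$ with constant filtration, hence is zero; then, setting $N:=\Cok(\Rees\beta)$ and $\nu\colon N\to M''$, it shows $t$ acts bijectively on $\Ker\nu$ and $\Cok\nu$, hence both vanish by the same ``constant filtration in $\bb S$'' argument. Your proof unwinds the same mechanism directly in $\bb{EF}_A$: your $E_1$ and $E_2$ play the role of the paper's $\Cok\nu$ and $\Ker\nu$, and ``$G_\bu E_i=0$ plus separated plus exhaustive'' is exactly the filtered-module translation of ``$t$ bijective on an object of $\bb R_A$ coming from $\bb S$.'' Your decomposition into the four claims (a)--(d) is finer than the paper's two-step Snake argument and makes the strictness of $\beta$ and $\gamma$ explicit, at the cost of a bit more bookkeeping; the paper's version is terser but leans on the reader to unpack why the auxiliary Rees modules lie in the image of $\bb S$. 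Either approach is a clean proof; yours has the advantage of not invoking the Rees formalism at all.
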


\begin{proof}
 It remains to see that a sequence $0\to M\xto\mu M'\xto{\mu'} M''\to 0$
in the image of $\bb S$ in $\bb R_A$ is exact if $\mu'\mu = 0$ and if
$0\to M/tM \xto{\bar\mu} M'/tM'\xto{\bar\mu'} M''/tM''\to 0$ is exact in
$\bb G_{G_\bu A}$.  Consider the following commutative diagram with
exact rows:
 \begin{equation}\label{eqBiex1}
 \begin{CD}
        0@>>> M  @>\tau>>  M  @>>>  M/tM   @>>> 0\\
        @. @V\mu VV     @V\mu VV @V\bar\mu VV\\
        0@>>> M'  @>\tau'>> M' @>>> M'/tM' @>>> 0
 \end{CD}
 \end{equation}
 where  the $\tau$'s denote multiplication by $t$.

Set $L := \Ker(\mu)$.  Since $\bar\mu$ is injective, multiplication by
$t$ on $L$ is bijective by the Snake Lemma; so if say $L = \bigoplus
L_n$, then $L_{n+1}\risom L_n$ for all $n$.  But $L$ is in the image of
$\bb S$, and $\bb S$ is fully separated.  Thus $L = 0$.

Set $N := \Cok(\mu)$.  As $\mu'\mu = 0$, there's a commutative diagram
with exact rows
 \begin{equation}\label{eqBiex2}
 \begin{CD}
        0@>>> N @>\theta>> N @>>> N/tN @>>> 0\\
        @. @V\nu VV     @V\nu VV @V\bar\nu VV\\
        0@>>> M''  @>\tau''>> M'' @>>> M''/tM''' @>>> 0
 \end{CD}
 \end{equation}
 where $\theta$ and $\tau''$ denote multiplication by $t$.  Note that,
applied to \eqref{eqBiex1}, the Snake Lemma yields $\Cok(\bar\mu)= N/tN$.

By Hypothesis, $\Cok(\bar\mu)= M''/tM''$.  Hence $\bar\nu$ is bijective.
So applied to \eqref{eqBiex2}, the Snake Lemma yields that
multiplication by $t$ is bijective on $\Ker(\nu)$ and on $\Cok(\nu)$.
So arguing as for $L$ yields $\Ker(\nu)=0$ and $\Cok(\nu)=0$.  Thus
$\nu$ is a isomorphism.  Thus $0\to M\to M'\to M''\to 0$ is exact, as
desired.
 \end{proof}

\begin{sbs}[$k$-Artinian modules]\label{sbArt} Fix a $k$-algebra $A$.

Call an $A $-module $C$ {\it $k$-Artinian\/} if $C$ is locally free of
 \indt{kArt@$k$-Artinian}
constant finite rank over $k$.  Let $\bb {AM}_A\subset \bb M_A$ denote
 \indn{AMAbb@$\bb {AM}_A$}
the full subcategory of such $C$.  Note that $\bb {AM}_A$ is $A$-linear,
but not Abelian.  Given $C\in\bb {AM}_A$, note, plainly, $C^*\in\bb
{AM}_A$ too. Thus $*\:\bb M_A\to \bb M_A$ restricts to an $A$-linear
functor $*\:\bb {AM}_A\to \bb {AM}_A$.

When $k$ is a field, every $C\in \bb M_A$ is free.  So $C$ is
$k$-Artinian iff $\dim_kC <\infty$.

 Given a function $\mb h\:\Z\to \Z$, define $\mb h^*\:\Z\to \Z$ by $\mb
h^*(n) := \mb h(-n)$.  Also, set
 \indn{hmbzst@$\mb h^*(n)$}
 \begin{equation*}\label{eqinfsup}
   i(\mb h) := \inf\{\,n\mid \mb h(n)\neq 0\,\}\and
   s(\mb h) := \sup\{\,n\mid \mb h(n)\neq 0\,\}.
 \end{equation*}
 \indn{ihmb@$i(\mb h)$} \indn{shmb@$s(\mb h)$}
 Call $\mb h$ {\it finite\/} if $i(\mb h)$ and $s(\mb h)$ are finite (so
if $h\neq 0$, then $\{\,n\mid \mb h(n)\neq 0\,\}$ is finite).
 \indt{finite function}

Assume $A$ is graded.  Given $C\in \bb G_A$, say $C = \bigoplus C_n$.
Notice that, if $C$ is $k$-Artinian, then $C_n = 0$ for almost all $n$,
and each $C_n$ is locally free of finite, but not necessarily constant,
rank; the converse holds if also the rank of each $C_n$ is constant.
  Call $\mb h\:\Z\to \Z$ the {\it Hilbert function\/} of $C$ if each
 \indt{Hilbert function}
$C_n$ is locally free of (constant) rank $\mb h(n)$; let $\bb
{AG}_A^{\mb h}$ denote the collection of all such $C$.
 \indn{AGbbh@$\bb {AG}_A^{\mb h}$} \indn{AGbb@$\bb {AG}_A$}

Let $\bb {AG}_A\subset \bb G_A$ denote the full subcategory of all $C
\in \bb {AG}_A^{\mb h}$ for all finite $\mb h$.

Assume $A$ is just filtered.  Given $(C,\,F^\bu C)\in \bb F_A$, call $C$
{\it $k$-Artinian with Hilbert function $\mb h$\/} if $G_\bu C \in \bb
{AG}_A^{\mb h}$ and if $F^\bu C$ is finite; let $\bb {AF}_A^{\mb h}$
denote the collection of all such $(C,\,F^\bu C)$.  Now, $F^nC =
F^{n+1}C$ for $n < i(\mb h)$ and $n > s(\mb h)$; so $F^\bu C$ is finite
iff $F^nC = C$ for $n\le i(\mb h)$ and $F^nC = 0$ for $n> s(\mb h)$.
 \indn{AFbbA@$\bb {AF}_A^{\mb h}$}

Let $\bb {AF}_A\subset \bb F_A$ denote the full subcategory of all
$(C,\,F^\bu C)$ with $G_\bu C\in \bb {AG}_A$.  Trivially, the functor
$G_\bu \: \bb F_A \to \bb G_{G_\bu A}$ restricts to a functor $G_\bu \:
\bb {AF}_A \to \bb {AG}_{G_\bu A}$.  Trivially, $\bb {AF}_A$ is
exhaustive and discrete.  Thus, \eqref{prBiex} implies $G_\bu$ is
biexact.

As a consequence, a $(C,\,F^\bu C) \in \bb {AF}_A$ is generated over $A$
by a $k$-submodule $N$ with the induced filtration if $G_\bu N$
generates $G_\bu C$ over $G_\bu A$.  Indeed, first note $G_\bu N \subset
G_\bu C$ as $G_\bu$ is exact.  Now, form the multiplication map $\mu\:
A\ox_k N \to C$.  The induced map $G_\bu\mu\: G_\bu A\ox_k G_\bu N \to
G_\bu C$ is surjective by hypothesis.  Thus, as $G_\bu$ is biexact,
$\mu$ is surjective, as desired.

Finally, call $A$ {\it $k$-Gorenstein\/} if $A^*$ is a locally free of
 \indt{kGor@$k$-Gorenstein}
rank 1 (that is, invertible).
 \end{sbs}

\begin{proposition}\label{prFilt}
 Let $A$ be a filtered $k$-algebra, and $(C,\,F^\bu C)\in \bb{AF}_A^{\mb h}$.

\(1) Then $C$ and $C/F^nC$ and $F^mC$ and $F^mC/F^nC$ are, for all $m\le
n$, locally free of ranks $\sum_p\mb h(p)$ and $\sum_{p<n}\mb h(p)$ and
$\sum_{m\le p}\mb h(p)$ and $\sum_{m\le p<n}\mb h(p)$.

\(2) Then $F^n((C^*)^*) = F^nC$ for all $n$.\hfill

\(3) Then $G_n(C^*) = (G_{-n}C)^*$ for all $n$.

\(4) Then $C^*\in \bb{AF}_A^{\mb h^*}$.
 \end{proposition}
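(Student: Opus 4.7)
The plan is to treat the four parts in order, since each builds on the previous. Part (1) is the foundation: once every $F^mC/F^nC$ is known to be locally free of finite constant rank, the functor $*$ preserves exactness of the relevant short exact sequences, and parts (2)--(4) reduce to pleasant bookkeeping. I do not expect any serious obstacle; the main care is in tracking indices and in quoting \eqref{eqFm4} correctly.

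For (1), I would fix $n$ large with $F^nC = 0$, so $C/F^nC = C$, and run reverse induction on $p \le n$ via the short exact sequence
\[
0 \to F^{p+1}C/F^nC \to F^pC/F^nC \to G_pC \to 0.
\]
Since $G_pC$ is locally free of rank $\mb h(p)$ by hypothesis, and since an extension of two locally free modules of finite constant rank is again one, with ranks adding (flatness and finite presentation are preserved, and a finitely presented flat module over a Noetherian ring is projective), the induction yields $F^pC/F^nC$ locally free of the stated rank. Taking $p \le i(\mb h)$ and $n > s(\mb h)$ recovers $C$ and $F^mC$, and the general $F^mC/F^nC$ is the same induction with $n$ arbitrary.

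For (3), I would dualize the short exact sequence
\[
0 \to G_{-p}C \to C/F^{1-p}C \to C/F^{-p}C \to 0,
\]
whose terms are locally free of finite rank by (1) so that $*$ stays exact, and combine with \eqref{eqFm4} in the form $F^p(C^*) = (C/F^{1-p}C)^*$; the quotient of the two outer duals then identifies $G_p(C^*)$ with $(G_{-p}C)^*$. For (4), part (3) makes $G_p(C^*)$ locally free of rank $\mb h(-p) = \mb h^*(p)$; meanwhile \eqref{eqFm4} gives $F^p(C^*) = C^*$ iff $p \le -s(\mb h) = i(\mb h^*)$ and $F^p(C^*) = 0$ iff $p > -i(\mb h) = s(\mb h^*)$, so $F^\bu(C^*)$ is finite with the correct endpoints, and $C^* \in \bb{AF}_A^{\mb h^*}$.

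For (2), I would dualize the exact sequence $0 \to F^nC \to C \to C/F^nC \to 0$ of locally free modules from (1) to obtain $0 \to (C/F^nC)^* \to C^* \to (F^nC)^* \to 0$; the first term equals $F^{1-n}(C^*)$ by \eqref{eqFm4}, hence $(F^nC)^* = C^*/F^{1-n}(C^*)$. Dualizing once more and invoking the canonical isomorphism $(F^nC)^{**} \risom F^nC$ (locally free of finite rank) yields $F^nC = (C^*/F^{1-n}(C^*))^*$, and by \eqref{eqFm4} applied to $C^*$ the right-hand side is $F^n((C^*)^*)$. These identifications are compatible with the canonical evaluation map $C \to (C^*)^*$, which is automatically a filtered map since $F^mk = 0$ for $m \ge 1$, so $F^n((C^*)^*) = F^nC$ under the canonical identification $C = (C^*)^*$.
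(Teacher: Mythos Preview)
Your proof is correct and follows essentially the same approach as the paper: the same short exact sequences are dualized in the same way for (2)--(4), and the only cosmetic difference is in (1), where you run reverse induction using $0 \to F^{p+1}C/F^nC \to F^pC/F^nC \to G_pC \to 0$ while the paper runs forward induction using $0 \to G_nC \to F^mC/F^{n+1}C \to F^mC/F^nC \to 0$. Your extra sentence in (2) checking compatibility with the canonical evaluation map $C \to (C^*)^*$ is a nice bit of care that the paper leaves implicit.
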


\begin{proof}
 {\bf For (1),} fix $m$.  If $m=n$, then $F^mC/F^nC = 0$.  Suppose $m<n$.
Form the exact sequence $0\to G_nC\to F^mC/F^{n+1}C\to F^mC/F^nC\to 0$.
By hypothesis, $G_nC$ is locally free of rank $\mb h(n)$.  By induction,
assume $F^mC/F^nC$ is locally free of rank $\sum_{m\le p<n}\mb h(p)$.
Thus $F^mC/F^{n+1}C$ is locally free of rank $\sum_{m\le p<n+1}\mb
h(p)$.  But $F^mC = C$ for $m\le i(\mb h)$ and $F^nC = 0$ for $n> s(\mb
h)$.  Thus (1) holds.

{\bf For (2),} note $F^n((C^*)^*) := (C^*/F^{1-n}(C^*))^*$ and
$F^{1-n}(C^*) := (C/F^nC)^*$.  So form the exact sequence $0\to F^nC\to
C\to C/F^nC\to 0$ of $k$-modules.  It splits, as $C/F^nC$ is projective
by (1).  So $0\to (C/F^nC)^* \to C^*\to (F^nC)^*\to 0$ is exact.  Hence
$(F^nC)^* = C^*/F^{1-n}(C^*)$.  Thus $F^n((C^*)^*) = ((F^nC)^*)^*$.  But
$((F^nC)^*)^* = F^nC$ as $F^nC$ is locally free of finite rank by (1).
Thus (2) holds.

{\bf For (3),} form the exact sequence $0\to G_nC\to C/F^{n+1}C\to
C/F^nC\to 0$. Note that $C/F^nC$ is projective by (1).  So replace $n$
by $-n$, and dualize.  Thus $0\to (C/F^{-n}C)^*\to (C/F^{1-n}C)^*\to
(G_{-n}C)^*\to 0$ is exact.  But the first term is equal to
$F^{n+1}(C^*)$, and the second, to $F^n(C^*)$.  Thus (3) holds.

{\bf For (4),} note (3) yields $G_\bu(C^*) \in \bb{AG}_A^{\mb h^*}$.  Also,
$F^n(C^*) := (C/F^{1-n}C)^*$ for all $n$ by \eqref{eqFm4}; so $F^\bu
(C^*)$ is finite as $F^\bu C$ is.
 \end{proof}

\begin{proposition}\label{prSes}
   Let $A$ be a filtered $k$-algebra, and $0 \to B \xto\beta C\xto\gamma
D \to 0$ an exact sequence in $\bb{EF}_A$.

\(1) Assume $D\in \bb{AF}_A$.  Then $B\in \bb{AF}_A$ iff $C\in
\bb{AF}_A$.

\(2) Assume $B,\,C,\,D\in \bb{AF}_A$.  Then $0 \to D^* \xto{\gamma^*}
C^* \xto{\beta^*} B^* \to 0$ is exact in $\bb{EF}_A$.
 \end{proposition}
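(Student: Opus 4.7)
For part (1), the plan is to pass to associated graded modules. Exactness of the sequence in $\bb{EF}_A$ means levelwise exactness of the filtrations, which by \eqref{sbRm} (or \eqref{prBiex}) gives exactness of $0 \to G_\bu B \to G_\bu C \to G_\bu D \to 0$ in $\bb G_{G_\bu A}$, hence a short exact sequence of $k$-modules $0 \to G_nB \to G_nC \to G_nD \to 0$ for every $n$. Since each $G_nD$ is locally free of finite constant rank by hypothesis, this sequence splits locally on $\Spec k$. I would then read off three facts: local freeness of $G_nB$ is equivalent to that of $G_nC$; constancy of rank transfers because the rank of $G_nD$ is constant; and the supports of $G_\bu B$ and $G_\bu C$ differ only within the already finite support of $G_\bu D$. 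Combining these yields $B \in \bb{AF}_A$ iff $C \in \bb{AF}_A$; if $\bb{AF}_A$ is understood to demand finite filtrations as well, finiteness of $F^\bu B$ and $F^\bu C$ is inherited levelwise from that of $F^\bu D$ and the remaining hypothesis via the short exact sequence of filtrations.

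For part (2), the plan is to verify exactness of the dual sequence one level at a time using the formula $F^p(X^*) = (X/F^{1-p}X)^*$ from \eqref{eqFm4}. Fixing $n$, the original sequence at filtration level $1-n$ gives a short exact sequence $0 \to F^{1-n}B \to F^{1-n}C \to F^{1-n}D \to 0$; the Snake Lemma, applied to the inclusions of these submodules into $B$, $C$, $D$, produces the exact quotient sequence
\begin{equation*}
0 \to B/F^{1-n}B \to C/F^{1-n}C \to D/F^{1-n}D \to 0.
\end{equation*}
By \eqref{prFilt}(1), each of these quotients is locally free of finite rank over $k$, so the sequence splits locally on $\Spec k$; dualizing therefore preserves exactness and yields $0 \to F^nD^* \to F^nC^* \to F^nB^* \to 0$. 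Since this holds for every $n$, and since \eqref{prFilt}(4) already places the duals in $\bb{EF}_A$ (indeed in $\bb{AF}_A^{\mb h^*}$), the dual sequence is exact in $\bb{EF}_A$.

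In both parts, the only substantive step is the local splitting of a short exact sequence of $k$-modules in which at least one term (namely $G_nD$ in part (1) and all three quotients in part (2)) is locally free of finite rank; everything else is manipulation of the definitions. Accordingly, I do not anticipate a real obstacle, only careful bookkeeping with the formulas for $G_\bu$, for $F^p(X^*)$, and for exactness in $\bb{EF}_A$.
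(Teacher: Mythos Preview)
Your proof is correct, and Part~(1) matches the paper's argument essentially verbatim: pass to associated graded via \eqref{prBiex}, use local freeness of $G_nD$ to transfer between $G_nB$ and $G_nC$, and check finiteness of the filtrations separately.

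For Part~(2) you take a slightly more direct route than the paper. The paper builds the commutative diagram
\[
\begin{CD}
0 @>>> D^* @>>> C^* @>>> B^* @>>> 0 \\
@. @VVV @VVV @VVV \\
0 @>>> (F^pD)^* @>>> (F^pC)^* @>>> (F^pB)^* @>>> 0
\end{CD}
\]
with $p=1-n$, argues both rows are exact (using projectivity of $D$ and of $F^pD$), identifies the kernels of the vertical maps as $F^n(D^*)$, $F^n(C^*)$, $F^n(B^*)$ via \eqref{eqFm4}, and then invokes the 9-Lemma. You instead form the quotient sequence $0\to B/F^{1-n}B\to C/F^{1-n}C\to D/F^{1-n}D\to 0$ directly by the Snake Lemma, observe it splits because $D/F^{1-n}D$ is projective by \eqref{prFilt}(1), and dualize immediately. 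Your path avoids the 9-Lemma and one of the two splitting arguments; it is a modest but genuine simplification. The underlying content (projectivity from \eqref{prFilt}(1) forces the relevant short exact sequences to split, so $\Hom_k(-,k)$ stays exact) is the same in both.
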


\begin{proof}
 {\bf For (1),} given $n$, note $G_n(D )$ is locally free of constant finite
rank.  Hence, $G_n(B)$ is so iff $G_n(C)$ is so, since $0 \to G_n(B) \to
G_n(C) \to G_n(D )\to 0$ is exact by \eqref{prBiex}.  Also, $F^\bu B$ is
finite iff $F^\bu C$ is, since $F^\bu D$ is.  Thus (1) holds.

{\bf For (2),} given $n$, set $p := 1-n$, and form the following commutative
diagram:
 \begin{equation}\label{eqSes1}
 \begin{CD}
        0 @>>>   D^*    @>>>   C^*   @>>>   B^*  @>>> 0\\
        @.      @VVV          @VVV         @VVV\\
        0 @>>> (F^pD)^* @>>> (F^pC)^* @>>> (F^pB)^* @>>> 0
 \end{CD}
 \end{equation}
 The upper sequence is exact, since $0 \to B \to C\to D\to 0$ splits as
$D$ is projective by \eqref{prFilt}(1).  Similarly, the lower sequence
is exact.

The sequence $0\to F^pD\to D\to D/F^pD\to 0$ splits as $D/F^pD$ is
projective by \eqref{prFilt}(1).  So the left map in \eqref{eqSes1} is
surjective, and its kernel is $(D/F^pD)^*$, or $F^n(D^*)$ by
\eqref{eqFm4}.  Similarly, the middle and right maps are surjective, and
their kernels are $F^n(C^*)$ and $F^n(B^*)$.  Hence $0\to F^n(D^*)\to
F^n(C^*)\to F^n(B^*)\to 0$ is exact by the 9-Lemma.  Thus (2) holds.
 \end{proof}

\section{Dualities}\label{seDu}

\begin{setup}\label{se3}
  Keep the setup of \eqref{se2}.  For a $k$-algebra $K$ and a $k$-module
$C$, set $$C_K := C\ox_k K.$$
 \indn{CK@$C_K$}

Let $A$ be a filtered $k$-algebra, and $C \in \bb F_A$.  Given a
$k$-algebra $K$, note that $C_K$ carries an induced filtration; by
definition, $F^n(C_K)$ is the image of $(F^nC)_K$ in $C_K$.

Given a contravariant functor $\mb D$ from an additive category $\bb S$
to itself, call $\mb D$ {\it dualizing\/} if $\mb D^2$ is naturally
 \indt{dualizing}
isomorphic to the identity functor.  Trivially, $\mb D$ is an
equivalence of $\bb S$ with its opposite category.

Call an object $D$ of $\bb S$ {\it dualizing\/} if $\Hom_{\bb S}(C,\,
D)$ is in $\bb S$ for any $C\in\bb S$ and if the functor $C\mapsto
\Hom_{\bb S}(C,\, D)$ is dualizing.  More generally, given a forgetful
functor from $\bb S$ to another additive category $\bb T$, call an
object $D$ of $\bb S$ {\it dualizing for\/} $(\bb S,\,\bb T)$ if
$\Hom_{\bb T}(C,\, D)$ is in $\bb S$ for any $C\in\bb S$ and if
$C\mapsto \Hom_{\bb T}(C,\, D)$ is dualizing for $\bb S$.
 \end{setup}

\begin{proposition}\label{*duality}
 Let $A$ be a $k$-algebra. Then $*\:\bb{AM}_A\to \bb{AM}_A$ is a
dualizing functor.  If $A$ is filtered, then $*$ induces two more
dualizing functors: $*\:\bb{AF}_A\to \bb{AF}_A$ and\/ $* \:
\bb{AG}_{G_\bu A} \to \bb{AG}_{G_\bu A}$;  the latter two $*$'s commute
with $G_\bu \: \bb {AF}_A \to \bb {AG}_{G_\bu A}$.

Moreover, if $A$ is filtered, then for any $B,\,C\in \bb{AF}_A$, 
\begin{equation}\label{eqduality1}
 \Hom_A(B,\,C) = \Hom_A(C^*,\,B^*) \tqn{in} \bb F_A.
 \end{equation}

 \end{proposition}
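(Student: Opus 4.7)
I would obtain every assertion from the duality properties already established in \eqref{prFilt}, together with the adjoint associativity formula \eqref{eqeqFm6}. First, for (1): \eqref{sbArt} already records that $*$ preserves $\bb{AM}_A$, so it suffices to produce a natural $A$-linear isomorphism $C \risom (C^*)^*$ for $C \in \bb{AM}_A$. I would take the canonical evaluation map $\psi \mapsto (f \mapsto f(\psi))$, which is a $k$-linear isomorphism by the standard biduality for locally free $k$-modules of finite rank (checked locally on a trivializing open cover of $\Spec k$), and which is seen to be $A$-linear by a direct unwinding of the adjoint action $(af)\psi := f(a\psi)$.

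For the filtered and graded dualizing statements, I would simply assemble the content of \eqref{prFilt}: part (4) shows $*$ sends $\bb{AF}_A$ into itself, part (2) shows $F^n((C^*)^*) = F^nC$ so that the biduality map from step one is an isomorphism in $\bb F_A$, and part (3) gives $G_n(C^*) = (G_{-n}C)^*$. Viewing $C \in \bb{AG}_{G_\bu A}$ as a filtered module via its grading (so $G_\bu C = C$), part (3) shows $C^*$ is graded with $(C^*)_n = (C_{-n})^*$, giving the induced $*$ on $\bb{AG}_{G_\bu A}$; biduality here holds degreewise by (1), and $G_\bu \circ * = * \circ G_\bu$ on $\bb{AF}_A$ is an immediate rephrasing of part (3).

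For \eqref{eqduality1}, I would apply \eqref{eqeqFm6} twice. Substituting $C^*$ for the second argument in \eqref{eqeqFm6} and using the biduality just established gives $(B \ox_A C^*)^* = \Hom_A(B,\,(C^*)^*) = \Hom_A(B,\,C)$ in $\bb F_A$. The filtered tensor product is symmetric, since its definition $F^p(B \ox_A C) = \Im\bigl(\bigoplus_{m+n=p} F^mB \ox_A F^nC \to B \ox_A C\bigr)$ is symmetric in $B$ and $C$; hence $B \ox_A C^* = C^* \ox_A B$ in $\bb F_A$. Applying \eqref{eqeqFm6} once more yields $(C^* \ox_A B)^* = \Hom_A(C^*,\,B^*)$, and combining the two identifications produces \eqref{eqduality1}. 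The one real concern throughout is ensuring equality of filtrations, not mere isomorphism of underlying $A$-modules, at each step; but since every ingredient cited is already asserted to hold in $\bb F_A$, this amounts to careful bookkeeping rather than new work.
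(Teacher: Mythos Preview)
Your treatment of the dualizing assertions on $\bb{AM}_A$, $\bb{AF}_A$, and $\bb{AG}_{G_\bu A}$ matches the paper's proof essentially line for line: biduality for locally free modules of finite rank, then \eqref{prFilt}(4) and (2) for the filtered case, and \eqref{prFilt}(3) for the graded case and the compatibility with $G_\bu$.

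For \eqref{eqduality1} you take a genuinely different route. The paper argues categorically: since $*$ is dualizing on $\bb{AF}_A$, one has $\Hom_{\bb F_A}(B,C) = \Hom_{\bb F_A}(C^*,B^*)$ automatically; then the shift identity $C^*[p] = (C[-p])^*$ upgrades this to $F^p\Hom_A(B,C) = F^p\Hom_A(C^*,B^*)$ for every $p$, and finiteness of the filtrations finishes. You instead run adjoint associativity \eqref{eqeqFm6} twice through the symmetry of the filtered tensor product, obtaining $\Hom_A(B,C) = (B\ox_A C^*)^* = (C^*\ox_A B)^* = \Hom_A(C^*,B^*)$ in $\bb F_A$. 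This is correct: \eqref{eqeqFm6} is asserted in $\bb F_A$, the tensor filtration is manifestly symmetric, and the biduality $(C^*)^* = C$ holds in $\bb F_A$ by \eqref{prFilt}(2). Your approach is more formula-driven and avoids the shift bookkeeping; the paper's approach is more conceptual in that it derives the Hom identity directly from the word ``dualizing'' and makes explicit why the identity holds filtration-level by filtration-level. Either way the work is light.
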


\begin{proof}
 Given $C\in\bb{AM}_A$, note that the natural map $C\to (C^*)^*$ is an
isomorphism.  Thus $*\:\bb{AM}_A\to \bb{AM}_A$ is a dualizing functor.

 Assume $A$ is filtered.  Given $C\in\bb{AF}_A$, note $C\in\bb{AM}_A$ by
\eqref{prFilt}(1); moreover, $C^*\in \bb{AF}_A$ by \eqref{prFilt}(4).
Thus the functor $*\:\bb{AM}_A\to \bb{AM}_A$ induces a functor
$*\:\bb{AF}_A\to \bb{AF}_A$, which is dualizing by \eqref{prFilt}(2).

Given $B,\,C\in \bb{AF}_A$, note that, as $*\:\bb{AF}_A\to \bb{AF}_A$ is
dualizing,
 \begin{equation}\label{eqduality2}
  \Hom_{\,\bb F_A}(B,\,C) = \Hom_{\,\bb F_A}(C^*,\,B^*) \tqn{in} \bb F_A.
 \end{equation}
 Given $p$, note  $C^*[p] = (C[-p])^*$.  Hence, replacing $C$ by $C[p]$
in \eqref{eqduality2} yields
\begin{equation*}\label{eqduality3}
 F^p\Hom_A(B,\,C) = F^p\Hom_A(C^*,\,B^*).
 \end{equation*}
 But $F^\bu B$ and $F^\bu C$ are finite; so $F^\bu\Hom_A(B,\,C)$ and $
F^\bu\Hom_A(C^*,\,B^*)$ are finite too.  Thus \eqref{eqduality1} holds.

Given $C\in \bb {AG}_{G_\bu A}$, say $C = \bigoplus_p C_p$.  Note $C^* =
\bigoplus_q (C_{-q})^*$; so $C^*\in \bb{AG}_{G_\bu A}$.  Thus
$*\:\bb{AM}_A\to \bb{AM}_A$ restricts to a dualizing functor
$*\:\bb{AG}_A\to \bb{AG}_A$.  These two functors commute with $G_\bu$
owing to \eqref{prFilt}(3).
 \end{proof}

\begin{proposition}[Relative Matlis Duality]\label{prRMD}
 Let $A$ be a $k$-Artinian $k$-algebra.  Then $A^*$ is dualizing and
injective for $\bb{AM}_A$.

Assume $A\in \bb{AF}_A$.  Then $A^*$ is dualizing for
$(\bb{AF}_A,\,\bb{M}_A)$, and $(G_\bu A)^*$ is dualizing for
$(\bb{AG}_{G_\bu A},\,\bb{M}_{G_\bu A})$.  Moreover, $\Hom_{G_\bu
A}\bigl(G_\bu C, \,(G_\bu A)^*\bigr) = G_\bu \Hom_A(C, \,A^*)$ for any
$C$ in $\bb{AF}_A$, and $(G_\bu A)^* = G_\bu(A^*)$.  Lastly, if also
$G_0A=k$, then $k$ embeds in $A^*$ as the smallest nonzero submodule $D$
with $A^*/D\in \bb{AF}_A$.
 \end{proposition}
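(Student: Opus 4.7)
The plan is to reduce each claim to the adjoint associativity formula \eqref{eqeqFm7}, which identifies $\Hom_A(C,A^*)=C^*$ canonically, respecting filtrations and gradings.  Through this identification, every assertion about the contravariant functor $\Hom_A(\bu,A^*)$ reduces to one about the $k$-dual functor $*$, whose dualizing behavior on $\bb{AM}_A$, on $\bb{AF}_A$, and on $\bb{AG}_{G_\bu A}$ has already been established in \eqref{*duality}.  Thus the three dualizing claims---that $A^*$ is dualizing for $\bb{AM}_A$, that $A^*$ is dualizing for $(\bb{AF}_A,\bb{M}_A)$, and that $(G_\bu A)^*$ is dualizing for $(\bb{AG}_{G_\bu A},\bb{M}_{G_\bu A})$---follow at once.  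For injectivity of $A^*$ on $\bb{AM}_A$, I would note that any short exact sequence $0\to B\to C\to D\to 0$ in $\bb{AM}_A$ splits as a sequence of $k$-modules (since $D$ is $k$-projective), so the $k$-dual sequence remains exact.

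Next, the identities $\Hom_{G_\bu A}(G_\bu C,(G_\bu A)^*)=G_\bu\Hom_A(C,A^*)$ and $(G_\bu A)^*=G_\bu(A^*)$ follow from the same principle: by the two adjoint associativities, the two sides of the first identity are respectively $(G_\bu C)^*$ and $G_\bu(C^*)$, which agree by \eqref{prFilt}(3); the second identity is the case $C:=A$.

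The delicate last claim requires more work.  I would first obtain $D\cong k$ and verify $A^*/D\in\bb{AF}_A$ by dualizing the short exact sequence $0\to F^1A\to A\to k\to 0$ in $\bb{AF}_A$ via \eqref{prSes}(2): this yields $0\to D\to A^*\to(F^1A)^*\to 0$, with $A^*/D\cong(F^1A)^*\in\bb{AF}_A$ by \eqref{prFilt}(4).  For minimality, let $D'\subset A^*$ be any nonzero submodule (with the induced filtration) such that $A^*/D'\in\bb{AF}_A$.  Then \eqref{prSes}(1) puts $D'$ in $\bb{AF}_A$, and \eqref{prSes}(2) dualizes the sequence to $0\to I'\to A\to(D')^*\to 0$ in $\bb{AF}_A$, where $I':=(A^*/D')^*$ is a proper ideal of $A$ (since $(D')^*\neq 0$).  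The crux, and the main obstacle, will be to prove $I'\subset F^1A$, which dualizes back to $D\subset D'$.

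To finish, I would inspect $G_0(A/I')=A/(F^1A+I')$: by hypothesis it is a quotient of $k$ locally free of constant finite rank, hence of rank $0$ or $1$.  Rank $0$ would give $F^1A+I'=A$; but $F^1A$ is nilpotent---since the filtration is finite and $(F^1A)^n\subset F^nA$---and a standard nilpotent-plus-unit argument then forces $I'=A$, contradicting properness.  So the rank is $1$, and then the kernel of the surjection $k\onto A/(F^1A+I')$ must vanish at every prime of $k$, hence vanish outright, giving $F^1A+I'=F^1A$, i.e., $I'\subset F^1A$, as wanted.  The most delicate ingredients here are the nilpotency-plus-unit step and the characterization of rank-$1$ quotients of $k$ as $k$ itself.
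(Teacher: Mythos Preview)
Your proposal is correct and follows essentially the same approach as the paper: reduce the dualizing claims to \eqref{eqeqFm7} and \eqref{*duality}, deduce the Hom identity from \eqref{prFilt}(3), and for the minimality claim dualize via \eqref{prSes} to a quotient $C=A/I'$ of $A$, then argue that $G_0C$ is a nonzero locally free quotient of $k$, hence equal to $k$. The only cosmetic difference is that the paper phrases the nonvanishing of $G_0C$ via the observation that the image of $1$ generates $G_\bu C$ (so $G_\bu C\neq 0$ forces $G_0C\neq 0$), whereas you phrase the equivalent contrapositive as a nilpotency-of-$F^1A$ argument; these are the same step in different clothing.
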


\begin{proof}
 Note $A^*\in \bb{AM}_A$ and $\Hom_{\bb{AM}_A}(C, \,A^*) =
\Hom_A(C,\,A^*)$ for $C\in \bb{AM}_A$; see \eqref{sbArt}.  However,
$\Hom_A(C,\,A^*) = C^*$ by \eqref{eqeqFm7} with $A$ and $C$ filtered
trivially, and $*$ is dualizing for $\bb{AM}_A$ by \eqref{*duality}.
Thus $A^*$ is a dualizing module for $\bb{AM}_A$.  In $\bb{AM}_A$,
moreover, $A^*$ is injective, as $C\mapsto C^*$ converts projectives
into injectives.

Assume $A\in \bb{AF}_A$.  Then, similarly, \eqref{*duality} and
\eqref{eqeqFm7} imply that $A^*$ is dualizing for $(\bb{AF}_A,\,\bb
M_A)$, and $(G_\bu A)^*$ is dualizing for $(\bb{AG}_{G_\bu A},\,\bb
M_{G_\bu A})$.

Moreover, $(G_\bu C)^* = G_\bu(C^*)$ by \eqref{prFilt}(3).  Apply
\eqref{eqeqFm7} to $G_\bu A$ and to $A$.  Thus $\Hom_{G_\bu A}(G_\bu C,
\,(G_\bu A)^*) = G_\bu \Hom_A(C, \,A^*)$.  On the other hand, taking $C
:= A$ yields $(G_\bu A)^* = G_\bu(A^*)$.

Lastly, let $\delta\: D\into A^*$ be the inclusion of a nonzero
submodule with $A^*/D$ in $\bb{AF}_A$.  Then $D\in \bb{AF}_A$ by
\eqref{prSes}(1).  Set $C:= D^*$.  Then $C\in \bb{AF}_A$ by
\eqref{prFilt}(4).  So $G_0C$ is locally free.  Moreover, $\delta^*\:
A\to C$ is surjective and $C$ has the induced filtration by
\eqref{prSes}(2) and \eqref{sbSm}.  So $G_0A\to G_0C$ is well defined
and surjective.  But $G_0C\neq0$, as $1\in G_0C$ and as $1\neq 0$ since
$G_\bu C$ had the same rank as $C$.

Assume $G_0A=k$.  Then, by the above, $G_0A\risom G_0C$. Hence the
quotient map $\kappa\: A\onto k$ factors via $\delta^*\: A\onto C$.  So
$\kappa^*: k\into A^*$ factors via $\delta\: D\into A^*$.  Now, $F^1A\in
\bb{AF}_A$; so $A^*/k = (F^1A)^*$ by \eqref{prSes}(2), and $(F^1A)^* \in
\bb{AF}_A$ by \eqref{prFilt}(4).  Thus $\kappa^*$ embeds $k$ in $A^*$ as
the smallest nonzero submodule $D$ with $A^*/D\in \bb{AF}_A$.
 \end{proof}

\begin{sbs}[Apolarity]\label{sbApl}
 Assume $A$ is a graded $k$-algebra.  Let $\mb b\:\Z\to \Z$ a function,
and $B$ a graded $A$-module whose graded pieces $B_p$ are locally free
of rank $\mb b(p)$ for all $p$.  Set $(B^\dg)_q := (B_{-q})^*$ and
 \indn{Badg@$B^\dg$}
$B^\dg := \bigoplus_q (B^\dg)_q$. Note that $B^* = \prod(B^\dg)_{q}$,
that $B^\dg$ is an $A$-submodule of $B^*$, that $B^\dg$ is graded, and
that $(B^\dg)_q$ is locally free of rank $\mb b^*(q)$ for all $q$.
Often, $B^\dg$ is called the {\it graded dual\/} of $B$.
 \indt{graded dual}

Given any $A$-submodule $I$ of $B$, its {\it apolar annihilator}
 \indt{apolar annihilator}
$(0:_{B^\dg}I)$ is defined by
 \begin{equation*}\label{eqApN}
 (0:_{B^\dg}I) := \{\, f\in B^\dg \mid \psi\cdot f = 0 \text{ for all }
\psi \in I \,\}.
 \end{equation*}
 \indn{0colI@$(0:_{B^\dg}I)$}
 Note that $(0:_{B^\dg}I)$ is an $A$-submodule of $B^\dg$.

Note $B^{\dg\dg} = B$.  Hence, given any $A$-submodule $D$ of
$B^\dg$,
 \begin{equation*}\label{eqApN2}
 (0:_BD) =  \{\, \psi\in B \mid \psi\cdot f = 0 \text{ for all } f \in D
\,\}.
 \end{equation*}  

 Equip $B$ and $B^\dg$ with the filtrations arising from their
gradings; equip every subquotient of $B$ and $B^\dg$ with the
induced filtration.  Fix a finite function $\mb h\:\Z\to \Z$.

Denote by $\Psi_B$ the set of $A$-submodules $I \subset B$ with $B/I
\in \bb{AM}_A$.  Denote by $\Delta_{B^\dg}$ the set of $A$-submodules
$D\subset B^\dg$ with $D \in \bb{AM}_A$ and with $B^\dg/D$ flat over
$k$.
 \indn{Psibm@$\Psi_B$} \indn{Delbm@$\Delta_{B^\dg}$}

Denote by $\mb F\Psi_B^{\mb h} \subset \Psi_B$ the subset of $I$
 \indn{FmbPsihm@$\mb F\Psi_B^{\mb h}$}
 \indn{FmbDelhstB@$\mb F \Delta_{B^\dg}^{\mb h^*}$}
 with $B/I \in \bb{AF}_A^{\mb h}$, and by $\mb F \Delta_{B^\dg}^{\mb
h^*} \subset \Delta_{B^\dg}$ the subset of $D$ with $D \in
\bb{AF}_A^{\mb h^*}$ and with $G_\bu(E)$ flat over $k$ where $E :=
B^\dg/D$.  Note that $E$ is flat over $k$ when $G_\bu(E)$ is, as $E =
\bigl(\bigoplus_{p\notin [i,\,s]} B^\dg_{-p}\bigr) \oplus
(F^{-s}E/F^{-i+1}E)$ with $i := i(\mb h)$ and $s := s(\mb h)$ and as
$F^{-s}E/F^{-i+1}E$ is flat by the proof of \eqref{prFilt}(1).

Denote by $\mb H\Psi_B^{\mb h} \subset \mb F\Psi_B^{\mb h}$ and
$\mb H\Delta_{B^\dg}^{\mb h^*} \subset \mb
F\Delta_{B^\dg}^{\mb h^*}$ the subset of homogeneous $I$ and the
subset of homogeneous $D$.
 \indn{HmbPsihm@$\mb H\Psi_B^{\mb h}$}
 \indn{HmbDelhstB@$\mb H\Delta_{B^\dg}^{\mb h^*}$}
 \end{sbs}

\begin{theorem}[Macaulay Duality]\label{thGMD3}
 Keep the setup of \eqref{sbApl}. 

\(1) Then $I\mapsto (0:_{B^\dg}I)$ gives a bijection $\Psi_B
\risom \Delta_{B^\dg}$, and $D\mapsto (0:_BD)$, its inverse.  Also,
$(0:_{B^\dg}I) = (B/I)^*$ and $B/(0:_BD) = D^*$.

Further, if $B = A$ and if $I$ and $D$ correspond, then $A/I$ is a
$k$-algebra, and $D$ is a dualizing module for $\bb{AM}_{A/I}$; also,
then $A/I$ is $k$-Gorenstein iff, for every maximal ideal $\go m$ of
$k$, there's a field $K$ containing $k/\go m$ such that $D_K$ is
$A_K$-cyclic.

\(2) The bijection in \(1) induces another, $\mb F\Psi_B^{\mb h}
\risom \mb F\Delta_{B^\dg}^{\mb h^*}$.  Further, if $A = B$ and
if $I$ and $D$ correspond, then $D$ is a dualizing module for
$(\bb{AF}_{A/I},\,\bb{AM}_{A/I})$.
 
The second bijection restricts to a third, $\mb H\Psi_B^{\mb h}
\risom \mb H\Delta_{B^\dg}^{\mb h^*} $.  These two bijections
commute with taking associated graded modules.  Furthermore, if $A = B$
and if $I$ and $D$ correspond, then $D$ is a dualizing module for
$(\bb{AG}_{A/I},\,\bb{AM}_{A/I})$.
 \end{theorem}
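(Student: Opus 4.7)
The strategy is to base everything on reflexivity of the $k$-dual on $k$-Artinian modules (\eqref{*duality}) and Relative Matlis Duality \eqref{prRMD}. The heart of the argument is the identification, for $I\in\Psi_B$, of the apolar annihilator $(0:_{B^\dg}I)$ with the image of $(B/I)^*\into B^*$ induced by $B\onto B/I$. I would first note that the bilinear pairing $B\ox_k B^\dg\to k$ provided by the duality $(B^\dg)_q=(B_{-q})^*$ is $A$-balanced, so the orthogonal complement of $I$ inside $B^\dg$ is automatically an $A$-submodule. Since $B/I$ is locally free of finite rank, the sequence $0\to I\to B\to B/I\to 0$ splits as $k$-modules and dualizes to $0\to(B/I)^*\to B^*\to I^*\to 0$, identifying $(B/I)^*$ with the orthogonal of $I$ in $B^*$. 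The finiteness implicit in $\Psi_B$ places this orthogonal inside the graded dual $B^\dg$, giving $(B/I)^*=(0:_{B^\dg}I)$.

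Next I would verify the target sets and check the two assignments are mutually inverse. For $I\in\Psi_B$, the dualized sequence realizes $B^\dg/(0:_{B^\dg}I)$ inside $I^*$, and graded-piece analysis exhibits it as a sum of $k$-duals of direct summands of the locally free $B_p$, hence $k$-flat; so $(0:_{B^\dg}I)\in\Delta_{B^\dg}$. Because $B^{\dg\dg}=B$ (noted in \eqref{sbApl}), running the argument in reverse gives the inverse map $D\mapsto(0:_BD)$ together with the identity $B/(0:_BD)=D^*$, and the composition identities $(0:_B(0:_{B^\dg}I))=I$ and $(0:_{B^\dg}(0:_BD))=D$ reduce to the reflexivity of $*$ on $k$-Artinian modules from \eqref{*duality}. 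When $B=A$, the submodule $I\subset A$ is an ideal and $A/I$ is $k$-Artinian, so \eqref{prRMD} makes $D=(A/I)^*$ a dualizing and injective module for $\bb{AM}_{A/I}$; the $k$-Gorenstein criterion then reduces to $D$ being locally free of rank $1$ over $A/I$, which by Nakayama can be tested pointwise at each maximal ideal $\go m$ of $k$, and is equivalent, after passing to a suitable field $K\supset k/\go m$, to $D_K$ being cyclic over $A_K$.

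For Part (2), I would transport the argument into the filtered category. By \eqref{prFilt}(1)--(4), if $B/I\in\bb{AF}_A^{\mb h}$ then $(B/I)^*\in\bb{AF}_A^{\mb h^*}$; the filtered version of the splitting, via \eqref{prSes}(2), forces $G_\bu(B^\dg/D)$ to be $k$-flat, so the bijection restricts to $\mb F\Psi_B^{\mb h}\risom\mb F\Delta_{B^\dg}^{\mb h^*}$. The identity $G_\bu(C^*)=(G_\bu C)^*$ of \eqref{prFilt}(3) shows that $I$ is graded iff $D$ is graded, yielding the restriction to $\mb H\Psi_B^{\mb h}\risom\mb H\Delta_{B^\dg}^{\mb h^*}$ and also intertwining the two retractions $G_\bu$; the dualizing-module claims for $A/I$ in $\bb{AF}$ and $\bb{AG}$ then follow from \eqref{prRMD} applied to $A/I$ and to $G_\bu(A/I)$. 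The main subtlety throughout is ensuring that the $k$-dual of each relevant quotient lands inside the graded dual $B^\dg$ rather than the full $k$-dual $B^*$; this is precisely the role of the flatness condition on $B^\dg/D$ and of the finiteness implicit in the filtration on $B/I$, and once it is set up correctly the rest is bookkeeping through \eqref{prFilt}, \eqref{prSes}, and \eqref{prBiex}.
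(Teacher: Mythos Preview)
Your approach is essentially the paper's, and the skeleton is sound; two execution points need tightening before it goes through.

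First, in Part~(1) your ``graded-piece analysis'' for the flatness of $B^\dg/(0:_{B^\dg}I)$ does not work as stated: at this stage $I$ is an arbitrary $A$-submodule, so neither $I$ nor $C^*$ is graded, and there are no graded pieces to analyze. The paper's fix is clean: the $k$-splitting of $0\to I\to B\to C\to 0$ dualizes to a retraction $\rho:B^*\onto C^*$, and since $C^*\subset B^\dg\subset B^*$, the restriction $\rho|_{B^\dg}$ splits $0\to C^*\to B^\dg\to B^\dg/C^*\to 0$ over $k$, so $B^\dg/C^*$ is a direct summand of the flat module $B^\dg$. Relatedly, your appeal to symmetry via $B^{\dg\dg}=B$ for the inverse direction is not literal, since $\Psi_B$ and $\Delta_{B^\dg}$ impose asymmetric conditions; you need the finiteness $D\subset\bigoplus_{p=m}^n(B^\dg)_p$ together with flatness of $B^\dg/D$ to get the surjection $B\onto D^*$, exactly as the paper does.

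Second, in Part~(2) your invocation of \eqref{prSes}(2) is off: that result requires all three terms to lie in $\bb{AF}_A$, but neither $B$ nor $B^\dg$ is $k$-Artinian in general. The paper argues degree by degree instead: since $G_pC$ is projective, $B_p\onto G_pC$ splits, so $(B^\dg)_{-p}/(G_pC)^*$ is a direct summand of $(B^\dg)_{-p}$ and hence projective; combining with $(G_pC)^*=G_{-p}(C^*)$ from \eqref{prFilt}(3) and $G_{-p}(B^\dg/C^*)=(B^\dg)_{-p}/G_{-p}(C^*)$ from \eqref{prBiex} gives the required flatness of $G_\bu(B^\dg/C^*)$.
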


\begin{proof}
 {\bf For (1),} given $I\in \Psi_B$, set $C := B/I$.  Note $0\to I\to B\to
C\to 0$ splits over $k$ as $C\in \bb{AM}_A$.  So $0\to C^* \to B^* \to
I^*\to 0$ splits over $k$.  Correspondingly, there's a retraction
$\rho\: B^*\onto C^*$.

As $C\in \bb{AM}_A$, there are $m$ and $n$ with $G_pC = 0$ for $p< m$
and $p>n$.  Hence $B_p$ maps to $0$ in $C$ for $p< m$ and $p > n$.  So
$\bigoplus_{p=m}^n B_p$ surjects onto $C$.  Therefore, 
      $$\ts C^*\subset \bigoplus_{p=m}^n (B_p)^* \subset B^\dg
         \subset B^*.$$

Note $\rho|C^* = 1$.  So $\rho|B^\dg$ splits $0\to C^* \to B^\dg
\to B^\dg/C^*\to 0$ over $k$.  Thus $B^\dg/C^*$ is flat over
$k$, as $B^\dg$ is.  But $B^\dg/C^* \subset B^*/C^* = I^*$.  So
$f\in B^\dg$ maps to $0$ in $I^*$ iff $f\in C^*$; thus $C^* =
(0:_{B^\dg}I)$.

However, $C\in \bb{AM}_A$.  So plainly $C^*\in \bb{AM}_A$ .  Thus
$C^*\in \Delta_{B^\dg}$.

Conversely, given $D\in \Delta_{B^\dg}$, note $D$ is finitely
generated over $k$ as $D \in \bb{AM}_A$.  So there are $m,\,n$ with
$D\subset E:= \bigoplus_{p=m}^n (B^\dg)_p$.  Set $E' :=
\bigoplus_{p\notin[m,\,n]} (B^\dg)_p$, so $B^\dg = E\oplus E'$.
Then $B^\dg/D = (E/D) \oplus E'$.  But $B^\dg/D$ is flat.  So
$E/D$ is flat, so projective.  Thus the inclusion $D\into E$ induces a
surjection $E^*\onto D^*$.

Plainly the surjection $(B^\dg)^*\onto E^*$ restricts to a
surjection $(B^\dg)^\dg \onto E^*$.  But $(B^\dg)^\dg =
B$.  Thus the the inclusion $D\into B^\dg$ induces a surjection
$B\onto D^*$; also, it's not just a $k$-map, but an $A$-map.

Set $I := \Ker(B\onto D^*)$.  Then $I$ is an $A$-submodule of $B$, as
$B\onto D^*$ is an $A$-map.  Further, $B/I = D^*$.  But $D \in
\bb{AM}_A$, so $D^* \in \bb{AM}_A$. Thus $I\in \Psi_B$.  Also, $\psi
\in B$ maps to $0$ in $D^*$ iff $\psi\in I$; so $\psi\cdot f = 0$ for
all $f \in D$.  Thus $I = (0:_BD)$.

Next, let's check that the functions $I\mapsto (B/I)^*$ and $D\mapsto
\Ker(B\onto D^*)$ are inverse.  Given $I$, set $D:= (B/I)^*$.  Then $D^*
= B/I$ as $B/I \in \bb{AM}_A$.  Thus $\Ker(B\onto D^*) = I$.
Conversely, given $D$, set $I := \Ker(B\onto D^*)$.  Then $B/I = D^*$.
So $(B/I)^* = D^{**}$.  But $D^{**} = D$ as $D \in \bb{AM}_A$.  Thus
$(B/I)^* = D$, as desired.

Further, assume $A = B$ and $I$ and $D$ correspond.  Set $C:= A/I$.
Then $C$ is, plainly, a $k$-algebra, and $D$ is a dualizing module for
$\bb{AM}_C$ by \eqref{prRMD}, as $D = C^*$.

Also, by \eqref{prGor}, the $k$-algebra $C$ is $k$-Gorenstein iff, for
every $K$, the $C_K$-module $(C^*)_K$ is cyclic, or equivalently,
$(C^*)_K$ is $A_K$-cyclic.  But $D = C^*$.  Thus (1) holds,

{\bf For (2),} given $I \in \mb F\Psi_B^{\mb h}$, set $C:=B/I \in
\bb{AF}_A^{\mb h}$.  Note $C^*\in \bb{AF}_A^{\mb h^*}$ by
\eqref{prFilt}(4).

Fix $p$.  As $G_pC$ is projective, the quotient map $B_p\onto G_pC$ has
a right inverse.  So $G_pC$ is a direct summand of $B_p$; hence
$(B_p)^*/(G_pC)^*$ is one of $(B_p)^*$, so is projective.  But $(B_p)^*
=: (B^\dg)_{-p}$.  And $(G_pC)^* = G_{-p}(C^*)$ by
\eqref{prFilt}(3).  Also, $(B^\dg)_{-p} / G_{-p}(C^*) =
G_{-p}(B^\dg/C^*)$ by \eqref{prBiex}.  Thus $G_{-p}(B^\dg/C^*)$
is projective, so flat; hence, $G_{\bu}(B^\dg/C^*):= \bigoplus
G_{-p}(B^\dg/C^*)$ is flat too.  Thus $C^* \in \mb
F\Delta_{B^\dg}^{\mb h^*}$.

Conversely, given $D\in \mb F\Delta_{B^\dg}^{\mb h^*}$, note
$D^*\in \bb{AF}_A^{\mb h}$ by \eqref{prFilt}(4).  The first assertion of
(2) now follows from (1), and the second, from \eqref{prRMD}.

Plainly, if $I$ and $D$ are homogeneous, then so are $(0:_{B^\dg}I)$ and
$(0:_BD)$.  Thus the second correspondence restricts as asserted.
Finally, \eqref{prRMD} now yields the last two assertions of (2).  Thus
(2) holds.
 \end{proof}

 \begin{remark}\label{reMacI}
 If $B\in\bb{AM}_A$, but $A$ and $B$ are not graded, then \eqref{thGMD3}
is generalized by the statement below.  Its proof is omitted, as it is
similar and simpler.  If $A$ and $B$ are graded, then the statement
below is a corollary of \eqref{thGMD3}.

First, some notation.  Denote by $\mb Q_B$ the set of quotient
$A$-modules $C$ of $B$ such that $C\in\bb{AM}_A$.  If $A$ is filtered
and $B\in \bb{AF}_A$, denote by $\mb {FQ}_B^{\mb h}$ the subset of
$C\in \mb Q_B$ such that, with the induced filtration,
$C\in\bb{AF}_A^{\mb h}$.

Denote by $\mb S_B$ the set of $A$-submodules $D$ of $B$ with $B/D \in
\bb{AM}_A$.  If $A$ is filtered and $B\in \bb{AF}_A$, denote by $\mb
{FS}_B^{\mb h}$ the subset of $D\in \mb S_B$ such that, with the induced
filtrations, $D \in \bb{AF}_A^{\mb h}$ and $B/D \in \bb{AF}_A$.

 If $A$ is graded and $B\in\bb{AG}_A$, denote by $\mb {HQ}_B^{\mb h}$
and $\mb {HS}_B^{\mb h}$ the analogues of $\mb {FQ}_B^{\mb h}$ and $\mb
{FS}_B^{\mb h}$ where $C$ and $D$ are homogeneous.

Here's the statement in the corresponding two parts:{\it

\(1) Then $C\mapsto C^*$ gives a bijection $\mb Q_B\risom \mb S_{B^*}$,
and $D\mapsto D^*$, its inverse.

\(2) Assume $A$ is filtered and $B\in\bb{AF}_A$.  Then there's a
commutative square:
 \begin{equation*}\label{eqGMD2cd}
  \begin{CD}
  \mb {FQ}_B^{\mb h} @>\sim>> \mb {FS}_{B^*}^{\mb h^*}\\
      @VVV                       @VVV\\
   \mb {HQ}_{G_\bu B}^{\mb h} @>\sim>> \mb {HS}_{G_\bu B^*}^{\mb h^*}
  \end{CD}
 \end{equation*}
 The horizontal maps are, as indicated, bijective; they're given by
$C\mapsto C^*$, and their inverses, by $D\mapsto D^*$.  The vertical
maps are given by $C\mapsto G_\bu C$ and $D\mapsto G_\bu D$.}
 \end{remark}

\begin{example}\label{exPolyRg}
 Fix variables $X_1,\dotsc,X_r$; take $A := k[X_1,\dotsc,X_r]$.  Weight
$X_i$ by a positive integer $w_i$; grade the polynomial ring $A$ by
weighted degree.  Thus the graded piece $A_n$ is the free $k$-module on
the monomials $M$ of weighted degree $n$.

The graded dual $A^\dg$ can be usefully viewed as the free
$k$-module on all the Laurent monomials $1/M$, namely as
$k[X_1^{-1},\dotsc,X_r^{-1}]$, provided the product of a monomial $L\in
A$ with $1/M$ is taken to be $L/M$ if $L\mid M$ and to be $0$ if not.
Then the graded piece $(A^\dg)_{-n}$ is the free $k$-module on the
$1/M$ of weighted degree $-n$.

As an abstract $A$-module, $A^\dg$ is, plainly, independent of the
choice of the $w_i$.  Moreover (cf.\ \cite{KES}*{Subsec.\,3.2}), $A^\dg$
consists of the maps $A\to k$ that are continuous when $k$ is given the
discrete topology and $A$ is given the topology defined by the
filtration with $F^pA$ generated by the monomials of degree at least
$p$; notice that two choices of the $w_i$ give rise to cofinal
filtrations, and so the same topology on $A$.  Thus, a better name for
$A^\dg$ would be the {\it continuous dual}.
 \indt{continuous dual}

If $w_i=1$ for all $i$, then $A^\dg$ is equal to the $k$-algebra of
divided powers; see \cite{Eis}*{Sec.\,A2.4} or \cite{JPAA217}*{Sec.\,3,
pp.\,2258--59}.  If $k=\bb C$ too, then $A^\dg$ is the usual algebra of
polynomial differential operators on $A$; see \cite{Eis}*{Prp.\,A2.8}.
Remarkably, in this case, $A$ can as well be viewed as the algebra of
polynomial differential operators on $A^\dg$, viewed as a polynomial
ring in the $X_i^{-1}$; see \cite{Eis}*{Ex.\,21.7}.  These additional
structures certainly have their charm (and in \eqref{sbApl}, justify the
term ``apolarity''), but in the present work, they'd just add an
unnecessary layer of complication.

In \eqref{thGMD3}(2), take $B := A$.  Note that an $A$-submodule $D$ of
$B^\dg$ is finitely generated over $A$ iff it's finitely generated over
$k$.  Also, note that, if $0\neq C:= A/I$, then $I\subset F^1(A):= \la
X_1,\dotsc,X_r\ra$ (and conversely) because $G_0C = k$ since $G_0A\to
G_0C$ is surjective, $G_0A =k$, and $G_0C$ is locally free and is
nonzero as $1\in G_0C$ and $G_\bu C$ is of the same rank as $C$, so is
nonzero.  Thus \eqref{thGMD3}(2) recovers Macaulay Duality essentially
as is stated in \cite{Eis}*{Thm.\,21.6}, but $k$ is any Noetherian ring
in \eqref{thGMD3}(2), not just a field, and the weights $w_i$ needn't be
1.  In addition, \eqref{thGMD3}(2) asserts the compatibility of duality
with forming associated graded modules.

Finally, let $\vf\: C\to A^*$ be an $A$-map.  Say $A_n\subset I$ for $n>
n_0$.  Then, given any monomial $M\in A_n$ with $n> n_0$, note $M\cdot C
= 0$; hence, $M\cdot \vf(C) = 0$.  Now, given any $c\in C$, say $\vf(c)
= \prod_{n\ge0} \bigl(\bigoplus_i (a_{in}/N_{in})\bigr)$ where
$a_{in}\in k$ and $N_{in}\in A_n$.  If $a_{jp}\neq 0$, then $N_{jp}\cdot
\vf(c) \neq 0$, and so $p\le n_o$.  Thus $\vf(c)\in
\bigoplus_{n=0}^{n_0}(A_n)^*$.  Thus $\vf(C)\subset A^\dg$.  Thus
$\Hom_A(C,\,A^*) = \Hom_A(C,\,A^\dg)$.  But $C^* = \Hom_A(C,\,A^*)$ by
\eqref{eqeqFm7}.  Thus $C^* = \Hom_A(C,\,A^\dg)$, as shown in a
different way assuming $k$ is a field in the course of the proof of
\cite{Eis}*{Thm.\,21.6, p.\,531}.
 \end{example}

\section{The  {\it k}-socle}\label{seSoc}

\begin{setup}\label{se4}
  Keep the setup of \eqref{se3}

Fix a filtered $k$-algebra $A$ with $A/F^1A =k$; that is, the
composition $k\to A \to k$ is the identity.
Fix a finite function $\mb h \:\Z\to \Z$, and $C\in \bb{AF}_A^{\mb h}$.
Set
 \begin{equation*}\label{eqse4}
 i := i(\mb h) \and s:= s(\mb h) \and t:= \mb h(s).
 \end{equation*}
  Note $C\big/((F^1A)\cdot C) = C\ox_Ak$, and  $C\ox_A k$ has the
filtration induced from $C$.
 \indn{ihmb@$i(\mb h)$} \indn{shmb@$s(\mb h)$} \indn{tmi@$t$}

Given a $k$-algebra $K$, note that $C_K$ carries an induced filtration;
by definition, $F^n(C_K)$ is the image of $(F^nC)_K$ in $C_K$.
 \end{setup}

\begin{definition}\label{deSoc}
 \indt{kso@$k$-socle}
  The {\it $k$-socle\/} of $C$ and its induced filtration are given by
 \indt{kso@$k$-socle}
 \begin{gather*}\label{eqDefSoc}
 \Soc_k(C) := \Hom_A(k,\,C) = \{\,c\in C\mid (F^1A)\cdot c = 0\,\}
 =: (0:_C F^1A) \subset C,\\
  F^n(\Soc_k(C)) := F^n\Hom_A(k,\,C)
        = \Soc_k(C)\bigcap F^nC \quad\text{for all }n.
 \end{gather*}
 \end{definition}

\begin{lemma}\label{leSoc}
 In $\bb F_A$, there's a canonical isomorphism: $\Soc_k(C) = (C^*\ox_A
k)^*$.
 \end{lemma}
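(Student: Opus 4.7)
The plan is to verify, as indicated in the text preceding the lemma, that both $\Soc_k(C)$ and $(C^*\ox_A k)^*$ agree naturally with $\Hom_A(C^*,\,k^*)$ in $\bb F_A$, and then to identify the two induced filtrations.

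First, I would handle the right-hand side. Give $k$ the trivial filtration of \eqref{sbFilt}, so that $k\in\bb{AF}_A$ (with Hilbert function supported only at $0$). Apply the adjoint associativity formula \eqref{eqeqFm6} with $C^*$ in place of $B$ and $k$ in place of $C$; since \eqref{eqeqFm6} is an identity in $\bb F_A$, we obtain
\[
  (C^*\ox_A k)^* = \Hom_A(C^*,\,k^*) \qquad \text{in } \bb F_A.
\]

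Next, I would handle the left-hand side. By \eqref{prFilt}(4), $C^*\in\bb{AF}_A^{\mb h^*}$, and $k\in\bb{AF}_A$ as above, so \eqref{eqduality1} applies with $B:=k$ and yields
\[
  \Hom_A(k,\,C) = \Hom_A(C^*,\,k^*) \qquad \text{in } \bb F_A.
\]
This uses that $*\:\bb{AF}_A\to\bb{AF}_A$ is dualizing, as established in \eqref{*duality}.

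Finally, I would check that the filtered $A$-module $\Hom_A(k,\,C)$ coincides with $\Soc_k(C)$ as defined in \eqref{deSoc}, including the filtrations. Evaluation at $1\in k$ identifies $\Hom_A(k,\,C)$ with $(0:_C F^1A)$, since $k=A/F^1A$. For the filtrations, $F^n\Hom_A(k,\,C):=\Hom_A(k,\,C[n])$ consists of those $A$-maps $\alpha\:k\to C$ with $\alpha(F^m k)\subset F^{m+n}C$ for all $m$; because $k$ is trivially filtered, this condition reduces to $\alpha(k)\subset F^nC$, which under evaluation at $1$ corresponds to $(0:_C F^1A)\cap F^nC = \Soc_k(C)\cap F^nC$, matching \eqref{deSoc}. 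Combining the three steps gives the desired isomorphism in $\bb F_A$.

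No step looks difficult; the only thing to be careful about is the bookkeeping of filtrations, in particular that $k$ with its trivial filtration genuinely lies in $\bb{AF}_A$ (so that \eqref{eqduality1} is applicable) and that the filtration on $\Hom_A(k,\,C)$ from \eqref{sbFilt} agrees with the one specified in \eqref{deSoc}.
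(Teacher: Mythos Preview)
Your proposal is correct and follows essentially the same route as the paper: apply \eqref{eqeqFm6} with $B:=C^*$, $C:=k$ to get $(C^*\ox_Ak)^*=\Hom_A(C^*,k^*)$, apply \eqref{eqduality1} with $B:=k$ to get $\Hom_A(k,C)=\Hom_A(C^*,k^*)$, and then invoke \eqref{deSoc}. Your extra step verifying the filtration on $\Hom_A(k,C)$ matches that of $\Soc_k(C)$ is already built into the definition \eqref{deSoc}, so the paper simply cites it.
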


\begin{proof}
 Taking $B:=k$ in \eqref{eqduality1} and taking $B:= C^*$ and $C:=k$
in \eqref{eqeqFm6} yield
\begin{equation*}\label{eqSoc2}
 \Hom_A(k,\,C) = \Hom_A(C^*,\,k^*) = (C^*\ox_A k)^* \tqn{in} \bb F_A.
 \end{equation*}
 Thus \eqref{deSoc} yields the assertion.
  \end{proof}

\begin{lemma}\label{leSocLev}
 \(1) Then $G_iC \risom G_i(C\ox_A k)$.\hfill
  \(2) Then $G_s(\Soc_k(C)) = G_sC$.

\(3) Choose a $k$-submodule $N\subset C$ that maps isomorphically onto
$G_iC$ (such an $N$ exists because $G_iC$ is projective).  Then $A\cdot
N = C$ iff $C\ox_Ak = G_iC$.

\(4) Then \(a) $\Soc_k(C) = G_sC$ and $C^*\ox_A k\in \bb{AM}_A$ iff\/
\(b) $C^*\ox_Ak = G_{-s} (C^*)$.
 \end{lemma}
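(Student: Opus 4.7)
The plan is to handle (1)--(4) in order: (1) and (2) are direct computations from the definitions; (3) uses the generation criterion stated at the end of \eqref{sbArt}; and (4) reduces to combining \eqref{leSoc} with duality and \eqref{prFilt}(3).

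For (1), since $F^nC = C$ for $n\le i$ and $F^1A\cdot F^nC \subseteq F^{n+1}C$, one has $F^1A\cdot C \subseteq F^{i+1}C$. The induced filtration on $C\ox_Ak = C/(F^1A\cdot C)$ therefore satisfies $F^i(C\ox_Ak) = C\ox_Ak$, while $F^{i+1}(C\ox_Ak)$ is the image of $F^{i+1}C$; hence
\[
G_i(C\ox_Ak) = C/(F^{i+1}C + F^1A\cdot C) = C/F^{i+1}C = G_iC.
\]
For (2), $F^sC \subseteq \Soc_k(C)$ because $F^1A\cdot F^sC \subseteq F^{s+1}C = 0$, so $F^s\Soc_k(C) = \Soc_k(C)\cap F^sC = F^sC$ and $F^{s+1}\Soc_k(C) = 0$, yielding $G_s\Soc_k(C) = F^sC = G_sC$.

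For (3), the key tool is the criterion from the last paragraph of \eqref{sbArt}: $C \in \bb{AF}_A$ is generated over $A$ by $N$ (with induced filtration) iff $G_\bu N$ generates $G_\bu C$ over $G_\bu A$. Since $N\risom G_iC$ via $N\subset C = F^iC$, the induced filtration on $N$ gives $F^iN = N$ and $F^{i+1}N = N\cap F^{i+1}C = 0$, so $G_\bu N$ lives in degree $i$ and equals $G_iC$ there. Next I translate the condition $C\ox_Ak = G_iC$: by (1), $G_i(C\ox_Ak) = G_iC$ always, so the stated equality is equivalent to $F^{i+1}(C\ox_Ak) = 0$, i.e., $F^1A\cdot C = F^{i+1}C$. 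By \eqref{eqRm2} and the biexactness of $G_\bu$ from \eqref{prBiex}, $G_\bu(C\ox_Ak) = G_\bu C/(G_+A\cdot G_\bu C)$ where $G_+A := \bigoplus_{n\ge 1}G_nA$. By graded Nakayama (using $G_0A = A/F^1A = k$), $G_iC$ generates $G_\bu C$ over $G_\bu A$ iff this quotient is concentrated in degree $i$. Chaining these equivalences with the generation criterion yields (3).

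For (4), \eqref{leSoc} gives $\Soc_k(C) = (C^*\ox_Ak)^*$ in $\bb F_A$. For (a)$\Rightarrow$(b): (a) says $\Soc_k(C) = G_sC \in \bb{AM}_A$ and $C^*\ox_Ak \in \bb{AM}_A$; dualizing via \eqref{*duality} and applying \eqref{prFilt}(3) gives $C^*\ox_Ak = (G_sC)^* = G_{-s}(C^*)$. For (b)$\Rightarrow$(a): $G_{-s}(C^*)$ is a graded piece of $C^*\in\bb{AF}_A$, hence in $\bb{AM}_A$, so (b) forces $C^*\ox_Ak\in\bb{AM}_A$; then \eqref{leSoc} and \eqref{prFilt}(3) yield $\Soc_k(C) = G_{-s}(C^*)^* = G_sC$, with equality as filtered submodules of $C$ because the iso of \eqref{leSoc} lives in $\bb F_A$. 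The main obstacle will be part (3): specifically, the direction $C\ox_Ak = G_iC \Rightarrow A\cdot N = C$ requires combining the biexactness of $G_\bu$ with graded Nakayama to reinterpret the hypothesis as a generation statement for $G_\bu C$ over $G_\bu A$, and then invoking the sufficient side of the generation criterion from \eqref{sbArt}.
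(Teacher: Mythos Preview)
Your arguments for (1), (2), and (4) are correct and essentially match the paper's. The gap is in (3).

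You claim $G_\bu(C\ox_A k) = G_\bu C / (G_+A\cdot G_\bu C)$, but this fails in general: biexactness only gives $G_\bu(C\ox_A k) = G_\bu C / G_\bu(F^1A\cdot C)$, and one has merely the inclusion $G_+A\cdot G_\bu C \subseteq G_\bu(F^1A\cdot C)$ (this is precisely the first inclusion underlying \eqref{leDiag}). The module $D := C^*$ of \eqref{exArt} is a counterexample to equality: there $D\ox_A k$ is one-dimensional in degree $-3$, while $(G_\bu D)\ox_{G_\bu A} k$ is two-dimensional, with an extra class $X_1^{-1}$ in degree $-1$ (indeed the paper computes $G_\bu D\ox_A k$ to be free of rank $2$). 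The same example refutes your ``iff'' reading of the generation criterion in \eqref{sbArt}: one has $A\cdot(kF) = D$, yet $G_\bu A\cdot(kX_2^{-3})$ misses $X_1^{-1}$ in $G_\bu D$, so the initial form of a generator need not generate the associated graded module. Thus both key links in your chain for (3) break; in particular, your implication $C\ox_A k = G_iC \Rightarrow A\cdot N = C$ passes through the step ``$G_\bu(C\ox_A k)$ concentrated in degree $i$ $\Rightarrow$ $(G_\bu C)\ox_{G_\bu A}k$ concentrated in degree $i$'', which is exactly the direction that fails.

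The paper's route avoids the associated graded entirely. Since $F^{s-i+1}A$ annihilates $C$, one works over $B := A/F^{s-i+1}A$, whose ideal $F^1B$ is nilpotent; ordinary Nakayama then gives $A\cdot N = C$ iff the map $N \to C\ox_A k$ is surjective. That map is always injective, because its composition with the quotient $C\ox_A k \to G_i(C\ox_A k) = G_iC$ is the chosen isomorphism $N \risom G_iC$; hence surjectivity is equivalent to $C\ox_A k \risom G_iC$.
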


\begin{proof}
  {\bf For (1),} recall $C\ox_A k = C/(F^1A)C$.  Let $\kappa\:C\onto
C\ox_A k$ be the quotient map.  Then $F^n(C\ox_A k) := \kappa(F^nC)$ for
all $n$; so $\kappa^{-1}(F^n(C\ox_A k)) = F^nC+\Ker(\kappa)$.  But
$\Ker(\kappa) = (F^1A)C$ and $(F^1A)C \subset F^{1+i}C$.  So
$\kappa^{-1}(F^{1+i}(C\ox_A k)) = F^{1+i}C $.  Thus $C/ (F^{1+i}C)
\risom (C\ox_A k)/ (F^{1+i}(C\ox_A k)) $.  Thus (1) holds.

{\bf For (2),} note $F^{s+1}C=0$.  So $F^{s+1}(\Soc_k(C)) =0$.  But
$(F^1A)(F^sC)\subset F^{1+s}C$.  So $F^sC \subset \Soc_k(C)$.  So $F^sC
= F^s(\Soc_k(C))$.  Thus (2) holds.

{\bf For (3),} set $B := A/F^{s-i+1}A$.  Then $C$ is a $B$-module.  Set $\go N
:= F^1B$.  Then $\go N^{s-i+1} = 0$, and $B/\go N = k$.  So by
Nakayama's lemma, $B\cdot N= C$ iff $(B\cdot N)\ox_Bk$ maps onto
$C\ox_{B}k$, iff $N \to C\ox_{B} k$ is surjective.  However, $B\cdot N=
A\cdot N$ and $C\ox_B k = C\ox_A k$.  Thus $A\cdot N = C$ iff $N\to
C\ox_A k$ is surjective.

By hypothesis, $N\risom G_iC$.  But $G_iC = G_i(C\ox_A k)$ by (1).  So
$N\risom G_i(C\ox_A k)$.  The latter factors through $C\ox_A k$.  So
$N\to C\ox_A k$ is injective; so surjective iff it's an isomorphism; so
iff $C\ox_A k\to G_iC$ is an isomorphism.  Thus (3) holds.

{\bf For (4),} first assume (a).  Then $\Soc_k(C)^* = (G_sC)^*$ and
$(C^*\ox_Ak)^{**} = C^*\ox_Ak$.  Thus \eqref{leSoc} and
\eqref{prFilt}(3) yield (b).  Conversely, assume (b). Then $C^*\ox_A
k\in \bb{AM}_A$ as $G_{-s} (C^*)\in \bb{AM}_A$ by \eqref{prFilt}(4).
Also, $(C^*\ox_Ak)^* = (G_{-s} (C^*))^*$.  So \eqref{leSoc} and
\eqref{prFilt}(3) yield $\Soc_k(C)= ((G_sC)^*)^* = G_sC$.  Thus (a)
holds.  Thus (4) holds.
 \end{proof}

\begin{definition}\label{deType}
  Fix a finite function $\mb t\:\Z\to \Z$.

Call $\mb t$ the {\it local generator type\/} of $C$ if $C\ox_A k$ is in
 \indt{local generator type}
$\bb{AF}_A$ with Hilbert function $\mb t$.
 \indt{local generator type}

Call $\mb t$ the {\it $k$-socle type\/} of $C$ if $C^*$ has local
 \indt{ksot@$k$-socle type} \indt{type (level of)}
generator type $\mb t^*$.  (Then $\Soc_k(C)$ is in $\bb{AF}_A$ and has
Hilbert function $\mb t$ by \eqref{leSoc} and \eqref{prFilt}(4), but
perhaps not conversely.)

Call $C$ {\it level\/} if $C^*\ox_Ak = G_{-s} (C^*)$, or equivalently by
 \indt{level}
\eqref{leSocLev}(4), if $\Soc_k(C) = G_sC$ and $C^*\ox_A k\in
\bb{AM}_A$.  If $C$ is graded, then $C$ is level iff $(C^*)_{-s}$
generates $C^*$ by \eqref{leSocLev}(3).  If $C$ is level, call $s$ its
{\it $k$-socle degree\/} and $t$ its {\it type}.
 \indt{ksod@$k$-socle degree}
 \end{definition}

\begin{lemma}\label{leDiag}
 There's a canonical commutative diagram of surjective maps:
 \begin{equation}\label{eqprGAA1}
 \begin{CD}
 G_\bu(C^*) @>{\kappa_1}>> G_\bu(C^*)\ox_{G_\bu A} k @>{\lambda_1}>> G_{-s}(C^*) \\
   @VV=V                            @VV\mu V               @VV=V  \\
  G_\bu(C^*) @>{\kappa_2}>> G_\bu(C^*\ox_A k) @>{\lambda_2}>> G_{-s}(C^*) 
 \end{CD}
 \end{equation}
 \end{lemma}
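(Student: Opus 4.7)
The strategy is to construct each horizontal arrow as a canonical augmentation or projection, define $\mu$ by a universal property, and reduce the commutativity check to a bookkeeping statement in degree $-s$. Throughout I will use that $G_\bu(C^*) \in \bb{AG}_{G_\bu A}^{\mb h^*}$ by \eqref{prFilt}(4), so it is concentrated in degrees $[-s,-i]$, and that $G_\bu$ is exact on $\bb{EF}_A \supset \bb{AF}_A$ by \eqref{prBiex}.

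The map $\kappa_2$ is obtained by applying $G_\bu$ to the surjection $C^*\onto C^*\ox_Ak$; as $G_\bu$ is exact, $\kappa_2$ is surjective. Since $(F^1A)\cdot(C^*\ox_Ak)=0$, each $G_pA$ with $p\ge 1$ annihilates $G_\bu(C^*\ox_Ak)$, so its $G_\bu A$-module structure factors through $G_0A=k$. Consequently $\kappa_2$ factors uniquely through the canonical augmentation surjection $\kappa_1\:G_\bu(C^*)\onto G_\bu(C^*)\ox_{G_\bu A}k$, giving the map $\mu$ and the commutativity of the left square; surjectivity of $\mu$ follows from that of $\kappa_2$.

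Define $\lambda_1$ and $\lambda_2$ as projections onto the top graded piece, in degree $-s$. Two natural identifications underlie them. First, because $G_{-s-p}(C^*)=0$ for $p\ge 1$, the relations imposed by $\kappa_1$ in degree $-s$ are empty, so $(G_\bu(C^*)\ox_{G_\bu A}k)_{-s}=G_{-s}(C^*)$ canonically, and $\lambda_1$ is visibly surjective. Second, using $F^{-s}C^*=C^*$ together with $(F^1A)C^*\subset F^{-s+1}C^*$, a direct computation yields
\[G_{-s}(C^*\ox_Ak)=\bigl(C^*/(F^1A)C^*\bigr)\big/\bigl(F^{-s+1}C^*/(F^1A)C^*\bigr)=C^*/F^{-s+1}C^*=G_{-s}(C^*),\]
so $\lambda_2$ is likewise surjective. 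Under these identifications, $\mu$ restricts on degree $-s$ to the identity on $G_{-s}(C^*)$, since $\kappa_2$ does.

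Commutativity of the right square is then immediate: $\mu$ is graded of degree $0$, so on each degree $n\neq -s$ of the source $\lambda_2\circ\mu$ vanishes because $\lambda_2$ does, matching $\lambda_1$; on degree $-s$ all three natural identifications collapse to the identity on $G_{-s}(C^*)$. The main obstacle\emdash and really the only nontrivial point\emdash is the bookkeeping needed to verify that these three identifications of the top-degree pieces with $G_{-s}(C^*)$ are mutually compatible; once that is in hand, surjectivity and commutativity are formal.
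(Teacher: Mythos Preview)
Your proof is correct and follows essentially the same approach as the paper's. The paper's proof is more compact: it observes the single chain of submodules
\[
\Bigl(\bigoplus_{j>0}G_jA\Bigr)\cdot G_\bu(C^*)\ \subset\ G_\bu\bigl((F^1A)\cdot C^*\bigr)\ \subset\ \bigoplus_{j>-s} G_j(C^*)
\]
in $G_\bu(C^*)$ and then invokes Noether's Third Isomorphism Theorem, whereas you unpack this chain by constructing each arrow separately via universal properties and degree-$(-s)$ bookkeeping; the underlying content (the identification of the three kernels and the two inclusions between them) is the same.
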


\begin{proof}
  Plainly, in $G_\bu(C^*)$, there's this nested sequence of ideals:
\begin{equation*}\label{eqleDiag} \ts
 \bigl(\bigoplus_{j>0}G_jA\bigr)\cdot G_\bu(C^*) \subset
 G_\bu\bigl((F^1A)\cdot C^*\bigr) \subset \bigoplus_{j>-s} G_j(C^*).
 \end{equation*}
 A few applications of Noether's Third Isomorphism Theorem now yields
\eqref{eqprGAA1}.
 \end{proof}

\begin{proposition}\label{prGAA}
  Assume $G_\bu C$ is level.  Then so is $C$.
 \end{proposition}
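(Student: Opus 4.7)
The plan is to re-express both hypothesis and conclusion in terms of the surjections in diagram \eqref{eqprGAA1} from \eqref{leDiag}, and then carry out a short diagram chase. By \eqref{prFilt}(3), $G_\bu(C^*) = (G_\bu C)^*$, so applying the equivalence (a)$\iff$(b) of \eqref{leSocLev}(4) to $G_\bu C$ in place of $C$ shows that the hypothesis ``$G_\bu C$ is level'' is equivalent to saying that $\lambda_1$ in \eqref{eqprGAA1} is an isomorphism. Similarly, the conclusion ``$C$ is level'' unpacks, via the same equivalence, to the identity $C^*\ox_A k = G_{-s}(C^*)$, which I plan to reach by first showing $\lambda_2$ is an isomorphism and then transferring this back to $C^*\ox_A k$ itself.

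First I would note that $\mu$ is surjective: it arises in \eqref{leDiag} as a further quotient in the displayed nested chain of subobjects of $G_\bu(C^*)$, and in any case commutativity of the left square gives $\kappa_2 = \mu\kappa_1$, while $\kappa_2$ is a quotient map. Commutativity of the right square then gives $\lambda_1 = \lambda_2\mu$. Since the hypothesis makes $\lambda_1$ injective, $\mu$ must be injective as well as surjective, hence an isomorphism, and therefore $\lambda_2 = \lambda_1\mu^{-1}$ is also an isomorphism.

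To finish, I would deduce $C^*\ox_A k = G_{-s}(C^*)$ from the isomorphism $\lambda_2\: G_\bu(C^*\ox_A k)\risom G_{-s}(C^*)$. Since the target is concentrated in degree $-s$, so is $G_\bu(C^*\ox_A k)$, whence $F^n(C^*\ox_A k) = F^{n+1}(C^*\ox_A k)$ for every $n\neq -s$. The induced filtration on $C^*\ox_A k$ is finite (because $C^*\in\bb{AF}_A^{\mb h^*}$), so this constancy forces $F^{-s+1}(C^*\ox_A k) = 0$ and $F^{-s}(C^*\ox_A k) = C^*\ox_A k$; consequently $C^*\ox_A k = G_{-s}(C^*\ox_A k)$, identified with $G_{-s}(C^*)$ under $\lambda_2$. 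The only subtle step is this last one, promoting a statement purely about the associated graded back to the original filtered module; it works here only because the filtration is finite, so no serious obstacle is expected.
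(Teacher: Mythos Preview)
Your proof is correct and follows essentially the same route as the paper's: both use the diagram \eqref{eqprGAA1} to pass from $\lambda_1$ bijective to $\mu$ bijective to $\lambda_2$ bijective, then conclude that $G_\bu(C^*\ox_A k)$ is concentrated in degree $-s$ and use finiteness of the filtration to get $C^*\ox_A k = G_{-s}(C^*)$. Your justification of the finiteness step and of why the hypothesis translates to $\lambda_1$ being an isomorphism is somewhat more explicit than the paper's, but the argument is the same.
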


\begin{proof}
 Consider\eqref{eqprGAA1}.  As $G_\bu C$ is level, $\lambda_1$ is
bijective.  So $\mu$ is injective.  But $\mu$ is surjective.  So $\mu$
is bijective.  So $\lambda_2$ is bijective.  So $G_j(C^*\ox k)=0$ for
$j\neq -s$ by \eqref{leSocLev}(1) with $C^*$ for $C$.  So $G_\bu(C^*\ox k) =
C^*\ox k$.  Thus $C^*\ox k = G_{-s}(C^*)$; that is, $C$ is level.
 \end{proof}

\begin{lemma}\label{leBC1}
 Let $V$ and $W$ be $k$-modules, with $V$ finitely generated.

\(1) Let $K$ be a $k$-algebra.  Assume $V$ is flat.  Then $\Hom(V,W)_K =
\Hom(V_K,W_K)$ and $(V^*)_K = (V_K)^*$.

\(2) Assume that, for every maximal ideal $\go m$ of $k$, there's a
field $K$ containing $k/\go m$ such that $V_K = 0$.  Then $V = 0$.

\(3) Let $\vf\: V\to W$ be a map.  Assume $W$ is flat and finitely
generated.  Assume, for every maximal ideal $\go m$ of $k$, there's a
field $K$ containing $k/\go m$ such that the induced map $\vf_K\: V_K\to
W_K$ is an isomorphism.  Then $\vf$ is an isomorphism.
 \end{lemma}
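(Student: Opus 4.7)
I will prove the three parts in order, as (2) will be used to derive (3). Throughout, I use that $k$ is Noetherian, so a finitely generated flat $k$-module is finitely presented, hence locally free of finite rank, hence projective.

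For (1), since $V$ is flat and finitely generated over the Noetherian ring $k$, it is projective, so it is a direct summand of some $k^n$. Thus $V^*$ is a direct summand of $(k^n)^* = k^n$, and $\Hom(V,W)$ is a direct summand of $\Hom(k^n, W) = W^n$. The formation of each of these direct summands commutes with $\ox_k K$, and the desired isomorphism $\Hom(V,W)_K = \Hom(V_K, W_K)$ follows from the trivial identity $(W^n)_K = (W_K)^n = \Hom(k^n_K,\, W_K)$. Taking $W := k$ yields $(V^*)_K = (V_K)^*$.

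For (2), fix a maximal ideal $\go m$ of $k$ and the given field $K \supset k/\go m$. Since $K$ is a nonzero vector space over the field $k/\go m$, the extension $k/\go m \to K$ is faithfully flat; so $V\ox_k(k/\go m) = 0$, equivalently $V = \go m V$. Localizing at $\go m$ gives $V_{\go m} = \go m V_{\go m}$; as $V_{\go m}$ is finitely generated over the Noetherian local ring $k_{\go m}$, Nakayama's Lemma yields $V_{\go m} = 0$. Since this holds at every maximal ideal, $V = 0$.

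For (3), note $W$ is projective by the argument used in (1). Let $C := \Cok(\vf)$, which is finitely generated as a quotient of $W$. Base change is right exact, so $C_K = \Cok(\vf_K) = 0$ by hypothesis, for each maximal $\go m$; hence (2) gives $C = 0$, so $\vf$ is surjective. Because $W$ is projective, the sequence $0\to \Ker(\vf) \to V \to W \to 0$ splits, giving $V \cong \Ker(\vf)\oplus W$; in particular, $\Ker(\vf)$ is finitely generated. Tensoring the splitting with $K$ yields $V_K \cong \Ker(\vf)_K \oplus W_K$, and since $\vf_K$ is an isomorphism that factors through projection onto the $W_K$ summand, necessarily $\Ker(\vf)_K = 0$. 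Applying (2) once more, $\Ker(\vf) = 0$, and so $\vf$ is an isomorphism.

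The only mildly delicate step is the passage in (2) from a vanishing after base change to a field $K \supset k/\go m$ to the vanishing of $V/\go m V$; this is the reason for invoking faithful flatness of a nonzero vector space, after which everything reduces to Nakayama's Lemma and routine base-change manipulations.
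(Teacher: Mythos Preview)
Your proof is correct and follows essentially the same approach as the paper's: both prove (2) via Nakayama after reducing to $V\otimes_k(k/\mathfrak m)=0$, and both prove (3) by applying (2) first to $\Cok(\varphi)$ and then to $\Ker(\varphi)$. The only cosmetic difference is in (3), where the paper uses flatness of $W$ to see that the short exact sequence stays exact after tensoring with $K$, while you use projectivity of $W$ to split it; these are interchangeable standard moves.
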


\begin{proof}
 {\bf In (1),} as $V$ is locally free of finite rank, $\Hom(V,W)_K\to
\Hom(V_K,W_K)$ is an isomorphism.  And so, taking $W:=k$ yields $(V^*)_K
= (V_K)^*$.

 {\bf For (2),} note $V = 0$ if, for every $\go m$, the localization $V_{\go
m}$ is 0.  By Nakayama's Lemma, $V_{\go m} = 0$ if $V_{\go m}\ox_{k_\go
m}(k_\go m/\go m k_\go m) = 0$; so, if $V\ox_k(k/\go m) = 0$;
so, if $V\ox_kK = 0$.  Thus (2) holds.

{\bf For (3),} set $Q := \Cok \vf$.  Then $V_K\xto{\vf_K} W_K\to Q_K\to 0$ is
exact for every $K$. But $\vf_K$ is an isomorphism.  So $Q_K = 0$.  But
$W$ is finitely generated; so $Q$ is too.  Thus (2) yields $Q = 0$.

Set $P := \Ker \vf$.  As $W$ is flat, $0\to P_K\to V_K\xto{\vf_K} W_K
\to 0$ is exact for every $K$.  But $\vf_K$ is an isomorphism. So $P_K =
0$.  But $V$ is finitely generated; so $P$ is too. So $P = 0$ by (2).
Thus (3) holds.
 \end{proof}

\begin{lemma}\label{leBC1a}
 Let $B$ and $K$  be $k$-algebras, and $M$ and $N$ two $B$-modules.  Then
 \begin{equation}\label{alBaseChg6}
 (M\ox_BN)_K = (M_K)\ox_{B_K}(N_K).
 \end{equation}
 \end{lemma}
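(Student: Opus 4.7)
The plan is to reduce \eqref{alBaseChg6} to repeated associativity of the tensor product. Define a map $\Phi\: (M\ox_BN)_K \to M_K\ox_{B_K}N_K$ on elementary tensors by $(m\ox_Bn)\ox_k\lambda \mapsto (m\ox_k1)\ox_{B_K}(n\ox_k\lambda)$; and a map $\Psi$ in the opposite direction by $(m\ox_k\lambda)\ox_{B_K}(n\ox_k\mu) \mapsto (m\ox_B n)\ox_k\lambda\mu$. Checking that each is well defined over the appropriate tensor-product balancing relations (including the $B$-bilinearity on one side and the $B_K$-bilinearity, where $B_K := B\ox_kK$ acts on $M_K$ and $N_K$ via the respective factors, on the other) is a routine verification.

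Alternatively, and more conceptually, chain the standard associativity isomorphisms. First, since $K$ is a $k$-module and $\ox_k$ is associative with $\ox_B$, there's a canonical isomorphism
\begin{equation*}
 (M\ox_BN)\ox_kK \risom M\ox_B(N\ox_kK) = M\ox_B N_K.
 \end{equation*}
Next, $N_K$ is naturally a $B_K$-module via the right factor, and via restriction a $B$-module; write $N_K = B_K\ox_{B_K}N_K$ and apply associativity once more to get
\begin{equation*}
 M\ox_B N_K = M\ox_B(B_K\ox_{B_K}N_K) \risom (M\ox_B B_K)\ox_{B_K}N_K.
 \end{equation*}
Finally, $M\ox_B B_K = M\ox_B(B\ox_kK) \risom (M\ox_B B)\ox_kK = M\ox_k K = M_K$. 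Composing these three isomorphisms yields \eqref{alBaseChg6}.

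There is no real obstacle; the only thing to track is that the $B_K$-action on $M_K$ used in the statement agrees with the one produced by the chain of associativity isomorphisms, which is immediate from the definitions on elementary tensors. I would present the proof in the second, conceptual form, as it makes the naturality transparent and avoids any verification of well-definedness on generators.
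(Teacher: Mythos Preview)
Your second, conceptual argument is essentially identical to the paper's proof: the paper also chains $(M\ox_BN)_K = M\ox_BN_K$ by associativity, then $M\ox_BN_K = (M\ox_BB_K)\ox_{B_K}N_K$ (which it calls ``Cancellation,'' but is exactly your insertion of $B_K\ox_{B_K}$ followed by associativity), and finally $M\ox_BB_K = M_K$. Your proposal is correct and matches the paper's approach; the explicit $\Phi,\Psi$ version is an unnecessary alternative.
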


\begin{proof}
   By Associativity, $(M\ox_BN)_K = M\ox_BN_K$.  But $N_K$ is a
$B_K$-module, and it's an $B$-module via restriction of scalars.  So
$M\ox_BN_K = (M\ox_BB_K)\ox_{B_K}N_K$ by Cancellation.  But $M\ox_BB_K =
M_K$ also by Cancellation.  Thus \eqref{alBaseChg6} holds.
 \end{proof}

\begin{lemma}\label{leBaseChg}
  Let $K$ be a $k$-algebra, and $n\in\Z$.

\smallskip

\(1) Then the defining surjection is an isomorphism: $(F^nC)_K
\risom F^n(C_K)$.

\(2) Then there's a canonical isomorphism:  $G_n(C_K) = (G_nC)_K$.

\(3) Then the conditions of \eqref{se4} hold for $A_K$ and $C_K$ in
place of $A$ and $C$.

\(4) Then there are canonical isomorphisms of filtered modules:
 \begin{align}
 (C^*)_K &= (C_K)^*                     \label{alBaseChga}\\
 (C^*\ox_Ak)_K &= (C_K)^*\ox_{A_K}K      \label{alBaseChgb}
 \end{align}
 \end{lemma}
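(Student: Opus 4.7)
The whole lemma rests on the flatness properties packaged in \eqref{prFilt}(1): since $C\in\bb{AF}_A^{\mb h}$ and $\mb h$ is finite, each of $F^nC$, $C/F^nC$, $G_nC$ (and hence $C$ itself) is locally free of finite rank over $k$. Consequently, every short exact sequence
\[
0\to F^nC\to C\to C/F^nC\to 0,\qquad 0\to F^{n+1}C\to F^nC\to G_nC\to 0
\]
splits over $k$, so it stays exact after applying $\bu\ox_kK$.

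For (1), apply $\bu\ox_kK$ to the first sequence above. The splitting over $k$ gives an injection $(F^nC)_K\hookrightarrow C_K$, whose image is by definition $F^n(C_K)$; thus the tautological surjection $(F^nC)_K\twoheadrightarrow F^n(C_K)$ is an isomorphism. Part (2) then follows immediately: tensoring the sequence $0\to F^{n+1}C\to F^nC\to G_nC\to 0$ with $K$ yields, via (1), the exact sequence $0\to F^{n+1}(C_K)\to F^n(C_K)\to (G_nC)_K\to 0$, hence a canonical isomorphism $G_n(C_K)=(G_nC)_K$, which is locally free of rank $\mb h(n)$ over $K$.

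For (3), note that the setup of \eqref{se4} asks that $A_K/F^1(A_K)=K$, that $C_K$ is finitely filtered and belongs to $\bb{AF}_{A_K}^{\mb h}$, and that the induced filtration on $C_K$ is the one already used. The first claim follows by applying (1) to $A$ itself (with its filtration): $F^1(A_K)=(F^1A)_K$, and $A_K/(F^1A)_K=(A/F^1A)\ox_kK=K$. The fact that $C_K\in\bb{AF}_{A_K}^{\mb h}$ is exactly what (2) established, together with the finiteness of $F^\bu C$, which is preserved under base change. The compatibility of the induced filtration is built into the definition in \eqref{se3}.

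For (4), the $k$-dual identity $(C^*)_K=(C_K)^*$ is \eqref{leBC1}(1) applied to $V:=C$, which is legitimate because $C$ is finitely generated and flat over $k$ by \eqref{prFilt}(1). To see this is an isomorphism \emph{in\/} $\bb F_{A_K}$, combine \eqref{eqFm4} with (1):
\[
\bigl(F^p(C^*)\bigr)_K=\bigl((C/F^{1-p}C)^*\bigr)_K=\bigl((C/F^{1-p}C)_K\bigr)^*=\bigl(C_K/F^{1-p}(C_K)\bigr)^*=F^p\bigl((C_K)^*\bigr),
\]
where the second equality again uses that $C/F^{1-p}C$ is finitely generated and flat. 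For the second isomorphism in (4), apply \eqref{leBC1a} with $B:=A$, $M:=C^*$, $N:=k$ to get $(C^*\ox_Ak)_K=(C^*)_K\ox_{A_K}K$, and then substitute $(C^*)_K=(C_K)^*$ from the first part; the filtrations match because $k$ carries the trivial filtration, so the tensor-product filtration reduces to the image filtration of $F^\bu(C^*)$ (resp.\ $F^\bu(C_K)^*$) under the quotient map. The main (very mild) obstacle is the bookkeeping in (4): checking that all identifications are strict maps of filtered modules. This is purely formal once (1)--(2) are in hand, because the filtrations in sight are defined either as images of flat submodules or as duals of flat quotients, and in both cases $\bu\ox_kK$ preserves the defining exact sequences.
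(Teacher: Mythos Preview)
Your proof is correct and follows essentially the same route as the paper's. The only noticeable variation is in (1): you use that $0\to F^nC\to C\to C/F^nC\to 0$ splits over $k$ (by projectivity of $C/F^nC$), whereas the paper argues inductively via $0\to F^{n+1}C\to F^nC\to G_nC\to 0$ (using flatness of $G_nC$) together with $F^nC=C$ for $n\ll0$; your version is slightly more direct. One small imprecision: in (3) you say ``applying (1) to $A$ itself,'' but (1) is stated for $C\in\bb{AF}_A^{\mb h}$, and $A$ need not be $k$-Artinian; the paper instead tensors $0\to F^1A\to A\to k\to 0$ directly (exactness survives because $k$ is $k$-flat), which is really what your argument reduces to. Similarly, in (4) the step $F^p((C^*)_K)=(F^p(C^*))_K$ requires (1) applied to $C^*$, which is legitimate because $C^*\in\bb{AF}_A^{\mb h^*}$ by \eqref{prFilt}(4); the paper cites this explicitly, and you leave it implicit in ``combine \eqref{eqFm4} with (1).''
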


\begin{proof}
 {\bf For (1),} tensor the sequence $0\to F^{n+1}C \to F^nC\to G_nC\to
0$ with $K$.  The resulting sequence is exact, because $G_nC$ is flat.
So $(F^{n+1}C)_K \subset (F^nC)_K$ and
 \begin{equation}\label{eqBaseChg3}
 (F^nC)_K \big/ (F^{n+1}C) _K = (G_nC)_K.
 \end{equation}
 But $F^nC = C$ for $n\ll0$.  So $(F^nC)_K \subset C_K$ for all
$n$.  Thus (1) holds.

{\bf Note (2)} follows from \eqref{eqBaseChg3} and (1).

{\bf For (3),} tensor $0\to F^1A\to A\to k\to 0$ with $K$, getting an exact
sequence.  So $(F^1A)_K = F^1(A_K)$ and $A_K\big/(F^1A)_K = K$, as
desired.

Recall $G_nC$ is locally free of constant rank $\mb h(n)$ over $k$;
hence, $G_n(C_K)$ is so over $K$ by (2).  Hence $C_K$ is in
$\bb{AF}_{A_K}$ and has Hilbert function $\mb h$. Thus (3) holds.

{\bf For (4),} note \eqref{alBaseChga} results from these canonical
isomorphisms, justified after:
 \begin{align}
 F^n((C^*)_K)   &= (F^n(C^*))_K \label{alBaseChg1}\\
                &= ((C/F^{1-n}C)^*)_K\label{alBaseChg2}\\
                &= ((C/F^{1-n}C)_K)^*\label{alBaseChg3}\\
                &= (C_K\big/(F^{1-n}C)_K)^*\label{alBaseChg4}\\
                &= (C_K\big/F^{1-n}(C_K))^*\label{alBaseChg4a}\\
                &= F^n((C_K)^*)\label{alBaseChg5}.
  \end{align}

 First, \eqref{alBaseChg1} holds by (1), as $C^* \in \bb{AF}_A$ by
\eqref{prFilt}(4).  Next, \eqref{alBaseChg2} and \eqref{alBaseChg5} hold
by \eqref{eqFm4}.  Next, $C/F^{1-n}C$ is locally free of finite rank by
\eqref{prFilt}(1); so \eqref{leBC1}(1) yields \eqref{alBaseChg3}. Next,
\eqref{alBaseChg4} holds by the right exactness of $\bu\ox_kK$.
Finally, \eqref{alBaseChg4a} holds by (1).  Thus \eqref{alBaseChga}
holds.

Take $B:=A$ and $N := k$ in \eqref{alBaseChg6}.  Note $N_K = k_K$ where
$k$ is viewed as a $k$-module via the composition $k\to A\to k$.  But
that's the identity.  Thus $N_K = K$.

Take $M := C^*$ also.  Then \eqref{alBaseChg6} becomes $(C^*\ox_Ak)_K =
(C^*)_K \ox_{A_K}K$.  This canonical isomorphism preserves the
filtrations, because the $n$th term on each side is, by definition, the
image of $(F^n(C^*))_K$.  Thus \eqref{alBaseChga} now yields
\eqref{alBaseChgb}.
  \end{proof}

\begin{proposition}\label{prFib}
 Given a function $\mb t\:\Z\to \Z$, these conditions are equivalent:
 \begin{enumerate}
 \item The $A$-module $C$ has $k$-socle type $\mb t$.

\item For every $k$-algebra $K$, the $A_K$-module $C_K$ has $K$-socle
type $\mb t$.

\item
    \begin{enumerate} \def\thenenumii{\alpha{enumii}}
    \item The $k$-module $G_n(C^*\ox_Ak)$ is flat for $-s< n \le -i$.

    \item For every maximal ideal $\go m$ of $k$, there's a field $K$
    containing $k/\go m$ such that $\Soc_K(C_K)$ has Hilbert function
    $\mb t$.
    \end{enumerate}
 \end{enumerate}
 \end{proposition}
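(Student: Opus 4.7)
The plan is to show $\(1)\iff\(2)$ and $\(1)\Rightarrow\(3)$ as routine base-change manipulations, and then focus on $\(3)\Rightarrow\(1)$, the main content. The key identities throughout are the base-change isomorphism $(C^*\ox_A k)_K = (C_K)^*\ox_{A_K} K$ from \eqref{alBaseChgb} and the compatibility $G_n((\bu)_K)=(G_n\bu)_K$ from \eqref{leBaseChg}(2).

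Condition \(1) asserts that $C^*\ox_A k\in\bb{AF}_A^{\mb t^*}$, i.e., each $G_n(C^*\ox_A k)$ is locally free of constant rank $\mb t^*(n)$ with finite filtration. The two identities above transport this assertion under any base change to a $K$-algebra, giving \(2); conversely, \(2) specializes at $K=k$ to recover \(1). For $\(1)\Rightarrow\(3)$(a), locally free implies flat. For \(3)(b), take $K:=k/\go m$; then by \(2) the module $(C_K)^*\ox_{A_K}K$ has Hilbert function $\mb t^*$, and \eqref{leSoc} gives $\Soc_K(C_K)=((C_K)^*\ox_{A_K}K)^*$, which thus has Hilbert function $\mb t$.

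For $\(3)\Rightarrow\(1)$, the filtration of $C^*\ox_A k$ is inherited from the finite filtration of $C^*\in\bb{AF}_A^{\mb h^*}$, so it remains to show each $G_n(C^*\ox_A k)$ is locally free of constant rank $\mb t^*(n)$. Outside $[-s,-i]$, $G_n(C^*)=0$ and hence $G_n(C^*\ox_A k)=0$; meanwhile, condition \(3)(b), combined with the embedding $\Soc_K(C_K)\subset C_K$ of filtered modules, gives $\mb t(p)\le\mb h(p)$ for all $p$, so $\mb t^*(n)$ also vanishes for $n\notin[-s,-i]$. At $n=-s$, the inclusion $(F^1A)\cdot C^* =(F^1A)\cdot F^{-s}(C^*)\subset F^{-s+1}(C^*)$ forces $G_{-s}(C^*\ox_A k)=G_{-s}(C^*)$, locally free of rank $\mb h^*(-s)=\mb h(s)$; and \(3)(b) combined with \eqref{leSocLev}(2) and \eqref{leBaseChg}(2) yields $\mb t(s)=\mb h(s)$, matching $\mb t^*(-s)$.

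The remaining range $-s<n\le-i$ is the main obstacle. There \(3)(a) gives flatness, and finite generation of $C^*$ over $k$ passes to the quotient $G_n(C^*\ox_A k)$, so the latter is locally free of finite but a priori non-constant rank. For each maximal ideal $\go m$, choose $K\supset k/\go m$ as in \(3)(b); flatness together with \eqref{leBaseChg}(2) and \eqref{alBaseChgb} gives that the rank at $\go m$ equals $\dim_K G_n((C^*\ox_A k)_K)=\dim_K G_n((C_K)^*\ox_{A_K}K)=\mb t^*(n)$. Since $\Spec k$ is Noetherian, every connected component contains a closed point, and so local constancy of the rank function upgrades this pointwise equality to the constant value $\mb t^*(n)$ globally. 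The subtle step is precisely this pinning-down: flatness from \(3)(a) alone permits the rank to vary, so it must be combined with the fiber-dimension input from \(3)(b) at every closed point, together with Noetherianness of $\Spec k$, to force the correct constant rank.
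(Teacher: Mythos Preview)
Your proof is correct and follows essentially the same approach as the paper's: establish the cycle via the base-change identities \eqref{alBaseChgb} and \eqref{leBaseChg}(2), with the substantive work in $(3)\Rightarrow(1)$ being the verification that each $G_n(C^*\ox_A k)$ is locally free of the correct constant rank by checking fibers at closed points. One small nicety worth noting: the paper is careful to invoke \emph{the proof of} \eqref{leBaseChg}(2) rather than the statement, since that lemma is formulated under the standing hypothesis $C\in\bb{AF}_A^{\mb h}$ (constant-rank Hilbert function), which for $C^*\ox_A k$ is precisely what you are still establishing; your phrase ``flatness together with \eqref{leBaseChg}(2)'' shows you grasp that flatness of the graded pieces is the only ingredient actually used, but the citation as written points to a statement whose hypotheses are not yet in hand.
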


\begin{proof}
 Assume (1).  By \eqref{leBaseChg}(3), the conditions of \eqref{se4}
hold for the pair $A_K,\,C_K$ and also, owing to (1), for the pair
$A_K,\,(C^*\ox_Ak)_K$ with $\mb t^*$ for $\mb h$.  Hence,
\eqref{alBaseChgb} implies that $(C_K)^*\ox_{A_K}K$ is in
$\bb{AF}_{A_K}$ and has Hilbert function $\mb t^*$.  Thus (2) holds.

Assume (2).  Then (3) holds as it's a special case of (2).

Assume (3).  Fix $n$.  Note $C^*\in \bb{AF}_A$ with Hilbert polynomial
$\mb h^*$ by \eqref{prFilt}(4).  So \eqref{prFilt}(1) with $C^*$ for $C$
implies $C^*$ is finitely generated over $k$.  So $C^*\ox_A k$ is too.
Thus $G_n(C^*\ox_A k)$ is.

If $n\le-s$, then $F^n(C^*) = C^*$, so $F^n(C^*\ox_Ak) = C^*\ox_Ak$.  If
$n>-i$. then $F^{n}(C^*) = 0$, so $F^{n}(C^*\ox_Ak) = 0$.  Thus
$G_n(C^*\ox_Ak) = 0$ if $n<-s$ or $n>-i$.

However, $G_{-s}(C^*\ox_Ak) = G_{-s}(C^*)$ by \eqref{leSocLev}(1); so
$G_{-s}(C^*\ox_Ak)$ is flat.  Thus (a) implies that, whatever $n$ is,
$G_n(C^*\ox_Ak)$ is flat, so locally free of finite rank.  It remains to
show that $G_n(C^*\ox_Ak)$ has constant rank $\mb t^*(n)$.

So given any $\go m$ and $K$ as in (b), we have to see
$\dim_K((G_n(C^*\ox_Ak))_K)= \mb t^*(n)$.  But $(G_n(C^*\ox_Ak))_K =
G_n((C^*\ox_Ak)_K)$ by the proof of \eqref{leBaseChg}(2) with
$C^*\ox_Ak$ for $C$, as that proof doesn't require the rank to be
constant.

So by \eqref{alBaseChgb}, we have to see $(C_K)^*\ox_{A_K}K$ has Hilbert
function $\mb t^*$, or by \eqref{prFilt}(3), equivalently,
$((C_K)^*\ox_{A_K}K)^*$ has Hilbert function $\mb t$.  Apply
\eqref{leSoc} with $ C_K$ for $C$, noting $C_K\in \bb{AF}_{A_K}$ as $K$
is a field.  Thus (b) now yields (1).
 \end{proof}

\begin{corollary}\label{CoLevel}
 The following conditions are equivalent:
 \begin{enumerate}
 \item The $A$-module $C$ is level of type $t$.

\item For every $k$-algebra $K$, the $A_K$-module $C_K$ is level of type
$t$.

\item For every maximal ideal $\go m$ of $k$, there is a field $K$
containing $k/\go m$ such that $$\dim_K(\Soc_K(C_K)) = t.$$
 \end{enumerate}
 \end{corollary}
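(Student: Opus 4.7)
The plan is to reduce the corollary to Proposition~\ref{prFib}, applied to the finite function $\mb t$ with $\mb t(s):=t$ and $\mb t(p):=0$ for $p\neq s$.  The first step is to observe that ``$C$ is level of type $t$'' is equivalent to ``$C$ has $k$-socle type $\mb t$'': unpacking \eqref{deType}, the latter says $C^*\ox_A k$ has Hilbert function $\mb t^*$, hence is locally free of constant rank $t$ concentrated in degree $-s$; via \eqref{leSocLev}(1) applied to $C^*$ this reads $C^*\ox_A k=G_{-s}(C^*)\in\bb{AM}_A$, which by \eqref{leSocLev}(4) is precisely the condition ``level'', and the type is $\rank G_sC=\mb h(s)=t$ by \eqref{prFilt}(3).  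Since $C_K\in\bb{AF}_{A_K}^{\mb h}$ for any $k$-algebra $K$ by \eqref{leBaseChg}(3), the same reformulation applies to $C_K$.

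With this reformulation in hand, $(1)\iff(2)$ is immediate from $(1)\iff(2)$ of Proposition~\ref{prFib}, and $(2)\Rightarrow(3)$ is trivial: take $K:=k/\go m$ and read off $\dim_K\Soc_K(C_K)=t$ from the definition of ``level of type $t$''.

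The crux is $(3)\Rightarrow(1)$.  Set $N:=C^*\ox_A k$.  By \eqref{leSoc} combined with the base-change identity \eqref{alBaseChgb}, one has $\Soc_K(C_K)=(N_K)^*$, so hypothesis $(3)$ gives $\dim_K N_K=t$ for each maximal ideal $\go m$ and its chosen field $K$.  By \eqref{leSocLev}(1) applied to $C^*$, the canonical surjection $N\onto G_{-s}(N)$ identifies with a surjection $\varphi\colon N\onto G_{-s}(C^*)$, whose target is flat and finitely generated over $k$ of rank $t$ since $G_{-s}(C^*)=(G_sC)^*$ by \eqref{prFilt}(3).  After base change to $K$, the map $\varphi_K$ is a surjection of $K$-vector spaces of the same dimension $t$ (using \eqref{leBaseChg}(2) for the target), hence an isomorphism.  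Since $N$ is a quotient of the finitely generated $k$-module $C^*$ (\eqref{prFilt}), \eqref{leBC1}(3) upgrades this fibrewise isomorphism to $\varphi$ itself being an isomorphism.  Therefore $C^*\ox_A k=G_{-s}(C^*)\in\bb{AM}_A$, and so $C$ is level of type $t$.

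The only genuine subtlety lies in this last step: rigidifying fibrewise data into a global statement.  The key idea is that duality converts the pointwise hypothesis $\dim_K\Soc_K(C_K)=t$ into an equality of $K$-dimensions between $N_K$ and the base change of a locally free $k$-module of rank $t$, so that the ``Nakayama-type'' criterion \eqref{leBC1}(3) applies directly to the canonical surjection $\varphi$.
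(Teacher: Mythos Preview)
Your proof is correct and follows essentially the same approach as the paper: both reduce to Proposition~\ref{prFib} with the same level $\mb t$, and for the key implication $(3)\Rightarrow(1)$ both show that the canonical surjection $C^*\ox_A k\onto G_{-s}(C^*)$ is an isomorphism by computing that source and target have the same $K$-dimension $t$ over each fibre and then invoking \eqref{leBC1}(3). The only cosmetic difference is that the paper phrases this as verifying condition~(3a) of \eqref{prFib} (namely $G_n(C^*\ox_Ak)=0$ for $-s<n$), whereas you pass directly to the ``level'' criterion of \eqref{leSocLev}(4); the underlying computation is identical.
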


\begin{proof}
 Define $\mb t\:\Z\to \Z$ by $\mb t(s) := t$ and $\mb t(j) := 0$ for
$j\neq s$.  Then \eqref{CoLevel} results from \eqref{prFib}, because
\eqref{CoLevel}(3) implies $G_n(C^*\ox_Ak)=0$ for $-s< n$, as shown
next.

We have to show $F^{1-s}(C^*\ox_Ak)=0$, or equivalently, $C^*\ox_Ak
\risom G_{-s}(C^*\ox_Ak)$.  But $G_{-s}(C^*\ox_Ak) = G_{-s}(C^*)$ by
\eqref{leSocLev}(1) with $C^*$ for $C$.  Thus there's a canonical surjection
$\vf\: C^*\ox_Ak \onto G_{-s}(C^*)$, and we have to show it's an
isomorphism.

So given any $\go m$ and $K$ as in (3), it suffices by \eqref{leBC1}(3)
to show $\vf\ox_k K$ is an isomorphism, as $G_{-s}(C^*)$ is projective.
Since $\vf\ox_k K$ is surjective and $K$ is a field, it suffices to show
its source and target have the same dimension over $K$.

The source is equal to $(C_K)^*\ox_{A_K}K$ by \eqref{alBaseChgb}.  Its
dimension is equal to that of $((C_K)^*\ox_{A_K}K)^*$, as $K$ is a
field; so to that of $\Soc_K(C_K)$ by \eqref{leSoc}; so to $t$ by (3).
Finally, the target is $(G_{-s}(C^*))_K$, and $G_{-s}(C^*)$ is locally
free of rank $\mb h^*(-s)$ by \eqref{prFilt}(4); so the dimension of the
target is $\mb h(s) =: t$, as desired.
 \end{proof}

\begin{sbs}[Multilevel modules]\label{sbMLev}
 First, assume $A$ and $C$ are graded.  Set $D :=C^*$,
\begin{equation}\label{eqMlev}\ts
 \Delta^mD := A\bigl(\bigoplus_{j= m}^sD_{-j}\bigr) \subset D
 \and \Lambda^mC := (\Delta^mD)^* \quad\text{for all }m.
 \end{equation}
  Fix $\mb h_m\:\Z\to \Z$ for all $m$.  If $\Delta^mD \in
\mb{H\Delta}_D^{\mb h_m^*}$ for all $m$, call $C$ \textit{multilevel}
and the $\mb h_m$ \textit{its Hilbert functions}.
 \indt{multilevel} \indn{hmbzmbm@$\mb h_m$} \indn{LamC@$\Lambda^mC$}

Given such a $C$, note each $\Lambda^mC$ is a flat quotient of $C$.
Moreover, $\Delta^mD = D$ for $m\le i$, and $\Delta^mD = 0$ for $m > s$;
so $\mb h_m = \mb h$ for $m\le i$, and $\mb h_m =0$ for $m > s$.  Also,
$C$ is level iff $h_m = h$ for $m\le s$, iff $\Lambda^mC = C$ for $m\le
s$.  Lastly, for all $m$ and $p \ge m$, plainly $(\Lambda^mC)_p =C_p$,
and so $\mb h_m(p) = \mb h(p)$.

Next, assume $C$ is filtered.  If the associated graded module $G_\bu C$
is multilevel (but not necessarily of the same $k$-socle type as $C$), call
$C$ \textit{multilevel}.  And if the $\mb h_m$ are the Hilbert functions
of $G_\bu C$, call them the \textit{Hilbert functions} of $C$ as well.

Notice that, if $C$ is level filtered, $C$ need not be multilevel, as
$G_\bu C$ may be of a different $k$-socle type.  For example, below, the
algebra $C$ in \eqref{exArt} is $k$-Gorenstein, so level of type 1 by
\eqref{prArt}, but $G_\bu C$ isn't $k$-Gorenstein when $C$ is filtered
via the standard weighting, although $G_\bu C$ is $k$-Gorenstein via a
different weighting.

Finally, given a graded $A$-module $B$, denote the sets of all
multilevel homogeneous and of all multilevel filtered quotients of $B$
by $\mb{H\Lambda}_B^{\{\mb h_n\}}$ and by $\mb{F\Lambda}_B^{\{\mb
h_n\}}$.
 \indn{HmbLamhBrm@$\mb{H\Lambda}_B^{\{\mb h_m\}}$}
 \indn{FmbLamhBrm@$\mb{F\Lambda}_B^{\{\mb h_m\}}$}
 \end{sbs}

\begin{proposition}\label{prMLev}
 In the setup of \eqref{sbMLev}, fix $C \in \mb{H\Lambda}_B^{\{\mb
h_n\}}$, fix a $k$-algebra $K$, and fix an integer $n$. For all $p$, set
$\mb t(p) := \mb h_p(p) - \mb h_{p+1}(p)$.

\(1) Then $C_K \in \mb{H\Lambda}_{B_K}^{\{\mb h_n\}}$ and
$\Lambda^m(C_K) = (\Lambda^mC)_K$ and $\Delta^m(C^*_K) =
(\Delta^mC^*)_K$.

\(2) Then $\mb t$ is the $k$-socle type of $C$.

\(3) Set $\mb h^\di_m := \mb h_n$ for $m\le n$ and $\mb h^\di_m := \mb h_m$
for $m\ge n$.  Then $\Lambda^nC \in \mb{H\Lambda}_{B_K}^{\{\mb h^\di_n\}}$.

 \(4) Assume $\mb t(n-1)\neq0$, and set $\?C := \bigl(\Delta^{n-1}D
\big/ \Delta^nD\bigr)^*$ and\/ $\?{\mb h} := \mb h_{n-1} - \mb h_n$.
Then $\?C$ is level, and $\?{\mb h}$ is its Hilbert function.
 \end{proposition}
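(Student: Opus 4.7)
The plan rests on Macaulay duality \eqref{thGMD3} and the explicit formula $\Delta^mD = A\bigl(\bigoplus_{j\ge m}D_{-j}\bigr)$. For (1), I would first recall $(C_K)^* = D_K$ by \eqref{leBaseChg}(4), then unwind the definition of $\Delta^m$ to get $(\Delta^mD)_K = A_K\bigl(\bigoplus_{j\ge m}(D_K)_{-j}\bigr) = \Delta^m((C_K)^*)$, since base change commutes with direct sums and with the $A$-action. Dualizing gives $(\Lambda^mC)_K = \Lambda^m(C_K)$, and the flatness of $B_K^\dg/\Delta^m((C_K)^*)$ and the matching Hilbert functions transfer from $C$ because every module involved is locally free of finite rank over $k$. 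For (2), I would compute $C^*\ox_A k = D/(F^1A)D$ graded-piece by graded-piece. The crux is that $(\Delta^pD)_{-p} = D_{-p}$ (only the $A_0$-summand of $A\cdot D_{-p}$ reaches degree $-p$) and $(\Delta^{p+1}D)_{-p} = \sum_{j\ge 1}A_j D_{-p-j} = ((F^1A)D)_{-p}$. Hence $(D/(F^1A)D)_{-p}$ agrees with $(\Delta^pD/\Delta^{p+1}D)_{-p}$, which is locally free of rank $\mb h_p(p)-\mb h_{p+1}(p) = \mb t(p) = \mb t^*(-p)$ via the $k$-flatness of $B^\dg/\Delta^pD$ and $B^\dg/\Delta^{p+1}D$ and a standard Tor argument. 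So $C^*\ox_A k$ has Hilbert function $\mb t^*$, and by \eqref{deType} this means $\mb t$ is the $k$-socle type of $C$.

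For (3), the idea is to identify $\Delta^m(\Delta^nD) = \Delta^{\max(m,n)}D$. When $m\ge n$, the computation in (2) gives $(\Delta^nD)_{-j} = D_{-j}$ for $j\ge n$, so $\Delta^m(\Delta^nD) = A\bigl(\bigoplus_{j\ge m}D_{-j}\bigr) = \Delta^mD$. When $m\le n$, the inclusion $\Delta^m(\Delta^nD)\subset\Delta^nD$ is automatic, while the reverse follows because each generator $D_{-\ell}$ of $\Delta^nD$ (with $\ell\ge n\ge m$) sits in $(\Delta^nD)_{-\ell}$ and so in $\Delta^m(\Delta^nD)$; hence $\Delta^m(\Delta^nD) = \Delta^nD$. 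The Hilbert functions then become $\mb h^\di_m$, and the $k$-flatness of $B^\dg/\Delta^m(\Delta^nD)$ is inherited from one of the $B^\dg/\Delta^rD$ already known to be $k$-flat.

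For (4), set $N := \Delta^{n-1}D/\Delta^nD$. The short exact sequence $0\to N\to B^\dg/\Delta^nD\to B^\dg/\Delta^{n-1}D\to 0$ has both right-hand terms $k$-flat, so $N$ is $k$-flat; being finitely generated over Noetherian $k$, it is locally free of finite rank, with $N_{-p}$ of rank $\mb h_{n-1}(p)-\mb h_n(p) = \?{\mb h}(p)$. Thus $\?C := N^* \in \bb{AF}_A^{\?{\mb h}}$ and $\?C^* = N$. The decomposition $\Delta^{n-1}D = A\cdot D_{-(n-1)}+\Delta^nD$ shows that $N$ is generated as an $A$-module by $N_{-(n-1)}$. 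To apply the graded-level criterion following \eqref{deType} (which uses \eqref{leSocLev}(3)), I must check $s(\?{\mb h}) = n-1$: for $p\ge n$, both $\mb h_{n-1}(p)$ and $\mb h_n(p)$ equal $\mb h(p)$, so $\?{\mb h}(p) = 0$; at $p = n-1$, part (2) gives $\?{\mb h}(n-1) = \mb h(n-1) - \mb h_n(n-1) = \mb t(n-1)\ne 0$ by hypothesis. This shows $\?C$ is level with Hilbert function $\?{\mb h}$. The main obstacle throughout is careful degree bookkeeping in $B^\dg$, particularly separating the contributions to $(\Delta^nD)_{-j}$ from the various generators $D_{-\ell}$ with $\ell\ge n$.
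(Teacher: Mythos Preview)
Your proposal is correct and follows essentially the same route as the paper: for (2) both identify $(F^1A\cdot D)_{-p}$ with $(\Delta^{p+1}D)_{-p}$; for (3) both reduce to $\Delta^m\Delta^nD = \Delta^{\max(m,n)}D$; and for (4) both use a short exact sequence to see that $\Delta^{n-1}D/\Delta^nD$ is locally free with Hilbert function $\?{\mb h}^*$ and then observe it is generated in the single degree $-(n-1)$. One small slip in (3): the flatness you must check is that of $\Delta^nD\big/\Delta^m(\Delta^nD)$, since the multilevel condition in \eqref{sbMLev} asks that $\Delta^m(\Delta^nD)\in\mb{H\Delta}_{\Delta^nD}^{(\mb h^\di_m)^*}$, not a condition on $B^\dg/\Delta^m(\Delta^nD)$. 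For $m\le n$ this quotient is zero; for $m\ge n$ the paper gets it from the exact sequence $0\to\Delta^nD/\Delta^mD\to D/\Delta^mD\to D/\Delta^nD\to 0$, whose last two terms are already known to be flat.
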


\begin{proof}
 {\bf  In (1),} the assertions are easy to check.

{\bf For (2),} by \eqref{deType}, we have to show that $D_{-p}\big/(F^1A\cdot
D)_{-p}$ is locally free of rank $\mb t(p)$.  But $(F^1A\cdot D)_{-p} =
(\Delta^{p+1}D)_{-p}$, and $\Delta^{p+1}D \in \mb {H\Delta}_D^{\mb
h_{p+1}^*}$.  Thus (2) holds.

{\bf For (3),} note $\Delta^m\Delta^nD = \Delta^nD$ for $m\le n$ and
$\Delta^m\Delta^nD= \Delta^mD$ for $m\ge n$.  For $m \ge n$, form the
following exact sequence:
 \begin{equation}\label{eqprMLev}
  0 \to \Delta^nD \big/\Delta^mD \to D\big/ \Delta^mD \to D\big/ \Delta^nD \to 0.
  \end{equation}
 For all $p$, recall $\Delta^pD \in \smash{\mb{ H\Delta}}_D^{\mb
h_p^*}$.  Hence $\Delta^m\Delta^nD \in \smash{\mb
{H\Delta}}_{\Delta^nD}^{\mb h^{\di*}_m}$.  Thus (3) holds.

{\bf For (4),} note $\?C \in \bb {AG}_A^{\?{\mb h}}$ by \eqref{eqprMLev} with
$n-1$ for $n$ and with $n$ for $m$.  However, plainly,
$(\Delta^{n-1}D)_m = (F^1A\cdot \Delta^{n-2}D)_m$ for $m>-n+1$, and
$(\?C^*)_m = 0$ for $m<-n+1$; so $\?C^*\ox k = (\?C^*)_{-n+1}$.  Thus
(4) holds.
 \end{proof}

\begin{proposition}\label{prMLevCon}
  Keep the setup of \eqref{sbMLev}, and fix $n$.  Set $\?C := \bigl(D
\big/ \Delta^nD\bigr)^*$ and\/ $\?{\mb h} := \mb h - \mb h_n$.  Set $\mb
h'_m := \mb h_n$ for $m\le n$ and $\mb h'_m := \mb h_m$ for $m\ge n$.
Assume $\?C$ is level, and $\?{\mb h}$ is its Hilbert function.  Assume
$\Lambda^nC \in \mb{H\Lambda}_B^{\{\mb h'_n\}}$ and  $\mb h_m = \mb
h$ for $m< n$.  Then $C \in \mb{H\Lambda}_B^{\{\mb h_n\}}$.
 \end{proposition}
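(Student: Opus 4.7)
The plan is to verify, for each integer $m$, that $\Delta^m D \in \mb H\Delta_D^{\mb h_m^*}$; i.e., $\Delta^m D$ lies in $\bb{AM}_A$ with Hilbert function $\mb h_m^*$ and $D/\Delta^m D$ is flat over $k$. Split into the cases $m \ge n$ and $m < n$.

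For $m \ge n$, a direct check from the definition of $\Delta^\bu$ gives $\Delta^m(\Delta^n D) = \Delta^m D$. Hence the hypothesis on $\Lambda^nC$, applied to $\Lambda^n C = (\Delta^n D)^*$, yields $\Delta^m D \in \bb{AM}_A$ of Hilbert function $\mb h_m^*$ and makes $\Delta^n D/\Delta^m D$ flat over $k$. The assumption that $\?C \in \bb{AM}_A$, combined with dualizing the natural surjection $C \onto \Lambda^n C$ whose kernel is $\?C$, identifies $D/\Delta^n D$ with $\?C^*$, so $D/\Delta^n D$ lies in $\bb{AM}_A$ too, hence is flat. The short exact sequence
\[
0 \to \Delta^n D/\Delta^m D \to D/\Delta^m D \to D/\Delta^n D \to 0
\]
then shows $D/\Delta^m D$ is flat, giving $\Delta^m D \in \mb H\Delta_D^{\mb h_m^*}$.

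For $m < n$, the hypothesis $\mb h_m = \mb h$ gives $\mb h_m^* = \mb h^*$, the Hilbert function of $D$, so it suffices to show $\Delta^m D = D$. The degenerate case $\?C = 0$ forces $\mb h = \mb h_n$ and $D = \Delta^n D$, so the claim collapses onto the hypothesis on $\Lambda^nC$; assume henceforth $\?C \ne 0$. Since $(D/\Delta^n D)_p = 0$ for $p \le -n$ (as $D_p$ sits among the generators of $\Delta^n D$ in that range), the top degree $s' := s(\?{\mb h})$ of $\?C$ satisfies $s' \le n-1$. I claim $s' = n-1$: if $s' < n-1$, then for every $j > s'$ the piece $D_{-j}$ maps to $0$ in $\?C^* = D/\Delta^n D$ (its degree sits below the support of $\?C^*$), so $D_{-j} \subseteq \Delta^n D$; this would force $\Delta^m D = \Delta^n D$ for $s' < m < n$, and its Hilbert function $\mb h_n^* \ne \mb h^*$ would contradict the prescribed $\mb h_m = \mb h$.

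With $s' = n-1$ in hand, $\?C$ being level of top degree $n-1$ says, via \eqref{deType} and \eqref{leSocLev}(3), that $\?C^* = D/\Delta^n D$ is generated as an $A$-module by its bottom-degree piece $(D/\Delta^n D)_{-(n-1)}$. Lifting these generators to $D_{-(n-1)}$ yields $D = \Delta^n D + A\cdot D_{-(n-1)} = \Delta^{n-1} D$, and the descending chain $\Delta^m D \supseteq \Delta^{n-1} D$ for $m \le n-1$ gives $\Delta^m D = D$ throughout. The main obstacle is this $m < n$ case: one must extract from the abstract level hypothesis on $\?C$ the concrete generation-in-bottom-degree statement for $D/\Delta^n D$, and pin down that bottom degree as exactly $-(n-1)$ via the consistency of $\mb h_m = \mb h$, so that generators can be lifted back to produce $\Delta^{n-1} D = D$.
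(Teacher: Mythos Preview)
Your treatment of the case $m \ge n$ is correct and essentially identical to the paper's.

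For $m < n$, the paper argues more directly: setting $\?D := D/\Delta^n D$, it simply asserts that since $\?C$ is level one has $\Delta^m \?D = \?D$, and combined with the identity $\Delta^m \?D = \Delta^m D/\Delta^n D$ (valid for $m \le n$) this gives $\Delta^m D = D$ at once. Your route is more elaborate: you first try to pin down the socle degree $s'$ of $\?C$ as exactly $n-1$, and only then lift generators from degree $-(n-1)$.

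The problem is that your argument forcing $s' = n-1$ is circular. You show that if $s' < n-1$ then $\Delta^m D = \Delta^n D$ for $s' < m < n$, and then say that ``its Hilbert function $\mb h_n^* \ne \mb h^*$ would contradict the prescribed $\mb h_m = \mb h$.'' But the hypothesis $\mb h_m = \mb h$ for $m < n$ is merely a relation among the \emph{abstract} functions $\mb h_m$; it says nothing about the actual Hilbert function of $\Delta^m D$. That $\Delta^m D$ has Hilbert function $\mb h_m^*$ is exactly the conclusion you are trying to establish, so invoking it here begs the question. No contradiction with the hypotheses arises, and the claim $s' = n-1$ is not proved.

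It is worth noting that the paper's one-line step ``since $\?C$ is level, $\Delta^m \?D = \?D$'' for all $m < n$ also tacitly uses $s' = n-1$ (or $\?C = 0$): levelness of $\?C$ only says $\?D$ is generated in degree $-s'$, so $\Delta^m \?D = \?D$ holds for $m \le s'$ but gives $\Delta^m \?D = 0$ for $s' < m < n$. In every application in the paper (notably \eqref{sbMLSch} and \eqref{thMLRpr}) one has $n = \bar s + 1$ and $\?C$ of socle degree $\bar s = n-1$, so the issue does not arise. Your instinct to isolate the role of $s'$ is therefore a good one, but the stated hypotheses alone do not force $s' = n-1$, so this step cannot be supplied by the argument you give.
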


\begin{proof}
 We have to show $\Delta^mD \in \mb{H\Delta}_D^{\mb h_m^*}$ for all $m$.

Assume $m < n$.  Set $\?D := D/\Delta^nD = \?C^*$.  Since $\?C$ is
level, $\Delta^m\?D = \?D$.  But $\Delta^m\?D = \Delta^m D/\Delta^nD$.
Hence $\Delta^mD = D$.  By hypothesis, $\mb h_m = \mb h$; so $\mb h_m^*
= \mb h^*$.  Thus $\Delta^mD \in \mb{H\Delta}_D^{\mb h_m^*}$.

Assume $m\ge n$.  Then $\Delta^m\Delta^nD = \Delta^mD$.  But
$\Lambda^nC$ is multilevel, and the $\mb h'_m$, its Hilbert functions.
Thus $\Delta^nD \big/ \Delta^mD$ is flat, and $\Delta^mD$ has $\mb
h_m^*$ as Hilbert function.

As $\?C$ is level, $D\big/ \Delta^nD$ is flat.  But \eqref{eqprMLev} is
exact.  Thus $D/\Delta^mD$ is flat.
 \end{proof}

\section{{\it k}-Gorenstein algebras}\label{seGor}

\begin{setup}\label{se5}
 Keep the setup of \eqref{se3}.  In addition, recall from \eqref{sbArt},
that we call $A$ {\it k-Gorenstein\/} if $A^*$ is a locally free of
 \indt{kGor@$k$-Gorenstein}
rank 1 (that is, invertible).  \end{setup}

\begin{proposition}\label{prGor}
  Let $A$ be a $k$-Artinian $k$-algebra. Then these are equivalent:

\begin{enumerate}
 \item The $k$-algebra $A$ is $k$-Gorenstein.

\item For every $k$-algebra $K$, the $K$-algebra $A_K$ is
$K$-Gorenstein.

\item For every maximal ideal $\go m$ of $k$, there is a field $K$
containing $k/\go m$ such that the $K$-algebra $A_K$ is $K$-Gorenstein.

\item For every maximal ideal $\go m$ of $k$, there is a field $K$
containing $k/\go m$ such that the $A_K$-module $(A^*)_K$ is cyclic.
 \end{enumerate}
 \end{proposition}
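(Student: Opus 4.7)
The plan is to establish the cyclic chain $(1) \Rightarrow (2) \Rightarrow (3) \Rightarrow (4) \Rightarrow (1)$. The implication $(1) \Rightarrow (2)$ follows from \eqref{leBaseChg}(4), which identifies $(A^*)_K$ with $(A_K)^*$ as $A_K$-modules, together with the fact that local freeness of rank $1$ is preserved under base change of modules; and $(2) \Rightarrow (3)$ is trivial specialization.

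For $(3) \Rightarrow (4)$, note that $A_K$ is a finite-dimensional $K$-algebra, hence Artinian, hence a finite product $\prod_i R_i$ of local Artinian $K$-algebras. An invertible module over such a product decomposes as a tuple of invertible modules over each $R_i$, and over a local ring an invertible module is free of rank $1$; hence if $(A_K)^*$ is invertible, it is globally free of rank $1$, in particular $A_K$-cyclic.  Combined with \eqref{leBaseChg}(4), this gives $(A^*)_K$ cyclic over $A_K$.

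The substantive direction is $(4) \Rightarrow (1)$.  Fix a maximal ideal $\go m$ of $k$ and $K \supseteq k/\go m$ as in (4).  A cyclic generator of $(A_K)^* = (A^*)_K$ gives a surjection $A_K \onto (A^*)_K$ of $K$-vector spaces of the same finite dimension (both equal to the rank of $A$ at $\go m$), so it is an isomorphism; thus $(A^*)_K$ is $A_K$-free of rank $1$.  Since any field extension is free, $A_{k/\go m} \to A_K$ is faithfully flat, and faithfully flat descent yields $(A^*)_{k/\go m}$ invertible over $A_{k/\go m} = A/\go m A$.  Next, fix a maximal ideal $\go P$ of $A$; since $A$ is finite over $k$, the contraction $\go m := \go P \cap k$ is maximal in $k$, so the preceding step applies.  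I then intend to invoke the local flatness criterion for the local homomorphism $k_{\go m} \to A_{\go P}$ and the finitely generated $A_{\go P}$-module $M := (A^*)_{\go P}$: here $M$ is $k_\go m$-flat because $A^*$ is $k$-projective and $k_\go m$-flatness is preserved by further $A$-localization, and $M/\go m M$ is canonically the localization at $\bar\go P$ of the invertible $(A/\go m A)$-module $(A^*)_{k/\go m}$, hence flat.  The criterion gives that $M$ is $A_{\go P}$-flat, so free (being finitely generated over the Noetherian local ring $A_{\go P}$), of rank $1$ by the residue-field computation.  Thus $A^*$ is locally free of rank $1$ at every maximal ideal of $A$, and since the complement of the (open) locus of rank $1$ in the Noetherian scheme $\Spec A$ contains no closed point, it is empty; so $A^*$ is invertible.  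I expect the main obstacle to be verifying the hypotheses of the local flatness criterion cleanly, in particular the $k_\go m$-flatness of $(A^*)_{\go P}$ under the two successive localizations (in $k$ and in $A$) and the correct identification of $M/\go m M$ with the already-proven invertible module localized at $\bar\go P$.
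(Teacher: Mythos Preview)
Your argument is correct. One minor citation issue: for $(1)\Rightarrow(2)$ you should cite \eqref{leBC1}(1) rather than \eqref{leBaseChg}(4), since the latter sits in the filtered setup of Section~\ref{seSoc} (with $A/F^1A=k$ and $C\in\bb{AF}_A^{\mb h}$), which is not assumed here; the content you need is exactly \eqref{leBC1}(1).

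For $(4)\Rightarrow(1)$ your route genuinely differs from the paper's. You first pass from $K$ to $k/\go m$ by faithfully flat descent of invertibility along the field extension $k/\go m\hookrightarrow K$, and then climb from $A/\go mA$ to $A_{\go P}$ via the fibral local flatness criterion (which, as you anticipate, boils down to a Nakayama argument once you note $M$ is cyclic and the kernel of $A_{\go P}\onto M$ dies modulo $\go m$). The paper instead descends a \emph{generator}: it passes to the algebraic closure $\bar K$, decomposes $A_L$ (with $L=k/\go m$) as a product of Artinian local $L$-algebras, uses cyclicity of $(A^*)_{\bar K}$ to find a generator over each local factor, assembles these into a single $f\in A^*$ whose image generates $(A^*)_L$, and then applies \eqref{leBC1}(3) (Nakayama) to the global map $\vf\colon A\to A^*$, $\alpha\mapsto\alpha f$, to get $\vf_{\go m}$ an isomorphism. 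Your approach is cleaner and avoids the somewhat fiddly algebraic-closure argument, at the cost of importing two standard external results (faithfully flat descent of local freeness and the fibral flatness criterion) not proved in the paper; the paper's approach is entirely self-contained, using only \eqref{leBC1}. Your worries about the hypotheses of the flatness criterion are unfounded: $(A^*)_{\go P}=A^*\ox_A A_{\go P}$ is $k$-flat because $A^*$ is $k$-flat and $A_{\go P}$ is $A$-flat, and a $k_{\go m}$-module that is $k$-flat is automatically $k_{\go m}$-flat; the identification $M/\go mM\cong\bigl((A^*)_{k/\go m}\bigr)_{\bar{\go P}}$ is the straightforward base-change computation you sketch.
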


\begin{proof}
  Assume (1).  Then $(A^*)_K$ is an invertible $A_K$-module for every
$k$-algebra $K$.  But $(A^*)_K = (A_K)^*$ by \eqref{leBC1}(1), and
plainly, $A_K$ is $K$-Artinian.  Thus (2) holds.

Trivially, (2) implies (3).

Assume (3).  Then, given $K$ as in (3), the $A_K$-module $(A_K)^*$ is
invertible.  But $A_K$ is semilocal.  Hence, $(A_K)^*$ is free of rank
1; so, cyclic.  But $(A^*)_K = (A_K)^*$ by \ref{leBC1}(1).  Thus (4)
holds.

Assume (4).  Given $\go m$, set $L := k/\go m$.  Below we find an $f\in
A^*$ whose image in $(A^*)_L$ generates it over $A_L$.  Given $f$,
define $\vf\: A\to A^*$ by $\alpha\mapsto \alpha f$.  Then the induced
$A_L$-map $\vf_L\: A_L\to (A^*)_L$ is surjective.  But $A$ and $A^*$ are
locally free of the same rank at $\go m$.  So $A_L$ and $(A^*)_L$ have
the same $L$-dimension.  So $\vf_L$ is an isomorphism.  So the
localization $\vf_\go m$ is an isomorphism by \eqref{leBC1}(3).  Thus
(1) holds.

To find $f$, let $\?K$ be the algebraic closure of $K$.  By
Cancellation, $A_{\?K} = A_K \ox_K\?K$ and $(A^*)_{\?K} = (A^*)_K
\ox_K\?K$.  Thus $(A^*)_{\?K}$ is a cyclic $A_{\?K}$-module.

Note $A_L$ is a finite product of Artinian local $L$-algebras $A_i$.  In
turn, each $A_i\ox_L\?K$ is a finite product of Artinian local
$\?K$-algebras $B_{ij}$.  As $\?K$ is algebraically closed, the residue
field of each $B_{ij}$ is $\?K$.

Note $A^*\ox_AB_{ij}$ is a quotient of $(A^*\ox_AA_i)\ox_L\?K$, which is
a quotient of $(A^*)_{\?K}$.  So $A^*\ox_AB_{ij}$ is a cyclic
$B_{ij}$-module.  So $(A^*\ox_AB_{ij})\ox_{B_{ij}}\?K$ is a
1-dimensional $\?K$-vector space.

  Fix $i$ and $j$.  Take $f_i\in A^*\ox_AA_i$ whose image in
$(A^*\ox_AB_{ij})\ox_{B_{ij}}\?K$ is nonzero, so generates it.  By
Nakayama's Lemma, the image of $f_i$ in $A^*\ox_AB_{ij}$ generates it.

Note $A_i\ox_L\?K$ is flat over $A_i$. Hence $B_{ij}$ is flat over
$A_i$, so faithfully flat since these rings are local.  Hence $f_i$
generates $A^*\ox_AA_i$ over $A_i$, as its image generates
$A^*\ox_AB_{ij}$ over $B_{ij}$.  So $\prod f_i$ generates $(A^*)_L$ over
$A_L$.  Lift $\prod f_i$ to some $f\in A^*$.  Then $f$ is the desired
element.
 \end{proof}

\begin{proposition}\label{prArt}
   Assume $A$ is $k$-Artinian with (finite) Hilbert function $\mb h$.

\(A) Then these five conditions are equivalent:
 \begin{enumerate}
 \item The $k$-algebra $A$ is $k$-Gorenstein (namely, the $A$-module
$A^*$ is invertible).

\item  The  $k$-module  $A^*\ox_Ak$ is invertible.

\item  The  $k$-module  $A^*\ox_Ak$ is flat, and the  $k$-module
$\Soc_k(A)$ is invertible.

\item The $A$-module $A$ is level of type $1$.

\item In $\bb{M}_A$, there's an isomorphism $A\cong L\ox_k A^*$ for
some $k$-module $L$.
 \end{enumerate}
 Moreover, if \(1)--\(5) hold, then $L$ is invertible, and $L\cong
\Soc_k(A) = G_sA$.

 \(B) If $A\in \bb {AG}_A$, say $A = \bigoplus A_n$, then \(1)--\(5) are
equivalent to this condition:
 \begin{enumerate}
 \item[$(6)$] In $\bb{AG}_A$, there's a canonical (homogeneous)
isomorphism $A = A_s\ox_kA^*$.
 \end{enumerate}
 (Here $(A_s\ox_kA^*)_n := A_s\ox_k(A^*)_{n-s}$ for all $n$.)  Moreover,
$\mb h(n) = \mb h(s-n)$.
 \end{proposition}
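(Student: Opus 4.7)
The plan is to establish the cycle $(1)\Rightarrow(2)\Rightarrow(3)\Rightarrow(4)\Rightarrow(1)$, then prove $(4)\Leftrightarrow(5)$ by an explicit construction of the isomorphism, picking up the identification of $L$ as a byproduct; part (B) then follows by running the construction in the graded category.

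For $(1)\Rightarrow(2)$: since $A^*$ is locally isomorphic to $A$ as $A$-module, $A^*\ox_Ak$ is locally isomorphic to $A\ox_Ak=k$, hence invertible as $k$-module. For $(2)\Rightarrow(3)$: invertible is flat, and by \eqref{leSoc} the $k$-dual of $A^*\ox_Ak$ is $\Soc_k(A)$, again invertible. For $(3)\Rightarrow(4)$: $A^*\ox_Ak$ is flat (given) and finitely generated (as $A^*$ is), hence in $\bb{AM}_k$ of rank $1$; for any maximal $\go m\subset k$ and $K:=k/\go m$, combining \eqref{leBC1}(1), \eqref{alBaseChgb}, and \eqref{leSoc} yields $\Soc_K(A_K)=(\Soc_k A)_K$, so $\dim_K\Soc_K(A_K)=1$, and \eqref{CoLevel} gives (4). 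For $(4)\Rightarrow(1)$: (4) says $A^*\ox_Ak$ is invertible of rank $1$, so by \eqref{alBaseChgb} $(A^*)_K\ox_{A_K}K$ has $K$-dimension $1$ for $K:=k/\go m$; Nakayama's lemma, applicable since $F^1A$ is nilpotent (the filtration on $A\in\bb{AF}_A$ being finite) and hence in the Jacobson radical of $A_K$, gives $(A^*)_K$ cyclic over $A_K$, so \eqref{prGor}(4) yields (1).

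For $(4)\Rightarrow(5)$ and the identification of $L$: set $L:=\Soc_k(A)$, invertible by (4), so $L^{-1}=A^*\ox_Ak$ by \eqref{leSoc}. Pick a $k$-linear section $s\colon L^{-1}\to A^*$ of the quotient $A^*\onto A^*\ox_Ak$ (exists since $L^{-1}$ is projective), and form the $A$-linear map
\[
 \mu\colon L^{-1}\ox_kA \longrightarrow A^*, \qquad \lambda\ox a\longmapsto a\cdot s(\lambda).
\]
Because $s(L^{-1})$ surjects onto $A^*/F^1A\cdot A^*$, one has $A\cdot s(L^{-1})+F^1A\cdot A^*=A^*$; Nakayama (using nilpotence of $F^1A$) gives surjectivity of $\mu$, and both sides have $k$-rank $\rank A$, so $\mu$ is an isomorphism of locally free $k$-modules, hence of $A$-modules. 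Applying the dualizing functor of \eqref{*duality} yields $A\cong L\ox_kA^*$ in $\bb M_A$, proving (5). Conversely, for $(5)\Rightarrow(1)$: faithful flatness of $A^*$ over $k$ (locally free of positive constant rank) lets $L$ inherit flatness and finite presentation from $L\ox_kA^*\cong A$ by descent, and a local rank count forces $\rank L=1$, so $L$ is invertible, $A^*\cong L^{-1}\ox_kA$ is $A$-invertible, and (1) holds. Tensoring $A\cong L\ox_kA^*$ over $A$ with $k$ pins down $L\cong(A^*\ox_Ak)^*=\Soc_k(A)$ by \eqref{leSoc}, which equals $G_sA$ by (4).

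For part (B): when $A$ is graded, $A^*\ox_Ak=(A_s)^*$ is concentrated in degree $-s$, so the inclusion $(A^*)_{-s}\into A^*$ is a canonical homogeneous section, and the resulting $\mu$ is homogeneous. Its dual is the canonical homogeneous isomorphism $A\risom A_s\ox_kA^*$ of (6), which in degree $n$ is the adjoint of the multiplication pairing $A_n\ox_kA_{s-n}\to A_s$; equating $k$-ranks in degree $n$ gives $\mb h(n)=\rank A_s\cdot\rank(A_{s-n})^*=\mb h(s-n)$. The main obstacle throughout is the $(4)\Rightarrow(5)$ construction: the section $s$ is non-canonical in the filtered case, and surjectivity of $\mu$ rests on the nilpotence of $F^1A$ making Nakayama's lemma globally available despite $A$ not being local.
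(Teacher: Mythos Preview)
Your proof is correct and follows essentially the same approach as the paper's: the key construction for $(4)\Rightarrow(5)$ via a section of $A^*\onto A^*\ox_Ak$ and Nakayama is identical (up to dualizing), and part~(B) likewise uses the canonical graded section $(A^*)_{-s}\into A^*$. The only differences are cosmetic rearrangements of the implication cycle---you route $(3)\Rightarrow(4)$ through \eqref{CoLevel} and add a direct $(4)\Rightarrow(1)$ via \eqref{prGor}, whereas the paper goes $(2)\Rightarrow(4)$ via \eqref{leSocLev}(1) and closes the cycle through $(5)\Rightarrow(1)$ by tensoring with $k$ over $A$ rather than by descent---but the substance is the same.
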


\begin{proof}
 Plainly, (1) implies (2).

Note (2) and (3) are equivalent owing to \eqref{leSoc}.  Indeed, in
general, an invertible $k$-module is flat, and its dual is invertible.
Conversely, in general, a locally free $k$-module of finite rank has
rank 1 everywhere if its dual does.

Assume (2).  Now, $G_{-s}(A^*) = G_{-s}(A^*\ox_Ak)$ by \eqref{leSocLev}(1)
with $ A^*$ for $C$.  However, $G_{-s}(A^*\ox_Ak)$ is a quotient of
$A^*\ox_A k$.  Let $\kappa\: A^*\ox_A k\onto G_{-s}(A^*)$ be the
quotient map.  Now, $G_{-s}(A^*)$ is locally free of rank $\mb h(s)$,
and $\mb h(s)\neq0$.  So (2) implies $\kappa$ is an isomorphism.  Thus
(4) holds.

Assume (4).  Then $\Soc_k(A) = G_sA$.  Also, $t=1$; that is,
$G_{-s}(A^*)$ is invertible.  But $G_{-s}(A^*) = (G_sA)^*$ by
\eqref{prFilt}(3).  Thus $(G_sA)\ox_k G_{-s}(A^*) \cong k$.

Choose a $k$-submodule $N\subset A^*$ that maps isomorphically onto
$G_{-s}(A^*)$.  As $A$ is level, \eqref{leSocLev}(3) implies $A\cdot N =
A^*$.  And, by the above, $(G_sA)\ox_k N\cong k$.

So the inclusion $N\into A^*$ yields a $k$-map $k\to (G_sA)\ox_k A^*$.
Say 1 maps to $\sum d_i\ox e_i$.  Define an $A$-map $\vf\: A\to
(G_sA)\ox_k A^*$ by $\vf(a) := \sum d_i\ox ae_i$.  Then $\vf$ is
surjective, as $A\cdot N = A^*$.  But $A$ and $A^*$ are locally free of
the same rank, and $G_{s}(A)$ is invertible.  So $\vf$ is an
isomorphism.  Thus (5) holds with $L:= G_sA$.

Assume (5).  Then $k\cong (L\ox_k A^*)\ox_Ak = L\ox_k (A^*\ox_Ak)$.  So
$L$ is invertible, and $L\cong (A^*\ox_Ak)^*$.  But $A\cong L\ox_k A^*$
in $\bb{M}_A$ by (5), and $L\ox_k A^*= (L\ox_kA)\ox_AA^*$.  Thus (1)
holds. Also, $L\cong \Soc_k(A)$ by \eqref{leSoc}.  Thus (A) holds.

In (B), trivially (6)\implies(5).  Conversely, take $N := (A^*)_{-s}$ in
the proof of (4)$\Rightarrow$(5).  Then $\vf\: A\risom A_s\ox_kA^*$ is
canonical, and $\vf(A_n) = A_s\ox_k (A^*)_{n-s}$.  Thus (4)\implies(6),
and as $A_s$ is invertible, $\mb h(n) = \mb h^*(n-s) = \mb h(s-n)$.
Thus (B) holds.
 \end{proof}

\begin{example}\label{exSymNonArt}
 Let's see how a nonstandard weighting can be used to show that a given
graded algebra is not $k$-Gorenstein, thus suggesting a general
question.

 Fix variables $X,\,Y$; take $A := k[X,\,Y]$.  Set $I := \la
X^2,\,XY,\,Y^3\ra$ and $C:= A/I$.  Let $x,\,y \in C$ denote the residues
of $X,\,Y$.  Plainly, $1,\,x,\,y,\,y^2$ form a free $k$-basis of $C$.
So $C$ is $k$-Artinian.  Notice that $I$ is monomial, so homogeneous
under both the standard and the nonstandard weightings.

Let's find $\Soc_k(C)$.  Given $a,\,b,\,c,\,d \in k$, set $\psi :=
a+bx+cy+dy^2$.  Then $x\psi = ax$ and $y\psi = ay+cy^2$.  So $\psi \in
\Soc_k(C)$ iff $a = 0$ and $c = 0$.  Thus $\Soc_k(C) = kx+ky^2$.  Thus
$C$ is, by \eqref{prArt}(A)(1)$\Rightarrow$(3), not $k$-Gorenstein.

First, give $X,\,Y$ their standard weights of 1.  Then the Hilbert
function $\mb h(n)$ has values $1,\,2,\,1$ for $n=0,\,1,\,2$; also,
$s(\mb h)=2$.  Thus $\mb h(n) = \mb h(s-n)$ although $C$ is not
$k$-Gorenstein.

Instead, give $X,\,Y$ weights of 3,\,2.  Then the Hilbert function $\mb
h(n)$ has values $1,\,0,\,1,\,1,\,1$ for $n=0,\,1,\,2,\,3,\,4$; also,
$s(\mb h)=4$.  Thus $\mb h(n) \neq \mb h(s-n)$ for $n=1$, and so $C$ is,
by \eqref{prArt}(B), not $k$-Gorenstein.

This example suggests the following question: given a $k$-Artinian
graded $k$-algebra $C$ with Hilbert function $\mb h(n)$ where $C$ is the
quotient of a polynomial ring by a monomial ideal, but $C$ is not
$k$-Gorenstein, then is there always a way to weight the variables so
that $\mb h(n) \neq \mb h(s-n)$ where $s := s(\mb h)$?

Likely the answer is yes, as Craig Huneke pointed out privately.
Namely, $\Soc_k(C)$ is generated by residues of monomials $M$, and there
must be at least two $M$ by \eqref{prArt}. Likely we can choose weights
so that, with $m := \max\{\deg(M)\}$, at least two $M$ have degree $m$;
then $m = s$ by \eqref{leSocLev}(2), and so $\mb h(s) > 1 = \mb h(0)$.
 \end{example}

\begin{example}\label{exArt}
 Here's a (fairly standard) example of a $k$-Gorenstein algebra $C$ such
that, under the standard weighting, $G_\bu C$ is not $k$-Gorenstein.
However, under a nonstandard weighting, $C$ is graded, and so $G_\bu C =
C$; thus $G_\bu C$ is $k$-Gorenstein.

 Keep the setup of \eqref{exPolyRg}.  Take $r:=2$, so $A:=
k[X_1,\,X_2]$.  Form the ideal $\go M := \la X_1,\,X_2\ra$, so $A/\go M = k$.
Identify $A^\dg$ with $k[X_1^{-1},\,X_2^{-1}]$.

Set $F:= X_1^{-2}+X_2^{-3}\in A^\dg$ and $D := AF\subset A^\dg$ and
$I := (0:_AD)\subset A$.  Note
 \begin{equation*}\label{eqArt1}\ts
 \bigl(\sum a_{ij}X_1^iX_2^j\bigr)\cdot F = a_{00}F + a_{10}X_1^{-1}
        + a_{01}X_2^{-2} + a_{02}X_2^{-1} + (a_{20} + a_{03}).
 \end{equation*}
 for any $a_{ij}\in k$.  
 So $\bigl(\sum a_{ij}X_1^iX_2^j\bigr)\cdot F = 0$ iff
$a_{00},\,a_{10},\,a_{01},\,a_{02},\,(a_{20} + a_{03}) = 0$.
Thus $I$ is the ideal generated by $X_1^{2}-X_2^{3},\ X_1X_2$.
 
Also $\go MF = kX_1^{-1} + kX_2^{-2} + kX_2^{-1} + k$ and $D = kF \oplus
\go MF$.  Denote the residues in $D/\go MD$ of $X_2^{-1}$ and $F$ by
$x_2^{-1}$ and $f$. Thus $D/\go MD = kf = kx_2^{-3}$.  But $D/\go MD =
D\ox_Ak$.  Thus $D\ox_Ak \cong k$.  Furthermore, $D\in \bb{AM}_A$.
However, $kF+kX_2^{-3} = kX_1^{-2} + kX_2^{-3}$.  Thus $A^\dg/D$ is
free over $k$, so projective.

Set $C:= A/I$.  Then $C = D^*\in \bb{AM}_A$ by \eqref{thGMD3}(1).  So $D
= D^{**} = C^*$.  However, $D := AF$.  So for every maximal ideal $\go
m$ of $k$ and for every field $K$ containing $k/\go m$, the $A_K$-module
$D_K$ is cyclic.  Thus by \eqref{thGMD3}(1), the $k$-algebra $C$ is
$k$-Gorenstein.

Since $D\ox_Ak\cong k$, if $C\in \bb{AF}_A$, then by
\eqref{prArt}(A)(2)$\Rightarrow$(1), again $C$ is $k$-Gorenstein.

To find $\Soc_k(C)$, set $E := k + kX_1 + kX_1^2 + kX_2 + kX_2^2$.  Then
$E\oplus I = A$.  Given $\psi := a+ bX_1+ cX_1^2+ dX_2+ eX_2^2\in E$,
note $X_1\psi\in I$ iff $a,\,b=0$ and $X_2\psi \in I$ iff $a,\,d,\,e =
0$.  For $i = 1,2$, let $x_i\in C$ be the residue of $X_i$.  Thus
whatever the weights $w_i$ of the $X_i$ may be, $\Soc_k(C)= kx_1^2=
kx_2^3$.  Thus, since $D\ox_Ak \cong k$, so is flat, by
\eqref{prArt}(A)(3)$\Rightarrow$(1), if $C\in \bb{AF}_A$, then, once
again, $C$ is $k$-Gorenstein.  Alternatively, $\Soc_k(C)= kx_2^3$ holds
because of \eqref{leSoc}, as $D/\go MD = kx_2^{-3}$.

First, weight the $X_i$ as usual: $w_1,\,w_2 :=1$.  From \eqref{sbArt},
recall $C\in \bb{AF}_A$ iff $G_\bu C\in \bb{AG}_A$.  Note $G_\bu C =
G_\bu A\big/G_\bu I$ by \eqref{prBiex}.  But $A$ is graded; so $G_\bu A
= A$.  Correspondingly, $G_\bu I$ is generated by the initial forms of
all the elements of $I$.  So $X_1X_2,\,X_1^2\in G_\bu I$.  But $X_2^4\in
I$; so $X_2^4\in G_\bu I$.

It's not immediately obvious that $X_1X_2,\,X_1^2,\,X_2^4$ generate
$G_\bu I$.  However, if so, then $G_\bu I$ is the $k$-span of the
monomials divisible by at least one of $X_1X_2,\,X_1^2,\,X_2^4$, and so
$G_\bu C\in \bb{AG}_A$.  Moreover, if so, then $x_1$ and $x_2^3$ lie in
$\Soc_k(G_\bu C)$, and they are independent over $k$; thus, $G_\bu C$
is, by \eqref{prArt}(A)(1)$\Rightarrow$(3), {\it not} $k$-Gorenstein.

Instead, consider $G_\bu D$.  Similarly, it's the $k$-span of the
initial forms of the elements of $D$.  So $G_\bu D = kX_2^{-3} +
kX_2^{-2} + (kX_1^{-1} + kX_2^{-1}) + k$.  Hence $D \in \bb{AF}_A$ and
$G_\bu(A^\dg/D)$ is free over $k$.  Thus \eqref{thGMD3}(2) yields
$G_\bu C\in \bb{AG}_A$.  So $C\in \bb{AF}_A$, and thus \eqref{prArt}(A)
implies, in two different ways, that $C$ is $k$-Gorenstein.

The expression for $G_\bu D$ above also shows that the residues in
$G_\bu D$ of $X_1^{-1}$ and $X_2^{-3}$ form a free basis of it.  Thus
\eqref{leSoc} yields $\Soc_k(G_\bu C) = kx_1\oplus kx_2^3$.

Also, $G_\bu I := (0 :_A G_\bu D)$.  But the monomials in $A$ and the
inverse (reciprocal) monomials in $A^\dg$ form dual bases.  So $G_\bu
I$ is the $k$-span of all the monomials of $A$ except
$1,\,X_1,\,X_2,\,X_2^2,\,X_2^3$.  Thus $X_1X_2,\,X_1^2,\,X_2^4$ do
generate $G_\bu I$, as expected.

In passing, note $\go M\cdot G_\bu D = kX_2^{-2} + kX_2^{-1} + k$. So
$G_\bu D\ox_Ak$ is free of rank 2 over $k$.  Thus, again, $G_\bu C$ is not
$k$-Gorenstein, this time by \eqref{prArt}(A)(1)$\Rightarrow$(2), So, by
\eqref{prGor}(4)$\Rightarrow$(1), although $D$ is, by construction,
generated by $F$, nonetheless $G_\bu D$ is not generated by the initial
form $X_2^{-3}$ of $F$, nor by any other single element.

Moreover, no isomorphism $C\cong D$ in $\bb{M}_A$ can preserve the
filtrations up to shift; else, $G_\bu C$ would be isomorphic in $\bb
G_A$ to a shift of $G_\bu D$.  In particular, the isomorphism $\vf\:
C\risom D$ given by $\vf(\tau):=\tau f$ doesn't do so.  In fact, 
        $$(F^2C)F = kX_2^{-1} + k \subsetneqq
           F^{-1}D = kX_1^{-1} + kX_2^{-1} + k.$$

Lastly, take $w_1 := 3$ and $w_2 := 2$.  Then $F$ is weighted homogeneous
of degree $-6$.  So the analysis simplifies, as there's no separate
associated graded case to consider in order to apply \eqref{prArt}.
Indeed, plainly, $(A^\dg)_{-6} = D_{-6} \oplus X_1^{-2}$ and
$(A^\dg)_{n} = D_{n}$ for $-4\le n \le 0$, with $D_{-5} = 0$ and
$D_{-1} = 0$.  So $D$ and $A^\dg/D$ are free over $k$, and $D\ox_Ak
= k$.  Thus \eqref{prArt}(A)(2)$\Rightarrow$(1) yields $C$ is
$k$-Gorenstein.

Alternatively, note that, for every maximal ideal $\go m$ of $k$ and for
every field $K$ containing $k/\go m$, the generators $X_1^{2}-X_2^{3},\
X_1X_2$ of $I$ map to a regular sequence in $K[X_1,\,X_2]$, because
$C_K$ is Artinian.  So $C_K$ is Gorenstein; see
\cite{Eis}*{Cor.\,21.19}.  Furthermore, $C$ is $k$-flat by
\cite{EGAIII0}*{(10.2.4), p.\,19} applied twice. So $C$ is $k$-Artinian.
Thus \eqref{prGor}(2)$\Rightarrow$(1) yields that $C$ is $k$-Gorenstein.
 \end{example}

\begin{proposition}[Linkage]\label{coLink}
   Assume $A$ is $k$-Artinian with Hilbert function $\mb h$, and is
$k$-Gorenstein.  Let $I\subset A$ be an ideal with $C := A/I\in
\bb{AM}_A$.  Form the ideal $I'\subset A$ {\bf directly linked} to $I$,
namely $I' := \Hom_A( C,\, A)$, and form the $A$-submodule of $A^*$ {\bf
Macaulay dual} to $C$, namely $C^*$.  Fix an isomorphism $A\cong
\Soc_k(A)\ox_k A^*$ in $\bb{M}_A$, as provided by \eqref{prArt}\(A).
Set $C':=A/I'$.

\(1) Then $I$ is directly linked to $I'$; that is, $I = \Hom_A( C',\,
A)$.

\(2) Then $I' = \Soc_k(A)\ox_k C^*$ and $C' = \Hom_A(I,\, A)$; moreover,
$C'\in \bb{AM}_A$.

\(3) Give $C$ the induced filtration, and assume $C\in \bb{AF}_A$.  Then
$C$ is $k$-Gorenstein iff there's an invertible $k$-module $L$ such that
$L\ox_k I'$ is a cyclic $A$-module.  If so, then necessarily $L \cong
\Soc_k(C)\ox_k\Soc_k(A)^*$ and $L\ox_k I' \cong C$.
 \end{proposition}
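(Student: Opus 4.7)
My plan is to prove (2) first by exploiting the fixed isomorphism $\psi\colon A\risom\Soc_k(A)\ox_k A^*$, then deduce (1) from (2) by duality, and finally derive (3) by combining (2) with \eqref{prArt}(A).

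For (2), the key observation is that $\psi$ is $A$-linear with $A$ acting only on the second factor, while $\Soc_k(A)$ is invertible---hence faithfully flat---over $k$. Combined with the identity $C^* = \Hom_A(C,\,A^*)$ from \eqref{eqeqFm7}, this yields
\[
 I' = \Hom_A(C,\,A) \cong \Soc_k(A)\ox_k \Hom_A(C,\,A^*) = \Soc_k(A)\ox_k C^*,
\]
and restricting $\psi$ identifies $I'\subset A$ with $\Soc_k(A)\ox_k C^*\subset \Soc_k(A)\ox_k A^*$. For the dual identities, I would split $0\to I\to A\to C\to 0$ over $k$ (legal since $C\in\bb{AM}_A$), dualize to get $A^*/C^*\cong I^*$, and quotient $\psi$ to obtain $C' = A/I' \cong \Soc_k(A)\ox_k I^*$; the same $\psi$-computation with $I$ in place of $C$ identifies this with $\Hom_A(I,A)$. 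Membership $C'\in\bb{AM}_A$ follows because $I$ is finitely generated (kernel between finitely generated modules over the Noetherian $k$) and $k$-flat (Tor long exact sequence applied to $0\to I\to A\to C\to 0$ with $A,C$ both $k$-flat), so $I^*$, and hence $C'$, is locally free of finite $k$-rank.

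For (1), applying (2) to the pair $(C',I')$---now legal since $C'\in\bb{AM}_A$---yields $\Hom_A(C',\,A)\cong\Soc_k(A)\ox_k(C')^*\cong \Soc_k(A)\ox_k\Soc_k(A)^*\ox_k I^{**} \cong I$. More transparently, under $\psi$ the ideal $I'$ corresponds to $\Soc_k(A)\ox_k C^*$, so $(0:_A I') = (0:_A C^*)$ by faithful flatness of $\Soc_k(A)$; Macaulay duality \eqref{thGMD3}(1) then gives $(0:_A C^*) = I$, and since $\Hom_A(C',\,A) = (0:_A I')$, the claim follows.

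For (3), identity (2) converts $L\ox_k I'$ into $L\ox_k\Soc_k(A)\ox_k C^*$. If $C$ is $k$-Gorenstein, \eqref{prArt}(A) gives $C\cong \Soc_k(C)\ox_k C^*$, so setting $L := \Soc_k(C)\ox_k\Soc_k(A)^*$ makes $L\ox_k I' \cong C$, visibly cyclic. Conversely, if $L\ox_k I'$ is cyclic over $A$, then by (1) and faithful flatness of $L$, $L\ox_k I' \cong A/(0:_A I') = C$, so $L\ox_k\Soc_k(A)\ox_k C^* \cong C$ exhibits $C^*$ as Zariski-locally $C$-cyclic, forcing $C$ to be $k$-Gorenstein by \eqref{prArt}(A). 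For uniqueness of $L$, apply $\bu\ox_A k$ to $L\ox_k\Soc_k(A)\ox_k C^*\cong \Soc_k(C)\ox_k C^*$; since $C^*\ox_A k = \Soc_k(C)^*$ by \eqref{leSoc}, and this is invertible because $C$ is $k$-Gorenstein, canceling it yields $L\ox_k\Soc_k(A)\cong\Soc_k(C)$, i.e., $L\cong\Soc_k(C)\ox_k\Soc_k(A)^*$. The main obstacle is bookkeeping: keeping the two factors in $\Soc_k(A)\ox_k A^*$ separated (only the second carries the $A$-action) and performing the final cancellation of an invertible $k$-module inside an $A$-module isomorphism cleanly.
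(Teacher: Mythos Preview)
Your proof is correct and takes a genuinely different route from the paper's, so a brief comparison is in order.

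The paper proves (1) first, by establishing abstractly that $P = \Hom_A(\Hom_A(P,A),A)$ for any $P\in\bb{AM}_A$: it transfers the known biduality with respect to $A^*$ (from \eqref{prRMD}) through the isomorphism $A\cong\Soc_k(A)\ox_kA^*$, and then applies this with $P=I$ after checking that $\Hom_A(A,A)\to\Hom_A(I,A)$ is surjective. You instead prove (2) first and then obtain (1) by a direct appeal to Macaulay duality \eqref{thGMD3}(1): since $\psi(I')=\Soc_k(A)\ox_kC^*$ and $\Soc_k(A)$ is faithfully flat, $(0:_AI')=(0:_AC^*)=I$. This is shorter and exploits machinery already in place; the paper's route is more self-contained but longer. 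Your first sketch for (1) via $(C')^*\cong\Soc_k(A)^*\ox_kI^{**}$ gives only an abstract isomorphism and would need extra bookkeeping to match the inclusions into $A$, but your second approach (the one you call ``more transparent'') does give equality of ideals, so the argument stands.

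For the converse in (3) the paper observes $I\cdot I'=0$ to factor the surjection $A\onto L\ox_kI'$ through $C$ and then compares ranks; you instead compute $\text{Ann}_A(L\ox_kI')=(0:_AI')=I$ using (1). Both are clean; yours makes the dependence on (1) explicit. The uniqueness of $L$ via $\bu\ox_Ak$ and cancellation of the invertible $C^*\ox_Ak$ is fine as written.
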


\begin{proof}
 {\bf For (1),} given any $k$-module $L$ and $A$-modules $M,\,N,\,P$,
form these two canonical maps:
 \begin{gather}
  \Hom_A( M,\,N) \To
  \Hom_A\bigl( L\ox_kM,\ L\ox_kN\bigr),\label{gab2}\\
 L\ox_k\Hom_A( P,\,N) \To \Hom_A( P,\ L\ox_kN).\label{gab3}
 \end{gather}
 Assume $L$ is invertible.  Then, plainly, both maps are isomorphisms.

Take $M := \Hom_A( P,\,N)$ in \eqref{gab2}.  Then \eqref{gab3} yields a
canonical isomorphism:
\begin{equation}\label{eqga4}
 \Hom_A\bigl( \Hom_A( P,\,N),\ N\bigr) =
  \Hom_A\bigl(\Hom_A( P,\,L\ox_kN),\ L\ox_kN\bigr).
 \end{equation}
 Plainly, \eqref{eqga4} respects the two canonical maps from $P$, one to
each side.

Given another $A$-module $Q$ and any $A$-isomorphism $\vf\:Q\risom
L\ox_kN$, note $\vf$ and $\vf^{-1}$ yield an isomorphism:
\begin{equation}\label{eqga5}
 \psi\: \Hom_A\bigl(\Hom_A( P,\,L\ox_kN),\ L\ox_kN\bigr)
  \risom \Hom_A\bigl( \Hom_A( P,\,Q),\ Q)\bigr) .
 \end{equation}
 Let $\nu_s$ and $\nu_t$ denote the canonical maps from $P$ to the
source and target of $\psi$.  Plainly, $\nu_t= \psi\nu_s$.  Hence, if
$\nu_s$ is an isomorphism, so is $\nu_t$. (Also, $\psi =
\nu_t\nu_s^{-1}$; so $\psi$ is independent of the choice of $\vf$.)

Assume $P\in \bb{AM}_A$; take $N:=A^*$.  Then $P=\Hom_A\bigl( \Hom_A(
P,\,N),\ N)\bigr)
$ by \eqref{prRMD}.  Take $L:=\Soc_k(A)$; it's
invertible by \eqref{prArt}(A)(1)\implies(3).  Then $\nu_s$ is an
isomorphism.

Take $Q:=A$ and $\vf\:Q\risom L\ox_kN$ to be the isomorphism fixed in
the hypotheses.  Then $\nu_t$ is an isomorphism; that is,
$P=\Hom_A\bigl( \Hom_A( P,\,A),\ A)\bigr)$.

Form the exact sequence $0\to I\to A\to C\to 0$.  Since $A,\,C\in
\bb{AM}_A$, plainly $I\in \bb{AM}_A$.  So taking $P:=I$ above yields $I
= \Hom_A\bigl( \Hom_A( I,\,A),\ A\bigr)$.

Note $0\to \Hom_A(C,\,A)\to \Hom_A(A,\,A)\xto\alpha \Hom_A(I,\,A)$ is
exact.  But below $\alpha$ is shown to be surjective.  Also
$\Hom_A(C,\,A) =: I'$ and $\Hom_A(A,\,A) =A$.  Thus $C' := A/I'
=\Hom_A(I,\,A)\in \bb{AM}_A$; hence, $\Hom_A(C',\, A) = I$.

The map $\Hom_A(A,\,A^*)\to \Hom_A(I,\,A^*)$ is surjective, as $A^*$ is
dualizing for $\bb{AM}_A$ by \eqref{prRMD}.  And it remains surjective
after applying $L\ox_k\bu$, as $L$ is invertible.  But \eqref{gab3} is
functorial in $P$.  Hence $\Hom_A(A,\,L\ox_kA^*)\to
\Hom_A(I,\,L\ox_k\,A^*)$ is surjective.  Hence the isomorphism
$\vf^{-1}\: L\ox_k\,A^*\risom A$ induces an isomorphism from that
surjection to $\alpha$.  Thus $\alpha$ is surjective, as asserted.  Thus
(1) holds.

{\bf For (2),} note that the quotient map $\kappa\: A\onto C$ induces
these two embeddings:
 \begin{gather*}\label{gs2}
 L\ox_k\Hom_A( C,\, A^*)\into L\ox_k\Hom_A( A,\, A^*) = L\ox_kA^*\\
 \Hom_A( C,\, L\ox_kA^*)\into \Hom_A( A,\, L\ox_kA^*) = L\ox_kA^*.
 \end{gather*}
 As \eqref{gab3} is an isomorphism that's compatible with $\kappa$, the
two images are the same submodule $G$ of $L\ox_kA^*$.

Set $F := \vf^{-1}G$.  Note $F$ is independent of the choice of $\vf$,
because any two choices differ by an automorphism of $A$, namely, by
multiplication by a unit of $A$ as $\Hom_A(A,A) = A$.  Now, $\Hom_A(
C,\, A^*) = C^*$ by \eqref{eqeqFm6}.  And $I' := \Hom_A( C,\, A)$.  Thus
\eqref{gab3} induces a canonical isomorphism $I' = \Soc_k(A)\ox_k C^*$,
which is independent of the choice of $\vf$,.  Thus (2) holds.

{\bf For (3),} assume $C$ is $k$-Gorenstein.  Then
\eqref{prArt}(A)\(1)\implies\(5) gives an isomorphism $C\cong
\Soc_k(C)\ox_kC^*$, and $\Soc_k(C)$ is invertible by
\eqref{prArt}(A)(1)\implies(3).  So (2) gives $C\cong L\ox_k I'$ with $L
:= \Soc_k(C)\ox_k\Soc_k(A)^*$.  Thus $L\ox_k I'$ is a cyclic $A$-module.

Conversely, assume there's an invertible $k$-module $L$ with $L\ox_k I'$
cyclic over $A$.  Then there's a surjection $A\onto L\ox_k I'$, and it
factors via a surjection $B\onto L\ox_k I'$, as $L\ox_k I'$ is a
$B$-module.  But $L\ox_k I' = M\ox_k B^* $ with $M:= L\ox_k\Soc_k(A)$
invertible by (2).  And $B^*$ is locally free over $k$ of the same rank
as $B$.  Hence $B\risom M\ox_kB^*$.  So $B$ is $k$-Gorenstein, and
$M\cong \Soc_k(B)$ by \eqref{prArt}(A).  Thus (3) holds.
 \end{proof}

\section{Basic Geometry}\label{seBasicGeo}

\begin{setup}\label{se6}
 Fix a nonempty Noetherian base scheme $S$, and a nonzero, finitely
generated, graded, quasi-coherent $\mc O_S$-algebra $\mc A$ with $\mc
A_p = 0$ for $p < 0$.  Given an $S$-scheme $T$, denote by $\mc A_T$ the
 \indn{AcalT@$\mc A_T$}
pullback of $\mc A$, and by $\bb M_{\mc A_T}$ the category of
quasi-coherent $\mc A_T$-modules.  For $\mc V\in \bb M_{\mc A_T}$, set
$\mc V^* := \sHom_T(\mc V, \,\mc O_T)$.

Likewise, generalize all the theory of Sections 2--4 in the standard way
using analogous notation, so that the old theory is recovered over each
open, affine subscheme $U :=\Spec(k)$ of $S$ with $A := H^0(U, \,\mc A)$
and over each open, affine subscheme $V :=\Spec(K)$ of $T$ mapping into
$U$.

Thus generalizing \eqref{sbFilt} and \eqref{sbArt} yields the categories
$\bb F_{\mc A_T}$ and $\bb G_{\mc A_T}$, their full subcategories
$\bb{AF}_{\mc A_T}$ and $\bb{AG}_{\mc A_T}$, the (associated-graded)
functor $G_\bu\: \bb F_{\mc A_T} \to \bb G_{\mc A_T}$, and its
restriction $G_\bu\: \bb{AF}_{\mc A_T} \to \bb{AG}_{\mc A_T}$.

Fix nonzero functions $\mb b,\,\mb h\:\Z\to \Z$ with $\mb b(p) \ge \mb
h(p)\ge 0$ for all $p$, and with $\mb h$ finite.  Set $s : = s(\mb h) :=
\sup\{\,p\mid \mb h(p)\neq 0\,\}$.

Fix a graded, locally finitely generated $\mc A$-module $\mc B$ with
graded pieces $\mc B_p$ locally free of rank $\mb b(p)$ for all $p$.
 \indn{bbmb@$\mb b$}
Then generalizing \eqref{sbApl} yields the graded dual $\mc B^\dg$ of
$\mc B$ and the following sets of subsheaves of $\mc B_T$ and $\mc
B^\dg_T$:
 \begin{equation}\label{eqse6}
 \mb F\Psi_{\mc B_T}^{\mb h}
  \index[notation]{FbmPsi@$\mb F\Psi_{\mc B_T}^{\mb h}$}
 \and \mb H\Psi_{\mc B_T}^{\mb h}
  \index[notation]{HmbPsi@$\mb H\Psi_{\mc B_T}^{\mb h}$} 
 \and \mb F\Delta_{\mc B^\dg_T}^{\mb h^*}
   \index[notation]{FmbDel@$\mb F\Delta_{\mc B^\dg_T}^{\mb h^*}$}
 \and \mb H\Delta_{\mc B^\dg_T}^{\mb h^*}.
   \index[notation]{HmbDel@$\mb H\Delta_{\mc B^\dg_T}^{\mb h^*}$}
 \end{equation}
 \indn{FbbPsit@$\bb F\Psi_{ B}^{\mb h,\mb t}$}
 \indn{HbbPsit@$\bb H\Psi_{ B}^{\mb h,\mb t}$}
 \indn{FbbDeltst@$\bb F\Delta_{ B^\dg}^{\mb h^*, \mb t}$}
 \indn{HbbDeltst@$\bb H\Delta_{ B^\dg}^{\mb h^*, \mb t}$}
 For convenience, view the elements of $\mb F\Psi_{\mc B_T}^{\mb h}$
and $\mb H\Psi_{\mc B_T}^{\mb h}$, {\it not\/} as certain subsheaves
$\mc I$ of $\mc B_T$, but as certain quotients $\mc C$, via the
canonical bijection $\mc I \mapsto \mc C := \mc B_T/\mc I$.

As $\mc B$ is locally finitely generated over $\mc A$ and as $S$ is
quasi-compact, a finite set of local generators belongs to some
$F^{b_0}\mc B$.  Then $F^p\mc B = \mc B$ for all $p\le b_0$.  So $F^p\mc
B^\dg = 0$ for all $p> -b_0$.  Fix $b_0$ as large as possible:
 \begin{equation}\label{eqeqse6
.2}
 b_0 := \sup\{\,p \mid F^p\mc B = \mc B \,\}
         = \inf\{\,p \mid b(p) \neq 0 \,\}.
 \end{equation}

From \eqref{prLevS} on, assume $\mc A/F^1\mc A = \mc O_S$, and
generalize the notion of level module in \eqref{deType} to sheaves.
Note that $\mc A_T/F^1\mc A_T = \mc O_T$ for any $T/S$ by
\eqref{leBaseChg}(3).  

Finally, given a finite function $\mb t\:\Z\to\Z$, define 
\begin{equation*}\label{eqse6
t}
 \mb F\Psi_{\mc B_T}^{\mb h, \mb t}
         \subset \mb F\Psi_{\mc B_T}^{\mb h}
 \and \mb H\Psi_{\mc B_T}^{\mb h, \mb t}
         \subset \mb H\Psi_{\mc B_T}^{\mb h}
 \end{equation*} as the subsets of sheaves of $T$-socle type $\mb t$.
In particular, define
 \begin{equation*}
 \mb F\Lambda_{\mc B_T}^{\mb h}
         \subset \mb F\Psi_{\mc B_T}^{\mb h}
 \and \mb H\Lambda_{\mc B_T}^{\mb h}
         \subset \mb H\Psi_{\mc B_T}^{\mb h}
 \end{equation*}
 as the subsets of level sheaves.  So $\mb F\Lambda_{\mc B_T}^{\mb h}
= \mb F\Psi_{\mc B_T}^{\mb h, \mb t}$ and $\mb H\Lambda_{\mc
B_T}^{\mb h} = \mb H\Psi_{\mc B_T}^{\mb h, \mb t}$ where $\mb t(s)
:= \mb h(s)$ and $\mb t(p) := 0$ for $p \neq s$.  By \eqref{prFib}, these
sets behave functorially in $T$.
 \end{setup}

The statements discussed in the rest of this paper about $\mc B$ and its
quotients and about $\mb h$ and $\mb t$ are plainly equivalent to
similar statements about their shifts by $b_0$ (to the left).
Therefore, to lighten the notation and to make some statements feel more
familiar to many readers, from now on assume $b_0 = 0$.

\begin{theorem}\label{thRep}
  Set $\ell := \sum_p\mb h(p)$ and $X := \Spec(\mc A)$.  Denote the $\mc
O_X$-module associated to $\mc B$ by $\wt{\mc B}$, and set $Q :=
\Quot_{\wt{\mc B}/X/S}^\ell$.  Then the sets in \eqref{eqse6} form
functors in $T$, which are representable by (possibly empty) subschemes
of $Q$, say $\bb F\Psi_{\mc B}^{\mb h}$ and $\bb H\Psi_{\mc B}^{\mb h}$,
 and $\bb F\Delta_{\mc B^\dg}^{\mb h^*}$ and $\bb H\Delta_{\mc
B^\dg}^{\mb h^*}$.  Moreover, Macaulay Duality gives canonical
isomorphisms $\bb F\Psi_{\mc B}^{\mb h} = \bb F\Delta_{\mc B^\dg}^{\mb
h^*}$ and\/ $\bb H\Psi_{\mc B}^{\mb h} = \bb H\Delta_{\mc B^\dg}^{\mb
h^*}$.
 \end{theorem}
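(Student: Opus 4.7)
The strategy is to represent $\mb F\Psi_{\mc B}^{\mb h}$ and $\mb H\Psi_{\mc B}^{\mb h}$ by applying Grothendieck's flattening stratification lemma inside the Quot scheme $Q$, and then to obtain $\mb F\Delta_{\mc B^\dg}^{\mb h^*}$ and $\mb H\Delta_{\mc B^\dg}^{\mb h^*}$ together with the stated isomorphisms directly from our Macaulay Duality \eqref{thGMD3}(2).  Functoriality of the four assignments in $T$ is the first step.  For a morphism $T\to T'$ over $S$ and any $\mc C \in \mb F\Psi_{\mc B_{T'}}^{\mb h}$, the pullback $\mc C_T$ carries an induced filtration with $F^n(\mc C_T) = (F^n\mc C)_T$ and $G_n(\mc C_T) = (G_n\mc C)_T$ by \eqref{leBaseChg}(1)--(2), so its Hilbert function and its socle type (via \eqref{prFib}) are preserved; homogeneity is immediate.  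For $\mc D \in \mb F\Delta_{\mc B^\dg_{T'}}^{\mb h^*}$, the pulled-back inclusion $\mc D_T \into \mc B^\dg_T$ remains injective because $\mc B^\dg_{T'}/\mc D$ is flat by definition of $\mb F\Delta$, and the rank, filtration, and flatness conditions are preserved similarly.

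Second, I build $\bb F\Psi_{\mc B}^{\mb h}$.  Let $\mc Q$ be the universal quotient on $Q$, viewed as an $\mc A_Q$-module quotient of $\mc B_Q$ via the correspondence between $\mc O_X$-modules and $\mc A$-modules.  For each integer $q$, apply Grothendieck's flattening lemma (cited in the introduction from \cite{SB221}) to $\mc Q/F^q\mc Q$; note this cokernel commutes with pullback even though $F^q\mc Q$ itself need not, which is the subtle point emphasized in the introduction.  The lemma produces a subscheme $Q_q \subset Q$ through which $\tau\: T\to Q$ factors precisely when $\tau^*(\mc Q/F^q\mc Q)$ is locally free of rank $\sum_{p<q}\mb h(p)$.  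Only finitely many $q$ contribute nontrivial conditions, since $\mb h$ has finite support, and I set $\bb F\Psi_{\mc B}^{\mb h} := \bigcap_q Q_q$; its $T$-points are exactly the desired quotients.  Then for $\bb H\Psi_{\mc B}^{\mb h}$, put $Z := \bb F\Psi_{\mc B}^{\mb h}$, write $\mc Q_Z = \wt{\mc B}_Z/\mc S_Z$, and form $\mc R := \wt{\mc B}_Z/(\mc S_Z + G_\bu(\mc S_Z))$.  Then $\mc R$ is a common quotient of $\mc Q_Z$ and of $G_\bu(\mc Q_Z)$, and both surjections onto $\mc R$ are isomorphisms precisely when $\mc Q_Z$ is homogeneous; a second application of Grothendieck's lemma to $\mc R$ with target rank $\ell$ cuts out $\bb H\Psi_{\mc B}^{\mb h}$ inside $Z$.

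Third, Macaulay Duality \eqref{thGMD3}(2) provides, on each $T$, a canonical bijection $\mb F\Psi_{\mc B_T}^{\mb h} \risom \mb F\Delta_{\mc B^\dg_T}^{\mb h^*}$ by $\mc C \mapsto \mc C^*$, which restricts to $\mb H\Psi \risom \mb H\Delta$.  Since every $\mc C$ in play is locally free of finite rank over $\mc O_T$, the identity $(\mc C_T)^* = (\mc C^*)_T$ of \eqref{leBaseChg}(4) shows these bijections are natural in $T$.  Hence $\mb F\Delta$ and $\mb H\Delta$ are represented by the same schemes as $\mb F\Psi$ and $\mb H\Psi$, renamed $\bb F\Delta_{\mc B^\dg}^{\mb h^*}$ and $\bb H\Delta_{\mc B^\dg}^{\mb h^*}$ when the universal family is viewed as a subsheaf of $\mc B^\dg$ rather than a quotient of $\mc B$, and the stated isomorphisms are tautological.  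The main obstacle I anticipate is the verification for $\mc R$: one must confirm that $\mc R_T$ being locally free of rank $\ell$ truly captures homogeneity of the pulled-back quotient $\mc Q_T$.  This is handled precisely because cokernel formation commutes with pullback (while pullback of submodules does not in general), so the construction of $\mc R$ via quotients yields the correct universal property; the dual care is needed to see that $\mc B^\dg_T/\mc D_T$ flat is the right flatness condition in the definition of $\mb F\Delta$, but this is exactly what the definition supplies and what \eqref{thGMD3}(2) uses.
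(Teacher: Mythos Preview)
Your proposal is correct and follows essentially the same route as the paper: flattening stratification on $\mc Q/F^q\mc Q$ to build $\bb F\Psi_{\mc B}^{\mb h}$, the $\mc R := \wt{\mc B}_Z/(\mc S_Z + G_\bu(\mc S_Z))$ construction for $\bb H\Psi_{\mc B}^{\mb h}$, and then Macaulay Duality \eqref{thGMD3}(2) with \eqref{alBaseChga} for the $\Delta$ side.  Two small remarks: the reference to \eqref{prFib} for socle type is extraneous here (socle type enters only later in \eqref{prLevS}); and the paper fills in the verification you flag as the ``main obstacle'' by proving explicitly that $(G_\bu(\mc S_Z))_T = G_\bu(\mc S_T)$ via the flatness of $G_\bu(\mc Q_Z)$, and also records a second, independent construction of $\bb H\Psi_{\mc B}^{\mb h}$ using the cokernels of $(\mc B_p)_Q \to \mc Q$.
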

 \indn{FbbLamb@$\bb F\Lambda_{ B}^{\mb h}$}
 \indn{FbbPsi@$\bb F\Psi_B^{\mb h}$}
 \indn{HbbPsi@$\bb H\Psi_B^{\mb h}$}

\begin{proof}
 To begin, fix an $S$-map $T\to R$.  Let $\mc C$ be an $\mc
A_R$-quotient of $\mc B_R$ with the induced filtration.  By right
exactness of pullback, for all $q$, these two sequences
 \begin{equation*}\label{eqRep3}
 (F^q(\mc B_R))_T \to (F^q\mc C)_T \to 0 \and 
 (F^q\mc C)_T \to \mc C_T \to (\mc C/F^{q}\mc C)_T\to 0
 \end{equation*}
 are exact.  But $(F^q(\mc B_R))_T
  = \bigl(\bigoplus_{p\ge q}(\mc B_p)_R\bigr)_T
  = \bigoplus_{p\ge q} (\mc B_p)_T = F^q(\mc B_T)$.  Therefore,
 \begin{equation}\label{eqRep4}
 \mc C_T / F^q(\mc C_T) = (\mc C/F^{q}\mc C)_T.
 \end{equation}
 But $0\to G_q(\mc C_T) \to \mc C_T / F^{q+1}(\mc C_T) \to \mc C_T / F^q(\mc
C_T) \to 0$ is exact.  Thus, so is
 \begin{equation}\label{eqRep5}
 0\to G_q(\mc C_T) \to (\mc C/F^{q+1}\mc C)_T \to (\mc
  C/F^{q}\mc C)_T \to 0.
 \end{equation}

If $\mc C/F^{q}\mc C$ is flat over $R$, then $0\to G_q\mc C \to \mc
C/F^{q+1}\mc C \to \mc C/F^q\mc C \to 0$ remains exact on pullback to
$T$, and so \eqref{eqRep5} yields
 \begin{equation}\label{eqRep6}
 G_q(\mc C_T) = (G_q\mc C)_T.
 \end{equation}

Suppose now $\mc C\in \mb F\Psi_{\mc B_R}^{\mb h}$.  Then $\mc C\in
\bb{AF}_{\mc A_R}^{\mb h}$.  So \eqref{prFilt}(1) implies $\mc C$ and
$\mc C/F^p\mc C$ are flat over $R$.  Moreover, $\mc C_T\in \bb{AF}_{\mc
A_T}^{\mb h}$ as $G_q(\mc C_T)$ is locally free of rank $\mb h(q)$ for
all $q$ by \eqref{eqRep5}.  Thus $\mc C_T \in \mb F\Psi_{\mc
B_T}^{\mb h}$.  Plainly, if $\mc C \in \mb H\Psi_{\mc B_R}^{\mb h}$,
then $\mc C_T \in \mb H\Psi_{\mc B_T}^{\mb h}$.  Thus as $T$ varies,
$ \mb F\Psi_{\mc B_T}^{\mb h}$ and $\mb H\Psi_{\mc B_T}^{\mb h}$
form functors.

Next, form $X\x_SQ = \Spec(\mc A_Q)$.  Let $\mc Q$ be the $\mc
A_Q$-quotient module of $\mc B_Q$ whose associated $\mc
O_{X\x_SQ}$-module is the universal quotient of $\wt{\mc B}_Q$.  Give
$\mc Q$ the induced filtration.  For each $q$, set $\ell_q := \sum_{p <
q}\mb h(p)$.  By the theory of flattening stratification
(\cite{SB221}*{Lem.\,3.6, p.\,15}), there's a subscheme $Q_q$ of $Q$
such that an $S$-map $T\to Q$ factors through $Q_q$ iff $(\mc Q/F^{q}\mc
Q)_T$ is locally free of rank $\ell_q$.  Set
 \begin{equation*}\label{eqRep10}\ts
  \bb F\Psi_{\mc B}^{\mb h} := \bigcap_{q} Q_q \subset Q.
 \end{equation*}
 We have to prove that its set of $T$-points is (canonically) equal to
$\mb F\Psi_{\mc B_T}^{\mb h}$.

Given $\mc Q'\in \mb F\Psi_{\mc B_T}^{\mb h}$, note $\mc
Q' \in \bb{AF}_{\mc A_T}^{\mb h}$.  So \eqref{prFilt}(1) implies $\mc
Q'$ and $\mc Q'/F^{q}\mc Q'$ are locally free of ranks $r$ and $\ell_q$
for all $q$.  So there's a unique map $T\to Q$ such that $\mc Q' = \mc
Q_T$.  Hence \eqref{eqRep4}, with $Q$ and $\mc Q$ for $R$ and $\mc C$,
implies $(\mc Q/F^{q}\mc Q)_T$ is locally free of rank $\ell_q$ for all
$q$.  Thus $T\to Q$ factors through $\bb F\Psi_{\mc B}^{\mb h}$.

Conversely, suppose $T\to Q$ factors through $\bb F\Psi_{\mc B}^{\mb
h}$.  Then $\mc Q_T \in \bb{AF}_{\mc A_T}^{\mb h}$ owing to
\eqref{eqRep4} with $Q$ and $\mc Q$ for $R$ and $\mc C$.  Thus $\mc C
\in \mb F\Psi_{\mc B_T}^{\mb h}$.  Thus $\mb F\Psi_{\mc B_T}^{\mb
h}$ is equal to the set of $T$-points of $\bb F\Psi_{\mc B}^{\mb h}$, as
desired.

Let's now construct $\bb H\Psi_{\mc B}^{\mb h}$.  For convenience, set
$Z := \bb F\Psi_{\mc B}^{\mb h}$.  Define $\mc S\subset \mc B$ by
$B_Q/\mc S := \mc Q$; give $\mc S$ the induced filtration.  Consider the
pullback $\mc Q_Z$ of $\mc Q$ and its associated graded quotient $G_\bu
(\mc Q_Z)$.  Note $\mc S_Z\subset \mc B_Z$ and $\mc B_Z/\mc S_Z = \mc
Q_Z$ as $\mc Q$ is flat.  So $G_\bu(\mc S_Z) \subset \mc B_Z$ and $\mc
B_Z/G_\bu(\mc S_Z) = G_\bu (\mc Q_Z)$ by \eqref{prBiex}.  Set $\mc R :=
{\mc B}_Z/(\mc S_Z + G_\bu(\mc S_Z))$.

The theory of flattening stratification yields a subscheme $\bb H\Psi_{
B}^{\mb h}$ of $Z$ such that, given a map $T\to Z$, it factors through
$\bb H\Psi_{\mc B}^{\mb h}$ iff $\mc R_T$ is locally free of rank
$\ell$.  We have to prove that the set of $T$-points of $\bb H\Psi_{\mc
B}^{\mb h}$ is (canonically) equal to $\mb H\Psi_{\mc B_T}^{\mb h}$.

Recall $\mc S\subset \mc B$ and $B_Q/\mc S: = \mc Q$. But $\mc Q$ is
flat; so $\mc S_T \subset \mc B_T$ and $\mc B_T / \mc S_T = \mc Q_T$.
So \eqref{prBiex} yields $G_\bu(\mc S_T) \subset \mc B_T$ and $\mc
B_T/G_\bu(\mc S_T) = G_\bu (\mc Q_T)$.  In particular, $G_\bu(\mc S_Z)
\subset \mc B_Z$ and $\mc B_Z/G_\bu(\mc S_Z) = G_\bu (\mc Q_Z)$.  Now,
$\mc Q_Z/F^{q}\mc Q_Z$ is flat for all $q$ by the definition of $Z$; so
$(G_\bu (\mc Q_Z))_T = G_\bu ((\mc Q_Z)_T)$ by \eqref{eqRep6}, and
$G_\bu (\mc Q_Z)$ is flat by \eqref{eqRep5} with $Z$ for $T$.
Hence $(G_\bu(\mc S_Z))_T \subset \mc B_T$ and $\mc S_T \subset \mc
B_T$.  Furthermore,
 $$\mc B_T\big/(G_\bu(\mc S_Z))_T = (G_\bu (\mc Q_Z))_T
     = G_\bu ((\mc Q_Z)_T) = G_\bu (\mc Q_T) = \mc B_T/G_\bu(\mc S_T).$$
 Thus $(G_\bu (\mc S_Z))_T = G_\bu(\mc S_T)\subset \mc B_T$.

Note the following right exact sequence:
 $\bigl(\mc S_Z + G_\bu(\mc S_Z)\bigr)_T \to \mc B_T \to \mc R_T \to 0$.
But $(\mc S_Z)_T = \mc S_T \subset \mc B_T$ and $(G_\bu (\mc S_Z))_T =
G_\bu(\mc S_T)\subset \mc B_T$.  Thus $\mc R_T = \mc B_T/(\mc S_T +
G_\bu(\mc S_T))$.

Given $\mc Q'\in \mb H\Psi_{\mc B_T}^{\mb h} \subset \mb
F\Psi_{\mc B_T}^{\mb h}$, there's a unique map $T\to Z$ with $\mc Q'
= (\mc Q_Z)_T$.  But $(\mc Q_Z)_T = \mc Q_T = \mc B_T/\mc S_T$ and
$G_\bu (\mc Q_T) = \mc B_T/G_\bu(\mc S_T)$.  Further, $G_\bu \mc Q' =
\mc Q'$ as $\mc Q'\in \mb H\Psi_{\mc B_T}^{\mb h}$.  Hence $\mc S_T =
G_\bu(\mc S_T) \subset \mc B_T$.  So $\mc R_T = \mc B_T/\mc S_T$.  So
$\mc R_T$ is locally free of rank $\ell$.  Thus $T\to Z$ factors through
$\bb H\Psi_{\mc B}^{\mb h}$, as desired.

Conversely, given a map $T\to \bb H\Psi_{\mc B}^{\mb h}$, note $\mc R_T$
is locally free of rank $\ell$ by definition of $\bb H\Psi_{ B}^{\mb
h}$.  Form the natural surjections $\mc B_T/\mc S_T \onto \mc R_T$ and
$\mc B_T/G_\bu(\mc S_T) \onto \mc R_T$.  Recall $\mc B_T/\mc S_T = Q_T$,
and as $T\to \bb H\Psi_{\mc B}^{\mb h}\subset \mb H\Psi_{\mc
B_T}^{\mb h}$, also $\mc B_T/G_\bu(\mc S_T) = G_\bu ((\mc Q_Z)_T)$.
Moreover, both $Q_T$ and $G_\bu ((\mc Q_Z)_T)$ are locally free of rank
$\ell$.  So both surjections are isomorphisms.  So $Q_T = G_\bu (Q_T)$.
Thus $\mc Q_T\in \mb H\Psi_{\mc B_T}^{\mb h}$, as desired.

Alternatively, $\bb H\Psi_{\mc B}^{\mb h}$ may be constructed as follows.
Set $\mc C_p := \Im\bigl( (\mc B_p)_Q \to \mc Q\bigr)$ for all $p$.  By
the theory of flattening stratification, there's a subscheme $Y_p$ of
$Q$ such that a map $T\to Q$ factors through $Y_p$ iff $(\mc Q/\mc
C_p)_T$ is locally free of rank $\ell-\mb h(p)$.  Set $\bb H\Psi_{\mc
B}^{\mb h} := \bigcap_{p} Y_p \subset Q$.  We have to prove its set of
$T$-points is equal to $\mb H\Psi_{\mc B_T}^{\mb h}$.

Suppose $T\to Q$ factors through $\bb H\Psi_{\mc B}^{\mb h}$.  Then
$(\mc Q/\mc C_p)_T$ is locally free of rank $\ell-\mb h(p)$.  Set $\mc
C_p' := \Im\bigl( (\mc B_p)_T \to \mc Q_T\bigr)$.  By right exactness of
pullback, the sequences
 \begin{equation*}\label{eqRep12}
 (\mc B_p)_T \to (\mc C_p)_T \to 0 \and
 (\mc C_p)_T \to \mc Q_T \to (\mc Q/\mc C_p)_T \to 0
 \end{equation*}
 are exact.  So $\mc Q_T/\mc C_p' = (\mc Q/\mc C_p)_T$.  So $\mc Q_T/\mc
C_p'$ is locally free of rank $\ell-\mb h(p)$.  Thus $\mc C_p'$ is locally
free of rank $\mb h(p)$.

So $\bigoplus_p \mc C_p'$ is locally free of rank $\ell$.  However, the
canonical map $\bigoplus_p \mc C_p'\to \mc Q_T$ is surjective; so it's
an isomorphism.  Thus $\mc Q_T$ is a homogeneous quotient of $\mc B_T$,
and $\mc Q_T \in \bb{AG}_{\mc A_T}^{\mb h} \subset \bb{AF}_{\mc
A_T}^{\mb h}$.  Thus $\mc C \in \mb H\Psi_{\mc B_T}^{\mb h}$.

Conversely, given $\mc Q' \in \mb H\Psi_{\mc B_T}^{\mb h}$, note $\mc
Q_p' := \Im\bigl( (\mc B_p)_T \to \mc Q'\bigr)$.  As $\mc Q' \in \mb
F\Psi_{\mc B_T}^{\mb h}$, there's a unique map $T\to Q$ such that
$\mc Q' = \mc Q_T$.  As above, $\mc Q_T/\mc Q_p' = (\mc Q/\mc C_p)_T$.
But $\mc Q_p'$ is locally free of rank $\mb h(p)$ as $\mc Q' \in \mb
H\Psi_{\mc B_T}^{\mb h}$.  Hence $(\mc Q/\mc C)_T$ is locally free of
rank $\ell-\mb h(p)$.  Thus $T\to Q$ factors through $\bb H\Psi_{\mc
B}^{\mb h}$.  Thus $\mb H\Psi_{\mc B_T}^{\mb h}$ is equal to the set
of $T$-points of $\bb H\Psi_{\mc B}^{\mb h}$, as desired.

Finally, owing to \eqref{thGMD3}, $\mc C \mapsto \mc C^*$ defines a
bijection $\mb F\Psi_{\mc B_T}^{\mb h} \risom \mb F\Delta_{\mc
(B_T)^\dg}^{\mb h^*}$, which restricts to a bijection $\mb H\Psi_{\mc
B_T}^{\mb h} \risom \mb H\Delta_{\mc (B_T)^\dg}^{\mb h^*}$.  These
bijections are functorial in $T$ owing to \eqref{alBaseChga}.  Take $\bb
F\Delta_{\mc B^\dg}^{\mb h^*} := \bb F\Psi_{\mc B}^{\mb h}$ and $\bb
H\Delta_{\mc B^\dg}^{\mb h^*} := \bb H\Psi_{\mc B}^{\mb h}$.  Thus $\mb
F\Delta_{\mc B^\dg_T}^{\mb h^*}$ and $\mb H\Delta_{\mc
B^\dg_T}^{\mb h^*}$ form functors in $T$, which are representable by
subschemes of $Q$.
 \end{proof}

\begin{corollary}\label{coIncRetr}
   There's a canonical pair of maps consisting of a closed embedding $\bb
H\Psi_{\mc B}^{\mb h} \into \bb F\Psi_{\mc B}^{\mb h}$ and a retraction
$\bb F\Psi_{\mc B}^{\mb h}\onto \bb H\Psi_{\mc B}^{\mb h}$.  Moreover,
there's an analogous pair of maps\/ $\bb H\Delta_{\mc B}^{\mb h}\into
\bb F\Delta_{\mc B}^{\mb h}$\varstrut and $\bb F\Delta_{\mc B}^{\mb h}
\onto\bb H\Delta_{\mc B}^{\mb h}$.  Both pairs respect Macaulay duality.
 \end{corollary}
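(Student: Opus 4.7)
The plan is to produce the pair of maps functorially, using the representability from \eqref{thRep}, and then to deduce that the monomorphism is a closed embedding via a standard equalizer argument.

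\emph{Inclusion.} Since $\mc A$ is graded, a homogeneous quotient $\mc C$ of $\mc B_T$ is automatically filtered with the filtration induced by the grading, and its graded pieces and its associated graded module coincide. Thus $\mb H\Psi_{\mc B_T}^{\mb h}$ is a subfunctor of $\mb F\Psi_{\mc B_T}^{\mb h}$, and by the representability of \eqref{thRep} this yields a canonical monomorphism $i\: \bb H\Psi_{\mc B}^{\mb h}\to\bb F\Psi_{\mc B}^{\mb h}$.

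\emph{Retraction.} For $\mc C\in \mb F\Psi_{\mc B_T}^{\mb h}$, the associated graded $G_\bu\mc C$ is a homogeneous quotient of $G_\bu\mc B_T = \mc B_T$ (again because $\mc A$ is graded) and lies in $\bb{AG}_{\mc A_T}^{\mb h}$. So $\mc C\mapsto G_\bu\mc C$ defines a map of sets $\mb F\Psi_{\mc B_T}^{\mb h}\to\mb H\Psi_{\mc B_T}^{\mb h}$. To see this is a natural transformation, observe that for any $T$-scheme $T'\to T$, equation \eqref{eqRep6} (applicable because every $\mc C/F^q\mc C$ is flat by \eqref{prFilt}(1)) gives $G_q(\mc C_{T'}) = (G_q\mc C)_{T'}$. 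Hence the assignment commutes with pullback and defines a morphism of schemes $r\: \bb F\Psi_{\mc B}^{\mb h}\onto\bb H\Psi_{\mc B}^{\mb h}$. At the level of functors, $r\circ i$ acts on a homogeneous $\mc C$ by returning $G_\bu\mc C = \mc C$; thus $r\circ i = \id$.

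\emph{The monomorphism $i$ is a closed embedding.} The Quot scheme $Q = \Quot^\ell_{\wt{\mc B}/X/S}$ is separated over $S$, hence so is its subscheme $\bb F\Psi_{\mc B}^{\mb h}$. The morphism $i$ coincides with the equalizer of $\id_{\bb F\Psi}$ and $i\circ r$, both maps $\bb F\Psi_{\mc B}^{\mb h}\rightrightarrows \bb F\Psi_{\mc B}^{\mb h}$; since the target is separated, this equalizer is a closed subscheme, and $i$ is a closed embedding.

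\emph{The Delta pair and compatibility with Macaulay duality.} By \eqref{thRep}, we have canonical equalities of schemes $\bb F\Delta_{\mc B^\dg}^{\mb h^*} = \bb F\Psi_{\mc B}^{\mb h}$ and $\bb H\Delta_{\mc B^\dg}^{\mb h^*} = \bb H\Psi_{\mc B}^{\mb h}$, so we may take the analogous pair to be the same pair of morphisms. That this pair represents, on the $\Delta$-side, the inclusion of homogeneous submodules of $\mc B^\dg_T$ into filtered ones and the retraction $\mc D\mapsto G_\bu\mc D$ follows from \eqref{thGMD3}(2): the Macaulay bijection $\mb F\Psi^{\mb h}\risom\mb F\Delta^{\mb h^*}$ restricts to $\mb H\Psi^{\mb h}\risom\mb H\Delta^{\mb h^*}$ and commutes with $G_\bu$. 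The only mild obstacle is the closedness in the previous step; everything else is functorial bookkeeping on top of \eqref{thRep} and \eqref{thGMD3}.
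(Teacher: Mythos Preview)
Your proof is correct and follows essentially the same route as the paper's: construct the inclusion and retraction functorially, invoke representability from \eqref{thRep}, use \eqref{eqRep6} for naturality of $G_\bu$, and then deduce closedness of $i$ from separatedness. The only cosmetic difference is in that last step: the paper cites the cancellation property from EGA~I (5.2.9) directly (since $\gamma\iota=\id$ is a closed embedding and $\gamma$ is separated, $\iota$ is a closed embedding), whereas you phrase it as an equalizer argument; these are equivalent standard formulations of the same fact.
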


\begin{proof}
 For each $S$-scheme $T$, by definition, $\mb H\Psi_{\mc B_T}^{\mb
h}$ is a subset of $\mb F\Psi_{\mc B_T}^{\mb h}$; plainly, the
inclusion is functorial in $T$.  Furthermore, taking associated graded
modules gives a retraction $\mb F\Psi_{\mc B_T}^{\mb h} \to \mb
H\Psi_{\mc B_T}^{\mb h}$; owing to \eqref{eqRep6}, it's functorial in
$T$.

As the functors are representable by quasi-projective $S$-schemes by
\eqref{thRep}, the maps between the functors are representable by
separated maps of schemes, say $\iota\: \bb H\Psi_{\mc B}^{\mb h} \to
\bb F\Psi_{\mc B}^{\mb h}$ and $\gamma\: \bb F\Psi_{\mc B}^{\mb h} \to
\bb H\Psi_{\mc B}^{\mb h}$.  

Moreover, $\gamma\iota = 1$ as $\gamma\iota$ represents the identity map
of the functor $T\mapsto \mb H\Psi_{\mc B_T}^{\mb h}$.  Thus $\iota$
is a closed embedding by \cite{EGAI}*{(I,\,5.2.9)} as $\gamma\iota$ is
one and $\gamma$ is separated.

Similarly, there's an analogous pair $\bb H\Delta_{\mc B}^{\mb h}\into
\bb F\Delta_{\mc B}^{\mb h}$ and $\bb F\Delta_{\mc B}^{\mb h} \onto\bb
H\Delta_{\mc B}^{\mb h}$.

Finally, owing to \eqref{thGMD3}, the two pairs respect Macaulay
duality.
 \end{proof}

\begin{proposition}\label{prHflg}
 Assume $\mc B$ is an $S$-Artinian and $S$-Gorenstin quotient of $\mc A$
of socle degree $b$.  For all $p$, set $\mb h'(p) := \mb b(p) - \mb
h(b-p)$.  Define a (Gorenstein direct) {\bf linkage} map $\lambda^{\mb
h} \: \bb F\Psi_{\mc B}^{\mb h} \to \bb F\Psi_{\mc B}^{\mb h'}$ on
$T$-points $\mc C \in \mb F\Psi_{\mc B_T}^{\mb h}$ by $\lambda^{\mb
h}(\mc C) := \mc B/\mc I' \in \mb F\Psi_{\mc B_T}^{\mb h'}$ where
$\mc I' := \sHom_{\mc B_T}( \mc C,\, \mc B_T)$.

\(1) Then $\lambda^{\mb h}$ is an isomorphism, $(\lambda^{\mb h})^{-1} =
\lambda^{\mb h'}$, and $\lambda^{\mb h} (\bb H\Psi_{\mc B}^{\mb h}) =
\bb H\Psi_{\mc B}^{\mb h'}$.

\(2) Fix $R:=\Spec(K)/S$ where $K$ is a field, and fix $\mc C \in \mb
H\Psi_{\mc B_R}^{\mb h}$.  Assume the ideal in $\mc A_R$ of\/ $\mc
B_R$ is generated by a homogeneous regular sequence.  Then $\bb
H\Psi_{\mc A}^{\mb h}$ is $S$-smooth at $\mc C$ iff\/ $\bb H\Psi_{\mc
A}^{\mb h'}$ is $S$-smooth at $\lambda^{\mb h}(\mc C)$.
 \end{proposition}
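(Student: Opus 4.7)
For part (1), my approach reformulates $\lambda^{\mb h}$ through Macaulay duality via Proposition~\eqref{coLink}. Since $\mc B$ is $S$-Gorenstein Artinian, base change via \eqref{prGor} makes $\mc B_T$ a $T$-Gorenstein Artinian quotient of $\mc A_T$ for every $T/S$. Given $\mc C = \mc B_T/\mc I \in \mb F\Psi_{\mc B_T}^{\mb h}$, the sheaf version of \eqref{coLink}(2) furnishes a canonical graded identification $\mc I' = \sHom_{\mc B_T}(\mc C,\mc B_T) \cong \Soc_T(\mc B_T) \ox_T \mc C^*$. By \eqref{prArt}(B), $\Soc_T(\mc B_T)$ is an invertible $\mc O_T$-module concentrated in degree $b$, so a degree-by-degree count using that $(\mc C^*)_q$ has rank $\mb h(-q)$ shows $(\mc I')_p$ has rank $\mb h(b-p)$; hence $\lambda^{\mb h}(\mc C) = \mc B_T/\mc I'$ has rank $\mb b(p) - \mb h(b-p) = \mb h'(p)$ in degree $p$, so it lies in $\mb F\Psi_{\mc B_T}^{\mb h'}$. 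Involutivity $\lambda^{\mb h'} \circ \lambda^{\mb h} = \mathrm{id}$ is the direct-linkage symmetry \eqref{coLink}(1). Homogeneity is preserved by $\sHom$, so $\lambda^{\mb h}$ carries $\bb H\Psi_{\mc B}^{\mb h}$ into $\bb H\Psi_{\mc B}^{\mb h'}$ and, combined with the involution, bijects them. Functoriality in $T$ follows from the base-change compatibility \eqref{leBaseChg}(4) and the canonical nature of \eqref{coLink}, which then upgrades $\lambda^{\mb h}$ to a scheme isomorphism via \eqref{thRep}.

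For part (2), I would pass from smoothness to infinitesimal data. The Zariski tangent space to $\bb H\Psi_{\mc A}^{\mb h}$ at the $K$-point $\mc C = \mc A_R/I$ is the degree-$0$ piece of $\Hom_{\mc A_R}(I,\,\mc A_R/I)$, and smoothness is controlled by the degree-$0$ piece of $\mathop{\rm Ext}\nolimits^1_{\mc A_R}(I,\,\mc A_R/I)$. Under the regular-sequence hypothesis $J = (f_1,\dotsc,f_c)$ defining $\mc B_R$, one has $I' = (J :_{\mc A_R} I)$, and this agrees with the direct linkage of (1) because the Koszul resolution of $\mc B_R$ is exact. I would then invoke classical linkage theory for complete intersections (Peskine--Szpiro, Rao, Kleppe) to set up natural graded isomorphisms between the tangent- and obstruction-relevant $\mathop{\rm Ext}\nolimits$-pieces at $\mc C$ and at $\lambda^{\mb h}(\mc C)$: the Koszul resolution furnishes a duality $\mathop{\rm Ext}\nolimits^i_{\mc A_R}(\mc A_R/I,\,\mc A_R) \cong \mathop{\rm Ext}\nolimits^{i-c}_{\mc B_R}(\mc A_R/I,\,\Soc_R(\mc B_R))$ up to a shift, and combining with the Gorenstein formula $\sHom_{\mc B_R}(\mc C,\mc B_R) \cong \Soc_R(\mc B_R) \ox_R \mc C^*$ from (1) identifies the degree-$0$ pieces of the relevant cohomology groups on the two sides, so that smoothness transfers.

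\textbf{Main obstacle.} Part (2) is the hard part. The technical crux is tracking grading shifts through the Koszul-duality isomorphisms and verifying that they precisely convert the degree-$0$ tangent and obstruction pieces on one side into those on the other, compatibly with the relation $\mb h'(p) = \mb b(p) - \mb h(b-p)$. A clean way to organize the argument is to factor through the isomorphism $\bb H\Psi_{\mc B}^{\mb h} \cong \bb H\Psi_{\mc B}^{\mb h'}$ from (1), analyze the forgetful maps $\bb H\Psi_{\mc B}^{\mb h} \to \bb H\Psi_{\mc A}^{\mb h}$ and $\bb H\Psi_{\mc B}^{\mb h'} \to \bb H\Psi_{\mc A}^{\mb h'}$ locally near the two points in question, and show their infinitesimal fiber codimensions match, thereby reducing the smoothness comparison to the explicit isomorphism already established.
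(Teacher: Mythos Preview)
Your treatment of part (1) is essentially the paper's: both use \eqref{coLink}(2) to see that $\lambda^{\mb h}(\mc C)$ is $T$-Artinian, \eqref{coLink}(1) for involutivity, base change for functoriality, and \eqref{thRep} to pass to schemes. One small sloppiness: you write $(\mc C^*)_q$ for a merely filtered $\mc C$; what you mean is $G_q(\mc C^*)$, which has rank $\mb h(-q)$ by \eqref{prFilt}(3)--(4).

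Part (2) has a real gap. Your Ext proposal asserts that $S$-smoothness is ``controlled by'' the degree-$0$ piece of $\Ext^1_{\mc A_R}(I,\mc A_R/I)$, but $S$-smoothness at $\mc C$ is a relative functorial lifting condition over $S$, not the vanishing of a single obstruction space over $K$; and even if you carefully set up Koszul/linkage isomorphisms between Ext groups, you would still need to check they are compatible with the obstruction maps, which is exactly the delicate point you flag. More importantly, your proposed fix in the last paragraph---comparing through the forgetful maps $\bb H\Psi_{\mc B}^{\mb h}\to\bb H\Psi_{\mc A}^{\mb h}$ with $\mc B$ fixed---cannot work as stated: that map is a closed immersion (its fiber over a point is either empty or a reduced point), so no ``matching fiber codimensions'' argument transfers smoothness.

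The paper's key idea, which you are missing, is to let $\mc B$ \emph{vary}. It introduces the Hilbert-flag scheme $\bb H\Psi_{\mc A}^{\mb b,\mb h}$ of pairs $(\mc B^\di,\mc C^\di)$ and proves, via the Infinitesimal Criterion, that the forgetful map $\vf^{\mb h}\colon \bb H\Psi_{\mc A}^{\mb b,\mb h}\to\bb H\Psi_{\mc A}^{\mb h}$ is smooth at $(\mc B_R,\mc C)$: given a thickening $X\subset Y$ and a lift of $\mc C$ to $Y$, one lifts the regular sequence defining $\mc B_R$ to homogeneous elements of the deformed ideal on $Y$ (this is where the regular-sequence hypothesis enters), and the local criterion of flatness makes the resulting $\wt{\mc B}$ flat. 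Then smoothness of $\bb H\Psi_{\mc A}^{\mb h}$ at $\mc C$ pulls up to smoothness of the flag scheme at the pair; the isomorphism of (1), applied over the Gorenstein locus of $\bb H\Psi_{\mc A}^{\mb b}$, carries this to the pair $(\mc B_R,\lambda^{\mb h}(\mc C))$; and smoothness of $\vf^{\mb h'}$ pushes it down to $\bb H\Psi_{\mc A}^{\mb h'}$.
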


\begin{proof}
 {\bf For (1),} note that $\lambda^{\mb h}$ is a well-defined map of
functors, as follows.  First, $\lambda^{\mb h}(\mc C)\in \bb{AM}_{\mc
B_T}$ owing to \eqref{coLink}(2).  But plainly, $\mb h'$ is the Hilbert
function of $\lambda^{\mb h}(\mc C)$.  Thus $\lambda^{\mb h}(\mc C)\in
\mb F\Psi_{\mc B_T}^{\mb h'}$.  Now, both $\mc C$ and $\mc B_T$ are
locally free of finite rank; so, given $T'/T$, the base-change map
$\sHom_{\mc B_T}( \mc C,\, \mc B_T)_{T'} \To \sHom_{\mc B_{T'}}( \mc
C_{T'},\, \mc B_{T'})$ is an isomorphism.  Thus $\lambda^{\mb h}(\mc
C)_{T'} = \lambda^{\mb h}(\mc C_{T'})$.  Thus $\lambda^{\mb h}$ is
well-defined.

By virtue of \eqref{thRep}, the functors $T\mapsto \mb F\Psi_{\mc
B_T}^{\mb h}$ and $T\mapsto \mb H\Psi_{\mc B_T}^{\mb h}$ are
representable by the $S$-schemes $\bb F\Psi_{\mc B}^{\mb h}$ and $\bb
H\Psi_{\mc B}^{\mb h}$.  Thus the map of functors $\lambda^{\mb h}$ is
representable by a map of $S$-schemes $\lambda^{\mb h}$.

Owing to \eqref{coLink}(1), the ideal of $\mc C$ is equal to $\sHom_{\mc
B_T}( \lambda^{\mb h}(\mc C),\, \mc B_T)$.  Thus $\lambda^{\mb
h'}\circ\lambda^{\mb h}$ is the identity.  Similarly, so is
$\lambda^{\mb h'}\circ\lambda^{\mb h}$.  Thus $\lambda^{\mb h}$ is an
isomorphism, and $(\lambda^{\mb h})^{-1} = \lambda^{\mb h'}$.

Plainly, $\lambda^{\mb h} (\bb H\Psi_{\mc B}^{\mb h}) \subset \bb
H\Psi_{\mc B}^{\mb h'}$.  Similarly, $\lambda^{\mb h'} (\bb H\Psi_{\mc
B}^{\mb h'}) \subset \bb H\Psi_{\mc B}^{\mb h}$.  But $\lambda^{\mb h}$
and $\lambda^{\mb h'}$ are inverse isomorphisms.  Thus $\lambda^{\mb h}
(\bb H\Psi_{\mc B}^{\mb h}) = \bb H\Psi_{\mc B}^{\mb h'}$. Thus (1)
holds.

 {\bf For (2),} form the Hilbert-flag scheme $\bb H\Psi_{\mc A}^{\mb b,
\,\mb h}$.  Its $T$-points are the pairs $(\mc B^\di,\, \mc C^\di)$
where $\mc B^\di \in \mb H\Psi_{\mc A_T}^{\mb b}$ and $\mc C^\di \in
\mb H\Psi_{\mc B^\di_T}^{\mb h}$.  Thus $\bb H\Psi_{\mc A}^{\mb b,\,
\mb h} = \bb H\Psi_{\mc B^\sp}^{\mb h}$ where $\mc B^\sp$ is the
universal quotient on $\bb H\Psi_{\mc A}^{\mb b}$; moreover, $\bb
H\Psi_{\mc A}^{\mb b}$ and $\bb H\Psi_{\mc B^\sp}^{\mb h}$ exist by
virtue of \eqref{thRep}.  Form the two forgetful maps $\vf^{\mb b} \: \bb
H\Psi_{\mc A}^{\mb b, \,\mb h} \to \bb H\Psi_{\mc A}^{\mb b}$ and
$\vf^{\mb h} \: \bb H\Psi_{\mc A}^{\mb b, \,\mb h} \to \bb H\Psi_{\mc
A}^{\mb h}$.  They represent the maps of functors given by $\vf^{\mb
b}(\mc B^\di,\, \mc C^\di) : = \mc B^\di$ and $\vf^{\mb h}(\mc B^\di,\,
\mc C^\di) := \mc C^\di$.

Let's prove that $\vf^{\mb h}$ is smooth at $(\mc B_R,\,\mc C)$.  To do
so, let's use the Infinitesimal Criterion \cite{EGAIV}*{(17.14.1),
p.\,98}: Given a local $S$-scheme $Y$, a closed subscheme $X$ of $Y$, an
$S$-map $\psi\: Y \to \bb H\Psi_{\mc A}^{\mb h}$ that carries the closed
point of $Y$ to $\mc C$, and an $S$-map $\xi\: X\to \bb H\Psi_{\mc
A}^{\mb b, \,\mb h}$ that carries the closed point of $X$ to $(\mc
B_R,\,\mc C)$ and that lifts $\psi|X$, we must lift $\psi$ to an $S$-map
$\chi\: Y \to \bb H\Psi_{\mc A}^{\mb b, \,\mb h}$ that restricts to
$\xi$.

Let $\mc I$ be the ideal of $\mc B^\sp$ in the pullback of $\mc A$ to
$\bb H\Psi_{\mc A}^{\mb b}$.  Then $\mc I_R$ is the ideal of $\mc B_R$
in $\mc A_R$.  So by hypothesis, $\mc I_R$ is generated by a homogeneous
regular sequence, say $f_1,\dotsc,f_r$.  But $\mc B_R$ is Artinian.  So
the $f_i$ may be replaced by any $r$ homogeneous generators.  But say
$K_0$ is the residue field of the scheme point of $\bb H\Psi_{\mc
A}^{\mb b}$ that represents $\mc B_R$, and set $R_0 := \Spec(K_0)$.
Then $\mc I_R = \mc I_{R_0}\ox_{K_0} K$.  So take the $f_i$ to form a
minimal set of homogeneous generators of $ \mc I_{R_0}$.  Plainly the
$f_i$ generate $\mc I_R$.  If they no longer form a minimal set, then
some $f_i$ is a $K$-linear combination of monomials in the other $f_i$.
But $K$ and $K_0$ are fields.  So $f_i$ is a $K_0$-linear combination of
the same monomials, contrary to the minimality of the $f_i$.  Thus the
$f_i$ form a regular sequence of homogeneous generators of $\mc
I_{R_0}$.

Let $K_1$ be the residue field of $X$, and set $R_1 := \Spec(K_1)$.  The
composition $\vf^{\mb b} \circ \xi\: X\to \bb H\Psi_{\mc A}^{\mb b}$
induces an injection $K_0 \into K_1$.  Via it, the $f_i$ form a regular
sequence of homogeneous generators of $\mc I_{R_1}$, which is the ideal
of $\mc B^\sp_{R_1}$ in $\mc A_{R_1}$.  Since $X$ is a local scheme,
each $f_i$ is the residue of some homogeneous $F_i \in \Gamma(\mc I_X)$.
By Nakayama's lemma, the $F_i$ generate $\mc I_X$.  By the local
criterion of flatness, the $F_i$ form a regular sequence.

Let $\mc J$ be the ideal of the universal quotient on $\bb H\Psi_{\mc
A}^{\mb h}$ of the pullback of $\mc A$.  The map $\xi\: X\to \bb
H\Psi_{\mc A}^{\mb b, \,\mb h}$ yields an inclusion $\mc I_X \subset \mc
J_X$.  But $X$ is a closed subscheme of $Y$.  So $\mc J_X$ is a quotient
of $\mc J_Y$, where $\mc J_Y$ is the pullback of $\mc J$ via $\psi\: Y
\to \bb H\Psi_{\mc A}^{\mb h}$.  Since $\mc J_Y$ is homogeneous, each
$F_i$ is the residue of some homogeneous $\wt F_i \in \Gamma(\mc J_Y)$.
Let $\wt {\mc I}$ be the ideal the $\wt F_i$ generate in $\mc J_Y$, and
set $\wt{\mc B} := \mc A_Y/ \wt {\mc I}$.  Then $\wt{\mc B}_X = \mc
B^\sp_X$.  By the local criterion of flatness, $\wt{\mc B}$ is $Y$-flat.
Hence $\wt{\mc B} \in \mb H\Psi_{\mc A_Y}^{\mb b}$.  Set $\wt{\mc C}
= \mc A_Y / \mc J_Y$.  Then $\wt{\mc C} \in \mb H\Psi_{\wt{\mc
B}}^{\mb h}$.  Thus the pair $(\wt{\mc B},\,\wt{\mc C})$ defines a map
$Y\to \bb H\Psi_{\mc A}^{\mb b, \,\mb h}$ that restricts to $\xi$, as
desired.  Thus $\vf^{\mb h} \: \bb H\Psi_{\mc A}^{\mb b, \,\mb h} \to
\bb H\Psi_{\mc A}^{\mb h}$ is smooth at $(\mc B_R,\,\mc C)$.

Finally, suppose $\bb H\Psi_{\mc A}^{\mb h}$ is $S$-smooth at $\mc C$.
Now, $\vf^{\mb h} \: \bb H\Psi_{\mc A}^{\mb b, \,\mb h} \to \bb
H\Psi_{\mc A}^{\mb h}$ is smooth at $(\mc B_R,\,\mc C)$.  Hence $\bb
H\Psi_{\mc A}^{\mb b, \,\mb h}$ is $S$-smooth at $(\mc B_R,\,\mc C)$.
But $\bb H\Psi_{\mc A}^{\mb b, \,\mb h} = \bb H\Psi_{\mc B^\sp}^{\mb
h}$, as noted early in this proof; similarly, $\bb H\Psi_{\mc A}^{\mb b,
\,\mb h'} = \bb H\Psi_{\mc B^\sp}^{\mb h'}$.  So let $U$ be the largest
open subscheme of $\bb H\Psi_{\mc A}^{\mb b}$ on which $\mc B^\sp$ is
$U$-Gorenstein.  Then $\lambda^{\mb h} \: \bb H\Psi_{\mc B^\sp_U}^{\mb
h} \risom \bb H\Psi_{\mc B^\sp_U}^{\mb h'}$ owing to (1) with $U$ for
$S$.  By hypothesis, $\mc B_R$ is an $R$-point of $U$.  Thus $\bb
H\Psi_{\mc B^\sp}^{\mb h'}$ is smooth at $\lambda^{\mb h}(\mc B_R,\,\mc
C)$, which is $(\mc B_R,\,\lambda^{\mb h}(\mc C))$.  But $\vf^{\mb h'}
\: \bb H\Psi_{\mc A}^{\mb b, \,\mb h'} \to \bb H\Psi_{\mc A}^{\mb h'}$
is smooth at $(\mc B_R,\,,\lambda^{\mb h'}(\mc C))$, just as $\vf^{\mb
h} \: \bb H\Psi_{\mc A}^{\mb b, \,\mb h} \to \bb H\Psi_{\mc A}^{\mb h}$
is smooth at $(\mc B_R,\,\mc C)$.  Thus $\bb H\Psi_{\mc A}^{\mb h'}$ is
$S$-smooth at $\lambda^{\mb h}(\mc C)$, as desired.  Similarly, if\/
$\bb H\Psi_{\mc A}^{\mb h'}$ is $S$-smooth at $\lambda^{\mb h}(\mc C)$,
then $\bb H\Psi_{\mc A}^{\mb h}$ is $S$-smooth at $\lambda^{\mb
h'}(\lambda^{\mb h}(\mc C))$.  But $\lambda^{\mb h'} = (\lambda^{\mb
h})^{-1}$ by (1).  Thus (2) holds.
 \end{proof}

\begin{remark}\label{reHflg}
 Assume $S:=\Spec(k)$ where $k$ is any algebraically closed field, and
$K$= k.  Then \eqref{prHflg}, statement and proof, recover and extend
the result of \cite{Trento}*{(2.8), (2.10), p.\,148}: it asserts, in the
setup of \eqref{prHflg}, that the map $\lambda^{\mb h} \: \bb H\Psi_{\mc
B^\sp_U}^{\mb h} \to \bb H\Psi_{\mc B^\sp_U}^{\mb h'}$ is an isomorphism
locally at $(\mc B,\,\mc C)$ if $\mc A$ is Gorenstein.  Of
course,\eqref{prHflg} proves $\lambda^{\mb h}$ is an isomorphism
globally on $U$ without assuming $\mc A_U$ is Gorenstein; moreover,
\eqref{prHflg}(1) asserts a more general result where $\mc B$ needn't be
the universal sheaf $\mc B^\sp$.  (However, \cite{Trento} treats
non-Artinian $\mc B$ and $\mc C$ as well.)

Also, \eqref{prHflg}(2) recovers the last assertion in
\cite{PGor}*{Prop.\,1.7, p.\,611}, and extends it to an $\mc A$ that
isn't a polynomial ring.  In both proofs, the key is the smoothness of
the forgetful map $\vf^{\mb h} \: \bb H\Psi_{\mc A}^{\mb b, \,\mb h} \to
\bb H\Psi_{\mc A}^{\mb h}$ at $(\mc B,\,\mc C)$, but in \eqref{prHflg}
the proof of the key is more direct.  Note that the proof of Prop.\,1.7
doesn't use the blanket hypothesis that $\car(k) \neq 2$ made on
p.\,607; thus Prop.\,1.7 holds in arbitrary characteristic.

Set $B := \Gamma(\mc B)$.  Then Prop.\,1.7 of \cite{PGor} also asserts
the following useful formula:
 \begin{align}\label{eqInfDim}
 \dim_{\mc C}\bb H\Psi_{\mc A}^{\mb h}
 &+ \dim_k\Hom_{B}\bigl( J/J^2,\,I\,\bigr)_0 \notag\\
 &= \dim_{\lambda^{\mb h}(\mc C)}\bb H\Psi_{\mc A}^{\mb h'}
  + \dim_k\Hom_{B}\bigl(J/J^2,\,I'\bigr)_0.
 \end{align}
 where $J$, $I$ and $I'$ are the ideals of $B$ in $\Gamma(\mc A)$, of
$\Gamma(\mc C)$ in $B$ and of $\Gamma({\lambda^{\mb h}(\mc C)})$ in $B$.
Since $J$ is generated by a regular sequence, $J/J^2$ is a free
$B$-module, and so the two Hom's can be easy to compute.

An infinitesimal analysis, made in \cite{PGor}, shows the Hom's in
\eqref{eqInfDim} are the tangent spaces to the fibers of $\vf^h$ and
$\vf^{h'}$ at $(\mc B,\,\mc C)$ and at $(\mc B,\,\lambda^{\mb h}(\mc
C))$.  But, at those points, $\vf^{\mb h}$ and $\vf^{\mb h'}$ are
smooth.  Thus the local dimensions of their fibers are the two
$\dim_k$'s.  In general, the dimension of a fiber over a point can be
computed by base change to the algebraic closure of the point's residue
field.  Thus those two $\dim_k$'s are the local dimensions of the fibers
of $\vf^{\mb h}$ and $\vf^{\mb h'}$ in the generality of
\eqref{prHflg}(2).

A good example of the use of \eqref{prHflg}(2) and \eqref{eqInfDim},
with the aid of Macaulay~2, was made in \cite{LevAlg}*{Ex.\,49, p.\,697}
to give an affirmative answer to Iarrobino's Ques.\,1.3ii on p.\,276 in
\cite{Pen2005}: Is there an $\mb h$ such that $\bb H\Psi_{\mc A}^{\mb
h}$, where $A$ is a polynomial ring in three variables, has two
different components whose general quotients are level of type 2?
(Other such $\mb h$ were found independently by Boij and Iarrobino, but
left unpublished according to the abstract of \cite{RedLev}).  In
\cite{LevAlg}, one component is generically smooth of dimension 47, as a
certain quotient can be connected via a sequence of direct linkages to a
quotient whose ideal is generated by a certain regular sequence.  The
second component is generically smooth of dimension 46, as was shown by
direct computation and also by conceptual means.
 \end{remark}

\begin{lemma}\label{leLev}
 Fix an $S$-map $T\to R$, and a coherent $\mc O_R$-module $\mc K$.

\(1) Then $\mc K_T$ vanishes iff $T\to R$ factors through $U := R -
\Supp(\mc K)$.

\(2) Given a surjection $\lambda\: \mc L\onto \mc M$ of coherent $\mc
O_R$-modules, where $\mc M$ is flat, take $\mc K := \Ker\lambda$.  Then
$\lambda_T\: \mc L_T\risom \mc M_T$ iff $T\to R$ factors through $U := R
- \Supp(\mc K)$.
 \end{lemma}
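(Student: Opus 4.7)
\medskip
\noindent\textit{Proof proposal.}
 The plan is to treat \(1) first by reducing the vanishing of $\mc K_T$ to a pointwise check on $T$, then obtain \(2) as a direct consequence via the flatness of $\mc M$. Throughout, $\mc K$ is coherent on the locally Noetherian scheme $R$, so $\Supp(\mc K)$ is closed and $U$ is genuinely an open subscheme.

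For \(1), the ``if'' direction is immediate: if $T\to R$ factors as $T\to U\into R$, then $\mc K_T$ is the pullback of $\mc K|_U$, which is zero by definition of $U$. For the ``only if'' direction, since $U\into R$ is an open immersion, it suffices to prove that, as a map of topological spaces, $T\to R$ lands in $U$; so I will fix $t\in T$ mapping to $r\in R$ and show $\mc K_r = 0$. The stalk computation gives $(\mc K_T)_t = \mc K_r\ox_{\mc O_{R,r}}\mc O_{T,t}$, and right exactness yields
\[
 (\mc K_T)_t\ox_{\mc O_{T,t}}\kappa(t)
   = \mc K_r\ox_{\mc O_{R,r}}\kappa(t)
   = (\mc K_r\ox_{\mc O_{R,r}}\kappa(r))\ox_{\kappa(r)}\kappa(t).
\]
If $\mc K_T = 0$, then the left-hand side vanishes; but $\kappa(t)$ is a nonzero vector space over $\kappa(r)$, so $\mc K_r\ox\kappa(r) = 0$, and Nakayama's lemma (applicable since $\mc K$ is coherent, hence $\mc K_r$ finitely generated) forces $\mc K_r = 0$, as required.

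For \(2), form the short exact sequence $0\to \mc K\to \mc L\xto\lambda \mc M\to 0$ on $R$. Because $\mc M$ is flat, pullback along $T\to R$ preserves exactness, yielding $0\to \mc K_T\to \mc L_T\xto{\lambda_T} \mc M_T\to 0$. Hence $\lambda_T$ is an isomorphism iff $\mc K_T = 0$, and \(1) finishes the argument.

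The only subtle point is the passage from pointwise vanishing of the fibers of $\mc K_T$ to set-theoretic containment in $U$, which is handled by Nakayama applied to the coherent stalk $\mc K_r$; no further obstacle is expected.
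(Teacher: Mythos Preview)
Your proof is correct and follows essentially the same approach as the paper: both arguments reduce (1) to a pointwise check via Nakayama's lemma and the fiber identity $\mc K_{T,t}\ox\kappa(t) = (\mc K_r\ox\kappa(r))\ox_{\kappa(r)}\kappa(t)$, and both deduce (2) from (1) by noting that flatness of $\mc M$ makes the pulled-back sequence exact. Your version is slightly more explicit about the ``if'' direction and the reduction to set-theoretic containment in the open $U$, but the content is the same.
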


\begin{proof}
 {\bf For (1),} given a (scheme) point $t\in T$, denote by $r\in R$ its image,
and by $k(r)/k(t)$ the extension of residue fields.  By Nakayama's
Lemma, the stalk $\mc K_{T,t}$ vanishes iff the fiber $\mc K_{T,t}\ox
k(t)$ vanishes; similarly, $\mc K_r$ vanishes iff $\mc K_r\ox k(r)$
vanishes.  But $\mc K_r\ox k(r)$ vanishes iff $r\in U$.  Thus (1) holds.

{\bf For (2),} note that $0\to \mc K\to \mc L \xto\lambda \mc M\to 0$
remains exact on pullback to $T$ as $\mc M$ is flat.  So $\lambda_T\:
\mc L_T\risom \mc M_T$ iff $\mc K_T$ vanishes.  Thus (1) implies (2).
 \end{proof}

\begin{proposition}\label{prLevS}
 As the function $\mb t\: \Z\to \Z$ varies, the functors $T\mapsto \mb
F\Psi_{\mc B_T}^{\mb h, \mb t}$ are representable by (possibly empty)
subschemes, say $\bb F\Psi_{\mc B}^{\mb h, \mb t}$, of\/ $\bb F\Psi_{\mc
B}^{\mb h}$ stratifying it, and the $T\mapsto \mb H\Psi_{\mc
B_T}^{\mb h, \mb t}$ are representable by the intersections $\bb
H\Psi_{\mc B}^{\mb h, \mb t} := \bb F\Psi_{\mc B}^{\mb h, \mb t} \cap
\bb H\Psi_{\mc B}^{\mb h}$, which stratify $\bb H\Psi_{\mc B}^{\mb h}$.
  Further, $T\mapsto \mb F\Lambda_{\mc B_T}^{\mb h}$ and $T\mapsto
\mb H\Lambda_{\mc B_T}^{\mb h}$ are representable, say by $\bb
F\Lambda_{\mc B}^{\mb h}$ and $\bb H\Lambda_{\mc B}^{\mb h}$; in fact,
$\bb F\Lambda_{\mc B}^{\mb h}$ is an open subscheme of\/ $\bb F\Psi_{\mc
B}^{\mb h}$, and $\bb H\Lambda_{\mc B}^{\mb h} = \bb F\Lambda_{\mc
B}^{\mb h}\cap \bb H\Psi_{\mc B}^{\mb h}$.
 \end{proposition}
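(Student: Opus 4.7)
The plan is to use Macaulay Duality in the form of \eqref{prFib}: a quotient $\mc C\in\mb F\Psi_{\mc B_T}^{\mb h}$ has $T$-socle type $\mb t$ if and only if $\mc C^*\ox_{\mc A_T}\mc O_T$ lies in $\bb{AF}_{\mc A_T}^{\mb t^*}$, which amounts to saying that each graded piece $G_n(\mc C^*\ox_{\mc A_T}\mc O_T)$ of its induced filtration is locally free of rank $\mb t^*(n)$. This reduces the socle-type condition to a rank condition on graded pieces of a coherent sheaf and so admits treatment by flattening stratification, exactly as in the proof of \eqref{thRep}.

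Concretely, set $Q := \bb F\Psi_{\mc B}^{\mb h}$ with universal quotient $\mc Q$, and form $\mc N := \mc Q^*\ox_{\mc A_Q}\mc O_Q$ with the filtration induced from $\mc Q^*$. Since $\mc Q^*\in\bb{AF}_{\mc A_Q}^{\mb h^*}$ by \eqref{prFilt}(4), its filtration, and hence that of $\mc N$, is finite, so there are only finitely many nonzero graded pieces. Following the argument given for $\mc Q$ in \eqref{thRep}, verbatim with $\mc N$ in place of $\mc Q$, I would apply Grothendieck's flattening stratification (\cite{SB221}*{Lem.\,3.6}) to each quotient $\mc N/F^m\mc N$, obtaining a subscheme $Q_m^{\mb t}\subset Q$ on which $\mc N/F^m\mc N$ is locally free of rank $\sum_{p<m}\mb t^*(p)$, and then define $\bb F\Psi_{\mc B}^{\mb h,\mb t}:=\bigcap_m Q_m^{\mb t}$. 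The $T$-point identification rests on \eqref{alBaseChgb}, which gives $(\mc N)_T = (\mc Q_T)^*\ox_{\mc A_T}\mc O_T$ as filtered modules, together with the analogs of \eqref{eqRep4}--\eqref{eqRep5} guaranteeing that $G_n$ commutes with pullback once flatness of the quotients is imposed; \eqref{prFib} then converts this into the socle-type condition. The subschemes stratify because any $T$-point with $T$ the spectrum of a field has a unique socle type by \eqref{prFib}. The homogeneous version is obtained by setting $\bb H\Psi_{\mc B}^{\mb h,\mb t}:=\bb F\Psi_{\mc B}^{\mb h,\mb t}\cap\bb H\Psi_{\mc B}^{\mb h}$ via the closed embedding of \eqref{coIncRetr}, and noting that the induced filtration of a homogeneous quotient shares its socle type.

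For the level case, $\mb t^*$ is concentrated at $-s$ with value $\mb h(s)$, so the flatness conditions reduce to vanishing of $G_n\mc N$ for $n>-s$; the pieces for $n<-s$ vanish automatically since $F^{-s}\mc Q^*=\mc Q^*$, and the remaining piece $G_{-s}\mc N$ is then forced to be locally free of rank $\mb h(s)$ by \eqref{leSocLev}(1) applied to $\mc Q^*$ together with \eqref{prFilt}(3). Vanishing of finitely many coherent sheaves is an open condition (the base must land in the complement of the union of their supports), so $\bb F\Lambda_{\mc B}^{\mb h}$ is open in $\bb F\Psi_{\mc B}^{\mb h}$, and $\bb H\Lambda_{\mc B}^{\mb h}=\bb F\Lambda_{\mc B}^{\mb h}\cap\bb H\Psi_{\mc B}^{\mb h}$ as a special case of the general formula. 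The main obstacle, beyond careful bookkeeping of the finite filtration indices, is verifying that base change commutes with the formation of $\mc N$ together with its graded pieces; this is exactly what the iterative flattening argument of \eqref{thRep} is designed to handle, so no genuinely new difficulty arises.
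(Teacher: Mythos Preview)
Your proposal is correct and follows essentially the same approach as the paper. The paper's proof is terser—it simply says to adapt the construction in the proof of \eqref{thRep} with $R := \bb F\Psi_{\mc B}^{\mb h}$ and $\mc R := \mc C^*\ox_{\mc A_R}\mc O_R$ in place of $Q$ and $\mc Q$—which is exactly what you spell out. For the openness of $\bb F\Lambda_{\mc B}^{\mb h}$, the paper packages the argument via the canonical surjection $\lambda(R)\colon \mc R \onto G_{-s}(\mc C^*)$ (from \eqref{leSocLev}(1)), noting that the target is flat and invoking \eqref{leLev}(2); your vanishing argument amounts to the same thing, since $\Ker\lambda(R) = F^{1-s}\mc R$.
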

 \indn{FbbLam@$\bb F\Lambda_{ B}^{\mb h}$}
 \indn{HbbLam@$\bb H\Lambda_{ B}^{\mb h}$}

\begin{proof} 
 Set $R:=\bb F\Psi_{\mc B}^{\mb h}$ for short.  Let $\mc C \in \mb
F\Psi_{\mc B_R}^{\mb h}$ denote the universal quotient.  Set $\mc R
:= \mc C^*\ox_{\mc A_R}\mc O_R$.  It is easy to adapt the construction
of $\bb F\Psi_{\mc B}^{\mb h}$ in the proof of \eqref{thRep} with $R$
and $\mc R$ in place of $Q$ and $\mc Q$ to construct $\bb F\Psi_{\mc
B}^{\mb h, \mb t}$ for each $\mb t$.  Plainly, every scheme-point of $R$
lies in some $\bb F\Psi_{\mc B}^{\mb h, \mb t}$, because, for every
$T/S$ where $T$ is the Spec of a field, $\mc C_T$ has some $T$-socle type
$\mb t$; thus, the $\bb F\Psi_{\mc B}^{\mb h, \mb t}$ stratify $R$.

Recall from \eqref{coIncRetr} that $\bb H\Psi_{\mc B}^{\mb h}$ is a
(closed) subscheme of $\bb F\Psi_{\mc B}^{\mb h}$.  So the $\bb
H\Psi_{\mc B}^{\mb h, \mb t}$ are well defined.  Plainly, they represent
the $T\mapsto \mb H\Psi_{\mc B_T}^{\mb h, \mb t}$ and stratify $\bb
H\Psi_{\mc B}^{\mb h}$.

 Further, therefore, $T\mapsto \mb F\Lambda_{\mc B_T}^{\mb h}$ and
$T\mapsto \mb H\Lambda_{\mc B_T}^{\mb h}$ are representable, say by
$\bb F\Lambda_{\mc B}^{\mb h}$ and $\bb H\Lambda_{\mc B}^{\mb h}$, and
also $\bb H\Lambda_{\mc B}^{\mb h} = \bb F\Lambda_{\mc B}^{\mb h}\cap
\bb H\Psi_{\mc B}^{\mb h}$, as $\mb F\Lambda_{\mc B_T}^{\mb h} = \mb
F\Psi_{\mc B_T}^{\mb h, \mb t}$ and $\mb H\Lambda_{\mc B_T}^{\mb
h} = \mb H\Psi_{\mc B_T}^{\mb h, \mb t}$ where $\mb t(s) := \mb
h(s)$, and $\mb t(p) := 0$ for $p \neq s$; see the end of \eqref{se6}.

Here's an alternative construction of $\bb F\Lambda_{\mc B}^{\mb h}$,
which, moreover, shows that it's an open subscheme of $R$.  Given an
$S$-map $T\to R$, notice $\mc C_T \in \bb{AF}_{\mc A_T}^{\mb h}$.  Also,
there's a canonical surjection $\lambda(T)\: \mc R_T \onto G_{-s}(\mc
C_T^*)$ owing to \eqref{leSocLev}(1).  And $\lambda(T)$ is an
isomorphism iff $\mc C_T$ is, by definition \eqref{deType}, level.

Note $\mc C^* \in \bb{AF}_{\mc A_R}^{\mb h^*}$ by \eqref{prFilt}(4); so
$G_{-s}(\mc C^*)$ is flat.  Also, $\lambda(T) = \lambda(R)_T$ by
\eqref{leBaseChg}(4),\,(2).  Set $\bb F\Lambda_{\mc B}^{\mb h} :=
R-\Supp(\Ker\lambda(R))$.  Then $\mb F\Lambda_{\mc B_T}^{\mb h}$ is,
by \eqref{leLev}(2), the set of $T$-points of $\bb F\Lambda_{\mc B}^{\mb
h}$, which is an open subscheme of\/ $R := \bb F\Psi_{\mc B}^{\mb h}$,
as desired.
 \end{proof}

\begin{sbs}[Another construction]\label{sbLev}
 Set $t := \mb h(s)$.  Note $0 < t \le \mb b(s)$ by \eqref{se6}.  Set
$\bb G := \Grass_t(\mc B_s)$, the Grassmann\-ian of rank-$t$ locally
free quotients of $\mc B_s$.  Let $\mc U$ be the universal quotient of
$(\mc B_s)_\bb G$ on $\bb G$; so $\mc U^*\subset (\mc B_\bb
G^\dg)_{-s}$.  Form the natural map:
 \begin{equation}\label{eqLev1}
 \mu_p\: (\mc A_\bb G)_{p+s}\ox \mc U^* \to (\mc B_\bb G^\dg)_p 
 \tqn{for} p\ge -s. 
 \end{equation}

Denote by $L_p\subset \bb G$ the subscheme where $\rank(\mu_p) = \mb
h^*(p)$; that is, an $S$-map $T\to \bb G$ factors through $L_p$ iff
$\Cok((\mu_p)_T)$ is locally free of rank $\mb b^*(p) -\mb h^*(p)$.
But, by right exactness of pullback, $\Cok((\mu_p)_T) =
(\Cok(\mu_p))_T$.  So by the theory of flattening stratification, $L_p$
exists, although it might be empty.  Set
 \begin{equation*}\label{eqLdv2}\ts
 \bb{HL}_{\mc B}^{\mb h}
         := \bigcap_{p= -s}^{-i(\mb h)} L_p \subset \bb G 
  \text{\quad where } i(\mb h) := \inf\,\{\, p \mid \mb h(p) \neq 0\,\}.
  \end{equation*}

In passing, notice, if $\mc A_{p+s}$ is locally free, say of rank $\mb
a(p+s)$, then $\mu_p$ is locally represented by a $\mb b^*(p)$ by $t\mb
a(p+s)$ matrix.  So locally $L_p$ is defined by the vanishing of its
minors of size $\mb h^*(p)+1$ and the nonvanishing of those size $\mb
h^*(p)$; moreover,
 \begin{equation}\label{eqeqLev2}
 \mb h^*(p) \le \min\{\,t\,\mb a(p+s),\ \mb b^*(p)\,\}.
 \end{equation}

Given a scheme point $g\in \bb G$, let $K$ be its residue field.  Then
$g\in \bb{HL}_{\mc B}^{\mb h}$ iff the map of $K$-vector spaces
$\mu_p\ox K$ is of rank $\mb h^*(p)$ for $p\ge -s$.  Thus as $\mb h$
varies, the $\bb{HL}_{\mc B}^{\mb h}$ stratify $\bb G$; that is,
set-theoretically their disjoint union $\coprod_{ h}\bb{HL}_{\mc
B}^{\mb h}$ is $\bb G$.

Assume $S$ is reduced and irreducible.  Assume $L := \bb{HL}_{\mc
B}^{\mb h}$ contains the generic point of $\bb G$.  Then $L$ is open,
since it's the intersection of an open subscheme and a closed subscheme,
and the latter must be all of $\bb G$.  Thus $L$ is nonempty, reduced,
and irreducible, and it's covered by open subschemes, each one
isomorphic to an open subscheme of the affine space over $S$ of fiber
dimension $t(\mb b(s) - t)$.  Also, by lower semicontinuity of rank, if
$\bb{HL}_{\mc B}^{\mb h^\di} \neq \emptyset$, then $\mb h^\di(p) \le \mb
h(p)$ for all $p$.  Notice $\sum_p\mb h^\di(p) \le \sum_p\mb h(p)$, with
equality iff $\mb h^\di = \mb h$.  So call $\mb h$ {\it maximal} for
 \indt{maximal (Hilbert polynomial)}
$\mc B$ and $t$.
 \end{sbs}

\begin{remark}\label{rmLev1}
 In one important case, let's see that the map $\mu_p$ of \eqref{eqLev1}
can be represented locally by a ``catalecticant'' matrix; cf.\
\cite{MaxGor}*{Thm.\,11, proof, p.\,3146}.

Take $t:=1$.  Then $\bb G = \IP(\mc B_s)$ and $\mc U^* = \mc O_\bb G(-1)
\subset (\mc B^\dg_{\bb G})_{-s}$.

Assume $S:=\Spec(k)$ for some (Noetherian) ring $k$.  Let $X$ be a
vector of $r$ weighted variables, set $A := k[X]$ and take $\mc B:= \mc
A := \wt A$.  Then $A_s$ is the $k$-span of the monomials $X^w$ where
$w$ is a multiindex of weight $s$.  As in \eqref{exPolyRg}, view
$(A^\dg)_{-s}$ as the $k$-span of the Laurent monomials $X^{-w}$.

Let $R$ denote the polynomial ring over $k$ on the $X^w$ viewed as
independent variables.  Then $\bb G = \Proj(R)$.  Moreover, the
universal surjection $(\mc A_s)_{\bb G} \onto \mc U$ arises from the
tautological $R$-map $R\ox_kA_s \to R[1]$ sending $1\ox X^w$ to $X^w$ in
$R_1 = A_s$.  So the inclusion $\mc U^* \into (\mc A^\dg_s)_{\bb G}$
arises from the $R$-dual map $R[-1] \to R\ox_k(A^\dg)_{-s}$ sending
1 to $\sum X^w\ox X^{-w}$ in $R_1\ox_k(A^\dg)_{-s}$, because the
canonical isomorphism $A_s \ox_k(A^\dg)_{-s} = \Hom_k(A_s,\,A_s)$
identifies $\sum X^w\ox X^{-w} $ with 1.

Fix $w_0$, and set $x_w := X^w/X^{w_0}$.  Then the principal open set
$D_+(X^{w_0})$ is equal to $\Spec(k[\{x_w\}])$.  Fix an affine
$S$-scheme $T := \Spec(K)$ and a $T$-point of $D_+(X^{w_0})$ with
coordinates $(a_w)$; the latter is the $S$-map $T\to D_+(X^w)$ defined
by $x_w\mapsto a_w$.  Then $\mc U^*_T = \mc O_T\cdot f \subset (\mc
A^\dg_T)_{-s}$ where $f := \sum a_w X^{-w}$ in $(A^\dg_K)_{-s} =
\Gamma((\mc A^\dg_T)_{-s})$.

Note that $\mu_p$ arises from the map $(A_K)_{p+s}\to (A_K^\dg)_p$
of multiplication by $f$ for $p\ge -s$.  As bases of $(A_K)_{p+s}$ and
$(A_K^\dg)_p$, take the monomials $X^v$ and $X^{-u}$ of weights
$p+s$ and $p$.  Then $\mu_p$ is represented by the ``catalecticant''
matrix $((b_{uv}))$ where $b_{uv} := a_w$ with $w := u+v$.
 \end{remark}

\begin{theorem}\label{thRk}
 The scheme $\bb{HL}_{\mc B}^{\mb h}$ of \eqref{sbLev} represents
$T\mapsto \mb H\Lambda_{\mc B_T}^{\mb h}$; in other words, if
non\-empty, $\bb{HL}_{\mc B}^{\mb h}$ carries a universal level
homogeneous quotient, $\mc {L}_{\mc B}^{\mb h}$ say.
 \end{theorem}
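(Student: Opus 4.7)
The plan is to exhibit a natural bijection between $T$-points of $\bb{HL}_{\mc B}^{\mb h}$ and elements of $\mb H\Lambda_{\mc B_T}^{\mb h}$, functorial in $T$, by translating under Macaulay duality \eqref{thGMD3}(2) between homogeneous level quotients $\mc C$ of $\mc B_T$ with Hilbert function $\mb h$ and homogeneous submodules $\mc D \subset \mc B^\dg_T$ generated in degree $-s$ with Hilbert function $\mb h^*$ and $\mc B^\dg_T/\mc D$ flat. The pivotal point is that, by \eqref{deType} and \eqref{leSocLev}(3), a graded $\mc C$ is level iff its Macaulay dual $\mc D := \mc C^*$ is generated by $\mc D_{-s}$, which is locally free of rank $t := \mb h(s)$; by \eqref{prFilt}(3), each $\mc D_p$ is then locally free of rank $\mb h^*(p)$.

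Given $\mc C \in \mb H\Lambda_{\mc B_T}^{\mb h}$, the rank-$t$ inclusion $\mc D_{-s} \into (\mc B^\dg_T)_{-s}$ dualizes to a rank-$t$ locally free quotient of $(\mc B_s)_T$, furnishing a $T$-point of $\bb G$. By \eqref{leBaseChg}, the pullback $(\mu_p)_T$ is the multiplication map $(\mc A_T)_{p+s}\ox \mc D_{-s}\to (\mc B^\dg_T)_p$, whose image is $\mc D_p$ by generation in degree $-s$; hence $\Cok((\mu_p)_T)$ is locally free of rank $\mb b^*(p)-\mb h^*(p)$ for $p \in [-s,-i(\mb h)]$, so the point lies in $\bb{HL}_{\mc B}^{\mb h}$.

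Conversely, given a $T$-point of $\bb{HL}_{\mc B}^{\mb h}$ corresponding to $\mc U^*_T \subset (\mc B^\dg_T)_{-s}$, set $\mc D_p := \Im((\mu_p)_T)$ for $p \in [-s,-i(\mb h)]$ and $\mc D_p := 0$ otherwise; the rank condition defining $L_p$ forces each $\mc D_p$ to be locally free of rank $\mb h^*(p)$ and each $(\mc B^\dg_T/\mc D)_p$ to be locally free of rank $\mb b^*(p)-\mb h^*(p)$, so $\mc D \in \mb H\Delta_{\mc B^\dg_T}^{\mb h^*}$. Applying \eqref{thGMD3}(2) yields $\mc C := \mc D^* \in \mb H\Psi_{\mc B_T}^{\mb h}$, and $\mc C$ is level because $\mc D$ is $\mc A_T$-generated by $\mc U^*_T \subset \mc D_{-s}$. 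The two constructions are mutually inverse by \eqref{thGMD3}, and functoriality in $T$ reduces to \eqref{leBaseChg} together with the fact that, once $\Cok(\mu_p)$ is flat, the formation of $\Im(\mu_p)$ commutes with arbitrary pullback. The universal quotient on $\bb{HL}_{\mc B}^{\mb h}$ is then $\mc L_{\mc B}^{\mb h} := (\mc D^\sp)^*$, where $\mc D^\sp \subset \mc B^\dg_{\bb{HL}}$ is the graded subsheaf with $p$-th piece $\Im(\mu_p|_{\bb{HL}})$.

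The main technical obstacle is handling degrees outside the interval $[-s,-i(\mb h)]$: for $p > 0$ the target $(\mc B^\dg_T)_p$ vanishes automatically, but when $i(\mb h) > 0$ the condition $\mc D_p = 0$ for $-i(\mb h) < p \le 0$ is not forced by the intersection as written, so one must either extend the intersection to the additional closed loci where $\mu_p = 0$ for such $p$, or verify that those loci already contain $\bb{HL}_{\mc B}^{\mb h}$ as a consequence of the rank constraints already imposed in degrees where $\mb h^*$ is nonzero. Either refinement is routine but needs to be made explicit for the Macaulay-dual module $\mc D$ produced in the reverse direction to have the correct Hilbert function in every degree.
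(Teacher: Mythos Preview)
Your argument mirrors the paper's exactly: both directions invoke Macaulay duality \eqref{thGMD3} and the characterization of ``level'' via generation of $\mc C^*$ in degree $-s$. The paper's own proof in fact glosses over the very issue you raise at the end, silently asserting that $\Cok((\mu_p)_T)$ has rank $\mb b^*(p)-\mb h^*(p)$ for all $p\ge -s$ when this is only imposed by the definition of $\bb{HL}_{\mc B}^{\mb h}$ for $p\in[-s,-i(\mb h)]$.

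Your diagnosis is right, but your claim that ``either refinement is routine'' overshoots: the second option---that the extra vanishing follows automatically from the constraints already imposed---fails in general. Take $k$ a field, $\mc A=k[x]$, $\mc B=\mc A\oplus\mc A(-1)$, and $\mb h$ with $\mb h(1)=1$ and $\mb h(p)=0$ otherwise. Then $s=i(\mb h)=1$, so the intersection defining $\bb{HL}_{\mc B}^{\mb h}$ is just $L_{-1}=\bb G\cong\bb P^1$; yet $\mb H\Lambda_{\mc B}^{\mb h}$ is a single point, since among all lines $\mc U^*\subset(\mc B^\dg)_{-1}$ only the one dual to the second summand satisfies $x\cdot\mc U^*=0$ in $(\mc B^\dg)_0$, and any other choice gives $\mc D_0\neq 0$. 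So only your first fix is available: extend the intersection defining $\bb{HL}_{\mc B}^{\mb h}$ to all $p$ with $-s\le p\le 0$. In the paper's main applications (e.g.\ $\mc B=\mc A$, where any nonzero quotient has $\mb h(0)=1$) one has $i(\mb h)=0$ and the issue is invisible, but at the generality stated the definition in \eqref{sbLev} must be amended for the theorem to hold.
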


 \begin{proof}
 Given $T$, we have to see that the set of $T$-points of $\bb{HL}_{\mc
B}^{\mb h}$ is equal to $\mb H\Lambda_{\mc B_T}^{\mb h}$.  In
particular, taking $T:=\bb{HL}_B^{\mb h}$ yields $\mc L_{\mc B}^{\mb
h}$.

Given a map $T\to \bb{HL}_{\mc B}^{\mb h}$, set $\mc D_p :=
\Im((\mu_p)_T)$ and $\mc D := \bigoplus_{p\ge-s} \mc D_p$.  Plainly,
$\mc D$ is an $\mc A_T$-submodule of $(\mc B^\dg)_T$.  Also, $0\to \mc
D_p \to (\mc B_T^\dg)_p \to \Cok((\mu_p)_T) \to 0$ is exact.  But
$\Cok((\mu_p)_T)$ is locally free of rank $\mb b^*(p) -\mb h^*(p)$.  So
$\mc D_p$ is locally free of rank $\mb h^*(p)$.  Thus $\mc D\in
\bb{AG}_{\mc A_T}^{\mb h^*}$.

Note $\mc B^\dg_T/\mc D = \bigl(\bigoplus_{p<-s}(\mc B_T^\dg)_p\bigr)
\oplus \bigl(\bigoplus_{p\ge-s} \Cok((\mu_p)_T)\bigr)$.  So $\mc
B_T^\dg/\mc D$ is flat.  Thus $\mc D \in \mb H\Delta_{\mc B_T^\dg}^{\mb
h^*}$.  Set $\mc C:= \mc D^*$.  Then $\mc C \in \mb H\Psi_{B_T}^{\mb
h}$ and $\mc C^* = \mc D$ by \eqref{thGMD3}.  But $\mc D$ is generated
by $\mc D_{-s}$ by definition of $\mu_p$.  So $\mc C$ is level.  Thus
$\mc C \in \mb H\Lambda_{\mc B_T}^{\mb h}$.

Conversely, given $\mc C \in \mb H\Lambda_{\mc B_T}^{\mb h}$, set
$\mc D := \mc C^*$.  As $\mc C$ is level, $\mc D_{-s}$
generates $\mc D$.  Also, $\mc D \in \mb H\Delta_{B_T^\dg}^{\mb
h^*}$ by \eqref{thGMD3}; hence, $(\mc B_T^\dg)_p/\mc D_p$ is locally
free of rank $\mb b^*(p) -\mb h^*(p)$.

Note that $\mc D_{-s}$ is locally free of rank $t$ as $\mc C \in \mb
H\Lambda_{\mc B_T}^{\mb h}$.  Hence there's a unique map $T\to \bb G$
with $\mc U_T = \mc C_s$.  So $ (\mc U_T)^* = \mc D_{-s}$.  But $\mc
D_{-s}$ generates $\mc D$.  So $ \Im(\mu_p)_T = \mc D_p$.  So
$\Cok((\mu_p)_T)$ is locally free of rank $\mb b^*(p) -\mb h^*(p)$.
Thus $T\to \bb G$ factors through all $L_p$.  Thus the set of $T$-points
of $\bb{HL}_{\mc B}^{\mb h}$ is equal to $\mb H\Lambda_{\mc B_T}^{\mb
h}$.
 \end{proof}

\begin{corollary}\label{coLevS}
 There's a canonical isomorphism of schemes:\enspace $\bb H\Lambda_{\mc
B}^{\mb h} = \bb{HL}_{\mc B}^{\mb h}$.
 \end{corollary}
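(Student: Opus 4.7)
The plan is to invoke Yoneda's lemma: since two schemes representing the same functor are canonically isomorphic, it suffices to observe that both $\bb H\Lambda_{\mc B}^{\mb h}$ and $\bb{HL}_{\mc B}^{\mb h}$ represent the functor $T \mapsto \mb H\Lambda_{\mc B_T}^{\mb h}$.

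First, by \eqref{prLevS}, the scheme $\bb H\Lambda_{\mc B}^{\mb h}$ is, by construction, the representing object for the functor $T \mapsto \mb H\Lambda_{\mc B_T}^{\mb h}$ of level homogeneous quotients with Hilbert function $\mb h$. Second, by \eqref{thRk}, the scheme $\bb{HL}_{\mc B}^{\mb h}$ of \eqref{sbLev} also represents the very same functor, via the correspondence sending a $T$-point of $\bb{HL}_{\mc B}^{\mb h}$ to the quotient $\mc C := \mc D^*$ where $\mc D := \bigoplus_{p\ge -s} \Im((\mu_p)_T)$.

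Therefore, by Yoneda's lemma, there is a unique isomorphism of schemes $\bb H\Lambda_{\mc B}^{\mb h} \risom \bb{HL}_{\mc B}^{\mb h}$ that identifies the two universal objects. Concretely, the isomorphism sends the universal level homogeneous quotient on $\bb H\Lambda_{\mc B}^{\mb h}$ (the restriction of the universal quotient on $\bb F\Psi_{\mc B}^{\mb h}$) to the quotient $\mc L_{\mc B}^{\mb h}$ on $\bb{HL}_{\mc B}^{\mb h}$ produced in the proof of \eqref{thRk}. No further argument is needed; the only step requiring care was to verify in \eqref{thRk} that the two constructions yield mutually inverse assignments on $T$-points, and that was carried out there. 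Hence the corollary is immediate.
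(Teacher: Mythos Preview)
Your proof is correct and follows essentially the same approach as the paper: both invoke \eqref{prLevS} and \eqref{thRk} to see that the two schemes represent the same functor $T\mapsto \mb H\Lambda_{\mc B_T}^{\mb h}$, then conclude by Yoneda. The paper's version is simply more terse.
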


\begin{proof}
 By \eqref{prLevS} and \eqref{thRk}, both $\bb H\Lambda_{\mc B}^{\mb h}$
and $ \bb{HL}_{\mc B}^{\mb h}$ represent the functor $T\mapsto \mb
H\Lambda_{\mc B_T}^{\mb h}$.  Thus $\bb H\Lambda_{\mc B}^{\mb h}$ and
$ \bb{HL}_{\mc B}^{\mb h}$ are canonically isomorphic.
 \end{proof}

\section{Schemes of Multilevel Homogeneous Quotients}

\begin{setup}\label{se7}
 Keep the setup of \eqref{se6} with this exception: don't fix the
``potential Hilbert function'' $\mb h$ a priori; so don't set $s :=
s(\mb h)$, but define $s$ as in (3) below.  Note however that, given a
 \indn{s@$s$}
filtered $S$-Artinian $\mc A$-module with Hilbert function $\mb h$ and
$S$-socle type $\mb t$, plainly \eqref{leSocLev}(2) yields $s(\mb h) =
s(\mb t)$.

Assume $\mc A/F^1\mc A = \mc O_S$.  Fix a ``socle type,'' a nonzero
finite function $\mb t\: \Z\to \Z$.

With \eqref{sbMLev}--\eqref{prMLevCon} in mind, call a set $\{\mb h_m\}$
of functions {\it fit\/} for $\mb t$ if
 \indt{fit}
 \begin{enumerate}
 \item\enspace $\mb t(p) = \mb h_p(p) - \mb h_{p+1}(p)$\enspace for all
$p$, and

\item\enspace $\mb b(p) \ge \mb h_m(p) \ge \mb h_{m+1}(p) \ge 0$\enspace
 for all  $m$ and $p$, and

\item\enspace $\mb h_m = 0$\enspace
 for all $m > s := s(\mb t) := \sup\{\,p\mid \mb t(p) > 0\,\}$.
 \end{enumerate}
 Notice that (1) and (2) imply $\mb b(p) \ge \mb t(p)\ge 0$ for all $p$.

Set $\bar s := i(\mb t) := \inf\,\{\, p \mid \mb t(p) \neq 0\,\}$.  To
 \indn{sbar@$\bar s$}
facilitate recursion on $(s-\bar s)$, set
 \begin{equation*}\label{eqFac}
  \mb t'(p):= \begin{cases}  0 & \text{for } p = \bar s,\\
                            \mb  t(p) & \text{for }p \neq \bar s,
                     \end{cases} \and
 \mb h'_m := \begin{cases}  \mb h_{\bar s+1} & \text{for }m\le \bar s+1,\\
                            \mb h_m & \text{for }m\ge \bar s+1.
                     \end{cases}
 \end{equation*}
 Call $\mb t'$ and $\{\mb h'_m\}$ the {\it attendants\/} to $\mb t$ and
 \indn{tmbpr@$\mb t'$}  \indn{hmbzprm2@$\mb h'_m$} \indt{attendant}
$\{\mb h_m\}$.  Note $\mb t$ and $\mb t'$ are fixed through out this
section, but $\{\mb h_m\}$ and $\{\mb h'_m\}$ are not.

Suppose temporarily $\{\mb h_m\}$ is fit for $\mb t$.  If $\mb t' = 0$,
or equivalently $\bar s = s$, note that (3) yields $\mb h'_m = 0$ for all
$m$.  If $\mb t' \neq 0$, or equivalently $\bar s < s$, set $\bar s' := i(\mb
t')$ and note that $\bar s < \bar s' \le s$ and that $\{\mb h'_m\}$ is fit for
$\mb t'$.
 \indn{sbarpr@$\bar s'$}

Finally, generalize the notion of multilevel module in \eqref{sbMLev} to
sheaves, and for any $T/S$ and any $\{\mb h_m\}$, denote the sets of all
multilevel graded and all multilevel filtered quotients of $\mc B_T$
with the $\mb h_m$ as Hilbert functions by $\mb{H\Lambda}_{\mc
B_T}^{\{\mb h_m\}}$ and $\mb{F\Lambda}_{\mc B_T}^{\{\mb h_m\}}$.
 \indn{HmbLamhBTb@$\mb{H\Lambda}_{\mc B_T}^{\{\mb h_m\}}$}
 \indn{FmbLamhBTb@$\mb{F\Lambda}_{\mc B_T}^{\{\mb h_m\}}$}
 Similarly, generalize the operator $\Delta^n$ and $\Lambda^n$ of
\eqref{sbMLev} to sheaves $\mc D \in \mb{H\Delta}_{B^\dg_T}^{\{\mb
h_m^*\}}$ and $\mc C \in \mb{H\Lambda}_{\mc B_T}^{\{\mb h_m\}}$.  By
\eqref{prMLev}(1), all these objects are functorial in $T$.
 \indn{HmbDelBT@$\mb{H\Delta}_{B^\dg_T}^{\{\mb h_m^*\}}$}
 \indn{HmbLamB@$\mb{H\Lambda}_{\mc B_T}^{\{\mb h_m\}}$}

If $\{\mb h_m\}$ is fit for $\mb t$, then $\mb t(p) = \mb h_p(p) - \mb
h_{p+1}(p)$ by (1); so by \eqref{prMLev}(2), for any $T/S$, every $\mc C
\in \mb{H\Lambda}_{\mc B_T}^{\{\mb h_m\}}$ is of $T$-socle type $\mb t$.
The converse holds, as noted below.

Assume $\mc C \in \mb{H\Lambda}_{\mc B_T}^{\{\mb h_m\}}$ is of $T$-socle
type $\mb t$.  Then $\mc C =\Lambda^m\mc C$ for $m \le \bar s$ by
\eqref{sbMLev} as $\mc C^*$ is locally generated in degree at most
$-\bar s$.  So $\mb h_m = \mb h_{\bar s}$ for $m \le \bar s$, and $\mb
h_{\bar s}$ is the Hilbert function of $\mc C$.  Similarly, $\mc C_p =
0$ for $p > s$ as $\mc C^*$ is locally generated in degree at least
$-s$.  Thus $s = s(\mb h_{\bar s})$.  And $\mb b(p) = 0$ for $p< {0}$ by
\eqref{se6}; thus
 \begin{equation}\label{eqLMlbd} \ts
 \rank (\mc C) = \sum_p \mb h_{\bar s}(p) \le \sum_{p=0}^s \mb b(p).
 \end{equation}
  Note $\{\mb h_m\}$ is fit for $\mb t$, since \eqref{prMLev}(2) gives
(1), and (2)--(3) follow from \eqref{sbMLev}.  So $\{\mb h'_m\}$ is fit
for $\mb t'$ by the above.  Also, $\Lambda^{\bar s+1}\mc C \in
\mb{H\Lambda}_{\mc B_T}^{\{\mb h'_m\}}$ by \eqref{prMLev}(3) with $n :=
\bar s+1$.  Thus $\Lambda^{\bar s+1}\mc C$ is of $T$-socle type $\mb
t'$.
 \end{setup}

\begin{proposition}\label{prMLevS}
  Fix\/ $\mb h\: \Z\to \Z$.  As $\{\mb h_m\}$ varies keeping $\mb h_0
= \mb h$, the functors $T\mapsto \mb H\Lambda_{\mc B_T}^{\{\mb
h_m\}}$ and $T\mapsto \mb F\Lambda_{\mc B_T}^{\{\mb h_m\}}$ are
representable by (possibly empty) subschemes $\bb H\Lambda_{\mc
B}^{\{\mb h_m\}}$ and $\bb F\Lambda_{\mc B}^{\{\mb h_m\}}$ of\/ $\bb
H\Psi_{\mc B}^{\mb h}$ and $\bb F\Psi_{\mc B}^{\mb h}$, stratifying
them.  Moreover, $\bb F\Lambda_{\mc B}^{\{\mb h_m\}}$ is the preimage of
$\bb H\Lambda_{\mc B}^{\{\mb h_m\}}$ under the retraction $\bb
F\Psi_{\mc B}^{\mb h} \onto \bb H\Psi_{\mc B}^{\mb h}$.
 \end{proposition}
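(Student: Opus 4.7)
The plan is to construct $\bb H\Lambda_{\mc B}^{\{\mb h_m\}}$ as a locally closed subscheme of $R:=\bb H\Psi_{\mc B}^{\mb h}$ by applying flattening stratification to a natural family of cokernels built from the universal $\mc C^*$, and then to set $\bb F\Lambda_{\mc B}^{\{\mb h_m\}}$ equal to the scheme-theoretic preimage of $\bb H\Lambda_{\mc B}^{\{\mb h_m\}}$ under the retraction $\gamma\:\bb F\Psi_{\mc B}^{\mb h}\onto \bb H\Psi_{\mc B}^{\mb h}$ of \eqref{coIncRetr}.

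Let $\mc C$ denote the universal quotient on $R$ and set $\mc D := \mc C^*\subset \mc B^\dg_R$, an $\mc A_R$-submodule by \eqref{thGMD3}. For each $m$ and $p$, form the natural multiplication map
 \[
  \nu_{m,p}\: \bigoplus_{q\ge m} (\mc A_R)_{p+q}\ox_{\mc O_R} \mc D_{-q}
   \To (\mc B^\dg_R)_p,
 \]
 whose image is $(\Delta^m\mc D)_p$; the sum is finite since $\mc D$ is $S$-Artinian, and nontrivial conditions arise only for finitely many $(m,p)$ in view of the bounds in \eqref{se7}. Apply flattening stratification \cite{SB221}*{Lem.\,3.6} to each $\Cok(\nu_{m,p})$ to get a subscheme $R_{m,p}\subset R$ such that $T\to R$ factors through $R_{m,p}$ iff $(\Cok\nu_{m,p})_T$ is locally free of rank $\mb b^*(p)-\mb h_m^*(p)$, and set $\bb H\Lambda_{\mc B}^{\{\mb h_m\}}:=\bigcap_{m,p} R_{m,p}$. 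To verify representability, given a $T$-point of $\bb H\Lambda_{\mc B}^{\{\mb h_m\}}$, right exactness of pullback gives $(\Delta^m\mc D)_{p,T} = \Im((\nu_{m,p})_T)$; then, arguing as in the proof of \eqref{thRep}, the flatness of $\Cok(\nu_{m,p})$ together with the sequence $0\to (\Delta^m\mc D)_{p,R} \to (\mc B^\dg_R)_p \to \Cok(\nu_{m,p}) \to 0$ yields that $(\Delta^m\mc D)_{p,T}$ is locally free of rank $\mb h_m^*(p)$ and that forming $\Delta^m$ commutes with pullback. Hence each $\Delta^m\mc D_T \in \mb H\Delta_{\mc B^\dg_T}^{\mb h_m^*}$, so \eqref{thGMD3}(2) gives $\Lambda^m\mc C_T \in \mb H\Psi_{\mc B_T}^{\mb h_m}$, that is, $\mc C_T\in \mb H\Lambda_{\mc B_T}^{\{\mb h_m\}}$; the converse direction follows from $\mc D_T = \mc C_T^*$ via \eqref{alBaseChga}.

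For the filtered version, by \eqref{sbMLev} (extended to sheaves in \eqref{se7}), a filtered $\mc C \in \bb{AF}_{\mc A_T}^{\mb h}$ is multilevel with Hilbert functions $\{\mb h_m\}$ iff $G_\bu\mc C = \gamma(\mc C)$ is; hence $\bb F\Lambda_{\mc B}^{\{\mb h_m\}}:=\gamma^{-1}\bb H\Lambda_{\mc B}^{\{\mb h_m\}}$ represents the filtered functor and is manifestly the preimage. Stratification is then immediate: a field-valued point of $\bb H\Psi_{\mc B}^{\mb h}$ (resp.\ $\bb F\Psi_{\mc B}^{\mb h}$) gives a quotient that is automatically multilevel (flatness over a field being automatic) with uniquely determined Hilbert functions $\{\mb h_m\}$, and so lies in exactly one $\bb H\Lambda_{\mc B}^{\{\mb h_m\}}$ (resp.\ $\bb F\Lambda_{\mc B}^{\{\mb h_m\}}$). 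The main obstacle I expect is the verification in representability that flatness of each degree-$p$ cokernel $\Cok(\nu_{m,p})$ translates into flatness of the total quotient $\mc B^\dg_T/\Delta^m\mc D_T$ over $\mc O_T$; this is handled as in the definition of $\mb F\Delta_{\mc B^\dg_T}^{\mb h^*}$ in \eqref{sbApl}, by decomposing the quotient into its bounded graded pieces and invoking \eqref{prFilt}(1).
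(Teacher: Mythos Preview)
Your proposal is correct and follows essentially the same approach as the paper: both apply flattening stratification on $\bb H\Psi_{\mc B}^{\mb h}$ to the data determining $\Delta^m\mc D$, then define the filtered version as the preimage under the retraction. The paper's proof is terser—it simply says ``much as in the proof of \eqref{thRep}'' and stratifies by requiring $\Delta^m\mc D_T\in\mb H\Delta_{\mc D_T}^{\mb h_m^*}$—whereas you spell out the multiplication maps $\nu_{m,p}$ and stratify their cokernels explicitly; these are the same construction, and your concern at the end about flatness of the total quotient is already resolved degree-by-degree since everything is graded.
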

 \indn{HbbLamhm@$\bb H\Lambda_{B}^{\{\mb h_m\}}$}
 \indn{FbbLamhm@$\bb F\Lambda_{B}^{\{\mb h_m\}}$}

\begin{proof} 
  Set $H := \bb H\Psi_{\mc B}^{\mb h}$\,, and let $\mc D\subset \mc
B_H^\dg$ be the universal subsheaf with Hilbert function $\mb h^*$.
Much as in the proof of \eqref{thRep}, the theory of flattening
stratification yields, for every $n$, a subscheme $H_m$ of $H$ such that
an $S$-map $T\to H$ factors through $H_m$ iff $\Delta^n\mc D_T \in
\mb{H\Delta}_{\mc D_T}^{\mb h_m^*}$.  Set $\bb H\Lambda_{\mc B}^{\{\mb
h_m\}} := \bigcap_mH_m$.  Then $\bb H\Lambda_{\mc B}^{\{\mb h_m\}}$
represents $T\mapsto \mb H\Lambda_{\mc B_T}^{\{\mb h_m\}}$.  Plainly,
every scheme-point of $H$ lies in $\bb H\Lambda_{\mc B}^{\{\mb h_m\}}$
for some choice of $\{\mb h_m\}$.  Thus, the $\bb H\Lambda_{\mc
B}^{\{\mb h_m\}}$ stratify $H$.  Finally, it's immediate from the
definitions that the preimage of $\bb H\Lambda_{\mc B}^{\{\mb h_m\}}$ in
$\bb F\Psi_{\mc B}^{\mb h}$ represents the functor $T\mapsto \mb
F\Lambda_{\mc B_T}^{\{\mb h_m\}}$.
 \end{proof}

\begin{sbs}[A recursive construction]\label{sbMLSch}
 Fix $\{\mb h_n\}$, and let's recursively construct a scheme $L :=
\bb{HL}^{\{\mb h_m\}}_{\mc B}$, and if $L \neq \emptyset$, also a
tautological multilevel homogeneous quotient $\mc L^{\{\mb h_m\}}_{\mc
B}$ in $\mb H\Lambda_{\mc B_L}^{\{\mb h_m\}}$ of $L$-socle type $\mb
t$.  If $L = \emptyset$, let's leave $\mc L^{\{\mb h_m\}}_{\mc B}$
undefined.

If there exist a $T/S$ and a $\mc C \in \mb{H\Lambda}_{\mc B_T}^{\{\mb
h_m\}}$, then $\{\mb h_m\}$ is fit for $\mb t$ if $\mc C$ is of
$T$-socle type $\mb t$, or equivalently, if $\{\mb h_m\}$ satisfies
\eqref{se7}(1); see the end of \eqref{se7}.  If not, set $L :=
\emptyset$.  So let's assume $\{\mb h_n\}$ is fit for $\mb t$, and let
$\mb t'$ and $\{\mb h'_m\}$ be their attendants.  Recall that either
$\bar s = s$ or $\bar s < \bar s' \le s$.  Let's proceed by recursion on
$(s-\bar s)$

If $\bar s = s$, set $L' := S$ and $\mc L' = 0$.  If $\bar s < s$, set $L' :=
\bb{HL}^{\{\mb h'_m\}}_{\mc B}$, and if $L'\neq \emptyset$, set $\mc L'
:= \mc L^{\{\mb h'_m\}}_{\mc B}$; by recursion, $L'$ and $\mc L'$ are
defined.  If $L'= \emptyset$, set $L :=\bb{HL}^{\{\mb h_m\}}_{\mc B} :=
\emptyset$.

Assume $L'\neq \emptyset$.  Set $\?{\mb h} := \mb h_{\bar s} - \mb
h_{\bar s+1}$.  Set $\mc B' := \Ker(\mc B_{L'}\onto \mc L')$, and let $\mb
b' $ be its Hilbert function.  Now, $\mc L'$ has Hilbert function $\mb
h'_{\bar s'}$.  But $\mb h'_{\bar s'} = \mb h'_{\bar s+1}$ as $\bar s+1\le \bar s'$.
And $ \mb h'_{\bar s+1} = \mb h_{\bar s+1}$ by definition of the $\mb h'_m$.
So $\mb b' = \mb b - \mb h_{\bar s+1}$.  Thus $\mb b'(p) \ge \?{\mb
h}(p)\ge 0$ for all $p$.  So using \eqref{sbLev}, define
 \begin{equation}\label{eqMLSch3}
            L := \bb{HL}^{\{\mb h_m\}}_{\mc B} := L_{\mc B'}^{\?{\mb h}}.
 \end{equation}

If $L\neq \emptyset$, define $\mc L^{\{\mb h_m\}}_{\mc B}$ via the
following pushout diagram on $L$:
 \begin{equation}\label{eqMLSch4}
 \begin{CD}
 0 @>>>    \mc B'_L  @>>>  \mc B_L  @>>> \mc L'_L @>>> 0\\
 @.              @VVV                @VVV\              @VV1V\\\
 0 @>>> \mc L_{\mc B'}^{\?{\mb h}} @>>> \mc L^{\{\mb h_m\}}_{\mc B}
        @>>> \mc L'_L @>>> 0
 \end{CD}
 \end{equation}
 Note $ \mc L'$ is in $\mb H\Lambda_{\mc B_{L'}}^{\{\mb h'_m\}}$.  So
$\mc L'_L$ is in $\mb H\Lambda_{\mc B_L}^{\{\mb h'_m\}}$ by
\eqref{prMLev}(1).  By construction, $\mc L_{\mc B'}^{\?{\mb h}}$ is in
$\mb H\Lambda_{\mc B_L'}^{\?{\mb h}}$.  Thus \eqref{prMLevCon} yields
$\mc L_{\mc B}^{\{\mb h_m\}} \in \mb H\Lambda_{\mc B_L}^{\{\mb
h_m\}}$, as desired.
 \end{sbs}

\begin{theorem}\label{thMLRpr}
 The scheme $L := \bb{HL}^{\{\mb h_m\}}_{\mc B}$ of \eqref{sbMLSch}
represents $T\mapsto \smash{\mb{H\Lambda}_{\mc B_T}^{\{\mb h_m\}}}$, and
if $L\neq \emptyset$, then $\mc L_{\mc B}^{\{\mb h_m\}}$ is the
universal multilevel homogeneous quotient of $\mc B_L$.
 \end{theorem}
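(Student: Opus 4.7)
The plan is to proceed by induction on the diameter $d := s - \bar s$ of $\mb t$, mirroring the recursive construction in \eqref{sbMLSch}. Before starting the induction, observe that when $\{\mb h_m\}$ is not fit for $\mb t$, both $L = \emptyset$ by construction and $\mb{H\Lambda}_{\mc B_T}^{\{\mb h_m\}} = \emptyset$ for every $T/S$, the latter because by \eqref{prMLev}(2) any multilevel quotient of $\mc B_T$ with Hilbert functions $\{\mb h_m\}$ has socle type $(\mb h_p(p) - \mb h_{p+1}(p))_p$ determined by them.  So I would reduce immediately to the case where $\{\mb h_m\}$ is fit for $\mb t$.

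In the base case $d = 0$, we have $\bar s = s$ and $\mb t$ level; the multilevel condition collapses to being level with Hilbert function $\?{\mb h} := \mb h_{\bar s}$, and the construction reduces to $L = \bb{HL}_{\mc B}^{\?{\mb h}}$ with $\mc L_{\mc B}^{\{\mb h_m\}} = \mc L_{\mc B}^{\?{\mb h}}$.  Here \eqref{thRk} supplies both the representability and the universal quotient.

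For the inductive step $d > 0$, the induction hypothesis asserts that $L' := \bb{HL}_{\mc B}^{\{\mb h'_m\}}$ represents $T \mapsto \mb{H\Lambda}_{\mc B_T}^{\{\mb h'_m\}}$ with universal quotient $\mc L'$.  My key device is the canonical short exact sequence $0 \to \?{\mc C} \to \mc C \to \mc C' \to 0$ attached to each $\mc C \in \mb{H\Lambda}_{\mc B_T}^{\{\mb h_m\}}$, where $\mc C' := \Lambda^{\bar s+1}\mc C$ and $\?{\mc C} := \Ker(\mc C \onto \mc C')$: by \eqref{prMLev}(3)--(4), $\mc C'$ lies in $\mb{H\Lambda}_{\mc B_T}^{\{\mb h'_m\}}$ and $\?{\mc C}$ is level with Hilbert function $\?{\mb h}$.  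In the forward direction, I would take $\phi \: T \to L = L_{\mc B'}^{\?{\mb h}}$, descend to $\phi' \: T \to L'$, and pull back \eqref{eqMLSch4} using the $L'$-flatness of $\mc L'$ to identify $(\phi')^*\mc B' = \Ker(\mc B_T \onto (\phi')^*\mc L')$; \eqref{prMLevCon} then certifies that $\phi^*\mc L_{\mc B}^{\{\mb h_m\}}$ lies in $\mb{H\Lambda}_{\mc B_T}^{\{\mb h_m\}}$.  In the reverse direction, $\mc C'$ yields a unique $\phi'$ by induction, and a diagram chase on $\mc B_T \onto \mc C \onto \mc C'$ produces a surjection $(\phi')^*\mc B' \onto \?{\mc C}$; since $\?{\mc C}$ is level with Hilbert function $\?{\mb h}$, \eqref{thRk} applied over $L'$ yields a unique $\phi \: T \to L$ over $\phi'$ with $\phi^*\mc L_{\mc B'}^{\?{\mb h}} = \?{\mc C}$.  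The pushout description in \eqref{eqMLSch4} then forces $\phi^*\mc L_{\mc B}^{\{\mb h_m\}} = \mc C$, and uniqueness of $\phi$ propagates from the uniqueness clauses in the induction hypothesis and in \eqref{thRk}.

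The main obstacle I expect is the careful bookkeeping needed to show that base change commutes with the pushout construction of \eqref{eqMLSch4}: namely, that $(\phi')^*\mc B' = \Ker(\mc B_T \onto (\phi')^*\mc L')$ (which rests on the $L'$-flatness of $\mc L'$) and that $\phi^*\mc L_{\mc B}^{\{\mb h_m\}}$, realized as the pushout of $\mc B_T$ and $\?{\mc C}$ over $(\phi')^*\mc B'$, coincides with $\mc C$ precisely when $\?{\mc C} = \Ker(\mc C \onto \mc C')$.  Once these flatness and pushout compatibilities are locked in, both the representability and the universal-quotient claim fall out formally from the forward and reverse constructions above.
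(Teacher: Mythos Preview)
Your proposal is correct and follows essentially the same route as the paper: both proceed by recursion on $s-\bar s$, split a given $\mc C$ into $\mc C' := \Lambda^{\bar s+1}\mc C$ and $\?{\mc C} := \Ker(\mc C\onto\mc C')$, invoke the induction hypothesis for $\mc C'$ and \eqref{thRk} for $\?{\mc C}$, and recombine via the pushout \eqref{eqMLSch4}. The only cosmetic differences are that the paper organizes the argument as ``uniqueness then existence'' rather than ``forward and reverse,'' and it treats the base case uniformly by setting $L':=S$, $\mc L':=0$ rather than invoking \eqref{thRk} directly; also, your forward direction (verifying $\phi^*\mc L$ is multilevel via \eqref{prMLevCon}) was already absorbed into the construction \eqref{sbMLSch} plus the functoriality of \eqref{prMLev}(1), so the paper's proof needs only the reverse direction.
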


\begin{proof}
 If $L\neq \emptyset$, set $\mc L := \mc L_{\mc B}^{\{\mb h_m\}}$.
Given $T \neq \emptyset$ and a multilevel sheaf $\mc C \in
\mb{H\Lambda}_{\mc B_T}^{\{\mb h_m\}}$, we have to show $L\neq
\emptyset$ and find a unique map $\tau\:T\to L$ with $\mc C =\tau^* \mc
L$.  To do so, as in \eqref{sbMLSch}, let's proceed by recursion on
$(s-\bar s)$.

First, suppose $L\neq \emptyset$ and $\tau$ exists; let's check $\tau$
is unique.  Note $L'\neq \emptyset$ as $L\neq \emptyset$.  Let
$\lambda\: L\to L'$ be the structure map, and set $\tau' :=
\lambda\tau$.  So $\tau'\: T\to L'$.

If $\bar s = s$, then $L' = S$, and so $\tau'\: T\to S$ is the structure
map, so unique.

Assume $\bar s < s$.  Then by recursion $\tau'^*\mc L'$ determines $\tau'$.
But $\tau'^*\mc L = \tau^*\mc L'_L$.  By construction, $\mc L'_L =
\Lambda^{\bar s+1}\mc L$.  So $\tau^*\mc L'_L = \Lambda^{\bar s+1}(\tau^*\mc
L) = \Lambda^{\bar s+1}\mc C$.  So $\mc C$ determines $\tau'$.

For any $\bar s$, view $\tau$ as a lifting of $\tau'$.  Then $\tau$ is
determined by the level quotient $\tau^*\?{\mc L}$ of $\mc B_T'$ by
\eqref{thRk}.  Pull back \eqref{eqMLSch4} under $\tau$.  The resulting
diagram plainly has surjective vertical maps, and as $\mc L'_L$ is
flat, it also has exact rows.  But, as was just observed, the surjection
$\tau^*\mc L \onto \tau^* \mc L'_L$ is equal to $\mc C \onto
\Lambda^{\bar s+1}\mc C$.  But $\tau^*\?{\mc L}$ is the kernel of $\tau^*\mc L
\onto \tau^* \mc L'_L$.  Hence $\tau^*\?{\mc L}$ is determined by $\mc
C$.  Thus $\tau$ is too, as desired.

It remains to show $L\neq \emptyset$ and find $\tau$.  Set $\mc C' :=
\Lambda^{\bar s+1}\mc C$.   Proceed by recursion.

If $\bar s = s$, then $L' = S$.  So define $\tau'\: T\to L'$ to be the
structure map.

Assume $\bar s < s$.  Then $C' \in \mb{H\Lambda}_{\mc B_T}^{\{\mb h'_m\}}$
by \eqref{prMLev}(3) with $n := \bar s+1$.  So by recursion, $L'\neq
\emptyset$ and there's a map $\tau'\: T\to L'$ with $\mc C' = \tau'^*\mc
L'$.

For any $\bar s$, set $\?{\mc C} = \Ker(\mc C\to \mc C')$.  Then $\?C \in
\mb{H\Lambda}_{\mc B_T}^{\?{\mb h}}$ by \eqref{prMLev}(4) with $n :=
\bar s+1$.  So $L\neq \emptyset$ and $\tau'$ lifts to a map $\tau\:T\to L$
with $\?{\mc C} = \tau^* \?{\mc L}$ by \eqref{thRk}.  Pull back
\eqref{eqMLSch4} under $\tau$.  As noted above, the resulting diagram
has exact rows and surjective vertical maps.  But $\?{\mc C} = \tau^* \?{\mc
L}$ and $\mc C' = \tau^*\mc L'_L$.  Thus $\mc C = \tau^*\mc L$, as
desired.
 \end{proof}

\begin{corollary}\label{coMLevS}
 There's a canonical isomorphism: $\bb H\Lambda_{\mc
B}^{\{\mb h_m\}} = \bb{HL}_{\mc B}^{\{\mb h_m\}}$.
 \end{corollary}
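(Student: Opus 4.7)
The plan is to argue exactly as in the proof of \eqref{coLevS}: invoke Yoneda's lemma. By \eqref{prMLevS}, the scheme $\bb H\Lambda_{\mc B}^{\{\mb h_m\}}$ was constructed precisely to represent the functor $T\mapsto \mb H\Lambda_{\mc B_T}^{\{\mb h_m\}}$ on $S$-schemes $T$. On the other hand, \eqref{thMLRpr} asserts that the recursively defined scheme $\bb{HL}_{\mc B}^{\{\mb h_m\}}$ of \eqref{sbMLSch} represents that same functor. Hence, by the uniqueness of the representing object, there is a canonical isomorphism between them.

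Concretely, I would set it up as follows. Let $\mc L^{\{\mb h_m\}}_{\mc B}$ denote the universal multilevel quotient on $\bb{HL}_{\mc B}^{\{\mb h_m\}}$ provided by \eqref{thMLRpr}, and let $\mc L^\sp$ denote the universal multilevel quotient on $\bb H\Lambda_{\mc B}^{\{\mb h_m\}}$ provided by \eqref{prMLevS}. Apply \eqref{thMLRpr} to $T:=\bb H\Lambda_{\mc B}^{\{\mb h_m\}}$ and to the quotient $\mc L^\sp \in \mb H\Lambda_{\mc B_T}^{\{\mb h_m\}}$ to obtain a unique $S$-map $\alpha\: \bb H\Lambda_{\mc B}^{\{\mb h_m\}} \to \bb{HL}_{\mc B}^{\{\mb h_m\}}$ with $\alpha^*\mc L^{\{\mb h_m\}}_{\mc B} = \mc L^\sp$. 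Symmetrically, apply \eqref{prMLevS} to $T:=\bb{HL}_{\mc B}^{\{\mb h_m\}}$ and to $\mc L^{\{\mb h_m\}}_{\mc B}$ to obtain a unique $S$-map $\beta\: \bb{HL}_{\mc B}^{\{\mb h_m\}} \to \bb H\Lambda_{\mc B}^{\{\mb h_m\}}$ with $\beta^*\mc L^\sp = \mc L^{\{\mb h_m\}}_{\mc B}$.

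Finally, use uniqueness once more to see that $\alpha\beta$ and $\beta\alpha$ must each be the identity: indeed, $(\alpha\beta)^*\mc L^{\{\mb h_m\}}_{\mc B} = \mc L^{\{\mb h_m\}}_{\mc B}$, and the identity map of $\bb{HL}_{\mc B}^{\{\mb h_m\}}$ also has this property, so the uniqueness clause in \eqref{thMLRpr} forces $\alpha\beta = 1$; symmetrically $\beta\alpha = 1$. This gives the desired canonical isomorphism. The argument is purely formal, so there is no real obstacle; the substantive work has already been done in the representability results \eqref{prMLevS} and \eqref{thMLRpr}.
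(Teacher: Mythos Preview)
Your proposal is correct and follows essentially the same approach as the paper: both \eqref{prMLevS} and \eqref{thMLRpr} show that the two schemes represent the same functor $T\mapsto \mb H\Lambda_{\mc B_T}^{\{\mb h_m\}}$, whence the canonical isomorphism follows from Yoneda. You spell out the Yoneda argument more explicitly than the paper does, but the substance is identical.
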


\begin{proof}
 By \eqref{prMLevS} and \eqref{thMLRpr}, both $\bb H\Lambda_{\mc
B}^{\{\mb h_m\}}$ and $\bb{HL}_{\mc B}^{\{\mb h_m\}}$ represent
$T\mapsto \mb H\Lambda_{\mc B_T}^{{\{\mb h_m\}}}$.  Thus $\bb
H\Lambda_{\mc B}^{\{\mb h_m\}}$ and $\bb{HL}_{\mc B}^{\{\mb h_m\}}$ are
canonically isomorphic.
 \end{proof}

\begin{sbs}[Recursively maximal]\label{sbRecMax}
  Call a set $\{\mb h_m\}$ {\it recursively maximal\/} for\ $\mb t$ and
 \indt{recursively maximal}
$T/S$ if these three conditions hold, with $\mb t'$ and $\{\mb h'_m\}$
the attendants to $\mb t$ and $\{\mb h_m\}$:
 \begin{enumerate}
 \item if $\bar s < s$, then  $\{\mb h'_m\}$ is recursively maximal
for $\mb t'$ and $T/S$;

\item given any $T^\di/T$ and any $\{\mb h^\di_m\}$ and any $\mc C^\di
\in \mb{H\Lambda}_{\mc B_{T^\di}}^{\{\mb h^\di_m\}}$ of $T^\di$-socle
type $\mb t$, with $\Lambda^{\bar s+1}\mc C^\di \in \mb{H\Lambda}_{\mc
B_{T^\di}}^{\{\mb h'_m\}}$ if $\bar s < s$, necessarily $\mb h^\di_m(p)
\le \mb h_m(p)$ for all $m,\ p$;

\item there exist $T^\sh/T$ and $\mc C^\sh \in \mb{H\Lambda}_{\mc
B_{T^\sh}}^{\{\mb h_m\}}$ of $T^\sh$-socle type $\mb t$.
 \end{enumerate}

Given (1) and (3), the inequality $\mb h^\di_m(p) \le \mb h_m(p)$ in (2)
holds for all $m$ if it holds just for $m = \bar s$.  Indeed, (1)
implies it holds for $m > \bar s$ by recursion.  But both $\mc C^\di$
and $\mc C^\sh$ are of socle type $\mb t$; so $\mb h^\di_m = \mb
h^\di_{\bar s}$ and $\mb h_m = \mb h_{\bar s}$ for $m \le \bar s$ by the
end of \eqref{se7}.

Notice, in (2), it suffices to use $T^\di$ of the form $\Spec K$ where
$K$ is a field.  Indeed, given any $T^\di$, just take $K$ to be one of
its residue fields.  Ditto for (3).

Notice (3) implies $\{\mb h_m\}$ is fit for $\mb t$ by the end of
\eqref{se7}.

Notice, if $\{\mb h_m\}$ and $\{\mb h^\di_m\}$ are both recursively
maximal for $\mb t$ and $T/S$, then $\{\mb h_m\}= \{\mb h^\di_m\}$.
Indeed, let $\mc C^\di \in \mb{H\Lambda}_{\mc B_{T^\di}}^{\{\mb
h^\di_m\}}$ be provided by (3).  Now, if $\bar s < s$, then $\{\mb
h'_m\}$ is equal to the attendant set to $\{\mb h^\di_m\}$ by (1) and
recursion; hence, $\Lambda^{\bar s+1}\mc C^\di \in \mb{H\Lambda}_{\mc
B_{T^\di}}^{\{\mb h'_m\}}$ by \eqref{prMLev}(3) with $n := \bar s+1$.
So $\mb h^\di_m(p) \le \mb h_m(p)$ for all $m$ and $p$ by (2).  By
symmetry, $\mb h_m(p) \le \mb h^\di_m(p)$ for all $m$ and $p$.  Thus
$\{\mb h^\di_m\}= \{\mb h_m\}$.
 \end{sbs}

\begin{sbs}[Quasi-permissible]\label{sbRecPer}
  Call $\mb t$ {\it quasi-permissible\/} if either $\bar s = s$ and then
 \indt{quasi-permissible}
$\mb b(s) \ge \mb t(s)$, or else $\bar s < s$ and then these two conditions
hold:
 \begin{enumerate}
 \item the attendant $\mb t'$ to $\mb t$ is quasi-permissible;

\item given any $T^\ft/S$ and any $\{\mb h^\ft_m\}$ recursively maximal for
$\mb t'$ and $T^\ft$, necessarily
         $$\mb b(\bar s) - \mb h^\ft_{\bar s+1}(\bar s) \ge \mb t(\bar s).$$
 \end{enumerate}
 \end{sbs}

\begin{theorem} \label{thMax}
 If there exists a recursively maximal set for $\mb t$ and $T/S$, then
$\mb t$ is quasi-permissible.  If $\mb t$ is quasi-permissible and $S$
is reduced and irreducible, then there exists a recursively maximal set
$\{\mb h_m\}$ for $\mb t$ and $T/S$; moreover, for any such set $\{\mb
h_m\}$, then $\bb H\Lambda_{B}^{\{\mb h_m\}}$ is nonempty, reduced, and
irreducible, and it's covered by open subschemes, with each one
isomorphic to an open subscheme of the affine space over $S$ of fiber
dimension $\mb H$ where $\mb H :=\sum_p\mb t(p)\bigl(\mb b(p) - \mb
h_{\bar s}(p)\bigr)$.
 \end{theorem}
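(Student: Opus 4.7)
The plan is to induct on the diameter $d := s - \bar s$ of $\mb t$, proving all three conclusions together so that the geometric description of $\bb H\Lambda_{B}^{\{\mb h'_m\}}$ is available at the next level. In the base case $d = 0$, the function $\mb t$ is concentrated at $s$ with value $t := \mb t(s)$, every fit $\{\mb h_m\}$ degenerates to a single level Hilbert function $\mb h_{\bar s}$ (on $m\le s$, zero beyond), and multilevel of socle type $\mb t$ reduces to level of type $t$. Existence of a recursively maximal $\{\mb h_m\}$ is then equivalent to $\mb b(s)\ge t$, which is precisely the base case of quasi-permissibility; and the geometric claims follow directly from \eqref{sbLev}--\eqref{coLevS}, since for $S$ reduced irreducible the maximal stratum $\bb H\Lambda_{\mc B}^{\mb h_{\bar s}}$ sits as a nonempty open subscheme of $\Grass_t(\mc B_s)$, covered by open pieces of affine space of relative dimension $t(\mb b(s)-t)=\mb H$.

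For the inductive step $d\ge 1$, first consider \emph{recursively maximal $\Rightarrow$ quasi-permissible}. Condition (1) of \eqref{sbRecMax} together with the inductive hypothesis immediately gives quasi-permissibility of $\mb t'$. For the remaining inequality, I would pick an arbitrary $T^\ft/S$ and a recursively maximal $\{\mb h^\ft_m\}$ for $\mb t'$ on $T^\ft$, then base-change the witness $\mc C^\sh$ from condition (3) of $\{\mb h_m\}$ to a common extension so that $\Lambda^{\bar s+1}\mc C^\sh$ pulls back to a multilevel sheaf with Hilbert data $\{\mb h^\ft_m\}$ (using uniqueness of recursively maximal sets over a fixed base from \eqref{sbRecMax}). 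The kernel $\?{\mc C}^\sh := \Ker(\mc C^\sh \to \Lambda^{\bar s+1}\mc C^\sh)$ is then, by \eqref{prMLev}(4), a level quotient of $\mc B' := \Ker(\mc B \to \Lambda^{\bar s+1}\mc C^\sh)$ concentrated at degree $\bar s$ of type $\mb t(\bar s)$; this forces $\mb b(\bar s) - \mb h^\ft_{\bar s+1}(\bar s) \ge \mb t(\bar s)$, as required.

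Conversely, assume $S$ reduced irreducible and $\mb t$ quasi-permissible. The plan is to run the recursive construction of \eqref{sbMLSch}. By induction, a recursively maximal $\{\mb h'_m\}$ exists for $\mb t'$ and $T/S$, and $L' := \bb H\Lambda_{\mc B}^{\{\mb h'_m\}}$ is nonempty, reduced, irreducible, and covered by open pieces of affine space of relative dimension $\mb H' := \sum_p \mb t'(p)(\mb b(p)-\mb h'_{\bar s+1}(p))$. Forming $\mc B' := \Ker(\mc B_{L'} \to \mc L')$, its rank at $\bar s$ is $\mb b(\bar s)-\mb h'_{\bar s+1}(\bar s)\ge \mb t(\bar s)$ by quasi-permissibility, and since $L'$ is reduced irreducible, \eqref{sbLev} produces a generic maximal level Hilbert function $\?{\mb h}$ for $\mc B'$ of type $\mb t(\bar s)$ at degree $\bar s$; the stratum $L := \bb{HL}_{\mc B'}^{\?{\mb h}}$ is open in the relative Grassmannian and is nonempty, reduced, irreducible, covered over $L'$ by open pieces of affine space of fiber dimension $\mb t(\bar s)(\mb b(\bar s)-\mb h'_{\bar s+1}(\bar s)-\mb t(\bar s))$. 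Setting $\mb h_{\bar s} := \mb h'_{\bar s+1}+\?{\mb h}$ and $\mb h_m := \mb h'_m$ for $m\ge \bar s+1$, the pushout \eqref{eqMLSch4} manufactures the universal multilevel quotient on $L$, and \eqref{coMLevS} identifies $L = \bb H\Lambda_{\mc B}^{\{\mb h_m\}}$. Composing the two affine-space coverings and telescoping the fiber dimensions gives $\mb H' + \mb t(\bar s)(\mb b'(\bar s)-\mb t(\bar s)) = \mb H$, since $\mb t'(\bar s)=0$ and $\?{\mb h}$ vanishes off $\bar s$. Finally, to verify $\{\mb h_m\}$ is recursively maximal, condition (1) is immediate from the choice of attendant, (3) is witnessed by the universal quotient on $L$, and (2) reduces via \eqref{sbRecMax} to bounding $\mb h^\di_{\bar s}(p)\le \mb h_{\bar s}(p)$, which follows because $\?{\mc C}^\di := \Ker(\mc C^\di\to \Lambda^{\bar s+1}\mc C^\di)$ is a level quotient of $\mc B'_{T^\di}$ of type $\mb t(\bar s)$, hence has Hilbert function termwise bounded by the generic maximum $\?{\mb h}$.

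The hard part is the direction \emph{recursively maximal $\Rightarrow$ quasi-permissible}: the definition of recursively maximal is relative to a fixed base $T$, whereas quasi-permissibility quantifies over all $T^\ft/S$, so the care lies in base-changing the witness $\mc C^\sh$ and aligning the Hilbert data using uniqueness of recursively maximal sets. The reverse direction, by contrast, is largely bookkeeping on top of the Grassmann construction of \eqref{sbLev} and the pushout construction of \eqref{sbMLSch}, applied inductively.
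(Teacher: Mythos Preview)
Your inductive strategy matches the paper's: recursion on the diameter $s-\bar s$, with the converse direction running the construction of \eqref{sbMLSch} over the Grassmannian stratification of \eqref{sbLev}, then telescoping fiber dimensions. Two points of difference worth noting:

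For the implication \emph{recursively maximal $\Rightarrow$ quasi-permissible}, the paper is considerably more direct than you. It never forms $\?{\mc C}^\sh$ or base-changes to a common extension. From the witness $\mc C^\sh$ it simply reads off $\mb b(\bar s)\ge\mb h_{\bar s}(\bar s)$ together with $\mb t(\bar s)=\mb h_{\bar s}(\bar s)-\mb h_{\bar s+1}(\bar s)$ (fitness, via \eqref{prMLev}(2)), giving $\mb b(\bar s)-\mb h_{\bar s+1}(\bar s)\ge\mb t(\bar s)$ at once; then it invokes the uniqueness paragraph of \eqref{sbRecMax} to get $\mb h^\ft_{\bar s+1}=\mb h'_{\bar s+1}=\mb h_{\bar s+1}$. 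So what you flag as the ``hard part'' the paper treats as routine. Your worry about aligning bases is legitimate---the uniqueness in \eqref{sbRecMax} is stated for a fixed base, while quasi-permissibility quantifies over all $T^\ft/S$---but the paper simply invokes uniqueness without further comment, and your base-change maneuver does not fully resolve it either (a common extension need not exist with no irreducibility hypothesis on $S$). In any case, your detour through $\?{\mc C}^\sh$ is unnecessary, and note that $\?{\mc C}^\sh$ is level of socle degree $\bar s$, not ``concentrated at degree $\bar s$'': its pieces in degrees below $\bar s$ need not vanish.

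For the dimension count in the converse direction, $\?{\mb h}$ does not vanish off $\bar s$ either; it is the Hilbert function of a level module of socle degree $\bar s$. The telescoping $\mb H'+\mb t(\bar s)(\mb b'(\bar s)-\mb t(\bar s))=\mb H$ works because $\?{\mb h}(p)=0$ for $p>\bar s$ while $\mb t(p)=0$ for $p<\bar s$, so in $\sum_p\mb t(p)\?{\mb h}(p)$ only the term $\mb t(\bar s)\?{\mb h}(\bar s)=\mb t(\bar s)^2$ survives.
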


\begin{proof}
 First, assume there exists a recursively maximal set $\{\mb h_m\}$ for
$\mb t$ and $T/S$. Note \eqref{sbRecMax}(3) provides $\mc C^\sh \in
\mb{H\Lambda}_{\mc B_{T^\sh}}^{\{\mb h_m\}}$ of $T^\sh$-socle type $\mb
t$.  So $\mb b(\bar s) \ge \mb h_{\bar s}(\bar s)$, and $\mb t(\bar s) =
\mb h_{\bar s}(\bar s) - \mb h_{\bar s+1}(\bar s)$ by \eqref{prMLev}(2).
Thus $\mb b(\bar s) - \mb h_{\bar s+1}(\bar s) \ge \mb t(\bar s)$.

Suppose $\bar s = s$.  Then $\mb h_{\bar s+1} = 0$ by \eqref{sbMLev}.
Thus $\mb b(s) \ge \mb t(s)$ as required by \eqref{sbRecPer}.

Suppose $\bar s < s$.  Then $\{\mb h'_m\}$ is recursively maximal for
$\mb t'$ and $T/S$ by \eqref{sbRecMax}(1).  So by recursion,
\eqref{sbRecPer}(1) holds.  Let's prove \eqref{sbRecPer}(2).  Note
$\{\mb h^\ft_m\} = \{\mb h'_m\}$ by the last paragraph of
\eqref{sbRecMax}.  But $\mb h'_{\bar s+1} = \mb h_{\bar s+1}$.  Thus
$\mb h^\ft_{\bar s+1} = \mb h_{\bar s+1}$, as desired.

Next, assume $\mb t$ is quasi-permissible and $S$ is reduced and
irreducible. Roughly follow \eqref{sbMLSch}, proceeding by recursion on
$(s-\bar s)$.  Set $\?t := \mb t(\bar s)$.

Suppose $\bar s = s$.  Again set $L' := S$, set $\mc L' = 0$, and set
$\mb h'_m:=0$ for all $m$.  Then $L'$ is nonempty, reduced, and
irreducible by hypothesis; trivially, it's of fiber dimension 0.  As
$\mb h'_{\bar s+1}:=0$ and $\mb t$ is quasi-permissible, $\mb b(\bar s)
- \mb h'_{\bar s+1}(\bar s) \ge \?t > 0$.

Suppose $\bar s < s$.  By recursion assume that there's a recursively
maximal set $\{\mb h'_m\}$ for $\mb t'$ and $S$, and that $L' :=
\bb{HL}^{\{\mb h'_m\}}_{\mc B}$ is nonempty, reduced, and irreducible,
and covered by open subschemes, each one isomorphic to an open subscheme
of the affine space over $S$ of fiber dimension $\mb H'$ where $\mb H'
:= \sum_p\mb t'(p)\bigl(\mb b(p) - \mb h'_{\bar s'}(p)\bigr)$.  So $\mb
h'_m = \mb h'_{\bar s'}$ for $m \le \bar s'$ by \eqref{sbMLev} with
\smash{$\mc L^{\{\mb h'_m\}}_{\mc B}$} for $\mc C$.  But $\bar s+1\le
\bar s'$.  Thus $\mb h'_m = \mb h'_{\bar s+1}$ for $m \le \bar s+1$.
Also, $\mb t$ is quasi-permissible; so \eqref{sbRecPer}(2) gives $\mb
b(\bar s) - \mb h'_{\bar s+1}(\bar s) \ge \?t > 0$.

For any $\bar s$, set $\mc B' := \Ker(\mc B_{L'}\onto \mc L')$; let $\mb
b'$ be its Hilbert function.  Now, $\mc L'$ has Hilbert function $\mb
h'_{\bar s'}$.  But $\mb h'_{\bar s'} = \mb h'_{\bar s+1}$ and $\mb
b(\bar s) - \mb h'_{\bar s+1}(\bar s) \ge \?t > 0$. Thus $\mb b' = \mb b
- \mb h'_{\bar s+1}$, and so $\mb b'(\bar s) \ge \?t > 0$.  Hence, the
end of \eqref{sbLev} with $S := L'$ yields a maximal Hilbert function,
$\bar{\mb h}$ say, for $\mc B'$ and $\?t$.  For use below, note that the
construction in \eqref{sbLev} yields $\bar t = \bar{\mb h}(\bar s)$.

Set $\mb h_m := \?{\mb h} + \mb h'_{\bar s+1}$ for $m \le \bar s$; set $\mb
h_m := \mb h'_m$ for $m > \bar s$.  Recall $\mb h'_m = \mb h'_{\bar s+1}$ for
$m \le \bar s+1$; see above.  Hence $\{\mb h'_m\}$ is the attendant to
$\{\mb h_m\}$.  Thus \eqref{sbRecMax}(1) holds.

The end of \eqref{sbLev} also yields a nonempty, reduced and irreducible
$L := \bb{HL}_{\mc B}^{\?{\mb h}}$.  By \eqref{thRk},  it represents
$T\mapsto \mb H\Lambda_{\mc B_T}^{\?{\mb h}}$, and correspondingly
carries a universal quotient $\mc {L}_{\mc B}^{\?{\mb h}}$.
 Set $\bb{HL}_{\mc B}^{\{\mb h_m\}} := L$ and $\mc L := \mc {L}_{\mc
B}^{\?{\mb h}}$.

Given any $T^\di/S$ and any $\{\mb h^\di_m\}$ and any $\mc C^\di \in
\mb{H\Lambda}_{\mc B_{T^\di}}^{\{\mb h^\di_m\}}$ of $T^\di$-socle type
$\mb t$, with $\Lambda^{\bar s+1}\mc C^\di \in \mb{H\Lambda}_{\mc
B_T^\di}^{\{\mb h'_m\}}$ if $\bar s < s$, there's a unique map $\tau'\:
T^\di\to L'$ with $\Lambda^{\bar s+1}\mc C^\di = \tau'^*\mc L'$.  Also,
$\tau'$ lifts to a unique map $\tau\:T^\di\to L$ with $\mc C^\di =
\tau^*\mc L$.  But $\?{\mb h}$ is maximal.  So $\mb h^\di_{\bar s}(p) -
\mb h^\di_{\bar s+1}(p) \le \?{\mb h}(p)$ for all $p$.  Thus $\mb
h^\di_{\bar s}(p) \le \mb h_{\bar s}(p)$ for all $p$.  Thus
\eqref{sbRecMax}(2) holds.  And, if $T^\di :=L$ and $\mc C^\di := \mc
L$, then $\mb h^\di_m = \mb h_m$ for all $m$; thus \eqref{sbRecMax}(3)
holds.

By the end of \eqref{sbLev} too, $L$ is covered by open subschemes, each
one isomorphic to an open subscheme of the affine space over $L'$ of
fiber dimension $\?t(\mb b'(\bar s)-\?t)$.  From the above, recall $\mb
b' = \mb b - \mb h'_{\bar s+1}$ and $\mb h_{\bar s}(\bar s) := \?{\mb
h}(\bar s) + \mb h'_{\bar s+1}(\bar s)$ and $\?t = \?{\mb h}(\bar s)$
and $\?t := \mb t(\bar s)$.  Thus $\?t(\mb b'(\bar s)-\?t) = \mb t(\bar
s)\bigl(\mb b(\bar s) - \mb h_{\bar s}(\bar s))$.

 However, $\mb t'(p) = 0$ for $p \le \bar s$, and $\mb t'(p) = \mb t(p)$
for $p > \bar s$; furthermore, $\mb h'_{\bar s'} = \mb h_{\bar s+1}$, and $\mb
h_{\bar s+1}(p) = \mb h_{\bar s}(p)$ for $p > \bar s$; see \eqref{se7} and
\eqref{sbMLev}.  Hence \begin{equation*}\label{eqMax1}\ts
  \?t(\mb b'(\bar s)-\?t) + \sum_p\mb t'(p)\bigl(\mb b(p)
                                         - \mb h'_{\bar s'}(p)\bigr)
   = \sum_p\mb t(p)\bigl(\mb b(p) - \mb h_{\bar s}(p)\bigr).
 \end{equation*}
 Thus $L$ is covered by open subschemes, each one isomorphic to an open
subscheme of the affine space over $S$ of fiber dimension $\mb H$, as
desired.
 \end{proof}

\begin{sbs}[Recursively compressed]\label{sbRC}
  Given $\mc C \in \mb{H\Lambda}_{\mc B_T}^{\{\mb h_m\}}$ for some
$\{\mb h_m\}$ and some $T/S$, call $\mc C$ {\it recursively compressed
for\/} $\mb t$ if $\mc C$ is of $T$-socle type $\mb t$ and if the
following two conditions hold, where $\mb t'$ is the attendant to $\mb
t$:
 \begin{enumerate}
 \item if $\bar s < s$, then $\Lambda^{\bar s+1}\mc C$ is recursively
compressed for $\mb t'$;
 \indt{recursively compressed}

\item for any $T^\di/S$ and $\{\mb h^\di_m\}$ and $\mc C^\di \in
\mb{H\Lambda}_{\mc B_{T^\di}}^{\{\mb h^\di_m\}}$ of $T^\di$-socle type
$\mb t$, with $\Lambda^{\bar s+1}\mc C^\di$ recursively compressed for
$\mb t'$ if $\bar s < s$, necessarily $\rank (\mc C^\di) \le \rank (\mc
C)$.
 \end{enumerate}

Notice, for every recursively compressed $\mc C$ for $\mb t$, the number
$\rank (\mc C)$ is the same.  Moreover, if $\mc C$ is recursively
compressed, then so is $\mc C_R$ for any $R/T$.
 \end{sbs}

\begin{theorem} \label{thRC}
 Assume $\mb t$ is quasi-permissible.

 \(1) In some $\mb{H\Lambda}_{\mc B_T}^{\{\mb h_m\}}$, there exists a
recursively compressed $\mc C$ for $\mb t$ .

\(2) Let $\{\mb h_m\}$ be recursively maximal for $\mb t$ and some
$T/S$.  Let $\mc C^\sh \in \smash{\mb{H\Lambda}_{\mc B_{T^\sh}}^{\{\mb
h^\sh_m\}}}$ be of $T^\sh$-socle type $\mb t$.  Then $\{\mb h_m\} =
\{\mb h^\sh_m\}$ iff\/ $\mc C^\sh$ is recursively compressed for $\mb
t$.
 \end{theorem}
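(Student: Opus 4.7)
The plan is to induct on the diameter $d := s - \bar s$ of $\mb t$, proving (2) first and deriving (1) from it. Throughout, quasi-permissibility is preserved under restriction of $S$ to a subscheme, since \eqref{sbRecPer}(2) quantifies over $T^\ft/S$. To invoke \eqref{sbRecMax}(2) against a $\mc C^\di$ on $T^\di/S$ that does not already lie over $T$, I base-change $\mc C^\di$ to a residue-field point $\Spec K$ of $T^\di$; this preserves the rank, the socle type (by \eqref{prFib}), and the hypothesis on $\Lambda^{\bar s+1}$ (by the closing remark of \eqref{sbRC}).

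For (2), in each inductive step I first prove $(\Rightarrow)_d$ using $(\Leftarrow)_{d-1}$, then $(\Leftarrow)_d$ using $(\Rightarrow)_d$. In $(\Rightarrow)_d$, assume $\{\mb h^\sh_m\} = \{\mb h_m\}$. Then $\mc C^\sh$ has socle type $\mb t$. If $\bar s < s$, then $\{\mb h'_m\}$ is recursively maximal for $\mb t'$ by \eqref{sbRecMax}(1), and $\Lambda^{\bar s+1}\mc C^\sh$ has Hilbert functions $\{\mb h'_m\}$ by \eqref{prMLev}(3); inductive $(\Rightarrow)_{d-1}$ then makes $\Lambda^{\bar s+1}\mc C^\sh$ recursively compressed for $\mb t'$. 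For the rank bound, given $\mc C^\di$ as in \eqref{sbRC}(2), when $\bar s < s$ apply inductive $(\Leftarrow)_{d-1}$ to $\Lambda^{\bar s+1}\mc C^\di$ to get $\{\mb h^{\di\prime}_m\} = \{\mb h'_m\}$; then \eqref{sbRecMax}(2) yields $\mb h^\di_m(p) \le \mb h_m(p)$ for all $m, p$, so $\rank \mc C^\di \le \rank \mc C^\sh$ (when $\bar s = s$ the bound is direct).

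In $(\Leftarrow)_d$, assume $\mc C^\sh$ is recursively compressed. By \eqref{sbRecMax}(3), some $\mc C$ on some $T^0/T$ realizes the Hilbert functions $\{\mb h_m\}$ and socle type $\mb t$; by $(\Rightarrow)_d$ just established, $\mc C$ too is recursively compressed, so $\rank \mc C^\sh = \rank \mc C = \sum_p \mb h_{\bar s}(p)$ by the uniqueness-of-rank for recursively compressed modules (end of \eqref{sbRC}). When $\bar s < s$, condition (1) of recursive compression and inductive $(\Leftarrow)_{d-1}$ give $\{\mb h^{\sh\prime}_m\} = \{\mb h'_m\}$, making \eqref{sbRecMax}(2) applicable to $\mc C^\sh$ and yielding $\mb h^\sh_m(p) \le \mb h_m(p)$ for all $m, p$. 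Combined with the equality of sums at $m = \bar s$, this forces the termwise equality $\mb h^\sh_{\bar s} = \mb h_{\bar s}$; then $\mb h^\sh_m = \mb h^\sh_{\bar s}$ for $m \le \bar s$ (end of \eqref{se7}) and $\mb h^\sh_m = \mb h'_m = \mb h_m$ for $m \ge \bar s+1$ close the proof.

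For (1), decompose $S_{red} = \bigcup_i S_i$ into finitely many irreducible components. On each $S_i$, quasi-permissibility is preserved, so \eqref{thMax} yields a recursively maximal $\{\mb h_m^{(i)}\}$ for $\mb t$ and $S_i/S_i$ together with a universal quotient $\mc L_i$ on $L_i := \bb H\Lambda_{\mc B_{S_i}}^{\{\mb h_m^{(i)}\}}$; by $(\Rightarrow)$ of (2), $\mc L_i$ is recursively compressed for $\mb t$ on $L_i/S_i$. Choosing $i^*$ maximizing $\rank \mc L_i$, I claim $\mc L_{i^*}$, viewed on $L_{i^*}/S$ by composition, is recursively compressed: any $\mc C^\di$ on $T^\di/S$ satisfying the hypotheses of \eqref{sbRC}(2) can be base-changed to $\Spec K$ for $K$ a residue field of a closed $y \in T^\di$ with image in some $S_j$, giving $\rank \mc C^\di = \rank \mc C^\di_K \le \rank \mc L_j \le \rank \mc L_{i^*}$. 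The main obstacle is the interleaved induction in (2), where each direction at diameter $d$ depends on the other — $(\Rightarrow)_d$ on $(\Leftarrow)_{d-1}$ and $(\Leftarrow)_d$ on $(\Rightarrow)_d$ — together with the need to align base schemes across the universal quantifiers in \eqref{sbRC}(2) and \eqref{sbRecMax}(2).
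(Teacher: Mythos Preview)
Your argument for (2) is sound and in one place more direct than the paper's. For $(\Rightarrow)_d$, the paper first establishes (1)$_d$, takes a recursively compressed $\mc C^\ft$, applies $(\Leftarrow)_d$ to it to get $\{\mb h^\ft_m\}=\{\mb h_m\}$, and concludes $\rank\mc C^\di\le\rank\mc C^\ft=\rank\mc C^\sh$. You instead apply the inductive $(\Leftarrow)_{<d}$ to $\Lambda^{\bar s+1}\mc C^\di$ to pin down its attendant Hilbert functions and then invoke \eqref{sbRecMax}(2) directly; this decouples (2) from (1). Your $(\Leftarrow)_d$ and the paper's are essentially the same argument repackaged. (Your opening base-change remark does not actually place $\Spec K$ over $T$, but the paper's own use of \eqref{sbRecMax}(2) for $\mc C^\sh$ in its $(\Leftarrow)$ direction is equally loose on this point, so you are on even footing.)

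Your argument for (1), however, has a real gap. Recursive compression in \eqref{sbRC} is defined relative to the fixed base $S$, and this base enters recursively through both conditions. When you restrict to an irreducible component $S_j\subset S_{\rm red}$, the sheaf $\mc L_j$ is recursively compressed for $\mb t$ \emph{over $S_j$}, so its rank bound \eqref{sbRC}(2) constrains only those $\mc C^\di_K$ whose $\Lambda^{\bar s+1}$ is recursively compressed for $\mb t'$ \emph{over $S_j$}. But you only know $\Lambda^{\bar s+1}\mc C^\di_K$ is recursively compressed for $\mb t'$ over $S$, and ``over $S$'' does not obviously imply ``over $S_j$'': the hypothesis inside \eqref{sbRC}(2) itself shifts base, so there may be \emph{more} admissible competitors over $S_j$ than over $S$ restricted to $S_j$. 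You also never verify condition (1) of \eqref{sbRC} for $\mc L_{i^*}$ over $S$, i.e.\ that $\Lambda^{\bar s+1}\mc L_{i^*}$ is recursively compressed for $\mb t'$ over $S$ rather than merely over $S_{i^*}$. The paper avoids both difficulties by never leaving $S$: it forms the set $\mb S$ of all $(T/S,\{\mb h_m\},\mc C)$ with $\mc C$ of socle type $\mb t$ and, when $\bar s<s$, with $\Lambda^{\bar s+1}\mc C$ recursively compressed for $\mb t'$ over $S$; it exhibits one member of $\mb S$ via a single residue field of $S$ and the inductive $(\Rightarrow)$ for $\mb t'$; and it picks a maximal-rank element, which then satisfies \eqref{sbRC}(1) by membership in $\mb S$ and \eqref{sbRC}(2) because every competitor already lies in $\mb S$.
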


\begin{proof}
 Let's prove (1) and (2) together.  By recursion on $(s-\bar s)$, assume
(2) holds with $\mb t$ and $\{\mb h_m\}$ replaced by their attendants
$\mb t'$ and $\{\mb h'_m\}$ if $\bar s < s$.

{\bf For (1),} form the set $\mb S$ of all triples $\bigl(T/S,\,\{\mb
h_m\},\,\mc C\bigr)$ with $\mc C \in \mb{H\Lambda}_{\mc B_T}^{\{\mb
h_m\}}$ of $T$-socle type $\mb t$, and $\Lambda^{\bar s+1}\mc C$
recursively compressed for $\mb t'$ if $\bar s < s$.  Let's see $\mb S$
is nonempty.

Let $K$ be a residue field of $S$, and set $T := \Spec(K)$.  Then $T$ is
reduced and irreducible.  So \eqref{thMax} yields a recursively maximal
set $\{\mb h_m\}$ for $\mb t$ and $T$.  So \eqref{sbRecMax}(3) yields a
$T^\sh/T$ and a $\mc C^\sh \in \mb{H\Lambda}_{\mc B_{T^\sh}}^{\{\mb
h_m\}}$ of $T^\sh$-socle type $\mb t$.  If $\bar s = s$, then
$\bigl(T^\sh,\,\{\mb h_m\},\,\mc C^\sh\bigr) \in \mb S$.

Suppose $\bar s < s$.  Let $\{\mb h'_m\}$ be the attendant to $\{\mb
h_m\}$.  Then $\Lambda^{\bar s+1}\mc C^\sh \in \mb{H\Lambda}_{\mc
B_{T^\sh}}^{\{\mb h'_m\}}$ by \eqref{prMLev}(3) with $n := \bar s+1$.  But
$\{\mb h'_m\}$ is recursively maximal for $\mb t'$ and $T^\sh$ by
\eqref{sbRecMax}(1) and \eqref{prMLev}(1).  Apply (2) with $\mb t'$ and
$\{\mb h'_m\}$ for $\mb t$ and $\{\mb h_m\}$; thus $\Lambda^{\bar s+1}\mc
C^\sh$ is recursively compressed for $\mb t'$.  Hence
$\bigl(T^\sh,\,\{\mb h_m\},\,\mc C^\sh\bigr) \in \mb S$.  Thus $\mb
S\neq \emptyset$ for any $\bar s$.

For $\bigl(T/S,\,\{\mb h_m\},\,\mc C\bigr)\in \mb S$, the numbers $\rank
(\mc C)$ are bounded owing to \eqref{eqLMlbd}.  So there exists
$\bigl(T/S,\,\{\mb h_m\},\,\mc C\bigr)\in \mb S$ with $\rank (\mc C)$
maximal.  Thus (1) holds.

{\bf For (2),} first assume $\mc C^\sh$ is recursively compressed for $\mb t$.
Suppose $\bar s < s$.  Then $\Lambda^{\bar s+1}\mc C^\sh$ is recursively
compressed for $\mb t'$ by \eqref{sbRC}(1).  Apply (2) with $\mb t'$ and
$\{\mb h'_m\}$ for $\mb t$ and $\{\mb h_m\}$; thus $\Lambda^{\bar s+1}\mc
C^\sh \in \mb{H\Lambda}_{\mc B_T^\sh}^{\{\mb h'_m\}}$.  Hence
\eqref{prMLev}(3) yields $\mb h^\sh_m = \mb h_m$ for $m>\bar s$.

For any $\bar s$, note that \eqref{sbRecMax}(2) yields $\mb h^\sh_m(p) \le
\mb h_m(p)$ for all $m$ and $p$.  Hence $\sum_p \mb h_m^\sh(p) \le
\sum_p \mb h_m(p)$ for all $m$. with equality iff $\{\mb h^\sh_m\} =
\{\mb h_m\}$.  Thus, owing to the preceding paragraph, it remains to
prove $\sum_p \mb h_m^\sh(p) \ge \sum_p \mb h_m(p)$ for $m\le \bar s$.

Note \eqref{sbRecMax}(3) gives $T/S$ and $\mc C \in \mb{H\Lambda}_{\mc
B_T}^{\{\mb h_m\}}$.  So $\Lambda^{\bar s+1}\mc C \in \mb{H\Lambda}_{\mc
B_T}^{\{\mb h'_m\}}$ by \eqref{prMLev}(3), and $\Lambda^{\bar s+1}\mc C$
has $T$-socle type $\mb t'$ by \eqref{prMLev}(2) with $\Lambda^{\bar
s+1}\mc C$ for $\mc C$.  So \eqref{sbRC}(1) holds by (2) with $\mb t'$
and $\{\mb h'_m\}$ for $\mb t$ and $\{\mb h_m\}$.  But, by assumption,
$\mc C^\sh$ is recursively compressed for $\mb t$.  Thus
\eqref{sbRC}(2), with $\mc C$ and $\mc C^\sh$ for $\mc C^\di$ and $\mc
C$, yields $\rank (\mc C) \le \rank (\mc C^\sh)$.

Note $\rank (\mc C) = \sum_p \mb h_{\bar s}(p)$ and $\rank (\mc C^\sh) =
\sum_p \mb h_{\bar s}^\sh(p)$ by \eqref{eqLMlbd} as both $\mc C$ and
$\mc C^\sh$ are of socle type $\mb t$.  Moreover, $\mb h_{\bar s} = \mb
h_m$ and $\mb h_{\bar s}^\sh = \mb h^\sh_m$ for $m \le \bar s$ by the
end of \eqref{se7}.  But, $\rank (\mc C) \le \rank (\mc C^\sh)$.  Thus
$\sum_p \mb h_m(p) \le \sum_p \mb h^\sh_m(p)$ for $m\le \bar s$, as
desired.

Conversely, assume $\{\mb h_m\} = \{\mb h^\sh_m\}$.  Then $\mc C^\sh \in
\mb{H\Lambda}_{\mc B_{T^\sh}}^{\{\mb h_m\}}$.  Two paragraphs above,
take $C$ to be $C^\sh$.  Thus \eqref{sbRC}(1) holds and it remains to
prove \eqref{sbRC}(2), both for $C^\sh$.

By (1), there exists a recursively compressed $\mc C^\ft$ for $\mb t$ in
some $\mb{H\Lambda}_{\mc B_{T^\ft}}^{\{\mb h^\ft_m\}}$.  So $\rank (\mc
C^\di) \le \rank (\mc C^\ft)$ by \eqref{sbRC}(2) for $\mc C^\ft$.
However, it's proved above that, if $\mc C^\sh$ is recursively
compressed for $\mb t$, then $\{\mb h^\sh_m\} = \{\mb h_m\}$; thus
$\{\mb h^\ft_m\} = \{\mb h_m\}$.  So $\rank (\mc C^\ft) = \sum_p \mb
h_{\bar s}(p)$ by \eqref{eqLMlbd}.  However, $\{\mb h_m\} = \{\mb
h^\sh_m\}$ by assumption; so $\rank (\mc C^\sh) = \sum_p \mb h_{\bar
s}(p)$ too by \eqref{eqLMlbd}.  Thus $\rank (\mc C^\di) \le \rank (\mc
C^\sh)$, as desired.
 \end{proof}

\section{Permissible Socle Types}

\begin{setup}\label{se8}
 Keep the setup of \eqref{se7}.  Also, assume each graded piece $\mc
A_p$ is locally free, say of rank $\mb a(p)$; recall $\mc A/F^1\mc A =
 \indn{ambp@$\mb a(p)$}
\mc O_S$, so $\mb a(0) = 1$.  Next, for all $m$ and $p$, set
 \begin{align}
 \mb g_m(p) &:= \ts \sum_{q = m}^s\mb t(q)\,\mb a(q-p),
          \label{eqsbImax}\\
 \mb h^{\rm I}_m(p)  &:= \min\{\,\mb g_m(p),\, \mb b(p)\,\},
          \label{eqsbImaxh}\\
  \beta^{\rm I}_m &:= \ts \sum_p \mb h^{\rm I}_m(p)
  \and \beta^{\rm I} := \beta^{\rm I}_{\bar s}\label{eqbetaI}.
 \indn{hmbzIm@$\mb h^{\rm I}_m(p)$}  \indn{gmp@$g_m(p)$}
 \indn{babetaIm@$\beta^{\rm I}_m$}
 \end{align}

 Call $\{\mb g_m\}$ the {\it pre-{\rm I}-set\/} of $\mb t$, and $\{\mb
h^{\rm I}_m\}$ the I-{\it set\/} of $\mb t$.
 \indt{preIs@pre-I-set}
 \indt{I-set}

Assume $\mb t(p) \ge 0$ for all $p$.  Then $\mb g_m(p) \ge 0$ and $\mb
h^{\rm I}_m(p) \ge 0$ for all $m$.

 Notice that the inequalities in \eqref{prhImax} below may become
stricter when $\mc A$ is replaced by $\mc A/(0:\mc B)$, so that $(0:\mc
B) = 0$, and that this replacement may be necessary if the bound is to
be achieved.  Thus, from a practical standpoint, it is reasonable to
make this replacement.  However, from a logical standpoint, this
replacement has no effect on the theory in this section and the
following ones.
  \end{setup}

\begin{proposition}[Maximality of $\mb h^{\rm I}_m$] \label{prhImax}
 Fix $\mc C \in \mb{H\Lambda}_{\mc B}^{\{\mb h_m\}}$ of $S$-socle type
$\mb t$.  Then $\mb h_m(p) \le \mb h^{\rm I}_m(p)$ for all $m$ and $p$;
also $\rank(\Lambda^m\mc C) \le \beta^{\rm I}_m$, with equality iff\/ $\mb
h_m = \mb h^{\rm I}_m$.

 \end{proposition}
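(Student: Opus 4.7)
The plan is first to establish the pointwise bound $\mb h_m(p) \le \mb h^{\rm I}_m(p)$ by comparing $\Delta^m \mc C^*$ to a free module constructed from the socle data $\mb t$, and then to sum to obtain the rank inequality. The starting observation is that, by \eqref{deType}, the hypothesis that $\mc C$ has $S$-socle type $\mb t$ is equivalent to saying $\mc C^*$ has local generator type $\mb t^*$; that is, $\mc C^* \ox_{\mc A} \mc O_S$ is locally free of rank $\mb t(q)$ in degree $-q$ for each $q$. Since Hilbert functions of locally free modules of constant rank are preserved under base change by \eqref{leBaseChg}, it suffices to verify the inequality after pulling back to $\Spec K$ for $K$ any residue field of $S$.

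Over such a $K$, Nakayama's lemma (as used at the end of \eqref{sbArt}, applied with $C := \mc C^*_K$) lets me lift a basis of $\mc C^*_K \ox_{\mc A_K} K$ in each degree to a minimal homogeneous generating set of $\mc C^*_K$ consisting of $\mb t(q)$ elements in degree $-q$ for each $q \in [\bar s,\, s]$. This yields a degree-$0$ surjection of graded $\mc A_K$-modules
\[ E := \bigoplus_q \mc A_K(q)^{\oplus \mb t(q)} \onto \mc C^*_K. \]
By definition \eqref{eqMlev}, the submodule $\Delta^m \mc C^*_K$ is generated over $\mc A_K$ by the homogeneous pieces of $\mc C^*_K$ of degree at most $-m$; since $\mc A_K$ lives in nonnegative degrees, only those chosen generators with $q \ge m$ contribute, so the restriction
\[ E_m := \bigoplus_{q \ge m} \mc A_K(q)^{\oplus \mb t(q)} \onto \Delta^m \mc C^*_K \]
is also surjective. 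Comparing pieces in degree $-p$, the source has $K$-dimension $\sum_{q \ge m} \mb t(q)\, \mb a(q-p) = \mb g_m(p)$, while the target has $K$-dimension $\mb h_m(p)$ by \eqref{prFilt}(3) applied to $\Lambda^m \mc C_K$. Hence $\mb h_m(p) \le \mb g_m(p)$. Since also $\Delta^m \mc C^*_K \subset (\mc B_K)^\dg$ forces $\mb h_m(p) \le \mb b(p)$, we obtain $\mb h_m(p) \le \mb h^{\rm I}_m(p)$.

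Both $\mb h_m$ and $\mb h^{\rm I}_m$ are supported in a bounded range of $p \ge 0$ (the convention $b_0 = 0$ from \eqref{se6} gives $\mb b(p) = 0$ for $p < 0$, hence $\mb h^{\rm I}_m(p) = 0$ there, while $\mb h_m(p) = 0$ for $p < 0$ since $\Lambda^m \mc C$ is a quotient of $\mc B$). Summing the pointwise inequality then yields $\rank(\Lambda^m \mc C) = \sum_p \mb h_m(p) \le \sum_p \mb h^{\rm I}_m(p) = \beta^{\rm I}_m$, with equality iff $\mb h_m(p) = \mb h^{\rm I}_m(p)$ for every $p$. The only substantive step is the construction of the surjection $E \onto \mc C^*_K$ of the prescribed local generator type; once that is in hand---and the argument at the end of \eqref{sbArt} supplies exactly this---the remainder is a dimension count together with the degree-$0$ restriction to $E_m$.
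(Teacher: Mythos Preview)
Your proof is correct and follows essentially the same approach as the paper's: both use that $\mc C^*$ has local generator type $\mb t^*$ to produce (locally) a homogeneous generating set with $\mb t(q)$ elements in degree $-q$, observe that $\Delta^m\mc C^*$ is then generated by those with $q\ge m$, and count ranks to get $\mb h_m(p)\le\mb g_m(p)$ and $\mb h_m(p)\le\mb b(p)$. The paper works directly locally on $S$ rather than reducing to residue fields, which makes its write-up a bit shorter, but the content is identical.
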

 \begin{proof}
  Note $\mc C^*$ is of local generator type $\mb t^*$ by \eqref{deType}.
So locally there's a set of homogeneous generators of $\mc C^*$ with
$\mb t(q)$ elements in degree $-q$.  Hence locally $\mc
(\Delta^mC^*)_{-p}$ is generated by $\mb g_m(p)$ elements.  So $\mb
h_m(p) := \rank\mc (\Delta^mC^*)_{-p} \le \mb g_m(p)$.  But $\mc
(\Delta^mC^*)_{-p} \subset \mc B^\dg_{-p}$; so $\mb h_m(p) \le \mb
b(p)$.  Thus $\mb h_m(p) \le \mb h^{\rm I}_m(p)$.

Since $\rank(\Lambda^m\mc C) = \sum_p\mb h_m(p)$, the second assertion
follows directly.
 \end{proof}

\begin{sbs}[Permissible socle types]\label{sbPerm}
 Call $\mb t$ {\it permissible\/} if there's $v \ge 1$ with
 \indt{permissible}
 \begin{enumerate}
 \item[(a)]\enspace $\mb t(p) = 0$ for $p < v-1$, and
 \item[(b)]\enspace $\mb b(v) > \mb g_v(v)$, and
 \item[(c)]\enspace $\mb t(v-1)
                 = \max\,\{\, 0,\ \mb b(v-1) - \mb g_v(v-1)\,\}$, and
 \item[(d)]\enspace $\mb b(p) > \mb g_m(p)$ for $v_m \le p \le s$ and
all $m$ with $v_m := \inf\{\, p \mid \mb b(p) > \mb g_m(p) \,\}$.
 \end{enumerate}
 \indn{vm@$v_m$} \indn{v@$v$}
 This definition is virtually Iarrobino's Def.\,2.2 in
\cite{Iar84}*{p.\,344}, but here $s$ replaces $j$. 

Iarrobino assumed $S = \Spec k$ with $k$ an infinite field, $\mc A$ a
polynomial ring in two or more variables over $\mc O_S$, and $\mc B :=
\mc A$.  His proof of Prop.\,3.6 on p.\,356 in \cite{Iar84} and his
discussion in \S\,4E on p.\,372 yield the existence of $\mc C$ in
$\mb{H\Lambda}_{\mc A}^{\{\mb h^{\rm I}_m\}}$, provided $\mb t$ is
permissible.  He assumed $v\ge2$, but it's easy to handle the case $v
=1$; see the end of this subsection.  The existence of $\mc C$ was
reproved by Fr\"{o}berg and Laksov \cite{F-L83}*{Thm.\,14, p.\,142}; see
also Laksov \cite{JPAA217}*{Cor.\,5.5, p.\,2262}.  In the level case
($\bar s = s$) Boij, in Prop.\,3.5 on p.\,368 in \cite{BoijJA226},
removed the restriction that $k$ be infinite, and generalized the
existence to $\mc B := \mc A^{\oplus n}$ for $n \ge 1$.

In Iarrobino's setup, (d) doesn't appear explicitly, as it's automatic.
Plainly, it holds whenever the difference $\mb b(p) - \mb g_m(p)$ is
nondecreasing in $p$.  The latter holds whenever $\mb a(p)$ and $\mb
b(p)$ are nondecreasing in $p$, because \eqref{eqsbImax} yields
 \begin{equation*}\label{eqDiffg} \ts
  \mb g_m(p) -  \mb g_m(p+1)
        = \sum_{q=m}^s \mb t(q)\bigl( \mb a(q-p) - \mb a(q-(p+1)) \bigr)
                \ge 0.
 \end{equation*}

Notably, (a)--(d) can hold if $\mc A$ or $\mc B$ is $\mc O_S$-Artinian.

By \eqref{leperm}(1) below, $v$ is unique; in fact, $v = v_0$.
Moreover, as Iarrobino asserted in Def.\,2.2 in \cite{Iar84} and is
proved in \eqref{leperm}(1) in full generality, $v$ is the {\it initial
value\/} of any $\mc B_T/\mc I$ in $\mb{H\Lambda}_{\mc B_T}^{\{\mb
h^{\rm I}_m\}}$; that is, $\mc I_p = 0$ for $p < v$ but $\mc I_v \neq
0$.

Suppose $\mb t$ is permissible.  Then so is its attendant $\mb t'$ if
$\bar s < s$ by \eqref{leperm}(3).  Moreover, Iarrobino's proof of
Prop.\,3.6 on p.\,356 in \cite{Iar84} is set up to proceed by recursion
on $\bar s$ much as in \eqref{sbMLSch} and \eqref{thMLRpr}.  Thus there
exist $\mc C$ in $\mb{H\Lambda}_{\mc B}^{\{\mb h^{\rm I}_m\}}$.

Suppose $\mb t$ is permissible.  Then $\mb t$ is quasi-permissible by
\eqref{leperm}(4), and $\{\mb h^{\rm I}_m\}$ is fit for $\mb t$ by
\eqref{leperm}(5).  Thus all the theory developed in
\eqref{se7}--\eqref{thRC} applies to $\mb t$ and $\{\mb h^{\rm I}_m\}$.
In particular, by \eqref{se7}, for any $T/S$, every $\mc C \in
\mb{H\Lambda}_{\mc B_T}^{\{\mb h^{\rm I}_m\}}$ is of $T$-socle type $\mb
t$.

In Iarrobino's setup, a simpler formulation of permissibility was given
by Fr\"{o}berg and Laksov in \cite{F-L83}, and presented more clearly by
Zanello in Def.\,2.1 on p.\,183 and Prop.\,2.6 on p.\,184 in
\cite{ZanI}.  Namely, $\mb t$ is permissible iff $\mb t(p) = 0$ for $p <
b$, where
 \indn{bbrm@$b$}
 \begin{equation}\label{eqsbImaxc}
  b := \inf\{\,p \ge 0\mid \mb b(p) \ge \mb g_p(p)\,\}
 \end{equation}

The equivalence of the two formulations isn't proved in either
\cite{F-L83} or \cite{ZanI}, but is in \eqref{leperm}(2) below after $b$
is replaced by $b_1$ where
 \begin{equation}\label{eqdfb1}
 b_1 := \begin{cases} v_0 - 1  & \text{if\enspace}
             \mb b(v_0 - 1) = \mb g_{v_0 - 1}(v_0-1),\\
             v_0 & \text{if\enspace}
              \mb b(v_0 - 1) < \mb g_{v_0 - 1}(v_0 - 1).
     \end{cases}
 \indn{bbrm1@$b_1$}
 \end{equation}
 In this connection, note $\mb g_0(p) = \mb g_p(p)$ for $p \ge 0$ by
\eqref{eqsbImax}; hence, (d) above yields
 \begin{equation}\label{eqv0}
 v_0:= \inf\{\,p \ge 0\mid \mb b(p) > \mb g_p(p)\,\}.
 \end{equation}
 Consequently, $\mb b(p) \le \mb g_p(p)$ for $p \le v_0-1$.
In particular, $b_1$ is well defined.

 Plainly, $b_1 =b$ iff $\mb b(p) < \mb g_p(p)$ for $0 \le p \le v_0-2$.
The latter holds, notably, in Iarrobino's setup; here's why.  Set $\mb
f(p) := \mb g_p(p) - \mb g_{p+1}(p+1)$.  Then \eqref{eqsbImax} yields
 \begin{equation*}\label{eqb1b}\ts
   \mb f(p) =  \mb t(p)\mb a(0) 
      + \sum_{q=p+1}^s \mb t(q)\bigl( \mb a(q-p) - \mb a(q-(p+1))\bigr).
 \end{equation*}
 But $\mb a(p)$ is strictly increasing for $p \ge 0$.  So $\mb f(p) \ge
\mb t(s)\bigl( \mb a(s-p) - \mb a(s-(p+1))\bigr) > 0$.  However, $\mb b
= \mb a$.  Thus $\mb b(p) - \mb g_p(p)$ is strictly increasing for $p
\ge 0$.  However, $\mb b(v_0 - 1) \le \mb g_{v_0 - 1}(v_0-1)$. Thus $\mb
b(p) < \mb g_p(p)$ for $0 \le p \le v_0-2$, as desired.

In Iarrobino's setup, Zanello in Rmk.\,2.2 on p.\,183 in \cite{ZanI}
observed that, if $\mb t$ is permissible, then $b\ge s/2$.  Let's see
that his proof yields $b_1\ge s/2$ whenever $\mb b(p) \le \mb t(s)\mb
a(p)$ and $\mb a(p) < \mb a(s-p)$ for $p < s/2$.  Indeed, suppose $b_1 <
s/2$.  Now, \eqref{eqsbImax} yields $\mb g_{b_1}(b_1) \ge \mb t(s)\mb
a(s-b_1)$.  So $ \mb g_{b_1}(b_1) > \mb t(s)\mb a(b_1) \ge \mb b(b_1)$.
So \eqref{eqv0} implies $b_1 \neq v_0$ and also $b_1 \neq v_0-1$ if
$g_{v_0-1}(v_0-1) = b(v_0-1)$.  Hence \eqref{eqdfb1} is contradicted.
Thus $b_1\ge s/2$.

Finally, in Iarrobino's setup, suppose $\mb t$ is permissible but $v=1$.
Then $v=v_0$ by \eqref{leperm}(1) below, and $b_1 \ge s/2$ as shown
above.  But $v_0 \ge b_1$ by \eqref{eqdfb1}.  Thus $1\ge s/2$, so
$s\le2$.  Now, $\mb b(1) > \mb g_1(1)$ by (b) above as $v = 1$, and $\mb
b = \mb a$.  But $\mb g_1(1) = \mb t(1) + \mb t(2)\mb a(1)$ by
\eqref{eqsbImax}.  So $\mb a(1) >\mb t(2)\mb a(1)$.  So $\mb t(2) = 0$.
So $s=1$.  Thus $\mb t$ is level of socle degree 1.  Set $t := \mb
t(1)$. Say the variables of $\mc A$ are $X_1,\dotsc,X_r$.  Since $r =
\mb b(1) > \mb g_1(1) =t$, the ideal
$(X_1,\dotsc,X_t)^2+(X_{t+1},\dotsc,X_r)$ gives a possible $\mc C \in
\mb{H\Lambda}_{\mc A}^{\{\mb h^{\rm I}_m\}}$, as desired.
 \end{sbs}

\begin{lemma}\label{leperm}
 \(1) Assume $\mb t$ is permissible.  Then $v = v_0 \le s+1$, and $v$ is
the initial value of any $\mc I$ with $\mc B_T/\mc I$ in
$\mb{H\Lambda}_{\mc B_T}^{\{\mb h^{\rm I}_m\}}$ for any $T/S$.

\(2) Then $\mb t$ is permissible iff $\mb t(p) = 0$ for $p < b_1$ and
\eqref{sbPerm}\(d) holds.

\(3) Assume $\mb t$ is permissible.  Then so is its attendant $\mb t'$
provided $\mb t'\neq 0$.

\(4) Assume $\mb t$ is permissible.  Then $\mb t$ is quasi-permissible.

\(5) Assume $\mb t$ is permissible.  Then  $\{\mb h^{\rm I}_m\}$ is fit for $\mb t$.

\(6) If $\mb t$ is permissible, then for any $T/S$, each $\smash{\mc C
\in \mb{H\Lambda}_{\mc B_T}^{\{\mb h^{\rm I}_m\}}}$ is of $T$-socle type
$\mb t$.
  \end{lemma}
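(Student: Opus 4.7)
The plan is to prove the six parts in the order (1), (2), (5), (6), (3), (4), exploiting the fact that the later parts all reduce via the earlier ones to direct bookkeeping with the definitions.

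For (1), the key identity is $\mb g_{v-1}(v-1) = \mb t(v-1) + \mb g_v(v-1)$, obtained from \eqref{eqsbImax} by splitting off the $q = v-1$ summand and noting $\mb a(0) = 1$. Condition \eqref{sbPerm}(b) puts $v$ into the set defining $v_0$ in \eqref{eqv0}, giving $v_0 \le v$. Condition \eqref{sbPerm}(c) forces $\mb b(v-1) - \mb g_v(v-1) \le \mb t(v-1)$, so $\mb b(v-1) \le \mb g_{v-1}(v-1)$, hence $v-1 \notin \{p : \mb b(p) > \mb g_p(p)\}$, giving $v \le v_0$. The bound $v \le s+1$ is immediate from \eqref{sbPerm}(a) and $\mb t(s) > 0$. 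For the initial-value claim, a surjection of locally free sheaves of equal rank is an isomorphism, so $\mc I_p = 0$ iff $\mb h^{\rm I}_0(p) = \mb b(p)$, iff $\mb b(p) \le \mb g_p(p)$ (using $\mb g_0(p) = \mb g_p(p)$), iff $p < v_0 = v$.

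For (2), use (1) to replace $v$ by $v_0$ and split on whether $\mb b(v_0 - 1) = \mb g_{v_0-1}(v_0-1)$ (so $b_1 = v_0 - 1$) or $\mb b(v_0-1) < \mb g_{v_0-1}(v_0-1)$ (so $b_1 = v_0$). In the first case \eqref{sbPerm}(c) gives $\mb t(v_0-1) = \mb b(v_0-1) - \mb g_{v_0}(v_0-1) \ge 0$ and \eqref{sbPerm}(a) gives $\mb t(p)=0$ for $p<v_0-1=b_1$. In the second case \eqref{sbPerm}(c) forces $\mb t(v_0-1) = 0$, so \eqref{sbPerm}(a) yields $\mb t(p)=0$ for $p<v_0=b_1$. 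The converse reverses this analysis; condition \eqref{sbPerm}(d) is carried through unchanged.

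For (5), fitness conditions \eqref{se7}(2) and \eqref{se7}(3) are immediate from \eqref{eqsbImaxh} since $\mb g_m \ge \mb g_{m+1}$ and $\mb g_m = 0$ for $m > s$. For fitness \eqref{se7}(1), compute $\mb h^{\rm I}_p(p) - \mb h^{\rm I}_{p+1}(p)$: when $\mb g_p(p) \le \mb b(p)$ it equals $\mb g_p(p) - \mb g_{p+1}(p) = \mb t(p)$; when $\mb g_p(p) > \mb b(p)$ (i.e.\ $p < v_0$), part (2) shows $\mb t(p) = 0$, so $\mb g_{p+1}(p) = \mb g_p(p) > \mb b(p)$ and both minima equal $\mb b(p)$, giving difference $0 = \mb t(p)$. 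Then (6) is immediate from (5) combined with \eqref{prMLev}(2).

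Finally, for (3) compute $\mb g'_m(p) = \mb g_m(p) - \mb t(\bar s)\mb a(\bar s - p)$ for $m \le \bar s$ and $\mb g'_m = \mb g_m$ for $m > \bar s$, and verify \eqref{sbPerm}(a)--(d) for $\mb t'$ directly from those for $\mb t$, taking $v' := v_0'$. For (4), induct on $s - \bar s$. In the base case $\bar s = s$, $\mb t$ is supported only at $s$; if $v = s+1$ then \eqref{sbPerm}(c) gives $\mb t(s) = \mb b(s)$, while if $v \le s$ then \eqref{sbPerm}(b) gives $\mb b(v) > \mb g_s(v)$, so $v_s \le v \le s$ and \eqref{sbPerm}(d) at $m=s$ yields $\mb b(s) > \mb t(s)$. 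In the inductive step \eqref{sbRecPer}(1) holds by the inductive hypothesis via (3). For \eqref{sbRecPer}(2), let $\{\mb h^\ft_m\}$ be recursively maximal for $\mb t'$ and $T^\ft$; take the witness $\mc C^\sh$ provided by \eqref{sbRecMax}(3) and apply \eqref{prhImax} for $\mb t'$, using $\mb g'_{\bar s+1} = \mb g_{\bar s+1}$, to get $\mb h^\ft_{\bar s+1}(\bar s) \le \mb h^{\rm I}_{\bar s+1}(\bar s)$. The bound $\mb h^{\rm I}_{\bar s+1}(\bar s) \le \mb b(\bar s) - \mb t(\bar s)$ then follows by case analysis: if $\bar s \ge v_0$, condition \eqref{sbPerm}(d) applied with $m=\bar s$ (whose $v_{\bar s} \le v_0$) gives $\mb g_{\bar s}(\bar s) < \mb b(\bar s)$; if $\bar s = v_0 - 1$, condition \eqref{sbPerm}(c) gives $\mb g_{\bar s}(\bar s) = \mb b(\bar s)$ and $\mb g_{\bar s+1}(\bar s) = \mb b(\bar s) - \mb t(\bar s)$.

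The main obstacle I expect is the verification in (4) that $\mb b(\bar s) - \mb h^{\rm I}_{\bar s+1}(\bar s) \ge \mb t(\bar s)$, which requires juggling all four conditions \eqref{sbPerm}(a)--(d) and the precise location of $\bar s$ relative to $v_0$. The verification of fitness \eqref{se7}(1) in (5) is subtler than it looks, because it hinges on the exact match between where $\mb g_p(p)$ clips against $\mb b(p)$ and where $\mb t$ vanishes, which is exactly the content of (2).
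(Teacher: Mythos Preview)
Your proposal is correct and follows essentially the same approach as the paper's proof. The only difference is ordering: the paper proves the parts in sequence (1)--(6), whereas you do (5) and (6) before (3) and (4); since (5) and (6) depend only on (1) and (2), this is immaterial. Your argument for (3) is sketchier than the paper's---the paper uses the characterization from (2) to reduce to showing $b_1' \le b_1$ rather than verifying \eqref{sbPerm}(a)--(d) for $\mb t'$ directly---but since you have (2) available, the two routes are equivalent; the key computations (in particular the bound $\mb b(\bar s) - \mb g_{\bar s+1}(\bar s) \ge \mb t(\bar s)$ via the case split on whether $\bar s = b_1 = v_0-1$ or $\bar s \ge v_0$) match the paper's exactly.
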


\begin{proof}
 First, recall $\mb t(p) \ge 0$ for all $p$ from \eqref{se8}.  Hence
\eqref{eqsbImax} yields
 \begin{equation}\label{eqleperm1}
 \mb g_m(p) - \mb g_{m+1}(p) = \mb t(m)\,\mb a(m-p) \ge 0.
 \end{equation}
 for all $m$ and $p$.  Now, recall $\mb a(0) = 1$ from \eqref{se8}.  So
taking $m := p$ in \eqref{eqleperm1} yields
\begin{equation}\label{eqleperm3}
                \mb g_p(p) - \mb g_{p+1}(p) = \mb t(p) \ge 0.
 \end{equation}

 {\bf For (1),} note $v_0 \le v$ owing to \eqref{sbPerm}(b).  Further,
$\mb b(p) > \mb g_p(p)$ for $v_0 \le p \le s$ by hypothesis.  So to show
$v_0 = v \le s+1$, let's show $v-1 \le s$ and $\mb b(v-1) \le \mb
g_{v-1}(v-1)$, because then $v-1 < v_0$, as well as $v_0 \le v$.

Note $\mb t(p) = 0$ for $p < v-1$ by \eqref{sbPerm}(a).  But $\mb t(s)
\neq 0$.  Thus $v-1 \le s$, as desired.

Note that taking $p := v-1$ in \eqref{eqleperm3} yields
 \begin{equation}\label{eqleperm2}
 \mb b(v-1) - \mb g_v(v-1)
  = \bigl(\mb b(v-1) - \mb g_{v-1}(v-1)\bigr) + \mb t(v-1).
 \end{equation}
 But $\mb t(v-1) \ge 0$ by \eqref{se8}.  Hence \eqref{sbPerm}(c) gives
$\mb b(v-1) \le \mb g_{v-1}(v-1)$, as desired.

For use in proving (2) below, note moreover that, if $\mb b(v-1) - \mb
g_{v-1}(v-1) < 0$, then \eqref{eqleperm2} and \eqref{sbPerm}(c) imply
$\mb t(v-1) = 0$.  But $v = v_0$, as was just proved.  Thus
\eqref{eqdfb1} yields that $b_1 = v_0$ implies $\mb t(v-1) = 0$.

Given $\mc B/\mc I \in \mb{H\Lambda}_{\mc B}^{\{\mb h^{\rm I}_m\}}$,
note $(\mc B/\mc I)_p$ is locally free of rank $\mb h^{\rm I}_p(p)$ for
all $p$ by definition; see \eqref{sbMLev}.  But $\mb b(p) \le \mb
g_p(p)$ for $p < v$ as $v = v_0$ by the above; so $\mb b(p) = \mb h^{\rm
I}_p(p)$ for $p < v$ by \eqref{eqsbImaxh}.  Hence $(\mc B/\mc I)_p$ is
of rank $\mb b(p)$ for $p < v$.  Thus $\mc I_p = 0$ for $p < v$.

Finally, $\mb g_v(v) < \mb b(v)$ by \eqref{sbPerm}(b).  So $\mb h^{\rm
I}_v(v) < \mb b(v)$ by \eqref{eqsbImaxh}.  Hence $(\mc B/\mc I)_v$ is of
rank strictly less than $\mb b(v)$.  Thus $\mc I_v \neq 0$.  Thus (1)
holds.

{\bf For (2),} first assume $\mb t$ is permissible.  Note $b_1 \le v_0$
by \eqref{eqdfb1}.  Recall $v_0 = v$ by (1), and its proof notes $\mb
t(v-1) = 0$ if $b_1 = v_0$.  But $\mb t(p) = 0$ for $p < v-1$ by
\eqref{sbPerm}(a).  Thus $\mb t(p) = 0$ for $p < b_1$, whether $b_1 \le
v-1$ or $b _1= v$.

Conversely, assume $\mb t(p) = 0$ for $p < b_1$.  Notice
\eqref{eqsbImax} does not involve $v$.  So set $v := v_0$.  Then
\eqref{eqdfb1} yields \eqref{sbPerm}(a),\,(b).

For \eqref{sbPerm}(c), first assume $b_1 = v-1$.  Then $\mb b(v-1) = \mb
g_{v-1}(v-1)$ by \eqref{eqdfb1}.  Thus \eqref{eqleperm2} reduces
\eqref{sbPerm}(c) to the tautology $\mb t(v-1) = \mb t(v-1)$.

Finally, assume $b_1 = v$.  Then $\mb b(v-1) < \mb g_{v-1}(v-1)$ by
\eqref{eqdfb1}.  But $\mb t(v-1) = 0$ by hypothesis.  Thus
\eqref{eqleperm2} yields \eqref{sbPerm}(c).  Thus (2) holds.

{\bf For (3),} from \eqref{se7}, recall $\mb t'(\bar s) = 0$ and $\mb t'(p)
= \mb t(p)$ for $p\neq \bar s$.

Define $\mb g'_m$ via \eqref{eqsbImax} with $\mb t'$ for $\mb t$.
However, $\mb t(p) \ge 0$ for all $p$ by \eqref{se8}.  Thus $\mb
g'_m(p)\le \mb g_m(p)$ for all $p$.  Define $v'_m$ via \eqref{sbPerm}(d)
with $\mb g'_m$ for $\mb g_m$.  Thus $v'_m \le v_m$.

Define $b'_1$ via \eqref{eqdfb1} with $v'_0$ and $\smash{\mb g'_{v'_0}}$
for $v_0$ and $\mb g_{v_0}$.  Let's prove $b'_1 \le b_1$.  First, if
$b_1 = v_0$, then $b'_1 \le v'_0 \le v_0 =b_1$.  Second, if $b_1 =
v_0-1$ and $b'_1 = v'_0-1$, then $b'_1 \le v'_0 \le v_0 =b_1$.  Third,
assume $b_1 = v_0-1$ and $b'_1 = v'_0$.  Then \eqref{eqdfb1} yields $\mb
b(v_0-1) = \mb g_{v_0 - 1}(v_0-1)$ and $\mb b(v'_0 - 1) < \mb g'_{v'_0 -
1}(v'_0 - 1)$.  But $\mb g'_m(p)\le \mb g_m(p)$ for all $p$.  Hence
$v'_0 \neq v_0$.  But $v'_0 \le v_0$.  So $b'_1 = v'_0 \le v_0-1 =b_1$.
Thus always $b'_1 \le b_1$.

However, $\mb t'(p) = 0$ for $p < b_1$ by (2).  Thus $\mb t'(p) = 0$ for
$p < b_1'$.

Finally, \eqref{sbPerm}(d) holds for $\mb t'$ as plainly $g'_m = g_m$
for $m > \bar s$ and $g'_m = g_{\bar s+1}$ for $m \le \bar s$.  Thus (3)
holds.

{\bf For (4),} note $\mb t(p) = 0$ for $p < b_1$ by (2).  But $\mb
t(\bar s) \neq 0$.  So $\bar s \ge b_1$.  If $\bar s = b_1$ and $b_1 =
v_0-1$, then $\mb b(\bar s) = \mb g_{\bar s}(\bar s)$ by \eqref{eqdfb1}.
Otherwise, $\bar s \ge v_0$.  But $\bar s \le s$.  So then $\mb b(\bar
s) > \mb g_{\bar s}(\bar s)$ by \eqref{eqv0}.  In either case,
\eqref{eqleperm3} gives $\mb b(\bar s) - \mb g_{\bar s+1}(\bar s) \ge
\mb t(\bar s)$.

If $\bar s = s$, then $\mb g_{\bar s+1}(\bar s) = 0$ by \eqref{eqsbImax}.  Thus
$\mb b(s) \ge \mb t(s)$, as required by \eqref{sbRecPer}.

Suppose $\bar s < s$.  Then $\mb t'$ is permissible by (3).  So by
recursion, assume \eqref{sbRecPer}(1).

Given any $T^\ft/S$ and any $\{\mb h^\ft_m\}$ recursively maximal
for $\mb t'$ and $T^\ft$ with $\mb t'$ the attendant to $\mb t$, note
\eqref{sbRecMax}(3) gives $T^\sh/T^\ft$ and $\mc C^\sh \in
\mb{H\Lambda}_{\mc B_{T^\sh}}^{\{\mb h^\ft_m\}}$ of $T^\sh$-socle type
$\mb t'$.

Define $\mb g'_m$ via \eqref{eqsbImax} with $\mb t'$ for $\mb t$.  So
$\mb h^\ft_{\bar s+1}(\bar s) \le \mb g'_{\bar s+1}(\bar s)$ by
\eqref{prhImax} and \eqref{eqsbImaxh}.  But $\mb g'_{\bar s+1}(\bar s) =
\mb g_{\bar s+1}(\bar s)$ by \eqref{eqsbImax}, as $\mb t'(\bar s) = 0$
and $\mb t'(p) = \mb t(p)$ for $p \neq \bar s$ by \eqref{se7}.  Hence
$\mb b(\bar s) - \mb h^\ft_{\bar s+1}(\bar s) \ge \mb b(\bar s) - \mb
g_{\bar s+1}(\bar s)$.  But $\mb b(\bar s) - \mb g_{\bar s+1}(\bar s)
\ge \mb t(\bar s)$ as noted above.  Thus $\mb b(\bar s) - \mb
h^\ft_{\bar s+1}(\bar s) \ge \mb t(\bar s)$, as required by
\eqref{sbRecPer}(2).  Thus $\mb t$ is quasi-permissible.

{\bf For (5),} recall $\mb g_m(p) \ge \mb g_{m+1}(p)$ from
\eqref{eqleperm1}.  Recall $\mb g_{m+1}(p) \ge 0$ from \eqref{se8}.
Thus \eqref{eqsbImaxh} gives $\mb b(p) \ge \mb h^{\rm I}_m(p) \ge \mb
h^{\rm I}_{m+1}(p) \ge 0$; that is, \eqref{se7}(2) holds.

Next, note \eqref{eqsbImax} gives $\mb g_m = 0$ for $m > s$.  So
\eqref{eqsbImaxh} gives $\mb h^{\rm I}_m = 0$ for $m > s$; that is,
\eqref{se7}(3) holds.

It remains to show \eqref{se7}(1), or $\mb h^{\rm I}_p(p) - \mb h^{\rm
I}_{p+1}(p) = \mb t(p)$ for all $p$.

First, suppose $p \ge b_1$.  Let's prove $\mb b(p) \ge \mb g_p(p)$.  If
$p = b_1 = v_0-1$, then $\mb b(p) = \mb g_p(p)$ by \eqref{eqdfb1}.
Otherwise, $p \ge v_0$.  If also $p \le s$, then $\mb b(p) > \mb g_p(p)$
by hypothesis.  Finally, if $p > s$, then $\mb g_p = 0$ by
\eqref{eqsbImax}; so again $\mb b(p) \ge \mb g_p(p)$.

Recall $\mb g_p(p) - \mb g_{p+1}(p) = \mb t(p) \ge 0$ from
\eqref{eqleperm3}.  So $\mb b(p) \ge \mb g_{p+1}(p)$ too.  So
\eqref{eqsbImaxh} gives $\mb h^{\rm I}_p(p) = \mb g_p(p)$ and $\mb
h^{\rm I}_{p+1}(p) = \mb g_{p+1}(p)$.  Thus $\mb h^{\rm I}_p(p) - \mb
h^{\rm I}_{p+1}(p) = \mb t(p)$.

Finally, suppose $p < b_1$.  Then $\mb t(p) = 0$ by (2).  So $\mb g_p(p)
= \mb g_{p+1}(p)$ by \eqref{eqleperm3}.  But $p < b_1 \le v_0$ by
\eqref{eqdfb1}; hence, \eqref{eqsbImaxc} yields $\mb b(p) \le \mb
g_p(p)$.  Hence \eqref{eqsbImaxh} gives $\mb b(p) = \mb h^{\rm I}_p(p) =
\mb h^{\rm I}_{ p+1}(p)$.  Thus $\mb h^{\rm I}_p(p) - \mb h^{\rm
I}_{p+1}(p) = 0 = \mb t(p)$.  Thus (5) holds.

 {\bf Notice (6)} results from (5) and  \eqref{se7}.
 \end{proof}

\begin{lemma}\label{lehsI}
 \(1) Assume \eqref{sbPerm}\(d).  Then $\mb h^{\rm I}_{\bar s}(p) = \mb
g_{\bar s}(p)$ for $p \ge b_1$.

 \(2) Always  $\mb h^{\rm I}_{\bar s}(p) = \mb b(p)$ for  $p < b_1$. 
 \end{lemma}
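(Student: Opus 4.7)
The plan is to unwind the definition $\mb h^{\rm I}_{\bar s}(p) = \min\{\mb g_{\bar s}(p),\,\mb b(p)\}$ from \eqref{eqsbImaxh} and determine, case by case in $p$, which of the two arguments of the $\min$ is smaller. Two monotonicity observations do most of the work. First, $\mb g_m(p)$ is nonincreasing in $m$ by \eqref{eqleperm1}, so the thresholds $v_m$ from \eqref{sbPerm}(d) are likewise nonincreasing in $m$; in particular $v_{\bar s} \le v_0$ and $\mb g_0(p) \ge \mb g_{\bar s}(p)$ for all $p$. Second, since $\mb t(q) = 0$ for $q < \bar s$, the sum defining $\mb g_m(p)$ is independent of $m$ for $m \le \bar s$, so that $\mb g_{\bar s}(p) = \mb g_p(p)$ whenever $p \le \bar s$. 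Finally, $b_1 \le \bar s$ by \eqref{leperm}(2), and $v_0 \in \{b_1,\,b_1+1\}$ by \eqref{eqdfb1}.

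For (2), fix $p < b_1$. If $p < 0$, both $\mb b(p)$ and $\mb h^{\rm I}_{\bar s}(p)$ vanish, so assume $p \ge 0$. Since $p < b_1 \le \bar s$, the second observation gives $\mb g_{\bar s}(p) = \mb g_p(p)$, and since $p < b_1 \le v_0$, the very definition \eqref{eqv0} of $v_0$ forces $\mb b(p) \le \mb g_p(p) = \mb g_{\bar s}(p)$. Therefore the minimum in \eqref{eqsbImaxh} equals $\mb b(p)$, as claimed.

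For (1), fix $p \ge b_1$. If $p > s$, then $\mb g_{\bar s}(p) = 0 \le \mb b(p)$, so I may assume $p \le s$. Next, if $p > b_1$ or $p = b_1 = v_0$, then $p \ge v_0$; combined with $v_{\bar s} \le v_0$ and $p \le s$, \eqref{sbPerm}(d) with $m := \bar s$ yields $\mb b(p) > \mb g_{\bar s}(p)$. The one remaining possibility is $p = b_1 = v_0 - 1$, in which case \eqref{eqdfb1} gives the equality $\mb b(b_1) = \mb g_0(b_1)$, and the first observation gives $\mb g_0(b_1) \ge \mb g_{\bar s}(b_1)$; hence $\mb b(b_1) \ge \mb g_{\bar s}(b_1)$. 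In every subcase the minimum equals $\mb g_{\bar s}(p)$, as required.

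The principal obstacle is the degenerate boundary $p = b_1 = v_0 - 1$, where only the non-strict inequality $\mb b(b_1) \ge \mb g_{\bar s}(b_1)$ is available. It is essential here that $\mb h^{\rm I}_{\bar s}(p)$ is defined via a $\min$ rather than by strict comparison, so that equality at $p = b_1$ still places the value on the $\mb g_{\bar s}$ side. Everything else amounts to a disciplined bookkeeping of the inputs already isolated in \eqref{eqdfb1}, \eqref{eqv0}, \eqref{eqleperm1}, and \eqref{sbPerm}(d).
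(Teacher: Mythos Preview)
Your argument is essentially the paper's, but there is one genuine gap. You assert in the preamble that $b_1 \le \bar s$ ``by \eqref{leperm}(2)''. That lemma is an \emph{equivalence} characterizing permissibility; it yields $b_1 \le \bar s$ only once permissibility is assumed. But \eqref{lehsI} does \emph{not} assume $\mb t$ permissible: part~(2) carries no hypothesis at all, and part~(1) assumes only condition \eqref{sbPerm}(d). Since \eqref{lehsI}(2) is invoked in \eqref{prthenperm} precisely in order to \emph{derive} permissibility, you cannot assume it here. Consequently your use of the ``second observation'' in the proof of (2)---applying $\mb g_{\bar s}(p)=\mb g_p(p)$ via $p<b_1\le\bar s$---is unjustified as written.

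The repair is immediate and is exactly what the paper does: the identity $\mb g_{\bar s}(p)=\mb g_p(p)$ holds for \emph{every} $p$, not merely for $p\le\bar s$. When $p\le\bar s$ your reason (the extra terms have $\mb t(q)=0$) works; when $p>\bar s$ the extra terms $\sum_{q=\bar s}^{p-1}\mb t(q)\,\mb a(q-p)$ vanish instead because $q-p<0$ forces $\mb a(q-p)=0$. With this strengthening your proof of (2) goes through with no reference to $\bar s$ at all. Your proof of (1) is fine; it differs from the paper's only cosmetically (you invoke \eqref{sbPerm}(d) with $m=\bar s$ via $v_{\bar s}\le v_0$, whereas the paper uses $m=0$ via \eqref{eqv0} and then the identity $\mb g_p(p)=\mb g_{\bar s}(p)$). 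One small slip: in the boundary case $p=b_1=v_0-1$ you cite \eqref{eqdfb1} for $\mb b(b_1)=\mb g_0(b_1)$, but \eqref{eqdfb1} literally gives $\mb g_{b_1}(b_1)$; the passage to $\mb g_0(b_1)$ (or directly to $\mb g_{\bar s}(b_1)$) again uses the same identity.
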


\begin{proof}
 {\bf  In (1),} if $p = b_1 = v_0-1$, then $\mb b(p) = \mb g_p(p)$ by
\eqref{eqdfb1}.  Otherwise, $p \ge v_0$.  If $v_0 \le p \le s$, then
$\mb b(p) > \mb g_p(p)$ by \eqref{sbPerm}(d) and \eqref{eqv0}.  If $p >
s$, then $ g_p = 0$ by \eqref{eqsbImax}; so $\mb b(p) \ge \mb
g_p(p)$.  But $\mb g_p(p) = \mb g_{\bar s}(p)$ owing to
\eqref{eqsbImax}.  Thus in any case, $\mb b(p) \ge \mb g_{\bar s}(p)$.
Thus \eqref{eqsbImaxh} yields $\mb h^{\rm I}_{\bar s}(p) = \mb g_{\bar
s}(p)$.

{\bf For (2),} note $p \le v_0$ by \eqref{eqdfb1}.  So $ \mb b(p) \le \mb
g_p(p)$ by \eqref{eqv0}.  But again, $\mb g_p(p) = \mb g_{\bar s}(p)$
owing to \eqref{eqsbImax}.  Thus \eqref{eqsbImaxh} yields $\mb h^{\rm
I}_{\bar s}(p) = \mb b(p)$.
 \end{proof}

\begin{proposition}\label{prthenperm}
 Assume $\mb{H\Lambda}_{\mc B_T}^{\{\mb h^{\rm I}_m\}} \neq \emptyset$
for some $T/S$, and $\mc A_1\mc B^\dg_{-p} = \mc B^\dg_{1-p}$ for $p \le
b_1$.  Then $\mb t(p) = 0$ for $p < b_1$, and if \eqref{sbPerm}\(d) holds,
then $\mb t$ is permissible.
 \end{proposition}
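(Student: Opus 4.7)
The plan is to prove Part 1, namely $\mb t(p) = 0$ for $p < b_1$; Part 2 then follows at once from \eqref{leperm}(2). First I would reduce to the case $T = \Spec(K)$ with $K$ a field by passing to a residue field of a point of $T$: since $\mc A$, $\mc B$, and $\mc B^\dg$ are $k$-flat, $\mc C_K \in \mb{H\Lambda}_{\mc B_K}^{\{\mb h^{\rm I}_m\}}$ by \eqref{prMLev}(1), and the multiplication hypothesis on $\mc A_1 \mc B^\dg$ descends to $K$. Writing $C := \mc C_K$, $B := \mc B_K$, $A := \mc A_K$, and $I := \Ker(B \onto C)$, note that by \eqref{lehsI}(2), $\mb h^{\rm I}_{\bar s}(p) = \mb b(p)$ for $p < b_1$, hence $C_p = B_p$ and $I_p = 0$ in that range.

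I would then collect two key ingredients. First, $K$-dualize the multiplication hypothesis: for $p \le b_1$, the surjection $A_1 \otimes_K B^\dg_{-p} \twoheadrightarrow B^\dg_{1-p}$ corresponds to the injection $B_{p-1} \hookrightarrow \Hom_K(A_1, B_p)$ adjoint to multiplication, i.e., $(0 :_{B_r} A_1) = 0$ for every $r < b_1$, so $A_1$ acts faithfully on $B_r$ in this range. Second, by \eqref{prMLev}(2) the $K$-socle type of $C$ is $\mb t$, giving the fit identity $\mb t(p) = \dim_K \Soc_K(C)_p = \mb h^{\rm I}_p(p) - \mb h^{\rm I}_{p+1}(p)$, while \eqref{eqleperm3} supplies the tautological identity $\mb t(p) = \mb g_p(p) - \mb g_{p+1}(p)$.

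Fix $p < b_1$, so $\mb b(p) \le \mb g_p(p)$, and split into two cases. Case (i), $\mb b(p) < \mb g_p(p)$: then $\mb h^{\rm I}_p(p) = \mb b(p)$, and equating the two expressions for $\mb t(p)$ rules out $\mb g_{p+1}(p) \le \mb b(p)$ (which would entail $\mb g_p(p) = \mb b(p)$, contradicting strictness), so $\mb h^{\rm I}_{p+1}(p) = \mb b(p)$ and $\mb t(p) = 0$ outright. Case (ii), $\mb b(p) = \mb g_p(p)$: by \eqref{eqdfb1}, $p$ cannot equal $v_0 - 1$ (else $b_1 = v_0 - 1 = p$, against $p < b_1$), so $p + 1 < v_0$ and $I_{p+1} = 0$ via \eqref{eqv0} and \eqref{eqsbImaxh}; a socle element $c \in \Soc_K(C)_p$ then satisfies $A_1 c \subseteq I_{p+1} = 0$ in $B$, and faithfulness forces $c = 0$, yielding $\mb t(p) = 0$. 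The main subtlety is the edge case $p = b_1 - 1$ with $b_1 = v_0$: there $I_{b_1}$ may fail to vanish and the socle argument breaks, but \eqref{eqdfb1} places this case under Case (i), where the identity comparison alone suffices.
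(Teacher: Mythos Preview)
Your proof is correct and reaches the same conclusion as the paper's, but is organized differently. The paper works entirely on the dual side: it uses the surjectivity hypothesis $\mc A_1\mc B^\dg_{-p} = \mc B^\dg_{1-p}$ directly to show $(F^1\mc A\cdot\mc C^*)_{1-p} = \mc C^*_{1-p}$ whenever $\mc C_p = \mc B_p$, thereby handling all $p < b_1-1$ uniformly, and then treats the edge case $p = b_1-1$ by a split on whether $b_1 = v_0-1$ or $b_1 = v_0$, the latter via an explicit rank computation with $(\Delta^{b_1}\mc C^*)_{1-b_1}$. You instead dualize the hypothesis to faithfulness of $A_1$ on $B_r$ for $r < b_1$ and split \emph{every} $p < b_1$ on $\mb b(p) < \mb g_p(p)$ versus $\mb b(p) = \mb g_p(p)$. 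Your Case~(i) repackages the paper's $b_1 = v_0$ rank argument as a clean comparison of the fit and tautological identities for $\mb t(p)$; your Case~(ii) is the primal-side dual of the paper's main step. Each organization has its merits: yours is more uniform across $p$, while the paper's keeps everything on the $\mc C^*$ side and makes the use of the multilevel structure (via $\Delta^{b_1}\mc C^*$) more visible.

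Two small remarks. In Case~(ii), your justification that $p \neq v_0-1$ (``else $b_1 = v_0-1 = p$'') only covers the sub-case $b_1 = v_0-1$ of \eqref{eqdfb1}; when $b_1 = v_0$ you should also observe that \eqref{eqdfb1} forces $\mb b(v_0-1) < \mb g_{v_0-1}(v_0-1)$, which is excluded in Case~(ii). The conclusion $p+1 < v_0$ holds either way. Also, your appeal to \eqref{prMLev}(2) for ``the $K$-socle type of $C$ is $\mb t$'' deserves care: \eqref{prMLev}(2) gives the \emph{formula} $\mb t^C(p) = \mb h^{\rm I}_p(p) - \mb h^{\rm I}_{p+1}(p)$ for the actual socle type $\mb t^C$ of $C$, not its equality with the fixed $\mb t$. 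The paper makes the identical identification via its displayed equation \eqref{eqtp2}, so you are matching the paper's reading (namely, that $\mc C$ is taken of socle type $\mb t$, as in \eqref{prhImax} and \eqref{prImax}).
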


\begin{proof}
 Note replacing $T$ by $S$, we may assume $T = S$.  Fix $\mc C \in
\mb{H\Lambda}_{\mc B_T}^{\{\mb h^{\rm I}_m\}}$.

  Given $p \le b_1$ with $\mc B_p = \mc C_p$, let's see $\mc B_{p-1} =
\mc C_{p-1}$ and $\mb t(p-1) = 0$.  Note
 \begin{equation*}\label{eqtp1}
 \mc B^\dg_{1-p} = \mc A_1\mc B^\dg_{-p} = \mc A_1\mc C^*_{-p}
        \subset \mc C^*_{1-p} \subset \mc B^\dg_{1-p}.
 \end{equation*} So $\mc A_1\mc C^*_{-p} = \mc C^*_{1-p} = \mc
B^\dg_{1-p}$.  Thus $\mc B_{p-1} = \mc C_{p-1}$.

Note $\mc A_1\mc C^*_{-p} \subset (F^1\mc A C^*)_{1-p} \subset \mc
C^*_{1-p}$.  So $(F^1\mc A\cdot\mc C^*)_{1-p} = \mc C^*_{1-p}$ as $\mc
A_1\mc C^*_{-p} = \mc C^*_{1-p}$.  But $\mb t^*$ is the Hilbert function
of $\mc C^*\ox_{\mc A}\mc O_S$; that is,
 \begin{equation}\label{eqtp2}
   \mb t^*(q) = \rank\big( \mc C^*_q\big/ (F^1\mc A\cdot\mc C^*)_q\big)
    \text{\quad for all }q.
 \end{equation}
 So $\mb t^*(1-p) = 0$.  Thus $\mb t(p-1) = 0$.

Note $\mc C \in \mb{H\Lambda}_{\mc B}^{\{\mb h^{\rm I}_m\}}$ by
hypothesis, and $\mb h^{\rm I}_{\bar s}(p) = \mb b(p)$ for $p < b_1$ by
\eqref{lehsI}(2).  So $\mc B_p = \mc C_p$ for $p < b_1$.  Thus, by the
above, $\mb t(p-1) = 0$ for $p < b_1$.

It remains to prove $\mb t(b_1-1) = 0$.  There are two cases.  First, if
$b_1 = v_0-1$, then \eqref{eqdfb1} gives $\mb b(b_1) = \mb
g_{b_1}(b_1)$.  But $\mb g_{b_1}(b_1) = \mb g_{\bar s}(b_1)$  owing to
\eqref{eqsbImax}.  Hence  $\mc B_{b_1} = \mc C_{b_1}$.
Thus, by the above, $\mb t(b_1-1) = 0$.

Second, suppose $b_1 = v_0$.  Then \eqref{eqdfb1} gives $\mb b(b_1-1) <
\mb g_{b_1-1}(b_1-1)$.

Proceeding by contradiction, suppose also $\mb t(b_1-1)\neq 0$.  Then
\eqref{eqtp2} gives $(F^1\mc A\cdot\mc C^*)_{1-b_1} \subsetneq \mc
C^*_{1-b_1}$.  But $\mc C^*_{1-b_1}\subset \mc B^\dg_{1-b_1}$.  Thus
$\rank (F^1\mc A\cdot\mc C^*)_{1-b_1} < \mb b(b_1-1)$.

Note $(F^1\mc A\cdot\mc C^*)_{1-b_1} = (\Delta^{b_1}C^*)_{1-b_1}$ owing
to \eqref{eqMlev}.  But $\mc C \in \mb{H\Lambda}_{\mc B}^{\{\mb h^{\rm
I}_m\}}$; so $\rank (\Delta^{b_1}C^*)_{1-b_1} = \mb h^{\rm
I}_{b_1}(b_1-1)$.  Therefore, $\mb h^{\rm I}_{b_1}(b_1-1) < \mb
b(b_1-1)$.  So \eqref{eqsbImaxh} gives $\mb h^{\rm I}_{b_1}(b_1-1) = \mb
g_{b_1}(b_1-1)$.  Thus $\rank (F^1\mc A\cdot\mc C^*)_{1-b_1} = \mb
g_{b_1}(b_1-1)$.

Hence \eqref{eqtp2} gives $\mb g_{b_1}(b_1-1) + \mb t(b_1-1) = \rank \mc
C^*_{1-b_1}$.  But \eqref{eqleperm3} gives $\mb g_{b_1}(b_1-1) + \mb
t(b_1-1) = \mb g_{b_1-1}(b_1-1)$.  So $ \mb g_{b_1-1}(b_1-1) = \rank \mc
C^*_{1-b_1} \le \mb b(b_1-1)$, a contradiction.  Thus $\mb t(b_1-1) =
0$, as desired.

Finally, if \eqref{sbPerm}\(d) holds, then \eqref{leperm}(2) now implies $\mb
t$ is permissible.
 \end{proof}

\begin{proposition}\label{prImax}
 Assume $\mb t$ is permissible.  Fix $T/S$ and $\mc C \in
\mb{H\Lambda}_{\mc B_T}^{\{\mb h^{\rm I}_m\}}$.  Then $\{\mb h^{\rm
I}_m\}$ is recursively maximal for $\mb t$ and $S$, and $\mc C$ is
recursively compressed for $\mb t$.
 \end{proposition}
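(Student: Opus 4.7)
The plan is to proceed by recursion on the diameter $s - \bar s$ of $\mb t$, proving the two assertions simultaneously. The base case $\bar s = s$ is especially clean: conditions \eqref{sbRecMax}(1) and \eqref{sbRC}(1) are vacuous, condition \eqref{sbRecMax}(3) holds by taking $T^\sh := T$ and $\mc C^\sh := \mc C$, and both \eqref{sbRecMax}(2) and \eqref{sbRC}(2) reduce immediately to \eqref{prhImax}. Note that $\mc C$ is of $T$-socle type $\mb t$ by \eqref{leperm}(6), and $\rank(\mc C) = \beta^{\rm I}_{\bar s}$ since $\mc C \in \mb{H\Lambda}_{\mc B_T}^{\{\mb h^{\rm I}_m\}}$.

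The crucial combinatorial observation, needed to set up the induction, is that when $\bar s < s$, the attendant $\{\mb h'_m\}$ of $\{\mb h^{\rm I}_m\}$ coincides with the I-set $\{\mb h^{{\rm I}'}_m\}$ of the attendant $\mb t'$. For $m > \bar s$, the summation \eqref{eqsbImax} defining $\mb g_m$ ranges over $q \ge m > \bar s$, where $\mb t$ and $\mb t'$ agree, so $\mb g'_m = \mb g_m$ and hence $\mb h^{{\rm I}'}_m = \mb h^{\rm I}_m = \mb h'_m$. For $m \le \bar s+1$, dropping the $q = \bar s$ term from $\mb g_m$ yields $\mb g_{\bar s+1}$, so $\mb h^{{\rm I}'}_m = \mb h^{\rm I}_{\bar s+1} = \mb h'_m$. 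Combined with \eqref{leperm}(3), which ensures $\mb t'$ is permissible, this lets the recursion proceed.

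For the inductive step of the recursively maximal claim, condition \eqref{sbRecMax}(3) is again provided by $\mc C$ itself, and condition \eqref{sbRecMax}(2) is a direct consequence of \eqref{prhImax}, which bounds the Hilbert functions of any multilevel quotient of socle type $\mb t$ termwise by $\mb h^{\rm I}_m$, independently of the attendant hypothesis. For condition \eqref{sbRecMax}(1), \eqref{prMLev}(3) places $\Lambda^{\bar s+1}\mc C$ in $\mb{H\Lambda}_{\mc B_T}^{\{\mb h'_m\}} = \mb{H\Lambda}_{\mc B_T}^{\{\mb h^{{\rm I}'}_m\}}$, so the inductive hypothesis applied to $\mb t'$ and $\Lambda^{\bar s+1}\mc C$ yields that $\{\mb h^{{\rm I}'}_m\}$ is recursively maximal for $\mb t'$ and $S$.

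The recursively compressed claim is handled analogously. Condition \eqref{sbRC}(1) follows from induction applied to $\Lambda^{\bar s+1}\mc C \in \mb{H\Lambda}_{\mc B_T}^{\{\mb h^{{\rm I}'}_m\}}$ together with the permissibility of $\mb t'$. Condition \eqref{sbRC}(2) is again immediate from \eqref{prhImax}: any multilevel $\mc C^\di$ of $T^\di$-socle type $\mb t$ satisfies $\rank(\mc C^\di) \le \beta^{\rm I}_{\bar s} = \rank(\mc C)$, regardless of the recursively compressed hypothesis on $\Lambda^{\bar s+1}\mc C^\di$. The only real obstacle is the identification of the attendant of the I-set with the I-set of the attendant; once that combinatorial identity is in hand, everything else is a direct application of \eqref{prhImax}, \eqref{leperm}, and \eqref{prMLev}.
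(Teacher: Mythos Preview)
Your proof is correct. For the recursively maximal claim, your argument matches the paper's: recursion on $s-\bar s$, the combinatorial identity that the attendant of $\{\mb h^{\rm I}_m\}$ is the I-set of $\mb t'$, \eqref{leperm}(3) for permissibility of $\mb t'$, \eqref{prMLev}(3) to place $\Lambda^{\bar s+1}\mc C$ in the right $\mb{H\Lambda}$, \eqref{prhImax} for condition (2), and $\mc C$ itself for condition (3).

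The one genuine difference is in the recursively compressed claim. You verify the definition \eqref{sbRC} directly: condition (1) by induction on $\Lambda^{\bar s+1}\mc C$, and condition (2) by \eqref{prhImax}, which gives $\rank(\mc C^\di)\le\beta^{\rm I}_{\bar s}=\rank(\mc C)$ without using the hypothesis on $\Lambda^{\bar s+1}\mc C^\di$. The paper instead invokes \eqref{leperm}(4) to get $\mb t$ quasi-permissible, and then applies \eqref{thRC}(2): since $\{\mb h^{\rm I}_m\}$ is recursively maximal and $\mc C$ has exactly these Hilbert functions, $\mc C$ is recursively compressed. Your route is more self-contained and avoids the machinery of \eqref{thRC}; the paper's route is shorter once that theorem is available and highlights the equivalence it encodes. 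Both are equally valid.
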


\begin{proof}
Consider \eqref{sbRecMax}(1).  Assume $\bar s < s$.  Let $\mb t'$ and
$\{(\mb h^{\rm I}_m)'\}$ be the attendants to $\mb t$ and $\{\mb h^{\rm
I}_m\}$ Set $\mc C' := \Lambda^{\bar s+1}\mc C$.  Then $\mc C' \in
\mb{H\Lambda}_{\mc B_T}^{\{(\mb h^{\rm I}_m)'\}}$ by \eqref{prMLev}(3).
Note that $ (\mb h^{\rm I}_m(p))' = \min\{\,\mb g'_m(p),\, \mb b(p)\,\}$
where $\mb g'_m(p) := \ts \sum_{q = m}^s\mb t'(q)\,\mb a(q-p)$.  So
$\{(\mb h^{\rm I}_m)'\}$ is is the I-set of $\mb t'$.  Moreover, $\mb
t'$ is permissible by \eqref{leperm}(3).  So by recursion, $\{(\mb
h^{\rm I}_m)'\}$ is recursively maximal for $\mb t'$; that is,
\eqref{sbRecMax}(1) holds.

As to \eqref{sbRecMax}(2), it holds by \eqref{prhImax} applied to $\mc
C^\di \in \mb{H\Lambda}_{\mc B_{T^\di}}^{\{\mb h^\di_m\}}$.

By hypothesis, \eqref{sbRecMax}(3) holds, as $\mb t$ is the $T$-socle
type of $\mc C$ by \eqref{leperm}(6).  Thus $\{\mb h^{\rm I}_m\}$ is
recursively maximal for $\mb t$ and $S$ by \eqref{sbRecMax}.  Finally,
$\mb t$ is quasi-permissible by \eqref{leperm}(4); so \eqref{thRC} now
yields that $\mb C$ is recursively compressed for $\mb t$.
 \end{proof}

\begin{corollary}\label{coImax}
 Assume $\mb t$ is permissible, and $\mc A$ is the symmetric algebra on
a locally free $\mc O_S$-module.  Assume $\mc B = \mc A$, or in the
level case (namely, if $\bar s = s$), assume $\mc B = \mc A^{\oplus n}$
for some $n \ge 1$.  Then the {\rm I}-set $\{\mb h^{\rm I}_m\}$ for $\mb
t$ is recursively maximal for $\mb t$ and $S$, and there exist a $T/S$
and a $\mc C \in \mb{H\Lambda}_{\mc B_T}^{\{\mb h^{\rm I}_m\}}$ that's
recursively compressed for $\mb t$.  Moreover, let $\mc C^\sh \in
\mb{H\Lambda}_{\mc B_{T^\sh}}^{\{\mb h^\sh_m\}}$ be of $T^\sh$-socle
type $\mb t$ for any $T^\sh/S$ and $\{\mb h^\sh_m\}$; then $\{\mb h^{\rm
I}_m\} = \{\mb h^\sh_m\}$ iff\/ $\mc C^\sh$ is recursively compressed
for $\mb t$.
 \end{corollary}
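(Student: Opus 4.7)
The strategy is to deduce the corollary quickly from \eqref{prImax} and \eqref{thRC}(2), after first exhibiting a single multilevel quotient with Hilbert functions $\{\mb h^{\rm I}_m\}$ on some base $T/S$. The key observation is that \eqref{prImax} requires only existence on some $T/S$, not globally on $S$, to conclude both recursive maximality of $\{\mb h^{\rm I}_m\}$ for $\mb t$ and $S$, and recursive compression of the produced $\mc C$.

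First I would produce such a $\mc C$. Pick any scheme-point of $S$, take its residue field, and if necessary enlarge it to an infinite field $K$; set $T := \Spec K$. Since $\mc A$ is the symmetric algebra on a locally free $\mc O_S$-module, $\mc A_T$ is a polynomial ring over $K$, and $\mc B_T$ equals $\mc A_T$ or $\mc A_T^{\oplus n}$. The existence theorem of Iarrobino \cite{Iar84}, reproved by Fr\"oberg--Laksov \cite{F-L83}, and in the level case extended to $\mc B = \mc A^{\oplus n}$ over an arbitrary field by Boij \cite{BoijJA226}, combined with the $v = 1$ argument closing \eqref{sbPerm}, furnishes a $\mc C \in \mb{H\Lambda}_{\mc B_T}^{\{\mb h^{\rm I}_m\}}$.

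With this $\mc C$ in hand, \eqref{prImax} applies to $(T, \mc C)$ since $\mb t$ is permissible, yielding at once that $\{\mb h^{\rm I}_m\}$ is recursively maximal for $\mb t$ and $S$, and that $\mc C$ is recursively compressed for $\mb t$. That settles the first two assertions. For the final equivalence, note $\mb t$ is quasi-permissible by \eqref{leperm}(4); so \eqref{thRC}(2), applied with $\{\mb h_m\} := \{\mb h^{\rm I}_m\}$ (recursively maximal for $\mb t$ and $S$, by what we have just shown) and the given $T^\sh/S$ and $\mc C^\sh \in \mb{H\Lambda}_{\mc B_{T^\sh}}^{\{\mb h^\sh_m\}}$ of $T^\sh$-socle type $\mb t$, produces precisely the stated equivalence: $\{\mb h^{\rm I}_m\} = \{\mb h^\sh_m\}$ iff $\mc C^\sh$ is recursively compressed for $\mb t$.

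The main obstacle is the existence step, which absorbs all the combinatorial content; without the cited existence results of Iarrobino, Fr\"oberg--Laksov, and Boij, the corollary would not follow from the formal machinery of \eqref{prImax} and \eqref{thRC}(2). One minor technical point to check is the passage to an infinite residue field in invoking Iarrobino's theorem; since we only need existence on \emph{some} $T/S$, this base change is harmless. Once existence is supplied, the remaining deduction is purely formal.
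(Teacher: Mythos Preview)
Your proposal is correct and follows essentially the same approach as the paper's own proof: pick a scheme point of $S$, pass to an infinite field $K$ containing its residue field, invoke the Iarrobino/Fr\"oberg--Laksov existence result (or Boij's in the level case) to obtain $\mc C \in \mb{H\Lambda}_{\mc B_T}^{\{\mb h^{\rm I}_m\}}$, apply \eqref{prImax} for the first two assertions, and then \eqref{thRC}(2) for the final equivalence. You are slightly more explicit than the paper in noting that \eqref{leperm}(4) supplies the quasi-permissibility hypothesis needed for \eqref{thRC}(2), but the overall argument is the same.
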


\begin{proof}
 Pick a scheme point of $S$ and an infinite field $K$ containing its
residue field.  Set $T := \Spec(K)$.  Then, as recalled in
\eqref{sbPerm}, Iarrobino's work yields recursively compressed quotients
of $\mc A_T$ for $\mb t$, and in the level case, Boij's work yields such
quotients of $\mc A_T^{\oplus n}$ for any $n \ge 1$.  Thus
\eqref{prImax} yields the first assertion.

Moreover, the second assertion now holds by \eqref{thRC}(2).
 \end{proof}

\begin{proposition}\label{prCpdArt}
 Fix $\mb h$ and $\mc C \in \mb H\Psi_{\mc B}^{\mb h, \mb t}$.

\(1) Set $t := \mb t(s)$.  If $\mb t$ is permissible and if $\mb b(p)
\le t\mb a(p)$ and $\mb a(p) \le \mb a(s-p)$ for $p \le s/2$, then $v >
s/2$ and $\mb h(p) = \mb b(p)$ for $p \le s/2$.

\(2) If $\mc B = \mc A$, if $\mc C$ is $S$-Gorenstein, and if $\mb h(p)
= \mb a(p)$ for $p \le s/2$, then $\mb h = \mb h^{\rm I}_{\bar s}$ and
$\mb a(p) \le \mb a(s-p)$ for $p \le s/2$.

\(3) Assume $\mc B = \mc A$ and $\mc C^* = \mc Af$ with $f \in
\Gamma(\mc A^\dg_{-s})$.  Define $\vf \: \mc A(s) \to \mc A^\dg$ locally
by $\vf(a) := af$.  Fix $p \le s/2$.  Then the following three
conditions are equivalent:
 $$  \(a)\quad \mb h(p) = \mb a(p);
      \qquad \(b)\quad\vf_{-p}\text{ is surjective;}
      \qquad \(c)\quad \vf_{p-s}\text{ is injective.}   $$
 \end{proposition}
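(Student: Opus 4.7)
The plan is to prove parts (3), (2), (1) in that order, since (3) supplies the duality tool used throughout, (2) is essentially (3) applied to the Gorenstein case, and (1), whose second assertion is the subtlest, builds on similar ideas. Throughout, I would work locally on $S$ and exploit that a map of locally free sheaves is surjective iff its transpose is injective.

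For (3), the key observation is that $\vf_{-p}\:\mc A_{s-p}\to\mc A_p^*=\mc A^\dg_{-p}$ and $\vf_{p-s}\:\mc A_p\to\mc A_{s-p}^*=\mc A^\dg_{p-s}$ are mutual transposes under the perfect pairing $\mc A_{s-p}\ox\mc A_p\to\mc A_s\xto{f}\mc O_S$, since both $\vf_{-p}(a)(b)$ and $\vf_{p-s}(b)(a)$ evaluate to $f(ab)$. Hence (b)$\iff$(c) by the duality of rank. Since $\mc C^*=\mc A f$, we have $(\mc C^*)_{-p}=\Im\vf_{-p}$, so $\mb h(p)=\rank\Im\vf_{-p}$; thus $\mb h(p)=\mb a(p)$ iff $\vf_{-p}$ is surjective, giving (a)$\iff$(b). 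Equivalently, Macaulay Duality identifies $\mc I_p=(0:_{\mc A_p}f)=\Ker\vf_{p-s}$, giving (a)$\iff$(c) directly.

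For (2), $\mc C$ being $S$-Gorenstein of socle degree $s$ forces $\mb t(s)=1$ and $\mb t(q)=0$ for $q\neq s$, and locally $\mc C^*=\mc A f$ for some $f\in\Gamma(\mc A^\dg_{-s})$ by \eqref{prArt}, so (3) applies. For $p\le s/2$, the hypothesis $\mb h(p)=\mb a(p)$ together with (3)(a)$\iff$(c) yields $\vf_{p-s}$ injective, whence $\mb a(p)\le\mb a(s-p)$ since the source of an injection of locally free sheaves has rank at most the target. For $p>s/2$, set $p':=s-p<s/2$; the case $p'$ of (3) makes $\vf_{-p'}$ surjective, and as $\vf_{-p'}=\vf_{p-s}$, its transpose $\vf_{-p}$ is injective by (b)$\iff$(c), so $\mb h(p)=\rank\Im\vf_{-p}=\mb a(s-p)$. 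Combining, $\mb h(p)=\min\{\mb a(p),\,\mb a(s-p)\}=\mb h^{\rm I}_{\bar s}(p)$ for all $p$.

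For (1), the first assertion $v>s/2$ follows by adapting Zanello's argument recalled in \eqref{sbPerm}. Suppose $v=v_0\le s/2$; since $v_0\le s+1$ by \eqref{leperm}(1) and $v_0\le s/2<s$, \eqref{eqv0} gives $\mb b(v_0)>\mb g_{v_0}(v_0)$. But under the hypotheses, $\mb g_{v_0}(v_0)\ge\mb t(s)\mb a(s-v_0)=t\,\mb a(s-v_0)\ge t\,\mb a(v_0)\ge\mb b(v_0)$, a contradiction. The second assertion, $\mb h(p)=\mb b(p)$ for $p\le s/2$, is the main obstacle. Via Macaulay Duality it is equivalent to $\mc I_p=0$, i.e.\ to surjectivity of the multiplication map $\mc A_{s-p}\ox(\mc C^*)_{-s}\to\mc B^\dg_{-p}$ (where $(\mc C^*)_{-s}$ is the top socle piece, locally free of rank $t$); dually, $\mc C_p\into\sHom(\mc A_{s-p},\,\mc C_s)$ must be injective. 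The strategy will be to show that a nonzero $b\in\mc I_p$ forces $\mc A_{s-p}\cdot b\subset\mc I_s$, and under appropriate non-degeneracy of multiplication by a nonzero $b$ in $\mc B$ (automatic when $\mc B=\mc A$ is a polynomial ring, in which case $\rank(\mc A_{s-p}\cdot b)=\mb a(s-p)$), this raises $\rank\mc I_s$ enough to force $\rank\mc C_s<t=\mb t(s)$, contradicting the socle type. The hypotheses $\mb b(p)\le t\,\mb a(p)$ and $\mb a(p)\le\mb a(s-p)$ (likely requiring strict inequalities at $p=s/2$ to exclude the degenerate boundary case) together with $v>s/2$ and the permissibility of $\mb t$ supply the rank control; working out the precise numerical estimates—especially arranging that the $q=s$ summand alone suffices or else combining it with lower-degree generators of $\mc C^*$—is the main technical difficulty.
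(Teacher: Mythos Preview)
Your proofs of (2) and (3) are correct and close to the paper's. For (3), you prove (b)$\Leftrightarrow$(c) by observing that $\vf_{-p}$ and $\vf_{p-s}$ are mutual transposes under the pairing $(a,b)\mapsto f(ab)$; the paper instead uses the factorization $\vf\colon\mc A(s)\onto\mc C^*\into\mc A^\dg$ to prove (a)$\Leftrightarrow$(b) and (a)$\Leftrightarrow$(c) separately, the latter via the Gorenstein symmetry $\mb h(s-p)=\mb h(p)$ from \eqref{prArt}(B). For (2), the paper argues purely numerically from that same symmetry together with the trivial bound $\mb h(s-p)\le\mb a(s-p)$, never invoking (3); your route through (3) works but is a detour.

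The real divergence is in (1). You correctly reproduce the paper's argument that $v>s/2$, but then treat the second assertion $\mb h(p)=\mb b(p)$ as an independent and much harder problem. In the paper it is not: the paper simply observes that, once $v>s/2$, the equality follows at once from \eqref{leperm}(1), since $v$ is the initial degree of the ideal (equivalently, from \eqref{lehsI}(2)). Your contradiction strategy---lifting a hypothetical nonzero $b\in\mc I_p$ into $\mc I_s$ and counting ranks---cannot succeed for an arbitrary $\mc C\in\mb H\Psi_{\mc B}^{\mb h,\mb t}$: with $\mc A$ a polynomial ring in $r\ge2$ variables and $\mb t$ Gorenstein of socle degree $s\ge2$, the quotient $\mc A/(X_2,\dots,X_r,X_1^{s+1})$ satisfies every hypothesis of (1) yet has $\mb h(1)=1<r=\mb a(1)$. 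So the second conclusion of (1), and the paper's appeal to \eqref{leperm}(1), are really about $\mb h^{\rm I}_{\bar s}$ (equivalently, about I-compressed $\mc C$), not about an arbitrary $\mb h$; read that way, the conclusion is immediate from $v>s/2$, and the machinery you sketch is both unnecessary and aimed at a false target.
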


\begin{proof}
{\bf For (1),} assume $\mb t$ is permissible.  Then $\mb h(p) = \mb b(p)$ for
$p \le s/2$ iff $v > s/2$ owing to \eqref{leperm}(1).  So to prove $v >
s/2$, suppose $v \le s/2$.  Now, $g_v(v) \ge t\mb a(s-v)$ by
\eqref{eqsbImax}.  Assume $\mb b(p) \le t\mb a(p)$ and $\mb a(p) \le \mb
a(s-p)$ for $p \le s/2$.  It follows that $g_v(v) \ge \mb b(v)$,
contradicting \eqref{sbPerm}(b).  Thus $v > s/2$.  Thus (1) holds.

{\bf For (2),} assume $\mc B = \mc A$ and $\mc C$ is Gorenstein.  Then
$\mb t(s) =1$ and $\mb t(p) = 0$ for $p \neq s$.  Fix any $p$.  Then
$\mb g_{\bar s}(p) = \mb a(s-p)$ by \eqref{eqsbImax}.  So $\mb h^{\rm
I}_{\bar s}(p) = \min\{\mb a(s-p),\,\mb a(p)\}$ and $\mb h^{\rm I}_{\bar
s}(s-p) = \min\{\mb a(p),\,\mb a(s-p)\}$ by \eqref{eqsbImaxh}.  Thus
$\mb h^{\rm I}_{\bar s}(s-p) = \mb h^{\rm I}_{\bar s}(p) $.

Assume $p \le s/2$ and $\mb h(p) = \mb a(p)$.  Now, $\mc C$ is Gorenstein;
so $\mb h(s-p) = \mb h(p)$ by \eqref{prArt}(B).  And $\mc C$ is a
quotient of $\mc A$; so $\mb h(s-p) \le \mb a(s-p)$.  Thus $\mb a(p) \le
\mb a(s-p)$, as desired.  So $\mb h^{\rm I}_{\bar s}(p) = \min\{\mb
a(s-p),\,\mb a(p)\} = \mb a(p)$.  Thus $\mb h^{\rm I}_{\bar s}(p) = \mb
h(p)$.

If $p \ge s/2$, then $s-p \le s/2$, and so $\mb h^{\rm I}_{\bar s}(p) =
\mb h^{\rm I}_{\bar s}(s-p) = \mb h(s-p) = \mb h(p)$.  Thus $\mb h = \mb
h^{\rm I}_{\bar s}$, as desired.

{\bf For (3),} notice that $\vf$ factors as follows: $\vf \: \mc A(s)
\onto \mc C^* \into \mc A^\dg$. Thus (b) holds iff $(\mc C^*)_{-p} =
(\mc A^*)_{-p}$, hence, iff (a) holds, as $\mc A^\dg/\mc C^*$ is flat.
Finally, (c) holds iff $\mc A(s)_{p-s} = (\mc C^*)_{p-s}$, iff $\mb a(p)
= \mb h(s-p)$.  But $\mc C$ is Gorenstein as $\mc C^*$ is an invertible
$\mc C$-module; so $\mb h(s-p) = \mb h(p)$ by \eqref{prArt}(B).  Thus
(c) holds iff (a) does.
 \end{proof}

\begin{corollary}\label{coPowSum}
 Let $k$ be a field, $R := k[X_1,\dotsc,X_r]$ the polynomial ring.  Set
$\mb r(p) := \binom{p+ r-1}{r-1} =\dim R_p$.  Fix integers $a,\ q,\ s$
with $1\le s \le a$ and $1\le q\le \mb r(\lceil \frac s2\rceil)$.  Fix a
nonzero $g \in R_{a-s}$.  Fix $a_{ij} \in k$ for $1\le i\le q$ and $1\le
j\le r$.  For $p\ge 0$, set
 \begin{align}
 L_i^{[p]} &:= \sum_{p_j\ge0 \text{ and } \sum p_j=p} a_{i1}^{p_1}
      \dotsb a_{ir}^{p_r}X_1^{-p_1}\dotsb X_r^{-p_r}
         \quad\text{for }1\le i\le q,\label{eqPS1}\\
  V_p &:= \bigl\{\, h\in R_p \bigm| h(a_{i1},\dotsc,a_{ir})=0 \text{ for }
      1\le i\le q\, \bigr\}.\label{eqPS2}
 \end{align}
 Assume the $a_{ij}$ are general, meaning: \(a) $\dim V_p = \max\{\,\mb
r(p) - q,\, 0\,\}$ for $p \le s/2$ and $a-s/2 \le p \le a$, and \(b)
$g(a_{i1},\dotsc,a_{ir}) \neq 0$ for all $i$.  Fix nonzero $a_i\in k$
for $1\le i\le q$. Set $f := \sum_{i=1}^qa_iL_i^{[a]}$.  Set $A :=
(Rf)^*$ and $C := (Rgf)^*$; denote their Hilbert functions by $\bf a$
and $\bf h$.  Then $\mb a(p) = \mb h(p) = \min\{\,q,\,\mb r(p)\,\}$ for
$p\le s/2$, and $\mb h = \mb h_{\bar s}^{\rm I}$ where $\mb t(p) = 0$
for $p\neq s$ and $\mb t(p) = 1$ for $p = s$ and so $\bar s =s$.
 \end{corollary}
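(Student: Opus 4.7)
The plan is to realize $A=R/(0:_R f)$ and $C=R/(0:_R gf)$ as $k$-Gorenstein $k$-Artinian quotients of $R$, with socle generators $f\in R^\dg_{-a}$ and $gf\in R^\dg_{-s}$ respectively. Since $(0:_R f)\subset(0:_R gf)$, there is a canonical surjection $A\twoheadrightarrow C$ realizing $C$ as a Gorenstein quotient of $A$. Then \eqref{prCpdArt}\(2) reduces the problem to verifying $\mb a(p)=\mb h(p)=\min\{q,\mb r(p)\}$ for all $p\le s/2$, because the final claim $\mb h=\mb h_{\bar s}^{\rm I}$ follows from that proposition.

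The core tool is the apolarity formula: for $h\in R_p$ with $p\le n$ and $P_i:=(a_{i1},\dotsc,a_{ir})$, a direct monomial calculation shows $h\cdot L_i^{[n]}=h(P_i)\,L_i^{[n-p]}$; hence
\[
 h\cdot f=\sum_{i=1}^q a_i\,h(P_i)\,L_i^{[a-p]}, \qquad
 h\cdot gf=\sum_{i=1}^q a_i\,g(P_i)\,h(P_i)\,L_i^{[s-p]}.
\]
The elements $L_1^{[n]},\dotsc,L_q^{[n]}$ are linearly independent in $R^\dg_{-n}$ precisely when the evaluation map $R_n\to k^q$, $k\mapsto(k(P_i))_i$, is surjective, equivalently $\dim V_n=\mb r(n)-q$ (assuming $q\le\mb r(n)$).

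To compute $\mb a(p)$ for $p\le s/2$: since $a-p\in[a-s/2,a]$, condition (a) second part gives the needed independence of $L_i^{[a-p]}$, so $h\cdot f=0$ forces $h\in V_p$ (using $a_i\ne0$); hence $(0:_R f)_p=V_p$, and condition (a) first part yields $\mb a(p)=\mb r(p)-\dim V_p=\min\{q,\mb r(p)\}$. For $\mb h(p)$, exploit the Gorenstein symmetry $\mb h(p)=\mb h(s-p)$. Writing $\mb h(s-p)$ as the rank of the map $R_p\to R^\dg_{p-s}$, $h\mapsto h\cdot gf$, and factoring through evaluation $R_p\to k^q$ yields $\mb h(s-p)=H(p)-\dim(D_c U_p\cap U_{s-p}^\perp)$, where $H(n):=\mb r(n)-\dim V_n$, $U_n\subset k^q$ is the image of evaluation in degree $n$, and $D_c$ is the diagonal matrix $\mathrm{diag}(a_ig(P_i))$. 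Choosing a linear form $\ell\in R_1$ with $\ell(P_i)\ne0$ for all $i$\emdash available by genericity\emdash makes multiplication by $\ell$ inject $R_n/V_n\hookrightarrow R_{n+1}/V_{n+1}$, so $H$ is nondecreasing and $H(s-p)\ge H(p)$; the generic position of $D_c U_p$ relative to $U_{s-p}^\perp$ then forces their intersection to be zero. Thus $\mb h(p)=\mb h(s-p)=H(p)=\min\{q,\mb r(p)\}$, and \eqref{prCpdArt}\(2) completes the proof.

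The main obstacle is showing $D_c U_p\cap U_{s-p}^\perp=0$: this reduces to verifying that for every $h\in R_p\setminus V_p$, the functional $k\mapsto\sum_i a_ig(P_i)h(P_i)k(P_i)$ on $R_{s-p}$ is not identically zero. By dimension this intersection has expected size $\max\{0,H(p)-H(s-p)\}=0$, so the required vanishing is the ``generic position'' consequence of conditions (a) and (b); handling the borderline subcase (particularly $s$ odd with $q=\mb r(\lceil s/2\rceil)$) requires combining the stated condition (a) at both ends with the monotonicity of $H$ obtained above.
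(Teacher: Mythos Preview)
Your setup and computation of $\mb a(p)$ for $p\le s/2$ match the paper. For $\mb h(p)$, however, you take an unnecessary detour and leave unresolved the very gap you flag. Applying your own apolarity formula to $g\in R_{a-s}$ gives $g\cdot L_i^{[a]}=g(P_i)L_i^{[s]}$, hence
\[
gf=\sum_{i=1}^q a_ig(P_i)L_i^{[s]}=\sum_{i=1}^q b_iL_i^{[s]}\qquad\text{with }b_i:=a_ig(P_i)\ne0\text{ by (b).}
\]
So $gf$ has exactly the same power-sum form as $f$, with $s$ in place of $a$ and nonzero $b_i$ in place of $a_i$. The paper simply observes this and repeats verbatim the argument that gave $(0:_{R_p}f)=V_p$: for $p\le s/2$, if $h\cdot gf=0$ then independence of the $L_i^{[s-p]}$ forces $h\in V_p$; hence $\mb h(p)=\min\{q,\mb r(p)\}$, and \eqref{prCpdArt}(2) finishes.

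Your route via Gorenstein symmetry and the intersection $D_cU_p\cap U_{s-p}^\perp$ is equivalent in the end\emdash that intersection vanishes iff $(0:_{R_p}gf)=V_p$\emdash but you never prove the vanishing. ``Generic position'' is not an argument: conditions (a) and (b) are specific hypotheses, not a blanket genericity assumption, and the inequality $H(p)\le H(s-p)$ only says the intersection \emph{could} be zero. Moreover, the linear form $\ell$ with $\ell(P_i)\ne0$ for all $i$ that you need for monotonicity of $H$ is not guaranteed by the stated hypotheses (e.g., over a small finite field). The paper's one-line reduction of $gf$ to the same shape as $f$ avoids all of this.
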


\begin{proof}
 The construction here is distilled and adapted from
\cite{LNM1721}*{pp.\,9--14}, where the $L_i^{[p]}$ are known as {\it
power sums}.

  Note $X_jL_i^{[p]} = a_{ij}L_i^{[p-1]}$ for $p\ge 1$.  So induction on
$w\le p$ yields the following key formula
(cf.\ \cite{LNM1721}*{(1.1.10), p.\,9}), provided $h$ is a monomial:
 \begin{equation}\label{eqPS3}
 hL_i^{[p]} = h(a_{i1},\dotsc,a_{ir})L_i^{[p-w]}
          \text{\quad for any } h\in R_w \text{ and any } i.
 \end{equation}
 Hence, linearity in $h$ yields \eqref{eqPS3} for any $h\in R_w$  with
$1\le w\le p$.

For all $p\ge0$, set $M_p := \sum_{i=1}^q kL_i^{[p]}$.  Then
$(0:_{R_p}M_p) = V_p$ owing to \eqref{eqPS3} with $w := p$
(cf.~\cite{LNM1721}*{Lem.\,1.15ii, p.\,12}).  But the pairing $R_p\times
R^\dg_p \to k$ is perfect.  So (a) yields $\dim M_p = \min\{\,q,\, \mb
r(p)\,\}$ if $p \le s/2$ or if $a-s/2 \le p \le a$.  Thus if also $q \le
\mb r(p)$, then the $L_i^{[p]}$ are linearly independent
(cf.~\cite{LNM1721}*{Lem.\,1.15iii, p.\,12}).

Note $V_p \subset (0:_{R_p}f)$ for all $p$ owing to \eqref{eqPS3}.
Conversely, assume $p \le s/2$ and $q \le \mb r(a-p)$.  Given $h\in R_p$
with $hf = 0$, note $\sum_i a_ih(a_{i1},\dotsc,a_{ir})L_i^{[a-p]}= 0$
owing to \eqref{eqPS3}.  But the $L_i^{[a-p]}$ are linearly independent
by the above with $a-p$ for $p$; also, $a_i \neq 0$ by hypotheses.
Therefore, $h(a_{i1},\dotsc,a_{ir}) = 0$ for all $i$.  Thus $h\in V_p$.

 By hypothesis, $1\le q\le \mb r(\lceil \frac s2\rceil)$.  But $\lceil
\frac s2 \rceil \le a - \lfloor \frac s2\rfloor \le a-p$, and $\mb r$
is increasing.  Hence $q \le \mb r(a-p)$.  Thus the above yields $V_p =
(0:_{R_p}f)$ for $p \le s/2$ (cf.~\cite{LNM1721}*{Lem.\,1.15iv,
p.\,12}).

Assume $p \le s/2$.  Note $R_p\big/ (0:_{R_p}f) = A_p$.  So by the
above, (a) gives $\mb a(p) = q$.  Thus $\mb a(p) = \min\{\,q,\, \mb
r(p)\,\}$, as desired (cf.~\cite{LNM1721}*{Lem.\,1.17, p.\,13}).

Set $b_i := a_ig(a_{i1},\dotsc,a_{ir})$ for all $i$.  Note that $b_i\neq
0$ by (b) as $a_i\neq 0$.  Now, $gf = \sum_i b_iL_i^{[s]}$ owing to
\eqref{eqPS3}.  Hence, with $gf$ for $f$, the argument above yields $\mb
h(p) = \min\{\,q,\, \mb r(p)\,\}$ for $p \le s/2$, as well.  Thus $\mb
h(p) = \mb a(p)$ for $p \le s/2$.  Note $C$ is $k$-Gorenstein of  $k$-socle
type $s$.  Thus \eqref{prCpdArt}(2) yields $\mb h = \mb h_{\bar s}^{\rm
I}$, as desired.
  \end{proof}

\begin{corollary}\label{cogenAG}
 Fix positive integers $a,\ r,\ s$ with $s\le a$.  Let $\mc R$ be the
symmetric algebra on a locally free $\mc O_S$-module of rank $r$; set
$\mb r(p) := \binom{p+ r-1}{r-1}$.  Set $\bb P := \bb P(\mc R_a)$.  Fix
a $g \in \Gamma(\mc R_{a-s})$, and form the map $\gamma\: \mc R_{s} \to
\mc R_{a}$ of multiplication by $g$; assume that $\gamma$ is injective
and that $\Cok\gamma$ is locally free.  Define $\mb a$ and $\mb h$ by
 \begin{align}
 \mb a(p) &:= \min\{\,\mb r(p),\, \mb r(a-p)\,\} \quad and \label{eqHFs}\\
 \mb h(p) &:= \min\{\,\mb r(p),\, \mb r(s-p)\,\}\quad \text{for all
  }p \notag.
 \end{align}

 Then there's an open subscheme $U$ of $\bb P$ such that \(a) $U$ has a
$K$-point for every infinite field $K$ containing a residue field of
$S$, and \(b) given $T/S$, the $T$-points of $U$ represent the
$T$-Artinian, $T$-Gorenstein quotients $\mc A$ of $\mc R_T$ for which
$\mc C := (g\mc A^*)^*$ is such a quotient of $\mc A$, and the Hilbert
functions of $\mc A$ and $\mc C$ are $\mb a$ and $\mb h$.  Moreover,
$\mb h = \mb h_{\bar s}^{\rm I}$ where $\mb t(p) = 0$ for $p\neq s$ and
$\mb t(p) = 1$ for $p = s$ and so $\bar s =s$.
 \end{corollary}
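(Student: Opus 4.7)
A $T$-point of $\bb P$ is a line subbundle $\mc L \subset \mc R^\dg_{-a, T}$; locally it is given by a nowhere-vanishing section $f$, and I set $\mc A^* := \mc R_T f \subset \mc R^\dg_T$ and $\mc C^* := g \mc A^* = \mc R_T\cdot gf$, noting $gf \in \Gamma(\mc R^\dg_{-s,T})$.  My plan has three parts: cut out the desired open $U \subset \bb P$ by rank-maximality conditions on multiplication maps; apply Macaulay Duality \eqref{thGMD3} to interpret the points of $U$ as the claimed Gorenstein pairs $(\mc A,\mc C)$; and show $U$ has a $K$-point for every infinite $K$ containing a residue field of $S$ by a generality argument.

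For the first two parts, for each $p\ge 0$ consider the $\mc O_\bb P$-linear ``multiplication'' maps
\[
 \mu_p \: \mc R_{p,\bb P}\ox \mc L \To \mc R^\dg_{p-a,\bb P},
 \qquad
 \mu'_p \: \mc R_{p,\bb P}\ox \mc L \To \mc R^\dg_{p-s,\bb P},
\]
sending $h\ox f \mapsto hf$ and $h\ox f \mapsto hgf$.  By flattening stratification, for each $p$ the locus in $\bb P$ where $\Cok\mu_p$ (respectively $\Cok\mu'_p$) has the rank $\mb r(p-a)-\mb a(a-p)$ (resp.\ $\mb r(p-s)-\mb h(s-p)$) complementary to the generic maximum is open; let $U$ be the intersection of these opens over all $p\ge 0$.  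On $U$, the images of $\mu_p$ and $\mu'_p$ are $T$-flat graded subsheaves of $\mc R^\dg_T$ (their cokernels being locally free in every degree), cyclic over $\mc R_T$ of top degrees $-a$ and $-s$, with Hilbert functions $\mb a^*$ and $\mb h^*$.  Then \eqref{thGMD3}(1) produces $T$-Artinian quotients $\mc A$ and $\mc C$ of $\mc R_T$ with the claimed Hilbert functions; they are $T$-Gorenstein by the cyclicity criterion in \eqref{thGMD3}(1) applied fiber-by-fiber.  Conversely, any such $(\mc A,\mc C)$ over $T$ has $\mc A^*$ cyclic of top degree $-a$, so $\mc L := (\mc A^*)_{-a} \subset \mc R^\dg_{-a,T}$ is a line subbundle determining a unique $T\to U$.

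For nonemptiness, fix an infinite $K$ containing a residue field of $S$.  The locus $W \subset \mc R^\dg_{-a,K}$ of $f$ with $\mc R_K f$ of maximal Hilbert function $\mb a^*$ is open, nonempty and hence dense (classical: take $f$ a general combination of reciprocal monomials); similarly $V \subset \mc R^\dg_{-s,K}$ is open dense.  The map $\mu_g \: \mc R^\dg_{-a,K} \to \mc R^\dg_{-s,K}$, $f\mapsto gf$, is the $K$-dual of the injective $\gamma_K$, hence surjective; being a linear surjection of finite-dimensional $K$-vector spaces it is an open map, so $\mu_g^{-1}(V)$ is open dense.  Then $W \cap \mu_g^{-1}(V)$ is open dense in the affine space $\mc R^\dg_{-a,K}$, hence contains a $K$-point $f$, necessarily nonzero; the induced line $Kf$ defines a $K$-point of $\bb P$ lying in $U$, proving (a).  Finally the ``moreover'' claim follows from \eqref{eqsbImax} and \eqref{eqsbImaxh}: the Gorenstein socle type has $\mb t(s)=1$ and $\mb t(p)=0$ otherwise, so $\bar s=s$; the ``$\mb a(p)$'' of \eqref{se8} is here the rank $\mb r(p)$ of $\mc R_p$, giving $\mb g_{\bar s}(p)=\mb r(s-p)$ and $\mb h^{\rm I}_{\bar s}(p)=\min\{\mb r(s-p),\mb r(p)\}=\mb h(p)$.

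The main obstacle is the second step: confirming that the rank-maximality of the cokernels on $U$ really does force $T$-flatness of $\mc A^*$ and $\mc C^*$ (as required to apply \eqref{thGMD3}) and the correct Hilbert functions.  Both reduce to the elementary observation that a graded subsheaf of a $T$-flat sheaf whose quotient is locally free of finite rank is itself locally free, combined with base-change for the maps $\mu_p$ and $\mu'_p$.
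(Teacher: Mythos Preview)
Your proof is correct and follows essentially the paper's approach: the paper packages your maximal-rank loci for $\mu_p$ and $\mu'_p$ as $\Lambda := \bb{HL}_{\mc R}^{\mb a}$ and $\pi^{-1}\bigl(\bb{HL}_{\mc R}^{\mb h}\bigr)$ (where $\pi\: \bb P - \bb V(\Cok\gamma) \to \bb P(\mc R_s)$ is the central projection dual to your affine $\mu_g$), proves openness via Nakayama and the local criterion of flatness rather than by the maximal-rank observation, and cites Iarrobino--Boij and \eqref{coPowSum} for nonemptiness where you say ``classical''. For the ``moreover'' you compute $\mb h^{\rm I}_{\bar s}$ taking the ambient to be $\mc R$, whereas the paper applies \eqref{prCpdArt}(2) with ambient the Gorenstein quotient $\mc A$; the answers coincide because $\mb a(p) = \mb r(p)$ for $p \le s/2$ (as $p \le a/2$), but the paper's interpretation is the one intended by the surrounding context (see \eqref{exCT}).
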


\begin{proof}
 Set $\Lambda := \bb{HL}_{\mc R}^{\mb a}$.  Recall $\Lambda$ is the
scheme parameterizing the level quotients of $\mc R$ with Hilbert
function $\mb a$; notice that these are the Gorenstein quotients of
socle degree $a$.  Note that \eqref{sbLev} implies $\Lambda$ is a
subscheme of $\bb P$.  So $\Lambda$ is a closed subscheme of some open
subscheme $V$ of $\bb P$.  Let's prove $\Lambda = V$.  Given $\lambda\in
\Lambda$, set $R := \Spec \mc O_{\Lambda,\lambda}$ and $T:= \Spec \mc
O_{V,\lambda}$.  It suffices to prove $T = R$.

Let $f \in (\mc R^\dg_{-a})_T$ represent the inclusion $T\into \bb P$.
For all $p$, form the map $\mu_p\: (\mc R_p)_T \to (\mc R^\dg_{p-a})_T$
of multiplication by $f$; cf.\ \eqref{sbLev}.  As $R\subset \Lambda$ and
as $\mb a$ is defined by \eqref{eqHFs}, it follows that $(\mu_p)_R$ is
surjective for $p\ge a/2$ and that it's injective with free cokernel for
$p\le a/2$.  Hence, so is $\mu_p$ by Nakayama's lemma and by the local
criterion of flatness.  Hence $T\into \bb P$ factors through $R$.  Thus
$T = R$.  Thus $\Lambda$ is an open subscheme of $\bb P$.

Similarly, set $\Lambda' := \bb{HL}_{\mc R}^{\mb h}$; then $\Lambda'$ is
an open subscheme of $\bb P(\mc R_s)$.  Fix any field $K$ containing a
residue field of $S$.  Then by virtue of the work of Iarrobino if $K$ is
infinite and of Boij if $K$ is finite\emdash see \eqref{sbPerm}\emdash
both $\Lambda'$ and $\Lambda$ have $K$-points; also, if $K$ is infinite,
then \eqref{coPowSum} with $s = a$ and $q = \mb r(\lfloor \frac
s2\rfloor)$ provides such points.

Set $\bb V := \bb V(\Cok\gamma)$ and $W := \bb P - \bb V$.  By
hypothesis, $\gamma$ is injective, and $\Cok\gamma$ is locally free.  Thus
$\bb V$ is a vector bundle (of fiber dimension $\mb r(a) - \mb r(s)$),
and $\gamma$ defines a central projection $\pi\: W \onto \bb P(\mc
R_s)$.
 
Set $U := \Lambda\cap \pi^{-1}\Lambda'$.  Both $\Lambda_K$ and
$\pi^{-1}_K\Lambda'_K$ are nonempty open subschemes of $\bb P_K$; so
$U_K$ is one too.  But, over an infinite base field, every nonempty
subscheme of a projective space has a rational point.  So if $K$ is
infinite, then $U_K$ has a $K$-point.  Thus $U$ satisfies (a).
Furthermore, by construction, $U$ satisfies (b).

Finally, \eqref{prCpdArt}(2) immediately yields $\mb h = \mb h_{\bar
s}^{\rm I}$, as desired.
 \end{proof}

\begin{example}\label{exCT} Often, \eqref{prCpdArt} gives a simple way
to see if, given an $S$-Gorenstein $\mc C$ in $\mb H\Psi_{\mc A}^{\mb
h, \mb t}$ for some $\mb h$, necessarily $\mb h = \mb h^{\rm I}_{\bar
s}$.  Here are some examples based on the literature; in all cases, $A$
is a quotient of a polynomial ring in $r$ variables over an
algebraically closed field $k$, and unless otherwise said, $\car(k) = 0$
or $\car(k) > s$.  Furthermore, $C$ is a general Gorenstein quotient of
$A$.

First, Iarrobino and Kanev \cite{LNM1721}*{Thm.\,4.16, p.\,111} take $A$
to be a complete intersection that's either general or monomial, and
take $s \le d-r$ where $d$ is the product of the degrees of the forms
defining $A$.  By deforming $A$ and $C$ so that the forms become
monomials, they conclude separately that $C_p = A_p$ for $p\le s/2$ and
that the Hilbert function of $C$ is $\mb h^{\rm I}_{\bar s}$; however,
owing to \eqref{prCpdArt}, the former implies the later in much greater
generality.

Second, Iarrobino and Kanev \cite{LNM1721}*{Lem.\,6.1, p.\,209} take $A$
to be the homogeneous coordinate ring of a finite subscheme $Z$ of the
affine $(r-1)$-space.  They assume that $Z$ is smooth or that $Z$ is
supported at a single point $p$ and the ideal of $Z$ in $\mc O_p$ is
homogeneous.  They show that, if $s \ge 2\tau$ where $\tau :=
\inf\{\,q\mid \dim_kA_q = \length Z\,\}$, then $C_p = A_p$ for $p\le
s/2$.

Cho and Iarrobino state in \cite{C-IJA241}*{Thm.\,2.1, a., p.\,753} and
prove in \cite{CI-JA366}*{Thm.\,3.3(i), p.\,68} that $Z$ need only be
Gorenstein.  Boij \cite{BoijBLMS31}*{Prp.\,2.4, p.\,12} proved that, if
$Z$ is smooth, then $k$ may be infinite of any characteristic;
furthermore, he specified the meaning of ``general.''

Again, \eqref{prCpdArt} yields $\mb h = \mb h^{\rm I}_{\bar s}$.  Cho
and Iarrobino note $\mb h = \mb h^{\rm I}_{\bar s}$ above Lem.\,2.3 on
p.\,753 in \cite{C-IJA241}, but as proof, they say only that it holds
since $\mb a(p)$ is nondecreasing.

 \end{example}

\section{I-compressed Homogeneous Quotients}

\begin{setup}\label{se9}
 Keep the setup of \eqref{se8}.  So $\{\mb g_m\}$ and $\{\mb h^{\rm
I}_m\}$ are the pre-I-set and I-set of $\mb t$.  Also, $\bar s
:= \inf\,\{\, p \mid \mb t(p) \neq 0\,\}$ and $s := \sup\{\, p \mid \mb
t(p) \neq 0\,\}$.  Finally, recall that the notion of ``permissible
socle type'' is defined in \eqref{sbPerm}.

Assume $\mb t$ is permissible.  Given $T/S$ and a multilevel quotient
$\mc C$ of $\mc B_T$, call $\mc C$ I-{\it compressed\/} if
 \indt{Icomp@I-compressed} its set of Hilbert functions is $\{\mb h^{\rm
I}_m\}$.  If $\mc C$ is homogeneous and I-compressed, then so is
$\Lambda^{\bar s+1}C$, as its set of Hilbert functions is the attendant
to $\{\mb h^{\rm I}_m\}$ by \eqref{prMLev}(3), so plainly it's equal to
the I-set of the attendant $\mb t'$ to $\mb t$.

Note, by definition, a homogeneous $\mc C$ is I-compressed iff $\mc C
\in \mb{H\Lambda}_{\mc B_T}^{\{\mb h^{\rm I}_m\}}$ Thus \eqref{coImax}
yields, under its hypotheses, that $\mc C$ is I-compressed iff it's
recursively compressed for $\mb t$.
  \end{setup}

\begin{proposition}\label{prMB}
 Assume $\mb t$ is permissible, $\mc A$ is a polynomial ring over $\mc
O_S$, and $S = \Spec k$ with $k$ a field.  Set $\mb h(p) := \mb h^{\rm
I}_{\bar s}(p+s)$ for all $p$ and $\mc B^\sh := \bigoplus_{q} \mc
A(q-s)^{\oplus \mb t(q)}$.

\(1) \(Cf.\ Miri \cite{MiriCA21}*{Prp.\,2.8, p.\,2846}) Assume $k$
infinite, $v\ge2$, and $\mc B := \mc A$.  Set $\wt{\mb t}(p) := \mb
t(s-p)$.  Then, in $\mb{H\Psi}_{\mc B^\sh}^{\mb h^*}$, there's $\mc D$
level of type $1$ and of local generator type $\wt{\mb t}$.

\(2) \(Cf.\ Boij \cite{BoijJA226}*{Prp.\,3.5, p.\,368}) Fix $n\ge 1$.
Assume $k$ arbitrary, and $\mc B := \mc A^{\oplus n}$.  Then, in
$\mb{H\Psi}_{\mc B^\sh}^{\mb h^*}$, there's $\mc D$ level of type $n$
with $\mc D^*$ level of type $\mb t(s)$.
 \end{proposition}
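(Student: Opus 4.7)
The plan is to deduce both statements from the existence theorems of Iarrobino--Fr\"oberg--Laksov (for (1)) and Boij (for (2), in the level case $\bar s = s$) recalled in \eqref{sbPerm}: namely, an I-compressed homogeneous quotient $\mc C$ of $\mc A$ (resp.\ of $\mc A^{\oplus n}$) with Hilbert function $\mb h^{\rm I}_{\bar s}$ exists. The module $\mc D$ is built as the image of a natural $\mc A$-map out of $\mc B^\sh$ whose components are a minimal homogeneous generating set of $\mc C^*$; this explicit construction lets us read the required properties of $\mc D$ directly off the corresponding properties of $\mc C$.

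Concretely, pick such a $\mc C$. By \eqref{leperm}(6), $\mc C$ has $S$-socle type $\mb t$, so by \eqref{deType} and \eqref{leSoc}, $\mc C^*$ has local generator type $\mb t^*$; hence (locally on $S$) $\mc C^*$ admits a minimal homogeneous generating set $\{f_{q,i}\}$ with $f_{q,i} \in (\mc A^\dg)_{-q}$ in (1) (resp.\ $f_{q,i} \in ((\mc A^\dg)^{\oplus n})_{-q}$ in (2)) for $\bar s\le q\le s$ and $1\le i\le \mb t(q)$. Define a homogeneous $\mc A$-map
\begin{equation*}
 \psi\: \mc B^\sh = \bigoplus_q \mc A(q-s)^{\oplus \mb t(q)}
    \To \mc A^\dg(-s)
   \quad\bigl(\text{resp.\ }(\mc A^{\oplus n})^\dg(-s)\bigr)
\end{equation*}
sending the canonical generator of the $(q,i)$-th summand $\mc A(q-s)$, which sits in degree $s-q$, to $f_{q,i}$, which sits in $\mc A^\dg(-s)_{s-q} = (\mc A^\dg)_{-q}$. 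Set $\mc D := \Im(\psi)$: a homogeneous quotient of $\mc B^\sh$ that coincides with $\mc C^*$ as an abstract $\mc A$-module but with grading shifted by $s$, so $\mc D_n = (\mc C^*)_{n-s}$. Then \eqref{prFilt}(3) yields $\mc D^* \cong \mc C(s)$ as graded $\mc A$-modules.

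With this identification, the three asserted properties fall out. (a) $\dim \mc D_n = \dim(\mc C^*)_{n-s} = \mb h^{\rm I}_{\bar s}(s-n) = \mb h^*(n)$, so $\mc D \in \mb H\Psi_{\mc B^\sh}^{\mb h^*}$. (b) In (1), $\mc C$ is $\mc A$-cyclic, so $\mc D^* \cong \mc C(s)$ is $\mc A$-cyclic, hence $\mc D$ is level of type $1$; in (2), the level case forces $\mb h^{\rm I}_{\bar s}(0) = n$, so $\mc C$ has exactly $n$ minimal $\mc A$-generators in degree $0$, whence $\mc D^* \cong \mc C(s)$ is level of type $n$, and dually $\mc D\otimes_{\mc A}k$ is concentrated in degree $0$ with dimension $\mb t(s)$, so $\mc D^*$ is level of type $\mb t(s)$. (c) For the local-generator-type claim in (1), minimality of $\{f_{q,i}\}$ means they form a basis of $\mc C^*\otimes_{\mc A}k = \mc D\otimes_{\mc A}k$ (after the shift); the surjection $\mc B^\sh\otimes_{\mc A}k\onto \mc D\otimes_{\mc A}k$ therefore sends basis to basis and is an isomorphism, giving $\mc D\otimes_{\mc A}k$ the Hilbert function of $\mc B^\sh\otimes_{\mc A}k$, namely $\wt{\mb t}$.

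The main obstacle is the degree-shift bookkeeping: arranging $\psi$ to be degree-preserving and verifying that the relation $\mc D_n = (\mc C^*)_{n-s}$ lines up precisely with $\mb h^*$, the claimed level types, and (in (1)) $\wt{\mb t}$. A secondary point is that the minimality of the $f_{q,i}$ as generators of $\mc C^*$, and the resulting dimension match on $\otimes_{\mc A}k$, must be invoked to see that the induced generators of $\mc D$ are themselves minimal, which underlies (c) and the dual part of (b) in (2).
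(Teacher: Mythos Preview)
Your proposal is correct and takes essentially the same approach as the paper: start from an I-compressed $\mc C$ (via Iarrobino/Fr\"oberg--Laksov in (1), Boij in (2)), set $\mc D := \mc C^*(-s)$, and read off the Hilbert function, level type, and local generator type from the corresponding properties of $\mc C$ and $\mc C^*$. The only cosmetic difference is order of exposition: the paper defines $\mc D := \mc C^*(-s)$ directly and then chooses a minimal generating set of $\mc C^*$ to exhibit $\mc D$ as a quotient of $\mc B^\sh$, whereas you build the surjection $\psi\colon \mc B^\sh \to \mc A^\dg(-s)$ first and then identify $\Im(\psi)$ with $\mc C^*(-s)$; these are the same construction.
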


\begin{proof}
  As recalled in \eqref{sbPerm}, by Iarrobino's work in the setup of
(1), and by Boij's work in the setup of (2), there's
$\mc C$ in $\mb{H\Lambda}_{\mc B}^{\{\mb h^{\rm I}_m\}}$ of $S$-socle
type $\mb t$.

Set $\mc D := \mc C^*(-s)$.  Then $\mc D^* = \mc C(s)$.  So $\mc D^*$ is
of $S$-socle type $(\wt{\mb t})^*$.  In (2), thus $\mc D^*$ is level of
type $\mb t(s)$.  And in (1), thus $\mc D$ is of local generator type
$\wt{\mb t}$.

Note $\mc C$ is a quotient of $\mc B$, and $\mc B := \mc A^{\oplus n}$,
with $n = 1$ in (1).  Hence $\mc D$ is level of $S$-socle type $n$.
Now, $\mb h^{\rm I}_{\bar s}$ is the Hilbert function of $\mc C$ by the
end of \eqref{se7}.  So $\mb h^*$ is the Hilbert function of $\mc D$.

Note $\mc C$ is of $S$-socle type $\mb t$.  So $\mc D(s)$ is of local
generator type $\mb t^*$.  But $k$ is a field.  So $\mc D(s)$ has a set
of homogeneous generators with $\mb t(p)$ elements in degree $-p$.  Use
them to express $\mc D(s)$ as a homogeneous quotient of $\mc B^\sh(s)$.
Thus $\mc D \in \mb{H\Psi}_{\mc B^\sh}^{\mb h^*}$.
 \end{proof}

\begin{theorem}\label{prSmOp}
  Assume $\mb t$ permissible.  Set $\mb H := \sum_p\mb
t(p)\bigl(\mb b(p) - \mb h^{\rm I}_{\bar s}(p)\bigr)$.

\(1) Then $\bb H\Lambda_{\mc B}^{\{\mb h^{\rm I}_m\}}$ is an $S$-smooth
open subscheme of\/ $\bb H\Psi_{\mc B}^{\mb h_{\bar s}}$, but
possibly empty.

\(2) Then $\bb H\Lambda_{\mc B}^{\{\mb h^{\rm I}_m\}}$ is covered by
open subschemes, each one isomorphic to an open subscheme of the affine
space over $S$ of fiber dimension $\mb H$.
 \end{theorem}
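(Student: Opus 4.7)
The plan is to induct on the diameter $s - \bar s$ of $\mb t$, using the recursive construction of $L := \bb{HL}_{\mc B}^{\{\mb h^{\rm I}_m\}}$ from \eqref{sbMLSch} together with its identification with $\bb H\Lambda_{\mc B}^{\{\mb h^{\rm I}_m\}}$ via \eqref{coMLevS}. I would write $\{(\mb h^{\rm I}_m)'\}$ for the attendant to $\{\mb h^{\rm I}_m\}$: inspection of \eqref{eqsbImax} shows this is the I-set of the attendant socle type $\mb t'$, which remains permissible by \eqref{leperm}(3). Set $L' := \bb H\Lambda_{\mc B}^{\{(\mb h^{\rm I}_m)'\}}$ (with $L' := S$ when $\bar s = s$), and assume the theorem for $\mb t'$ inductively. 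By \eqref{sbMLSch}, then $L = \bb{HL}_{\mc B'}^{\bar{\mb h}}$ over $L'$, where $\mc B' := \Ker(\mc B_{L'} \onto \mc L')$ has Hilbert function $\mb b' = \mb b - \mb h^{\rm I}_{\bar s+1}$ and $\bar{\mb h} := \mb h^{\rm I}_{\bar s} - \mb h^{\rm I}_{\bar s+1}$.

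The crux will be to verify that $\bar{\mb h}$ is maximal for $\mc B'$ and $t := \mb t(\bar s)$ in the sense of \eqref{sbLev}, i.e., $\bar{\mb h}(p) = \min\{\mb t(\bar s)\mb a(\bar s - p),\ \mb b'(p)\}$ for all $p$. After substituting the definitions, this reduces to
\begin{equation*}
\mb h^{\rm I}_{\bar s}(p) = \min\{\,\mb t(\bar s)\mb a(\bar s - p) + \mb h^{\rm I}_{\bar s+1}(p),\ \mb b(p)\,\},
\end{equation*}
which I would prove by a short case analysis on whether $\mb h^{\rm I}_{\bar s}(p)$ and $\mb h^{\rm I}_{\bar s+1}(p)$ attain their $\mb g$-bound or their $\mb b$-bound, invoking the identity $\mb g_{\bar s}(p) - \mb g_{\bar s+1}(p) = \mb t(\bar s)\mb a(\bar s - p)$ from \eqref{eqleperm1} and the monotonicity $\mb g_{\bar s+1}(p) \le \mb g_{\bar s}(p)$; the latter rules out the would-be pathological case $\mb h^{\rm I}_{\bar s+1}(p) = \mb b(p) < \mb g_{\bar s+1}(p)$ paired with $\mb h^{\rm I}_{\bar s}(p) = \mb g_{\bar s}(p)$.

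Granted maximality, each Fitting locus $L_p$ in \eqref{sbLev} becomes an open condition by upper semicontinuity of rank, so $L$ is an open subscheme of the Grassmannian $\bb G := \Grass_{\mb t(\bar s)}(\mc B'_{\bar s})$ over $L'$, hence covered over $L'$ by open subschemes of affine space of fiber dimension $\mb t(\bar s)(\mb b'(\bar s) - \mb t(\bar s)) = \mb t(\bar s)(\mb b(\bar s) - \mb h^{\rm I}_{\bar s}(\bar s))$; I would rewrite here via $\mb t(\bar s) = \mb h^{\rm I}_{\bar s}(\bar s) - \mb h^{\rm I}_{\bar s+1}(\bar s)$ from \eqref{se7}(1), which holds by \eqref{leperm}(5). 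Combining with the inductive structure of $L' \to S$ and the identity $\mb h^{\rm I}_p(p) = \mb h^{\rm I}_{\bar s}(p)$ (immediate from \eqref{eqsbImax}, since $\mb a(q - p) = 0$ for $q < p$) yields total fiber dimension $\sum_p \mb t(p)\bigl(\mb b(p) - \mb h^{\rm I}_{\bar s}(p)\bigr) = \mb H$, proving (2). For (1), $S$-smoothness is immediate from (2), while openness in $\bb H\Psi_{\mc B}^{\mb h^{\rm I}_{\bar s}}$ will follow from \eqref{prhImax}: the universal bound $\rank(\Delta^m \mc D)_{-p} \le \mb h^{\rm I}_m(p)$ on the socle-type stratum $\bb H\Psi_{\mc B}^{\mb h^{\rm I}_{\bar s}, \mb t}$ makes $\bb H\Lambda_{\mc B}^{\{\mb h^{\rm I}_m\}}$ open in that stratum by upper semicontinuity, while openness of the stratum itself reduces through the recursion to the level base case handled by \eqref{prLevS}. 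The hard part will be the maximality identity in the second paragraph; the openness propagation is the subtler point requiring careful bookkeeping.
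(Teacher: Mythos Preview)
Your approach to (2) is correct and in fact more direct than the paper's. The combinatorial identity $\mb h^{\rm I}_{\bar s}(p) = \min\{\mb t(\bar s)\mb a(\bar s - p) + \mb h^{\rm I}_{\bar s+1}(p),\ \mb b(p)\}$ does hold (your case analysis works), and it makes each rank locus $L_p$ open in the Grassmannian $\bb G$ over $L'$, so the recursive description gives the affine-space covering immediately. The paper instead proves (1) first by a direct lifting argument (the Infinitesimal Criterion, plus the dichotomy that the generator maps $u^m_p\colon (\Delta^m\mc E_R)_{-p}\to(\mc B_R^\dg)_{-p}$ are either surjective or split injective according to whether $\mb h^{\rm I}_m(p)$ equals $\mb b(p)$ or $\mb g_m(p)$), and then derives (2) by using the resulting $S$-flatness to reduce to residue fields, where \eqref{prImax} and the proof of \eqref{thMax} supply openness in $\bb G$. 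Your route buys independence of (2) from (1); the paper's route buys a uniform argument that handles openness and smoothness simultaneously.

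However, your argument for openness in (1) has a genuine gap. You factor ``open in $\bb H\Psi_{\mc B}^{\mb h^{\rm I}_{\bar s}}$'' as ``open in the socle-type stratum $\bb H\Psi_{\mc B}^{\mb h^{\rm I}_{\bar s},\mb t}$'' plus ``stratum open in $\bb H\Psi_{\mc B}^{\mb h^{\rm I}_{\bar s}}$''. The first step is fine, but the second is not justified: the recursion you have set up lives on the multilevel schemes $\bb H\Lambda_{\mc B}^{\{\mb h_m\}}$, not on the socle-type strata, and there is no forgetful map $\bb H\Psi_{\mc B}^{\mb h^{\rm I}_{\bar s},\mb t}\to\bb H\Psi_{\mc B}^{\mb h^{\rm I}_{\bar s+1},\mb t'}$ in general---this is precisely the error in \cite{Iar84} discussed in \eqref{reErr}. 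Nor does \eqref{prLevS} help outside the level case. The paper avoids the stratum entirely: it shows directly that for any point of $\bb H\Lambda_{\mc B}^{\{\mb h^{\rm I}_m\}}$, the universal quotient on the local ring of $\bb H\Psi_{\mc B}^{\mb h^{\rm I}_{\bar s}}$ at that point already lies in $\mb H\Lambda^{\{\mb h^{\rm I}_m\}}$, by lifting a minimal generating set and invoking the surjective/split-injective dichotomy for each $v^m_p$. You could repair your argument by transplanting that dichotomy---which is close in spirit to your maximality identity---to work directly on $\bb H\Psi_{\mc B}^{\mb h^{\rm I}_{\bar s}}$ rather than routing through the stratum.
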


\begin{proof}
 {\bf For (1),} set $\Lambda := \bb H\Lambda_{\mc B}^{\{\mb h^{\rm
I}_m\}}$.  To prove $\Lambda$ is $S$-smooth, use the Infinitesimal
Criterion \cite{EGAIV}*{(17.14.1), p.\,98}.  Given a local $S$-scheme
$T$, a closed subscheme $R$ of $T$, and an $S$-map $\rho\: R \to
\Lambda$, we must lift $\rho$ to an $S$-map $\tau\: T \to \Lambda$.

Say $\rho$ corresponds to $\mc C \in \mb H\Lambda_{\mc B_R}^{\{\mb
h^{\rm I}_m\}}$.  Set $\mc D := \mc C^* \subset \mc B_R^\dg$ and $\mc E
:= \bigoplus_{q\in\Z} \mc A(q)^{\oplus \mb t(q)}$.  Since $\mb t$ is
permissible, $\mc C$ is of $R$-socle type $\mb t$ by \eqref{leperm}(6).
So $\mc D$ is of local generator type $\mb t^*$.  But $R$ is a local
scheme.  So $\mc D$ has a set, $\Gamma$ say, of homogeneous generators
with precisely $\mb t(p)$ elements in degree $-p$.  Use these generators
to define an $\mc A_R$-map $u\: \mc E_R \to \mc B_R^\dg$ of degree $0$
with image $\mc D$.

For each $m$ and $p$, form the induced map $u^m_p\: (\Delta^m \mc
E_R)_{-p} \to (\mc B_R^\dg)_{-p}$.  Notice $\Im(u^m_p) = (\Delta^m \mc
D)_{-p}$.  But $\mc C \in \mb{H\Lambda}_{\mc B_R}^{\{\mb h^{\rm I}_m\}}$; so
$(\Delta^m \mc D)_{-p}$ is $R$-free of rank $\mb h^{\rm I}_m(p)$, and is a
direct summand of $(\mc B_R^\dg)_{-p}$.  Also $(\Delta^m \mc E_R)_{-p}$
is free of rank $\mb g_m(p)$, and $(\mc B_R^\dg)_{-p}$ is free of rank
$\mb b(p)$.  By hypothesis, \eqref{eqsbImaxh} holds.  Thus either $\mb
h^{\rm I}_m(p) = \mb b(p)$ and $u^m_p$ is surjective, or $\mb h^{\rm I}_m(p) = \mb
g_m(p)$ and $u^m_p$ is injective (as $(\Delta^m \mc E_R)_{-p} \onto
\Im(u^m_p)$ is a surjection between free modules of the same rank) and
$\Cok u^m_p$ is $R$-free.

As $R$ is closed in the local scheme $T$, the elements of $\Gamma$ lift
to homogeneous sections of $\mc B_T^\dg$.  Use them to define an $\mc
A_T$-map $v\: \mc E_T \to \mc B_T^\dg$.  For each $m$ and $p$, form the
induced map $v^m_p\: (\Delta^m \mc E_T)_{-p} \to (\mc B_T^\dg)_{-p}$.
Plainly $v^m_p|R = u^m_p$.  Thus, by Nakayama's Lemma, if $u^m_p$ is
surjective, so is $v^m_p$, and by the Local Criterion of Flatness for
finitely generated modules, if $u^m_p$ is injective and $\Cok u^m_p$ is
$R$-free, then $v^m_p$ is injective and $\Cok v^m_p$ is $T$-free.

Set $\mc D' := \Im v \subset \mc B_T^\dg$.  Then $(\Delta^m\mc D')_{-p}
= \Im v^m_p$.  Hence, the conclusion of the preceding paragraph implies
$(\Delta^m \mc D')_{-p}$ is $T$-free of rank $h^{\rm I}_m(p)$.  Moreover, $\mc
(B_T^\dg)_{-p} \big/ (\Delta^m \mc D')_{-p}$ is $T$-free.  But $\mc D'
=\Delta^m\mc D'$ if $m\ll 0$.  So the exact sequence
\begin{equation*}\label{eqSmO1}
 0 \to \mc D'_{-p} \big/ (\Delta^m \mc D')_{-p}
     \to \mc (B_T^\dg)_{-p} \big/ (\Delta^m \mc D')_{-p}
      \to \mc (B_T^\dg)_{-p} \big/ \mc D'_{-p} \to 0
 \end{equation*}
 shows that $\mc D'_{-p} \big/ (\Delta^m \mc D')_{-p}$ is free.  Thus
$\mc D'^* \in \mb H\Lambda_{\mc B_T}^{\{\mb h^{\rm I}_m\}}$ and $\mc D'^*|R =
\mc C$.

Say $\mc D'^*$ corresponds to $\tau\: T \to \Lambda$.  Then $\tau|R =
\rho$.  Thus $\Lambda$ is smooth.

Next, recall $\Lambda$ is a subscheme of $\bb H\Psi_{\mc B}^{\mb
h^{\rm I}_{\bar s}}$ by \eqref{prMLevS}.  So $\Lambda$ is a closed subscheme of
some open subscheme $U$ of $\bb H\Psi_{\mc B}^{\mb h^{\rm I}_{\bar s}}$.  Let's
prove $\Lambda = U$.  Given $\lambda\in \Lambda$, set $R := \Spec \mc
O_{\Lambda,\lambda}$ and $T:= \Spec \mc O_{U,\lambda}$.  It suffices to
prove $T = R$.

Let $\mc C' \in \mb {H\Psi}_{\mc B_T}^{\mb h^{\rm I}_{\bar s}}$ correspond to the
inclusion $T \into \bb H\Psi_{\mc B}^{\mb h^{\rm I}_{\bar s}}$.  It suffices to
prove $\mc C' \in \mb H\Lambda_{\mc B_T}^{\{\mb h^{\rm I}_m\}}$.  Indeed,
then the inclusion $T \into \bb H\Psi_{\mc B}^{\mb h^{\rm I}_{\bar s}}$ factors
through $\Lambda$.  However, $T\cap \Lambda = R$.  Thus $T = R$, as
desired.

Set $\mc C := \mc C'|R$.  Then $\mc C$ corresponds to the inclusion $R
\into \Lambda$ because $\Lambda \subset \bb H\Psi_{\mc B}^{\mb
h^{\rm I}_{\bar s}}$.  Hence $\mc C \in \mb H\Lambda_{\mc B_R}^{\{\mb h^{\rm I}_m\}}$.
Set $\mc D' := \mc C'^*$ and $\mc D := \mc C^*$.

As above, $\mc D$ has a set $\Gamma$ of homogeneous generators with
precisely $\mb t(p)$ elements in degree $-p$.  Note $\mc D' |R = \mc D$.
So the elements of $\Gamma$ lift to homogeneous sections of $\mc D'
\subset \mc B_T^\dg$.  Use them to define an $\mc A_T$-map $v\: \mc E_T
\to \mc B_T^\dg$ whose image is $\mc D'$.

For each $m$ and $p$, form the induced map $v^m_p\: (\Delta^m \mc
E_T)_{-p} \to (\mc B_T^\dg)_{-p}$.  As above, either $\mb h^{\rm I}_m(p) = \mb
b(p)$ and $v^m_p$ is surjective, or $\mb h^{\rm I}_m(p) = \mb g_m(p)$ and
$v^m_p$ is injective and $\Cok v^m_p$ is $T$-free.  But $\Im v^m_p =
(\Delta^m\mc D')_{-p}$.  Thus $\mc C' \in \mb H\Lambda_{\mc
B_T}^{\{\mb h^{\rm I}_m\}}$.   Thus (1) holds.

 {\bf {\bf For (2),}} first note $\bb H\Lambda_{\mc B}^{\{\mb h^{\rm
I}_m\}} = \bb{HL}_{\mc B}^{\{\mb h^{\rm I}_m\}}$ and $\bb H\Lambda_{\mc
B}^{\{(\mb h^{\rm I}_m)'\}} = \bb{HL}_{\mc B}^{\{(\mb h^{\rm I}_m)'\}}$
by \eqref{coMLevS}.  So let's adapt the proof of \eqref{thMax}, and so
also that of \eqref{sbMLSch}; let's use their notation.  Recall $L' :=
S$ if $\bar s = s$, and $L' := \bb{HL}^{\{(\mb h^{\rm I}_m)'\}}_{\mc B}$
if $\bar s < s$.  Note $L'\neq \emptyset$ as $\bb H\Lambda_{\mc
B}^{\{\mb h^{\rm I}_m\}}\neq \emptyset$.  Thus, by recursion, $L'$ is
covered by open subschemes, each one isomorphic to an open subscheme of
an affine space over $S$.

From \eqref{eqMLSch3}, recall $\bb{HL}^{\{\mb h^{\rm I}_m\}}_{\mc B} :=
L_{\mc B'}^{\?{\mb h}}$ where $\mc B' := \Ker(\mc B_{L'}\onto \mc L')$
and where $\?{\mb h} := \mb h^{\rm I}_{\bar s} - \mb h^{\rm I}_{\bar
s+1}$.  By construction, $L_{\mc B'}^{\?{\mb h}}$ is a closed subscheme
of an open subscheme, $U$ say, of $\bb G := \Grass_{\mb t(\bar s)}(\mc
B'_s)$.  Thus, to prove $\bb{HL}^{\{\mb h^{\rm I}_m\}}_{\mc B}$ is
covered by open subschemes, each one isomorphic to an open subscheme of
an affine space over $S$, it suffices to prove $L_{\mc B'}^{\?{\mb h}} =
U$ assuming $ L_{\mc B'}^{\?{\mb h}} \neq \emptyset$.

Let $\mc I$ be the ideal of $L_{\mc B'}^{\?{\mb h}}$ in $\bb G$.  Then
$L_{\mc B'}^{\?{\mb h}} = U$ if $\mc I = 0$ when $L_{\mc B'}^{\?{\mb h}}
\neq \emptyset$.

Note $\mc I$ is $S$-flat, as $L_{\mc B'}^{\?{\mb h}}$ is $S$-flat by
(1).  So $\mc I_T$ is the ideal of $L_{\mc B'_T}^{\?{\mb h}}$ in $U_T$
for any $T/S$ with $L_{\mc B'_T}^{\?{\mb h}} \neq \emptyset$.  Thus by
Nakayama's Lemma, to prove $\mc I = 0$, it suffices to prove $\mc I_T =
0$ when $T := \Spec K$ where $K$ is any residue field of $S$ with
$L_{\mc B'_T}^{\?{\mb h}} \neq \emptyset$.

Note $\Spec K$ is reduced and irreducible.  Note $\mb t$ is
quasi-permissible by \eqref{leperm}(3).  Also note $\{\mb h^{\rm I}_m\}$
is the (unique) recursively maximal set for $\mb t$ and $S$ as follows.
Take a point in $\bb H\Lambda_{\mc B}^{\{\mb h^{\rm I}_m\}}$.  Let $K'$
be an algebraically closure of its residue field, and set $T' := \Spec
K'$.  The map $T'\to S$ corresponds to some $\mc C'$ in
$\mb{H\Lambda}_{\mc B_{T'}}^{\{\mb h^{\rm I}_m\}}$.  Then $\mc C'$ is of
$T'$-socle type $\mb t$ by \eqref{leperm}(4).  Thus $\{\mb h^{\rm
I}_m\}$ is, by \eqref{prImax}, the recursively maximal set.

Thus \eqref{thMax} applies.  It's proof shows $\bb G_T$ is reduced and
irreducible, and contains $L_{\mc B'_T}^{\?{\mb h}}$ as a nonempty open
subscheme.  But $L_{\mc B'_T}^{\?{\mb h}}$ is closed in $U_T$, and
$U_T$ is open in $\bb G_T$.  So $L_{\mc B'_T}^{\?{\mb h}}$ is nonempty,
open and closed in $U_T$, which is reduced and irreducible, So $L_{\mc
B'_T}^{\?{\mb h}} = U_T$.  Thus $\mc I_T=0$, as desired.

Finally, \eqref{thMax} implies $\bb{HL}^{\{\mb h^{\rm I}_m\}}_{\mc B_T}
$ is of fiber dimension $\mb H$.  Thus (2) holds.
 \end{proof}

\begin{proposition}\label{prlift}
 Given $x \in S$, let $K$ be  its residue field $k(x)$.
Then, given $\mc C$ in $\mb H\Lambda_{\mc B_K}^{\{\mb h^{\rm I}_m\}}$ of
$K$-socle type $\mb t$, there's an open neighborhood $T$ of $x$ and a
$\mc C' $ in $\mb H\Lambda_{\mc B_T}^{\{\mb h^{\rm I}_m\}}$ of $T$-socle
type $\mb t$ with $\mc C'_K = \mc C$.
 \end{proposition}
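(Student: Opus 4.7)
The plan is to use the geometric structure theorem \eqref{prSmOp}(2), which says that $\bb H\Lambda_{\mc B}^{\{\mb h^{\rm I}_m\}}$ is covered by open subschemes, each one $S$-isomorphic to an open subscheme of the affine space over $S$ of fiber dimension $\mb H$. The idea is that the $K$-point $\mc C$ corresponds to a $K$-point $c$ of $\bb H\Lambda_{\mc B}^{\{\mb h^{\rm I}_m\}}$ lying over $x$, and since locally around $c$ the scheme is literally an open piece of affine space over $S$, we can extend $c$ across a Zariski neighborhood of $x$ simply by lifting coordinates from $K$ to $\mc O_{S,x}$ and spreading out.

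In detail, first I would translate $\mc C$ into its classifying map: by \eqref{prMLevS}, $\mc C$ corresponds to a unique $S$-map $c\:\Spec(K) \to \bb H\Lambda_{\mc B}^{\{\mb h^{\rm I}_m\}}$ over $x\in S$. Apply \eqref{prSmOp}(2) to obtain an open neighborhood $V$ of the image point of $c$ together with an $S$-open immersion $\iota\: V \into \bb A^{\mb H}_S$. Setting $N := \mb H$, the composite $\iota\circ c$ is determined by $N$ scalars $a_1,\dotsc,a_N\in K$ (the pullbacks of the coordinate functions on $\bb A^N_S$).

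Next I would lift and spread out. Choose lifts $\tilde a_i\in \mc O_{S,x}$ of the $a_i$; each $\tilde a_i$ extends to a section $a_i'\in \Gamma(U,\mc O_S)$ on some open neighborhood $U$ of $x$, and after intersecting finitely many such $U$ we may take a single $U$ working for all $i$. The tuple $(a_1',\dotsc,a_N')$ defines an $S$-map $\vf\: U \to \bb A^N_S$. Since the fiber of $\vf$ at $x$ is exactly the $K$-point $(a_1,\dotsc,a_N) = \iota(c(\Spec K)) \in V$, and since $V$ is open in $\bb A^N_S$, the preimage $T := \vf^{-1}(V)$ is an open subscheme of $U$ containing $x$, hence an open neighborhood of $x$ in $S$. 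The restriction $\vf|_T\: T \to V \subset \bb H\Lambda_{\mc B}^{\{\mb h^{\rm I}_m\}}$ corresponds via \eqref{prMLevS} to a quotient $\mc C' \in \mb H\Lambda_{\mc B_T}^{\{\mb h^{\rm I}_m\}}$. That $\mc C'_K = \mc C$ is automatic from the construction: composing $\Spec(K) \to T$ with $\vf|_T$ reproduces $c$, because the reductions of the $\tilde a_i$ modulo $\go m_x$ recover the $a_i$. Finally, that $\mc C'$ is of $T$-socle type $\mb t$ follows from \eqref{leperm}(6), since $\mb t$ is permissible by the blanket hypothesis of \eqref{se9}.

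There is no real obstacle here beyond organizing the bookkeeping; the nontrivial geometric content is entirely packaged into \eqref{prSmOp}(2). The only step requiring any care is the final verification that the lifted map $\vf$ lands in $V$ on a Zariski (not merely infinitesimal or étale) neighborhood of $x$, and this is immediate from openness of $V$ together with continuity of $\vf$.
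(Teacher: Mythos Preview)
Your argument is correct and takes a genuinely different route from the paper's own proof.

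The paper argues directly at the level of Macaulay duals: it takes $\mc D := \mc C^* \subset \mc B_K^\dg$, chooses a minimal homogeneous generating set for $\mc D$ (possible since $K$ is a field), lifts those generators to sections of $\mc B^\dg$ over an open neighborhood $T$ of $x$, and then uses them to define a map $v\:\mc E_T \to \mc B_T^\dg$ with $\mc E := \bigoplus_q \mc A(q)^{\oplus \mb t(q)}$. The key step is showing (after shrinking $T$) that for each $m,p$ the induced map $v^m_p\:(\Delta^m\mc E_T)_{-p}\to(\mc B_T^\dg)_{-p}$ is either surjective or injective with free cokernel, according to which alternative in \eqref{eqsbImaxh} governs $\mb h^{\rm I}_m(p)$; this is the same dichotomy used in the proof of \eqref{prSmOp}(1). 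Setting $\mc D' := \Im v$ and $\mc C' := \mc D'^*$ finishes the construction.

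Your approach bypasses all of this by treating \eqref{prSmOp}(2) as a black box: once $\bb H\Lambda_{\mc B}^{\{\mb h^{\rm I}_m\}}$ is known to be locally an open piece of affine space over $S$, lifting a residue-field point to a section on a neighborhood is elementary spreading-out. This is shorter and more conceptual, but it hides the explicit description of the lift that the paper's proof supplies. The paper's direct argument is essentially a reprise of the smoothness proof in \eqref{prSmOp}(1), adapted from an infinitesimal to a Zariski-local setting; your argument exploits the stronger structural conclusion of \eqref{prSmOp}(2) instead. Since \eqref{prSmOp} precedes \eqref{prlift} and does not depend on it, there is no circularity.
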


\begin{proof}
 Proceed as in the first part of the proof of \eqref{prSmOp}(1).  Set
$\mc D := \mc C^* \subset \mc B_K^\dg$ and $\mc E := \bigoplus_{q\in\Z}
\mc A(q)^{\oplus \mb t(q)}$.  By hypothesis, $\mc C$ is of $K$-socle
type $\mb t$; so $\mc D$ is of generator type $\mb t^*$.  But $K$ is a
field.  So $\mc D$ has a set of homogeneous generators with precisely
$\mb t(p)$ elements in degree $-p$.  Lift these generators to sections
of $\mc B^\dg$ over some open subscheme $T$ of $S$.  Use these sections
to define an $\mc A_T$-map $v\: \mc E_T \to \mc B_T^\dg$.

For each $m$ and $p$, form the induced map $v^m_p\: (\Delta^m \mc
E_T)_{-p} \to (\mc B_T^\dg)_{-p}$.  Essentially as in the proof of
\eqref{prSmOp}(1), it can be shown that, after $T$ is replaced by a
smaller open subscheme, either $\mb h^{\rm I}_m(p) = \mb b(p)$ and $v^m_p$ is
surjective, or $\mb h^{\rm I}_m(p) = \mb g_m(p)$ and $v^m_p$ is injective and
$\Cok v^m_p$ is $T$-free.

Set $\mc D' := \Im v \subset \mc B_T^\dg$.  Then $(\Delta^m\mc D')_{-p}
= \Im v^m_p$.  So $(\Delta^m \mc D')_{-p}$ is $T$-free of rank $h^{\rm
I}_m(p)$.  Also, $\mc D'_{-p} \big/ (\Delta^m \mc D')_{-p}$ is free.
Set $\mc C' := \mc D'^*$.  Thus $\mc C' \in \mb H\Lambda_{\mc
B_T}^{\{\mb h^{\rm I}_m\}}$ and $\mc C'_K = \mc C$.  By construction,
$\mc D'$ is of generator type $\mb t^*$; so $\mc C'$ is of $T$-socle
type $\mb t$.
 \end{proof}

\begin{example}\label{exNotAll}
 Even if $\mb t$ is permissible, $\{\mb h^{\rm I}_m\}$ is its I-set, $S
= \Spec k$ with $k$ any field, and $\mc A$ is the sheaf on $S$
associated to a polynomial ring $A$, nevertheless $\bb H\Psi_{\mc
A}^{\mb h^{\rm I}_{\bar s}, \mb t} - \bb H\Lambda_{\mc A}^{\{\mb h^{\rm
I}_m\}}$ may be nonempty, even contain $k$-points.  Here's an example.

Fix variables $X$, $Y$; set $A := k[X,Y]$.  So $\mb a(p) = p+1$.  Take
$s := 5$ and $\bar s := 4$; take $\mb t(s) := 1$ and $\mb t(\bar s) :=
1$.  Then the vectors of nonzero values of $\mb h^{\rm I}_{\bar s}$ and
$\mb h^{\rm I}_s$ are
 \begin{equation*}\label{eqNotAll1}
  \mb h^{\rm I}_s\,:\,(1,2,3,3,2,1)
    \and \mb h^{\rm I}_{\bar s}\,:\,(1,2,3,4,3,1).
 \end{equation*}

To see $\mb t$ is permissible, let's use \eqref{leperm}(2).  For $p \le
4$, note that $\mb g_p = \mb g_4$.  Hence $\mb g_p(p) = \mb a(4-p) + \mb
a(5-p) = 11-2p$.  So $\mb a(p) - \mb g_p(p) = 3p - 10$.  Thus $b = 4$.
But $\mb t(p) = 0$ for $p < 4$.  Finally, recall from \eqref{sbPerm}
that \eqref{sbPerm}(d) is automatic in the current setup.  Thus $\mb t$
is permissible.

 As in \eqref{exPolyRg}, view $A^\dg$ as the $k$-span of the Laurent
monomials $X^{-m}Y^{-n}$.  Set
 \begin{equation*}\label{eqNotAll2}
 F := X^{-5}+Y^{-5}  \and  G := X^{-2}Y^{-2}.
 \end{equation*}
Set $D := AF+AG \subset A^\dg$.  It's easy to check that
 \begin{equation*}\label{eqNotAll3}
               D = kF + (kX^{-4} + kY^{-4} + kG)
                     + A^\dg_{-3} + A^\dg_{-2} + A^\dg_{-1}+ k.
  \end{equation*}
 So $D$ has Hilbert function $(\mb h^{\rm I}_{\bar s})^*$.  Thus $D^*
\in \mb H \Psi_{\mc A}^{\mb h^{\rm I}_{\bar s}}$.  Also, $D^*$ is of
$k$-socle type $\mb t$.
 
 Note $\Delta^sD = AF$.  So it's easy to check that
 \begin{equation*}\label{eqNotAll4} \ts
 \Delta^sD = kF + \sum_{i=1}^4 (kX^{-i} + kY^{-i}) + k.
  \end{equation*}
 So $\Delta^sD$ doesn't have $(\mb h^{\rm I}_s)^*$ as Hilbert function.  Thus
$D^*\notin \mb H \Lambda_{\mc A}^{\{\mb h^{\rm I}_m\}}$, as desired.

In passing, note $ \bb H\Lambda_{\mc A}^{\{\mb h^{\rm I}_m\}}$ is smooth
of fiber dimension $1\cdot (5-3) + 1\cdot (6-1)$, or 7, by
\eqref{prSmOp}(2).  In fact, it contains $k$-points.  For instance, set
$F' := F + X^{-4}Y^{-1}$ and $D' := AF'+AG$.  It's easy to check that
the Hilbert functions of $D'$ and $AF'$ are $\mb h^{\rm I}_{\bar s}$ and
$\mb h^{\rm I}_s$.  But $\Delta^sD' = AF'$.  Thus $D'^*$ is in $\mb H
\Lambda_{\mc A}^{\{\mb h^{\rm I}_m\}}$, as desired.
 \end{example}

\begin{remark}\label{reErr}
 Consider Prop.\,3.6 with $V=0$ on p.\,356 of \cite{Iar84} and its dual
formulation, Thm.\,IIB on p.\,351.  Let's correct an error in their
assertions.

For a moment, let $\{\mb h_m\}$ be an arbitrary set of functions, and
let $\{\mb h_m'\}$ and $\mb t'$ be the attendants; see \eqref{se7}.
Then there's a ``forgetful" map
 \begin{equation*}\label{eqErr1}
                \pi\: \bb H\Lambda_{\mc B}^{\{\mb h_m\}}
                         \to \bb H\Lambda_{\mc B}^{\{\mb h'_m\}};
 \end{equation*}
 it is given on $T$-points by mapping a $\mc C \in \mb {H\Lambda}_{\mc
B_T}^{\{\mb h_m\}}$ to $\Lambda^{\bar s+1}\mc C$.  Dually, $\pi$ maps
$\mc C^*$ to $\Delta^{\bar s+1}(\mc C^*)$; that is, $\pi$ ``forgets" the
local minimal generators of $\mc C^*$ of degree $-\bar s$.

Assume $\mb t$ is permissible, and $\{\mb h_m\}$ is the I-set $\{\mb
h^{\rm I}_m\}$.  Then the proof of \eqref{prSmOp}(2) shows $\pi$ is
smooth.

In addition, assume $\mc A$ is the symmetric algebra on a locally free
$\mc O_S$-module.  And assume $\mc B = \mc A$, or in the level case
(namely, if $\bar s = s$), assume $\mc B = \mc A^{\oplus n}$ for some $n
\ge 1$.  Then, applied fiberwise, \eqref{coImax} implies each fiber of
$\bb H\Lambda_{\mc B}^{\{\mb h^{\rm I}_m\}}\big/ S$ is nonempty.  Thus
the proof of \eqref{prSmOp}(2) shows $\pi$ is surjective.

Finally, $\bb H\Lambda_{\mc B}^{\{\mb h^{\rm I}_m\}}$ is an open
subscheme of $\bb H\Psi_{\mc B}^{\mb h^{\rm I}_{\bar s}, \mb t}$ by
\eqref{prSmOp}(1) and \eqref{prLevS}; similarly, $\bb H\Lambda_{\mc
B}^{\{(\mb h^{\rm I}_m)'\}}$ is an open subscheme of $\bb H\Psi_{\mc
B}^{\mb h^{\rm I}_{\bar s+1}, \mb t'}$.  The results in \cite{Iar84}
cited above assert that $\mc C \mapsto \Lambda^{\bar s+1}\mc C$ defines
a surjective map $\Pi\: \bb H\Psi_{\mc B}^{\mb h^{\rm I}_{\bar s}, \mb
t} \onto \bb H\Psi_{\mc B}^{\mb h^{\rm I}_{\bar s+1}, \mb t'}$ when $S$
is the Spec of an infinite field $k$.  This assertion is false; indeed,
$\Pi$ is not defined at the $k$-point representing the module $D^*$ of
\eqref{exNotAll}.  However, the smooth surjection $\pi$ is defined at
the $k$-point representing the module $D'^*$ of \eqref{exNotAll}.

Thus, to correct the error in \cite{Iar84}*{Thm.IIB}, simply replace
$G(E)$ by $\bb H\Lambda_{\mc B}^{\{\mb h^{\rm I}_m\}}$; that is, replace
arbitrary compressed quotients by recursively compressed quotients.
 \end{remark}

\begin{example}\label{exNotAll2}
 Here's another example where $\bb H\Psi_{\mc A}^{\mb h^{\rm I}_{\bar
s}} - \bb H\Lambda_{\mc A}^{\{\mb h^{\rm I}_m\}}$ is nonempty.
However, this time the socle type changes.

Take $S$, $\mc A$, and $A$ as in \eqref{exNotAll}, but $\bar s := s := 5$
and $\mb t(5) = 1$.  Then $\mb g_p(p) = 6-p$; so $\mb g_p(p) \le \mb
a(p)$ for $p \ge 3$.  Thus $\mb t$ is permissible by \eqref{leperm}(2).

Let $\{\mb h^{\rm I}_m\}$ be the I-set.  Then the vector of nonzero
values of $\mb h^{\rm I}_5$ is
\begin{equation*}\label{eqNotAll2a}
 \mb h^{\rm I}_5 \, :\, (1,2,3,3,2,1).
 \end{equation*}
As in \eqref{exPolyRg}, view $A^\dg$ as the $k$-span of the Laurent
monomials $X^{-m}Y^{-n}$.  Set
 \begin{equation*}\label{eqNotAll2b}
 F := X^{-5} + Y^{-5} \and G := X^{-2}Y^{-1}.
 \end{equation*}
Set $D := AF+AG \subset A^\dg$.  It's easy to check that
 \begin{equation*}\label{eqNotAll2c}
        D = kF + (kX^{-4} + kY^{-4}) + (kX^{-3} + kY^{-3} + kG)
                + A^\dg_{-2} + A^\dg_{-1}+ k.
 \end{equation*}
  So $D$ has $(\mb h^{\rm I}_{\bar s})^*$ as Hilbert function. So $D^*
\in \mb H \Psi_{\mc A}^{\mb h^{\rm I}_{\bar s}}$.  But $D$ requires
two generators.  So $D^*$ isn't of $k$-socle type $\mb t$.  Thus
\eqref{leperm}(6) yields $D^*\notin \mb H \Lambda_{\mc A}^{\{\mb
h^{\rm I}_m\}}$, as desired.

In passing, note $\bb H\Lambda_{\mc A}^{\{\mb h^{\rm I}_m\}}$ is smooth
of fiber dimension $1\cdot (6-1)$, or 5, by \eqref{prSmOp}(2).  In fact,
it contains $k$-points.  For instance, set $F' := F + X^{-4}Y^{-1}$ and
$D' := AF'$.  It's easy to check that $D'$ has Hilbert function $\mb
h^{\rm I}_5$.  But $\Delta^mD' = D'$ and $\mb h^{\rm I}_m = \mb h^{\rm
I}_5$ for $m \le 5$.  Thus $D'^*$ is in $\mb H \Lambda_{\mc A}^{\{\mb
h^{\rm I}_m\}}$, as desired.
 \end{example}

\begin{proposition}\label{prIrr}
 Assume $\mb t$ is permissible, and $S$ is irreducible.

\(1) Assume $\bb H\Psi_{\mc B}^{\mb h^{\rm I}_{\bar s}, \mb t}$ is
nonempty.  Then it's irreducible.

\(2) Assume $\bb H\Lambda_{\mc B}^{\{\mb h^{\rm I}_m\}}$ is
nonempty. Then it's an open, dense subscheme of\/ $\bb H\Psi_{\mc B}^{\mb
h^{\rm I}_{\bar s}, \mb t}$.
 \end{proposition}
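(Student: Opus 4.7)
The plan is to realize $\bb H\Psi_{\mc B}^{\mb h^{\rm I}_{\bar s},\,\mb t}$ as the continuous image of an open subscheme of a vector bundle over $S$, so that irreducibility of $S$ propagates to it; part \(2) will then follow formally from \(1) together with \eqref{prSmOp}(1). Set $\mc E := \bigoplus_{q\in\Z}\mc A(q)^{\oplus\mb t(q)}$ as in the proof of \eqref{prSmOp}(1), and form the $S$-scheme
$$ \mc X := \sHom_{\mc A}\bigl(\mc E,\,\mc B^\dg\bigr)_{0} $$
of degree-$0$ graded $\mc A$-linear maps $\phi\:\mc E\to\mc B^\dg$.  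Since $\Hom_{\mc A}\bigl(\mc A(q),\,\mc B^\dg\bigr)_{0}=\mc B^\dg_{-q}=\mc B_q^*$, the scheme $\mc X$ is the total space of the locally free $\mc O_S$-module $\bigoplus_q(\mc B_q^*)^{\oplus\mb t(q)}$; its fibers over $S$ are affine spaces, so $|\mc X|$ is irreducible whenever $|S|$ is.

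Inside $\mc X$, let $W$ be the locus on which \(i) $\rank\bigl(\Im(\phi)_{-p}\bigr)=\mb h^{\rm I}_{\bar s}(p)$ for every $p$, and \(ii) the induced map $\mc E_{-p}/(F^1\mc A)\mc E_{-p}\to \Im(\phi)_{-p}/(F^1\mc A)\Im(\phi)_{-p}$ has rank $\mb t(p)$ for every $p$.  Both conditions are open in $\mc X$ by upper semicontinuity of the rank of a map between locally free sheaves; \eqref{prhImax} assures that $\mb h^{\rm I}_{\bar s}(p)$ is the a priori maximum in \(i).  Over $W$, the universal image has constant Hilbert function $(\mb h^{\rm I}_{\bar s})^*$ with locally free graded pieces, so \eqref{thRep} together with Macaulay duality \eqref{thGMD3} produces a morphism
$$ f\: W \longrightarrow \bb H\Delta_{\mc B^\dg}^{(\mb h^{\rm I}_{\bar s})^*}=\bb H\Psi_{\mc B}^{\mb h^{\rm I}_{\bar s}}, $$
and condition \(ii) combined with \eqref{deType} places the image in $\bb H\Psi_{\mc B}^{\mb h^{\rm I}_{\bar s},\,\mb t}$.

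For \(1), every scheme-point of $\bb H\Psi_{\mc B}^{\mb h^{\rm I}_{\bar s},\,\mb t}$ is represented by some $\mc C$ over a residue field $K$, and $\mc C^*$ is of local generator type $\mb t^*$ by \eqref{deType}; thus $\mc C^*$ admits a minimal system of homogeneous generators with exactly $\mb t(q)$ elements in degree $-q$, yielding a surjection $\mc E_K\onto\mc C^*$, that is, a $K$-point of $W$ mapping to the given scheme-point.  Hence $f$ is topologically surjective, and in particular $W$ is non-empty whenever the target is.  A non-empty open subscheme of the irreducible $\mc X$ is itself irreducible, and the continuous image of an irreducible space is irreducible, which proves \(1).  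For \(2), combine \eqref{prSmOp}(1) with \eqref{leperm}(6) (every I-compressed quotient is of socle type $\mb t$) to see that $\bb H\Lambda_{\mc B}^{\{\mb h^{\rm I}_m\}}$ is open in $\bb H\Psi_{\mc B}^{\mb h^{\rm I}_{\bar s},\,\mb t}$; its non-emptiness forces that of the ambient scheme, so \(1) applies and a non-empty open subscheme of an irreducible scheme is dense.

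The main technical obstacle will be rigorously verifying the openness of conditions \(i) and \(ii) cutting out $W$ from $\mc X$\emdash in particular confirming that \(ii) is genuinely open rather than merely constructible\emdash and checking via the universal property that the flat family of images over $W$ induces a bona fide morphism of schemes into the Quot-type scheme representing Macaulay-dual submodules.
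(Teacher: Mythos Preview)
Your approach is essentially the paper's own: parametrize degree-$0$ maps $\mc E\to\mc B^\dg$ by an irreducible parameter space (the paper uses the product of projective spaces $P:=\prod_q\bb P(\mc B_q)^{\times\mb t(q)}$ rather than your affine bundle $\mc X$, but this difference is cosmetic), pass to the open locus where the image has Hilbert function $(\mb h^{\rm I}_{\bar s})^*$, and map surjectively onto $\bb H\Psi_{\mc B}^{\mb h^{\rm I}_{\bar s},\mb t}$.  Part \(2) is handled identically.

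The gap you flag is real, but it has a clean resolution that you are missing, and this resolution is the substantive content of the paper's argument: condition \(ii) is \emph{not} an extra condition to impose; it follows automatically from \(i).  On the open locus $W$ cut out by \(i) alone, the surjection $\tau\:\mc E_W\onto\mc D:=\Im(\phi)$ induces a surjection $\bar\tau\:(\mc E\ox_{\mc A}\mc O_S)_W\onto\mc D\ox_{\mc A_W}\mc O_W$.  For $q<b_1$, permissibility gives $\mb t(q)=0$ by \eqref{leperm}(2), so the source of $\bar\tau_{-q}$ vanishes, hence so does the target.  For $q\ge b_1$, \eqref{lehsI}(1) gives $\mb g_{\bar s}(q)=\mb h^{\rm I}_{\bar s}(q)$; thus $\tau_{-q}$ is a surjection between locally free sheaves of the \emph{same} rank, hence an isomorphism, so $\bar\tau_{-q}$ is an isomorphism and $(\mc D\ox_{\mc A_W}\mc O_W)_{-q}$ is locally free of rank $\mb t(q)$.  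Therefore $\mc D$ has local generator type $\mb t^*$ everywhere on $W$, condition \(ii) is redundant, and your worry about its constructibility evaporates.  With this step supplied, your outline goes through.
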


\begin{proof}
  {\bf For (1),} for $\bar s\le q\le s$, set $P_q := \bb P(\mc B_q)^{\times
\mb t(q)}$ with $P_q := S$ if $\mb t(q) = 0$.  Set $P:= \prod_qP_q$.
For $1\le i\le \mb t(q)$, let $\mc L_{q,i}$ denote the pullback to $P$,
under the projection to the $i$th factor of $P_q$, of $O_{\bb P(\mc
B_q)}(1)$; so $\mc L_{q,i}^* \subset (\mc B_P^\dg)_{-q}$.  Form the
natural multiplication map
 \begin{equation}\label{eqIrr1}\ts
 \mu_p\: \bigoplus_{q,i}(\mc A_P)_{p+q}\ox_{\mc O_P}
       \mc L^*_{q,i} \to (\mc B_P^\dg)_{p} \tqn{for} -s\le p\le -\bar s.
 \end{equation}

Given a finite function $\mb h\: \Z\to\Z$, denote the subscheme where
$\rank(\mu_p) = \mb h^*(p)$ by $L_p\subset P$; that is, an $S$-map $T\to
P$ factors through $L_p$ iff $\Cok((\mu_p)_T)$ is locally free of rank
$\mb b^*(p) -\mb h^*(p)$.  But $\Cok((\mu_p)_T) = (\Cok(\mu_p))_T$ by
right exactness of pullback.  So by the theory of flattening
stratification, $L_p$ exists, although it might be empty.  Set
 \begin{equation*}\label{eqIrr2}\ts
 L^{\mb h} := \bigcap_{p=-s}^{-\bar s} L_p \subset P.
  \end{equation*}

Let $x\in P$ be a scheme point, $K$ its residue field.  Then $x\in
L^{\mb h}$ iff the map of $K$-vector spaces $(\mu_p)_K$ is of rank $\mb
h^*(p)$ for $0\ge p\ge -s$.  Thus as $\mb h$ varies, the $L^{\mb h}$
stratify $P$; that is, set-theoretically their disjoint union
$\coprod_{\mb h}L^{\mb h}$ is $P$.

 By hypothesis, $S$ is irreducible, and each $\mc B_q$ is locally free
of finite rank.  Hence $P$ is irreducible too.  Its generic point lies
in $L^{\mb h^{\rm J}}$ for some $\mb h^{\rm J}$.  Set $L := L^{\mb
h^{\rm J}}$.  Then the support of $L$ is an open subset of $P$, since
$L$ is the intersection of an open subscheme and a closed subscheme, and
the latter's support must be all of $P$.  Thus $L$ is (nonempty and)
irreducible.  Also, by lower semicontinuity of the rank of $\mu_r$, if
$L^{\mb h} \neq \emptyset$ for some $\mb h$, then $\mb h(r) \le
\mb h^{\rm J}(r)$ for all $r$.

Given $x' \in \bb H\Psi_{\mc B}^{\mb h^{\rm I}_{\bar s} \mb t}$, let
$K'$ be its residue field, and $C$ the corresponding quotient of $\mc
B_{K'}$, Then $C^*$ is of (local) generator type $\bf t^*$.  Pick a
minimal set of generators.  For $0\ge p\ge -s$, this set defines a map
$\bigoplus_q(\mc A_{K'})_{p+q}^{\oplus\mb t(q)} \to (\mc B_{K'}^\dg)_p$
of rank $\bigl(\mb h^{\rm I}_{\bar s}\bigr)^*(p)$, and so a $K'$-point
of $L^{\mb h^{\rm I}_{\bar s}}$.  Take $\mb h := \mb h^{\rm I}_{\bar s}$
above.  Thus $\mb h^{\rm I}_{\bar s}(r) \le \mb h^{\rm J}(r)$ for all
$r$.

Set $\mc D:= \Im\bigl(\bigoplus_p(\mu_p)_L\bigr)$. Fix $p$.  Then $\rank
\mc D_p = \bigl(\mb h^{\rm J}\bigr)^*(p)$.  But \eqref{eqIrr1} yields
 \begin{equation*}\label{eqIrr3}
  \ts  \rank \mc D_p
   \le \rank\Bigl(\bigoplus_q(\mc A_\bb L)_{p+q}\ox (\mc U^*_q)_L\Bigr)
    = \mb g^*_{\bar s}(p).
 \end{equation*}
 Also, $\rank \mc D_p \le b^*(p)$.  Thus $\mb h^{\rm J}(-p) \le \mb
h^{\rm I}_{\bar s}(-p)$.  But $\mb h^{\rm J}(-p) \ge \mb h^{\rm I}_{\bar
s}(-p)$ by the above paragraph.  Thus $\mb h^{\rm J} = \mb h^{\rm I}
_{\bar s}$. But $L := L^{\mb h^{\rm J}}$. Thus $L := L^{\mb h^{\rm
I}_{\bar s}}$.

Notice \eqref{eqIrr1} yields a (homogeneous) surjection
 $\tau\:\bigoplus_{q,i}\mc A\ox_{\mc O_L}(\mc L^*_{q,i})_L \onto \mc
D$.  Apply $\bu\ox_{\mc A_L}\mc O_L$ to get the surjection $\bar\tau\:
\bigoplus_{q,i} (\mc L^*_{q,i})_L \onto \mc D\ox_{\mc A_L}\mc O_L$.  
 Fix $q < b_1$.  Then $\mb t(q) = 0$ by \eqref{leperm}(2).  So there are
no $\mc L^*_{q,i}$.  Thus $\bigl(\mc D\ox_{\mc A_L}\mc O_L\bigr)_{-q} =
0$.  Now, fix $q \ge b_1$.  Then $\tau_{-q}$ is an isomorphism; indeed,
its target is locally free of rank $\mb h^{\rm I}_{\bar s}(q)$, whereas
its source is locally free of rank $g_{\bar s}(q)$; however, $g_{\bar
s}(q) = \mb h^{\rm I}_{\bar s}(q)$ by \eqref{leperm}(1).
So $\bar \tau_{-q}$ is an isomorphism.  Thus $\bigl(\mc D\ox_{\mc
A_L}\mc O_L\bigr)_{-q}$ is locally free of rank $\mb t(q)$.  Thus $\mc
D$ is of local generator type $\mb t^*$.

Therefore, $\mc D^*$ defines a map $L \to \bb H\Psi_{\mc B}^{\mb h^{\rm
I}_{\bar s}, \mb t}$ owing to the universal property of $\bb H\Psi_{\mc
B}^{\mb h^{\rm I}_{\bar s}, \mb t}$.  This map is surjective, because,
as shown above, every $x' \in \bb H\Psi_{\mc B}^{\mb h^{\rm I}_{\bar s}
\mb t}$ gives rise to a point of $L$, and plainly the latter maps to
$x'$.  But $L$ is irreducible. Thus $\bb H\Psi_{\mc B}^{\mb h^{\rm
I}_{\bar s}, \mb t}$ is too.  Thus (1) holds.

 {\bf For (2),} recall $\bb H\Lambda_{\mc B}^{\{\mb h^{\rm I}_m\}}$ is
an open subscheme of $\bb H\Psi_{\mc B}^{\mb h^{\rm I}_{\bar s}, \mb t}$
by \eqref{prSmOp}(1).  Thus (1) yields (2).
 \end{proof}

\begin{corollary}\label{coIrrmax}
 Assume $\mb t$ is permissible, and $S$ is irreducible.  Assume $\mc A$
is the symmetric algebra on a locally free $\mc O_S$-module.  Finally,
assume $\mc B = \mc A$, or in the level case (namely, if $\bar s = s$),
assume $\mc B = \mc A^{\oplus n}$ for some $n \ge 1$.

Then $\bb H\Lambda_{\mc B}^{\{\mb h^{\rm I}_m\}}$ is a nonempty, open
subscheme of $\bb H\Psi_{\mc B}^{\mb h^{\rm I}_{\bar s}, \mb t}$, which
is irreducible.
 \end{corollary}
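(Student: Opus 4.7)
The plan is to assemble this corollary as essentially a one-line consequence of three earlier results already established in the paper: the existence theorem \eqref{coImax}, the irreducibility result \eqref{prIrr}(1), and the openness-and-density statement \eqref{prIrr}(2).

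First, I would verify nonemptiness of $\bb H\Lambda_{\mc B}^{\{\mb h^{\rm I}_m\}}$. The hypotheses of the corollary (namely $\mb t$ permissible, $\mc A$ a symmetric algebra on a locally free $\mc O_S$-module, and $\mc B = \mc A$ or $\mc B = \mc A^{\oplus n}$ in the level case) are precisely the hypotheses of \eqref{coImax}. Applying that result produces some $T/S$ and some $\mc C \in \mb{H\Lambda}_{\mc B_T}^{\{\mb h^{\rm I}_m\}}$ that is recursively compressed for $\mb t$; in particular, $\mc C$ is a $T$-point of $\bb H\Lambda_{\mc B}^{\{\mb h^{\rm I}_m\}}$, so the scheme is nonempty.

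Next, I would deduce that $\bb H\Psi_{\mc B}^{\mb h^{\rm I}_{\bar s}, \mb t}$ is nonempty as well. This is immediate because \eqref{prSmOp}(1) (together with \eqref{prLevS}) exhibits $\bb H\Lambda_{\mc B}^{\{\mb h^{\rm I}_m\}}$ as an open subscheme of $\bb H\Psi_{\mc B}^{\mb h^{\rm I}_{\bar s}, \mb t}$, so the nonemptiness of the former forces the nonemptiness of the latter. Since $S$ is assumed irreducible and $\mb t$ is permissible, \eqref{prIrr}(1) then yields that $\bb H\Psi_{\mc B}^{\mb h^{\rm I}_{\bar s}, \mb t}$ is irreducible.

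Finally, since $\bb H\Lambda_{\mc B}^{\{\mb h^{\rm I}_m\}}$ is nonempty, \eqref{prIrr}(2) applies directly and gives that $\bb H\Lambda_{\mc B}^{\{\mb h^{\rm I}_m\}}$ is a nonempty, open (and dense) subscheme of the irreducible scheme $\bb H\Psi_{\mc B}^{\mb h^{\rm I}_{\bar s}, \mb t}$, which is the conclusion of the corollary. Since every substantive step is quoted from an already-established result, there is no real obstacle; the only point of care is to note that the hypotheses of \eqref{coImax} and of \eqref{prIrr} are simultaneously met under the assumptions of \eqref{coIrrmax}, so that both existence of a recursively compressed quotient and irreducibility of the ambient stratum are available.
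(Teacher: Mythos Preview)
Your proposal is correct and follows essentially the same approach as the paper, which simply states that the assertions follow immediately from \eqref{coImax} and \eqref{prIrr}(1),\,(2). You have merely unpacked this one-line citation into its constituent steps; the extra invocation of \eqref{prSmOp}(1) and \eqref{prLevS} to justify openness is harmless but slightly redundant, since \eqref{prIrr}(2) already delivers openness directly.
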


\begin{proof}
 The assertions follow immediately from \eqref{coImax} and
\eqref{prIrr}(1),\,(2).
 \end{proof}

\begin{remark}\label{regenstype}
 Assume $\mb t$ is permissible, and $S$ is irreducible.  Then a minor
modification of the proof of \eqref{prIrr} yields a bit more than
stated.  First, let's see that $P$ parameterizes, in a weak sense, the
quotients of $\mc B$ of socle type bounded by $\mb t$.

Let $x\in P$ be a point, $K$ a field containing its residue field.  Consider
\eqref{eqIrr1}, and set $D := \Im\bigl(\bigoplus_p (\mu_p)_K\bigr)$.
Then $D$ is generated, possibly not minimally, by homogeneous elements
of $\mc B^\dg_K$ of which $\mb t(q)$ are of degree $-q$; that is, the
(local) generator type of $D$ is bounded by $\mb t^*$.  Thus $D^*$ is a
quotient of $\mc B_K$ of socle type bounded by $\mb t$.

Conversely, let $K$ be a field containing a residue field of $S$, and
$C$ a homogeneous quotient of $\mc B_K$ of socle type bounded by $\mb
t$.  Then $C^*$ is generated, possibly not minimally, by homogeneous
elements of $\mc B^\dg_K$ of which $\mb t(q)$ are of degree $-q$.  Order
them to define a map $\bigoplus_q(\mc A_{K})_{p+q}^{\oplus\mb t(q)} \to
(\mc B_{K}^\dg)_p$, and so a $K$-point of $P$ representing $C$.  Thus
$P$ parameterizes the various $C$.  Notice however that different
$K$-points may represent the same $C$.  Furthermore, $P$ carries no
universal flat family.

Second, consider the map $L \to \bb H\Psi_{\mc B}^{\mb h^{\rm I}_{\bar
s}, \mb t}$.  Recall that $L$ is open in $P$, and it parameterizes, in
the same weak sense, the various compressed $C$ of socle type $\mb t$.
Furthermore, as the map is surjective and as $\bb H\Lambda_{\mc
B}^{\{\mb h^{\rm I}_m\}}$ is open in $\bb H\Psi_{\mc B}^{\mb h^{\rm
I}_{\bar s}, \mb t}$ by \eqref{prIrr}(2), the preimage $\Lambda$ of $\bb
H\Lambda_{\mc B}^{\{\mb h^{\rm I}_m\}}$ is open in $L$, and
parameterizes, in the same sense, the various recursively compressed $C$
of socle type $\mb t$; also, $\Lambda$ is nonempty under the hypotheses
of \eqref{coIrrmax}.

Third, consider the locus $T\subset P$ just of the $C$ of socle type
$\mb t$.  Note forming $\Im\bigl(\bigoplus_p\mu_p\bigr)$ needn't commute
with base change, but forming $\mc D^{\mb h} :=
\Im\bigl(\bigoplus_p(\mu_p)_{L^{\mb h}}\bigr)$ does.  For $1\le i\le \mb
t(q)$, form the flatting stratification of $(\mc D^{\mb h}\ox_{\mc
A_P}\mc O_P)_{-q}$, and denote the stratum of rank $\mb t(q)$ by $T^{\mb
h}_q$.  Set $T^{\mb h} :=\bigcap_q T^{\mb h}_q$.  Then $T = \amalg_{\mb
h} T^{\mb h}$.  Thus $T$ is constructible, but possibly not locally
closed.
 \end{remark}

 \section{I-Compressed Filtered Quotients}
 \begin{setup}\label{se10}
   Keep the setup of \eqref{se9}.  So $\{\mb g_m\}$ and
   $\{\mb h^{\rm I}_m\}$ are the pre-I-set and I-set of $\mb t$.  Also,
   $\bar s := \inf\,\{\, p \mid \mb t(p) \neq 0\,\}$ and
   $s := \sup\{\, p \mid \mb t(p) \neq 0\,\}$.  Recall that
   ``I-Compressed'' and``permissible socle type'' and $v_0$ and $b_1$
   and $\beta^{\rm I}_{\bar s}$ are defined in \eqref{se9} and
   \eqref{sbPerm} and \eqref{sbPerm}(d) and \eqref{eqdfb1} and
   \eqref{eqbetaI}.
 \end{setup}

\begin{sbs}[Stability and integrity]\label{sbIS}
  Given a graded $\mc A$-module $\mc C$, call $\mc C$ $n$-{\it stable}
 \indt{nsta@$n$-stable}
  if $\mc A_1\cdot \mc C_{q-1} = \mc C_q$ for $q > n$.  Say that $\mc C$
  has {\it integrity\/} $m$ if $(0:_{\mc C_p}\mc A_1) = 0$ for $p < m$.
 \indt{integrity@integrity $m$}

  For example, suppose $\mc A_1$ generates $\mc A$ over $\mc O_S$.  Then
  $\mc C$ has integrity $m$ iff $\Soc_k(\mc C) \subset F^m\mc C$, since
  $(0:_{\mc C}\mc A_1) = \Soc_k(\mc C)$ as
  $\Soc_k(\mc C) = (0:_{\mc C}F^1\mc A)$.  So for instance, if $\mc A$
  is the symmetric algebra on a locally free $\mc O_S$-module, then
  plainly $\mc A(q)^{\oplus r}$ has integrity $m$ for all $m$, all $q$,
  and all $r$; furthermore, $\mc A^\dg(q)^{\oplus r}$ is $m$-stable for
  all $m$, all $q$, and all $r$ either by direct computation or by the
  following proposition.
\end{sbs}

\begin{proposition}\label{prIntSt}
  Let $\mc C$ be a graded $\mc A$-module, $m\in \Z$.  Assume $\mc C_p$
  for $p\le m$ and $\mc A_1$ are locally free of finite rank.  Then the
  pullback $\mc C_T$ has integrity $m$ for all $T/S$ iff $\mc C^\dg$ is
  $(-m)$-stable.
\end{proposition}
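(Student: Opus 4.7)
My plan is to encode both conditions via a single natural map and then dualize. For each integer $p$, I will introduce $\varphi_p\: \mc C_p \to \sHom_{\mc O_S}(\mc A_1,\,\mc C_{p+1})$ sending $c$ to $a\mapsto ac$, so that $\Ker\varphi_p = (0:_{\mc C_p}\mc A_1)$ by definition. Since $\mc A_1$ is locally free of finite rank, $\sHom_{\mc O_S}(\mc A_1,\,\mc C_{p+1}) = \mc A_1^*\ox\mc C_{p+1}$. For $p<m$, both $\mc C_p$ and $\mc C_{p+1}$ are locally free of finite rank by hypothesis, so $\varphi_p$ commutes with pullback along any $T/S$, and $\Ker\varphi_{p,T} = (0:_{\mc C_{p,T}}\mc A_{1,T})$. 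Hence $\mc C_T$ has integrity $m$ for every $T/S$ iff $\varphi_{p,T}$ is injective for every $T/S$ and every $p<m$.

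Since $\mc A_1^*\ox\mc C_{p+1}$ is locally free of finite rank for $p<m$, I will use the local criterion of flatness to show that this universal injectivity is equivalent to $\varphi_p$ itself being injective (the case $T=S$) together with $\Cok\varphi_p$ being $\mc O_S$-flat. Being a finitely presented quotient of a locally free sheaf of finite rank, $\Cok\varphi_p$ is then automatically locally free of finite rank. So $\mc C_T$ has integrity $m$ for all $T/S$ iff, for every $p<m$, the map $\varphi_p$ is injective with locally free cokernel.

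Finally, I dualize over $\mc O_S$. Because $\mc A_1$ and $\mc C_{p+1}$ are locally free of finite rank, canonical identifications give $\mc C_p^* = \mc C^\dg_{-p}$ and $(\mc A_1^*\ox\mc C_{p+1})^* = \mc A_1\ox\mc C^\dg_{-p-1}$, under which $\varphi_p^*$ becomes the multiplication map $\mu_{-p}\: \mc A_1\ox\mc C^\dg_{-p-1}\to\mc C^\dg_{-p}$. If $\varphi_p$ is injective with locally free cokernel, dualizing the short exact sequence $0\to\mc C_p\to\mc A_1^*\ox\mc C_{p+1}\to\Cok\varphi_p\to0$ shows $\mu_{-p}$ is surjective. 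Conversely, if $\mu_{-p}$ is surjective, its locally free of finite rank target forces a local splitting, making $\Ker\mu_{-p}$ locally free of finite rank; dualizing the split sequence recovers $\varphi_p$ as an injection with locally free cokernel. Combining, integrity $m$ for every $T/S$ amounts to $\mu_q$ being surjective for every $q>-m$, i.e., to the $(-m)$-stability of $\mc C^\dg$.

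The main obstacle is bookkeeping: ensuring that every sheaf I dualize or pull back is locally free of finite rank. The hypothesis supplies local freeness only for $\mc C_p$ with $p\le m$ and for $\mc A_1$, and it is crucial that the strict inequality $p<m$ in the definition of integrity $m$ leaves $p+1\le m$ so that $\mc C_{p+1}$ still lies in the hypothesized range. This narrow margin is exactly what makes $\varphi_p$ both compatible with base change and available for dualization; once that bookkeeping is organized, the rest of the argument runs routinely.
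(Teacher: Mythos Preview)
Your proof is correct and follows essentially the same approach as the paper: both introduce the map $\mc C_p \to \sHom_{\mc O_S}(\mc A_1,\mc C_{p+1})$, use the local criterion of flatness to characterize universal injectivity as local splitting, and then dualize to identify the dual map with multiplication $\mc A_1\ox\mc C^\dg_{-p-1}\to\mc C^\dg_{-p}$. Your write-up is somewhat more explicit about the bookkeeping and the converse direction, but the argument is the same.
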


\begin{proof}
  Fix $p<m$.  Define
  $\mu \: \mc C_p \to \Hom_{\mc O_S}(\mc A_1,\,\mc C_{p+1})$ using
  multiplication.  Adjoint associativity gives
  $\Hom_{\mc O_S}(\mc A_1,\,\mc C_{p+1}) = \Hom_{\mc O_S}( \mc A_1
  \ox_{\mc O_S} (\mc C_{p+1})^*, \,\mc O_S)$.  So dualizing $\mu$ gives
  $\mu^* \: \mc A_1 \ox_{\mc O_S} \mc C^\dg_{-p-1} \to \mc C^\dg_{-p}$.

  By the local criterion, every pullback $\mu_T$ is injective iff $\mu$
  is injective and $\Cok \mu$ is flat, so iff $\mu$ is locally split.
  But $\mu$ is locally split iff $\mu^*$ is, iff $\mu^*$ is surjective.
  Also, $\Ker(\mu_T) = (0:_{\mc C_{T,p}}\mc A_{T,1})$.  Thus every
  $\mc C_T$ has integrity $m$ iff $\mc C^\dg$ is $(-m)$-stable.
\end{proof}

\begin{proposition}\label{prIcprsd1}
  Assume $\mb t$ is permissible.  Fix
  $\mc F \in \mb F\Psi_{\mc B}^{\mb h, \mb t}$ for some $\mb h$.

  \(1) Then $\rank(\mc F) \le \beta^{\rm I}_{\bar
  s}$, with equality if $\mb h = \mb h^{\rm I}_{\bar s}$.

  \(2) Assume that $\rank(\mc F) = \beta^{\rm I}_{\bar s}$, that $\mc
  A$ has integrity $s-b_1$, and that $\mc B^\dg$ is
  $(1-b_1)$-stable.  Then $\mb h = \mb h^{\rm I}_{\bar
  s}$ and $G_{\bu}\mc F \in \mb H\Psi_{\mc B}^{\mb h, \mb
  t}$ and
  $G_{\bu}(\mc F^*) \ox_{\mc A}\mc O_S = G_{\bu}(\mc F^* \ox_{\mc A}\mc
  O_S)$.
\end{proposition}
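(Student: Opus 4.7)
My plan is to reduce both parts to analyzing the graded $\mc A$-submodule $G_\bu\mc F^*\subseteq\mc B^\dg$ via the biexactness of $G_\bu$ from \eqref{prBiex}. The hypothesis that $\mc F$ has socle type $\mb t$ gives, by \eqref{leSoc}, that $\mc F^*$ has local generator type $\mb t^*$; a local filtered Nakayama argument (using the finiteness of the filtration on $\mc F^*$) then lifts minimal generators of $\mc F^*\ox_{\mc A}\mc O_S$ to $\mb t(q)$ generators of $\mc F^*$ sitting in filtration degree $-q$, for each $q\in[\bar s,s]$. Biexactness of $G_\bu$ applied to the resulting strict surjection yields a local graded surjection
$$\varepsilon\:\bigoplus_q\mc A(q)^{\oplus\mb t(q)}\onto G_\bu\mc F^*.$$

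For Part (1), I compare ranks in each degree $-p$: the source has rank $\mb g_{\bar s}(p)$ and $(G_\bu\mc F^*)_{-p}\hookrightarrow(\mc B^\dg)_{-p}$ has rank bounded by $\mb b(p)$, so $\mb h(p)\le\min\{\mb g_{\bar s}(p),\mb b(p)\}=\mb h^{\rm I}_{\bar s}(p)$. Summing yields $\rank(\mc F)\le\beta^{\rm I}_{\bar s}$, with equality precisely when $\mb h=\mb h^{\rm I}_{\bar s}$.

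For Part (2), the rank equality forces pointwise $\mb h(p)=\mb h^{\rm I}_{\bar s}(p)$. Both remaining conclusions follow once the canonical surjection $\mu\:G_\bu(\mc F^*)\ox_{\mc A}\mc O_S\onto G_\bu(\mc F^*\ox_{\mc A}\mc O_S)$ of \eqref{leDiag} is shown to be an isomorphism, because its target has Hilbert function $\mb t^*$ and hence $G_\bu\mc F$ inherits socle type $\mb t$. By \eqref{lehsI} combined with pointwise rank equality, $\varepsilon_{-p}$ is an iso for $p\ge b_1$; meanwhile flatness of $G_\bu(\mc B^\dg/\mc F^*)$ via \eqref{prBiex}, together with $\mb h^{\rm I}_{\bar s}(p)=\mb b(p)$, gives $(G_\bu\mc F^*)_{-p}=(\mc B^\dg)_{-p}$ for $p<b_1$. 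It remains to show $(G_\bu\mc F^*\ox_{\mc A}\mc O_S)_{-p}=0$ for $p\notin[\bar s,s]$: the case $p>s$ is trivial; the case $p\in[\bar s,s]$ yields rank $\mb t(p)$ via $\varepsilon_{-p}$ from above and via the surjection onto the target of $\mu$ from below; and the case $p\le b_1-2$ follows from the $(1-b_1)$-stability of $\mc B^\dg$ combined with the identification $(G_\bu\mc F^*)_{-p-1}=(\mc B^\dg)_{-p-1}$.

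The main obstacle is the transition range $\max\{b_1-1,0\}\le p\le\bar s-1$, where $\mc B^\dg$-stability no longer applies directly and $\varepsilon_{-p}$, though surjective for $p\ge b_1$, does not on its own decompose $(G_\bu\mc F^*)_{-p}$ as $\mc A_1\cdot(G_\bu\mc F^*)_{-p-1}$, since $\mc A$ need not be generated in degree $1$. Here I plan to exploit integrity of $\mc A$: via \eqref{prIntSt} it amounts to $(b_1-s)$-stability of $\mc A^\dg$, which, combined with the identification $(G_\bu\mc F^*)_{-p-1}\cong\bigoplus_q\mc A_{q-p-1}^{\oplus\mb t(q)}$ valid for $p+1\ge b_1$ and the behaviour of $(\mc B^\dg)_{-p}$ at the boundary $p=b_1-1$, should furnish the required $\mc A_1$-surjectivity component by component. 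The boundary $p=b_1-1$, where one must patch the $\mc A$-side argument against the identity $(G_\bu\mc F^*)_{-p}=(\mc B^\dg)_{-p}$ and the failure of $\mc B^\dg$-stability at $q=1-b_1$, will be the most delicate step.
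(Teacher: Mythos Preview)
Your proposal has a genuine gap at the very first step: the filtered surjection $\ve\:\mc E\onto\mc F^*$ is \emph{not} strict in general, so $\varepsilon:=G_\bu\ve$ need not be surjective. Biexactness from \eqref{prBiex} only says that $G_\bu$ preserves short exact sequences in $\bb{EF}_A$, and a sequence there is short exact precisely when the maps are strict; it does \emph{not} say that $G_\bu$ of an arbitrary filtered surjection is surjective. A concrete counterexample is furnished by \eqref{exArt} with the standard weighting: take $A=k[X_1,X_2]$, $\mc B=A$, $f=X_1^{-2}+X_2^{-3}$, and $\mc F^*=Af$. Then $\mc F$ is Gorenstein with $\mb t(3)=1$ (permissible), yet $G_{-1}\ve\:A_2\to G_{-1}\mc F^*$ has image $k\cdot\overline{X_2^{-1}}$, missing $\overline{X_1^{-1}}$; indeed $G_\bu\mc F^*$ needs two generators even though $\mc F^*$ needs only one. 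This is exactly the phenomenon $G_\bu(\mc F^*)\ox_{\mc A}\mc O_S\ne G_\bu(\mc F^*\ox_{\mc A}\mc O_S)$ that Part~(2) is supposed to rule out under the extra hypotheses---so you cannot assume it away in Part~(1).

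Consequently your degree-by-degree bound $\mb h(p)\le\mb g_{\bar s}(p)$ is unproved, and both your deduction of $\mb h=\mb h^{\rm I}_{\bar s}$ in Part~(2) and your claim that $\varepsilon_{-p}$ is an isomorphism for $p\ge b_1$ collapse with it. The paper sidesteps the strictness issue in Part~(1) by working instead with the \emph{quotient} map $\bar\ve\:\mc E/F^{1-b_1}\mc E\onto\mc F^*/F^{1-b_1}\mc F^*$, which is genuinely surjective (no strictness needed), giving only the coarser bound $\rank(\mc F^*/F^{1-b_1}\mc F^*)\le\sum_{p\ge b_1}\mb g_{\bar s}(p)$; the remaining piece $F^{1-b_1}\mc F^*$ is bounded via the inclusion into $F^{1-b_1}\mc B^\dg$. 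In Part~(2) the paper then uses the integrity hypothesis to prove, by induction on $p\ge b_1$, that $G_{-p}\ve$ is \emph{injective}; combined with the rank equality this forces $\bar\ve$ to be a strict isomorphism, which is the engine for everything else. Your plan uses integrity only peripherally in the ``transition range,'' but in fact integrity is what supplies the injectivity of $G_{-p}\ve$ that replaces the surjectivity you assumed.
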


\begin{proof}
  Each assertion holds if it holds on some neighborhood of each point of
  $S$.  Now, $\mb t^*$ is the local generator type of $\mc
  F^*$; so each point of $S$ has a neighborhood on which $\mc
  F^*$ has a set of generators with precisely $\mb
  t(q)$ generators in $F^{-q}\mc F^* - F^{1-q}\mc
  F^*$.  Thus shrinking
  $S$, we may assume these generators exist globally on $S$.

  Set $\mc E := \bigoplus_q \mc A(q)^{\oplus \mb t(q)}$.  The generators
  define a map $\ve\: \mc E \to \mc F^*$; it's surjective and filtered
  of degree 0.  Let
  $\?\ve\: \mc E/F^{1-b_1}\mc E \to \mc F^*/F^{1-b_1}\mc F^*$ be the
  induced map.  It is surjective.  Plainly
  $\rank (\mc E/F^{1-b_1}\mc E) = \sum_{p\ge b_1} \mb g_{\bar s}(p)$
  owing to \eqref{eqsbImax}.  However,
  $\mb g_{\bar s}(p) = \mb h^{\rm I}_{\bar s}(p)$ for $p\ge b_1$ by
  \eqref{lehsI}(1).  Thus
  $\rank(\mc F^* / F^{1-b_1}\mc F^*) \le \sum_{p\ge b_1} \mb h^{\rm
  I}_{\bar s}(p)$.

  On the other hand, $F^{1-b_1}\mc F^* \subset F^{1-b_1}\mc B^\dg$, and
  $F^{1-b_1}\mc B^\dg / F^{1-b_1}\mc F^*$ is flat.  Hence
  $\rank (F^{1-b_1}\mc F^*) \le \sum_{p< b_1} \mb b(p)$.  But
  $\mb b(p) = \mb h^{\rm I}_{\bar s}(p)$ for $p< b_1$ by
  \eqref{lehsI}(2).  Therefore,
  $\rank (F^{1-b_1}\mc F^*) \le \sum_{p< b_1} \mb h^{\rm I}_{\bar
  s}(p)$.  Hence $\rank(\mc F^*) \le \sum_p \mb h^{\rm I}_{\bar s}(p)$.
  Thus $\rank(\mc F) \le \beta^{\rm I}_{\bar s}$.

  Lastly, note $\rank(\mc F) = \rank(G_\bu\mc F)$.  But
  $G_\bu\mc F \in \mb H\Psi_{\mc B}^{\mb h}$ since
  $\mc F \in \mb F\Psi_{\mc B}^{\mb h, \mb t}$; hence,
  $\rank(G_\bu\mc F) = \sum_p \mb h(p)$.  Thus
  $\rank(\mc F) = \beta^{\rm I}_{\bar s}$ if
  $\mb h = \mb h^{\rm I}_{\bar s}$.  Thus (1) holds.

  {\bf For (2),} set
  $\beta^{\rm I}_1 := \sum_{p\ge b_1} \mb h^{\rm I}_{\bar s}(p)$ and
  $\beta^{\rm I}_2 := \sum_{p < b_1} \mb h^{\rm I}_{\bar s}(p)$.  Then
  $\beta^{\rm I}_1 + \beta^{\rm I}_2 = \beta^{\rm I}_{\bar s}$.  And
  $\rank(\mc F^* / F^{1-b_1}\mc F^*) \le \beta^{\rm I}_1$ and
  $\rank (F^{1-b_1}\mc F^*) \le \beta^{\rm I}_2$; see the proof of (1).
  But $\rank(\mc F^*) = \beta^{\rm I}_{\bar s}$ now.  Thus
  $\rank(\mc F^* / F^{1-b_1}\mc F^*) = \beta^{\rm I}_1$ and
  $\rank (F^{1-b_1}\mc F^*) = \beta^{\rm I}_2$.

  Note $\rank(F^{1-b_1}\mc B^\dg) = \beta^{\rm I}_2$ and
  $F^{1-b_1}\mc B^\dg / F^{1-b_1}\mc F^*$ is flat.  So
  $F^{1-b_1}\mc F^* = F^{1-b_1}\mc B^\dg$.  But
  $F^q\mc F^* = \mc F^* \cap F^q\mc B^\dg$ for all $q$; indeed, as
  $\mc F$ has the induced filtration, the quotient map
  $\mc B \onto \mc F$ is strict, and so its dual, the inclusion
  $\mc F^* \into \mc B^*$, is strict too by \eqref{sbSm}.  Thus
  $G_{-p}\mc F^* =\mc B^\dg_{-p}$ for $p < b_1$.  So
  $\mb h(p) = \mb b(p)$ for $p < b$.  Recall
  $\mb b(p) = \mb h^{\rm I}_{\bar s}(p)$ for $p< b_1$ by
  \eqref{lehsI}(2).  Thus $\mb h(p) = \mb h^{\rm I}_{\bar s}(p)$ for
  $p< b_1$.

  By the above,
  $\?\ve\: \mc E/F^{1-b_1}\mc E \onto \mc F^*/F^{1-b_1}\mc F^*$ is a
  surjection of locally free sheaves of the same rank.  So $\?\ve$ is an
  isomorphism.  So its restriction to $F^{-b_1}\mc E / F^{1-b_1}\mc E$
  is injective.  Thus
  $G_{-b_1}\ve \: G_{-b_1} \mc E \to G_{-b_1} \mc F^*$ is injective.

  Proceed by induction: given $p\ge b_1$, assume
  $G_{-p}\ve \: G_{-p} \mc E \to G_{-p} \mc F^*$ is injective.  Given
  $\sigma\in S$ and $y \in G_{-p-1} \mc E_\sigma$ with
  $(G_{-p-1} \ve) y = 0$, take any $x \in \mc A_{1,\sigma}$.  Then
  $(G_{-p} \ve) (xy) = x((G_{-p-1} \ve) y) = 0$.  So $xy = 0$.  But
  $\mc A$ has integrity $s-b_1$, and $p+1 > b_1$.  So $y=0$.  Thus
  $G_{-p}\ve \: G_{-p} \mc E \to G_{-p} \mc F^*$ is injective for all
  $p\ge b_1$.

  By \eqref{sbArt}, the functor $\mc M \mapsto G_\bu\mc M$ is biexact on
  the filtered $\mc O_S$-Artinian modules $\mc M$.  Thus
  $\?\ve\: \mc E / F^{1-b_1}\mc E \to \mc F^* / F^{1-b_1}\mc F^*$ is a
  strict injection.

  But $\?{\ve}$ is surjective. So $\?\ve$ is a strict isomorphism.  Thus
  $G_\bu\?\ve$ is an isomorphism.

  Hence $\mb g_{\bar s}(p) = \mb h(p)$ for $p\ge b_1$.  But
  $\mb g_{\bar s}(p) = \mb h^{\rm I}_{\bar s}(p)$ for $p\ge b_1$ by
  \eqref{lehsI}(1).  Thus $\mb h(p) = \mb h^{\rm I}_{\bar s}(p)$ for
  $p\ge b_1$, so for all $p$ owing to the conclusion above, as desired.

Note $G_{\bu}\mc F \in \mb H\Psi_{\mc B}^{\mb h, \mb t}$ if
$G_{\bu}\mc F ^*$ is of local generator type $\mb t^*$, or $((G_\bu\mc F
^*) \ox_{\mc A}\mc O_S)_{-p}$ is locally free of rank $\mb t(p)$ for all
$p$.  The latter holds for $p \ge b_1$, as $G_\bu\?\ve$ is bijective and
as $\mc E$ is plainly of local generator type $\mb t^*$.  But $\mb t$ is
permissible; so $\mb t(p) = 0$ for $p < b_1$ by \eqref{leperm}.  Thus it
remains to show $((G_\bu\mc F ^*) \ox_{\mc A}\mc O_S)_{-p} = 0$ for $p <
b_1$.

Recall $G_{-p}\mc F^* = \mc B^\dg_{-p}$ for $p < b_1$.  But, by
hypothesis, $\mc B^\dg$ is $(1-b_1)$-stable; that is, $\mc A_1\cdot \mc
B^\dg_{-p} = \mc B^\dg_{1-p}$ for $p < b_1$.  Thus $((G_\bu\mc F^*)
\ox_{\mc A}\mc O_S)_{-p} = 0$ for $p < b_1-1$.

Recall $\?\ve\: \mc E / F^{1-b_1}\mc E \to \mc F^* / F^{1-b_1}\mc F^*$
is injective and $\ve\: \mc E \to \mc F^*$ is surjective.  Hence $\ve$
restricts to a surjection $F^{1-b_1}\mc E \onto F^{1-b_1}\mc F^*$ by the
Snake Lemma.  Thus $(G_{1-b_1}\ve) (\mc E_{1-b_1}) = G_{1-b_1}\mc F^*$.

Recall $\mc E$ is of local generator type $\mb t^*$ and $\mb t(b_1-1) =
0$.  So $(\mc E \ox_{\mc A}\mc O_S)_{1-b_1} = 0$.  Hence
$\bigoplus_{q>0}\mc A_q\cdot \mc E_{1-b_1-q} = G_{1-b_1}\mc E$.
Therefore,
 $$\ts \bigoplus_{q>0}\mc A_q\cdot
         \big((G_{1-b_1-q}\ve) (\mc E_{1-b_1-q})\big)
                = (G_{1-b_1}\ve) (G_{1-b_1}\mc E).$$
  But $(G_{1-b_1-q}\ve) (\mc E_{1-b_1-q})\subset G_{1-b_1-q}\mc F^*$,
and $(G_{1-b_1}\ve) (\mc E_{1-b_1}) = G_{1-b_1}\mc F^*$.  Hence
$\bigoplus_{q>0}\mc A_q\cdot G_{1-b_1-q}\mc F^* = G_{1-b_1}\mc F^*$.  So
$((G_\bu\mc F ^* )\ox_{\mc A}\mc O_S)_{1-b_1} = 0$.  Thus $G_{\bu}\mc F
\in \mb H\Psi_{\mc B}^{\mb h, \mb t}$.

Finally, by \eqref{leDiag} there's a canonical surjection $G_{\bu}(\mc
F^*) \ox_{\mc A}\mc O_S \onto G_{\bu}(\mc F^* \ox_{\mc A}\mc O_S)$.
It's an isomorphism as both its source and target are locally free with
the same Hilbert function, namely $\mb t^*$.  Thus (2) holds.
 \end{proof}

\begin{proposition}\label{prIcprsd2}
   Assume $\mb t$ is permissible.  Given $\mc F \in \mb F\Psi_{\mc
B}^{\mb h,\mb r}$ for some $\mb h$ and $\mb r$, assume $G_{\bu}\mc F \in
\mb H\Psi_{\mc B}^{\mb h^{\rm I}_{\bar s}, \mb t}$.  Then $\mb h =
\mb h^{\rm I}_{\bar s}$ and $\mb r = \mb t$ and $\rank(\mc F) =
\beta^{\rm I}_{\bar s}$ .
 \end{proposition}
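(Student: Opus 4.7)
The first two conclusions follow at once: by the definition in \eqref{sbArt}, the Hilbert function of the filtered module $\mc F$ is that of $G_\bu\mc F$, so the hypothesis $G_\bu\mc F\in\mb H\Psi_{\mc B}^{\mb h^{\rm I}_{\bar s},\mb t}$ forces $\mb h=\mb h^{\rm I}_{\bar s}$; then \eqref{prFilt}(1) gives $\rank(\mc F)=\sum_p\mb h(p)=\sum_p\mb h^{\rm I}_{\bar s}(p)=\beta^{\rm I}_{\bar s}$. The real content is the socle-type identity $\mb r=\mb t$, which I would establish by sandwiching $\mb r$ between $\mb t$ and itself.

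For the inequality $\mb t\ge\mb r$, I would apply \eqref{leDiag} with $C:=\mc F$; it produces a canonical surjection $\mu\:G_\bu(\mc F^*)\ox_{\mc A}\mc O_S\onto G_\bu(\mc F^*\ox_{\mc A}\mc O_S)$. The source, via the identification $G_\bu(\mc F^*)=(G_\bu\mc F)^\dg$ from \eqref{prFilt}(3), has Hilbert function $\mb t^*$ because the hypothesis supplies the socle type $\mb t$ of the graded module $G_\bu\mc F$; the target has Hilbert function $\mb r^*$ by the socle type $\mb r$ of $\mc F$. Degree-by-degree surjectivity of $\mu$ then forces $\mb t(p)\ge\mb r(p)$ for all $p$.

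For the reverse inequality, suppose $\mb r\neq\mb t$ and pick $q^*$ with $\mb t(q^*)>\mb r(q^*)$. By \eqref{leperm}(2), permissibility of $\mb t$ forces $\mb t(q)=0$ for $q<b_1$, so $q^*\ge b_1$ (with $b_1$ as in \eqref{eqdfb1} for $\mb t$). Because $\mc F^*$ has local generator type $\mb r^*$, after shrinking $S$ I can produce a filtered degree-$0$ surjection $\ve^r\:\mc E^r:=\bigoplus_q\mc A(q)^{\oplus\mb r(q)}\onto\mc F^*$ with exactly $\mb r(q)$ generators in $F^{-q}\mc F^*\setminus F^{1-q}\mc F^*$. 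I would then rerun the bound in \eqref{prIcprsd1}(1) with $\mc E^r$ in place of $\mc E$ and the same cut $c:=b_1$, using only surjectivity of $\ve^r$ for the first summand and flatness of $\mc B^\dg/\mc F^*$ for the second, to get
\[ \rank(\mc F^*)\le\sum_{p\ge b_1}\mb g^r_{\bar s^r}(p)+\sum_{p<b_1}\mb b(p), \]
where $\mb g^r$ is the pre-I-set of $\mb r$. By \eqref{lehsI} (applied to $\mb t$), the analogous sum with $\mb g_{\bar s}$ equals $\beta^{\rm I}_{\bar s}$, so subtracting yields $\beta^{\rm I}_{\bar s}-\rank(\mc F)\ge\sum_{p\ge b_1}(\mb g_{\bar s}(p)-\mb g^r_{\bar s^r}(p))$.

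Each difference $\mb g_{\bar s}(p)-\mb g^r_{\bar s^r}(p)=\sum_q(\mb t(q)-\mb r(q))\mb a(q-p)$ is nonnegative since $\mb t\ge\mb r$ componentwise, and at $p:=q^*\ge b_1$ the term $q=q^*$ contributes $(\mb t(q^*)-\mb r(q^*))\mb a(0)>0$. Hence $\rank(\mc F)<\beta^{\rm I}_{\bar s}$, contradicting the rank equality already established. The main obstacle will be confirming that, despite $\mb r$ being possibly non-permissible, the cut at $b_1=b_1^t$ allows verbatim reuse of the bound from \eqref{prIcprsd1}(1): the upper bound on $\rank(\mc F^*/F^{1-b_1}\mc F^*)$ requires only surjectivity of $\ve^r$, and the bound on $\rank(F^{1-b_1}\mc F^*)$ requires only that $\mc B^\dg/\mc F^*$ is flat, so no permissibility hypothesis on $\mb r$ is needed, and \eqref{lehsI} is invoked only for $\mb t$.
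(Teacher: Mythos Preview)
Your proof is correct and follows essentially the same route as the paper's. Both arguments get $\mb h=\mb h^{\rm I}_{\bar s}$ and $\rank(\mc F)=\beta^{\rm I}_{\bar s}$ immediately, obtain $\mb r\le\mb t$ from the surjection in \eqref{leDiag}, and then use a rank bound for $\mc F$ computed from $\mb r$ to force equality.

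The only difference is packaging. The paper defines $\alpha^{\rm I}:=\sum_p\min\{\mb g^r(p),\mb b(p)\}$ and asserts $\rank(\mc F)\le\alpha^{\rm I}$ by citing \eqref{prIcprsd1}(1), then notes $\alpha^{\rm I}\le\beta^{\rm I}_{\bar s}$ with equality iff $\mb r=\mb t$. You instead rerun the filtration-cut argument from the proof of \eqref{prIcprsd1}(1) directly, cutting at $b_1=b_1^{\mb t}$, to get $\rank(\mc F)\le\sum_{p\ge b_1}\mb g^r(p)+\sum_{p<b_1}\mb b(p)$ and then compare term-by-term with $\beta^{\rm I}_{\bar s}=\sum_{p\ge b_1}\mb g_{\bar s}(p)+\sum_{p<b_1}\mb b(p)$. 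Your version is slightly more careful: since \eqref{prIcprsd1} is stated under the hypothesis that the socle type is permissible, the paper's direct citation with $\mb r$ in place of $\mb t$ is a mild abuse, whereas your explicit rerun makes transparent that permissibility is only needed for $\mb t$ (via \eqref{lehsI}), not for $\mb r$. The trade-off is that the paper's bound $\alpha^{\rm I}$ is in principle sharper than yours (since $\mb b(p)$ need not be $\le\mb g^r(p)$ for $p<b_1$), but both suffice for the contradiction.
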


\begin{proof}
  As $\mc F \in \mb F\Psi_{\mc B}^{\mb h,\mb r}$ and $G_{\bu}\mc F
\in \mb H\Psi_{\mc B}^{\mb h^{\rm I}_{\bar s}, \mb t}$, the
definitions yield $\mb h = \mb h^{\rm I}_{\bar s}$.

Note that $G_\bu \mc F \in \mb H\Psi_{\mc B}^{\mb h^{\rm I}_{\bar s},
\mb t}$.  Hence $\rank(G_\bu\mc F) = \beta^{\rm I}_{\bar s}$ by
\eqref{prIcprsd1}(1).  However, $\rank(G_\bu \mc F) = \rank(\mc F)$.
Thus $\rank(\mc F) = \beta^{\rm I}_{\bar s}$.

By definition, $\mb r^*$ is the generating function of $G_\bu (\mc F^*
\ox_{\mc A}\mc O_S)$, and $\mb t^*$ is that of $(G_\bu\mc F^*)\ox_{\mc
A}\mc O_S$.  But there's a canonical surjection $G_\bu (\mc F^*)\ox_{\mc
A}\mc O_S \onto G_\bu (\mc F^* \ox_{\mc A}\mc O_S)$ by \eqref{leDiag}.
Thus $\mb r(p) \le \mb t(p)$ for all $p$.  Define $\alpha^{\rm I}$ like
$\beta^{\rm I}_{\bar s}$ in \eqref{eqbetaI} using $\mb r$ in place of
$\mb t$.  Thus $\alpha^{\rm I} \le \beta^{\rm I}_{\bar s}$, and
$\alpha^{\rm I} = \beta^{\rm I}_{\bar s}$ iff $\mb r = \mb t$.  But
$\rank(\mc F) \le \alpha^{\rm I}$ by \eqref{prIcprsd1}(1).  So
$\beta^{\rm I}_{\bar s} = \rank(\mc F) \le \alpha^{\rm I} \le \beta^{\rm
I}_{\bar s}$.  So $\alpha^{\rm I} = \beta^{\rm I}_{\bar s}$.  Thus $\mb
r = \mb t$.
 \end{proof}

\begin{lemma}\label{lePotLift}
 Fix $\mb h$ and $\mc C\in \mb H\Psi_{\mc B}^{\mb h, \mb t}$.  Set
$\mc N := \mc C^*\ox_{\mc A}\mc O_S$ and $\mc W := F^{-s}\mc B^\dg/\mc
C^*$.  Fix an affine $S$-scheme $T$.  Then given a homogeneous $\mc
O_T$-section $\nu\:\mc N_T \into \mc C_T^*$ of $\mc C_T^*\onto \mc N_T$,
and one $\omega \: \mc W_T \into F^{-s}\mc B_T^\dg$ of $F^{-s}\mc
B_T^\dg \onto \mc W$ and given $\mc F \in \mb F\Psi_{\mc B_T}^{\mb
h}$ with $G_\bu \mc F = \mc C_T$, there's a unique $\gamma \in
F^1\Hom_{\mc O_T}(\mc N_T,\,\mc W_T)$ with $\mc F^* := \mc A_T \cdot(\nu
+\omega\gamma)(\mc N_T)$.
 \end{lemma}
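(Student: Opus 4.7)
The plan is to identify $\mc F^*$ with $\mc C_T^*$ as a filtered $\mc O_T$-module by means of the splitting $\omega$, and then read off the correction $\gamma$ as the obstruction to that identification being $\mc A_T$-linear through $\nu$.

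First, note $\mc F^* \subset F^{-s}\mc B_T^\dg$: indeed $\mb h$ is supported in $[0,s]$, so by \eqref{eqFm4}, $F^n\mc F^* = \mc F^*$ for $n \le -s$. The section $\omega$ yields a direct-sum decomposition $F^{-s}\mc B_T^\dg = \mc C_T^* \oplus \omega(\mc W_T)$, and I write $\pi$ for the projection onto the first summand. The main preliminary step is to prove that the restriction $\rho := \pi|_{\mc F^*}\: \mc F^* \to \mc C_T^*$ is a filtered isomorphism. Filteredness is clear, and to compute $G_\bu\rho$, fix $f \in F^{-q}\mc F^*$ with initial form $\bar f \in G_{-q}\mc F^*$, and write $f = c + \omega(w)$ with $c \in \mc C_T^*$. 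Since $G_\bu \mc F^* = \mc C_T^*$ identifies as subsheaves of $\mc B_T^\dg$, the image of $f$ in $G_{-q}\mc B_T^\dg = (\mc B_T^\dg)_{-q}$ lies in $(\mc C_T^*)_{-q}$; but the decomposition $G_{-q}\mc B_T^\dg = (\mc C_T^*)_{-q} \oplus (\omega(\mc W_T))_{-q}$ is direct, so the degree-$(-q)$ component of $\omega(w)$ must vanish, whence the degree-$(-q)$ component of $c$ equals $\bar f$. Thus $G_\bu \rho = \mathrm{id}$. Applying \eqref{prBiex} (exactness of $G_\bu$) to the four-term sequence $0 \to \Ker\rho \to \mc F^* \xrightarrow\rho \mc C_T^* \to \Cok\rho \to 0$, one finds $G_\bu(\Ker\rho) = 0 = G_\bu(\Cok\rho)$, so both vanish, and a short filtration induction using the identification $G_\bu \rho = \mathrm{id}$ shows $\rho$ is strict, hence a filtered isomorphism.

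For existence, I would set $\tilde\nu := \rho^{-1}\circ\nu\: \mc N_T \to \mc F^*$ and $\gamma := \omega^{-1}\circ(\tilde\nu - \nu)$. This last formula makes sense because $\pi(\tilde\nu(\bar x)) = \nu(\bar x) = \pi(\nu(\bar x))$, so $(\tilde\nu-\nu)(\bar x) \in \Ker\pi = \omega(\mc W_T)$. To check $\gamma \in F^1\Hom_{\mc O_T}(\mc N_T,\mc W_T)$, fix a homogeneous $\bar x \in (\mc N_T)_{-p}$; then $\nu(\bar x) \in (\mc C_T^*)_{-p}$, the element $\tilde\nu(\bar x) \in F^{-p}\mc F^*$ has initial form equal to $\nu(\bar x)$ (because $G_\bu\rho = \mathrm{id}$), and hence $\tilde\nu(\bar x) - \nu(\bar x) \in F^{-p+1}\mc B_T^\dg$, giving $\gamma(\bar x) \in F^{-p+1}\mc W_T$. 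The equality $\mc F^* = \mc A_T\cdot(\nu+\omega\gamma)(\mc N_T) = \mc A_T\cdot\tilde\nu(\mc N_T)$ is then immediate: the inclusion $\supset$ is clear, and for $\subset$, since $G_\bu\tilde\nu = \nu$ (again by $G_\bu\rho = \mathrm{id}$) and $\nu(\mc N_T)$ generates the graded module $\mc C_T^* = G_\bu\mc F^*$ over $\mc A_T$, the filtered-Nakayama statement at the end of \eqref{sbArt} implies $\tilde\nu(\mc N_T)$ generates $\mc F^*$ over $\mc A_T$.

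For uniqueness, suppose $\gamma'$ also satisfies the stated relation. Then both $(\nu+\omega\gamma)(\mc N_T)$ and $(\nu+\omega\gamma')(\mc N_T)$ lie in $\mc F^*$, so $\omega(\gamma-\gamma')(\mc N_T)$ lies in $\mc F^* \cap \omega(\mc W_T) = \mc F^* \cap \Ker\rho = 0$ by the injectivity of $\rho$ established above. Since $\omega$ is injective, $\gamma = \gamma'$. The main obstacle is the middle paragraph's verification that $\rho$ is a filtered isomorphism, which requires careful tracking of how the graded identification $G_\bu\mc F^* = \mc C_T^*$ sits inside $\mc B_T^\dg$ and interacts with the (non-graded) splitting $\omega$; once $\rho^{-1}$ is in hand, the construction of $\gamma$ and both existence and uniqueness reduce to formal manipulations.
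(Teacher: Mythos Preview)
Your proof is correct and follows the same overall arc as the paper's: establish that $\mc F^*$ and $\omega(\mc W_T)$ are filtered complements inside $F^{-s}\mc B_T^\dg$, lift $\nu$ through this decomposition, and read off $\gamma$ as the $\omega$-component of the lift; uniqueness then comes from $\mc F^* \cap \omega(\mc W_T) = 0$.

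The main difference is in how the complementarity is proved. The paper chooses, for each $p$, an $\mc O_T$-splitting $\mc F'_p$ of $F^p\mc F^* \onto G_p\mc F^*$ (using that $T$ is affine and $G_p\mc F^*$ is projective), and assembles these into $F^p\mc F^* = \bigoplus_{q\ge p}\mc F'_q$ and $F^p\mc B_T^\dg = F^p\mc F^* \oplus \omega(F^p\mc W_T)$ directly. You instead define the single projection $\rho := \pi|_{\mc F^*}$, check $G_\bu\rho = \mathrm{id}$, and invoke biexactness of $G_\bu$ on $\bb{AF}$ to conclude $\rho$ is a strict isomorphism. Your route is cleaner in that it avoids the auxiliary choices $\mc F'_q$ and any arbitrary lifting $\nu'_p$ (the paper fixes some lift $\nu'_p$ of $\nu_p$ into $F^p\mc F^*$ and then corrects it); your $\tilde\nu = \rho^{-1}\circ\nu$ is the canonical lift, and one checks a posteriori that the paper's $(\nu+\omega\gamma)$ agrees with it. The paper's route is more hands-on and makes the degree-by-degree structure of $\gamma$ explicit from the outset. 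One small remark: your appeal to \eqref{prBiex} is most directly phrased as biexactness applied to the two-term sequence $0 \to \mc F^* \xrightarrow{\rho} \mc C_T^* \to 0$, rather than via the four-term sequence you wrote.
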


\begin{proof}
 Note that $G_\bu \mc F^*$ is a finitely generated and flat $\mc
O_T$-module.  So, as $T$ is affine, each projection $F^p\mc F^* \onto
G_p\mc F^*$ splits over $\mc O_T$; say $F^p\mc F^* = \mc F'_p \oplus
F^{p+1}\mc F^*$.  But $G_\bu \mc F^* = \mc C_T^*$.  Thus $F^p\mc B_T^\dg
= \mc F'_p \oplus \omega(\mc W_T)_p \oplus F^{p+1}\mc B_T^\dg$.

Recall from \eqref{se6} that $F^q\mc B = \mc B$ for all $q\le 0$.  Hence
$F^q\mc B^\dg = 0$ for all $q\ge 1$.  It follows that $F^p\mc F^* =
\bigoplus_{q\ge p} \mc F'_q$ and that $F^p\mc B_T^\dg = F^p\mc F^*
\oplus \omega(F^p\mc W_T)$.  Denote the corresponding projections by
$\vf_p\: F^p\mc B_T^\dg \to F^p\mc F^*$ and $\pi_p\: F^p\mc B_T^\dg \to
F^p\mc W_T$.

Recall that $G_\bu \mc F^* = \mc C_T^*$.  So for every $p$, there's a
corresponding projection $F^p\mc F^* \onto {(\mc C_T^*)}_p$.  Fix a
lifting $\nu'_p \: (\mc N_T)_p \into F^p\mc F^*$ of $\nu_p\: ({\mc
N_T})_p \into (\mc C_T^*)_p$.  Then $\nu'_p - \nu_p \in \Hom_{\mc
O_S}(\mc N_p,\, F^{p+1}\mc B^\dg)$.  Set $\gamma_p := \pi_{p+1}(
\nu'_p - \nu_p) \in \Hom_{\mc O_S}( ({\mc N_T})_p,\,F^{p+1}\mc W_T)$
and $\gamma := \sum \gamma_p \in F^1\Hom_{\mc O_S}( \mc N_T,\,\mc W_T)$.

Note that $\vf_p + \omega_p\pi_p = 1_{F^p\mc B_T^\dg }$.  Therefore,
$\omega_p\gamma_p = (\nu'_p - \nu_p) - \vf_{p+1}(\nu'_p - \nu_p)$
in $\Hom_{\mc O_S}( (\mc N_T)_p ,\, F^{p+1}\mc B_T^\dg)$.  Furthermore,
 $$\nu_p + \omega_p\gamma_p  =  \nu'_p
    - \vf_{p+1}(\nu'_p - \nu_p) \text{\quad in\quad}
    \Hom_{\mc O_S}( (\mc N_T)_p ,\, F^p\mc F^*).$$ 
 Hence $(\nu +\omega\gamma)(\mc N_T) \subset \mc F^*$ and $G_\bu (\nu
+\omega\gamma)(\mc N_T) = \mc N_T$.  But $G_\bu \mc F^* = \mc C_T^*$ and
$\mc A_T\cdot \mc N_T = \mc C_T^*$.  Apply the last paragraph of
\eqref{sbArt}.  Thus $\mc A_T\cdot (\nu +\omega\gamma)(\mc N_T) = \mc
F^*$.

Lastly, given $\gamma^\ft\in \Hom_{\mc O_T}(\mc N_T,\,\mc W_T)$, set
$\delta := \gamma^\ft- \gamma$.  Note $\delta \in \Hom_{\mc O_T}(\mc
N_T,\,\mc W_T)$.  So $\delta(\mc N_T) \subset \mc W_T$.  So
$(\omega\delta)(\mc N_T) \subset \omega(\mc W_T)$.  Now, assume $(\nu
+\omega\gamma^\ft)(\mc N_T) \subset \mc F^*$.  Note $(\nu
+\omega\gamma^\ft) - (\nu +\omega\gamma) = \omega\delta$.  Hence
$(\omega\delta)(\mc N_T) \subset \mc F^*$.  Thus $(\omega\delta)(\mc N_T)
\subset \mc F^*\cap \omega(\mc W_T)$.  But $F^p\mc F^* \cap
\omega(F^p\mc W_T) = 0$ for all $p$.  So $(\omega\delta)(\mc N_T) = 0$.
So $\omega\delta = 0$.  But $\omega$ is injective.  So $\delta = 0$.  Thus
$\gamma^\ft = \gamma$, as desired.
 \end{proof}

\begin{definition}\label{deEff}
 Fix $\mc C\in \mb H\Psi_{\mc B}^{\mb h}$ for some $\mb h$.  Given a
filtered quotient $\mc F$ of $\mc B_T$ for some $T/S$, call $\mc F$ a
{\it lifting\/} of $\mc C$ if $G_\bu \mc F = \mc C_T$ as quotients of
 \index[terminology]{lifting}
$\mc B_T$ (so $\mc F \in \mb F\Psi_{\mc B_T}^{\mb h}$).

Set $\mc W := F^{-s}\mc B^\dg/\mc C^*$ and $\mc N := \mc C^*\ox_{\mc
A}\mc O_S$.  Call $\mc F$ a {\it potential lifting\/} of $\mc C$ if
 \indt{potential lifting@potential lifting of $\mc C$}
there exist homogeneous sections $\nu\:\mc N_T \into \mc C_T^*$ of $\mc
C_T^*\onto \mc N_T$ and $\omega \: \mc W_T \into F^{-s}\mc B_T^\dg$ of
$F^{-s}\mc B_T^\dg \onto \mc W$ and a map $\gamma \in F^1\Hom_{\mc
O_T}(\mc N_T,\,\mc W_T)$ with $\mc F^* := \mc A_T \cdot(\nu
+\omega\gamma)(\mc N_T)$.

Call $\mc C$ {\it effective\/} if any potential lifting $\mc F$ on any
 \indt{effective@effective $\mc C$}
(Noetherian) affine $T$ is, in fact, a lifting.  Call $\mc C$ {\it very
effective\/} if, in addition, $\mc F$ has the same socle type $\mb t$ as
$\mc C$.
 \indt{very effective@very effective $\mc C$}
 \end{definition}

\begin{proposition}\label{prIveryeff}
 Every $\mc C\in \mb H\Psi_{\mc B}^{\mb h^{\rm I}_{\bar s}, \mb t}$
with $\mb t$ permissible is very effective.
 \end{proposition}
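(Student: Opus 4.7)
The plan is to reduce, via Lemma~\eqref{lePotLift} and Proposition~\eqref{prIcprsd2}, to identifying the associated graded of $\mc F^*$ with $\mc C_T^*$. Fix a potential lifting $\mc F$ on a Noetherian affine $T/S$ given by data $(\nu,\,\omega,\,\gamma)$, so $\mc F^* := \mc A_T\cdot(\nu + \omega\gamma)(\mc N_T) \subset F^{-s}\mc B_T^\dg$. Working locally on $T$, identify $\mc A_T\ox_{\mc O_T}\mc N_T$ with $\mc E_T := \bigoplus_q \mc A_T(q)^{\oplus \mb t(q)}$, and let $\Phi_0,\Phi\:\mc E_T \to F^{-s}\mc B_T^\dg$ be the $\mc A_T$-linear extensions of $\nu$ and $\nu + \omega\gamma$. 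Then $\Phi_0$ is homogeneous of degree $0$ with $\Im(\Phi_0) = \mc C_T^*$, whereas $\Phi$ is filtered of filtered degree $0$ with $\Im(\Phi) = \mc F^*$; since $\omega\gamma$ raises filtration by at least $1$, the associated graded map $G_\bu\Phi$ coincides with $\Phi_0$.

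Once $G_\bu\mc F^* = \mc C_T^*$ is proved as graded submodules of $\mc B_T^\dg$, Macaulay Duality~\eqref{thGMD3}(2) puts $\mc F^*$ in $\mb F\Delta_{\mc B_T^\dg}^{(\mb h^{\rm I}_{\bar s})^*}$ (using that $G_\bu(\mc B_T^\dg/\mc F^*) = \mc B_T^\dg/\mc C_T^*$ is $\mc O_T$-flat), and dually $\mc F \in \mb F\Psi_{\mc B_T}^{\mb h^{\rm I}_{\bar s}}$ with $G_\bu\mc F = \mc C_T$, so $\mc F$ is a lifting. Then Proposition~\eqref{prIcprsd2}, applied with $\mb h = \mb h^{\rm I}_{\bar s}$ and $\mb r$ the socle type of $\mc F$, forces $\mb r = \mb t$, giving very effectiveness. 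The inclusion $\mc C_T^* \subseteq G_\bu\mc F^*$ is immediate: for a local generator $n \in (\mc N_T)_{-q}$, the element $(\nu + \omega\gamma)(n) \in F^{-q}\mc B_T^\dg$ has initial form $\nu(n) \in (\mc C_T^*)_{-q}$ in $G_{-q}\mc B_T^\dg$, and by Nakayama $\mc A_T\cdot\nu(\mc N_T) = \mc C_T^*$, so the $\mc A_T$-submodule of $G_\bu\mc F^*$ generated by the initial forms of the generators of $\mc F^*$ is exactly $\mc C_T^*$.

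The hard part is the reverse inclusion $G_\bu\mc F^* \subseteq \mc C_T^*$, equivalently the strictness of $\Phi$. The plan is to exploit the permissibility of $\mb t$ and the I-compressed Hilbert function of $\mc C$ to control $\Phi$ degree by degree, in the spirit of \eqref{prIcprsd1}(2): by \eqref{lehsI}, for $p \ge b_1$ one has $\mb h^{\rm I}_{\bar s}(p) = \mb g_{\bar s}(p) = \rank(\mc E_T)_{-p}$, so $(\Phi_0)_{-p}\:(\mc E_T)_{-p} \to (\mc C_T^*)_{-p}$ is an isomorphism of locally free $\mc O_T$-modules, while for $p < b_1$ one has $\mb h^{\rm I}_{\bar s}(p) = \mb b(p) = \rank(\mc B_T^\dg)_{-p}$, so $(\Phi_0)_{-p}$ surjects onto $(\mc B_T^\dg)_{-p}$. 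A descending induction on $p$, using the local criterion of flatness and Nakayama's lemma exactly as in the smoothness proof of \eqref{prSmOp}(1) to transfer these isomorphism/surjection statements from $\Phi_0$ to $\Phi$, then shows that $F^p\mc F^* = \Phi(F^p\mc E_T)$ for every $p$; that is, $\Phi$ is strict. Hence $G_\bu\mc F^* = \Im(G_\bu\Phi) = \Im(\Phi_0) = \mc C_T^*$, as required. This degree-by-degree transfer from $\Phi_0$ to $\Phi$ is the main obstacle, and it is precisely here that the hypothesis $\mc C\in \mb H\Psi_{\mc B}^{\mb h^{\rm I}_{\bar s},\mb t}$ with $\mb t$ permissible is essential.
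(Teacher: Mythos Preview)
Your overall strategy matches the paper's: set up the filtered map $\Phi$ with $G_\bu\Phi = \Phi_0$, establish $G_\bu\mc F^* = \mc C_T^*$ by splitting into the ranges $p\ge b_1$ (where $(\Phi_0)_{-p}$ is an isomorphism by \eqref{lehsI}(1)) and $p<b_1$ (where $(\mc C_T^*)_{-p}=(\mc B_T^\dg)_{-p}$ by \eqref{lehsI}(2)), and then deduce the socle type. But two steps need repair.

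\textbf{Strictness.} Your appeal to ``the local criterion of flatness and Nakayama's lemma exactly as in \eqref{prSmOp}(1)'' is misplaced: that proof transfers properties of \emph{graded} maps from a closed fibre to a thickening, whereas here the issue is passing from the graded map $\Phi_0$ to the \emph{filtered} map $\Phi$ over the same base. The paper handles this by forming the induced map $\bar\mu\colon \mc E_T/F^{1-b_1}\mc E_T \to \mc B_T^\dg/F^{1-b_1}\mc B_T^\dg$, observing that $G_\bu\bar\mu$ is injective (since each $(\Phi_0)_{-p}$ for $p\ge b_1$ is), and then invoking the biexactness of $G_\bu$ on $\bb{AF}$ from \eqref{prBiex} to conclude that $\bar\mu$ is a \emph{strict} injection, hence a strict isomorphism onto $\mc F^*/F^{1-b_1}\mc F^*$. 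Your claim that $\Phi$ itself is strict on all of $\mc E_T$ is likely too strong (for $p<b_1$ the map $(\Phi_0)_{-p}$ has a kernel); only the quotient map $\bar\mu$ is strict, and that suffices. A direct inductive argument along your lines can be made to work, but it amounts to reproving a special case of \eqref{prBiex}.

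\textbf{Socle type.} Your invocation of \eqref{prIcprsd2} has a genuine gap. That proposition assumes $\mc F \in \mb F\Psi_{\mc B}^{\mb h,\mb r}$ for \emph{some} $\mb r$, i.e., that $\mc F^*\ox_{\mc A_T}\mc O_T$ is already in $\bb{AF}_{\mc A_T}$. Once you know $G_\bu\mc F = \mc C_T$ you have $\mc F\in\mb F\Psi_{\mc B_T}^{\mb h^{\rm I}_{\bar s}}$, but the flatness of $G_p(\mc F^*\ox_{\mc A_T}\mc O_T)$ is not automatic over a non-reduced $T$. Applying \eqref{prIcprsd2} fibrewise over each residue field of $T$ shows the fibre dimensions are right, but constant fibre dimension does not give local freeness on a non-reduced base. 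The paper closes this gap directly: using the strict isomorphism $\mc E_T/F^{1-b_1}\mc E_T \risom \mc F^*/F^{1-b_1}\mc F^*$ already established, it sets up the commutative diagram \eqref{eqFHGH} and shows (using $\mb t(p)=0$ for $p<b_1$, i.e., permissibility via \eqref{leperm}(2)) that the induced map $\mc N_T \to \mc F^*\ox_{\mc A_T}\mc O_T$ is a strict isomorphism. This is exactly the statement that $\mc F$ has $T$-socle type $\mb t$, proved without assuming any socle type exists a priori.
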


\begin{proof}
 Fix a potential lifting $\mc F$ of $\mc C$, and use the setup of
\eqref{deEff}.  Set $\mc E := \mc A\ox_{\mc O_S}\mc N$.  Define $\mu \:
\mc E_T \to \mc B_T^\dg$ by $\mu(a\ox f) := a\cdot (\nu
+\omega\gamma)(f)$.  Then $\mu(\mc E_T) = \mc F^*$.  And $G_\bu\mu\: \mc
E_T \to B_T^\dg$ is given by $G_\bu\mu(a\ox f) := a\cdot \nu(f)$.  But
$\mc A_T\cdot \nu(\mc N_T) = \mc C_T^*$.  Thus $G_{-p}\mu$ restricts to
a surjection $(\mc E_T)_{-p} \onto (\mc C_T^*)_{-p}$ for all $p$; also
$\mc C_T^*\subset G_\bu \mc F^*$, but equality cannot yet be asserted as
it's not obvious $\mu$ is strict.

For all $p$, note $\mc E_{-p} = \bigoplus_{q+r=p}\mc A_{-q}\ox_{\mc
O_S}\mc N_{-r}$.  Thus \eqref{eqsbImax} gives $\rank\mc E_{-p} = \mb
g_{\bar s}(p)$.

For $p\ge b_1$, note $\mb h^{\rm I}_{\bar s}(p) = \mb g_{\bar s}(p)$ by
\eqref{lehsI}(1) as $\mb t$ is permissible.  But $\mc C^* \in \mb
H\Delta_{\mc B^\dg}^{\mb h^*}$ where $\mb h := \mb h^{\rm I}_{\bar
s}$.  So $\mb h^*(-p) = \mb g_{\bar s}(p)$.  Thus $(\mc
E_T)_{-p} \onto (\mc C_T^*)_{-p}$ is an isomorphism.

Form the induced map $\bar\mu\: \mc E_T/ F^{1-b_1}\mc E_T \to\mc B_T^\dg
/ F^{1-b_1}\mc B_T^\dg$.  Then $G_{\bu}\bar\mu$ is injective, because it
factors as $\bigoplus_{p\ge b}(\mc E_T)_{-p} \risom \bigoplus_{p\ge
b}(C^*_T)_{-p} \into \mc B_T^\dg$.  But owing to \eqref{sbArt}, the
functor $\mc M \mapsto G_\bu \mc M$ is biexact on the filtered $\mc
O_T$-Artinian modules $\mc M$.  Thus $\?{\mu}$ is a strict injection.

Note $\bar\mu$ factors as $\mc E_T/ F^{1-b_1}\mc E_T \onto \mc F^*/
F^{1-b_1}\mc F^* \into \mc B_T^\dg / F^{1-b_1}\mc B_T^\dg$ since $\mc
F^*$ has the induced filtration.  So $\bar\mu$ gives a strict
isomorphism $\mc E_T/ F^{1-b_1}\mc E_T \risom \mc F^*/ F^{1-b_1}\mc
F^*$.  But $\?{\mu}$ is a strict injection.  So $G_{\bu}\bar\mu$ gives
an isomorphism $(\mc E_T)_{-p} \risom G_{-p}\mc F^*$ for $p\ge b_1$.
Thus $(\mc C_T^*)_{-p} = G_{-p}\mc F^* \subset (\mc B_T^\dg)_{-p}$ for
$p\ge b_1$.

For $p< b_1$, note $\mb h^{\rm I}_{\bar s}(p) = \mb b(p)$ by
\eqref{lehsI}(2).  But $\mc C^* \in \mb H\Delta_{\mc B^\dg}^{\mb
h^*}$ where $\mb h := \mb h^{\rm I}_{\bar s}$.  So $\mb h^*(-p) = \mb
b(p)$.  But $(\mc C^*_T)_{-p} \subset G_{-p}\mc F^* \subset (\mc
B_T^\dg)_{-p}$.  Further, $\mc B^\dg_{-p} \big/ \mc C^*_{-p}$ is flat,
because $\mc C^* \in \mb H\Delta_{\mc B^\dg}^{\mb h^*}$.  Thus $(\mc
C_T^*)_{-p} = G_{-p}\mc F^*= (\mc B_T^\dg)_{-p}$.  Thus $\mc C_T^* =
G_\bu \mc F^*$.

Finally, form the following commutative diagram, which is induced by
$\mu$:
 \begin{equation}\label{eqFHGH}
 \begin{CD}
 (F^{1-b_1}\mc E_T)\ox_{\mc A_T}\mc O_T  @>>>  \mc E_T\ox_{\mc A_T}\mc O_T
   @>>> \bigl(\mc E_T \big/ F^{1-b_1}\mc E_T\bigr)\ox_{\mc A_T}\mc O_T
    @>>>  0 \\
              @VVV                  @VVV
                                        @VVV \\
 (F^{1-b_1}\mc F^*)\ox_{\mc A_T}\mc O_T   @>>> \mc F^*\ox_{\mc A_T}\mc O_T
   @>>> \bigl( \mc F^* \big/ F^{1-b_1}\mc F^*\bigr) \ox_{\mc A_T}\mc O_T
   @>>> 0
 \end{CD}
 \end{equation}
 Its rows are right exact.  Its vertical maps are surjective; in fact,
the right map is an isomorphism, as the isomorphism $\mc E_T/
F^{1-b_1}\mc E_T \risom \mc F^*/ F^{1-b_1}\mc F^*$ was proved above.

Note $\mc E\ox_{\mc A}\mc O_S = (\mc A\ox_{\mc O_S}\mc N)\ox_{\mc A}\mc
O_S = \mc N\ox_{\mc O_S}\mc O_S = \mc N$.  But $F^{1-b_1}\mc N = 0$ as
$\mc C^*$ has local generator type $\mb t^*$ and as $\mb t$ is
permissible.  In \eqref{eqFHGH}, the upper left map preserves the
filtrations.  So it's 0.  So the lower left map is 0 too, as the left
vertical map is surjective.  Thus \eqref{eqFHGH} yields $\mc N_T \risom
\mc F^*\ox_{\mc A_T} \mc O_T$.  But this isomorphism is strict, as $\mc
E/ F^{1-b_1}\mc E \risom \mc F^*/ F^{1-b_1}\mc F^*$ is strict.  Thus
$\mc N_T \risom G_\bu (\mc F^*\ox_{\mc A_T} \mc O_T)$.  Thus $\mc F$ has
the same socle type $\mb t$ as $\mc C$.  Thus $\mc C$ is very effective.
 \end{proof}

\begin{proposition}\label{prAffBdl}
 Fix $\mb h$.  Set $r_p := \sum_{q<p}(\mb b(q) - \mb h(q))$ and $\mb R :=
\sum_p \mb t(p)r_p$.  Given an effective $\mc C\in \mb H\Psi_{\mc
B_T}^{\mb h, \mb t}$ for some $T/S$, form the associated map $T\to \bb
H\Psi_{\mc B}^{\mb h}$.  Then $T\times_{\bb H\Psi_{\mc B}^{\mb h}}
\bb F\Psi_{\mc B}^{\mb h}$ is an affine-space bundle over $T$ of fiber
dimension $\mb R$.
 \end{proposition}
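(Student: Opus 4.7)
The plan is to realize the fiber product as the total space of a naturally defined affine-space bundle over $T$ modelled on a locally free $\mc O_T$-module of rank $\mb R$. First I would unwind the fiber product: a $T'$-point, for $T' \to T$, is precisely a filtered quotient $\mc F \in \mb F\Psi_{\mc B_{T'}}^{\mb h}$ with $G_\bu \mc F = \mc C_{T'}$, i.e., a lifting of $\mc C_{T'}$ in the sense of Definition \eqref{deEff}. So the task is to represent the functor $T' \mapsto \{\text{liftings of }\mc C_{T'}\}$ on $T$-schemes.

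Second, I would apply Lemma \eqref{lePotLift} locally. Set $\mc N := \mc C^* \ox_{\mc A_T} \mc O_T$ and $\mc W := F^{-s}\mc B_T^\dg/\mc C^*$. Both are graded locally free $\mc O_T$-modules: $\mc N_{-p}$ has rank $\mb t(p)$ since $\mc C$ is of $T$-socle type $\mb t$, and $\mc W_{-q}$ has rank $\mb b(q) - \mb h(q)$ for $0 \le q \le s$ since $\mc C^* \in \mb{F}\Delta_{\mc B_T^\dg}^{\mb h^*}$. On an affine open $T'' \subset T'$, splittings $\nu\:\mc N_{T''} \into \mc C_{T''}^*$ and $\omega\:\mc W_{T''} \into F^{-s}\mc B_{T''}^\dg$ exist by local freeness; given such splittings, Lemma \eqref{lePotLift} provides a canonical bijection between liftings of $\mc C_{T''}$ and elements $\gamma \in F^1\Hom_{\mc O_{T''}}(\mc N_{T''},\,\mc W_{T''})$. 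The effectivity hypothesis on $\mc C$ is exactly what guarantees every such $\gamma$ produces a genuine lifting, and conversely every lifting is obviously a potential lifting, so the bijection is complete.

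Third, I would globalize. The functor of liftings is a Zariski sheaf on $T$-schemes, and the preceding step gives local trivializations by $F^1\sHom_{\mc O_T}(\mc N,\mc W)$. Two different choices $(\nu_1,\omega_1)$ and $(\nu_2,\omega_2)$ over an overlap produce two trivializations; by the uniqueness clause in Lemma \eqref{lePotLift}, if a given lifting corresponds to $\gamma_1$ and $\gamma_2$ respectively, then $\gamma_2 = \gamma_1 + \delta(\nu_2-\nu_1,\omega_2-\omega_1)$ for a fixed $\delta$ depending only on the differences of the splittings. Hence the transitions are affine-linear, and the functor is represented by an affine-space bundle $\mb V$ over $T$ modelled on the vector bundle $F^1\sHom_{\mc O_T}(\mc N,\mc W)$. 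Finally, I would compute the rank via the decomposition
\[
F^1\sHom_{\mc O_T}(\mc N,\mc W) \;=\; \bigoplus_{p} \sHom_{\mc O_T}\!\Bigl(\mc N_{-p},\,\bigoplus_{q<p}\mc W_{-q}\Bigr),
\]
whose total rank is $\sum_p \mb t(p)\sum_{q<p}(\mb b(q)-\mb h(q)) = \sum_p \mb t(p)\,r_p = \mb R$.

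The main obstacle is step three: verifying that the local trivializations glue into a single affine-space bundle. This hinges on extracting from Lemma \eqref{lePotLift}'s uniqueness statement the explicit affine-linear change-of-trivialization formula when $(\nu,\omega)$ is perturbed. Once that is in hand, the effectivity hypothesis handles surjectivity onto actual liftings, the sheaf property is automatic, and the rank computation is pure bookkeeping.
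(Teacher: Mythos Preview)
Your proposal is correct and takes essentially the same route as the paper: define $\mc N$ and $\mc W$, invoke Lemma~\eqref{lePotLift} together with effectivity to obtain local bijections between liftings and $F^1\Hom(\mc N,\mc W)$, check that the transitions are affine, and compute the rank. One small caveat on your transition formula: perturbing $\omega$ contributes a genuine linear automorphism of $F^1\Hom(\mc N,\mc W)$ rather than a translation, so the change-of-trivialization has the form $\gamma_2 = L\gamma_1 + c$ rather than $\gamma_2 = \gamma_1 + \delta(\nu_2-\nu_1,\omega_2-\omega_1)$; the paper is equally terse on this point and simply asserts that different choices of the auxiliary $\mc O_{U_x}$-maps differ by $\mc O_{U_x}$-isomorphisms, hence give affine transitions.
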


 \newbox\FHbox \setbox\FHbox=\hbox{$F^1\Hom_{\mc O_R}(\mc N_R,\,\mc W_R)$}
 \newbox\gambox \setbox\gambox=\hbox to \wd\FHbox{\hfil$\gamma$\hfil}
 \begin{proof}
 Set $\bb F := T\times_{\bb H\Psi_{\mc B}^{\mb h}} \bb F\Psi_{\mc
B}^{\mb h}$.  We must prove each point $x \in T$ has a neighborhood
$U_x$ equipped with an isomorphism $\vf_x \: \bb F_{U_x} \risom \bb
A^{\mb R}_{U_x}$ such that, on each overlap $U_{xy} := U_x\cap U_y$, the map
$(\vf_y | U_{xy})(\vf_x | U_{xy}) ^{-1}$ is an affine transformation of
$\bb A^{\mb R}_{U_{xy}}$.  Notice that the cocycle condition automatically
holds.

Set $\mc N := \mc C^*\ox_{\mc A_T} \mc O_T$ and set $\mc W := \mc
B_T^\dg / \mc C_T^*$.  Fix $p\le 0$.  For any $R/T$, set $\mb V_p(R) :=
\Hom_{\mc O_R}\bigl( (\mc N_p)_R,\ (F^{p+1}\mc W)_R\bigr)$.  As $R$
varies, the $\mb V_p(R) $ form a functor; by \cite{EGAI}*{(9.6.1),
p.\,377}, it's representable, say by $\bb V_p / T$.  Also, note $\rank
(\mc N_p) = \mb t(p)$ and $\rank (F^{p+1}\mc W) = r_p$.  Thus $\dim (\bb
V_p / T) =\mb t(p)r_p$.

Set $\mb V(R) := F^1\Hom_{\mc O_R}( \mc N_R,\,\mc W_R)$.  Note $\mb V(R)
= \bigoplus_p \mb V_p(R)$.  Thus, as $R$ varies, the $\mb V(R)$ form a
functor, and it's representable by the fiber product, say $\bb V / T$,
of the $\bb V_p / T$.  Furthermore, $\dim (\bb V / T) = \mb R$.

Given $x \in T$, choose a neighborhood $U_x$ such that $\mc N_{U_x}$ and
$\mc W_{U_x}$ are free over $\mc O_{U_x}$.  Choosing bases yields an
isomorphism $\psi_x \: \bb V_{U_x} \risom \bb A^{\mb R}_{U_x}$.

Also, then there exist homogeneous sections $\nu\:\mc N_{U_x} \into \mc
C_{U_x}^*$ of $\mc C_{U_x}^*\onto \mc N_{U_x}$ and $\omega \: \mc
W_{U_x} \into F^{-s}\mc B_{U_x}^\dg$ of $F^{-s}\mc B_{U_x}^\dg \onto \mc
W_R$.  But $\mc C$ is effective for $T$.  So for any affine $R/U_x$,
there exists a map of sets
 \begin{align*}
 \copy\FHbox &\longrightarrow \big\{\,\mc F \in
  \mb F\Psi_{B_R}^{\mb h}\ \big| \ G_\bu \mc F = \mc C_R \,\big\} \\
  \copy\gambox & \longmapsto  \hphantom{\big\{\,}
            \mc F^* := \mc A_R\cdot(\nu_R +\omega_R\gamma)(\mc N_R).
 \end{align*}
 Owing to \eqref{lePotLift}, this map is bijective.  Plainly, it's
functorial in $R$.  So it defines an isomorphism $\theta_x\: \bb F_{U_x}
\risom \bb V_{U_x}$.  Set $\vf_x := \psi_x\theta_x$, so that $\vf_x \:
\bb F_{U_x} \risom \bb A^{\mb R}_{U_x}$.

The isomorphisms $\psi_x$ and $\theta_x$ depend on the choices of
several $\mc O_{U_x}$-maps, but two different choices of the same map
differ by an appropriate $\mc O_{U_x}$-isomorphism.  Thus although
$\vf_y | U_{xy}$ and $\vf_x | U_{xy}$ needn't be the same isomorphism
from $\bb F_{U_{xy}}$ to $\bb A^{\mb R}_{U_{xy}}$, nevertheless $(\vf_y
| U_{xy})(\vf_x | U_{xy}) ^{-1}$ is an affine transformation of $\bb
A^{\mb R}_{U_{xy}}$.
  \end{proof}

\begin{theorem}\label{thFHdim}
  Set $r_p := \sum_{q<p}(\mb b(q) - \mb h^{\rm I}_{\bar s}(q))$ and $\mb
R := \sum_p \mb t(p)r_p$.  Assume that $\mc A$ has integrity $s-b_1$,
and $\mc B^\dg$ is $(1-b_1)$-stable\emdash for example, that $\mc A$ is
the symmetric algebra on a locally free $\mc O_S$-module and that $\mc B
= \mc A$.  Assume $\mb t$ is permissible and $\bb H\Psi_{\mc B}^{\mb
h^{\rm I}_{\bar s}, \mb t} \neq \emptyset$.  Then $\bb F\Psi_{\mc
B}^{\mb h^{\rm I}_{\bar s}, \mb t}$ is an affine-space bundle over $\bb
H\Psi_{\mc B}^{\mb h^{\rm I}_{\bar s}, \mb t}$ of fiber dimension $\mb
R$, and $\bb F\Lambda_{\mc B}^{\{\mb h^{\rm I}_m\}}$ is one over $\bb
H\Lambda_{\mc B}^{\{\mb h^{\rm I}_m\}}$.
 \end{theorem}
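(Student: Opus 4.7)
The plan is to reduce the theorem to \eqref{prAffBdl}, applied with base $T := \bb H\Psi_{\mc B}^{\mb h^{\rm I}_{\bar s}, \mb t}$ equipped with its universal quotient $\mc C$.  Since $\mb t$ is permissible, \eqref{prIveryeff} asserts that $\mc C$ is very effective, so \eqref{prAffBdl} immediately identifies the fiber product
$$T \times_{\bb H\Psi_{\mc B}^{\mb h^{\rm I}_{\bar s}}} \bb F\Psi_{\mc B}^{\mb h^{\rm I}_{\bar s}}$$
as an affine-space bundle of fiber dimension $\mb R$ over $T$.  It remains to identify this fiber product---which is the preimage of $T$ in $\bb F\Psi_{\mc B}^{\mb h^{\rm I}_{\bar s}}$ under the retraction of \eqref{coIncRetr}---with $\bb F\Psi_{\mc B}^{\mb h^{\rm I}_{\bar s}, \mb t}$.

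To do so, I would verify agreement of functors of points on an arbitrary affine $R/S$.  Given $\mc F \in \mb F\Psi_{\mc B_R}^{\mb h^{\rm I}_{\bar s}, \mb t}$, the Hilbert function forces $\rank(\mc F) = \beta^{\rm I}_{\bar s}$ by \eqref{prIcprsd1}(1), and then \eqref{prIcprsd1}(2)---the only place where the integrity of $\mc A$ and the $(1-b_1)$-stability of $\mc B^\dg$ enter---puts $G_\bu \mc F$ into $\mb H\Psi_{\mc B_R}^{\mb h^{\rm I}_{\bar s}, \mb t}$, so $\mc F$ is an $R$-point of the preimage.  Conversely, given $\mc F \in \mb F\Psi_{\mc B_R}^{\mb h^{\rm I}_{\bar s}}$ with $G_\bu \mc F$ an $R$-point of $T$, \eqref{lePotLift} exhibits $\mc F$ as a potential lifting of $G_\bu \mc F$; since $G_\bu \mc F$ is very effective (being a pullback of the universal $\mc C$), this potential lifting has $R$-socle type $\mb t$, so $\mc F \in \mb F\Psi_{\mc B_R}^{\mb h^{\rm I}_{\bar s}, \mb t}$.

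For the multilevel variant, recall from \eqref{prSmOp}(1) that $\bb H\Lambda_{\mc B}^{\{\mb h^{\rm I}_m\}}$ is an open subscheme of $\bb H\Psi_{\mc B}^{\mb h^{\rm I}_{\bar s}, \mb t}$, and from \eqref{prMLevS} that $\bb F\Lambda_{\mc B}^{\{\mb h^{\rm I}_m\}}$ is by definition the preimage of $\bb H\Lambda_{\mc B}^{\{\mb h^{\rm I}_m\}}$ under the retraction $\bb F\Psi_{\mc B}^{\mb h^{\rm I}_{\bar s}} \onto \bb H\Psi_{\mc B}^{\mb h^{\rm I}_{\bar s}}$.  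The preceding identification shows that this preimage already lies inside $\bb F\Psi_{\mc B}^{\mb h^{\rm I}_{\bar s}, \mb t}$, so restricting the affine-space bundle just constructed over $\bb H\Psi_{\mc B}^{\mb h^{\rm I}_{\bar s}, \mb t}$ to the open base $\bb H\Lambda_{\mc B}^{\{\mb h^{\rm I}_m\}}$ delivers the second bundle of the same fiber dimension $\mb R$.

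The hard part will be neither the geometric assembly nor the glueing, but the two-sided dictionary between the socle type of $\mc F$ and that of $G_\bu \mc F$.  The ``upward'' direction from $\mc F$ to $G_\bu \mc F$ is \eqref{prIcprsd1}(2), whose proof uses the integrity and stability hypotheses to turn the natural surjection $\bigoplus_q \mc A(q)^{\oplus \mb t(q)} \onto \mc F^*$ into a strict isomorphism modulo a suitable filtration step; the ``downward'' direction from $G_\bu \mc F$ to $\mc F$ is precisely the content of very effectiveness, \eqref{prIveryeff}, which in turn rests on the permissibility of $\mb t$.  Once these two technical inputs are secured, the final assembly via \eqref{prAffBdl} and \eqref{lePotLift} is routine.
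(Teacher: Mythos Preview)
Your proposal is correct and follows essentially the same route as the paper: apply \eqref{prAffBdl} to the universal quotient on $T=\bb H\Psi_{\mc B}^{\mb h^{\rm I}_{\bar s},\mb t}$ (very effective by \eqref{prIveryeff}), then identify the fiber product with $\bb F\Psi_{\mc B}^{\mb h^{\rm I}_{\bar s},\mb t}$ via the two-sided dictionary between socle types of $\mc F$ and $G_\bu\mc F$, using \eqref{prIcprsd1}(1),(2) upward and very effectiveness downward.  The only cosmetic difference is that for the downward direction the paper also offers \eqref{prIcprsd2} as a direct shortcut, whereas you spell out the \eqref{prIveryeff} route via \eqref{lePotLift}; both are valid, and the multilevel variant is handled identically.
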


\begin{proof}
 Set $\mb h := \mb h^{\rm I}_{\bar s}$.  Set $T := \bb H\Psi_{\mc
B}^{\mb h, \mb t}$ and $\bb F := T\times_{\bb H\Psi_{\mc B}^{\mb h}} \bb
F\Psi_{\mc B}^{\mb h}$.  Let $\mc C$ be the universal quotient of $\mc
B_T$.  Then $\mc C$ is (very) effective by \eqref{prIveryeff}.  So $\bb
F$ is an affine-space bundle over $T$ of fiber dimension $r$ by
\eqref{prAffBdl}.  Thus we must prove $\bb F = \bb F\Psi_{\mc B}^{\mb h,
\mb t}$.

Fix $R/S$.  An $R$-point of $\bb F$ is given by an $\mc F \in \mb
F\Psi_{\mc B_R}^{\mb h}$ with $G_\bu \mc F \in \mb H\mb \Psi_{\mc
B_R}^{\mb h, \mb t}$.  So $\mc F \in \mb F \Psi_{\mc B_R}^{\mb h, \mb
t}$ by \eqref{prIcprsd2} or \eqref{prIveryeff}.  Thus $\mc F$ gives an
$R$-point of $\bb F\Psi_{\mc B}^{\mb h, \mb t}$.  Conversely, an
$R$-point of $\bb F\Psi_{\mc B}^{\mb h, \mb t}$ is given by an $\mc F
\in  F\Psi_{\mc B_R}^{\mb h, \mb t}$.  Then $G_\bu \mc F \in \mb H\mb
\Psi_{\mc B_R}^{\mb h, \mb t}$ by \eqref{prIcprsd1}(1),\,(2) and
\eqref{sbIS}.  Thus $\bb F$ and $\bb F\Psi_{\mc B}^{\mb h, \mb t}$ have
the same $R$-points, so they're equal as schemes.

The last assertion follows because $F\Lambda_{\mc B}^{\{\mb h^{\rm
I}_m\}} = \bb H\Lambda_{\mc B}^{\{\mb h^{\rm I}_m\}}\times_{\bb
H\Psi_{\mc B}^{\mb h}} \bb F\Psi_{\mc B}^{\mb h}$ essentially by
defintion; see \eqref{sbMLev}.
 \end{proof}

\begin{corollary}\label{coFIrr}
  Assume $\mb t$ is permissible, and $S$ is irreducible.

\(1) Assume $\bb F\Psi_{\mc B}^{\mb h^{\rm I}_{\bar s}, \mb t}$ is nonempty.
Then it is irreducible. 

\(2)  Assume  $\bb F\Lambda_{\mc B}^{\{\mb h^{\rm I}_m\}}$ is nonempty. Then
it is open and dense in $\bb F\Psi_{\mc B}^{\mb h^{\rm I}_{\bar s}, \mb
t}$.
 \end{corollary}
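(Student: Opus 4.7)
The strategy is to transfer both assertions from their graded counterparts in \eqref{prIrr} via the affine-bundle structure of \eqref{thFHdim}. Set $\mb h := \mb h^{\rm I}_{\bar s}$, and let $\rho\: \bb F\Psi_{\mc B}^{\mb h,\mb t} \to \bb H\Psi_{\mc B}^{\mb h, \mb t}$ and $\rho_\Lambda\: \bb F\Lambda_{\mc B}^{\{\mb h^{\rm I}_m\}} \to \bb H\Lambda_{\mc B}^{\{\mb h^{\rm I}_m\}}$ denote the two retractions provided by \eqref{thFHdim}; each is an affine-space bundle of fiber dimension $\mb R$, so in particular each is faithfully flat, surjective, and has irreducible fibers. (As usual in the last sections of this paper, we assume in force the mild hypotheses on $\mc A$ and $\mc B$ required by \eqref{thFHdim}; they hold, for example, when $\mc A$ is a symmetric algebra and $\mc B = \mc A$.)

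\textbf{Proof of (1).} Since $\rho$ is surjective and $\bb F\Psi_{\mc B}^{\mb h,\mb t}\neq\emptyset$, the scheme $\bb H\Psi_{\mc B}^{\mb h,\mb t}$ is nonempty. Then \eqref{prIrr}(1) says it is irreducible. An affine-space bundle over an irreducible base is irreducible: locally the total space is a product of an irreducible base open with an affine space, hence irreducible, and the irreducible components glue across a connected base. Hence $\bb F\Psi_{\mc B}^{\mb h,\mb t}$ is irreducible.

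\textbf{Proof of (2).} If $\bb F\Lambda_{\mc B}^{\{\mb h^{\rm I}_m\}}\neq\emptyset$, then $\rho_\Lambda$ exhibits $\bb H\Lambda_{\mc B}^{\{\mb h^{\rm I}_m\}}$ as nonempty, so by \eqref{prIrr}(2) it is an open dense subscheme of $\bb H\Psi_{\mc B}^{\mb h,\mb t}$. Because the second bundle in \eqref{thFHdim} is the restriction of the first, we have
\[
 \bb F\Lambda_{\mc B}^{\{\mb h^{\rm I}_m\}}
  = \rho^{-1}\bigl(\bb H\Lambda_{\mc B}^{\{\mb h^{\rm I}_m\}}\bigr)
   \ \subset\ \bb F\Psi_{\mc B}^{\mb h,\mb t}.
\]
Since $\rho$ is a flat surjection, it is open, so the preimage of an open subscheme is open, and the preimage of a dense subscheme is dense (a flat surjection is, in particular, universally submersive, and on an irreducible base a closed set containing $\rho^{-1}$ of a dense open must contain all of $\bb F\Psi_{\mc B}^{\mb h,\mb t}$ by part (1)). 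Thus $\bb F\Lambda_{\mc B}^{\{\mb h^{\rm I}_m\}}$ is open and dense in $\bb F\Psi_{\mc B}^{\mb h,\mb t}$.

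\textbf{Main obstacle.} There is no substantial difficulty beyond having set up \eqref{thFHdim} correctly; the only point requiring a little care is the inclusion $\bb F\Lambda_{\mc B}^{\{\mb h^{\rm I}_m\}} = \rho^{-1}(\bb H\Lambda_{\mc B}^{\{\mb h^{\rm I}_m\}})$, which is built into the joint statement of \eqref{thFHdim}, together with the elementary fact that openness and density pull back under a flat surjection onto an irreducible scheme. Once these are in hand, (1) and (2) drop out immediately from \eqref{prIrr}(1), (2).
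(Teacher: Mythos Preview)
Your proof is correct and follows exactly the same approach as the paper: transfer \eqref{prIrr}(1),(2) from the graded to the filtered side via the affine-space bundle structure of \eqref{thFHdim}. Your argument is simply more explicit than the paper's two-line proof; in particular, you correctly flag that the integrity and stability hypotheses of \eqref{thFHdim} are needed here (the paper's statement of the corollary omits them, which appears to be an oversight), and you spell out the identification $\bb F\Lambda_{\mc B}^{\{\mb h^{\rm I}_m\}} = \rho^{-1}\bigl(\bb H\Lambda_{\mc B}^{\{\mb h^{\rm I}_m\}}\bigr)$, which the paper establishes in the last line of the proof of \eqref{thFHdim}.
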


\begin{proof}
  {\bf For (1),} recall that $\bb H\Psi_{\mc B}^{\mb h^{\rm I}_{\bar s},
\mb t}$ is irreducible by \eqref{prIrr}(1).  So $\bb F\Psi_{\mc B}^{\mb
h^{\rm I}_{\bar s}, \mb t}$ is irreducible, as it's an
affine space bundle over $\bb H\Psi_{\mc B}^{\mb h^{\rm I}_{\bar s},
\mb t}$ by \eqref{thFHdim}.  Thus (1) holds

The proof of (2) is similar with \eqref{prIrr}(2) in place of
\eqref{prIrr}(1).
 \end{proof}

\begin{corollary}\label{coFdim}
 Set $\mb F := \sum_p \mb t(p)\big(\sum_{q\le p}(\mb b(q) - \mb h^{\rm
I}_{\bar s}(q))\bigr)$.  Assume that $\mc A$ has integrity $s-b_1$ and
that $\mc B^\dg$ is $(1-b_1)$-stable\emdash for example, that $\mc A$ is
the symmetric algebra on a locally free $\mc O_S$-module and that $\mc B
= \mc A$.  Assume that $\mb t$ is permissible%
.

\(1) Then $\bb F\Lambda_{\mc B}^{\{\mb h^{\rm I}_m\}}$ is covered by
(possibly empty) open subschemes, each one isomorphic to an open
subscheme of the affine space over $S$ of fiber dimension $\mb F$.

\(2) If $S$ is irreducible, then so are $\bb H\Lambda_{\mc B}^{\{\mb
h^{\rm I}_m\}}$ and $\bb F\Lambda_{\mc B}^{\{\mb h^{\rm I}_m\}}$, but
they may be empty.
 \end{corollary}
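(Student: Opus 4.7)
The plan is to simply combine the two main structural results already established, namely \eqref{prSmOp}(2) for the base $\bb H\Lambda_{\mc B}^{\{\mb h^{\rm I}_m\}}$ and \eqref{thFHdim} for the projection $\bb F\Lambda_{\mc B}^{\{\mb h^{\rm I}_m\}} \to \bb H\Lambda_{\mc B}^{\{\mb h^{\rm I}_m\}}$. For (1), \eqref{prSmOp}(2) covers $\bb H\Lambda_{\mc B}^{\{\mb h^{\rm I}_m\}}$ by open subschemes $V_i$, each of which comes equipped with an open immersion $V_i \hookrightarrow \bb A^{\mb H}_S$. Meanwhile, \eqref{thFHdim} gives that $\bb F\Lambda_{\mc B}^{\{\mb h^{\rm I}_m\}} \to \bb H\Lambda_{\mc B}^{\{\mb h^{\rm I}_m\}}$ is an affine-space bundle of fiber dimension $\mb R$. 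Inspection of the construction in \eqref{prAffBdl} shows that this bundle is Zariski-locally trivial (with affine transition functions), so over each $V_i$ one can further refine to open subsets $V_{ij} \subset V_i$ on which the bundle is trivial, giving isomorphisms $\bb F\Lambda_{\mc B}^{\{\mb h^{\rm I}_m\}}|_{V_{ij}} \cong V_{ij} \times_S \bb A^{\mb R}_S$.

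Next, I would observe that since $V_{ij}$ is an open subscheme of $\bb A^{\mb H}_S$, the product $V_{ij} \times_S \bb A^{\mb R}_S$ is an open subscheme of $\bb A^{\mb H}_S \times_S \bb A^{\mb R}_S = \bb A^{\mb H + \mb R}_S = \bb A^{\mb F}_S$. Thus the $V_{ij} \times_S \bb A^{\mb R}_S$ pulled back to $\bb F\Lambda_{\mc B}^{\{\mb h^{\rm I}_m\}}$ form the required covering by open subschemes, each isomorphic to an open subscheme of $\bb A^{\mb F}_S$. This establishes (1), and also confirms the equation $\mb F = \mb H + \mb R$.

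For (2), assume $S$ is irreducible. By \eqref{prIrr}(2), if $\bb H\Lambda_{\mc B}^{\{\mb h^{\rm I}_m\}}$ is nonempty, it is an open dense subscheme of the irreducible scheme $\bb H\Psi_{\mc B}^{\mb h^{\rm I}_{\bar s},\mb t}$ furnished by \eqref{prIrr}(1), hence irreducible. Then, since $\bb F\Lambda_{\mc B}^{\{\mb h^{\rm I}_m\}}$ is an affine-space bundle over $\bb H\Lambda_{\mc B}^{\{\mb h^{\rm I}_m\}}$ by \eqref{thFHdim}, it too is irreducible; alternatively, one may directly appeal to \eqref{coFIrr}(2), which has already packaged this deduction. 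The possibility of emptiness is unavoidable since neither \eqref{prSmOp} nor \eqref{thFHdim} asserts nonemptiness without additional hypotheses such as those of \eqref{coImax}.

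There is no real obstacle here beyond bookkeeping: the only point deserving attention is verifying that the affine-space bundle of \eqref{prAffBdl} is Zariski-locally trivial in the strong sense needed, but this is immediate from the explicit local trivializations $\vf_x$ constructed in its proof.
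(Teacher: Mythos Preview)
Your proof is correct and follows essentially the same route as the paper: combine \eqref{prSmOp}(2) for the base with \eqref{thFHdim} for the affine-space bundle to get (1), and invoke \eqref{prIrr}/\eqref{coFIrr} (the paper also offers \eqref{thMax} via \eqref{prImax} as an alternative) for (2). The only detail the paper makes explicit that you leave implicit is the inclusion $\bb H\Lambda_{\mc B}^{\{\mb h^{\rm I}_m\}} \subset \bb H\Psi_{\mc B}^{\mb h^{\rm I}_{\bar s}, \mb t}$ via \eqref{leperm}(6), needed to justify that the nonemptiness hypothesis of \eqref{thFHdim} is met once $\bb H\Lambda_{\mc B}^{\{\mb h^{\rm I}_m\}}\neq\emptyset$.
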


\begin{proof}
 First, note that $\bb H\Lambda_{\mc B}^{\{\mb h^{\rm I}_m\}}$ is
nonempty iff $\bb F\Lambda_{\mc B}^{\{\mb h^{\rm I}_m\}}$ is so.
Moreover, both (1) and (2) are trivial if these schemes are empty.  So
assume they're nonempty.

 {\bf For (1),} note $\bb H\Lambda_{\mc B}^{\{\mb h^{\rm I}_m\}}$ and
$\bb H\Psi_{\mc B}^{\mb h^{\rm I}_{\bar s}, \mb t}$ are both subschemes
of $\bb H\Psi_{\mc B}^{\mb h^{\rm I}_{\bar s}}$ by \eqref{prMLevS} and
\eqref{prLevS}.  In fact, $\bb H\Lambda_{\mc B}^{\{\mb h^{\rm I}_m\}}
\subset \bb H\Psi_{\mc B}^{\mb h^{\rm I}_{\bar s}, \mb t}$, since the
universal quotient on $\bb H\Lambda_{\mc B}^{\{\mb h^{\rm I}_m\}}$ is of
$S$-socle type $ t$ by \eqref{leperm}(6).  So $\bb F\Lambda_{\mc
B}^{\{\mb h^{\rm I}_m\}}$ is an affine-space bundle over $\bb
H\Lambda_{\mc B}^{\{\mb h^{\rm I}_m\}}$ of fiber dimension $\mb R$ by
\eqref{thFHdim}.  Thus (1) \eqref{prSmOp}(2) yields (1).

{\bf For (2),} note $\{\mb h^{\rm I}_m\}$ is recursively maximal by
\eqref{coImax}.  Assume $S$ is irreducible. Then $\bb H\Lambda_{\mc
B}^{\{\mb h^{\rm I}_m\}}$ is irreducible by \eqref{thMax} applied with
$S_{\rm red}$ for $S$.  Hence $\bb F\Lambda_{\mc B}^{\{\mb h^{\rm
I}_m\}}$ is irreducible, as it's an affine-space bundle over $\bb
H\Lambda_{\mc B}^{\{\mb h^{\rm I}_m\}}$ by \eqref{thFHdim}.  Thus (2)
holds.

  Alternatively, (2) follows immediately from \eqref{prIrr} and
\eqref{coFIrr}.
 \end{proof}

\begin{example}[smoothable quotients, elementary components]
       \label{exElemCpts}
 Fix a polynomial ring $A := k[X_1,\dotsc,X_r]$ with $k$ a field.  Let's
see how \eqref{coFdim} can help in finding examples of $k$-Artinian
quotients $C$ of $A$ that aren't {\it smoothable\/} (deformable to
$k$-etale quotients) and examples of {\it elementary\/} components of
 \indt{smoothable quotient}
 \indt{elementary component} $\Hilb_{\bb A_{k}^r/k}^d$ where $d :=
\dim_kC$ (absolutely irreducible components whose $K$-points for every
field $K\supset k$ are quotients $C'$ of $A_K$ with $\Spec(C')$
absolutely irreducible).  Notably, we are going to find a new {\it
small\/} elementary component (one of dimension
 \indt{small elementary component}
 smaller than $dr$, which is the dimension of the {\it principal
component\/} of $\Hilb_{\bb A_k^r/k}^d$, which parameterizes all
smoothable $C$).
 
To simplify notation, set $\mb h_m := \mb
h^{\rm I}_m$ for all $m$.  Define the {\it translation map}
 \indt{translation map}
\begin{equation}\label{eqTr}\ts
 \theta\:\bb F\Lambda_A^{\{\mb h_m\}}\x\bb A_k^r \to \Hilb_{\bb A_k^r/k}
 \end{equation}
 as follows (cf.\ Jelisiejew's definition \cite{JJ-JLMS2019}*{p.\,258}).
It suffices to define the action of $\theta$ on $K$-points for an
arbitrary $k$-algebra $K$.  A $K$-point of $\bb A_k^r$ is given by a
vector $\mb x := (x_1,\dotsc,x_r)$ with $x_i \in K$.  Define a
$K$-automorphism $\alpha_{\mb x}\: A_K \risom A_K$ by $\alpha_{\mb
x}(X_i) := X_i - x_i$.  Given $A_K/I \in \bb F\Lambda_{A_K}^{\{\mb
h_m\}}$, set $\theta((A_K/I),\,\mb x) := A_K/\alpha_{\mb x}(I)$.  This
assignment is functorial in $K$.  Thus it defines the desired map of
schemes $\theta$.

Note that $\theta$ is injective on $K$-points if $K$ is a field, as $\mb
x = \Supp(\alpha_{\mb x}(I))$ and $I = \alpha_{\mb -x}(\alpha_{\mb x}(I)
)$.  
 Note that the definition of $\theta$ and its injectivity remain valid
if $k$ is replaced by an arbitrary Noetherian ring; this added
generality is used in \eqref{prLift}.

Iarrobino and Emsalem, in their pioneering paper \cite{IE78}, proved the
first version of \eqref{coFdim}.  It treats I-compressed filtered
Gorenstein $C$ over an arbitrary algebraically closed field. In
particular, when the Hilbert vector of these $C$ is $(1,r,r,1)$, then
$\Im(\theta)$ is of dimension $r^3/6 + r^2 + 5r/6 -1$.  But the
principal component of $\Hilb_{\bb A_k^r/k}^{2r+2}$ is of dimension
$r(2r+2)$; so it can't contain $\Im(\theta)$ for $r\ge 8$.  Thus, they
proved (see \cite{IE78}*{p.\,152, mid}) that most such $C$ aren't
smoothable if $r\ge 8$.

Similarly, suppose the socle vector is $(0,0,1,1)$.  Then the nonzero
Hilbert vectors of an I-compressed $C$ are $(1,r,r+1,1)$ and
$(1,r,r,1)$; see \eqref{se9}.  So \eqref{coFdim} yields
 \begin{align*}\label{eq}\ts
 \mb F &= \mb t(3)\Bigl(\Bigl(\tbinom{r+2}{3} -1\Bigr)
             + \Bigl(\tbinom{r+1}{2} -(r+1)\Bigr)\Big)
                  +  \mb t(2)\Bigl(\tbinom{r+1}{2} -(r+1)\Bigr)\\
   &= r^3/6 + 3r^2/2 - 2r/3 - 3.
 \end{align*}
 The principal component is of dimension $(2r+3)r$.  But $\mb F+r >
(2r+3)r$ if $r \ge 7$, and $\mb F+r < (2r+3)r$ if $r\le 6$.  Thus most
$C$ in $\mb F\Lambda_A^{\{h_m\}}$ aren't smoothable if $r\ge 7$.

Iarrobino and Emsalem devoted most of \cite{IE78} to another novel
method, known as the {\it method of  small tangent spaces} or {\it
 \indt{method of  small tangent spaces}
 \indt{method of trivial negative tangents}
trivial negative tangents\/}.  It grew out of Schlessinger's work with
the induced grading on the tangent space $T = \Hom_A(I,C)$ to
$\Hilb_{\bb A_k^r/k}^d$ at a homogeneous quotient $C$ with ideal $I$
(his setting is more general); see p.\,151, top.  Iarrobino and Emsalem
described the method in terms of $T$ and the functor $T^1(\bu)$ of
deformation theory.  In essence, see \cite{IE78}*{p.\,155 f.}, they
showed that, under mild restrictions on $\car(k)$, if $I$ is generated
by several general quadrics and if $T^1(C)_{-1} = 0$, then $T$ is small
in the sense that $\dim_kT_{-1} \le r$ and $T_v = 0$ for $v \le -2$, and
so $C$ belongs to an elementary component.  Unfortunately, they falsely
concluded, due to a computational error, see \cite{LNM1721}*{p.\.221,
bot.}, that there's an elementary component when the Hilbert vector is
$(1,r,r,1)$ for $r = 4,5$.

However, the method itself is sound, and was put in definitive form for
filtered $C$ by Jelisiejew \cite{JJ-JLMS2019}*{Thm.\,4.5, p.\,259 and
Thm.\,4.9, p.\,260}.  At the top of p.\,250, he made this definition:
$C$ has {\it trivial negative tangents\/} if $T^1(C)\big/F^0T^1(C) = 0$.
 \indt{trivial negative tangents}
Moreover, by Cor.\,4.7 on p.\,260, if $C$ is homogeneous, then it has
trivial negative tangents iff $\sum_{v<0} \dim_k T_v = r$ (caution,
there's a typo: $\Hom(I_R,O_R)$ should be $\Hom(I_R,O_R)_{<0}$ in the
statement and in l.\,2 of the proof; cf.\ arXiv:1710.06124v5.)

Further, Thm.\,4.5 asserts that, if $C$ has trivial negative tangents,
then $\theta$ restricts to an isomorphism between neighborhoods of
$(C,0)$ and $C$.  Hence, cf.\ \cite{JJ-JLMS2019}*{ Cor.\,4.6, proof,
p.\.259}, if $C$ is also I-compressed, then \eqref{coFdim} implies $C$
is a simple point of $\Hilb_{\bb A_k^r/k}^d$, so lies on a unique
elementary component, which is of dimension $\mb F+r$.  Conversely, by
\cite{JJ-JLMS2019}*{Thm.\,4.9, p.\,260}, if $\car(k) = 0$ and $C$ is a
simple point of $\Hilb_{\bb A_k^r/k}^d$ lying on an elementary
component, then $C$ has trivial negative tangents.  Versions of these
important results also hold by \eqref{prLift}, which is a self-contained
alternative to \cite{JJ-JLMS2019}, but was greatly inspired by it; these
versions suffice here.

In the case of Hilbert vector $(1,r,r,1)$, Iarrobino and Kanev
\cite{LNM1721}*{Lem.\,6.21, p.\,221} asserted that an elementary
component exists for $6 \le r \le 12$, but $r \neq 7$.  To prove it,
they reformulated the condition to have trivial negative tangents in
terms of the Hilbert function of $A/I^2$; this reformulation is valid
essentially because $\Hom_A(I,C) = (I/I^2)^*(-3)$ as $C$ is Gorenstein.
Then they used Macaulay 2 to verify that, over several finite fields,
specific $C$ have trivial negative tangents; they said that therefore
they ``consider'' the existence of such $C$ true when $\car(k) = 0$.

Recently, Szafarczyk \cite{RSz} about settled the case of $(1,r,r,1)$.
He showed that a certain well-chosen $C$ has a small tangent space.  He
did so in Section 3 for $r\ge 18$ and arbitrary $k$ via a lengthy
elementary hand-calculation.  He did so in Section~4 with Macaulay~2 for
$6 \le r \le 17$, but $r \neq 7$, and $k = \bb Q,\ \bb Z/2,\ \bb Z/3$.
He concluded in Proposition 2.20 on p.\,7 that this $C$ is a simple
point of an elementary component.

Independently, the present authors used Macaulay 2 to show that, for
$8\le r \le 19$ and $k = \bb Z/101$, a randomly generated homogeneous
$C$ has trivial negative tangents, so is a simple point of $\Hilb_{\bb
A_k^r/k}^d$ and lies on a unique elementary component, which is of
dimension $\mb E := r^3/6 + r^2 + 5r/6 -1$.
  Hence, over $\bb Q$ too, for $8\le r \le 19$, there exists a
generically smooth elementary component of dimension $\mb E$, by virtue
of \eqref{prLift} with $k := \bb Z$.

  Notice that it suffices to show $\dim T \le \mb E$ in one positive
characteristic.  Notice also that \eqref{prLift} provides a general
method for establishing the existence of an elementary component with
given permissible socle type in characteristic 0 by producing one
example of a $C$ with trivial negative tangents in positive
characteristic (and its base field needn't be prime or even finite).
Thus \eqref{prLift} illustrates an advantage gained from having
\eqref{coFdim} with a base $S$ that's not the Spec of a field.

In the case of socle vector $(0,0,a,1)$, the authors' computations with
Macaulay 2 have yielded randomly generated I-compressed homogeneous $C$
with trivial negative tangents for $r = 5,7,8,9$ and $a=1$, for $r = 6,
7, 8$ and $2 \le a \le 5$, for $r = 7, 8$ and $6 \le a \le 8$, and for
$r = 8$ and $9 \le a \le 10$, all over the prime field $k$ of
characteristic 101 and several others (31, 701, 3001) for good measure
when $a=1$.  Similarly, they found such $C$ with Hilbert vector
$(1,r,2r,2)$ for $r = 6,7,8$ and $(1,r,2r+1,2)$ for $r = 7,8$ in
characteristic 101.  More precisely, the computations show that $\dim
T_{-1} = r$ and $T_{-2} = 0$ and that the ideal $I$ of $C$ is generated
by quadrics, say $n$ in number, giving a surjection $A(-2)^{\oplus n}
\onto I$; but $T := \Hom_A(I,C)$, so $T\into C(2)^{\oplus n}$, thus $T_v
= 0$ for $v < -2$.

Thus in those cases, each $C$ lies in the smooth locus of a unique
elementary component, which is of dimension $\mb F + r$.
 Hence, \eqref{prLift} yields, in each case, a generically smooth
elementary component of dimension $\mb F + r$ over $\bb Q$.
Independently, in infinitely many related cases, Satriano and Staal
\cite{SS2}*{Thm.\,1.3, p.\,3} produced on 25 Oct.\ 2022 a generically
smooth elementary component over an algebraically closed field of
characteristic 0 by conceptual means.

The case of socle vector $(0,0,1,1)$ and $r = 5$ is of particular
interest.  Indeed, $\mb F+r = 57$ and $(2r+3)r = 65$; thus the component
is small.  But it's not in Jelisiejew's \cite{JJ-JLMS2019}*{Thm.\,1.4,
p.\,251, and Ex.\,6.9 and Rmk.\ 6.10, p.\.269}, which include all small
elementary components known in 2019; also, it's not in Satriano and
Staal's preprint \cite{SS1} of 2 Dec.\,2021.  For $r = 5$, moreover,
Macaulay 2 worked over $\bb Q$, thus corroborating \eqref{prLift}.

In the case of socle vector $(0,0,1,1)$ and $r = 6$ and $k = \bb Q$,
most I-compressed $C$ are smoothable, even though this case is
sandwiched between the cases $r = 5$ and $r =7$, where, as noted above,
most aren't.  This smoothability was just proved via a Macaulay 2
calculation by Jelisiejew, who kindly informed the authors about it in
an email.  Thus the situation is analogous to that of Gorenstein $C$
with Hilbert vector $(1,r,r,1)$.  For $r=6$ and $r = 8$ most such $C$
aren't smoothable as noted above.  But, for $r = 7$, all such $C$ are
smoothable by Bertone, Cioffi, and Roggero's \cite{BCR}*{Thm.\,3.7};
also, Jelisiejew said he had recovered this result via a Macaulay~2
calculation like the one for $(1,6,7,1)$.

The authors found no small elementary component among the examples with
socle vector $(0,0,a,1)$ computed above with $a \ge 1$ and $r \ge 6$.
Similarly, they found none for $(0,0,0,2)$ and $(0,0,1,2)$.
Furthermore, the computations suggest that a general enough $C$ with
Hilbert vector $(1,r,br+a,b)$ lies in a unique elementary component for
any $a \ge 0$, any $b\ge1$, and any $r\gg0$.

Iarrobino and Kanev conjectured in \cite{LNM1721}*{Conj.\,6.30, p.\,226}
that a general homogeneous Gorenstein $C$ of odd socle degree $s$ has
trivial negative tangents if $r=4$ and $s\ge 15$, if $r=5$ and $s\ge 5$,
or if $r\ge6$ and $s\ge 3$ but $(r,s) \neq (7,3)$.  The case $s=3$ was
proved by Szafarczyk, as noted above.  So the authors used Macaulay 2 to
show that, in the 18 cases $r \ge 5$ and $r + s \le 16$ and $(s,r) =
(5,12),\ (11, 6)$ when $k = \bb Z/101$, a randomly generated Gorenstein
$C$ is I-compressed and has trivial negative tangents.  Hence by
\eqref{prLift}(5)--(6), the conjecture holds in these cases when $k =
\bb Z/101$ and also when $k = \bb Q$.

If $s$ is even and $C$ is Gorenstein and $I$-compressed, it's not hard
to see, by looking at $A/I^2$ and noting $(I^2)_{s+1} = 0$, that $\dim
T_{-1} = \dim A_{s+1}$, and so $C$ never has trivial negative tangents.
Nevertheless, if $s = 4$ and $r \ge 8$, or $s = 6$ and $r \ge 5$, or $s
= 8$ and $r \ge 5$, or $s \ge 10$ and $r \ge 4$, then $\mb F + r > rd$,
because $\mb F = \binom{r+s}{r} - d$ and $d = 2\binom{r+n-1}{r}+
\binom{r+n-1}{r-1}$; thus then a general $C$ isn't smoothable.
 \end{example}

\begin{proposition}\label{prLift}
 Let $A$ be a polynomial ring in $r$ variables over a Noetherian ring
$k$, and $K$ a field containing a residue field $R$ of $k$.  Set $\mb
h_m := \mb h^{\rm I}_m$ for all $m$.  Set $\bb P := \bb
F\Lambda_A^{\{\mb h_m\}}\x_k\bb A_k^r$ and $\bb H := \Hilb_{\bb
A_k^r/k}$.  Let $\theta\: \bb P \to \bb H$ be the translation map
\eqref{eqTr}.  Set $\mb F := \sum_p \mb t(p)\big(\sum_{q\le
p}\bigl(\binom{q+r-1}{r-1} - \mb h_{\bar s}(q)\bigr) \bigr)$.  Assume
$\mb t$ is permissible.

Fix $C \in \mb F\Lambda_{A_K}^{\{\mb h_m\}}$.  Let $T$ be the tangent
space at the $K$-point $C$ to $\bb H_K$.  Let $Z$ be the
scheme-theoretic image of $\theta$ in $\bb H_R$; let $Z\sm$ be the
smooth locus of $Z$, and $Z\0\sm$ the intersection of $Z\sm$ with the
smooth locus of $\bb H_R$.  Given a subscheme $G$ of a Noetherian scheme
$H$, call $G$ a {\rm component} if $Z = \Spec(\mc O_H/\mc Q)$
where $\mc Q$ is the primary ideal of a minimal prime in the primary
decomposition of $(0)$.

\(1) Then $Z$ is absolutely integral of dimension $\mb F+r$.  Moreover,
$Z$ is a component, necessarily elementary, iff $Z$ has a point at which
$\bb H_R$ is smooth of dimension $\mb F+r$.

\(2) If either $\dim_K(T/F^0T) \le r$ or $\dim_K(T) \le \mb F+r$, then
both are equalities.

\(3) Assume $C$ has trivial negative tangents.  Then $\dim_K(T/F^0T)\le
r$.

\(4) Assume $\dim_K(T/F^0T) \le r$.  Then $\bb H_R$ is smooth of dimension
$\mb F+r$ at $C$.

\(5) Assume either \(a) $C$ has trivial negative tangents or \(b)
$\car(K) = 0$ and $Z$ is a component of\/ $\bb H_R$.  Then $\theta$
restricts to an isomorphism $\theta^{-1}Z\sm \risom Z\sm$.  Moreover, a
$K$-point $C'$ of $Z$ has trivial negative tangents iff $C'$ lies in
$Z\0\sm$.

\(6) Assume that $k$ is a Dedekind domain with fraction field $L$ and
that $Z$ is a component of\/ $\bb H_R$.  Then the scheme-theoretic image
$Y$ of\/ $\theta_L$ is a component of\/$\bb H_L$.
 \end{proposition}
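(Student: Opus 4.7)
The plan is to establish (1)--(6) in order, using \eqref{coFdim} to control $\bb F\Lambda_A^{\{\mb h_m\}}$, and a filtration analysis of the Hilbert-scheme tangent space $T=\Hom_A(I,C)$ to handle (2)--(5); (6) is a flatness argument over a Dedekind base. For (1), by \eqref{coFdim}(1)--(2), $\bb F\Lambda_A^{\{\mb h_m\}}$ is $R$-smooth and geometrically irreducible of relative dimension $\mb F$, so $\bb P_R$ is geometrically integral of dimension $\mb F+r$. The map $\theta$ is injective on $K$-points (as noted after \eqref{eqTr}), so its generic fibre over $Z$ is a point; hence $Z$ is absolutely integral of dimension $\mb F+r$, and generically each quotient has single-point support, so $Z$ is elementary when it is a component. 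An irreducible closed subscheme of an ambient Noetherian scheme is a component iff the ambient's local dimension drops to $\dim Z$ somewhere on $Z$; this occurs iff there is a point of $Z$ at which $\bb H_R$ is smooth of dimension $\mb F+r$, giving the second assertion of (1).

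For (2)--(4), filter $T$ by $F^pT=\{\vf:\vf(F^qI)\subset F^{p+q}C\}$ for all $q$. The tangent at $C$ to the $R$-smooth scheme $\bb F\Lambda_A^{\{\mb h_m\}}$ is exactly $F^0T$, so $\dim_K F^0T\ge \mb F$. The partial derivations $\partial_1,\dotsc,\partial_r$ lie in $F^{-1}T$ and induce a $K$-linear map $K^r\to T/F^0T$ which is injective: a nontrivial relation $\sum a_i\partial_i\in F^0T$ would force the degree-lowering derivation $\sum a_i\partial_i$ to act trivially on the top of $G_\bu I$, contradicting the I-compressed, multilevel structure. Thus $\dim T\ge \dim F^0T+\dim T/F^0T\ge \mb F+r$, and because $C\in Z$ with $\dim Z=\mb F+r$, also $\dim T\ge \dim_C\bb H_R\ge \mb F+r$. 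Either hypothesized inequality in (2) forces equality throughout. Part (3) follows by unwinding Jelisiejew's definition: trivial negative tangents means $T^1(C)/F^0T^1(C)=0$, where $T^1(C)$ is $T$ modulo the span of translations, so $T/F^0T$ is spanned by the $\partial_i$, giving $\dim T/F^0T\le r$. Part (4) is then immediate: $\dim T=\mb F+r=\dim_C\bb H_R$ forces $\bb H_R$ to be regular at $C$ of that dimension.

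For (5), in both hypotheses the goal is to show $d\theta$ is bijective on tangent spaces above $Z\sm$: source and target both have dimension $\mb F+r$ (source by smoothness of $\bb P_R$; target by smoothness of $Z$), and $d\theta$ is surjective onto $T_{C',Z}$ since it spans $F^0T\cap T_{C',Z}$ via the $\bb F\Lambda$-factor and $T/F^0T\cap T_{C',Z}$ via the translation factor. Hence $\theta$ is \'etale at each point of $\theta^{-1}Z\sm$; combined with injectivity of $\theta$ on $K$-points this makes $\theta|_{\theta^{-1}Z\sm}$ an open immersion onto $Z\sm$. Case (a) invokes (3)--(4) directly; case (b) uses generic smoothness in characteristic zero to get a dense open $U\subset Z\sm$ where $\bb H_R$ is smooth, then applies (2)--(4) pointwise. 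The iff-characterization of trivial negative tangents on $Z\sm$ follows by combining (2) with (4): at $C'\in Z\sm$, the ambient is smooth iff $\dim T=\mb F+r$ iff $\dim T/F^0T\le r$. For (6), let $\mc Z\subset \bb H$ be the scheme-theoretic image of $\theta$ over $k$; it is irreducible, with $\mc Z_R\supset Z$ and $\mc Z_L=Y$, and since $\bb P$ is $k$-flat (product of a $k$-smooth scheme with $\bb A_k^r$) and dominates $\Spec k$, so does $\mc Z$. Pick an irreducible component $\bb H_i$ of $\bb H$ containing $\mc Z$; it dominates $\Spec k$, and being integral over the Dedekind domain $k$ it is $k$-flat with equidimensional fibres of dimension $\dim\bb H_i-1$. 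The component $Z$ of $\bb H_R$ sits inside $\bb H_{i,R}$ and hence is a component of $\bb H_{i,R}$, forcing $\dim\bb H_{i,L}=\dim\bb H_{i,R}=\mb F+r$; since $Y\subset\bb H_{i,L}$ is irreducible of dimension $\mb F+r$ as well (flat dimension of $\mc Z$ over $k$), we get $Y=\bb H_{i,L}$, a component of $\bb H_L$.

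The main obstacle is the identification used in (2)--(3): one must pin down $F^0T$ intrinsically as the tangent to $\bb F\Lambda_A^{\{\mb h_m\}}$ and check that the $\partial_i$ inject into $T/F^0T$, which relies on the I-compressed/multilevel hypotheses to exclude degenerate cases. Together with this, showing that $d\theta$ at each point of $\theta^{-1}Z\sm$ is bijective in the more delicate case (5b) --- where only $Z$, not a priori $\bb H_R$, is assumed smooth --- is the most delicate step and is exactly where characteristic zero (via generic smoothness and the comparison with Jelisiejew's deformation-theoretic formulation) is used.
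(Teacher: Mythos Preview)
Your outline for (1), (3), (4), and (6) tracks the paper's argument. In (2) you also reach the right conclusion, but note that your injection-of-derivations step is both unnecessary and unproved: once you have $F^0T=T_{\bb F\Lambda}$ and $\dim F^0T=\mb F$ (equality, from smoothness, not just $\ge$), the bound $\dim T\ge \mb F+r$ follows from $C\in Z$ and $\dim Z=\mb F+r$, and (2) drops out. The paper proves $F^0T=T_{\bb F\Lambda}$ via Bennett's theorem on standard bases (a first-order deformation lies in $\bb F\Psi$ iff the orders of a standard basis are preserved), which is the substantive content you flag as ``the main obstacle''.

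The genuine gap is in (5). You try to show $d\theta$ is bijective at \emph{every} preimage of $Z\sm$ and deduce \'etaleness there. But your surjectivity onto $T_{C',Z}$ needs the derivations to be linearly independent modulo $F^0T$ at $C'$, and your sketch (``act trivially on the top of $G_\bu I$, contradicting I-compressed'') does not establish that; for special $C'$ the lowest graded piece of the ideal can sit inside the kernel of a single derivation. In case~(a) the hypothesis concerns only the one point $C$; in case~(b) your generic-smoothness argument gives \'etaleness only on a dense open $U\subset Z\sm$, not on all of $Z\sm$. The paper proceeds differently: it first proves the induced map $\theta'_R:\bb P_R\to Z$ is \emph{birational}---in case~(a) by showing $d\theta$ is an isomorphism at the single point $(C,0)$ (using trivial negative tangents to make $d$ in the sequence $\Der_K(A_K,C)/F^0\to T/F^0T\to T^1(C)/F^0T^1(C)\to 0$ surjective), hence $\theta$ is smooth there; in case~(b) by noting $\theta$ is radicial and $\car R=0$. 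Then, since $\theta^{-1}Z\sm\to Z\sm$ is bijective on field-valued points, birational, and has normal target, Zariski's Main Theorem gives the isomorphism on all of $Z\sm$. You are missing this birationality\,+\,ZMT step that passes from information at a single point (or on a dense open) to the isomorphism over the full smooth locus.
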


\begin{proof}
  {\bf For (1),} note that $\bb F\Lambda_{A_R}^{\{\mb h_m\}}$ is smooth
and absolutely irreducible of dimension $\mb F$ owing to \eqref{coFdim}.
So $\bb P_R$ is smooth and absolutely irreducible of dimension $\mb
F+r$.  Thus $Z$ is absolutely integral.  But $\theta$ is injective on
field-valued points; hence $\theta$ cannot have a fiber of positive
dimension.  Thus $\dim Z =\mb F+r$.

Suppose $Z$ is a component.  Then necessarily it’s elementary, as the
$K'$-points of $\bb P$ for every algebraically closed field $K'\supset
k$ are the quotients $C'$ of $A_{K'}$ such that $\Spec(C')$ is supported
at a $K'$-point, so is absolutely irreducible.  Moreover, $Z$ contains a
nonempty open subscheme $U$ that's open in $\bb H_R$.  Since $Z$ is
integral of dimension $\mb F+r$, so is $U$.  Thus $U$ contains a point
at which $\bb H_R$ is smooth of dimension $\mb F+r$.

Conversely, suppose $Z$ contains a scheme point $z$ at which $\bb H_R$
is smooth of dimension $\mb F+r$.  Then $z$ lies in a unique integral
component $Z'$ of $\bb H_R$ of dimension $\mb F+r$.  But $Z$ is
integral; so $Z$ lies in some irreducible component $Z''$ of $\bb H$.
But $z\in Z$; so $Z''= Z'$.  But $\dim Z = \dim Z'$.  Thus $Z = Z'$.
Thus (1) holds.

 {\bf For (2),} let $T_{\bb F}$ be the tangent space at the $K$-point
$C$ to $\bb F\Lambda_{A_K}^{\{\mb h_m\}}$.  First, let's prove $T_{\bb
F}= F^0T$.

Recall that $\bb H \Lambda_{A}^{\{\mb h_m\}}$ is an open subscheme of\/
$\bb H \Psi_{A}^{\mb h_{\bar s}}$ by \eqref{prSmOp}(1).  But $\bb F
\Psi_{A}^{\mb h_{\bar s}}$ is the preimage of\/ $\bb H
\Lambda_{A}^{\{\mb h_m\}}$ in $\bb F \Psi_{A}^{\mb h_{\bar s}}$ under
the retraction map by definition; see \eqref{sbMLev}.  Thus $T_{\bb F}$
is also the tangent space at $C$ to $\bb F \Psi_{A}^{\mb h_{\bar s}}$.
Note $T_{\bb F}\subset T$.

Let $I$ be the ideal of $C$.  Fix a standard basis $\{f_i\}$ of $I$.  A
$\tau\in T = \Hom_A(I,C)$ corresponds to a flat deformation of $C$ over
the dual numbers $K[e]$; say its ideal has basis $\{f_i+g_ie\}$ with
$g_i\in A$.  Then $\tau \in T_{\bb F}$ iff the initial terms of the
$f_i+g_ie$ define a flat deformation of $G_\bu C$.  The latter holds iff
the values $v(f_i)$ and $v(f_i+g_ie)$ are equal for all $i$ by Bruce
Bennett's \cite{Bennett}*{Thm.\,(2.13). p.\,29}; so iff $v(f_i) \le
v(g_i)$ for all $i$.  But $g_i$ reduces to $\tau(f_i)\in C$.  So if
$\tau \in T_{\bb F}$, then $\tau \in F^0T$.  Conversely, if $\tau \in
F^0T$, then replace $g_i$ with another lifting of $\tau(f_i)$ so that
$v(f_i) \le v(g_i)$.  Thus $T_{\bb F}= F^0T$.

Note that $\bb F\Lambda_{A}^{\{\mb h_m\}}$ is smooth and irreducible of
fiber dimension $\mb F$ by \eqref{coFdim}.  So $\dim F^0T = \mb F$.  So
$\dim_K(T/F^0T) + \mb F = \dim_K(T)$.  Thus if $\dim_K(T/F^0T) \le r$,
then $\dim_K(T) \le \mb F+r$, and if $\dim_K(T) = \mb F+r$, then
$\dim_K(T/F^0T) = r$.

Note $\dim Z =\mb F+r$ by (1).  Hence $\bb H_K$ is of dimension at least
$\mb F+r$ at the $K$-point $C$.  Thus $\dim_K(T) \ge \mb F+r$.  Hence,
if $\dim_K(T) \le \mb F+r$, then $\dim_K(T) = \mb F+r$, and so
$\dim_K(T/F^0T) = r$.  Thus (2) holds.

{\bf For (3),} consider the standard right-exact sequence
\begin{equation}\label{eqLift1}
\Der_K(A_K,C)\big/F^0\Der_K(A_K,C) \xto{\ d\ } T\big/F^0T
                     \to T^1(C)\big/F^0T^1(C)\to 0.
  \end{equation}
 So $C$ has trivial negative tangents, or $T^1(C)/F^0T^1(C) = 0$, iff
$d$ is surjective.  But $\Omega^1_{A/k} = A(-1)^{\oplus r}$.  So
$\Der_K(A_K,C) = C[1]^{\oplus r}$.  So the first term in \eqref{eqLift1}
is of $K$-dimension $r$.  Thus (3) holds.

{\bf For (4),} note $\dim_K T = \mb F+r$ by (2).  But (1) imples $\bb
H_K$ is of dimension at least $\mb F+r$ at $C$.  So $\bb H_K$ is smooth
of dimension $\mb F+r$ at $C$.  Hence, $\bb H_R$ is too by the Jacobian
criterion.  Thus (4) holds.

{\bf For (5),} note $Z$ and $\bb P_R$ are integral of dimension $\mb
F+r$ by (1) and its proof.  Recall $\theta$ is injective on field-valued
points; that is, $\theta$ is radicial, see \cite{EGAIihes}*{(3.5.4),
p.\,115}.  So $\theta_R$ is purely inseparable by
\cite{EGAIihes}*{(3.5.8), p.\,116}.  Thus (b) implies $\theta$ induces a
birational map $\theta'_R\: \bb P_R \to Z$.  Let's prove (a) too implies
$\theta'_R$ is birational.

Assume (a).  Then the map $d$ in \eqref{eqLift1} is surjective.  Its
source is plainly equal to $\Der_K(A_K,K)$.  And under this
identification, $d$ becomes the tangent map of the restriction of
$\theta$ to the fiber over $C$ of the projection from $\bb P$ onto its
first factor.  The proof of (1) yields $T_{\bb F}= F^0T$.  Hence the
tangent map $d\theta'_R$ is surjective at $(C,0)$.  But its source is of
dimension $\mb F+r$, as $\bb P_R$ is smooth and irreducible of dimension
$\mb F+r$ by the proof of (1).  And $\dim_K(T) = \mb F+r$ by (2) and
(3).  Thus $d \theta_R$ is an isomorphism of $K$-vector spaces at
$(C,0)$.

Hence $\theta_K$ is smooth at $(C,0)$ by d)\,\implies\,a) of
\cite{EGAIV}*{(17.11.1), p.\,82}, as the scheme points of $P_K$ and $\bb
H_K$ representing $(C,0)$ and $C$ have the same residue field, namely
$K$.  So $\theta_R$ is smooth at its $K$-point $(C,0)$.  But $\theta_R$
is purely inseparable, as noted above.  Thus $\theta'_R\: \bb P_R \to Z$
is birational.

Hence, $\theta^{-1}Z\sm \to Z\sm$ is bijective and birational.  But
$Z\sm$ is smooth, so normal.  Thus \cite{EGAIII0}*{(4.4.9), p.\,137}
implies $\theta^{-1}Z\sm \to Z\sm$ is an isomorphism.

Finally, replace $K$ by its algebraic closure.  Then $C'$ is supported
at a $K$-point of $\bb A_K^r$.  Change coordinates so that this point is
the origin.  First, assume $C'$ has trivial negative tangents. Then
$H_R$ is smooth of dimension $\mb F+r$ at $C'$ by (3) and (4) with $C'$
for $C$.  Hence, $Z$ is a component of $\bb H_R$ by (1), and it's the
only component containing $C'$.  Thus $C'$ lies in $Z\0\sm$.

Conversely, assume $C'$ lies in $Z\0\sm$.  Then $d\theta$ yields an
isomorphism from the tangent space of $\bb P_K$ at $(C',0)$ onto that,
$T'$ say, of $\bb H_K$ at $C'$, since $\theta^{-1}Z\sm \to Z\sm$ is an
isomorphism.  So let $T_{\go f}$ be the tangent space of the fiber
through $C'$ of the projection from $\bb P$ onto its first factor; then
$d\theta$ yields an isomorphism from $T_{\go f}$ onto $T'/F^0T'$, see
the reasoning above for $C$.  Furthermore, with $C'$ for $C$, the map
$d$ in \eqref{eqLift1} is therefore an isomorphism.  Thus
$T^1(C')\big/F^0T^1(C') = 0$; that is, $C'$ has trivial negative
tangents.  Thus (5) holds.

{\bf In (6),} $k$ is a domain; so $\bb F\Lambda_A^{\{\mb h_m\}}$ is
integral by \eqref{coFdim}.  So $\bb P$ is too. Thus $\Im(\theta)$ is
integral, so lies in an irreducible component $X$ of $\bb H$.

 By hypothesis, $Z$ is a component of\/ $\bb H_R$.  Also, $Z$ is
integral by (1).  Hence there's a scheme point $x \in Z\sm$ that lies in
no other component of $\bb H_R$ than $Z$.  So $x \in Z\0\sm$.  Note
$\dim Z = \mb F + r$ by (1), and $x \in X$.  So $X_R$ is smooth of
dimension $\mb F+r$ at $x$.

Consider the local rings $\mc O_{X,x}$ and $\mc O_{X_{\rm red},x}$.
Note that $\mc O_{X,x}\ox_kR = \mc O_{X_R,x}$ and $\mc O_{X_{\rm
red},x}\ox_kR = \mc O_{(X_{\rm red})_R,x}$.  But $X_R$ is smooth at $x$;
hence, $\mc O_{X_R,x}$ is reduced, so equal to $\mc O_{(X_{\rm
red})_R,x}$.  Thus $\mc O_{X,x}\ox_kR = \mc O_{X_{\rm red},x}\ox_kR$.

Let $\go n$ be the nilradical of $\mc O_{X,x}$.  So the sequence $0 \to
\go n \to \mc O_{X,x} \to \mc O_{X_{\rm red},x} \to 0$ is exact.  So it
remains exact after tensoring over $k$ with $R$, as $X_{\rm red}$ is
$k$-flat since $k$ is a Dedekind domain and $X_L \neq \emptyset$.  So
$\go n\ox_kR = 0$.  So $\go n = 0$ by Nakayama's Lemma over $\mc
O_{X,x}$.  So $\mc O_{X,x} = \mc O_{X_{\rm red},x}$.  So $\mc O_{X,x}$
is $k$-flat.  But $X_R$ is $R$-smooth of dimension $\mb F+r$ at $x$.
Hence $X$ is $k$-smooth of fiber dimension $\mb F+r$ at $x$.  But $X$
has no embedded nilpotents by the definition of the scheme structure on
an irreducible component.  Thus $X$ is an $(\mb F+r+1)$-dimensional
integral component of\/ $\bb H$.

However, $L$ is the fraction field of $k$; hence, $X_L$ is an $(\mb
F+r)$-dimensional integral component of\/ $\bb H_L$.  Now, $Y$ is
integral of dimension $\mb F+r$ by (1) with $L$ for $R$.  But $Y \subset
X_L$.  So $Y = X_L$.  Thus (6) holds.
 \end{proof}

\begin{remark}\label{rePGor}
 Preserve the conditions of \eqref{prLift}.  Recall the first step in
the proof of (1): it shows $T_{\bb F}= T_{\mb h} = F^0T$ where $T_{\mb
h}$ is the tangent space at $C$ to $\bb F \Psi_{A}^{\mb h_{\bar s}}$; in
fact, it shows $T_{\mb h} = F^0T$ even when $C$ isn't I-compressed.
Notice that a small modification of the proof shows that, if $C$ is
homogeneous, then the tangent spaces at $C$ to $\bb H\Lambda_A^{\{\mb
h_m\}}$ and $\bb H \Psi_{A}^{\mb h_{\bar s}}$ are equal, and equal to
$T_0$; in fact, the latter two spaces are equal even when $C$ isn't
I-compressed.

When $C$ is homogeneous, but not necessarily $I$-compressed, then the
equation $T_{\mb h} = F^0T$ was already proved in \cite{PGor}; see the
left-hand expression in the second display on p.\,615, noting the
definition of $Z_H(S)$ on p.\,614 and Remark~1.11(i) on p.\,617.
(Moreover, the right-hand expression is equal to the obstruction space
for $\bb F \Psi_{A}^{\mb h_{\bar s}}$ at $C$; this obstruction theory
agrees with Jelisiejew's \cite{JJ-JLMS2019}*{Thm.\,4.2, p.\,258}.)  The
proof of \cite{PGor}*{Thm.\,1.10, p.\,615} suggests using either
Bennett's \cite{Bennett}*{Thm.\,(2.13), p.\,29} or the lengthy
elementary argument on pp.\,616--617 in \cite{PGor} (to treat the
tangent space, but not the obstruction space); of course, Bennett's
theorem yields more results, as noted above.
 \end{remark}

\section{Compressed Quotients of Gorenstein Artinian Rings}\label{GCQ}

\begin{setup}\label{se11}
 Keep the setup of \eqref{se10}; in particular, keep the Noetherian base
scheme $S$ and the $\mc O_S$-algebra $\mc A$.  Also, fix a Noetherian
ring $k$, a $k$-flat finitely generated graded algebra $A :=
\bigoplus_{p\ge0}A_p$ with $A/F^1A = k$.

Call a field $K$ an {\it $S$-field} if $K$ contains a residue field of
 \indt{Sfie@$S$-field}
$S$.  Similarly, call a field $K$ an {\it $k$-field} if $K$ is a
 \indt{kfie@$k$-field}
$k$-algebra.

Given a graded $A$-module $M = \bigoplus_p M_p$ and $g_i\in A_{d_i}$
with $d_i>0$ for $1\le i\le m$, note that the usual Koszul complex
$K_\bu(g_1,\dotsc,g_m;M)$ is naturally graded: its terms are sums of
right-shifts of M; and its differentials are maps of degree 0.  For
example, $K_\bu(g_1;M)$ is the complex $M(-d_1) \to M$ whose
differential is multiplication by $g_1$.  Furthermore,
\begin{equation*}\label{eqse11a}
 K_\bu(g_1,\dotsc,g_m;M) = K_\bu(g_1;A) \ox_A \dotsb
    \ox_A K_\bu(g_m;A) \ox_A M,
 \end{equation*}
 and the grading on the left-hand side is induced by that on the factors
on the right.

Fix an indeterminate $z$.  If $M_p$ is locally free of rank $\mb m(p)$
 \indn{HmbrMz@$\mb H(M,z)$}
for all $p$, set
                $$\ts\mb H(M,z) := \sum_p \mb m(p)z^p.$$

  Given Laurent series $\mb A(z) = \sum a_p z^p$ and $\mb B(z) = \sum
b_p z^p$ and $\mb C(z) = \sum c_p z^p$ with real coefficients and $c_q =
0$ for $q < 0$, let $\mb A(z) = \mb B(z) \mod z^n$ mean $a_p = b_p$ for
$p < n$.  Then $\mb A(z)\mb C(z) = \mb B(z)\mb C(z) \mod z^n$.  Also,
let $\mb A(z) \le \mb B(z) \mod z^n$ mean $a_p \le b_p$ for $p < n$.  If
$c_q \ge 0$ for $q\ge 0$, then $0\le\big(\mb B(z) - \mb A(z)\big)\mb
C(z) \mod z^n$ and so $\mb A(z)\mb C(z) \le \mb B(z)\mb C(z) \mod z^n$.
Moreover, let $\mb A(z) \le \mb B(z)$ mean $a_p \le b_p$ for all $p$.
If $0 \le \mb C(z)$, then $\mb A(z)\mb C(z) \le \mb B(z)\mb C(z)$.
 \end{setup}

\begin{lemma}\label{leCplx}
 Let $M \xto\mu N \xto\nu P$ be a sequence of filtered $A$-modules.
Assume the filtrations are exhaustive, and there's $n$ with $G_pM \to
G_pN \to G_pP$ exact for all $p < n$ and with\/ $\nu\mu M \subset
F^{n+1}P$.  Set $D := \Im\nu$ and $F^pD := D \cap F^pP$ for all $p$.

\(1) Then $F^qM/F^pM \to F^qN/F^pN \to F^qP/F^pP$ is exact for all $q <
p \le n$.

\(2) Then $G_p D = \Im G_p\nu$ for all $p\le n$.

\(3) Fix $p<n$. Assume $(M/F^{p+1}M)\ox_Ak \to (N/F^{p+1}N)\ox_Ak$
vanishes, and assume $((G_\bu N)\ox_Ak)_p \risom G_p(N\ox_Ak)$.  Then
$((G_\bu D)\ox_Ak)_p \risom G_p(D\ox_Ak)$.
 \end{lemma}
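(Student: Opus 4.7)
The plan is to prove the three parts in order, each feeding into the next. For part~(1), I would first observe that the composition $F^qM/F^pM \to F^qN/F^pN \to F^qP/F^pP$ vanishes because $\nu\mu M \subset F^{n+1}P \subset F^pP$ whenever $p \le n$. For exactness at the middle term, given $n \in F^qN$ with $\nu n \in F^pP$, I would iteratively use the given exactness of $G_{q'}M \to G_{q'}N \to G_{q'}P$ at each $q' = q, q+1, \ldots, p-1$: at each step the image of $n$ in $G_{q'}N$ vanishes in $G_{q'}P$ (since $\nu n \in F^pP \subset F^{q'+1}P$), so there is $m_{q'} \in F^{q'}M$ with $n \equiv \mu m_{q'} \pmod{F^{q'+1}N}$; after $p-q$ steps the remainder lies in $F^pN$. (Alternatively, a Snake Lemma induction on $p-q$ using the short exact sequences $0 \to G_{p-1}X \to F^qX/F^pX \to F^qX/F^{p-1}X \to 0$ for $X = M, N, P$ works.)

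For part~(2), only $G_pD \subset \Im G_p\nu$ needs work. Given $d \in F^pD$, lift it to some $n_0 \in F^qN$ (using exhaustiveness of the filtration) with $\nu n_0 = d$. Applying part~(1) with this $q$, since $\nu n_0 = d \in F^pP$, there exist $m \in F^qM$ and $n \in F^pN$ with $n_0 = \mu m + n$. Then $d - \nu n = \nu\mu m \in F^{n+1}P \cap D = F^{n+1}D \subset F^{p+1}D$ since $p \le n$, so $d$ and $\nu n$ agree in $G_pD$.

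For part~(3), the canonical surjection $((G_\bu D) \ox_A k)_p \onto G_p(D \ox_A k)$ from \eqref{leDiag} must be shown injective. Unwinding definitions reduces this to the containment
\[
  F^pD \cap F^1A \cdot D \;\subset\; F^{p+1}D + X_p, \qquad X_p := \sum_{q \ge 1} F^qA \cdot F^{p-q}D.
\]
Hypothesis (b) provides the analogous containment on $N$ with $Y_p := \sum_{q \ge 1} F^qA \cdot F^{p-q}N$ in place of $X_p$, and hypothesis (a) unwinds to $\mu M \subset F^{p+1}N + F^1A \cdot N$. The strategy is to lift from $D$ back to $N$: given $d = \sum a_i e_i$ with $a_i \in F^1A$ and $e_i \in D$, pick preimages $e_i = \nu f_i$ and set $n := \sum a_i f_i \in F^1A \cdot N$, so that $\nu n = d \in F^pP$. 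By part~(1) (applied with $q \ll 0$), write $n = \mu m + n_p$ with $n_p \in F^pN$; then $d \equiv \nu n_p \pmod{F^{p+1}D}$ because $\nu\mu m \in F^{n+1}D \subset F^{p+1}D$. Hypothesis (a) now puts $n_p = n - \mu m$ into $F^1A \cdot N + F^{p+1}N$; intersecting with $F^pN$ and invoking hypothesis (b) gives $n_p \in F^{p+1}N + Y_p$; finally, applying $\nu$ and using $\nu F^{p-q}N \subset F^{p-q}D$ places $\nu n_p \in F^{p+1}D + X_p$, so $d \in F^{p+1}D + X_p$, as required.

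The main obstacle, and the step that drives the whole strategy in part~(3), is the carefully chosen lift $n = \sum a_i f_i$ lying in $F^1A \cdot N$ (rather than an arbitrary preimage of $d$ under $\nu$): this is the bridge that transports hypotheses (a) and (b)---which live on $M$ and $N$---to the structural question about $D$. The remainder of the argument is a formal filtration chase.
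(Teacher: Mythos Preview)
Your proof is correct. Parts (1) and (2) match the paper's argument essentially verbatim: the paper invokes biexactness (Proposition~\ref{prBiex}) where you sketch the explicit step-by-step induction, and for (2) both of you lift $d$ to $N$, split via (1), and use $\nu\mu M \subset F^{n+1}P$.

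For part (3) your route is genuinely different from the paper's. The paper sets up the commutative square
\[
\begin{CD}
((G_\bu N)\ox_Ak)_p @>\nu_t>> ((G_\bu D)\ox_Ak)_p \\
     @VV\simeq V                          @VVV\\
G_p(N\ox_Ak)    @>\nu_b>>      G_p(D\ox_Ak)
\end{CD}
\]
and then reduces to the case $F^nM = F^nN = F^nP = 0$ (observing that neither the hypotheses nor the conclusion see the terms of filtration degree $\ge n$). After that reduction, it proves two auxiliary facts --- $\nu_t$ is surjective and $\nu_b$ is an isomorphism --- by separate diagram chases on quotient modules; the square then forces the right vertical to be injective. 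Your argument bypasses all of this: you unwind the injectivity statement into the concrete containment $F^pD \cap F^1A\cdot D \subset F^{p+1}D + X_p$ and prove it by a single element chase, with the clever move of choosing the lift $n = \sum a_i f_i$ to lie in $F^1A\cdot N$ from the start. Your approach is shorter and avoids the truncation step; the paper's approach has the advantage that its intermediate result $G_p(N\ox_Ak)\risom G_p(D\ox_Ak)$ is a clean structural statement that could be reused, and the square makes transparent why the two hypotheses on $N$ and on $M\to N$ conspire to give the conclusion on $D$.
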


\begin{proof}
  {\bf In (1),} as $\nu\mu M \subset F^{n+1}P \subset F^pP$, the
sequence is a complex.  So it's exact as $G_rM \to G_rN \to G_rP$ is
exact for $q \le r < p$ and $L \mapsto G_\bu L$ is biexact by
\eqref{prBiex}.

{\bf For (2),} fix $p$.  Note $G_p D \subset G_p P$ as $F^{p+1} D := (
D) \cap F^{p+1}P$.  So we have to show $G_p N \to G_p D$ is surjective.
Thus given $z\in F^p D$, we have to find $y\in F^pN$ with $\nu y - z \in
F^{p+1}P$.  But $D := \Im\nu$; so $z = \nu y'$ with $y' \in F^qN$ for
some $q$.

As $z\in F^p D$, owing to (1) there's $x \in F^qM$ with $y := y'-\mu
x \in F^pN$.  Then $\nu y = \nu y' - \nu\mu x$.  But $\nu\mu x \in
F^{n+1}P \subset F^{p+1}P$.  Also $z = \nu y'$.  Thus $\nu y - z \in
F^{p+1}P$.

{\bf For (3),} form this induced commutative square with surjective
right column:
 \begin{equation*}\label{eqeqCplx}
 \begin{CD}
 ((G_\bu N)\ox_Ak)_p @>\nu_t>> ((G_\bu D)\ox_Ak)_p \\
          @VV\simeq V                          @VVV\\
     G_p(N\ox_Ak)    @>\nu_b>>      G_p(D\ox_Ak)
 \end{CD}
 \end{equation*}
 To prove (3), let's prove that $\nu_t$ is surjective and that $\nu_b$
is an isomorphism.

Given any graded $A$-module $L$, note that $(L\ox_Ak)_p =
L_p\big/((F^1A)L)_p$ and that $((F^1A)L)_p = \sum_{q>0}A_qL_{p-q}$.
Hence the hypotheses of (3) and its assertion don't involve $F^nM$ and
$F^nN$ and $F^nP$.  Thus replacing $M$ and $N$ and $P$ by $M/F^nM$ and
$N/F^nN$ and $P/F^nP$, we may assume $F^rM$ and $F^rN$ and $F^rP$ vanish
if $r\ge n$.

For all $r$, let's see $M/F^rM \to N/F^rN \to P/F^rP$ is now exact.
Indeed, given $y \in N$ with $\nu y \in F^rP$, say $y \in F^qN$.  Note
(1) holds now with $p$ replaced by any $r$.  Thus (1) yields $x \in
F^qM$ with $\mu x -y \in F^rN$, as desired.

For all $r$, the image of $N/F^rN$ in $P/F^rP$ is plainly $D/(D\cap
F^rP)$.  However, $F^rD := D \cap F^rP$.  Thus $M/F^rM \to N/F^rN \to
D/F^rD \to 0$ is exact.

For all $r$, form the following commutative diagram with exact rows and
columns:
 \begin{equation*}\label{eqeqCplxa}
 \begin{CD}
     @.       @.  M/F^{r+1}M @>>> M/F^rM @>>> 0\\
 @.      @.          @VVV           @VVV\\
 0 @>>> G_rN @>>> N/F^{r+1}N @>>> N/F^rN @>>> 0\\
 @.  @VV\gamma V  @VV\beta V    @VV\alpha V\\
 0 @>>> G_rD @>>> D/F^{r+1}D @>>> D/F^rD @>>> 0
 \end{CD}
 \end{equation*}
 As $\alpha$ and $\beta$ are surjective, a diagram chase shows $\gamma$
is too.  So $G_\bu N \to G_\bu D$ is surjective; so $(G_\bu N)\ox_Ak \to
(G_\bu D)\ox_Ak$ is too.  Thus $\nu_t$ is surjective, as desired.

For all $r$, note $(M/F^rM)\ox_Ak \to (N/F^rN)\ox_Ak \to (D/F^rD)\ox_Ak
\to 0$ is exact owing to the second paragraph above.  The first map
vanishes for $r := p+1$ by hypothesis.  So it vanishes for all $r \le
p+1$, as $(M/F^{p+1}M)\ox_Ak \to (M/F^rM)\ox_Ak$ is surjective.  Thus
$(N/F^rN)\ox_Ak \risom (D/F^rD)\ox_Ak$ for all $r \le p+1$.

But for any filtered $A$-module $L$, note $(L/F^rL)\ox_Ak =
(L\ox_Ak)\big/F^r(L\ox_Ak)$ as $F^r(L\ox_Ak)$ is the image of
$(F^rL)\ox_Ak$ in $L\ox_Ak$.  Hence, in the following commutative
diagram, the right two vertical maps are isomorphisms, as indicated:
 \begin{equation*}\label{eqeqCplxb}
 \begin{CD}
 0 @>>> G_p(N\ox_Ak) @>>> (N\ox_Ak)\big/F^{p+1}(N\ox_Ak)
                                @>>> (N\ox_Ak)\big/F^p(N\ox_Ak) @>>> 0\\
 @.        @V\nu_bVV              @VV\simeq V  @VV\simeq V\\
 0 @>>> G_p(D\ox_Ak) @>>> (D\ox_Ak)\big/F^{p+1}(D\ox_Ak)
                                @>>> (D\ox_Ak)\big/F^p(D\ox_Ak) @>>> 0          
 \end{CD}
 \end{equation*}
 The horizontal sequences are exact. So $\nu_b$ is an isomorphism.  Thus
(3) holds.
 \end{proof}

\begin{lemma}\label{leInj}
 Fix $m$, and for $1\le i\le m$, fix nonzero $g_i\in A_{d_i}$ with
$d_i>0$.  Fix a $k$-flat, finitely generated, graded $A$-module $M =
\bigoplus_p M_p$.  Set $M^{(0)} := M$.  For $1\le q\le m$, set $M^{(q)}
:= M\big/\sum_{i=1}^qg_iM$, set $\mb P^{(q)}(z) := \prod_{i=1}^q
(1-z^{d_i})$, and for all $p$, define $\mu^{(q)}_p\: M^{(q-1)}_{p-d_q}
\to M^{(q-1)}_p$ by $\mu^{(q)}_p(x) := g_q x$.  Fix $n$.

\(1) Assume $k$ is a field.  Then $\mb H(M,z) \le \mb H(M^{(m)},z) \big/
\mb P^{(m)}(z)$.  Further, $\mu^{(q)}_p$ is injective for $1\le q\le m$
and all $p < n$ iff $\mb H(M,z) = \mb H(M^{(q)},z) \big/ \mb P^{(q)}(z)
\mod z^n$ for $1\le q\le m$, iff $\mb H(M,z) = \mb H(M^{(m)},z) \big/
\mb P^{(m)}(z) \mod z^n$.

\(2) Then $\mu^{(q)}_p$ is injective and $M^{(q)}_p$ is $k$-flat for
$1\le q\le m$ and all $p < n$ iff $\mb H(M,z) = \mb
H((M\ox_kK)^{(m)},z) \big/ \mb P^{(m)}(z) \mod z^n$ for every $k$-field
$K$.

\(3) For all $r\ge 1$, the Koszul homology group $H_r(g_1,\dotsc,g_m;M)$
is $0$ in degree $p < n$, iff $H_1(g_1,\dotsc,g_m;M)$ is so, iff
$\mu^{(q)}_p$ is injective for $1\le q\le m$ and all $p < n$.
 \end{lemma}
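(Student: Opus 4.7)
The unifying observation is that, for each $q$, the right exact sequence
\[
  M^{(q-1)}(-d_q) \xrightarrow{\ \mu^{(q)}\ } M^{(q-1)} \to M^{(q)} \to 0
\]
becomes short exact in a given degree exactly when $\mu^{(q)}$ is injective in that degree. I will run this observation three ways.

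For (1), assume $k$ a field. Taking Hilbert series in the right exact sequence above gives
\[
  (1-z^{d_q})\,\mb H(M^{(q-1)},z) \le \mb H(M^{(q)},z),
\]
with equality modulo $z^n$ iff each $\mu^{(q)}_p$ is injective for $p<n$. Dividing and iterating over $q = 1,\dots,m$, using that $\mb P^{(q)}(z)$ is a polynomial in $z$ of positive coefficients in the relevant sense—more carefully, using the monotonicity remarks at the end of \eqref{se11}—gives the global bound $\mb H(M,z) \le \mb H(M^{(m)},z)/\mb P^{(m)}(z)$, and shows the three stated equivalences: the intermediate equalities modulo $z^n$ for each $q$ telescope to the single equality at $q=m$, and conversely an equality modulo $z^n$ at $q=m$ forces equality at each intermediate $q$ because each step can only shrink the series.

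For (2), I will relate the arithmetic criterion of (1) to $k$-flatness via base change. Fix $p<n$ and proceed by induction on $q$: if $M^{(q-1)}_r$ is $k$-flat for all $r \le p$ and $\mu^{(q)}_r$ is $k$-injective for $r \le p$, then the right exact sequence in degrees $\le p$ becomes short exact and shows $M^{(q)}_r$ is $k$-flat. This gives the ``only if'' direction by applying (1) over an arbitrary $k$-field $K$, since $(M\otimes_k K)^{(q)} = M^{(q)}\otimes_k K$ and $\mb H(M\otimes_k K,z) = \mb H(M,z)$ by $k$-flatness of $M$. For the converse, assume the displayed equality over every $k$-field $K$. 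Then (1) applied over $K$ gives injectivity of $\mu^{(q)}_p \otimes_k K$ for all $q \le m$ and $p < n$ and every $K$. Using \eqref{leBC1}(3) and the local criterion of flatness applied to $M^{(q-1)}_{p-d_q} \to M^{(q-1)}_p$ with $M^{(q-1)}$ finitely presented over $k$ (which follows inductively from $k$-flatness and finite generation of $M$), we conclude by induction on $q$ that each $\mu^{(q)}_p$ is injective and $M^{(q)}_p$ is $k$-flat.

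For (3), I will use the standard recursion $K_\bu(g_1,\dots,g_m;M) = K_\bu(g_m;A)\otimes_A K_\bu(g_1,\dots,g_{m-1};M)$ and induct on $m$. The case $m=1$ is immediate: $H_1(g_1;M)_p = \ker(\mu^{(1)}_p)$ and higher $H_r$ vanish. For the inductive step, the short exact sequence of complexes yields a long exact sequence
\[
  \cdots \to H_{r}(g_1,\dots,g_{m-1};M)_{p-d_m} \xrightarrow{g_m} H_{r}(g_1,\dots,g_{m-1};M)_p \to H_{r}(g_1,\dots,g_m;M)_p \to H_{r-1}(g_1,\dots,g_{m-1};M)_{p-d_m} \to \cdots
\]
Running this with the induction hypothesis collapses the vanishing of $H_r(g_1,\dots,g_m;M)$ in degrees $<n$ for all $r\ge 1$ to the joint vanishing of $H_r(g_1,\dots,g_{m-1};M)_{<n}$ for $r\ge 1$ and the injectivity of multiplication by $g_m$ on $H_0(g_1,\dots,g_{m-1};M)_p = M^{(m-1)}_p$, i.e. the injectivity of $\mu^{(m)}_p$. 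The ``iff $H_1 = 0$'' half follows likewise by tracking $H_1$ through the recursion.

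The main technical obstacle is step (2): one must carefully align the abstract arithmetic equality of Hilbert series after every base change with the algebraic statement ``injective with flat cokernel in every degree $<n$''. The rest is bookkeeping with short exact sequences and the Koszul recursion.
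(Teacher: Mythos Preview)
Your plan matches the paper's proof closely: the same right-exact sequence and telescoping inequality for (1), the local criterion of flatness for (2) (the paper inducts on $m$ rather than $q$, but either works), and the Koszul recursion for (3). Two small points to tighten: your citation of \eqref{leBC1}(3) is misplaced, since that lemma concerns isomorphisms rather than injections---the local criterion of flatness alone does the work once you know $M^{(q-1)}_p$ is flat and $(\mu^{(q)}_p)_K$ is injective for every residue field $K$; and in (3), the implication ``$H_1(g_1,\dots,g_m;M)_{<n}=0 \Rightarrow$ all $\mu^{(q)}_p$ injective'' needs one more idea than ``tracking $H_1$'': from the recursion you get $H_1(L_\bu)_p = g_m\cdot H_1(L_\bu)_{p-d_m}$ for $p<n$, and you must use $d_m>0$ together with the fact that $H_1(L_\bu)$ is bounded below in degree to iterate this down to zero, which the paper makes explicit.
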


\begin{proof} {\bf For (1),} cf.\ \cite{RStan78}*{Cor.\,3.2, p.\,62}.
Given $p$, form this right exact sequence:
 \begin{equation}\label{eqInj1}
  M^{(m-1)}_{p-d_m} \xto{\mu^{(m)}_p} M^{(m-1)}_p \to M^{(m)}_p \to 0.
 \end{equation}
 Set $\mb m^{(j)}(p) := \dim M^{(j)}_p$.  Then \eqref{eqInj1} yields
$\mb m^{(m-1)}_p - \mb m^{(m-1)}_{p-d_m} \le \mb m^{(m)}_p$, with
equality iff $\mu^{(m)}_p$ is injective.  Thus
 \begin{equation}\label{eqInj4}
 \mb
H(M^{(m-1)},z)(1-z^{d_m}) \le \mb H(M^{(m)},z).
 \end{equation}
 Also, $\mb H(M^{(m-1)},z) (1-z^{d_m}) = \mb H(M^{(m)},z) \mod z^n$ iff
$\mu^{(m)}_p$ is injective for $p < n$.

Induction on $m$ gives $\mb H(M,z) \le \mb H(M^{(m-1)},z) \big/ \mb
P^{(m-1)}(z)$.  Now, multiply \eqref{eqInj4} by $1\big/\mb P^{(m)}(z)$.
The inequality is preserved as $1\big/\mb P^{(m)}(z)$ has nonnegative
coefficients.  But $\mb P^{(m)}(z) = \mb P^{(m-1)}(z)(1-z^{d_m})$.  Thus
\begin{equation}\label{eqInj3+}
 \mb H(M,z) \le  \mb H(M^{(m-1)},z) \big/ \mb P^{(m-1)}(z)
\le \mb H(M^{(m)},z) \big/ \mb P^{(m)}(z)
 \end{equation}

Further, suppose $\mu^{(q)}_p$ is injective for $1\le q\le m$ and all $p
< n$.  Then by induction, $\mb H(M,z) = \mb H(M^{(q)},z) \big/ \mb
P^{(q)}(z) \mod z^n$ for $1\le q < m$.  But $\mu^{(m)}_p$ is injective
for $p < n$.  So  $\mb H(M^{(m-1)},z) (1-z^{d_m}) = \mb H(M^{(m)},z)
\mod z^n$.  Thus also
 $$\mb H(M,z) = \mb H(M^{(m-1)},z) \big/ \mb P^{(m-1)}(z)
              = \mb H(M^{(m)},z) \big/ \mb P^{(m)}(z) \mod z^n. $$

Conversely, suppose $\mb H(M,z) = \mb H(M^{(m)},z) \big/ \mb P^{(m)}(z)
\mod z^n$.  Then in \eqref{eqInj3+}, the two inequalities become
equalities mod $z^n$.  So by the first and by induction,
$\smash{\mu^{(q)}_p}$ is injective for $1\le q < m$ and all $p < n$.
But as $\mb P^{(m)}(z)$ is an ordinary polynomial, the second equality
implies $\mb H(M^{(m-1)},z) (1-z^{d_m}) = \mb H(M^{(m)},z) \mod z^n$.
So as shown above, $\mu^{(m)}_p$ is injective for $p < n$.  Thus (1)
holds.

{\bf For (2),} first assume $\mu^{(q)}_p$ is injective and $M^{(q)}_p$ is
$k$-flat for $1\le q\le m$ and all $p < n$.  For any $k$-algebra $K$,
then $\mu^{(q)}_p\ox_kK$ is injective and $M^{(q)}_p\ox_kK$ is $K$-flat.
But $\mb H(M,z) = \mb H(M\ox_kK,z)$ and $M^{(q)}_p\ox_kK =
(M\ox_kK)^{(q)}$.  Thus (1) yields $\mb H(M,z) = \mb
H((M\ox_kK)^{(q)},z) \big/ \mb P^{(m)}(z) \mod z^n$ for every $k$-field
$K$.

Conversely, assume $\mb H(M,z) = \mb H((M\ox_kK)^{(m)},z) \big/ \mb
P^{(m)}(z) \mod z^n$ for every $k$-field $K$.  Then (1) implies $\mb
H(M,z) = \mb H((M\ox_kK)^{(m-1)},z) \big/ \mb P^{(m-1)}(z) \mod z^n$ for
every $k$-field $K$.  So by induction on $m$, then $\mu^{(q)}_p$ is
injective and $M^{(q)}_p$ is $k$-flat for $1\le q < m$ and all $p < n$.
In particular, $M^{(m-1)}_p$ is flat.  Also, (1) implies
$\mu^{(m)}_p\ox_kK$ is injective.  So by the Local Criterion of
Flatness, $\mu^{(m)}_p$ is injective and $M^{(m)}_p$ is $k$-flat for all
$p < n$.  Thus (2) holds.

{\bf For (3),} cf.\ \cite{SerreAlLoc}*{Prop.\,3, p.\,IV-5}.  Form the
Koszul complexes $K_\bu := K_\bu(g_m;A)$ and $L_\bu :=
K_\bu(g_1,\dotsc,g_{m-1};M)$.  Then \cite{SerreAlLoc}*{Prop.\,1,
p.\,IV-2} yields, for all $r$, the following exact sequence of graded
modules and maps of degree 0: \begin{equation}\label{eqInj2}
 0 \to H_0(K_\bu\ox_A H_r(L_\bu)) \to  H_r(K_\bu\ox_A L_\bu)
    \to H_1(K_\bu\ox_A H_{r-1}(L_\bu)) \to 0.
 \end{equation}
 
First assume $\mu^{(q)}_p$ is injective for $1\le q\le m$ and $p < n$.
Induct on $m$.  So assume $H_r(L_\bu)_p = 0$ for $r \ge 1$ and $p < n$.
So $H_0(K_\bu\ox_A H_r(L_\bu))_p = 0$, and if $r\ge 2$, then
$H_1(K_\bu\ox_A H_{r-1}(L_\bu))_p = 0$.  But $H_0(L_\bu)) = M^{(m-1)}$.
Furthermore, $K_\bu\ox_A M^{(m-1)} = K_\bu(g_m,M^{(m-1)})$.  So
$H_1(K_\bu\ox_A H_0(L_\bu))_p = 0$ as $\mu^{(m)}_p$ is injective.  Thus
\eqref{eqInj2} yields $H_r(g_1,\dotsc,g_m;M)_p = 0$, as desired.

Finally, fix $p < n$ and assume $H_1(g_1,\dotsc,g_m;M)_p = 0$.  Then
\eqref{eqInj2} yields $\Ker\mu^{(m)}_p = 0$ and $H_1(L_\bu)_p \big/
(g_m\cdot H_1(L_\bu)_{p-d_m}) = 0$.  As $d_m > 0$, the latter yields
$H_1(L_\bu)_p = 0$.  So $\Ker\mu^{(q)}_p=0$ for $1\le q< m$ by induction
on $m$. Thus (3) holds.
 \end{proof}

\begin{proposition}\label{prOpens}
 Fix $d_1,\dotsc,d_m >0$, and set $\mb P(z) := \prod_{i=1}^m(1-z^{d_i})$
and set $\bb P := \prod_{i=1}^m \bb P(\mc A^*_{d_i})$.  Fix an $S$-flat,
finitely generated, graded $\mc A$-module $\mc M = \bigoplus_p \mc M_p$.
Given an $S$-field $K$, set $A_K := \Gamma(\mc A_{\Spec K})$ and $M_K :=
\Gamma(\mc M_{\Spec K})$.  Given a $K$-point of $\bb P$, let
$g_1,\dotsc,g_m \in (A_K)_{d_i}$ correspond to it.  Set $M_K^{(m)} :=
M_K\big/\sum_{i=1}^mg_iM_K$.  Assume $S$ is irreducible.  Fix $n\ge 1$.

\(1) There's a nonempty open subset $U$ of $\bb P$ such that, given any
$S$-field $K$, a $K$-point of $\bb P$ lies in $U$ iff $\mb
H(M_K^{(m)},z) \le \mb H(M_{K^\di}^{(m)},z)\mod z^n$ for every
$K^\di$-point of $\bb P$ for every $S$-field $K^\di$.

\(2) Assume $\mb H(M_K,z)\mb P(z) = \mb H(M_K^{(m)},z) \mod z^n$ for
some $K$-point of $\bb P$ for some $S$-field $K$.  Then given any
$S$-field $K^\di$ and any $K^\di$-point of $\bb P$, the latter lies in
the open subset $U$ of \(1) iff\/ $\mb H(M_{K^\di},z)\mb P(z) = \mb
H(M_{K^\di}^{(m)},z) \mod z^n$.
 \end{proposition}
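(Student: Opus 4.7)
The plan is to build a universal family of the modules $M_K^{(m)}$ parametrized by $\bb P$ and then apply upper-semicontinuity of fiber dimension. Let $\pi\:\bb P\to S$ denote the structure map. Each factor $\bb P(\mc A^*_{d_i})$ carries a tautological line subbundle $\mc L_i \subset \pi^*\mc A_{d_i}$ (dual to the universal quotient). Pulling these back to $\bb P$ and composing with multiplication in $\pi^*\mc M$ yields, for each $p$, a coherent $\mc O_{\bb P}$-module
\[
 \mc M^{(m)}_p \;:=\; \pi^*\mc M_p \Big/ \sum_{i=1}^m \Im\bigl(\mc L_i \otimes \pi^*\mc M_{p-d_i} \to \pi^*\mc M_p\bigr),
\]
whose fiber at any $K$-point $x$ is canonically $(M_K^{(m)})_p$, by the right-exactness of $\otimes$. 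Since $S$ is irreducible and each $\bb P(\mc A^*_{d_i}) \to S$ is a projective bundle, $\bb P$ is irreducible; let $\eta$ denote its generic point.

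For (1), set $c_p := \dim_{k(\eta)} (\mc M^{(m)}_p \otimes k(\eta))$. Upper-semicontinuity of fiber dimension shows that $U_p := \{\, x \in \bb P : \dim_{k(x)}(\mc M^{(m)}_p \otimes k(x)) \le c_p \,\}$ is a nonempty open set containing $\eta$, and hence $U := \bigcap_{p<n} U_p$ is a nonempty open subset of $\bb P$. A $K$-point $x$ lies in $U$ iff $\mb H(M_K^{(m)})(p) = c_p$ for all $p < n$. Since the minimum $c_p$ is attained at $\eta$, which defines a $k(\eta)$-point with $k(\eta)$ an $S$-field, this condition is equivalent to $\mb H(M_K^{(m)},z) \le \mb H(M_{K^\di}^{(m)},z) \mod z^n$ for every $K^\di$-point over every $S$-field $K^\di$.

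For (2), the crux is to identify $(c_p)_{p<n}$ with the first $n$ coefficients of $\mb H(M,z)\mb P(z)$, where $\mb H(M,z)$ denotes the common value of $\mb H(M_{K'},z)$ for any $S$-field $K'$ (constant since each $\mc M_p$ is locally free of constant rank on the connected $S$). Form the graded Koszul complex $\mc K_\bu := K_\bu(g_1,\dotsc,g_m;\,\pi^*\mc M)$ on $\bb P$, built from the universal sections above. In each degree $p$, this is a bounded complex of locally free $\mc O_{\bb P}$-modules of finite rank, so the fiber-dimensions $x \mapsto \dim_{k(x)} H_r(\mc K_\bu \otimes k(x))_p$ are upper-semicontinuous in $x$ for every $r$ and $p$. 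By the hypothesis of (2) at $K_0$ together with \eqref{leInj}(1),\,(3), all Koszul homologies $H_r$ with $r \ge 1$ vanish at the fiber over $K_0$ in every degree $p < n$. Upper-semicontinuity then yields an open neighborhood $V$ of $K_0$ on which the same vanishing persists; since $\bb P$ is irreducible, $\eta \in V$. Applying \eqref{leInj}(1) and (3) at $\eta$ gives $\mb H(M,z)\mb P(z) = \mb H(M_{k(\eta)}^{(m)},z) \mod z^n$; and since $\eta \in U$, the right-hand side equals $\sum_{p<n} c_p z^p$, so $c_p = [\mb H(M,z)\mb P(z)]_p$ for $p<n$. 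Reinterpreting (1) through this identification yields the equivalence in (2).

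The main obstacle is the forward direction of (2). One is tempted to conclude $\mb H(M,z)\mb P(z) \le \mb H(M_{K^\di}^{(m)},z) \mod z^n$ directly from the unconditional inequality $\mb H(M) \le \mb H(M^{(m)}_K)/\mb P(z)$ of \eqref{leInj}(1), but this fails because $\mb P(z)$ has negative coefficients and termwise inequalities are not preserved under multiplication by $\mb P(z)$. The detour through the Koszul complex, combined with upper-semicontinuity of its homology dimensions, is what circumvents this: vanishing of $H_1$ in finitely many degrees is an open condition, which propagates the desired Hilbert equality from $K_0$ to the generic point, where it pins down the minimum $(c_p)$.
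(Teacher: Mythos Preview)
Your proof is correct. Part (1) matches the paper's argument: both construct the universal quotient $\mc M^{(m)}$ on $\bb P$ and take $U$ to be the intersection over $p<n$ of the minimum-rank loci, using irreducibility of $\bb P$ to ensure $U\neq\emptyset$.

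For (2) your route differs from the paper's. The paper argues by induction on $m$: invoking \eqref{leInj}(1) at the given $K$-point and then the Local Criterion of Flatness, it builds an affine open $V^{(m)}\subset\bb P$ containing that point on which each $\mu^{(q)}_p$ is injective and each $M^{(q)}_p$ is flat for $p<n$; then $V^{(m)}\cap U\neq\emptyset$ furnishes a point where both the equality $\mb H(M,z)\mb P(z)=\mb H(M^{(m)},z)\bmod z^n$ and membership in $U$ hold, pinning down the minimum. You instead globalize the full Koszul complex (twisted by the tautological line bundles $\mc L_i$) and use upper-semicontinuity of homology dimensions for a bounded complex of locally free sheaves to propagate the vanishing of $H_1$ in degrees $<n$ from the given $K$-point directly to the generic point $\eta$, then read off the equality there via \eqref{leInj}(1),(3). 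Both approaches correctly identify the minimum $(c_p)_{p<n}$ with the coefficients of $\mb H(M,z)\mb P(z)$; yours is more conceptual and bypasses the inductive flatness bookkeeping, while the paper's stays within the Hilbert-series formalism of \eqref{leInj}(2) and avoids assembling the twisted global Koszul complex. Your closing remark about why multiplying the inequality of \eqref{leInj}(1) by $\mb P(z)$ fails is apt and explains why some such detour is unavoidable.
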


\begin{proof}
 First off, let's define a global version on $\bb P$ of the module
$M^{(q)}$ of \eqref{leInj}.  For $1\le i\le m$, set $\bb P_i := \bb
P(\mc A^*_{d_i})$, and denote the pullback to $\bb P$ of the
tautological injection $\mc O_{\bb P_i}(-1)\into (\mc A_{d_i})_{\bb
P_i}$ by $\mc L_i \into (\mc A_{d_i})_{\bb P}$.  Combine the latter with
the pullback of the multiplication map $\mc A\ox \mc M \to \mc M$ to get
a map $\mc L_i \ox_{\mc O_P}\mc M(-d_i)_{\bb P} \to \mc M_{\bb P}$.
Finally, set $\mc M^{(q)} := \cok\bigl(\bigoplus_{i=1}^q \mc L_i \ox_{\mc
O_P}\mc M(-d_i)_{\bb P} \to \mc M_{\bb P}\bigr)$.

Note that $S$ is irreducible, and that each $\smash{\mc A_{d_i}^*}$ is
locally free.  Thus $\bb P$ is irreducible.

{\bf For (1),} for $0\le q< n$, apply the theory of flattening
stratification to $\mc M^{(m)}_q$.  Let $U_q$ be the stratum of $\bb P$
on which the restriction of $\mc M^{(m)}_q$ has minimum rank, $r_q$ say.
Then $U_q$ is nonempty and open.  Set $U : = \bigcap_{q=0}^{n-1} U_q$.
Then $U$ is open.  Further, as $\bb P$ is irreducible, $U$ is nonempty
and irreducible.  Set $\mb R(z) := \sum_{q=0}^{n-1} r_qz^q$.  Then given
an $S$-field $K$ and a $K$-point of $\bb P$, the construction of $U$
yields $\mb R(z) \le \mb H(M_K^{(m)},z) \mod z^n$, with equality iff the
$K$-point lies in $U$.  Thus (1) holds.

{\bf For (2),} let's find an affine open subset $V^{(m)} \subset \bb P$
containing the given $K$-point such that, in the notation of
\eqref{leInj} with $M^{(q)} := \Gamma(V^{(q)},\mc M^{(q)})$, the map
$\mu^{(q)}_p$ is injective and the module $M^{(q)}_p$ is flat for $1\le
q\le m$ and $p < n$.  To find $V^{(m)}$, note $\mb H(M_K,z) = \mb
H(M_K^{(m-1)},z) \big/ \mb P^{(m-1)}(z) \mod z^n$ by \eqref{leInj}(1).
So induction on $m$ yields the analogous set $V^{(m-1)}$.  In
particular, $M^{(m-1)}_p$ is flat.  But $\mu^{(m)}_p\ox_kK$ is injective
by \eqref{leInj}(1).  So the Local Criterion of Flatness yields an
affine open subset $V^{(m)} \subset V^{(m-1)}$ containing the given
$K$-point such that the restriction of $\mu^{(m)}_p$ to $V^{(m)}$ is
injective and the restriction of $M^{(m)}_p$ is flat for all $p < n$, as
desired.

Both $V^{(m)}$ and $U$ are nonempty open subsets of $\bb P$, and $\bb P$
is irreducible.  So $V^{(m)} \cap U \neq \emptyset$. So $V^{(m)} \cap U$
contains a $K^\di$-point for some field $K^\di$.  Then \eqref{leInj}(2)
yields $\mb H(M_{K^\di},z) = \mb H(M_{K^\di}^{(m)},z) \big/ \mb
P^{(m)}(z) \mod z^n$. But $\mb P(z)$ is an ordinary polynomial.  So $\mb
H(M_{K^\di},z)\mb P(z) = \mb H(M_{K^\di}^{(m)},z) \mod z^n$.  But mod
$z^n$, for every $K^\di$-point of $U$, the value of $ \mb
H(M_{K^\di}^{(m)},z)$ is the same, and that value is strictly less than
the value of $ \mb H(M_{K^\di}^{(m)},z)$ for every $K^\di$-point of $\bb
P - U$.  Thus (2) holds.
 \end{proof}

\begin{proposition}\label{prNonempty}
  Assume $k$ a field; $A$ Gorenstein, Artinian of socle degree a,
generated by $A_1$.  Set $r := \mb a(1)$ and $\mb r(p) := \binom{p+
r-1}{r-1}$ and $v_0 := \inf\{\, v \mid \mb r(v) > \mb a(v)\, \}$.  Set
$\mb P(z) := \prod_j(1-z^j)^{\mb t(a-j)}$.  Define $\mb w^*(p)$ by
$\sum_p\mb w^*(p)z^p := \mb H(A^\dg,z) \mb P(z)$.  Fix $n \le v_0 - a$
with $\mb w^*(n-1)=0$.  Assume $\mb t^*(p)=0$ for $p\ge n$ and $p \le
-a$.  Set $\mb u(i) := \sum_{j > i}\mb t(j)$.  Assume $\mb u(1-n) \le
r$.  Set $\mb h(q) := \mb a(q)-\mb w(q)$ for $q > -n$ and\/ $\mb h(q) :=
\mb a(q)$ for $q \le -n$.  Then $\mb w^*(p) \ge 0$ for $p <n$, and $\mb
H\Psi_A^{\mb h, \mb t} \neq \emptyset$.
 \end{proposition}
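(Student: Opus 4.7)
The plan is to apply Macaulay Duality \eqref{thGMD3}(2): it suffices to construct a graded submodule $D \subset A^\dg$ with Hilbert function $\mb h^*$ and local generator type $\mb t^*$, for then $D^* \in \mb H\Psi_A^{\mb h, \mb t}$. Since $A$ is $k$-Gorenstein Artinian of socle degree $a$, \eqref{prArt}(B) gives an $f \in A^\dg_{-a}$ with $A^\dg = Af$, and multiplication by $f$ yields an iso $A(a) \risom A^\dg$ of graded $A$-modules. For each $q$ with $\mb t(q) > 0$, choose generic $g_{q,1}, \ldots, g_{q,\mb t(q)} \in A_{a-q}$; relist the whole collection as $g_1, \ldots, g_m$ (with $m := \sum_q\mb t(q)$ and $d_i := \deg g_i$) and set $D := \sum_i Ag_if$. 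Then $D$ corresponds under the iso to the shifted ideal $(g_1, \ldots, g_m)(a)$, so $\mb H(A^\dg/D, z) = z^{-a}\mb H(A^{(m)}, z)$ with $A^{(m)} := A/(g_1,\ldots,g_m)$. By construction $\mb P(z) = \prod_i(1-z^{d_i})$ and $\mb w^*(z) = z^{-a}\mb H(A,z)\mb P(z)$, so the target identity $\mb H(D, z) = \mb h^*(z)$ reduces to $\mb H(A^{(m)}, z) = \sum_{p<n}\mb w^*(p) z^{p+a}$.

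The key reduction is that $A$, being generated by $A_1$ with $\dim A_1 = r$, is a quotient $R/I$ of $R := k[X_1,\ldots,X_r]$ by a homogeneous ideal with $I_p = 0$ for $p < v_0$, so $A_p = R_p$ for $p < v_0$. The hypothesis $n \le v_0 - a$ places the whole relevant Hilbert-function computation in degrees $< n+a$ within the $R$-comparable range, so Koszul maps and multiplication maps in $A$ coincide with those in $R$ there. For the positivity $\mb w^*(p) \ge 0$ when $p < n$, compute $\mb w^*(p) = [z^{p+a}](\mb H(R,z)\mb P(z))$. The hypothesis $\mb u(1-n) \le r$ says the $d_i$'s satisfying $d_i \le a+n-2$ number at most $r$, and since $\mb t$ is supported in $[-n+1, a-1]$ all other $d_i$'s equal exactly $a+n-1$. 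For generic choice the ``low'' $g_i$'s form a regular sequence in $R$, so $\mb H(R,z)\prod_{\text{low}}(1-z^{d_i})$ is the Hilbert series of a complete intersection and has nonnegative coefficients. For $p \le n-2$ the ``high'' factors $(1-z^{a+n-1})^{\mb t(1-n)}$ contribute only their constant term $1$ in degree $p+a < a+n-1$, so $\mb w^*(p)$ equals this CI Hilbert value and is $\ge 0$; for $p = n-1$ use $\mb w^*(n-1) = 0$ by hypothesis.

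For existence, apply \eqref{prOpens} with $M := A$: the positivity just proved together with $A_p = R_p$ for $p < v_0$ show that for generic $g_1, \ldots, g_m$ the Koszul maps $\mu_p^{(q)}$ are injective for $1 \le q \le m$ and $p < n+a$. By \eqref{leInj}(1), this gives $\mb H(A^{(m)}, z) = \mb H(A, z)\mb P(z) \pmod{z^{n+a}}$; combined with $\mb w^*(n-1) = 0$ and a parallel Fr\"oberg-style analysis forcing the vanishing of $A^{(m)}_q$ in the remaining high degrees $n+a \le q \le a$ (where the Fr\"oberg prediction is $\le 0$, so generically $A^{(m)}_q = 0$), we get $\mb H(D, z) = \mb h^*(z)$. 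For the local generator type, the natural map $\bigoplus_q A(q)^{\oplus \mb t(q)} \to D$ induced by the chosen generators becomes an isomorphism after $\otimes_A k$ for generic $g_i$ by dimension count, so $D$ has generator type $\mb t^*$. Thus $D \in \mb H\Delta_{A^\dg}^{\mb h^*, \mb t^*}$, and \eqref{thGMD3} concludes. The main obstacle is coordinating the multiple genericity requirements\emdash Koszul injectivity through degree $n+a$, the low-degree regular sequence in $R$, the vanishing of $A^{(m)}$ in high degrees, and the minimality of the chosen generators\emdash into a single nonempty open locus of $(g_i)$; this rests on $k$ being a field so that finite intersections of dense opens are nonempty, passing to an infinite extension and descending via \eqref{prlift}-style arguments if $k$ is finite.
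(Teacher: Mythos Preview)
Your overall strategy matches the paper's: present $A$ as a quotient of $R := k[X_1,\dotsc,X_r]$ with $R_p = A_p$ for $p < v_0$, build a submodule $D = \sum_j Ag_jf \subset A^\dg$ with the right Hilbert function and generator type, and conclude by Macaulay Duality. The essential difference is that the paper does \emph{not} use generic forms. It takes the ``low'' generators to be explicit monomials $g_j := X_j^{p_j}$, one power for each variable $X_1,\dotsc,X_{\mb u(1-n)}$, with the exponent $p_j$ determined by $\mb t$; these form an obvious regular sequence in $R$, which immediately gives $\mb w^*(p) = \dim_k(R/J)_{a+p} \ge 0$ for $p < n-1$ and $\dim_k(R/J)_{a+n-1} = \mb t(1-n)$ from $\mb w^*(n-1) = 0$. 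The ``high'' generators are then chosen to be any lift of a basis of $(R/J)_{a+n-1}$, so that $D_{n-1} = A^\dg_{n-1}$, and the single fact that $A$ is generated by $A_1$ propagates this to $D_i = A^\dg_i$ for all $i \ge n-1$. No genericity, no openness argument, and no appeal to Fr\"oberg is needed; the construction works verbatim over any field, finite or infinite.

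By contrast, your argument has two soft spots. First, the phrase ``a parallel Fr\"oberg-style analysis forcing the vanishing of $A^{(m)}_q$ in the remaining high degrees'' is not a proof: Fr\"oberg's conjecture is open in general, and you never actually invoke the elementary mechanism that does the work here, namely that once $(A/(g_1,\dotsc,g_m))_{a+n-1} = 0$, generation of $A$ by $A_1$ forces vanishing in all higher degrees. You should replace the Fr\"oberg appeal with that one-line induction. Second, the reduction to infinite $k$ via ``passing to an infinite extension and descending via \eqref{prlift}-style arguments'' does not work as written: \eqref{prlift} lifts from a residue field to a neighborhood in the base, it does not descend a $\bar k$-point of $\mb H\Psi_A^{\mb h,\mb t}$ to a $k$-point, and the statement you are proving asserts precisely the existence of a $k$-point. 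The paper's explicit monomial choice sidesteps both issues simultaneously; if you want to keep the generic approach, you must at minimum supply the $A_1$-generation argument for the high degrees and, for finite $k$, replace genericity by an explicit choice (which effectively reproduces the paper's construction anyway).
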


\begin{proof}
 Since $A$ is Artinian, $A^\dg = A^*$.  Since $A$ is Gorenstein, $A^*$
is an invertible $A$-module, so free of rank 1.  Choose an $f\in
A^*_{-a}$ with $A^* = Af$.  Choose variables $X_1,\dotsc,X_r$ and set $R
:= k[X_1,\dotsc,X_r]$.  Fix a $k$-basis of $A_1$.  Mapping $X_i$ to its
$i$th element, view $A$ as a homogeneous quotient of $R$.  Thus $A^* =
Rf \subset R^\dg$.

Form the map $\mu\: R(a) \to A^*$ by $\mu(x) := xf$. It factors as
$R(a)\onto A(a)\risom A^*$ where $R\onto A$ is the quotient map.  Fix $i
< v_0$.  Then $\dim_k R_i = \dim_k A_i$ by definition of $v_0$.  So
$R_i\risom A_i$.  Thus $\mu_i\: R(a)_{i-a} \risom (A^*)_{i-a}$.

For $1 \le p < a+n-1$, note $u(a-p-1)\le r$, and for $u(a-p) < j \le
u(a-p-1)$, set $g_j := X^p_j$.  Set $J:= \sum Rg_j$.  As the $g_j$ form
an $R$-regular sequence, \eqref{leInj}(1) yields $\mb H(R,z) = \mb
H(R/J,\,z) \big/ \prod_{j=1}^{a+n-2} (1-z^j)^{\mb t(a-j)}$.  Multiplying
both sides by $\mb P(z)$ gives $\mb H(R,z)\mb P(z) = \mb
H(R/J,\,z)(1-z^{a+n-1})^{\mb t(1-n)}$ as $\mb t^*(p)=0$ for $p \ge n$.
However, $\mb H(R,z) = \mb H(A^*(-a),z) \mod z^{v_0}$.  Thus
  $$\ts \mb H(R/J,\,z) (1-z^{a+n-1})^{\mb t^*(n-1)}
    = \sum_p\mb w^*(p-a)z^p  \mod  z^{v_0}.$$

Compare terms of degree $a+p$ with $p< n-1$.  Thus $\mb w^*(p) = \dim
(R/J)_{a+p} \ge 0$ as  $a+p < v_0$.
 
 Similarly, $\dim (R/J)_{a+n-1} - \mb t^*(n-1) = \mb w^*(n-1) = 0$.
Thus there are $g_j$ in $R_{a+n-1}$, for $\mb u(1-n)< j \le \mb u(-n)$,
whose residues form a basis of $(R/J)_{a+n-1}$.

Set $D := \sum_{j=1}^{\mb u(-n)} Ag_jf\subset A^*$ and $C := D^*$.  Then
$J(a)_i\risom D_i$ for $i< n-1$.  But $\dim J(a)_i = \mb h^*(i)$ for
$i<n-1$.  Thus $\dim C_p = \mb h(p)$ for $p > 1-n$.  Now, by
construction, $D_{n-1} = (A^*)_{n-1}$.  But $A^* = Rf$; so $R_1\cdot
(A^*)_i = (A^*)_{i+1}$ for all $i$.  So $D_i = (A^*)_i$ for all $i\ge
n-1$.  Thus $\dim C_p = \mb h(p)$ for $p \le 1-n$.  Plainly the $g_jf$
form a minimal generating set of $D$.  Thus $D$ has local generator type
$\mb t^*$. Thus $C \in \mb H\Psi_{A}^{\mb h, \mb t}$.
 \end{proof}

\begin{proposition}\label{prN28May22}
 Assume $k$ is a field, and $A$ is generated by $A_1$.  Fix a finitely
generated, graded $A$-module $B$, and $C \in \mb H\Psi_B^{\mb h, \mb
t}$ for some $\mb h,\ \mb t$.  Fix $n$ with $\mb t^*(p)= 0$ for $p\ge
n$.  Fix $\wt C \in \mb H\Psi_C^{\wt{\mb h}, \wt{\mb t}}$ for some
$\wt{\mb h},\ \wt{\mb t}$.  Assume $(C^*)_p = (\wt C^*)_p$ for $p <
n-1$, and $A_1(C^*)_{n-1} \subset A_1(\wt C^*)_{n-1}$.  Set $d := \dim
(C^*/\wt C^*)_{n-1}$.

\(1) Then $\wt{\mb h}(q) = \mb h(q)$ for $q \neq 1-n$, and\/ $\wt{\mb
h}(1-n) = \mb h(1-n) - d$.

\(2) Then $\wt{\mb t}(q) = \mb t(q)$ for $q\neq 1-n$, and\/ $\wt{\mb
t}(1-n) = \mb t(1-n) - d$.

\(3) Set $\mb P(z) := \prod_j(1-z^j)^{\mb t(a-j)}$ and\/ $\wt{\mb P}(z)
:= \prod_j(1-z^j)^{\wt{\mb t}(a-j)}$.  Define $\mb w^*(p)$ by $\sum_p\mb
w^*(p)z^p := \mb H(\mc B^\dg,z) \mb P(z)$ and $\wt{\mb w}^*(p)$ by\/
$\sum_p\wt{\mb w}^*(p)z^p := \mb H(\mc B^\dg,z) \wt{\mb P}(z)$.  Set
$b:=\dim B_a$. Then $\wt{\mb w}^*(p) = \mb w^*(p)$ for $p < n-1$, and\/
$\wt{\mb w}^*(n-1) = \mb w^*(n-1) + bd$.
 \end{proposition}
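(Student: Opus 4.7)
Since $\widetilde C$ is a homogeneous quotient of $C$, the dual $\widetilde C^*$ sits inside $C^*$ as a graded $A$-submodule. My plan is first to pin down in which degrees this inclusion is actually an equality, then to read off (1) and (2) from dimension counts, and finally to translate (2) into a Hilbert-series identity that delivers (3).

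The first step is to show by ascending induction on $p\ge n$ that $(\widetilde C^*)_p = (C^*)_p$ and $\widetilde{\mb t}^*(p)=0$. Because $A$ is generated by $A_1$, the hypothesis $\mb t^*(p)=0$ for $p\ge n$ says $(C^*)_p = A_1(C^*)_{p-1}$. The base case $p=n$ is settled by combining the hypothesis $A_1(C^*)_{n-1}\subset A_1(\widetilde C^*)_{n-1}$ with the reverse inclusion $A_1(\widetilde C^*)_{n-1}\subset A_1(C^*)_{n-1}$, which forces equality; hence $(C^*)_n = A_1(\widetilde C^*)_{n-1}\subset (\widetilde C^*)_n\subset (C^*)_n$, giving $(\widetilde C^*)_n = (C^*)_n = A_1(\widetilde C^*)_{n-1}$ and so $\widetilde{\mb t}^*(n)=0$; the inductive step is identical. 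Combined with the equalities $(C^*)_p = (\widetilde C^*)_p$ for $p<n-1$ already in the hypotheses, parts (1) and (2) follow by straightforward dimension arithmetic: the only discrepancy occurs at $p=n-1$, where $\dim(\widetilde C^*)_{n-1} = \dim(C^*)_{n-1}-d$, while $A_1(\widetilde C^*)_{n-2} = A_1(C^*)_{n-2}$ by the equality in degree $n-2$, so the loss of $d$ dimensions passes to $\widetilde{\mb t}^*(n-1) = \mb t^*(n-1)-d$.

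For (3), part (2) says $\widetilde{\mb t}(a-j)$ agrees with $\mb t(a-j)$ except at $j = a+n-1$, where it drops by $d$; this yields the clean factorization
\[
 \mb P(z) \;=\; \widetilde{\mb P}(z)\,(1-z^{a+n-1})^d.
\]
Multiplying by $\mb H(\mc B^\dg,z)$ and expanding via the binomial theorem gives
\[
 \mb w^*(p) \;=\; \sum_{k\ge 0}\binom{d}{k}(-1)^k\,\widetilde{\mb w}^*\!\bigl(p - k(a+n-1)\bigr).
\]
Under the Gorenstein-Artinian framework of Section~\ref{GCQ} (in which $B$ is concentrated in degrees at most $a$, so $\mc B^\dg$ is supported in $[-a,0]$), the Laurent series $\widetilde{\mb w}^*$ vanishes below degree $-a$. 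For $p<n-1$ every shift $p-k(a+n-1)$ with $k\ge 1$ is strictly less than $-a$, so $\mb w^*(p) = \widetilde{\mb w}^*(p)$. For $p=n-1$ only $k=0,1$ contribute, and the $z^{-a}$-coefficient of $\mb H(\mc B^\dg,z)\widetilde{\mb P}(z)$ is $\mb b(a)\cdot\widetilde{\mb P}(0) = b$, whence $\mb w^*(n-1) = \widetilde{\mb w}^*(n-1) - bd$, as claimed.

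The only delicate point is confirming the vanishing of the higher binomial terms at $p=n-1$; this rests squarely on $B$ being supported in degrees $\le a$, which is the tacit hypothesis under which the invariant $b := \dim B_a$ captures the full correction. Everything else is routine bookkeeping in the Laurent-series conventions fixed in \eqref{se11}.
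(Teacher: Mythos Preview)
Your proof is correct and follows essentially the same approach as the paper's. For (1) and (2) you and the paper both run the same induction on $p\ge n$ using $A_1(C^*)_{p-1}=(C^*)_p$, and the degree-by-degree dimension count at $p=n-1$ is identical; for (3) both arguments derive $\mb P(z)=\wt{\mb P}(z)(1-z^{a+n-1})^d$ from (2) and compare coefficients, with you spelling out the binomial expansion and the vanishing of $\wt{\mb w}^*$ below degree $-a$ more explicitly than the paper's terse ``compare terms and note $\mb w^*(-a)=b$.'' Your closing remark that $B$ must be supported in degrees $\le a$ is well taken: the paper's own line $\mb w^*(-a)=\dim B^\dg_{-a}=b$ relies on exactly this, so you have correctly identified a tacit hypothesis rather than introduced a new one.
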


\begin{proof} {\bf For (1) and (2),} for $p < n-1$, notice $\wt{\mb
h}^*(p) = \mb h^*(p)$ as $(\wt C^*)_p = (C^*)_p$.

For the same reason, $((F^1A)\wt C^*)_p = ((F^1A)C^*)_p$ but for $p\le
n-1$.  However, $\wt{\mb t}^*(p) := \dim \bigl( ({\wt C}^*)_{p} \big/
((F^1A){\wt C}^*)_{p} \bigr)$ and $\mb t^*(p) := \dim \bigl( (C^*)_{p}
\big/ ((F^1A) C^*)_{p} \bigr)$ for all $p$.  So $\wt{\mb t}^*(p) = \mb
t^*(p)$ for $p < n-1$, and\/ $\wt{\mb t}^*(n-1) = \mb t^*(n-1) - d$ as
$d := \dim (C^*/\wt C^*)_{n-1}$.

Notice $\wt{\mb h}^*(n-1) = \mb h^*(n-1)-d$, also as $d := \dim (C^*/\wt
C^*)_{n-1}$.

For $p\ge n$, note $\mb t^*(p) = 0$; so $((F^1A) C^*)_{p} = (C^*)_p$.
But $A_1$ generates $A$; hence, $((F^1A) C^*)_{p} = A_1(C^*)_{p-1}$.
Thus $A_1(C^*)_{p-1} = (C^*)_p$.

Recall $A_1(C^*)_{n-1} \subset A_1(\wt C^*)_{n-1}$.  So $(C^*)_n \subset
A_1(\wt C^*)_{n-1} \subset (\wt C^*)_n \subset (C^*)_n$.  So
\begin{equation}\label{eqN28}
 A_1(\wt C^*)_{p-1} = (\wt C^*)_p = (C^*)_p.
 \end{equation}
 holds for $p = n$.  Suppose \eqref{eqN28} holds for some $p\ge n$.
Then $A_1(\wt C^*)_p = A_1(C^*)_p$.  But $A_1(C^*)_p = (C^*)_{p+1}$.  So
$A_1(\wt C^*)_p = (C^*)_{p+1}$.  But $A_1(\wt C^*)_p \subset (\wt
C^*)_{p+1}$ and $(\wt C^*)_{p+1} \subset (C^*)_{p+1}$.  Thus
\eqref{eqN28} holds for all $p\ge n$ by induction on $p$.  Thus $\wt{\mb
t}^*(p) = 0$ and $\wt{\mb h}^*(p) = \mb h^*(p)$ for all $p > n-1$.  Thus
(1) and (2) hold.

{\bf For (3),} note $\mb t(1-n) - \wt{\mb t}(1-n) = d$ by (2).  Hence
$\mb P(z) = \wt{\mb P}(z)(1-z^{a+n-1})^d$.  So $\sum\mb w^*(p)z^p =
\bigl(\sum\wt{\mb w}^*(p)z^p\bigr) (1-z^{a+n-1})^d$.  Compare terms of
degree $p$ for $p \le n-1$, and note $\mb w^*(-a) = \dim B^\dg_{-a} = b$.
Thus (3) holds.
 \end{proof}

\begin{theorem}\label{thAffBdl}
 Assume $\mc A$ is $\mc O_S$-Gorenstein and $\mc O_S$-Artinian of socle
degree $a$.  Set $\mb P(z) := \prod_j(1-z^j)^{\mb t(a-j)}$.  Define $\mb
w^*(p)$ by $\sum_p\mb w^*(p)z^p := \mb H(\mc A^\dg,z) \mb P(z)$.  Fix $n$
with $\mb w^*(p) \ge 0$ for $p <n$. Set $\mb h(q) := \mb a(q)-\mb w(q)$
for $q > -n$, and\/ $\mb h(q) := \mb a(q)$ for $q \le -n$. Assume $\mb
t^*(p)=0$ for $p\ge n$ and $p \le -a$.  Finally, assume $\bb
H\Psi_{\mc A}^{\mb h, \mb t} \neq \emptyset$.

\(1a) Set $\mb R := \sum_p \mb t(p)\bigl(\sum_{p'<p}\mb w(p')\bigr)$.
Then $\bb F\Psi_{\mc A}^{\mb h, \mb t}$ is an affine-space bundle
over\/ $\bb H\Psi_{\mc A}^{\mb h, \mb t}$ of fiber dimension $\mb R$.

\(1b) Set $\mb H := \sum_p \mb t(p)\mb w(p)$.  Then $\bb H\Psi_{\mc
A}^{\mb h, \mb t}$ is covered by nonempty open subschemes, with each
isomorphic to an open subscheme of the affine space over $S$ of fiber
dimension $\mb H$.

\(1c) Set $\mb F := \mb H + \mb R$.  Then $\bb F\Psi_{\mc A}^{\mb h,
\mb t}$ is covered by nonempty open subschemes, with each isomorphic to
an open of the affine space over $S$ of fiber dimension $\mb F$.

\(2) Assume $S$ irreducible.  Given $S^\di/S$ and $\mb h^\di$ and $\mc
C^\di \in \mb H\Psi_{\mc A_{S^\di}}^{\mb h^\di, \mb t}$, necessarily
$\mb h^\di(q) \le \mb h(q)$ for all $q$.

\(3) Given $p, q > 0$, set $r := a - (p + q)$. Assume $\mb a(r) > \mb
h(r)$. Assume $\mb t(a-p) \ge 2$ if $p = q$, or else $\mb t(a-p) \ge 1$
and\/ $\mb t(a-q) \ge 1$ if $p \neq q$. Then $\mb h(r) < \mb h^{\rm
I}_{\bar s}(r)$.
 \end{theorem}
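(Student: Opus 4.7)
The plan is to adapt the affine-space-bundle machinery of Section~10 (notably \eqref{prSmOp}, \eqref{thFHdim}, \eqref{coFdim}) to the Gorenstein Artinian setting, the crucial new tool being the Koszul analysis of \eqref{leInj}.  Since $\mc A$ is $S$-Gorenstein Artinian, $\mc A^\dg = \mc A f$ is a cyclic $\mc A$-module with generator $f \in \mc A^\dg_{-a}$, so any $C \in \mb H\Psi_{\mc A_T}^{\mb h,\mb t}$ has $C^* = \sum \mc A_T g_i f$ locally, with $g_i \in \mc A_{T,d_i}$, $d_i = a - q_i$, and $\mb t(q_i)$ generators in each degree $-q_i$; the polynomial $\mb P(z)$ encodes exactly these degrees, and $\mc A^\dg/C^*$ is isomorphic, up to shift, to $\mc A_T/(g_1,\dots,g_m)$.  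By \eqref{leInj}(3), the Hilbert-function equality $\mb h^*(p) = \mb a^*(p) - \mb w^*(p)$ for $p<n$ translates into the vanishing of $H_1(g_1,\dots,g_m;\mc A^\dg)$ in degrees below $n$.

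For (1b), I would first prove $S$-smoothness of $\mb H\Psi_{\mc A}^{\mb h,\mb t}$ by imitating \eqref{prSmOp}(1): given a closed inclusion $R\hookrightarrow T$ of local $S$-schemes and a map $R \to \mb H\Psi_{\mc A}^{\mb h,\mb t}$ corresponding to $C_R$, pick a minimal homogeneous generating set $g_i^R f$ of $C_R^*$, lift each $g_i^R$ arbitrarily to $g_i^T \in \mc A_{T,d_i}$, and set $C'^* := \sum \mc A_T g_i^T f$.  The Local Criterion of Flatness applied to the Koszul multiplications $\mu^{(q)}_p$ of \eqref{leInj} shows that the vanishing of $H_1$ in low degrees lifts, so $\mc A_T^\dg/C'^*$ is $T$-flat with the prescribed Hilbert function.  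The open cover by affine spaces of fiber dimension $\mb H = \sum_q \mb t(q)\mb w(q)$ would then be built, as in the second half of \eqref{prSmOp}(2), via an iterated Grassmann-bundle construction: for $q$ from $s$ down to $\bar s$, the choice of the $\mb t(q)$ new generators of degree $-q$, taken modulo the submodule already produced by higher-degree generators, lies in a $\Grass\bigl(\mb t(q),\mc A^\dg_{-q}/M_{>q,-q}\bigr)$ of fiber dimension $\mb t(q)\bigl(\mb a(q)-\mb h(q)\bigr) = \mb t(q)\mb w(q)$.

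For (1a), I would adapt \eqref{prIveryeff} to show every $C \in \mb H\Psi_{\mc A}^{\mb h,\mb t}$ is \emph{very effective} in the sense of \eqref{deEff} and then invoke \eqref{prAffBdl}.  Given a potential lifting $\mc F$, form $\mu\: \mc E_T \to \mc A^\dg_T$ with $\mc E := \bigoplus_q \mc A(q)^{\oplus \mb t(q)}$ using the lifted generators as before.  The Koszul-theoretic computation, combined with the Local Criterion of Flatness, yields the injectivity of $G_\bu\mu$ on $\mc E_T/F^{1-n}\mc E_T$, whence by biexactness (\eqref{sbArt}) one obtains a strict isomorphism $\mc E_T/F^{1-n}\mc E_T \risom \mc F^*/F^{1-n}\mc F^*$; outside the relevant range ($p \ge n$), both $\mc A^\dg$ and $\mc F^*$ vanish, so the analysis in \eqref{prIveryeff} applies verbatim with $n$ in place of $b_1$.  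The role of the integrity and stability hypotheses of \eqref{thFHdim} is supplied here by the cyclicity $\mc A^\dg = \mc A f$ together with Gorensteinness of $\mc A$.  Then \eqref{prAffBdl}, applied with $r_p = \sum_{q<p}\mb w(q)$, yields $\mb F\Psi_{\mc A}^{\mb h,\mb t}$ as an affine-space bundle of fiber dimension $\mb R = \sum_p \mb t(p)\sum_{q<p}\mb w(q)$ over $\mb H\Psi_{\mc A}^{\mb h,\mb t}$, proving (1a).  Combining with (1b) gives (1c) with $\mb F = \mb H + \mb R$.

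For (2), given any $C^\di \in \mb H\Psi_{\mc A_{S^\di}}^{\mb h^\di,\mb t}$, its dual $C^{\di*}$ is locally generated by elements of the same degree shape, so \eqref{leInj}(1) gives termwise $\mb H(\mc A^\dg,z)\mb P(z) \le \mb H(\mc A^\dg/C^{\di*},z)$, whence $\mb w^*(p) \le \mb a^*(p) - (\mb h^\di)^*(p)$ for $p<n$ and thus $(\mb h^\di)^*(p)\le \mb h^*(p)$ for all $p$ (the case $p\ge n$ being trivial).  For (3), expand $\mb H(\mc A^\dg,z)\mb P(z)$ through quadratic order in the $\mb t$-values and extract the coefficient of $z^{-r}$: the linear-in-$\mb t$ part reproduces the Iarrobino sum $\mb g_{\bar s}(r)$ via the Gorenstein symmetry $\mb a(j) = \mb a(a-j)$, while the quadratic-in-$\mb t$ part at $e = p+q$ picks up $\mb a(r+e)\cdot[\mb P(z)]^{\text{quad}}_{p+q} = 1\cdot[\mb P(z)]^{\text{quad}}_{p+q}$ because $\mb a(a)=1$; this contribution equals $\binom{\mb t(a-p)}{2}$ when $p=q$ and $\mb t(a-p)\mb t(a-q)$ when $p\neq q$, which under the $\mb t$-hypothesis is strictly positive, forcing the strict inequality $\mb h(r) < \mb h^{\rm I}_{\bar s}(r)$.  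The main obstacle will be the very-effectiveness step in (1a): the integrity and stability hypotheses used in \eqref{thFHdim} for polynomial $\mc A$ must be carefully replayed through the cyclic Gorenstein structure of $\mc A^\dg$ in the Artinian regime, and one must verify that the socle type of the lifting is preserved via the final argument of \eqref{prIveryeff} using $\mc E \ox_{\mc A}\mc O_S = \mc N$.
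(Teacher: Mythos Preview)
Your overall architecture is sound, but the proof of (1a) has a genuine gap.  You claim that ``the Koszul-theoretic computation\dots\ yields the injectivity of $G_\bu\mu$ on $\mc E_T/F^{1-n}\mc E_T$'' and then invoke biexactness exactly as in \eqref{prIveryeff}.  That step worked in the I-compressed polynomial setting because $\rank(\mc E_T)_{-p} = \mb g_{\bar s}(p) = \mb h^{\rm I}_{\bar s}(p) = \rank(\mc C^*_T)_{-p}$ for $p\ge b_1$, forcing $(G_\bu\mu)_{-p}$ to be an isomorphism there.  Here that equality fails: in the Gorenstein Artinian case $\rank(\mc E_T)_{-p} = \mb g_{\bar s}(p)$ while $\rank(\mc C^*_T)_{-p} = \mb h(p)$, and part (3) of this very theorem shows that often $\mb h(p) < \mb g_{\bar s}(p)$.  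So $(G_\bu\mu)_{-p}$ has nontrivial kernel, and biexactness applied to $\mu$ alone tells you nothing about $G_\bu\mc F^*$.  The paper's remedy is to pass to the full Koszul complex $K_2\to K_1\to K_0$ with $K_0=\mc A^\dg_T$, $K_1=\mc E_T$: the vanishing of $H_1(g_1,\dots,g_m;\mc A^\dg_T)_p$ for $p<n$ (from \eqref{leInj}(2),(3)) says the \emph{associated graded} complex is exact in those degrees, and then \eqref{leCplx}(2) gives $G_p\mc F^* = \Im(G_p\mu) = (\mc C^*_T)_p$ directly, without any injectivity of $G_\bu\mu$.  The socle-type preservation likewise needs \eqref{leCplx}(3), not the diagram chase from \eqref{prIveryeff}, since the latter again relied on the strict isomorphism $\mc E_T/F^{1-b_1}\mc E_T\risom \mc F^*/F^{1-b_1}\mc F^*$, which you do not have.

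Your argument for (3) is also incomplete.  Expanding $\mb H(\mc A^\dg,z)\mb P(z)$ in powers of $\mb t$ and isolating one quadratic term at $e=p+q$ does not control the remaining contributions: $[\mb P(z)]_e$ for $e<p+q$ enters with both signs, and there is no evident reason the full correction $\mb g_{\bar s}(r)-\mb h(r)$ is positive from the series alone.  The paper instead uses the nonemptiness hypothesis directly: pick any $\mc C$ on some $T$, write $\mc C^*=\sum g_i\mc A^\dg_T$ with $\deg g_1=p$, $\deg g_2=q$, and observe that the Koszul syzygy $(g_2,-g_1,0,\dots,0)$ is a nonzero element of $\Ker\bigl(\mc N_{-r}\to\mc C^*_{-r}\bigr)$, so $\mb g_{\bar s}(r)=\rank\mc N_{-r}>\mb h(r)$; combined with the hypothesis $\mb a(r)>\mb h(r)$, this gives $\mb h(r)<\mb h^{\rm I}_{\bar s}(r)$.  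Your sketches for (1b) and (2) are essentially correct in outline and close to the paper's approach, though for (2) you should reduce to $S^\di=\Spec K$ for a field $K$ before invoking \eqref{leInj}(1).
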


\begin{proof}
 For general use below, set $m := \sum_p \mb t(p)$.

{\bf For (1a),} as for \eqref{thFHdim}, it suffices to apply
\eqref{prAffBdl}.  To do so, we must see that the universal sheaf, $\mc
C$ say, on $\bb H\Psi_{\mc A}^{\mb h, \mb t}$ is very effective.  So
given a Noetherian affine $T/\bb H\Psi_{\mc A}^{\mb h, \mb t}$ and a
potential lifting $\mc F$ of $\mc C_T$, we must show that the subsheaves
$\mc C_T^*$ and $G_\bu \mc F^*$ of $\mc A^\dg_T$ are equal and that the
surjection $\gamma\:(G_\bu \mc F^*)\ox_{\mc A_T}\mc O_T \onto G_\bu (\mc
F^*\ox_{\mc A_T}\mc O_T)$ is an isomorphism.  But these issues are
local.  Thus we may assume $T$ is local.

As $\mc A$ is $\mc O_S$-Artinian, $\mc A^\dg = \mc A^*$.  But $\mc A$ is
$\mc O_S$-Gorenstein, and $T$ is local.  So $\mc A^\dg_T$ is cyclic.
Now, $\mb t^*$ is the local generator type of $\mc C^*$.  Hence for
$1\le i\le m$, there's $g_i\in \Gamma((\mc A_{d_i})_T)$ with $d_i>0$,
with $\#\{\, i \mid d_i = a-q\,\} = \mb t(q)$ for all $q$, and with $\mc
C_T^* = \sum g_i\mc A^\dg_T$.  Note $\mb P(z) = \prod_i(1-z^{d_i})$.

Set $\mc M^{(m)} := \mc A^\dg_T/\mc C_T^*$.  Then $\mc M^{(m)}_{p}$ is
locally free of rank $\mb w^*(p)$ for $p < n$.  So given any $T$-field
$K$, then $\mb H(\mc M^{(m)}\ox K,z) = \mb H(\mc A^\dg,z) \mb P(z) \mod
z^n$.  Thus \eqref{leInj}(2),\,(3) yield $H_1(g_1,\dotsc,g_m; \mc
A^\dg_T)_p = 0$ for all $p<n$.

As $\mc F$ is a potential lifting of $\mc C_T$, there's $f_i \in
F^{d_i}\mc A_T$ with $g_i$ as its initial form and with $\mc F = \sum
f_iA^\dg$.  Form the Koszul complex $K_\bu := K_\bu(f_1,\dotsc,f_m; \mc
A^\dg_T)$.  Note $G_\bu K_\bu = K_\bu(g_1,\dotsc,g_m; \mc A^\dg_T)$.  So
\eqref{leCplx} applies to the complex $K_2\to K_1\to K_0$.

Hence \eqref{leCplx}(2) yields $G_p(\mc F^*) = (\mc C_T^*)_p$ for $p \le
n$.  But $G_p(\mc F^*) = (\mc C_T^*)_p$ for $p \ge n$ as $(\mc
C_T^*)_p \subset G_p(\mc F^*) \subset (\mc A_T^\dg)_p$ and $\mb h^*(p)
:= \mb a^*(p)$.  Thus $\mc F$ is a lifting of $\mc C_T$.

Fix $p\ge n$.  Then $(\mc C^*\ox\mc O_T)_p = 0$ as $\mb t^*(p) = 0$.
But $\mc C^*\ox\mc O_T =: \mc C^*_T = G_\bu\mc F^*$, and $\gamma_p$ is
surjective.  Thus $\gamma_p$ is an isomorphism.

Finally, fix $p< n$, and let's apply \eqref{leCplx}(3).  Note $K_2\ox\mc
O_T\to K_1\ox \mc O_T$ vanishes as $d_i > 0$ for all $i$.  But $K_1 =
\mc A^{\dg\oplus m}_T$.  Hence $G_\bu (K_1\ox \mc O_T) = K_1\ox \mc O_T$
and $G_\bu K_1 = K_1$.  Hence $(G_\bu K_1)\ox \mc O_T = G_\bu (K_1\ox
\mc O_T)$.  Thus \eqref{leCplx}(3) implies $\gamma_p$ is an isomorphism.

Thus $\mc C$ is very effective.  Thus (1a) holds.

{\bf For (1b),} form the functor $\mb U_{\mb t}$ whose $T$-points are
the homogeneous quotients $\mc E = \mc A_T^\dg / \mc D$ with $\mc E_p$
locally free of rank $\mb w^*(p)$ for $p < n$ and with $\mc D$ of local
generator type $\mb t^*$.  Let's use the hypotheses $\mb w^*(p) \ge 0$
for $p <n$ and $\mb t^*(p)=0$ for $p\ge n$ and $p \le -a$, but not $\bb
H\Psi_{\mc A}^{\mb h, \mb t} \neq \emptyset$, to represent $\mb
U_{\mb t}$ via recursion on $s - \bar s$ where $s := \sup \{\, q\mid \mb
t(q) \neq 0\,\}$ and where $\bar s := \inf \{\, q\mid \mb t(q) \neq
0\,\}$.  Set $\bar t := \mb t(\bar s)$.

If $\bar s = s$.  set $U' := S$ and $\wt{\mc E'} = \mc A^\dg$; so set
$\wt{\mc D}' := 0$, set $\mb w'^* := \mb a^*$, and set $\mb H' := 0$.

If $\bar s < s$, set $\mb t'(\bar s) := 0$ and $\mb t'(q) := \mb t(q)$
for $q \neq \bar s$, and define $w'^*$ and $\mb H'$ to be the objects
corresponding to $w^*$ and $\mb H$.  Trivially, $\mb t'^*(p)=0$ for
$p\ge n$ and $p \le -a$.  Further, $\sum_p\mb w'^*(p)z^p =
\bigl(\sum_p\mb w^*(p)z^p\bigr)(1-z^{a - \bar s})^{-\bar t}$; hence,
$\mb w'^*(p) \ge \mb w^*(p) \ge 0$ for $p < n$.  So assume that $\mb
U_{\mb t'}$ is representable by an $S$-scheme $U'$ and a universal
quotient $\wt{\mc E}' = \mc A_{U'}^\dg \big / \wt{\mc D}'$ with $\wt{\mc
E}'_p$ locally free of rank $\mb w'^*(p)$ for $p < n$, with $\wt{\mc
D}'$ of local generator type $\mb t'^*$, and with $U'$ covered by
nonempty open subschemes of the affine space over $S$ of fiber dimension
$\mb H'$.

For any $s$, note $\bigl(\sum_p\mb w'^*(p)z^p\bigr)(1-z^{a - \bar
s})^{\bar t} = \sum_p\mb w^*(p)z^p$ and $\mb w'^*(-a) = 1$; therefore,
comparing terms of degree $-\bar s$ yields $\mb w'^*(-\bar s) - \bar t =
\mb w^*(-\bar s) \ge 0$. Set $\bb G := \Grass_{\mb w(\bar s)}(\wt{\mc
E}'_{-\bar s})$, the Grassmann\-ian of rank-$\mb w(\bar s)$ locally free
quotients.  Note that $\bb G$ is covered by nonempty open subschemes of
the affine space over $U'$ of fiber dimension $(\mb w'^*(-\bar s)-\mb
w(\bar s))\mb w(\bar s)$, or $\bar t \mb w(\bar s)$.  Thus $\bb G$ is
covered by nonempty open subschemes of the affine space over $S$ of
fiber dimension $\mb H' + \bar t \mb w(\bar s)$ or $\mb H$.

Let's adapt the argument surrounding \eqref{eqLev1}.  Let $\mc
E^\di_{-\bar s}$ be the universal quotient of $(\wt{\mc E}'_{-\bar
s})_\bb G$.  Recall that $\wt{\mc E}' = \mc A_{U'}^\dg \big / \wt{\mc
D}'$.  So say $\mc E^\di_{-\bar s} = (\mc A^\dg_\bb G)_{-\bar s} \big/
\mc D^\di_{-\bar s}$.  Then set $\mc D^\di := \mc A_\bb G \mc
D^\di_{-\bar s} \subset \mc A^\dg_\bb G$ and $\mc E^\di := \mc A^\dg_\bb
G \big/ \mc D^\di$.  For $p < n$, let $U_p \subset \bb G$ be the
subscheme such that a map $T\to \bb G$ factors through $U_p$ iff $(\mc
E^\di_p)_T$ is locally free of rank $\mb w^*(p)$.  (Of course, $U_{-\bar
s} = \bb G$.)  Set $U := \bigcap_{p < n}U_p$, set $\wt{\mc E} := \mc
E^\di\mid U$, and set $\wt{\mc D} := \mc D^\di\mid U$.  Let's prove that
the pair $(U,\, \wt{\mc E})$ represents $\mb U_{\mb t}$.

First, note that, by construction, $\wt{\mc E} = \mc A_{U}^\dg \big/
\wt{\mc D}$ is a homogeneous quotient with $\wt{\mc E}_p$ locally free
of rank $\mb w^*(p)$ for $p < n$.  Furthermore, $\wt{\mc D}$ is of local
generator type $\mb t^*$ since $\wt{\mc D}'$ is of local generator type
$\mb t'^*$ and since $\wt{\mc D} / \wt{\mc D}'_U$ is generated by the
pullback of the universal subsheaf on $\bb G$, which is locally free of
rank $\bar t$.

Next, given any $S$-scheme $T$ and any homogeneous quotient $\mc E = \mc
A_T^\dg / \mc D$ with $\mc E_p$ locally free of rank $\mb w^*(p)$ for $p
< n$ and with $\mc D$ of local generator type $\mb t^*$, we have to show
that there's a unique $S$-map $T\to U$ with $\mc E = \wt{\mc E}_T$ as
quotients of $\mc A^\dg_T$.

If $\bar s = s$, recall $U' := S$ and $\wt{\mc E'} = \mc A^\dg$, and set
$\mc E' := \mc A^\dg_T$.  Then trivially there's a unique $S$-map $T\to
U'$, and $\mc E' = \wt{\mc E'}_T$.

If $\bar s < s$, set $\mc D' := \mc A_T\big( \bigoplus_{j>\bar s}\mc
D_{-j}\big)$ and $\mc E' : = \mc A_T^\dg / \mc D'$.  Let's see that $\mc
E'_p$ is locally free of rank $\mb w'^*(p)$ for $p < n$ and that $\mc
D'$ is of local generator type $\mb t'^*$.  As these issues are local,
we may temporarily assume that $T$ is local.  Set $m' := \sum_{q>\bar
s}\mb t(q)$ and $m := \sum_{q}\mb t(q)$.  Proceed somewhat as in the
proof of (1a).

As $\mc A$ is $\mc O_S$-Artinian, $\mc A^\dg = \mc A^*$.  But $\mc A$ is
$\mc O_S$-Gorenstein, and $T$ is local.  So $\mc A^\dg_T = \mc A_Tf$ for
some $f\in \Gamma((\mc A^\dg_{-a})_T)$.  Now, $\mb t^*$ is the local
generator type of $\mc D$.  Hence for $1\le i\le m$, there's $g_i\in
\Gamma((\mc A_{d_i})_T)$ with $d_i>0$, with $\mc D = \sum g_i\mc
A^\dg_T$, and with $\#\{\, i \mid d_i = a-q\,\} = \mb t(q)$ for all $q$.
Rearrange the $g_i$ so that $a-d_i > \bar s$ iff $i \le m'$.  Then
$g_1f,\dotsc,g_{m'}f$ form a minimal generating set of $\mc D'$.  Thus
$\mc D'$ is of local generator type $\mb t'^*$.  Now, set $M :=
\Gamma(\mc E)$ and $M' := \Gamma(\mc E')$.  Given any $T$-field $K$,
note $\mb H(M\ox K,z) = \sum_p \mb w^*(p)z^p \mod z^n$.  So
\eqref{leInj}(1) yields $\mb H(M'\ox K,z) = \sum_p \mb w'^*(p)z^p \mod
z^n$.  Hence $M'_p$ is flat over $\Gamma(\mc O_T)$ for $p < n$ by
\eqref{leInj}(2).  Thus $\mc E'_p$ is locally free of rank $\mb w'^*(p)$
for $p < n$, as desired.

By recursion, for our original $T$, there's a unique $S$-map $T\to U'$
with $\mc E' = \wt{\mc E_s}_T$ as quotients of $\mc A^\dg_T$.  But $\mc
E_{-\bar s}$ is a rank-$\mb w(\bar s)$ locally free quotient of $\mc
E'_{-\bar s}$.  Hence there's a unique $U'$-map $T\to \bb G$ with $\mc
E_{-\bar s} = (\wt{\mc E}_{-\bar s})_T$ as quotients of $(\wt{\mc
E}'_{-\bar s})_T$.  So $\mc D_{-\bar s} = (\wt{\mc D}_{-\bar s})_T$.  So
$\mc D = \wt{\mc D}_T$.  Thus $\mc E = \wt{\mc E}_T$ as quotients of
$\mc A^\dg_T$.  Thus the pair $(U,\, \wt{\mc E})$ represents $\mb U_{\mb
t}$, as desired.

Finally, set $R_p := \Supp \wt{\mc E}_p \subset U$. Set $\wh U := U -
\bigl(\bigcap_{p\ge n}R_p\bigr)$ and $\wh{\mc E} := \wt{\mc E} \mid \wh
U$.  Then a map $T\to U$ factors through $\wh U$ iff $\wt{\mc E}_p = 0$
for $p \ge n$ by \eqref{leLev}(1).  But $(U,\, \wt{\mc E})$ represents
$\mb U_{\mb t}$.  So $(\wh U,\, \wh{\mc E})$ represents the functor
whose $T$-points are the $\mc E = \mc A_T^\dg / \mc D$ with $\mc E_p$
locally free of rank $\mb w^*(p)$ for $p < n$, with $\mc E_p=0$ for $p
\ge n$. and with $\mc D$ of local generator type $\mb t^*$.
Equivalently, set $\wh{\mc D} := \wt{\mc D} \mid \wh U$; then $(\wh U,\,
\wh{\mc D})$ represents the functor whose $T$-points are the flat $\mc D
\subset \mc A^\dg_T$ with Hilbert function $\mb h^*$, with local
generator type $\mb t^*$, and with $\mc A^\dg_T/\mc D$ flat.  Or, by
\eqref{thGMD3}, Macaulay Duality, $(\wh U,\, \wh{\mc D}^*)$ represents
the functor whose $T$-points are the flat quotients $\mc C$ of $\mc A_T$
with Hilbert function $\mb h$ and $T$-socle type $\mb t$.  Thus $\wh U =
\bb H\Psi_{\mc A}^{\mb h, \mb t}$.

Note $\wh U$ is an open subscheme of $U$, so one of $\bb G$.  So $\wh U$
is covered by open subschemes of the affine space over $S$ of fiber
dimension $\mb H$.  The latter open subschemes may be taken to be
nonempty as $\bb H\Psi_{\mc A}^{\mb h, \mb t} \neq \emptyset$.  Thus
(1b) holds.

 {\bf Plainly, (1c)} follows immediately from (1a) and (1b).

{\bf For (2),} note $\mb P(z) = \prod(1-z^{d_i})$.  Now, the Hilbert
function of $\mc C_T$ is $\mb h$.  So $\dim((\mc A^\dg_{T}/\mc C_T^*)_p
= \mb a^*(p) - \mb h^*(p)$.  But $\mb h(q) := \mb a(q)-\mb w(q)$ for $q
> -n$.  Furthermore, $\sum_p\mb w^*(p)z^p := \mb H(\mc A^\dg,z) \mb
P(z)$.  Thus $\mb H((\mc A^\dg_{T}/\mc C_T^*),\, z) = \mb H(\mc
A_T^\dg,z) \mb P(z)\mod z^n$.

Let $K^\di$ be an $S^\di$-field, $T^\di := \Spec K^\di$.  But $\mb t^*$
is the local generator type of $\mc C^{\di*}$ and $\mc A_{T^\di}^\dg$ is
cyclic.  So for $1\le i\le m$, there's $g^\di_i\in \Gamma((\mc
A_{d_i})_{T^\di})$ with $\mc C_{T^\di}^{\di*} = \sum g^\di_i\mc
A^\dg_{T^\di}$.

Set $\bb P := \prod_{i=1}^m \bb P(\mc A^*_{d_i})$.  Consider the
$T$-point of $\bb P$ defined by the $g_i$.  By \eqref{prOpens}(2), it
lies in the open subset $U$ of $\bb P$ provided by \eqref{prOpens}(1).
Consider the $T^\di$-point of $\bb P$ defined by the $g^\di_i$.  Note
$\mb H(\mc A^\dg,z) \mb P(z) \le \mb H( (\mc A^\dg_{T^\di}/\mc
C_{T^\di}^{\di}*),\, z) \mod z^n$ by \eqref{prOpens}(1).  So $\mb w^*(p) \le
\mb a^*(p) - \mb h^{\di*}(p)$ for $p < n$.  So $\mb h^\di(q) \le \mb
h(q)$ for $q > -n$.  But $h^\di(q) \le \mb a(q)$ for all $q$, and $\mb
h(q) = \mb a(q)$ for $q \le -n$.  Thus (2) holds.

 {\bf For (3),} set $\mc N := \bigoplus_{i=1}^m \mc A_T(a-d_i)$, and use
the $g_i$ to define a surjection $\mc N \onto \mc C^*$.  Reorder the
$g_i$ so $d_1 = p$ and $d_2 = q$.  Then $(g_2,-g_2,0,\dotsc,0)\neq0$ in
$\Gamma(\mc N_{-r})$, but it maps to 0 in $\Gamma(\mc C^*_{-r})$.  Now,
$\dim \Gamma(\mc N_{-r}) = \mb g_{\bar s}(r)$ by \eqref{eqsbImax}.  So
$\mb g_{\bar s}(r) > \mb h(r)$.  But $\mb h^{\rm I}_{\bar s}(r) :=
\min\{\,\mb g_{\bar s}(r),\, \mb a(r)\,\}$ by \eqref{eqsbImaxh}, and
$\mb a(r) > \mb h(r)$.  Thus (3) holds.
 \end{proof}

\begin{example}\label{exMetal}
 Let's see what the theory in this section says about Migliore,
Mir\'{o}-Roig, and Nagel's Examples 2.14--2.16 on pp.\,343--345 in
\cite{MMN} of an algebra $A$ and a level $\mb t$ (that is, $ \bar s =
s$) with $\mb{H\Psi}_A^{\mb h^{\rm I}_{\bar s},\mb t} = \emptyset$.
Their A is an unspecified ``general'' complete intersection of forms of
the same degree;. They prove $A$ has a homogeneous quotient $C$ whose
Hilbert function $\mb h$ is maximal for $\mb t$, but $\mb h \neq \mb
h^{\rm I}_{\bar s}$.

By contrast, below, we construct specific such $A$ and $C$.  Now, take
$\mc M$ to be a polynomial ring over a field $k$ in \eqref{prOpens}.
Its (2) yields a scheme $U$ whose $K$-points, for any $k$-field $K$,
represent (with repetition) exactly the quotients $A^\di$ of $M_K$ with
the right Hilbert function; the sequence that defines an $A^\di$ is
regular, as $A^\di$ is Artinian.  The proof of \eqref{prOpens} provides
a flat $\mc O_U$-algebra $\mc A$ whose fibers are the $A^\di$.

Consider \eqref{thAffBdl}.  Its $\mb h$ turns out to be the Hilbert
function of our $C$; therefore, $C \in \mb H\Psi_{\mc A}^{\mb h, \mb
t}$.  So \eqref{thAffBdl}(1b) implies $\bb H\Psi_{\mc A}^{\mb h, \mb
t}$ is $U$-flat, where $U$ and $\mc A$ were introduced above.  So the
image of $\bb H\Psi_{\mc A}^{\mb h, \mb t}$ in $U$ is a nonempty open
subscheme, whose points represent (with repetition) exactly the desired
$A^\di$.  Moreover, \eqref{thAffBdl}(1b) describes the fiber over a
given $A^\di$; it's the scheme $\bb H \Psi_{A^\di}^{\mb h,\mb t}$
parameterizing all the desired quotients of $A^\di$.

There's more.  First, \eqref{thAffBdl}(1a) describes the retraction map
to $\bb H \Psi_A^{\mb h,\mb t}$ of the scheme $\bb F \Psi_A^{\mb h,\mb
t}$ of arbitrary filtered quotients of $A$ with Hilbert function $\mb h$
and socle type $\mb t$.  Second, \eqref{thAffBdl}(2) asserts $\mb h$ is
maximal among the Hilbert functions of homogeneous quotients of $A$ of
$k$-socle type $\mb t$.  Third, \eqref{thAffBdl}(3) yields $\mb h \neq
\mb h^{\rm I}_{\bar s}$.

For Migliore et al.'s 2.14--2.16, respectively take $r := 3,\,4,\,3$ and
$e := 2,\,2,\,3$ in the following setup.  Let $k$ be any field, $R :=
k[X_1,\dotsc,X_r]$ the polynomial ring with $r \ge 2$.  Fix $e \ge 2$.
Set $f := X_1^{-e}\dotsb X_r^{-e}$ and $I := (0:_Rf)$; set $A := R/I$.
Plainly, $X_1^{i_i}\dotsb X_r^{i_r} \in I$ iff $i_j \ge e+1$ for some
$j$; hence, $I = (X_1^{e+1},\dots,X_r^{e+1})$.  Thus $A$ is a complete
intersection; its socle degree $a$ is $er$, and its initial degree $v_0$
is $e+1$.  Set $n := v_0-a = e+1-a$.

Fix $t\ge1$.  Take $\mb t(p) := t$ for $p = a-1$ and $\mb t(p) := 0$ for
$p\neq a-1$.  Migliore et al.'s examples have $t=2$; so $t = r-1$ in
2.14 and 2.16.  Let's do the general case: $t = r-1$.  Use the notation
of \eqref{prNonempty} and its proof.  So $\mb P(z) = (1-z)^t$, and $J:=
\sum_{j=1}^t RX_j$, and $D = Jf\subset A^*$.  But $1-a < n-1$ as $1 <
e$; so $\mb t(1-n) = 0$.  Plainly $R/J = k[X_r]$.  So $R/(I+J) =
k[X_r]/(X_r^{e+1})$.  However, $R/I = A^*(-a)$; hence, $R/(I+J) =
(A^*/D)(-a)$.  Thus $(A^*/D)_p = 0$ for $p\ge e+1-a =: n$.  Now, $\mb
H(R/J,\,z) = \sum_p\mb w^*(p-a)z^p \mod z^{v_0}$ by an argument in the
proof of \eqref{prNonempty}.  But $J_p = (I+J)_p$ for $p < v_0$.  So
$\dim (A^*/D)_p = \mb w^*(p)$ for $p < n$.  Thus the Hilbert function of
$D$ is $\mb h^*$, so that of $C := D^*$ is $\mb h$, as desired.

Migliore et al.'s 2.15 is a little different, requiring more attention
be paid to $J$ and to $\car k$.  Of course, $J:= Rg_1+Rg_2$ where
$g_1,\,g_2$ are linear forms, as $t=2$.  However, if $g_j := X_j$, then
$X_3X_4^2,\,X_3^2X_4 \notin (I+J)$; so $D_n \neq A^*_n$, where $n = -5$.

Fix $b \in k$, take $g_1 := X_1+X_2+X_3$ and $g_2:= X_2+b
X_3+X_4$, and define a map $u\: R_2\oplus R_2 \to A_{-5}^\dg$ by $u(x,y)
:= (xg_1+yg_2)f$.  It's straightforward to write out a 16 by 20 matrix
representing $u$; use as basis of $R_2$ its monomials, and as basis of
$A_{-5}^\dg$ the products $X_1^{i_1}\dotsb X_4^{i_4}f$ with $0\le
i_1,\dotsc,i_4 < 3$ and $\sum i_j = 3$.  Order the two bases
lexicographically.  Then Macaulay~2 (with {\tt ZZ[b]} as ring)
finds the first nonvanishing maximal minor to be $9b(1-b)$.
Thus, if $\car k\neq 3$ and $b \neq 0,\,1$, then $u$ is surjective,
and so $C \in \mb H\Psi_{ A}^{\mb h, \mb t}$.

The case $k = \Z/2$ is special.  Of course, there's no $b\in k$ with $b
\neq 0,\,1$; however, there is such a $b$ in any proper extension field.
So the $k$-scheme $\bb H \Psi_A^{\mb h,\mb t}$ is nonempty.
(Moreover, it has no $k$-points; indeed, Macaulay~2 was used to check
each of the six essentially different choices of the pair $(g_1,g_2)$.)
Thus \eqref{thAffBdl} still applies.

Assume $\car k = 3$.  Say $g_i = \sum_{j=1}^4 a_{ij}X_j$ for $i=1,\ 2$.
Set $\Delta := a_{11}a_{22} - a_{12}a_{21}$.  Note $g_i^3 = \sum
a_{ij}^3X_j^3$; so $\Delta^3 X_1^3,\ \Delta^3 X_2^3 \in
\bigl(g_1,g_2,X_3^3,X_4^3\bigr)$.  Assume $\Delta \neq 0$.  Then,
therefore, $I+J = \bigl(g_1,g_2,X_3^3,X_4^3\bigr)$.  Hence $R/(I+J)$ is
of socle degree $4$.  Hence $\dim (A^*/D)_p \neq 0$ for $p = n = -5$ and
$p = n+1 = -4$.  Denote the Hilbert function of $D$ by $\mb h_\Delta^*$.
Thus $\mb h_\Delta^*(p) \neq \mb h^*(p)$ for $p = -5,\,-4$.  (Moreover,
it's not hard to see using \eqref{prOpens} that $\mb h_\Delta^*$ is
maximal for $A$ and $\mb t$.) Thus $\bb H \Psi_A^{\mb h,\mb t} =
\emptyset$, although as seen above, $\bb H \Psi_A^{\mb h,\mb t} \neq
\emptyset$ when $\car k \neq 3$.  (In Ex.\,2.12 on p.\,340, Migliore et
al.\ work a similar example in characteristic 2 with $r := 3$ and $e :=
3$ and $t:=1$.)
 \end{example}

\begin{remark}\label{reFrbg}
 If true, Fr\"oberg's conjecture provides another major case where
  \indt{Fr\"oberg's conjecture}
\eqref{thAffBdl} applies; namely, $A$ is an Artinian general complete
intersection over an infinite field $k$.  Fr\"oberg's conjecture is
suitably formulated in Migliore, Mir\'{o}-Roig, and Nagel's Conjecture
2.7 on p.\,339 in \cite{MMN} and in Nenashev's Conjecture 1 on p.\,273
in \cite{Nen}, with $A$ not necessarily Artinian.  The conjecture has
been proved in three instances: (1)~when $A$ has two or three
generators, (2)~when $\car k = 0$ and $A$ is a complete intersection, or
almost so, and (3)~wnen $k$ is an algebraically closed of characteristic
0 and $A$ is a polynomial ring modulo forms of the same degree.  The
proofs are generally attributed to Stanley, Fr\"oberg, Anick,
Hocster--Laksov, Aubry, and Nenashev; see \cite{Nen}*{p.\,275}.

Our treatment is geared to the setup of \eqref{thAffBdl}.  However, in
essence, it's the same as Migliore, Mir\'{o}-Roig, and Nagel's treatment
of Conjecture 2.9 and Remark 2.10 on p.\,340, although they don't assume
$A$ is Artinian.  The two treatments differ in one notable way: we use
Macaulay Duality, whereas they use linkage in Lemma 2.4 on p.\,338.
Nevertheless, this difference is superficial owing to \eqref{coLink}(2).

So say $A = R/I$ with $R := k[X_1,\dotsc,X_r]$ the polynomial ring,
$\deg X_i =1$, and $I$ is generated by $r$ general forms of degrees
$d_1,\dotsc,d_r$.  Note $\mb H(R,z) = 1/(1-z)^r$.  Set $\mb Q(z) :=
\prod_{i=1}^r (1-z^{d_i})$.  Thus \eqref{leInj}(1) yields $\mb H(A,z) =
\mb Q(z)\big/(1-z)^r$.

Set $a := \sum (d_i-1)$; so $a$ is the socle degree of $A$.  Choose an
$f\in A^\dg_{-a}\subset R^\dg_{-a}$ with $A^\dg = Af$.  Then $A(a)\risom
A^\dg$ by $g\mapsto gf$.  Thus $\mb H(A^\dg,z) = \mb H(A,z)z^{-a}$.

Set $\mb P(z) := \prod_j(1-z^j)^{\mb t(a-j)}$.  Define $\mb w^*(p)$ by
$\sum_p\mb w^*(p)z^p := \mb H(A^\dg,z) \mb P(z)$.  Set $n := \sup\{\,
m\mid \mb w^*(p) \ge 0 \text{ for }p < m\,\}$. Assume $\mb t^*(p)=0$ for
$p\ge n$ and $p \le -a$.

For all $i$, fix general forms $g_{ij}\in R_i$ for $j=1, \dotsc, \mb
t(a-i)$.  Set $J:= \sum_{i,j}Rg_{ij}$.  Then Fr\"oberg's conjecture
asserts $\mb H(R/(I+J),z) = \mb P(z)\mb Q(z)\big/(1-z)^r \mod z^{a+n}$
and $R_p = (I+J)_p$ for $p = a+n$.  Note $\mb P(z)\mb Q(z)\big/(1-z)^r =
\sum_p\mb w^*(p)z^{p+a}$.

Set $D := JA^\dg$.  Then $(R/(I+J))(a) \risom A^\dg/D$.  So Fr\"oberg's
conjecture yields $\dim (A^\dg/D)_p = \mb w^*(p)$ for $p < n$ and
$(A^\dg/D)_p = 0$ for $p= n$; so $(A^\dg/D)_p = 0$ for $p \ge n$, as
$R_1\cdot A^\dg_p = R_1\cdot R_{p+a}f = A^\dg_{1+p}$.  Moreover, the
conjecture implies that, if one of the $g_{ij}$ is omitted, then at
least one value of $\dim (A^\dg/D)_p$ changes, since $\mb t^*(p)=0$ for
$p\ge n$; hence, the products $g_{ij}f$ form a minimal generating set of
$D$, and so the (local) generator type of $D$ is $\mb t^*$.

Finally, set $C := D^*$.  Set $a(q) := \dim A_q$, and set $\mb h(q) :=
\mb a(q)-\mb w(q)$ for $q > -n$, and\/ $\mb h(q) := \mb a(q)$ for $q \le
-n$.  Then $\dim C_q = \mb h(q)$.  Therefore, $C\in \bb H\Psi_A^{\mb
h, \mb t}$.  Thus \eqref{thAffBdl} applies; so it describes the geometry
of $\bb H \Psi_A^{\mb h,\mb t}$ and  $\bb F \Psi_A^{\mb h,\mb t}
\onto \bb H \Psi_A^{\mb h,\mb t}$, and it asserts $\mb h$ is maximal,
but often $\mb h \neq \mb h^{\rm I}_{\bar s}$.
 \end{remark}

\section*{References}

\begin{biblist}
\parskip = 0pt plus 2pt

\bib{Thin}{article}{
   author={Anick, David J.},
   title={Thin algebras of embedding dimension three},
   journal={J. Algebra},
   volume={100},
   date={1986},
   number={1},
   pages={235--259},
   issn={0021-8693},
   review={\MR{839581}},
   doi={10.1016/0021-8693(86)90076-1},
}

\bib{Aubry}{article}{
   author={Aubry, Marc},
   title={S\'{e}rie de Hilbert d'une alg\`ebre de polyn\^{o}mes quotient},
   language={French, with English summary},
   journal={J. Algebra},
   volume={176},
   date={1995},
   number={2},
   pages={392--416},
   issn={0021-8693},
   review={\MR{1351616}},
   doi={10.1006/jabr.1995.1251},
}

\bib{Bennett}{article}{
   author={Bennett, Bruce},
   title={Normally flat deformations},
   journal={Trans. Amer. Math. Soc.},
   volume={225},
   date={1977},
   pages={1--57},
   issn={0002-9947},
   review={\MR{498555}},
   doi={10.2307/1997491},
}

\bib{BCR}{article}{
   author={Bertone, Cristina},
   author={Cioffi, Francesca},
   author={Roggero, Margherita},
   title={Smoothable Gorenstein points via marked schemes and double-generic
   initial ideals},
   journal={Exp. Math.},
   volume={31},
   date={2022},
   number={1},
   pages={120--137},
   issn={1058-6458},
   review={\MR{4399113}},
   doi={10.1080/ 10586458.2019.1592034}, }

\bib{BoijBLMS31}{article}{
   author={Boij, Mats},
   title={Gorenstein Artin algebras and points in projective space},
   journal={Bull. London Math. Soc.},
   volume={31},
   date={1999},
   number={1},
   pages={11--16},
   issn={0024-6093},
   review={\MR{1651033}},
   doi={10.1112/S0024609398004925},
}

\bib{BoijJA226}{article}{
   author={Boij, Mats},
   title={Artin level modules},
   journal={J. Algebra},
   volume={226},
   date={2000},
   number={1},
   pages={361--374},
   issn={0021-8693},
   review={\MR{1749894}},
   doi={10.1006/jabr. 1999.8185},
}

\bib{RedLev}{article}{
   author={Boij, Mats},
   author={Iarrobino, Anthony},
   title={Reducible family of height three level algebras},
   journal={J. Algebra},
   volume={321},
   date={2009},
   number={1},
   pages={86--104},
   issn={0021-8693},
   review={\MR{2469350}},
   doi={10.1016/j.jalgebra.2008.10.001},
}

\bib{Bour}{book}{
   author={Bourbaki, N.},
   title={\'{E}l\'{e}ments de math\'{e}matique. Fascicule XXVIII.
    Alg\`ebre commutative.
   Chapitre 3: Graduations, filtrations et topologies. Chapitre 4: Id\'{e}aux
   premiers associ\'{e}s et d\'{e}com\-po\-si\-tion primaire},
   language={French},
   series={Actualit\'{e}s Scientifiques et Industrielles, No. 1293},
   publisher={Hermann, Paris},
   date={1961},
   pages={183},
   review={\MR{0171800}},
}

\bib{C-IJA241}{article}{
   author={Cho, Young Hyun},
   author={Iarrobino, Anthony},
   title={Hilbert functions and level algebras},
   journal={J. Algebra},
   volume={241},
   date={2001},
   number={2},
   pages={745--758},
   issn={0021-8693},
   review={\MR{1843323}},
   doi={10.1006/jabr.2001.8787},
}

\bib{CI-JA366}{article}{
   author={Cho, Young Hyun},
   author={Iarrobino, Anthony},
   title={Inverse systems of zero-dimensional schemes in $\mb{P}^n$},
   journal={J. Algebra},
   volume={366},
   date={2012},
   pages={42--77},
   issn={0021-8693},
   review={\MR{2942643}},
   doi={10.1016/j.jalgebra.2012.04.032},
}

\bib{D71}{article}{
   author={Deligne, Pierre},
   title={Th\'{e}orie de Hodge. II},
   language={French},
   journal={Inst. Hautes \'{E}tudes Sci. Publ. Math.},
   number={40},
   date={1971},
   pages={5--57},
   issn={0073-8301},
   review={\MR{0498551}},
}

\bib{Eis}{book}{
   author={Eisenbud, David},
   title={Commutative algebra},
   series={Graduate Texts in Mathematics},
   volume={150},
   note={With a view toward algebraic geometry},
   publisher={Springer-Verlag, New York},
   date={1995},
   pages={xvi+785},
   isbn={0-387-94268-8},
   isbn={0-387-94269-6},
   review={\MR{1322960}},
   doi={10.1007/978-1-4612-5350-1},
}

\bib{ER-AM}{article}{
   author={Elias, J.},
   author={Rossi, M. E.},
   title={The structure of the inverse system of Gorenstein $k$-algebras},
   journal={Adv. Math.},
   volume={314},
   date={2017},
   pages={306--327},
   issn={0001-8708},
   review={\MR{3658719}},
   doi={10.1016/j.aim.2017.04.025},
}

\bib{PEpais}{article}{
   author={Emsalem, Jacques},
   title={G\'{e}om\'{e}trie des points \'{e}pais},
   language={French, with English summary},
   journal={Bull. Soc. Math. France},
   volume={106},
   date={1978},
   number={4},
   pages={399--416},
   issn={0037-9484},
   review={\MR{518046}},
}

\bib{F-L83}{article}{
   author={Fr\"{o}berg, R.},
   author={Laksov, D.},
   title={Compressed algebras},
   conference={
      title={Complete intersections},
      address={Acireale},
      date={1983},
   },
   book={
      series={Lecture Notes in Math.},
      volume={1092},
      publisher={Springer, Berlin},
   },
   date={1984},
   pages={121--151},
   review={\MR{775880}},
   doi={10.1007/BFb0099360},
}

\bib{GerMAM186}{article}{
   author={Geramita, Anthony V.},
   author={Harima, Tadahito},
   author={Migliore, Juan C.},
   author={Shin, Yong Su},
   title={The Hilbert function of a level algebra},
   journal={Mem. Amer. Math. Soc.},
   volume={186},
   date={2007},
   number={872},
   pages={vi+139},
   issn={0065-9266},
   review={\MR{2292384}},
   doi={10.1090/memo/0872},
}

\bib{Granger}{article}{
   author={Granger, Michel},
   title={G\'{e}om\'{e}trie des sch\'{e}mas de Hilbert ponctuels},
   language={French, with English summary},
   journal={M\'{e}m. Soc. Math. France (N.S.)},
   number={8},
   date={1983},
   pages={84},
   issn={0037-9484},
   review={\MR{716783}},
}

\bib{SB221}{article}{
   author={Grothendieck, Alexander},
   title={Techniques de construction et th\'{e}or\`emes d'existence en
   g\'{e}om\'{e}trie al\-g\'{e}\-brique. IV. Les sch\'{e}mas de Hilbert},
   language={French},
   conference={
      title={S\'{e}minaire Bourbaki, Vol. 6},
   },
   book={
      publisher={Soc. Math. France, Paris},
   },
   date={1995},
   pages={Exp. No. 221, 249--276},
   review={\MR{1611822}},
}

\bib{EGAIihes}{article}{
   author={Grothendieck, A.},
   title={\'{E}l\'{e}ments de g\'{e}om\'{e}trie alg\'{e}brique. I.
     Le langage des sch\'{e}mas},
   language={French},
   journal={Inst. Hautes \'{E}tudes Sci. Publ. Math.},
   number={4},
   date={1960},
   pages={228},
   issn={0073-8301},
   review={\MR{217083}},
}

\bib{EGAIII0}{article}{
   author={Grothendieck, A.},
   title={\'{E}l\'{e}ments de g\'{e}om\'{e}trie
 alg\'{e}brique. III. \'{E}tude cohomologique des
   faisceaux coh\'{e}rents. I},
   journal={Inst. Hautes \'{E}tudes Sci. Publ. Math.},
   number={11},
   date={1961},
   pages={167},
   issn={0073-8301},
   review={\MR{217085}},
}

\bib{EGAIV}{article}{
   author={Grothendieck, A.},
   title={\'{E}l\'{e}ments de g\'{e}om\'{e}trie
alg\'{e}brique. IV. \'{E}tude locale des sch\'{e}mas et
   des morphismes de sch\'{e}mas IV},
   language={French},
   journal={Inst. Hautes \'{E}tudes Sci. Publ. Math.},
   number={32},
   date={1967},
   pages={361},
   issn={0073-8301},
   review={\MR{238860}},
}

\bib{EGAI}{book}{
   author={Grothendieck, A.},
   author={Dieudonn\'{e}, J. A.},
   title={El\'{e}ments de g\'{e}om\'{e}trie alg\'{e}brique. I},
   language={French},
   series={Grundlehren der Mathematischen Wissenschaften [Fundamental
   Principles of Mathematical Sciences]},
   volume={166},
   publisher={Springer-Verlag, Berlin},
   date={1971},
   pages={ix+466},
   isbn={3-540-05113-9},
   isbn={0-387-05113-9},
   review={\MR{3075000}},
}

\bib{HaHy}{article}{
   author={Haboush, William J.},
   author={Hyeon, Donghoon},
   title={Conjugacy classes of commuting nilpotents},
   journal={Trans. Amer. Math. Soc.},
   volume={372},
   date={2019},
   number={6},
   pages={4293--4311},
   issn={0002-9947},
   review={\MR{4009390}},
   doi={10.1090/tran/7782},
}

\bib{HS04}{article}{
   author={Haiman, Mark},
   author={Sturmfels, Bernd},
   title={Multigraded Hilbert schemes},
   journal={J. Algebraic Geom.},
   volume={13},
   date={2004},
   number={4},
   pages={725--769},
   issn={1056-3911},
   review={\MR{2073194}},
   doi={10.1090/S1056-3911-04-00373-X},
}

\bib{Hun}{article}{
   author={Huneke, Craig},
   title={Hyman Bass and ubiquity: Gorenstein rings},
   conference={
      title={Algebra, $K$-theory, groups, and education},
      address={New York},
      date={1997},
   },
   book={
      series={Contemp. Math.},
      volume={243},
      publisher={Amer. Math. Soc., Providence, RI},
   },
   date={1999},
   pages={55--78},
   review={\MR{1732040}},
   doi={10.1090/conm/243/03686},
}
 \bib{IM15}{article}{
    author={Iarrobino, A.},
    title={Reducibility of the families of $0$-dimensional schemes on a
    variety},
    journal={Invent. Math.},
    volume={15},
    date={1972},
    pages={72--77},
    issn={0020-9910},
    review={\MR{301010}},
    doi={10.1007/BF01418644},
 }
\bib{IaTop}{article}{
   author={Iarrobino, A.},
   title={An algebraic fibre bundle over ${\bf P}_{1}$ that is not a
   vector bundle},
   journal={Topology},
   volume={12},
   date={1973},
   pages={229--232},
   issn={0040-9383},
   review={\MR{340254}},
   doi={10.1016/0040-9383(73)90008-6},
}

\bib{IaBAMS78}{article}{
   author={Iarrobino, A.},
   title={Punctual Hilbert schemes},
   journal={Bull. Amer. Math. Soc.},
   volume={78},
   date={1972},
   pages={819--823},
   issn={0002-9904},
   review={\MR{308120}},
   doi={10.1090/S0002-9904-1972-13049-0},
}

\bib{Iar84}{article}{
   author={Iarrobino, Anthony},
   title={Compressed algebras: Artin algebras having given socle degrees
and
   maximal length},
   journal={Trans. Amer. Math. Soc.},
   volume={285},
   date={1984},
   number={1},
   pages={337--378},
   issn={0002-9947},
   review={\MR{748843}},
   doi={10.2307/1999485},
}

\bib{Pen2005}{article}{
   author={Iarrobino, Anthony},
   title={Hilbert functions of Gorenstein algebras associated to a pencil of
   forms},
   conference={
      title={Projective varieties with unexpected properties},
   },
   book={
      publisher={Walter de Gruyter, Berlin},
   },
   date={2005},
   pages={273--286},
   review={\MR{2202259}},
}

\bib{IE78}{article}{
   author={Iarrobino, A.},
   author={Emsalem, J.},
   title={Some zero-dimensional generic singularities; finite algebras
   having small tangent space},
   journal={Compositio Math.},
   volume={36},
   date={1978},
   number={2},
   pages={145--188},
   issn={0010-437X},
   review={\MR{515043}},
}

\bib{LNM1721}{book}{
   author={Iarrobino, Anthony},
   author={Kanev, Vassil},
   title={Power sums, Gorenstein algebras, and determinantal loci},
   series={Lecture Notes in Mathematics},
   volume={1721},
   note={Appendix C by Iarrobino and Steven L. Kleiman},
   publisher={Springer-Verlag, Berlin},
   date={1999},
   pages={xxxii+345},
   isbn={3-540-66766-0},
   review={\MR{1735271}},
   doi={10.1007/BFb0093426},
}

\bib{JJthesis}{article}{
   author={Jelisiejew, Joachim},
   title={Hilbert schemes of points and their applications},
journal={arXiv:2205.10584},
   volume={},
   date={2017},
   number={},
   pages={},
   review={
\ },
   doi={},
}

\bib{JJ-VSP}{article}{
   author={Jelisiejew, Joachim},
   title={VSPs of cubic fourfolds and the Gorenstein locus of the Hilbert
   scheme of 14 points on $\bb{A}^6$},
   journal={Linear Algebra Appl.},
   volume={557},
   date={2018},
   pages={265--286},
   issn={0024-3795},
   review={\MR{3848270}},
   doi={10.1016/j.laa.2018.08.002},
}

\bib{JJ-JLMS2019}{article}{
   author={Jelisiejew, Joachim},
   title={Elementary components of Hilbert schemes of points},
   journal={J. Lond. Math. Soc. (2)},
   volume={100},
   date={2019},
   number={1},
   pages={249--272},
   issn={0024-6107},
   review={\MR{3999690}},
   doi={10.1112/jlms.12212},
}
		
\bib{JJ-LS}{article}{
   author={Jelisiejew, Joachim},
   author={Sienkiewicz, \L ukasz},
   title={Bia\l ynicki-Birula decomposition for reductive groups},
   language={English, with English and French summaries},
   journal={J. Math. Pures Appl. (9)},
   volume={131},
   date={2019},
   pages={290--325},
   issn={0021-7824},
   review={\MR{4021177}},
   doi={10.1016/j.matpur.2019.04.006},
}

\bib{JJ-KS}{article}{
   author={Jelisiejew, Joachim},
   author={\v{S}ivic, Klemen},
   title={Components and singularities of Quot schemes and varieties of
commuting matrices},
 journal={J. Reine Angew. Math.},
   volume={788},
   date={2022},
   pages={129--187},
   issn={0075-4102},
   review={\MR{4445543}},
   doi={10.1515/crelle-2022-0018},
}

\bib{PGor}{article}{
   author={Kleppe, Jan O.},
   title={The smoothness and the dimension of ${\rm PGor}(H)$ and of other
   strata of the punctual Hilbert scheme},
   journal={J. Algebra},
   volume={200},
   date={1998},
   number={2},
   pages={606--628},
   issn={0021-8693},
   review={\MR{1610672}},
   doi={10.1006/jabr.1997.7226},
}

\bib{Trento}{article}{
   author={Kleppe, Jan O.},
   title={Liaison of families of subschemes in ${\bf P}^n$},
   conference={
      title={Algebraic curves and projective geometry},
      address={Trento},
      date={1988},
   },
   book={
      series={Lecture Notes in Math.},
      volume={1389},
      publisher={Springer, Berlin},
   },
   date={1989},
   pages={128--173},
   review={\MR{1023396}},
   doi={10.1007/BFb0085930},
}

\bib{MaxGor}{article}{
   author={Kleppe, Jan O.},
   title={Maximal families of Gorenstein algebras},
   journal={Trans. Amer. Math. Soc.},
   volume={358},
   date={2006},
   number={7},
   pages={3133--3167},
   issn={0002-9947},
   review={\MR{2216262}},
   doi={10.1090/S0002-9947-06-03845-1},
}

\bib{LevAlg}{article}{
   author={Kleppe, Jan O.},
   title={Families of Artinian and one-dimensional algebras},
   journal={J. Algebra},
   volume={311},
   date={2007},
   number={2},
   pages={665--701},
   issn={0021-8693},
   review={\MR{2314729}},
   doi={10.1016/j.jalgebra.2006.11.019},
}

\bib{KSV-JA505}{article}{
   author={Kustin, Andrew R.},
   author={\c{S}ega, Liana M.},
   author={Vraciu, Adela},
   title={Poincar\'{e} series of compressed local Artinian rings with odd top
   socle degree},
   journal={J. Algebra},
   volume={505},
   date={2018},
   pages={383--419},
   issn={0021-8693},
   review={\MR{3789918}},
   doi={10.1016/j.jal\-ge\-bra.2018.02.034},
 }

\bib{JPAA217}{article}{
   author={Laksov, D.},
   title={Inverse systems, generic linear forms, divided powers, and
   transversality},
   journal={J. Pure Appl. Algebra},
   volume={217},
   date={2013},
   number={12},
   pages={2255--2262},
   issn={0022-4049},
   review={\MR{3057308}},
   doi={10.1016/j.jpaa.2013.03.004},
}

\bib{LO96}{book}{
   author={Li, Huishi},
   author={van Oystaeyen, Freddy},
   title={Zariskian filtrations},
   series={$K$-Monographs in Mathematics},
   volume={2},
   publisher={Kluwer Academic Publishers, Dordrecht},
   date={1996},
   pages={x+252},
   isbn={0-7923-4184-8},
   review={\MR{1420862}},
}

\bib{MacF}{book}{
   author={Macaulay, F. S.},
   title={The algebraic theory of modular systems},
   series={Cambridge Mathematical Library},
   note={Revised reprint of the 1916 original;
   With an introduction by Paul Roberts},
   publisher={Cambridge University Press, Cambridge},
   date={1994},
   pages={xxxii+112},
   isbn={0-521-45562-6},
   review={\MR{1281612}},
}

\bib{MatDu}{article}{
   author={Matlis, Eben},
   title={Injective modules over Noetherian rings},
   journal={Pacific J. Math.},
   volume={8},
   date={1958},
   pages={511--528},
   issn={0030-8730},
   review={\MR{99360}},
}

\bib{MMN}{article}{
   author={Migliore, Juan C.},
   author={Mir\'{o}-Roig, Rosa M.},
   author={Nagel, Uwe},
   title={Minimal resolution of relatively compressed level algebras},
   journal={J. Algebra},
   volume={284},
   date={2005},
   number={1},
   pages={333--370},
   issn={0021-8693},
   review={\MR{2115019}},
   doi={10.1016/j.jalgebra.2004.08.009},
}

\bib{MirThesis}{book}{
   author={Miri, Abderrahim},
   title={ARTIN MODULES HAVING EXTREMAL HILBERT SERIES: COMPRESSED MODULES},
   note={Thesis (Ph.D.)--Northeastern University},
   publisher={ProQuest LLC, Ann Arbor, MI},
   date={1985},
   pages={124},
   review={\MR{2634219}},
}
		
\bib{MiriCA21}{article}{
   author={Miri, Abderrahim},
   title={Compressed Gorenstein modules: Artin modules of type one having
   extremal Hilbert functions},
   journal={Comm. Algebra},
   volume={21},
   date={1993},
   number={8},
   pages={2837--2857},
   issn={0092-7872},
   review={\MR{1222746}},
   doi={10.1080/00927879308824708},
}

\bib{CAS}{book}{
   author={Mumford, David},
   title={Lectures on curves on an algebraic surface},
   series={With a section by G. M. Bergman. Annals of Mathematics Studies,
   No. 59},
   publisher={Princeton University Press, Princeton, N.J.},
   date={1966},
   pages={xi+200},
   review={\MR{0209285}},
}
\bib{Nen}{article}{
   author={Nenashev, Gleb},
   title={A note on Fr\"{o}berg's conjecture for forms of equal degrees},
   language={English, with English and French summaries},
   journal={C. R. Math. Acad. Sci. Paris},
   volume={355},
   date={2017},
   number={3},
   pages={272--276},
   issn={1631-073X},
   review={\MR{3621254}},
   doi={10.1016/j.crma.2017.01.011},
}

 \bib{Q}{article}{
   author={Quillen, Daniel},
   title={Higher algebraic $K$-theory. I},
   conference={
      title={Algebraic $K$-theory, I: Higher $K$-theories},
      address={Proc. Conf., Battelle Memorial Inst., Seattle, Wash.},
      date={1972},
   },
   book={
      publisher={Springer, Berlin},
   },
   date={1973},
   pages={85--147. Lecture Notes in Math., Vol. 341},
   review={\MR{0338129}},
}

\bib{RS-AM259}{article}{
   author={Rossi, Maria Evelina},
   author={\c{S}ega, Liana M.},
   title={Poincar\'{e} series of modules over compressed Gorenstein local rings},
   journal={Adv. Math.},
   volume={259},
   date={2014},
   pages={421--447},
   issn={0001-8708},
   review={\MR{3197663}},
   doi={10.1016/j.aim.2014.03.024},
}

\bib{SS1}{article}{
   author={Satriano, Matthew},
   author={Staal, Andrew P.},
   title={Small elementary components of Hilbert schems of points},
journal={arXiv:2112.01481v1},
   volume={},
   date={2021},
   number={},
   pages={31},
   review={},
   doi={},
}

\bib{SS2}{article}{
   author={Satriano, Matthew},
   author={Staal, Andrew P.},
   title={Galois closures and elementary components  of Hilbert schems of points},
 journal={arXiv: 2210.14310v1},
   volume={},
   date={2022},
   number={},
   pages={26},
   review={},
   doi={},
}

\bib{SerreAlLoc}{book}{
   author={Serre, Jean-Pierre},
   title={Alg\`ebre locale. Multiplicit\'{e}s},
   language={French},
   series={Lecture Notes in Mathematics},
   volume={11},
   note={Cours au Coll\`ege de France, 1957--1958, r\'{e}dig\'{e} par Pierre Gabriel;
   Seconde \'{e}dition, 1965},
   publisher={Springer-Verlag, Berlin-New York},
   date={1965},
   pages={vii+188 pp. (not consecutively paged)},
   review={\MR{0201468}},
}

\bib{Sharp}{article}{
   author={Sharp, Rodney Y.},
   title={David Rees, FRS 1918--2013},
   journal={Bull. Lond. Math. Soc.},
   volume={48},
   date={2016},
   number={3},
   pages={557--576},
   issn={0024-6093},
   review={\MR{3509915}},
   doi={10.1112/blms/bdw010},
}

\bib{KES}{article}{
   author={Smith, Karen E.},
   title={Local cohomology and base change},
   journal={J. Algebra},
   volume={496},
   date={2018},
   pages={11--23},
   issn={0021-8693},
   review={\MR{3737831}},
   doi={10.1016/j.jalgebra.2017.09.036},
}

\bib{RStan78}{article}{
   author={Stanley, Richard P.},
   title={Hilbert functions of graded algebras},
   journal={Advances in Math.},
   volume={28},
   date={1978},
   number={1},
   pages={57--83},
   issn={0001-8708},
   review={\MR{485835}},
   doi={10.1016/0001-8708(78)90045-2},
}
\bib{RSz}{article}{
   author={Szafarczyk, Robert},
   title={New elementary components of the Gorenstein locus of the Hilbert
   scheme of points},
   journal={Comm. Algebra},
   volume={51},
   date={2023},
   number={6},
   pages={2688--2704},
   issn={0092-7872},
   review={\MR{4563459}},
   doi={10.1080/00927872.2023.2171425},
}

 \bib{ZanI}{article}{
     author={Zanello, Fabrizio},
     title={Extending the idea of compressed algebra to arbitrary
     $k$-socle-vectors},
     journal={J. Algebra},
     volume={270},
     date={2003},
     number={1},
     pages={181--198},
     issn={0021-8693},
     review={\MR{2016656}},
     doi={10.1016/S0021-8693(03)00365-X},
}

\bib{ZanCA35}{article}{
   author={Zanello, Fabrizio},
   title={The $h$-vector of a relatively compressed level algebra},
   journal={Comm. Algebra},
   volume={35},
   date={2007},
   number={4},
   pages={1087--1091},
   issn={0092-7872},
   review={\MR{2313652}},
   doi={10.1080/00927870601139476},
}

\bib{ZSvII}{book}{
   author={Zariski, Oscar},
   author={Samuel, Pierre},
   title={Commutative algebra. Vol. II},
   series={The University Series in Higher Mathematics},
   publisher={D. Van Nostrand Co., Inc., Princeton, N. J.-Toronto-London-New
   York},
   date={1960},
   pages={x+414},
   review={\MR{0120249}},
}


 \end{biblist}
\newpage

\indexprologue[\vspace{-12pt}]{}
\printindex[terminology]

\indexprologue[\vspace{-12pt}]{}
\printindex[notation]

 \end{document}